\def\com#1{}%\def\com#1{#1}
\newcommand{\Z}{\ensuremath{{\rm{Z}}}}
\newcommand{\R}{\mathrm {R}}
\newcommand{\Mat}{\mathrm {Mat}}
\newcommand{\RU}{\mathrm {R}_u}
\newcommand{\aaa}{{\mathbf a}}
\newcommand{\bbb}{{\mathbf b}}
\newcommand{\Se}{{\mathsf{Set}}}
\newcommand{\BB}{{\mathbf B}}
\newcommand{\CC}{{\mathbf C}}
\newcommand{\DD}{{\mathbf D}}
\newcommand{\GG}{{\mathbf G}}
\newcommand{\GA}{{\GG_\aaa}}
\newcommand{\GB}{{\GG_\bbb}}
\newcommand{\HH}{{\mathbf H}}
\newcommand{\LL}{{\mathbf L}}
\newcommand{\KK}{{\mathbf K}}
\newcommand{\MM}{{\mathbf M}}
\newcommand{\NN}{{\mathbf N}}
\newcommand{\PP}{{\mathbf P}}
\newcommand{\bS}{{\mathbf S}}
\newcommand{\UU}{{\mathbf U}}
\newcommand{\VV}{{\mathbf V}}
\newcommand{\TT}{{\mathbf T}}
\newcommand{\XX}{{\mathbf X}}
\newcommand{\YY}{{\mathbf Y}}
\newcommand{\mo}{{\mathsf{mod}}}
\newcommand{\Mmo}{{\text{-}\mathsf{Mod}}}
\newcommand{\mmo}{{\text{-}\mathsf{mod}}}
\newcommand{\HO}{{\mathsf{Ho}^b}}
\newcommand{\rH}{{\mathrm{H}}}
\newcommand{\bbC}{{\mathbb{C}}}
\newcommand{\bbR}{{\mathbb{R}}}
\newcommand{\bbF}{{\mathbb{F}}}
\newcommand{\SF}{\mathrm {Spec}(\bbF)}
\newcommand{\FH}{{\mathbb{F}H}}
\newcommand{\Fm}{{\mathbb{F}^\times}}
\newcommand{\bbQ}{{\mathbb{Q}}}
\newcommand{\bbZ}{{\mathbb{Z}}}
\newcommand{\bbN}{{\mathbb{N}}}
\newcommand{\tA}{\mathsf A}
\newcommand{\tB}{\mathsf B}
\newcommand{\tC}{\mathsf C}
\newcommand{\tD}{\mathsf D}
\newcommand{\fsl}{{\mathfrak{sl}}}
\newcommand{\asl}{\widehat{\mathfrak{sl}}}
\newcommand{\cA}{{\mathcal A}}
\newcommand{\cC}{{\mathcal C}}
\newcommand{\cE}{{\mathcal E}}
\newcommand{\cF}{{\mathcal F}}
\newcommand{\cG}{{\mathcal G}}
\newcommand{\cL}{{\mathcal L}}
\newcommand{\HGU}{{\mathcal H}_\bbF(G,U)}
\newcommand{\cN}{{\mathcal N}}
\newcommand{\OO}{{\mathcal O}}
\newcommand{\cP}{{\mathcal P}}
\newcommand{\cS}{{\mathcal S}}
\newcommand{\Aut}{\operatorname{Aut}}
\newcommand{\diag}{\operatorname{diag}}
\newcommand{\End}{\operatorname{End}}
\newcommand{\Res}{\operatorname{Res}}
\newcommand{\Ind}{\operatorname{Ind}}
\newcommand{\id}{\operatorname{id}}
\newcommand{\tr}{{\operatorname{tr}}}
\newcommand{\Irr}{\operatorname{Irr}}
\newcommand{\IBr}{\operatorname{IBr}}
\newcommand{\Bl}{\operatorname{Bl}}
\newcommand{\Br}{\operatorname{Br}}
\newcommand{\Alp}{\operatorname{Alp}}
\newcommand{\CF}{\operatorname{CF}}
\newcommand{\conj}{\operatorname{conj}}
\newcommand{\op}{{\operatorname{opp}}}
\newcommand{\hoo}{{\operatorname{hook}}}
\newcommand{\pr}{{\operatorname{pr}}}
\newcommand{\Hom}{\operatorname{Hom}}
\newcommand{\reg}{\operatorname{reg}}
\newcommand{\SC}{{\operatorname{sc}}}
\newcommand{\ad}{{\operatorname{ad}}}
\newcommand{\GL}{\operatorname{GL}}
\newcommand{\SL}{\operatorname{SL}}
\newcommand{\PSL}{\operatorname{PSL}}
\newcommand{\PGL}{\operatorname{PGL}}
\def\Sym#1{\operatorname{\frak S}_{#1}}
\def\Alt{\operatorname{\frak A}}
\newcommand{\mm}{{-1}}
\newcommand{\W}{{\operatorname W}}
\newcommand{\Op}{{\mathsf{ Open}}}
\def\OP#1{{\operatorname O}_{p}(#1)}
\def\norm#1#2{{\operatorname N}_{#1}(#2)}
\def\cent#1#2{{\operatorname C}_{#1}(#2)}
\def\ccent#1#2{{\operatorname C}^\circ_{#1}(#2)}
\def\zent#1{{\operatorname Z}(#1)}
\def\czent#1{{\operatorname Z}^\circ(#1)}
\def\ser#1#2{{\mathcal E}(#1 , #2)}
\def\lser#1#2{{\mathcal E}_\ell(#1 , #2)}
\newcommand{\w}{\widetilde}
\newcommand{\wh}{\widehat}
\newcommand{\ov}{\overline }
\newcommand{\ovF}{\overline \bbF}
\newcommand{\tw}[1]{{}^#1}
\newcommand{\inn}{\subseteq}\newcommand{\nni}{\supseteq}
\newcommand{\deq}{\mathrel{\mathop:}=}
\newcommand{\GF}{{{\GG^F}}}
\newcommand{\GD}{{{\GG^*}}}
\newcommand{\LD}{{{\LL^*}}}
\newcommand{\FD}{{{F^*}}}
\newcommand{\LF}{{{\LL^F}}}
\newcommand{\pp}{{p'}}
\newcommand{\lp}{{\ell '}}
\def\Lu#1#2{{{\rm R}^{#2}_{#1}}}
\def\slu#1#2{{{}^*{\rm R}^{#2}_{#1}}}
\let\al=\alpha
\let\dd=\delta
\let\eps=\epsilon
\let\th=\theta
\let\si=\sigma
\let\zz=\zeta
\let\la=\lambda
\newcommand{\PGT}{{\Phi(\GG,\TT)}}
\newcommand{\WGT}{{W(\GG,\TT)}}
\let\co=\colon
\newtheorem{thm}{Theorem}[section]
\newtheorem{lem}[thm]{Lemma}
\newtheorem{cor}[thm]{Corollary}
\newtheorem{pro}[thm]{Proposition}
\newtheorem{prop}[thm]{Proposition}
\theoremstyle{definition}
\newtheorem{exm}[thm]{Example}
\newtheorem{defn}[thm]{Definition}
\newtheorem{rem}[thm]{Remark}
\def\Th#1{Theorem~\ref{#1}}
\numberwithin{figure}{section} %% Comment out for sequentially-numbered
\def\restr#1|#2{\left.#1\right\rceil_{#2}}
\def\spann<#1>{\left\langle#1\right\rangle}
\def\Spann<#1>{\Spann@h#1@}
\def\Spann@h#1|#2@{\left\langle\left.#1\vphantom{#2}\hskip.1em\right.\mid\relax #2 \right\rangle}
\def\Set#1{\Set@h#1@}
\def\Set@h#1|#2@{\left\{\left.#1\vphantom{#2}\hskip.1em\,\right.
\mid\relax #2\right\}}
\def\set#1{\set@h#1@}
\def\set@h#1@{\left\{#1\right\}}
\begin{document}

\title{Local methods for blocks of finite simple groups}

\date{October 20, 2017}

\author{Marc Cabanes}
\address{CNRS, Institut de Math\'ematiques de Jussieu-Paris Rive Gauche,
Batiment Sophie Germain,
75205 Paris Cedex 13, France.}
\email{marc.cabanes@imj-prg.fr}

\thanks{ }

\keywords{}

\subjclass[2010]{Primary 20C15}

\begin{abstract}
 We review old and new results about the modular representation theory of finite reductive groups with a strong emphasis on local methods. This includes subpairs, Brauer's Main Theorems, fusion, Rickard equivalences. In the defining characteristic we describe the relation between $p$-local subgroups and parabolic subgroups, then give classical consequences on simple modules and blocks, including the Alperin weight conjecture in that case. In the non-defining characteristics, we sketch a picture of the local methods pioneered by Fong-Srinivasan in the determination of blocks and their ordinary characters. This includes the relationship with Lusztig's twisted induction and the determination of defect groups. We conclude with a survey of the results and methods by Bonnaf\'e-Dat-Rouquier giving Morita equivalences between blocks that preserve defect groups and the local structures. 
\end{abstract}

\maketitle

%%\pagestyle{myheadings}
%%\markboth{for personal use only}{preliminary}
\tableofcontents

%%%%%%%%%%%%%%%%%%%%%%%%%%%%%%%%%%%%%%%%%%%%%%%%%%%%%%%%%%%%%%%%%%%%%%%%%

\centerline{\sc Introduction}

This survey aims at presenting in an almost self contained fashion some key results in the representation theory of finite quasi-simple groups that can be related to some global-local principle. For finite group theorists local information means information relating to normalizers of nilpotent subgroups. The typical situation is when given a finite group $G$ and a prime number $p$, one wants to guess information about $G$ from information of the same kind about subgroups $N$ normalizing a non normal $p$-subgroup of $G$. Those $N$ are sometimes called $p$-local subgroups. One has $N\lneq G$ so the process looks like somehow reducing the questions we might have about $G$ to questions about more tractable subgroups. This is particularly apparent in the classification of finite simple groups (CFSG, 1955--1980) where, at least in the earliest stages, $2$-local subgroups were systematically used to sort out simple groups by the structure of centralizers of involutions.

But what is the relevance of all that to representations, in particular of quasi-simple groups ? We try to give very concrete answers here. 

It is clear that in the years of the classification it was strongly believed that the $p$-local information on $G$ should determine many aspects of linear representations of $G$ in characteristic $p$. A short textbook by J.L. Alperin appeared in 1986 with the title ``Local representation theory" [Alper]. The main themes: Green's vertex theory, Brauer's morphism and defect groups, the case of cyclic defect and its consequence on the module category $B\text{-}\mo$ of the block $B$. On the other hand, the theme of ``simple groups and linear representations" was at that time recalling mainly the spectacular applications of character theory (both modular and ordinary) to CFSG, especially through Glauberman's Z*-theorem. This was exemplified by the influential survey [Da71] or the textbooks [Feit], [Nava].

Today the perspective has changed a little. CFSG and the wealth of knowledge on representations of finite groups of Lie type (see the survey [Ge17] of this volume) or symmetric groups make that representations of (quasi-)simple groups are becoming the main subject. The development of combinatorial representation theory and the recent interpretations in terms of categorification (see [ChRo08], [DuVV15], [DuVV17]), seem to hint at a situation where $p$-blocks of finite group algebras are classified regarding their module categories $\mo$ and other associated categories (derived $\tD^b$, homotopy $\HO$ or stable) {\it before} the relevant $p$-local information is known. The latter can even possibly be a consequence of the equivalence of blocks as rings (see Puig's theorem (\Th{Pu99} below) and its use in [BoDaRo17]). On the other hand, the notion of fusion systems and the topological questions or results it provides (see for instance [AschKeOl], [Craven]), have given a new perspective to the determination of local structure both for groups and blocks. 

We try here to sum up the relevance of local methods in representations of quasi-simple groups, essentially for groups of Lie type. 
 In the defining characteristic we describe the relation between $p$-local subgroups and parabolic subgroups, then we give classical consequences on simple modules and blocks, including the Alperin's weight conjecture. In the non-defining characteristics, we sketch a picture of the local methods pioneered by Fong-Srinivasan in the determination of blocks and their ordinary characters. On the method side one will find Brauer's three Main Theorems, Alperin-Brou\'e subpairs, both revolving around the Brauer morphism which will reappear also when discussing Rickard equivalences. On the side of results, we describe the relationship between blocks and Lusztig's twisted induction including the determination of defect groups. We also recall applications to Brauer's height zero conjecture (Kessar-Malle) and Brou\'e's abelian defect conjecture. In all cases we try to give many proofs at least for ``main cases" (leaving aside bad primes). We conclude with a survey of the results and methods of Bonnaf\'e-Dat-Rouquier ([BoRo03], [BoDaRo17]).
 
 The exposition follows a route prescribed by the groups we study. Abstract methods on blocks are only introduced when needed. The basics about $p$-local subgroups and fusion are in sections 1.C-D, $p$-blocks appear for the first time in 1.E with Brauer's first and third Main Theorems, Alperin's weight conjecture is recalled in 3.C, sections 5.A-D recall the general strategy to find the splitting of $\Irr (G)$ ($G$ a finite group) into blocks and the defect groups as an application of Brauer's second Main Theorem. Categorifications are evoked in 5.E, Rickard equivalences in 9.C.
 
This text grew out of the course and talks I gave in July and September 2016 during the program ``Local representation theory and simple groups" at CIB Lausanne. I heartily thank the organizers for giving me the opportunity to speak in those occasions and publish in this nice proceedings volume. 

\medskip\noindent{\bf On background and notation.}
We use freely the standard results and notation of basic module theory (see first chapter of [Benson]). For characters and block theory we refer to [NagaoTsu] and [AschKeOl, Ch. IV] but restate most theorems used with references. For categories and homological algebra, we refer to the first part of [Du17] whose notations we follow\index{$\tD^b$}\index{$\HO$}. For varieties, algebraic groups and finite groups of Lie type, our notations are the ones of [DigneMic] and [CaEn]. We borrow as much as possible from [Ge17] and [Du17], but since their algebraic group is denoted $G$ and $\Bbb G$ respectively, we felt free to stick to our good old $\GG$. I also thank Lucas Ruhstorfer for his careful reading, suggestions and references.

\bigskip

{ }

\bigskip

{ }

\bigskip

{ }
\bigskip

{ }

\centerline{\bf I. DEFINING CHARACTERISTIC}

%%%%%%%%%%%%%%%%%%%%%%%%%%%%%%%%%%%%%%%%%%%%%%%%%%%%%%%%%%%%%%%%

%%%%%%%%%%%%%%%%%%%%%%%%%%%%%%%%%%%%%%%%%%%%%%%%%%%%%%%%%%%%%%%%

{}
\bigskip

We first construct the finite groups $\GG^F$ that will be the main subject of this survey. Symmetric groups are also evoked in Sections 5.E-G and 10. The groups $\GF$ are commonly called finite groups of Lie type or finite reductive groups. In order to simplify the exposition we will not try to cover the Ree and Suzuki groups, nor speak of finite BN-pairs. We will even sometimes assume that $F$ induces no permutation of the roots (``untwisted groups") and refer to the bibliography for the original theorems in their full generality.

%\medskip

{}

\section{$p$-local subgroups and parabolic subgroups}

The groups and subgroups we will study are defined as follows (see [CaEn], [Carter2], [DigneMic], [MalleTe], [Sri], [Spr]).

Let $p$ be a prime and $\bbF\deq\ovF_p$ the algebraic closure of the field with $p$ elements.

Let $\GG$ be a connected algebraic group over $\bbF$.\index{$\GG$} We assume that it is defined over a finite subfield $\bbF_q$ ($q$ a power of $p$) thus singling out a Frobenius endomorphism $F\co\GG\to\GG$.\index{$F$} The group of fixed points $$\GF =\{g\in \GG\mid F(g)=g \}$$ is a finite group.

\begin{rem}\label{ReRe} Our way of defining things may be less concrete than saying that  $\GG$ is a subgroup of some $\GL_n(\bbF)$ ($n\geq 1$) defined by polynomial equations (on the matrix entries) with all coefficients in the finite subfield $\bbF_q$. This is indeed equivalent to the definition we gave, but the more intrinsic definition is generally preferred and also leads to a more compact notation. Subgroups of $\GG$ that are $F$-stable are also very important. 
\end{rem}

\begin{exm}\label{ExRe}
(a) The group $\GL_n(\bbF)$ is such a group $\GG$. It is defined over any finite subfield and the map $F\co\GG\to\GG$ raising matrix entries to the $q$-th power gives $\GF=\GL_n(\bbF_q)$. Note that any element of $\GL_n(\bbF)$ has finite order and that the Jordan decomposition $g=g_ug_{ss}$ of matrices coincides with the decomposition $g=g_pg_\pp$ into $p$-part and $p'$-part. This also defines a notion of {\bf unipotent/semi-simple} elements\index{unipotent elements}\index{semi-simple elements} and {\bf Jordan decomposition}\index{Jordan decomposition} inside any algebraic group $\GG$ over $\bbF$.  

(b) The group $U_n(\bbF)$ consisting of upper triangular unipotent matrices is clearly defined over $\bbF_p$ and is stable under $F$ defined in (a). Note that any element of $U_n(\bbF)$ has finite order a power of $p$.

(c) The group $D_n(\bbF)\cong (\Fm)^n$ consists of invertible diagonal matrices. Every element there has order prime to $p$. 

(d) Groups of type $\GF$ are rarely finite simple groups. For instance, $\SL_n(\bbF_q)$ is such a group with $\GG=\SL_n(\bbF)$ but in general it is not possible to find a connected group $\GG$ such that $\GG^F$ is isomorphic to $\PSL_n(\bbF_q)$. Even factoring out the center of $\SL_n(\bbF)$ would produce a $\PGL_n(\bbF)$ whose subgroup of fixed points under $F$ is $\cong \PGL_n(\bbF_q)$, a non-simple group! But realizing $\SL_n(\bbF_q)$, a perfect central extension of our simple group, is preferable for our representation theoretic purposes. Of course any representation of $\PSL_n(\bbF_q)$ identifies with a representation of $\SL_n(\bbF_q)$ trivial on its center.
\end{exm}

The unipotent radical $\RU(\HH)$\index{$\RU(\GG)$} of an algebraic group $\HH$ is the maximal connected normal unipotent subgroup of $\HH$ (see [Hum1, 19.5]).

The groups $\GG$ we study are assumed to be {\bf reductive}\index{reductive group}, i.e. $\RU(\GG)=\{1\}$. This implies essentially that $\Z(\GG)$ consists of semi-simple elements and $\GG_\ad\deq \GG/\Z(\GG)$ is a direct product of abstract simple groups [MalleTe, \S 8.4]. The factors in the direct product are in fact taken in a list obtained by the classification of simple algebraic groups, due to Chevalley and depending on root systems in the usual list.

We now introduce some subgroups of fundamental importance.

\subsection{Parabolic subgroups and Levi subgroups: reductive groups.}

Each group $\GG$ as above contains closed subgroups $\BB=\RU(\BB)\TT\geq\TT$ called a {\bf Borel subgroup} and a {\bf maximal torus}\index{$\BB$}\index{$\TT$}. Borel means connected solvable and maximal as such\index{Borel subgroup}. Torus means isomorphic to some $D_n(\bbF)$ ($n\geq 0$) as in Example~\ref{ExRe}.(c)\index{maximal torus}\index{torus}. Moreover the normalizer of $\TT$ is such that the Weyl group $$W_\GG(\TT)\deq \norm{\GG}{\TT}/\TT$$ is finite\index{Weyl group} and $$S\deq\{s\in W_\GG(\TT)\mid \BB\cup \BB s\BB\text{ is a subgroup }  \}$$ generates $W_\GG(\TT)$. When $w\in W_\GG(\TT)$ the expression $\BB w\BB$ means the set of products $b_1xb_2$ with $b_i\in\BB$ and $x\in w$ where the latter is a class mod $\TT$. Since $\TT\leq\BB$, $\BB w\BB$ is a single double coset with regard to $\BB$. The pair $(W_\GG(\TT),S)$ \index{$S$}satisfies the axioms of {\bf Coxeter systems} (see {[Hum2]}).

One has the {\bf Bruhat decomposition}\index{Bruhat decomposition} \begin{equation}\label{Bruhat}
\GG =\bigcup_{w\in W_\GG(\TT)}\BB w\BB \text{\ \ \ (a disjoint union).}
\end{equation} \def\sing#1{{\bf #1}\index{#1}}

One classically defines the \sing{root system} $\Phi(\GG,\TT)$\index{$\Phi(\GG,\TT)$} as a finite subset of the $\bbZ$-lattice $X(\TT )\deq \Hom (\TT ,\Fm)$\index{$X(\TT)$} (algebraic morphisms). It is stable under the action of $W_\GG(\TT)$. The actual definition of roots refers to the Lie algebra of $\GG$ and roots also define certain unipotent subgroups. One has a family of so-called \sing{root subgroups} $$(\XX_\al)_{\al\in\Phi(\GG,\TT)}$$ \index{$\XX_\al$}ranging over all minimal connected unipotent subgroups of $\GG$ normalized by $\TT$.

One has $^w\XX_\al =\XX_{w(\al)}$ for any $\al\in\PGT$, $w\in\WGT$.

 There is a basis of the root system $\Delta\inn\Phi(\GG,\TT)$ \index{$\Delta$}of cardinality the rank of $\Phi (\GG,\TT)$ in $\bbR\otimes _\bbZ X(\TT)$. One has $\Phi (\GG,\TT)=\Phi (\GG,\TT)^+\sqcup\Phi (\GG,\TT)^-$\index{$\Phi (\GG,\TT)^+$} where $\Phi (\GG,\TT)^+=\Phi (\GG,\TT)\cap\bbR^+\Delta$, $\Phi (\GG,\TT)^-=-\Phi (\GG,\TT)^+$.

One has $\XX_\al\leq \BB$ if and only if $\al\in\Phi (\GG,\TT)^+$ (which defines $\Phi (\GG,\TT)^+$ and therefore $\Delta$ from $\BB$).

When $\al\in\PGT^+$, $s\in S$, the condition $\XX_{-\al}\leq \BB\cup\BB s\BB$ implies $\al\in\Delta$. This establishes a bijection \begin{equation}\label{deltas}
\delta\co S\to \Delta\ \ \ \ s\mapsto \dd_s.
\end{equation}

The \sing{commutator formula} in a simplified version says the following for any linearly independent $\al,\beta\in\PGT$ 
\begin{equation}\label{ComFla}
[\XX_\al ,\XX_\beta]\leq\spann <\XX_{i\al+j\beta}\mid i,j\in\bbZ_{>0}\ ,\ i\al+j\beta\in\PGT >.\end{equation}

One calls \sing{parabolic subgroups} of $\GG$ the ones containing a conjugate of $\BB$. Denoting $W_I\deq\spann <I>\leq\WGT$ for $I\inn S$\index{$W_I$}\index{$\PP_I$}, the subgroups of $\GG$ containing $\BB$ are in bijection with subsets of $S$ by the map \begin{equation}\label{PI}I\mapsto \PP_I\deq \BB W_I\BB.\end{equation}

Note that $\PP_\emptyset =\BB$, $\PP_S=\GG$.

One has a semi-direct decomposition called the \sing{Levi decomposition}
\begin{equation}\label{Levi}\PP_I=\RU(\PP_I)\rtimes \LL_I\end{equation} where $\LL_I\deq\TT \spann <\XX_\al\mid\al\in \PGT\cap\bbR\delta(I)>$\index{$\LL_I$}\index{$\UU_I$}, a reductive group with same maximal torus as $\GG$, Borel subgroup $\BB\cap\LL_I$ and root system $\PGT\cap\bbR\delta(I)$.

One denotes $\UU_I\deq \RU(\PP_I)=\spann <\XX_\al\mid \al\in\PGT^+,\ \al\not\in \bbR\delta(I) >$.

\begin{exm}\label{GL}
	The case of $\GG =\GL_n(\bbF)$. Then $\BB=\TT\UU$ is the group of upper triangular matrices, $\UU =\RU(\BB)$ the group of upper unipotent matrices (see Example~\ref{ExRe}.(b)). It is not difficult to see that $\norm{\GG}{\TT} $ is the subgroup of monomial matrices (each row and column has a single non-zero entry) and $\norm{\GG}{\TT} /\TT$ identifies with the subgroup of permutation matrices $\cong\Sym n$, where $S$ corresponds to the set of transpositions of consecutive integers $\{s_1\deq (1,2), \dots ,s_{n-1}\deq(n-1,n)\}$. The roots $\PGT =\{\al_{(i,j)}\mid 1\leq i,j\leq n,\ i\not=j \}$ are defined as elements of $X(\TT)$ by \begin{equation}\label{GLal}\al_{(i,j)}\co\TT\to\Fm\ ,\  \diag (t_1,\dots ,t_n) \mapsto t_it_j^\mm .\end{equation} The elements of $\PGT^+$, resp. $\Delta$, are defined by the condition $i<j$, resp. $j=i+1$. 
	
	When $\al\in\PGT$ corresponds to $(i,j)$ then $\XX_\al$ is the subgroup of matrices $\id_n+\la E_{i,j}$ ($\la\in\bbF$) where $E_{i,j}$ is the elementary matrix with 1 as $(i,j)$ entry and 0 elsewhere. 
	
	If $I\inn S$, let us write $S\setminus I=\{s_{n_1}, s_{n_1+n_2}, \dots , s_{n_1+n_2+\cdots +n_{k-1} } \}$ with $n_1,n_2,\dots ,n_{k-1}\geq 1$ and define $n_k=n-(n_1+n_2+\cdots +n_{k-1})$. Then 
the elements of $\PP_I=\UU_I\LL_I$ decompose as
	$$ \begin{array}{c}
	n_1\  \\ n_2\  \\ \vdots \\ n_k\  
	\end{array}  \left( \begin{array}{cccc}
*	&\multicolumn{1}{|c}{*}&*&* \\ \cline{1-2}
0	&\multicolumn{1}{|c}{*}&\multicolumn{1}{|c}{*}&*  \\ \cline{2-2}
0	&0&\ddots & \\ \cline{4-4}
0	&0&0&\multicolumn{1}{|c}{*}	\end{array} \right)\ =\ \left( \begin{array}{cccc}
\id_{n_1}	&\multicolumn{1}{|c}{*}&*&* \\ \cline{1-2}
0	&\multicolumn{1}{|c}{\id_{n_2}}&\multicolumn{1}{|c}{*}&*  \\ \cline{2-2}
0	&0&\ddots & \\ \cline{4-4}
0	&0&0&\multicolumn{1}{|c}{\id_{n_k}}	\end{array} \right)
\left( \begin{array}{cccc}
*	&\multicolumn{1}{|c}{0}&0&0 \\ \cline{1-2}
0	&\multicolumn{1}{|c}{*}&\multicolumn{1}{|c}{0}&0  \\ \cline{2-2}
0	&0&\ddots & \\ \cline{4-4}
0	&0&0&\multicolumn{1}{|c}{*}	\end{array} \right)
$$

Note that $\LL_I\cong \GL_{n_1}(\bbF)\times \GL_{n_2}(\bbF)\times \cdots \times \GL_{n_k}(\bbF)$.
\end{exm}

\subsection{Parabolic subgroups and Levi subgroups: finite groups}

All the above can be taken $F$-stable: $F(\BB)=\BB$, $F(\TT)=\TT$. Then one denotes $B=\BB^F$, $T=\TT^F$, $N=\norm\GG\TT^F$ and $W=N/T=(W_\GG(\TT))^F$\index{$B$}\index{$T$}\index{$N$}\index{$\ov S$}. The later is generated by the set $$\ov S\deq\{w_\omega\mid \omega\in S/\spann <F>   \}\longleftrightarrow S/\spann <F> $$ where $\omega$ ranges over $F$-orbits in $S$ and if $I\inn S$, $w_I$ denotes the element of maximal $S$-length in $W_I$. From (\ref{Bruhat}) one gets a Bruhat decomposition  
\begin{equation}\label{fBru}
\text{  $    G=\bigcup_{w\in W}BwB  $\ ,\ \  a disjoint union. }
\end{equation} For $\ov J\inn \ov S$ corresponding to an $F$-stable subset $J\inn S$, the subgroups $\PP_J$, $\LL_J$ are $F$-stable, $P_{\ov J}\deq\PP^F_J= BW_J^FB$ and $L_{\ov J}\deq \LL^F_J$. Moreover $U_{\ov J}\deq \UU_J^F=\OP{P_{\ov J}}$. One has $$P_{\ov J}=U_{\ov J}\rtimes L_{\ov J}.$$ The roots are also acted upon by $F$ and the quotient set $\Phi (G,T)$ has properties similar to $\PGT$. Similar ideas allow to associate to them $p$-subgroups $(X_\al)_{\al\in \Phi (G,T)}$ that satisfy consequently an analogue of the commutator formula (\ref{ComFla}) seen above.

The relevance to simple groups starts with the following (see [MalleTe, 12.5]).

\begin{thm}\label{Tits} Assume $\ov S$ has no non-trivial partition into commuting subsets. Assume $G$ is perfect (i.e. $[G,G]=G$). Then $G/\Z(G)$ is simple.
\end{thm}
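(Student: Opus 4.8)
The plan is to exploit the Bruhat decomposition $G=\bigcup_{w\in W}BwB$ together with the Tits simplicity criterion, in the classical form due to Tits: if a group $H$ acts faithfully and primitively on a set $\Omega$, has an abelian normal subgroup $A$ in a point stabiliser whose conjugates generate $H$, and $H=[H,H]$, then $H$ is simple. Here the natural action is the action of $G$ on the coset space $G/B$. First I would verify that $B$ is its own normaliser in $G$ (so the action on $G/B$ has $B$ as point stabiliser) and that the action is primitive: the subgroups of $G$ containing $B$ are exactly the parabolic subgroups $P_{\ov J}$ indexed by subsets $\ov J\subseteq\ov S$, and the hypothesis that $\ov S$ admits no nontrivial partition into mutually commuting subsets is exactly what prevents $G$ from being a proper product of parabolics; this combinatorial condition is what forces maximality of each $P_{\ov J}$ with $\ov J$ a maximal proper subset and, more importantly, rules out intermediate ``block systems''. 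Concretely, primitivity of $G$ on $G/B$ is equivalent to $B$ being a maximal subgroup, or to the lattice of overgroups of $B$ having no nontrivial ``product'' decomposition, and the no-partition hypothesis on $\ov S$ delivers this.

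Next I would produce the abelian normal subgroup inside the point stabiliser: take $A=U=\RU(\BB)^F$, the unipotent radical of $B$, or rather a suitable piece of it. The point here is that $U$ is normal in $B$ and its $G$-conjugates generate $G$ — this follows from the Bruhat decomposition and the fact that the root subgroups $(X_\al)_{\al\in\Phi(G,T)}$ generate $G$, combined with the commutator relations (the analogue of (\ref{ComFla})) which are used to locate enough conjugates. Since $U$ itself need not be abelian, the standard trick is to pass to $\Z(U)$ or, in Tits's original argument, to work with the derived-series filtration and the commutator formula to show that the normal closure of any nontrivial normal subgroup of $G$ contained in $B$ must contain all root subgroups; equivalently one checks that $\bigcap_{g\in G}{}^gB$ is trivial (no nontrivial normal subgroup of $G$ lies in $B$) and that $G$ has no normal subgroup transverse to $B$. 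Feeding $A$ abelian, $A\trianglelefteq B$, $\langle{}^gA\mid g\in G\rangle=G$, primitivity, and $G=[G,G]$ into Tits's lemma yields that every nontrivial normal subgroup of $G$ acts transitively on $G/B$, hence equals $G$ modulo its kernel, and the kernel of the action on $G/B$ is $\bigcap_g {}^gB$. So modulo $\Z(G)$ (which sits inside every Borel, hence inside the kernel) the group is simple.

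**The main obstacle** will be the combinatorial/structural step linking the hypothesis on $\ov S$ to primitivity — i.e. showing rigorously that the absence of a nontrivial commuting partition of $\ov S$ forbids any intermediate normal structure, and in particular that $\bigcap_{g\in G}{}^gB = \Z(G)$. When $\ov S$ does decompose, $G$ is (up to centre) a direct product of smaller groups of Lie type, so a genuine argument is needed to see that the converse gives simplicity; this is where one invokes [MalleTe, 12.5] or reconstructs the Iwasawa/Tits argument via the $BN$-pair axioms. I would also need to double-check the hypothesis $G=[G,G]$ is being used (not automatic — e.g. small fields), but it is given. The remaining verifications — $\norm G B=B$, the conjugates of $U$ generating $G$, $\Z(G)\subseteq B$ — are routine consequences of the Bruhat decomposition, the Levi decomposition (\ref{Levi}), and the commutator formula already recalled, so I would not belabour them; I would cite [MalleTe, 12.5] for the full argument and merely indicate this skeleton.
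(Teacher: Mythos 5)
The paper itself gives no proof of this statement (it cites [MalleTe, 12.5]), so I can only assess your sketch on its own terms — and there is a genuine gap at the very first step, the primitivity claim.

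You want to apply Iwasawa's lemma to the action of $G$ on $\Omega = G/B$, and you assert that the hypothesis that $\ov S$ has no non-trivial partition into commuting subsets forces this action to be primitive. That is false. Primitivity of $G$ on $G/B$ is \emph{precisely} equivalent to $B$ being a maximal subgroup of $G$ — block systems on $G/B$ correspond bijectively to intermediate subgroups $B \le H \le G$, and no further refinement of this characterization is available. Whenever the Lie rank is $\ge 2$, each proper non-empty $\ov J \subsetneq \ov S$ gives a proper parabolic $B \subsetneq P_{\ov J} \subsetneq G$, so $G/B$ is imprimitive, \emph{regardless} of whether the Coxeter graph on $\ov S$ is connected. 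Connectedness of the diagram is an orthogonal condition to $B$ being maximal; conflating "no nontrivial product decomposition of the diagram" with "no intermediate subgroups" is the error. If you insist on an Iwasawa-style argument you must act instead on $G/P$ for $P = P_{\ov J}$ a \emph{maximal} parabolic (these \emph{are} primitive, since maximality of $P$ holds exactly when $|\ov S\setminus\ov J|=1$), and then take $A = \RU(\PP_J)^F$ — but now you face the second problem you flagged, namely that this unipotent radical is in general non-abelian, so you need the solvable-$A$ variant of Iwasawa's criterion, and you still must show that the conjugates of $A$ generate all of $G$.

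The standard proof — essentially what Tits does for $BN$-pairs and what is reproduced in [MalleTe, 12.5] — avoids primitivity altogether and uses the irreducibility of $(W,\ov S)$ at a different point. One shows first that $\Z(G) = \bigcap_{g\in G}{}^gB$. Given $K \trianglelefteq G$ with $K \not\le \Z(G)$, the set $KB$ is a subgroup containing $B$, hence a parabolic $P_{\ov J}$. The crucial lemma, which is where connectedness of the Coxeter graph enters, is that if $s\in \ov J$ and $t\notin\ov J$ then $s$ and $t$ commute (one proves this by a direct double-coset computation in the $BN$-pair); since $\ov J\ne\emptyset$ and the graph is connected, this forces $\ov J = \ov S$, i.e.\ $KB=G$. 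Then $G/K \cong B/(B\cap K)$ is solvable (since $B = U\rtimes T$ is), while also being perfect as a quotient of $G=[G,G]$, hence trivial; so $K=G$. This is the mechanism; the hypothesis on $\ov S$ is not doing the job you assign it. Your remaining verifications ($\norm GB=B$, $\Z(G)\le B$, conjugates of $U$ generating $G$) are correctly identified as routine.
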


Recall that a \sing{quasi-simple group} is a finite perfect group $H$ such that $H/\zent H$ is simple. A \sing{universal covering} of a simple group $V$ is a quasi-simple group $H$ of maximal order such that $H/\zent H\cong V$. 

The \sing{classification of finite simple groups (CFSG)} (see [GoLySo], [Asch, \S 47]) tells us that finite non-abelian simple groups are either \begin{enumerate}[$\bullet$]
	\item alternating groups $\Alt_n$ ($n\geq 5$),
	\item groups of Lie type $\GF/\zent{\GF}$ as above, \index{finite groups of Lie type}
	\item or among the 26 so-called {\bf sporadic groups}.
\end{enumerate}

 Remarkably enough, simple groups of Lie type have universal coverings that are of type $\GF$ (short of 17 exceptions, see [GoLySo, 6.1.3]).

When dealing with finite groups $\GF$, an important tool is \sing{Lang's theorem}. It tells us that if $\CC$ is a connected closed $F$-stable subgroup of $\GG$, then $x\mapsto x^\mm F(x)$ is surjective from $\CC$ to itself.

\subsection{$p$-local subgroups and simple groups of characteristic $p$ type}

The proof of the classification of finite simple groups makes crucial use of the notions of $2$-local subgroups and of simple groups of characteristic $2$ type, this last one to separate simple groups of even and odd characteristic. The notions have also been defined for any prime (see [Asch, Ch. 48]). 

We fix here a prime $p$.

\begin{defn}\label{pLoc} Let $H$ be a finite group. A $p$-local subgroup \index{local subgroup} of $H$ is any normalizer $\norm HQ$ where $1\not= Q\leq H$ is a non trivial $p$-subgroup of $H$.
\end{defn}

\begin{defn}\label{radp} Let $p$ be a prime and $H$ a finite group. A radical $p$-subgroup \index{radical subgroup} of $H$ is any $p$-subgroup $Q$ of $H$ such that $$Q=\OP{\norm HQ}.$$
\end{defn}

Note that a Sylow $p$-subgroup of $H$ is always $p$-radical.

\begin{exm}\label{UIs} Let $G=\GF$ as in the last section, let $I\inn \ov S$. Then $U_I$ defined in 1.B satisfies $\norm G{U_I}=P_I$ and $U_I=\OP{P_I}$. Both properties are a consequence of the commutator formula. This proves that the $U_I$'s are $p$-radical subgroups.
\end{exm}

\begin{pro}\label{Maxp} The maximal $p$-local subgroups of a finite group $H$ satisfying $\OP{H}=\{1\}$ are normalizers of radical $p$-subgroups.
\end{pro}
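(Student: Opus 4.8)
The plan is to show that any maximal $p$-local subgroup $M = \norm{H}{Q}$ (with $Q$ a non-trivial $p$-subgroup) is in fact the normalizer of a \emph{radical} $p$-subgroup, by passing from $Q$ to $\OP{M}$. First I would set $R \deq \OP{M} = \OP{\norm{H}{Q}}$. Since $Q$ is a normal $p$-subgroup of $M$ we have $Q \leq R$, and in particular $R \neq \{1\}$, so $\norm{H}{R}$ is again a $p$-local subgroup of $H$. Because $R$ is characteristic in $M = \norm{H}{Q}$, every element normalizing $Q$ normalizes $R$, giving $M = \norm{H}{Q} \leq \norm{H}{R}$. Maximality of $M$ among $p$-local subgroups then forces $M = \norm{H}{R}$ (note $\norm{H}{R} \neq H$: otherwise $R \leq \OP{H} = \{1\}$, contradicting $R \neq \{1\}$).

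It remains to check that $R$ is a radical $p$-subgroup, i.e. that $R = \OP{\norm{H}{R}}$. But we have just shown $\norm{H}{R} = M$, so $\OP{\norm{H}{R}} = \OP{M} = R$ by the very definition of $R$. Hence $M = \norm{H}{R}$ with $R$ radical, which is the assertion.

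The only point requiring a word of care — and the closest thing to an obstacle — is the standard fact that $\OP{M}$ is a characteristic subgroup of $M$ and that $\OP{\norm{H}{Q}}$ contains $Q$; both follow at once from the definition of $\OP{-}$ as the largest normal $p$-subgroup (a normal $p$-subgroup of $M$ is contained in it, and the largest such is invariant under $\Aut(M)$, in particular under conjugation inside $H$ by elements of $\norm{H}{M}$, but here we only need invariance under $M$ itself, which is automatic for a characteristic subgroup). One should also record that the hypothesis $\OP{H} = \{1\}$ is used precisely to guarantee $\norm{H}{R} \neq H$, so that the enlarged group is still a genuine $p$-local subgroup to which maximality applies.
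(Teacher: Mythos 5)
Your proof is correct and follows essentially the same route as the paper: pass from $Q$ to $R=\OP M$, use that $R$ is characteristic in $M$ (hence $M\leq\norm HR$) together with $\OP H=\{1\}$ (hence $\norm HR\neq H$) to conclude $M=\norm HR$ by maximality, and finally observe $\OP{\norm HR}=\OP M=R$ so that $R$ is radical. The paper phrases the same argument more tersely as a dichotomy, but the content is identical.
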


\begin{proof} For any subgroup $M\leq H$, we clearly have $\norm H{\OP M}\geq M$. Applying this to our maximal $p$-local subgroup $M$ we get that either $M=\norm H{\OP M}$ or $\OP M\lhd H$ and therefore $\OP M=\{1\}$. But the second case is impossible by the definition of $p$-local subgroups. 
\end{proof}

\begin{defn}\label{chap} Let $H$ be a finite {\bf simple} group and $p$ be a prime. Then $H$ is said to be of characteristic $p$ type\index{characteristic $p$ type}, if and only if 
	\begin{equation}\label{char}
	\text{  $   \cent X {\OP X}\leq \OP X $ }
	\end{equation} for any $p$-local subgroup $X$ of $H$. 
	
	This is equivalent to (\ref{char}) holding for any {\it maximal} $p$-local subgroup.
\end{defn}

The second statement in the above definition is a non trivial one. We refer to the proof of [Asch, 31.16], using among other things Thompson's A$\times$B lemma.

One will of course check here that our groups $\GF$ give rise to simple groups of characteristic $p$ type, see [Asch, 47.8.(3)]. 

We take $G=\GF$ as in Sect. 1.B above. Recall the subgroups $P_I=U_I\rtimes L_I$ for $I\inn \ov S$.

\begin{thm}[{[BoTi65]}]\label{BTits}  \begin{enumerate}[\rm(1)]
		\item The $p$-radical subgroups of $G$ are the $\{{}^g(U_I)\mid g\in G,\ I\inn \ov S  \}$ with $U_I=\OP {P_I}$, $P_I=\norm G{U_I}$.
		\item If $g\in G$ and $I,J\inn\ov S$ are such that $^g(U_I)=U_J$, then $I=J$ and $g\in P_I$.
		\item If $\ov S\supsetneq I$ and $G/\Z (G)$ is simple then $\cent G{U_I}\leq \Z (G) U_I$.
	\end{enumerate}
\end{thm}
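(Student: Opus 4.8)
The plan is to prove the three assertions in order, deducing each from the structure theory recalled above, in particular the Levi decomposition, the commutator formula, and the Bruhat decomposition.

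\medskip\noindent\textbf{Strategy for (1) and (2).} First I would note that Example~\ref{UIs} already gives that each $U_I$ satisfies $U_I=\OP{P_I}$ and $\norm G{U_I}=P_I$, so the same holds for every conjugate $\tw g(U_I)$; hence all the listed subgroups are indeed $p$-radical. For the converse in (1), let $Q$ be a $p$-radical subgroup of $G$ and set $M\deq\norm GQ$, so $Q=\OP M$. Since $Q$ is a $p$-subgroup it lies in some Sylow $p$-subgroup, which by the Bruhat decomposition (\ref{fBru}) we may take to be $U\deq U_\emptyset=\RU(\BB)^F$; after conjugating we may assume $Q\le U\le B$. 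The point is then to show that $M$ is contained in a proper parabolic $P_I$ unless $Q=U_\emptyset$, and to identify $Q$ with $\OP{P_I}=U_I$. Here I would invoke the Borel--Tits theorem in the form already available in the algebraic group (cf.\ the discussion after Theorem~\ref{Tits} and [MalleTe, \S12]): the normalizer in $\GG$ of the unipotent subgroup generated by $Q$ is contained in a proper $F$-stable parabolic $\PP_I$ whenever $Q\ne\{1\}$, and one passes to fixed points to get $M\le P_I$. Then $Q=\OP M\ge\OP{P_I}=U_I$ is forced once one checks $M$ itself is $p$-radical and uses that $\OP{P_I}=U_I$ together with $Q\lhd M\le P_I=\norm G{U_I}$; a short argument with the commutator formula, showing any $p$-subgroup of $P_I$ normal in $P_I$ is contained in $U_I$, pins down $Q=U_I$. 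For (2), if $\tw g(U_I)=U_J$ then conjugating by $g$ sends $\norm G{U_I}=P_I$ to $\norm G{U_J}=P_J$, so $\tw gP_I=P_J$; since the $P_I$ for $I\inn\ov S$ form a set of representatives of conjugacy classes of parabolics, with $P_I$ conjugate to $P_J$ only if $I=J$ and then $\norm G{P_I}=P_I$, we get $I=J$ and $g\in P_I$.

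\medskip\noindent\textbf{Strategy for (3).} Now assume $\ov S\supsetneq I$ and $G/\Z(G)$ simple, and let $x\in\cent G{U_I}$. I want $x\in\Z(G)U_I$. Working inside $P_I=U_I\rtimes L_I$: since $x$ centralizes $U_I$, in particular $x$ normalizes it, but $x$ need not a priori lie in $P_I$. So the first step is to confine $x$ to $P_I$. For this I would use the commutator formula (\ref{ComFla}): write $x$ via the Bruhat decomposition and use that conjugation by $x$ fixes every root element $X_\al$ with $\al\in\Phi^+\setminus\bbR\delta(I)$; these roots, since $I\ne\ov S$, include roots moved outside $\Phi^+$ by any nontrivial Weyl component, which forces the Bruhat component of $x$ to lie in $W_I$, hence $x\in P_I=U_IL_I$. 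Write $x=u\ell$ with $u\in U_I$, $\ell\in L_I$; replacing $x$ by $\ell$ (legitimate since $u$ already centralizes $U_I$ as $U_I$ is... wait, $U_I$ need not be abelian) — more carefully, $x$ centralizes $U_I$ and $x\in U_IL_I$, and since $U_I\lhd P_I$ one computes that $\ell=u^{-1}x$ centralizes $U_I$ as well modulo a correction, so it suffices to show $\cent{L_I}{U_I}\le\Z(G)$. Now $L_I$ acts on $U_I$, and on the abelianization $U_I/[U_I,U_I]$ this action is, root-subgroup by root-subgroup, given by the $T$-weights together with the $L_I$-root action; the kernel of the $L_I$-action on $U_I$ is therefore contained in the subgroup of $L_I$ acting trivially on all $X_\al$, $\al\in\Phi^+\setminus\bbR\delta(I)$, which by the linear independence of enough such roots (using $I\subsetneq\ov S$) forces $\ell$ to act trivially on $T$, i.e.\ $\ell\in\cent{\GG}{\TT}^F=T$ (as $\TT$ is a maximal torus in a reductive group, $\cent\GG\TT=\TT$), and then trivially on all of $\Phi$, so $\ell\in\zent\GG^F$. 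Finally $\zent\GG^F\le\Z(G)$ because $G/\Z(G)$ is simple forces $\Z(G)$ to be exactly the part of $\zent\GG^F$ one expects; alternatively $\zent\GG\le\TT$ centralizes $\GG$ hence $\zent\GG^F\le\zent G\le\Z(G)$. Unwinding, $x=u\ell\in U_I\zent G=\Z(G)U_I$, as desired.

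\medskip\noindent\textbf{Main obstacle.} The routine part is checking that the listed $U_I$ are $p$-radical and the conjugacy bookkeeping in (2). The genuinely delicate step is confining an element that merely \emph{centralizes} $U_I$ to the parabolic $P_I$ in part (3): centralizing $U_I$ gives normalization, and one needs the Borel--Tits-type argument via the commutator formula to conclude membership in $P_I$ rather than just in $\norm G{U_I}$, which here happen to coincide but only after one has proved exactly the statement being proved. I would handle this by first establishing $\norm G{U_I}=P_I$ independently (this is the content of Example~\ref{UIs} and of part (1)), so that in (3) the hypothesis $x\in\cent G{U_I}\le\norm G{U_I}=P_I$ is immediate, and then the whole argument of (3) takes place inside $P_I=U_I\rtimes L_I$ with only the $L_I$-action computation remaining, the one real calculation being that $\cent{L_I}{U_I}$ is central in $\GG$, which follows from the faithfulness of the $L_I$-action on $U_I/[U_I,U_I]$ modulo the center when $I\subsetneq\ov S$.
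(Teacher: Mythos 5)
Your overall route is the right one — Borel--Tits in the algebraic group for (1), conjugacy bookkeeping for (2), a centralizer computation inside $P_I=U_I\rtimes L_I$ for (3) — but parts (1) and (3) each contain a step that does not go through as written.

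In (1), after Borel--Tits gives $Q\leq U_I$ and $M\deq\norm GQ\leq P_I$, you write ``$Q=\OP M\geq \OP{P_I}=U_I$''; this inequality needs $\OP{P_I}\leq M$, i.e.\ that $U_I$ normalizes $Q$, which is precisely what you are trying to prove, so the argument is circular. Likewise, the observation ``any normal $p$-subgroup of $P_I$ is in $U_I$'' cannot be applied to $Q$, because $Q$ is normal in $M$, not in $P_I$. The step that actually closes the proof (and is what the paper uses, in the guise of $\norm{\RU(\PP(V))^F}{V}\lhd\norm GV$) is a normalizer-growth argument: $\norm{U_I}{Q}$ is a $p$-group normalized by $M$ (since $M$ normalizes both $U_I$ and $Q$), hence $\norm{U_I}{Q}\leq\OP M=Q$, so $Q=\norm{U_I}{Q}$, and a proper subgroup of the $p$-group $U_I$ is properly contained in its $U_I$-normalizer, forcing $Q=U_I$. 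Your part (2) is fine and is in fact cleaner than the paper's: instead of the explicit Bruhat/Weyl computation (which the paper carries out only under the simplifying assumption that $F$ acts trivially on the Weyl group), you invoke the standard BN-pair facts that the $P_I$ are self-normalizing and pairwise non-conjugate for distinct $I$; this is a perfectly good alternative.

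In (3), your reduction is on the right track — from $x\in\cent G{U_I}\leq\norm G{U_I}=P_I$ write $x=u\ell$ and deduce that $\ell\in L_I$ acts on $U_I$ by the inner automorphism $\ad_{u^{-1}}$ — but the subsequent deduction is broken. What you actually need is that any $\ell\in L_I$ acting on $U_I$ as an \emph{inner} automorphism (not only those centralizing $U_I$) lies in $\Z(\GG)$, and your reduction ``so it suffices to show $\cent{L_I}{U_I}\leq\Z(G)$'' quietly replaces one statement with the other. More seriously, the chain ``$\ell$ acts trivially on the $X_\al$ $\Rightarrow$ $\ell$ acts trivially on $T$ $\Rightarrow$ $\ell\in T$'' does not hold: centralizing the root subgroups does not imply centralizing the torus, and there is no reason at that stage for $\ell$ to lie in $T$. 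The plausible statement you are reaching for (the kernel of $L_I$ on $U_I/[U_I,U_I]$ is central) needs a genuine argument about which characters of $T$ occur, and this is exactly where care is required. The paper avoids the issue entirely: it first shows $\cent G{U_I}\leq B$, then for $I=\emptyset$ exploits the trick that $C_s=\cent{L_s}{U_s}$ equals $\cent TU$ for every $s$, hence $\cent TU$ is normalized by $N$ and centralized by $B$, so $\cent G{\cent TU}$ is a parabolic subgroup normalized by $N$, hence normal, hence all of $G$, forcing $\cent TU\leq\Z(G)$; for general $I$ it defers to [Asch, 47.5.3]. That argument sidesteps the lattice-spanning question your approach runs into, and is worth absorbing.
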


\begin{cor}\label{charp} If $G/\Z(G) $ is simple then it has characteristic $p$ type.
\end{cor}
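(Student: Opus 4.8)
The plan is to reduce the statement to Theorem~\ref{BTits}(3) by passing to the $p'$-quotient $H\deq G/\Z(G)$, with quotient map $\pi\co G\to H$. Since $H$ is non-abelian simple, $\OP H=\{1\}$, so by the second assertion of Definition~\ref{chap} it is enough to show $\cent X{\OP X}\leq\OP X$ for every maximal $p$-local subgroup $X$ of $H$. By Proposition~\ref{Maxp} such an $X$ equals $\norm H{\bar Q}$ with $\bar Q\deq\OP X$ a radical $p$-subgroup of $H$, and $\bar Q\neq\{1\}$ (a $p$-local subgroup is proper in $H$ because $\OP H=\{1\}$, whereas $\norm H{\{1\}}=H$). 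Since an element centralizing $\bar Q$ also normalizes it, $\cent X{\OP X}=\cent X{\bar Q}=\cent H{\bar Q}$, so the goal becomes $\cent H{\bar Q}\leq\bar Q$. I would also record at the outset that $\Z(G)$ is a $p'$-group: one has $\Z(\GG^F)=\Z(\GG)^F$ (because $\GG^F$ is Zariski-dense in the connected group $\GG$), and $\Z(\GG)$ consists of semi-simple elements.

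Next I would lift $\bar Q$ to $G$. Let $Q$ be a Sylow $p$-subgroup of $\pi^{-1}(\bar Q)$. Since $\Z(G)$ is a central $p'$-subgroup, $\pi^{-1}(\bar Q)=\Z(G)\times Q$; in particular $Q$ is the unique Sylow $p$-subgroup of $\pi^{-1}(\bar Q)$ and $\pi$ restricts to an isomorphism of $Q$ onto $\bar Q$. Three facts then follow. First, $\norm G Q=\pi^{-1}(\norm H{\bar Q})=\pi^{-1}(X)$, because normalizing $\pi^{-1}(\bar Q)$ is the same as normalizing its unique Sylow $p$-subgroup $Q$. Second, $Q$ is a radical $p$-subgroup of $G$: $\pi\big(\OP{\norm G Q}\big)$ is a normal $p$-subgroup of $X$, hence is contained in $\bar Q$, so $\OP{\norm G Q}\leq\pi^{-1}(\bar Q)$ is a $p$-group and thus $\OP{\norm G Q}\leq Q$, while $Q$ being normal in $\norm G Q$ gives the reverse inclusion. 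Third, $\cent G Q=\pi^{-1}(\cent H{\bar Q})$: if $g\in\pi^{-1}(\cent H{\bar Q})$ then $g\in\pi^{-1}(X)=\norm G Q$ by the first fact, so $[g,q]\in Q$ for all $q\in Q$, while $\pi([g,q])=1$; hence $[g,q]\in Q\cap\Z(G)=\{1\}$ and $g$ centralizes $Q$ (the reverse inclusion is obvious). Since $\Z(G)=\ker\pi$ lies in $\cent G Q$, the third fact gives $\cent H{\bar Q}=\pi(\cent G Q)$.

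Now I would apply Theorem~\ref{BTits}(1) to the radical $p$-subgroup $Q$ of $G$: there exist $g\in G$ and $I\inn\ov S$ with $Q={}^{g}U_I$. Because $U_I\cong Q\cong\bar Q\neq\{1\}$ while $U_{\ov S}=\RU(\GG)^F=\{1\}$, we must have $\ov S\supsetneq I$; this is precisely where the hypothesis that $G/\Z(G)$ is simple is used, namely through Theorem~\ref{BTits}(3), which gives $\cent G{U_I}\leq\Z(G)U_I$. Conjugating by $g$ and using that $\Z(G)$ is central, $\cent G Q\leq\Z(G)Q$, so together with the third fact above
\[
\cent H{\bar Q}=\pi(\cent G Q)\leq\pi(\Z(G)Q)=\pi(Q)=\bar Q,
\]
which is $\cent X{\OP X}\leq\OP X$. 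Hence $G/\Z(G)$ is of characteristic $p$ type.

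The only substantive ingredient is Theorem~\ref{BTits}(3); everything else is routine bookkeeping around the quotient $\pi$. The two small points I would be most careful about --- and which constitute whatever obstacle there is --- are the two preliminary observations that legitimise that bookkeeping: that $\Z(G)$ is a $p'$-group, so that $p$-subgroups and their normalizers and centralizers transfer faithfully between $G$ and $G/\Z(G)$; and that $\bar Q\neq\{1\}$ excludes the degenerate case $I=\ov S$, so that Theorem~\ref{BTits}(3) genuinely applies.
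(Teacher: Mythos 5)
Your derivation is correct and is the one the paper leaves implicit: combine Theorem~\ref{BTits}(3) with Proposition~\ref{Maxp} and the reduction to maximal $p$-local subgroups noted in Definition~\ref{chap}, then transfer $p$-radical subgroups and their normalizers and centralizers across the central $p'$-quotient $\pi\colon G\to G/\Z(G)$, all of which you do carefully. The one misstep is the stated reason that $\Z(G)$ is a $p'$-group: $\GG^F$ is a finite group and therefore is \emph{not} Zariski-dense in the positive-dimensional connected group $\GG$, so the equality $\Z(\GG^F)=\Z(\GG)^F$ does not follow from density (and is not in fact what you need). The correct, elementary reason appears later in the paper (proof of Theorem~\ref{StM}): a central $p$-element of $G$ would lie in both Sylow $p$-subgroups $U$ and $U_-={}^{w_S}U$, but $U\cap U_-=\{1\}$; with that substitution everything else in your argument goes through unchanged.
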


We finish this subsection by giving some ideas in the proof of 
 Theorem~\ref{BTits}.
First the theorem has an equivalent in $\GG$ as follows (Platonov 1966, see [Hum1, 30.3]).

\begin{lem}\label{Plato} In $\GG$, if $V$ is a closed subgroup of $\UU$, then the sequence $\VV_0=V$, $\VV_{i+1}\deq \VV_i \RU(\norm \GG{\VV_i})$ is an ascending sequence stabilizing at some $\RU(\PP(V))$ where $\PP(V)$ is a parabolic subgroup of $\GG$. 
\end{lem}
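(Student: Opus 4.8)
The plan is to exploit the structure theory of parabolic subgroups already laid out in Sections~1.A--1.B, together with Lang's theorem and standard facts about unipotent groups over $\bbF$. First I would establish that the sequence $(\VV_i)$ is well defined as a sequence of closed subgroups of $\UU$: each $\VV_i$ is unipotent, $\norm\GG{\VV_i}$ is closed, its unipotent radical $\RU(\norm\GG{\VV_i})$ is a closed connected unipotent normal subgroup, and the subgroup generated by $\VV_i$ and $\RU(\norm\GG{\VV_i})$ is again unipotent because both lie in a common unipotent subgroup (one checks that $\VV_i$ normalizes $\RU(\norm\GG{\VV_i})$, so the product set is a subgroup; unipotence of the product follows since a subgroup generated by two normalized-by-each-other unipotent subgroups is unipotent). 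The dimensions of the (closures of the) $\VV_i$ are nondecreasing and bounded by $\dim\UU$, so the sequence of closures stabilizes; one then argues the groups themselves stabilize, since once $\dim\VV_{i+1}=\dim\VV_i$ and $\VV_i$ is contained in the connected group $\VV_{i+1}^\circ$ of the same dimension, a short argument with $\RU$ forces equality from some point on. Call the stable value $\VV_\infty$; by construction $\RU(\norm\GG{\VV_\infty})\leq\VV_\infty$, i.e. $\VV_\infty$ contains the unipotent radical of its own normalizer.

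The heart of the matter is then to show that a closed connected unipotent subgroup $W\leq\UU$ with $\RU(\norm\GG W)\leq W$ is automatically of the form $\RU(\PP)$ for a parabolic $\PP$. Here I would bring in Borel--Tits theory directly: the standard Borel--Tits theorem states that for any nontrivial closed unipotent subgroup $W$ of a reductive group $\GG$ there is a parabolic $\PP(W)$ with $W\leq\RU(\PP(W))$ and $\norm\GG W\leq\PP(W)$; moreover one may take $\PP(W)$ so that $\norm\GG W$ contains a Levi factor, whence $\RU(\PP(W))\leq\norm\GG W$ is false in general but $\RU(\PP(W))\leq \RU(\norm\GG{W})\cdot(\text{something})$ — the precise statement to use is that $\RU(\PP(W))=\RU(\norm\GG W)$ when $W$ already contains $\RU$ of its normalizer. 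Concretely: set $\PP\deq\PP(\VV_\infty)$; then $\VV_\infty\leq\RU(\PP)$, and since $\PP$ normalizes $\RU(\PP)\geq\VV_\infty$ we do not directly get $\PP\leq\norm\GG{\VV_\infty}$, so instead I would run the stabilization argument inside $\PP$: replace $\GG$ by a Levi $\LL$ of $\PP$ complementary to $\RU(\PP)$ and observe $\VV_\infty\cap\LL$ must be trivial by minimality/maximality of $\PP(\VV_\infty)$, forcing $\VV_\infty=\RU(\PP)$. Checking that conjugacy of parabolics and Lang's theorem let one pin down $\PP(V)$ canonically (up to $\norm\GG{V}$-conjugacy, but in fact uniquely as the unique parabolic with $\RU(\PP(V))=\VV_\infty$) completes the argument.

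The main obstacle I anticipate is the last step: proving $\VV_\infty$ is exactly $\RU(\PP)$ rather than a proper subgroup of it. The fixed-point property $\RU(\norm\GG{\VV_\infty})\leq\VV_\infty$ says the process cannot be pushed further, but translating this into "$\VV_\infty$ is the full unipotent radical of a parabolic" requires the non-formal input of Borel--Tits, namely that a closed unipotent subgroup not equal to $\RU$ of any parabolic always has extra unipotent elements normalizing it coming from $\RU(\PP(W))$, which would contradict stabilization. I would isolate this as the key claim and prove it by induction on $\dim\GG$, passing to the Levi $\LL_{\PP(W)}$ and using that $W\cap\RU(\PP(W))$-complement behaves well; the base case is $\GG$ a torus, where $\UU=\{1\}$ and there is nothing to prove. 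The routine verifications (closedness, that products of mutually normalizing unipotent subgroups are unipotent, dimension bookkeeping) I would state without detailed computation, citing [Hum1, \S30] and [MalleTe, \S12] for the Borel--Tits package.
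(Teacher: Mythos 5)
The paper does not actually prove this lemma; it attributes it to Platonov (1966) and cites [Hum1, 30.3], then uses it as input to the sketch of Theorem~\ref{BTits}. So the relevant comparison is against the proof in Humphreys, and there your proposal has a fundamental logical problem.

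Your plan invokes the Borel--Tits theorem (``for any nontrivial closed unipotent subgroup $W$ there is a parabolic $\PP(W)$ with $W\leq\RU(\PP(W))$ and $\norm\GG W\leq\PP(W)$'') as the ``non-formal input'' at the key step. But in every standard reference --- including [Hum1, \S 30] and [MalleTe, \S 12], which you cite --- that Borel--Tits theorem is \emph{deduced} from Platonov's lemma, not the other way around; the normalizer-stabilization construction is precisely the mechanism that produces the parabolic $\PP(W)$ in the first place. The paper follows the same logical order: Lemma~\ref{Plato} is stated first and then used to derive the Borel--Tits consequences in Theorem~\ref{BTits}. Using Borel--Tits to prove Platonov is therefore circular unless you supply an independent proof of Borel--Tits (say via Kempf--Rousseau--Hesselink optimal parabolics), which you do not indicate and which would be far heavier than proving the lemma directly. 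What Humphreys' argument actually does is show directly, once the sequence stabilizes at $\VV_\infty$, that $N\deq\norm\GG{\VV_\infty}$ contains a Borel subgroup (hence is parabolic) and that $\RU(N)=\VV_\infty$; no appeal to an already-known Borel--Tits statement is made.

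Even setting aside the circularity, the closing step does not work as written. You claim that ``$\VV_\infty\cap\LL$ must be trivial by minimality/maximality of $\PP(\VV_\infty)$, forcing $\VV_\infty=\RU(\PP)$.'' But $\VV_\infty\cap\LL=\{1\}$ is automatic from the containment $\VV_\infty\leq\RU(\PP)$ (a Levi complement meets the unipotent radical trivially), and it in no way forces $\VV_\infty$ to equal $\RU(\PP)$: a proper subgroup of $\RU(\PP)$ also meets $\LL$ trivially. What is needed is an argument showing that the stabilization property $\RU(\norm\GG{\VV_\infty})\leq\VV_\infty$, combined with $\norm\GG{\VV_\infty}\leq\PP$, gives $\RU(\PP)\leq\VV_\infty$; this is exactly the content that the direct proof has to supply, and it is missing here. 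Finally, a smaller point: your stabilization argument is phrased in terms of dimensions, but since $V$ need not be connected you also need to observe that the component group cannot grow --- this does hold because each step multiplies by the connected group $\RU(\norm\GG{\VV_i})$, so the component group of $\VV_{i+1}$ is a quotient of that of $\VV_i$ --- and it is worth making that explicit.
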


Note that if $V$ is $F$-stable then all $\VV_i$'s and therefore $\PP(V)$ itself are $F$-stable. Once written as $^g\PP_I$ for $g\in \GG$ and $I\inn S$, using $F$-stability one gets $^{g^\mm F(g)}\PP_{F(I)}=\PP_I   $. By the argument we are going to use for (2) of the Theorem, this implies $F(I)=I$ and $g^\mm F(g)\in\PP_I$. Lang's theorem then allows to assume that $g=g'h$ where $g'\in G$ and $h\in \PP_I$, so that $\PP(V)={}^{g'}\PP_I$ with $F(I)=I$ and $g'\in G$.

Assume moreover $V$ $p$-radical in $G$. The inclusions $V\leq \RU(\PP(V))$ and $\norm \GG V\leq \PP(V)$ imply $\norm{\RU(\PP(V))^F}V\lhd\norm GV$. But $\RU(\PP(V))^F$ is a  $p$-subgroup of $G$, so $p$-radicality of $V$ implies $\norm{\RU(\PP(V))^F}V =V$. But $V\leq {\RU(\PP(V))^F}$ is an inclusion of $p$-groups so we must have indeed $V = {\RU(\PP(V))^F}$. Using the above this gives $V={}^{g'}U_{\ov I}$, hence the claim (1).

For the claim (2), writing $g\in BwB$ thanks to Bruhat decomposition (\ref{fBru}) and using that $B$ normalizes both $U_I$ and $U_J$, one finds that $^wU_I=U_J$. Assume for simplicity that $F$ acts trivially on $W(\GG,\TT)$ and $S$. Our equality implies on roots that $$w(\PGT^+\setminus \PGT^+_I)\inn \PGT^+ .$$ But a basic property of Weyl groups acting on roots tells us that any element of $W(\GG,\TT)$ decomposes as $w=w'v$ where $v\in \spann <I>$ and $w'(\PGT^+_I)\inn\PGT^+$. But then $w'(\PGT^+)=\PGT^+$, therefore $w'=1$, $w=v\in \spann <I>$ and $g\in P_I$.

(3) Using (\ref{fBru}) again and arguing on roots it is easy to show that $\cent G{U_I}\leq B$. We then check that under our assumptions, $\cent B{U_I}\leq \Z (G)$. We show it for $I=\emptyset$ and refer to [Asch, 47.5.3] for the general case. Given the semi-direct product structure $B=U\rtimes T$ with $U$ the Sylow $p$-subgroup of $B$, it is not difficult to see that our claim about $\cent BU$ reduces to the inclusion $\cent TU\leq \Z(G)$. For $s\in \ov S$, let $C_s=\cent{L_s}{U_s}$. It is normalized by $X_s$, $U_s$ (hence $U$), but also by $s$ and we have seen $C_s\leq B$. So $$C_s\leq L_s\cap B\cap B^s=L_s\cap TU_s=T.$$ So $C_s=\cent T{U_s}$ normalizes  $U$, hence centralizes it since $U\cap C_s=\{1\}$. So $C_s=\cent TU$. We deduce that $\cent TU$ is normalized by any $s\in S$ and by $T$, hence by $N$. On the other hand $B=TU\leq \cent G{\cent TU}$, so the latter is a parabolic subgroup normalized by $N$, hence normal in $G$. By our hypothesis, it has to equal $G$, hence the inclusion $\cent TU\leq \Z(G)$.

\qed

\subsection{Consequences on fusion}

The problem of $p$-fusion in finite groups is as follows. Let $Q$ be a Sylow $p$-subgroup of a finite group $H$. One wants to know when subsets of $Q$ can be $H$-conjugate. 

More generally one defines the ``fusion system" \index{fusion system}$\cF_Q(H)$ as follows

\begin{defn}[{[AschKeOl, I.1.1]}]\label{FusCat} For $Q$ a Sylow $p$-subgroup of $H$, the fusion system $\cF_Q(H)$ has objects the subgroups of $Q$ and if $Q_1,Q_2\leq Q$, one defines $$\Hom_{\cF_Q(H)}(Q_1,Q_2)\inn\Hom(Q_1,Q_2),$$ the former consisting of maps \begin{eqnarray*}
		\ad_{h,Q_1,Q_2}\ \colon Q_1&\to&Q_2  \\
		x&\mapsto&hxh^\mm \end{eqnarray*}  for $h\in H$ with $^hQ_1\leq Q_2$.
\end{defn}

A theorem by Alperin (1967) first showed that this category is generated by certain elementary operations, see [Asch, 38.1].

A \sing{tame intersection} of Sylow $p$-subgroups of $H$ is a $p$-subgroup of type $Q_1\cap Q_2$ with $Q_1,Q_2$ both Sylow $p$-subgroups of $H$ and $\norm{Q_1}{Q_1\cap Q_2}$, $\norm{Q_2}{Q_1\cap Q_2}$ both Sylow $p$-subgroups of $\norm{H}{Q_1\cap Q_2}$. 
\begin{thm}[{[Al67]}]\label{AlpFus} Let $h\in H$ and $A\inn Q$ such that $A^h\inn Q$. Then there exist Sylow $p$-subgroups $Q_1,\dots,Q_n$ and elements $h_i\in \norm H{Q\cap Q_i}$ for $i=1,\dots ,n-1$ such that 
	\begin{enumerate}[\rm(i)]
		\item $h=h_1\dots h_{n-1}$,
		\item for any $i=1,\dots ,n$, $Q\cap Q_i$ is a tame intersection, and
		\item $A\inn Q\cap Q_1$, $A^{h_1} \inn Q\cap Q_2$, $\dots$ , $A^{h_1\dots h_{n-1}} \inn Q\cap Q_n$.
	\end{enumerate}
\end{thm}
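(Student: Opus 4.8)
The plan is to reformulate the statement as saying that the normalisers $\norm HD$, where $D$ ranges over the tame intersections of Sylow $p$-subgroups of $H$ with $Q$ (together with $\norm HQ$ itself), form a \emph{conjugation family}: every instance of $p$-fusion inside $Q$ is a composite of conjugations carried out inside such normalisers. I would prove this by induction on the index $[Q:A]$. The base case $A=Q$ is immediate, since then $Q^h=A^h\le Q$ forces $Q^h=Q$, i.e. $h\in\norm HQ$; one takes $n=2$, $Q_1=Q_2=Q$, $h_1=h$, and notes that $Q=Q\cap Q$ is a tame intersection because $\norm QQ=Q$ is a Sylow $p$-subgroup of $\norm HQ$.

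For the inductive step assume $A\lneq Q$. The crucial elementary fact is that a proper subgroup of a $p$-group is properly contained in its normaliser, so $A\lneq\norm QA$ and $A^h\lneq\norm Q{A^h}$; thus the $p$-local subgroups $\norm HA$ and $\norm H{A^h}$ genuinely "see more" of $Q$ than $A$ does, and this is what drives the induction. First one reduces to conjugates of $A$ that are \emph{fully normalised in $Q$}, i.e. with $\norm QA\in\mathrm{Syl}_p(\norm HA)$: among all $H$-conjugates of $A$ lying in $Q$ choose one, $A_0$, maximising $|\norm Q{A_0}|$, extend $\norm Q{A_0}$ to a Sylow $p$-subgroup of $\norm H{A_0}$, conjugate that back into $Q$, and invoke maximality to see it coincides with $\norm Q{A_0}$. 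Given now the fusion of $A$ into $Q$ by $h$, one transports $A$ step by step towards such a maximal conjugate: since $\norm QA$ and $(\norm Q{A^h})^{h^{-1}}$ are $p$-subgroups of $\norm HA$, Sylow's theorem inside $\norm HA$ yields an element $h_1$ that realises part of the required conjugation while moving us to a conjugate of $A$ whose normaliser in $Q$ is strictly larger; each such $h_1$ lies in $\norm H{D_1}$ for $D_1=Q\cap Q_1$ a tame intersection containing $A$ (one chooses $Q_1$ to pass through the appropriate Sylow subgroup of $\norm H{D_1}$), and the secondary parameter $[Q:\norm QA]$ strictly decreases. After finitely many such moves, performed symmetrically at the $A$ and $A^h$ ends, the residual fusion is absorbed into the last normaliser, and concatenation produces $h=h_1\cdots h_{n-1}$ together with the telescoping chain $A\le D_1$, $A^{h_1}\le D_2$, \dots, $A^{h_1\cdots h_{n-1}}\le D_n$, which is (i)--(iii).

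The part I expect to be the main obstacle is keeping the three pieces of data simultaneously compatible: the factorisation of $h$, the telescoping chain of inclusions $A^{h_1\cdots h_{i-1}}\le D_i$, and — above all — the tameness of every $D_i=Q\cap Q_i$. Tameness is not automatic: it requires that \emph{both} $\norm Q{D_i}$ and $\norm{Q_i}{D_i}$ be Sylow in $\norm H{D_i}$, which is precisely why the Sylow subgroups $Q_i$ must be chosen to run through carefully selected Sylow subgroups of the intervening local subgroups, and why the "fully normalised" reduction has to be made on both sides. Organising this so that a genuine numerical parameter (say the pair $([Q:A],[Q:\norm QA])$, ordered lexicographically) strictly decreases at each stage is the delicate bookkeeping at the heart of Alperin's original argument; once it is arranged, the theorem follows by assembling the one-step moves.
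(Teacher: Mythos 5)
The paper states this result as a classical fact and does not give a proof (it cites Alperin's 1967 paper and refers to [Asch, 38.1]), so I compare your sketch against the standard argument, whose ingredients you have correctly assembled. There is, however, one specific gap that is not just bookkeeping: the induction is pitched at the wrong object. You propose inducting on $[Q:A]$ for an \emph{arbitrary} $A\le Q$ with $A^h\le Q$, and the correcting element your step produces lies in $\norm HA$. But the theorem requires each $h_i$ to lie in $\norm H{Q\cap Q_i}$ for a tame intersection $Q\cap Q_i\supseteq A$; a subgroup $A$ of $Q$ is not, in general, an intersection of two Sylow $p$-subgroups, and an element of $\norm HA$ need not normalize the larger group $Q\cap Q_i$. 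The phrase "one chooses $Q_1$ to pass through the appropriate Sylow subgroup of $\norm H{D_1}$" is circular, since $D_1=Q\cap Q_1$ is defined by the choice of $Q_1$ — this is exactly where the argument must be tightened, and the lexicographic pair $([Q:A],[Q:\norm QA])$ does not by itself resolve it.

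The missing idea — the decisive move in Alperin's proof — is to replace $A$ at the very outset by $D:=Q\cap Q^{h^{-1}}$. One has $A\le D$ (since $A\le Q$ and $A^h\le Q$ gives $A\le Q^{h^{-1}}$) and $D^h=Q\cap Q^h\le Q$, so proving the conclusion for the pair $(D,h)$ immediately yields it for $(A,h)$. Now the object being transported is an honest Sylow intersection; the induction runs downward on $|Q\cap R|$ as $R$ ranges over Sylow $p$-subgroups of $H$, with trivial base case $R=Q$; the fully-normalized adjustment and Sylow's theorem take place in $\norm HD$; and the tameness of the intermediate intersections $Q\cap Q_i$ can actually be arranged, because those intersections contain $\norm QD$ and are built from Sylow $p$-subgroups of $\norm HD$. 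With this replacement inserted, your outline becomes essentially Alperin's argument; without it, the factors you produce do not visibly land in normalizers of tame intersections, which is precisely the assertion you need.
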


This can be summed up as saying that normalizers of tame intersections $Q\cap Q'$ ($Q'$ another Sylow $p$-subgroup of $H$) generate $\cF_Q(H)$.

\begin{rem}\label{Tame}
	A tame intersection $Q_1\cap Q_2$ of Sylow subgroups is a $p$-radical subgroup (see Definition~\ref{radp}). Indeed $\OP {\norm H{Q_1\cap Q_2}}$ is included in both $\norm{Q_1}{Q_1\cap Q_2}$ and $\norm{Q_2}{Q_1\cap Q_2}$ by the tame intersection hypothesis, so included in $Q_1\cap Q_2$.
	In the case of groups $G=\GF$ it means that they are $G$-conjugates of subgroups $U_I$ ($I\inn \ov S$) thanks to \Th{BTits}.
\end{rem}

Alperin's theorem has been strengthened by Goldschmidt so as to find a minimal family of normalizers of so-called {\it essential } $p$-subgroups \index{essential $p$-subgroups} (see [AschKeOl, I.3.2])  which generates $\cF_Q(H)$. In the case of 
groups $G=\GF$, it gives the following [Puig76, Appendix I]. Recall that $U\deq\UU^F$ is a Sylow $p$-subgroup of $G$.

\begin{thm}[{}]\label{Pu76} 
The fusion system $\cF_{U}(G)$ is generated by minimal parabolic subgroups $P_{\{s\}}=B\cup BsB$ for $s$ ranging over $\ov S$.
\end{thm}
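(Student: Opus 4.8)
The plan is to deduce Theorem~\ref{Pu76} from Alperin's fusion theorem (\Th{AlpFus}), as strengthened via Goldschmidt–Puig to essential subgroups, together with the description of $p$-radical subgroups of $G=\GF$ given in \Th{BTits}. The point is that a generating set for $\cF_U(G)$ is provided by the automorphism groups $\Aut_G(R)=\norm GR/\cent GR$ for $R$ running over (tame intersections, hence) $p$-radical subgroups of $G$, and by \Th{BTits}(1) together with Remark~\ref{Tame} these $R$ are $G$-conjugate to the $U_I$ with $I\inn \ov S$. So it suffices to show that, for each $I\inn\ov S$, the maps $\ad_{h,Q_1,Q_2}$ coming from $h\in\norm G{U_I}=P_I$ already lie in the subcategory of $\cF_U(G)$ generated by the minimal parabolics $P_{\{s\}}$, $s\in\ov S$. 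Equivalently, writing $N_G(U_I)=P_I=U_I\rtimes L_I$ and noting $U_I$ acts trivially on the relevant Brauer-type quotients, it is enough to generate the contribution of $L_I$ inside the fusion system by the $P_{\{s\}}$.

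**First I would** make the reduction precise: by \Th{AlpFus} every morphism in $\cF_U(G)$ factors as a composite of restrictions of conjugation maps $\ad_{h_i}$ with $h_i\in\norm G{U\cap Q_i}$, and each $U\cap Q_i$ is a tame intersection of Sylow $p$-subgroups, hence a $p$-radical subgroup of $G$ by Remark~\ref{Tame}, hence (replacing it by a $U$-conjugate, which changes nothing in $\cF_U(G)$) of the form $U_I$ for some $I\inn\ov S$ with $U_I\leq U$. So $\cF_U(G)$ is generated by the local subgroups $N_G(U_I)=P_I$, $I\inn\ov S$. **Next** I would argue that each $P_I$ is already "seen" by the minimal parabolics: the Levi subgroup $L_I\cong \LL_I^F$ is itself a finite reductive group with Weyl group $W_I^F$, root subgroups the $X_\al$ for $\al\in\Phi(L_I,T)$, and — by the $F$-stable version of the Coxeter/BN-pair structure recalled in Section~1.B — $L_I$ is generated by $T$ together with the rank-one subgroups $\langle X_{\delta_s},X_{-\delta_s}\rangle$ for $s$ in the subset $I\inn\ov S$. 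Consequently $P_I=U_I\rtimes L_I$ is generated by $U$ (which contains $U_I$ and all $X_{\delta_s}$), by $T$, and by the $X_{-\delta_s}$ with $s\in I$; and each such generator lies in the minimal parabolic $P_{\{s\}}=B\cup BsB\supseteq\langle X_{\delta_s},X_{-\delta_s},T\rangle$ for the appropriate $s$, while $U\leq B\leq P_{\{s\}}$ trivially. Hence every element of $P_I$ is a product of elements each lying in some $P_{\{s\}}$ ($s\in\ov S$), so the conjugation maps it induces on subgroups of $U$ decompose accordingly inside the category $\cF_U(G)$.

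**The main obstacle** I anticipate is bookkeeping the difference between "generated as an abstract group" and "generated as a fusion system": it is not enough to factor $h\in P_I$ as $h=g_1\cdots g_r$ with $g_j\in P_{\{s_j\}}$, because the conjugation map $\ad_{h}$ restricted to a given $A\inn U$ need only factor through $\cF_U(G)$ if each intermediate image $A^{g_1\cdots g_j}$ again lies in $U$. Fixing this requires the Sylow-intersection apparatus of \Th{AlpFus} applied inside $P_I$ (or inside $L_I$) rather than naively, i.e. one must run Alperin's argument one level down, in the group $P_I$ with its own Sylow $p$-subgroup $U$, using that the minimal parabolics of $P_I$ are exactly the $P_{\{s\}}$ with $s\in I$ together with the "degenerate" pieces contained in $B$. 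So the clean logical skeleton is: (i) reduce $\cF_U(G)$ to the $N_G(U_I)$ via Alperin–Goldschmidt and \Th{BTits}; (ii) inside each $N_G(U_I)=P_I$ reduce again to the minimal parabolics of $P_I$, using that $L_I$ has a split BN-pair of rank $|I|$ whose rank-one parabolics are the $P_{\{s\}}\cap P_I$; (iii) observe these are among the $P_{\{s\}}$ of $G$. Step (ii) — verifying that the rank-one parabolics genuinely generate the fusion system of a split BN-pair, with the tame-intersection conditions met — is where the real work sits, and I would lean on the split BN-pair commutator relations (\ref{ComFla}) and Example~\ref{UIs} to check that the relevant intersections $U\cap{}^gU$ are $P_I$-conjugate to subgroups $U_J$, $J\inn I$, so that the induction terminates.
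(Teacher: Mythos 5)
Your step (i) — reducing via \Th{AlpFus}, Remark~\ref{Tame} and \Th{BTits}(1)--(2) to the conjugation maps coming from the $\norm G{U_I}=P_I$, $I\inn\ov S$ — is sound. The problem is step (ii), which you yourself flag as ``where the real work sits'' and then leave undone. As you correctly note, factoring $h\in P_I$ as a product $g_1\cdots g_r$ with $g_j\in P_{\{s_j\}}$ does not by itself decompose $\ad_h$ inside the category $\cF_U(G)$, because nothing controls the intermediate images $A^{g_1\cdots g_j}$. Your proposed fix — run Alperin's theorem again inside $P_I$ — does not obviously make progress: the $p$-radical subgroups of $P_I$ are the $U_J$ for $J\inn I$ \emph{including} $J=I$ (indeed $U_I=\OP{P_I}$, so $\norm{P_I}{U_I}=P_I$), and nothing in the statement of \Th{AlpFus} forces a decomposition to avoid the tautological term $\norm{P_I}{U_I}=P_I$. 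An honest induction on $|I|$ is therefore not set up, and as written the argument is circular at precisely this step.

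The route the paper points to (Goldschmidt's refinement of \Th{AlpFus} via essential subgroups, mentioned just before \Th{Pu76}, and which you allude to in your opening sentence but then abandon) removes the need for any induction: one shows the $\cF$-essential subgroups of $\cF_U(G)$ are exactly the $U_{\{s\}}$, $s\in\ov S$. All the $U_I$ are $\cF$-centric by \Th{BTits}(3), and $\Out_G(U_I)=\norm G{U_I}/U_I\cent G{U_I}\cong L_I/\Z(G)$, again from \Th{BTits}(3) which gives $U_I\cent G{U_I}=U_I\Z(G)$. For $|I|\geq 2$ this group carries a split BN-pair of rank $\geq 2$ in characteristic $p$ and hence has \emph{no} strongly $p$-embedded subgroup: a candidate $M$ containing a Sylow $p$-subgroup would have to contain the normalizer of every nontrivial $p$-subgroup of $M$, in particular the images of all minimal parabolics of $L_I$, and these generate $L_I/\Z(G)$. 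For $|I|=1$ the image of $B\cap L_{\{s\}}$ is strongly $p$-embedded; for $I=\emptyset$ the group $T/\Z(G)$ is a $p'$-group. The Alperin--Goldschmidt theorem [AschKeOl, I.3.5] then says $\cF_U(G)$ is generated by $\Aut_G(U)$ together with the $\Aut_G(U_{\{s\}})$, i.e.\ by $\norm GU=B$ and the $\norm G{U_{\{s\}}}=P_{\{s\}}$; since $B\leq P_{\{s\}}$, the minimal parabolics $P_{\{s\}}$, $s\in\ov S$, suffice. This single structural fact about strongly $p$-embedded subgroups of split BN-pairs is exactly what Goldschmidt's refinement is designed to exploit, and it replaces your problematic two-level reduction.
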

\subsection{Consequences on $p$-blocks}

We show that the condition of being of characteristic $p$ type has strong consequences on the $p$-blocks of our simple group.

\medskip

\noindent {\bf Blocks and the Brauer morphism.} Let us recall what are $p$-blocks \index{block}of a finite group $H$. 

We keep $\bbF$ the algebraic closure of $\bbF_p$ and consider the group algebra $\FH$. As for any finite dimensional algebra over a field, one has a maximal decomposition \begin{equation}\label{Blo}\FH\cong B_0\times B_1\times \cdots \times B_\nu\end{equation} as a direct product of $\bbF$-algebras. The corresponding two-sided ideals $B_i$ of $\FH$ are called the {\bf $p$-blocks} of $H$, one denotes $\Bl(H)=\{B_0, B_1,\dots ,B_\nu\}$. The unit element $b_i$ of each $B_i$ is a primitive idempotent of the center $\Z(\FH)$ and one has a bijection between $\Bl(H)$ and the primitive idempotents of $\Z(\FH)$ since any such idempotent $b$ defines the block $\FH b$. Any $\FH$-module $M$ decomposes as $M=\oplus_iB_iM$ as $FH$-module. So each indecomposable module has only one block acting non-trivially on it. This induces a partition $\IBr (H)=\sqcup_{i=0}^\nu \IBr (B_i)$ of the isomorphism classes of simple $\FH$-modules. The \sing{principal block} $B_0(H)$ is by definition the one corresponding to the trivial $\FH$-module\index{trivial module}, i.e. the line $\bbF$ with $H$ acting as identity, often denoted as $\bbF$ or $1$.

When $Q$ is a $p$-subgroup of $H$, the {\bf Brauer morphism }\index{Brauer morphism}\begin{align}\label{Brmo}
	\Br_Q\co\Z(\FH)&\to\Z(\bbF\cent HQ)\\
	\sum_{h\in H}\la_hh&\mapsto \sum_{h\in \cent HQ}\la_hh
\end{align} is a morphism of commutative algebras. The {\bf defect groups} \index{defect group}of a given block $B_i$ are the $p$-subgroups $Q$ of $H$ maximal for the property that $\Br_Q(b_i)\not= 0$. For a given $B_i$ they form a single $H$-conjugacy class. For $D\leq H$ a given $p$-subgroup of $H$, one denotes $\Bl(H\mid D)$ \index{$\Bl(H\mid D)$} the subset of $\Bl(H)$ consisting of blocks admitting $D$ as defect group.

The {\bf principal block} has defect group any Sylow $p$-subgroup of $H$. 

A block $B_i$ has defect group $\{1\}$ if and only if $B_i$ is a semi-simple algebra (in fact simple with $|\IBr(B_i)|=1$), this is called {\bf a block of defect zero}\index{defect zero} (defect was first defined as a numerical invariant corresponding to the exponent $d$ such that $|D|=p^d$).

Brauer's first and third so-called Main Theorems are as follows\index{Brauer's first Main Theorem}\index{Brauer's third Main Theorem}. One keeps $H$ a finite group.

\begin{thm}\label{BrThs} Let $Q$ be a $p$-subgroup of $H$.
	
	(i) The Brauer morphism $\Br_Q$ induces bijections $$\Bl(H\mid Q)\longleftrightarrow \Bl(\norm HQ\mid Q)\longleftrightarrow \Bl(Q\cent HQ\mid Q)/\norm HQ\text{-}\conj .$$
	
	(ii) Through the above, the principal blocks of $H$, $\norm HQ$ and $\cent HQ$ correspond.
	
\end{thm}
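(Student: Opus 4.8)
Write $N=\norm HQ$, $C=\cent HQ$ and $L=Q\,C$, noting that $Q\lhd L\lhd N$ and $Q\le\zent L$, so that $Q$ is an honest subgroup of $L$ and the set $\Bl(L\mid Q)$ makes sense. The plan is to handle the left-hand bijection $\Bl(H\mid Q)\leftrightarrow\Bl(N\mid Q)$ (Brauer's First Main Theorem proper) and the right-hand one $\Bl(N\mid Q)\leftrightarrow\Bl(L\mid Q)/N\text{-}\conj$ separately, and then to follow the principal block through $\Br_Q$ to obtain (ii).

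For the First Main Theorem I would first note that $\Br_Q$ refines to an $\bbF$-algebra homomorphism $\Br_Q\co\Z(\bbF H)\to\Z(\bbF N)$: by construction $\Br_Q(z)\in\bbF C\subseteq\bbF N$, and since $z$ is invariant under $H$-conjugation and $\Br_Q$ is $N$-equivariant, $\Br_Q(z)$ is fixed by $N$-conjugation, hence lies in $\Z(\bbF N)$. Composing with the central character $\lambda_{B'}\co\Z(\bbF N)\to\bbF$ of a block $B'$ of $N$ gives an algebra homomorphism $\lambda_{B'}\circ\Br_Q$, hence the central character $\lambda_B$ of a unique block $B=B'^{H}$ of $H$; this defines the Brauer correspondence, and the goal is to prove it restricts to a bijection $\Bl(N\mid Q)\xrightarrow{\ \sim\ }\Bl(H\mid Q)$. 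Every needed ingredient --- that $\Br_Q(b_B)=\sum_{B'^{H}=B}b_{B'}$, that this is nonzero exactly when $Q$ is $H$-subconjugate to a defect group of $B$, that it is a single term of defect group $Q$ precisely when $Q$ is itself a defect group of $B$, and conversely that $B'^{H}$ has defect group $Q$ whenever $B'$ does --- I would read off from the relative-trace description of defect groups ($b_B\in\operatorname{Tr}^H_D((\bbF H)^D)$ for $D$ a defect group, while $b_B\notin\operatorname{Tr}^H_{D'}((\bbF H)^{D'})$ for proper $D'<D$), together with the vanishing $\Br_Q\circ\operatorname{Tr}^H_{D'}=0$ unless $Q$ is $H$-subconjugate to $D'$, applied on both the $H$- and the $N$-side.

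For the right-hand bijection I would run Clifford theory of blocks along $L\lhd N$. Since $Q\le\zent L$, the augmentation $\bbF L\twoheadrightarrow\bbF(L/Q)$ has nilpotent kernel, so blocks of $\bbF L$ biject with blocks of $\bbF(C/(C\cap Q))$, those of defect group $Q$ matching the blocks of defect zero there. Fong--Reynolds then identifies the blocks of $N$ lying over a given $N$-orbit of blocks of $L$ with the blocks of the corresponding inertial subgroup lying over one of them and sharing its defect group; because $Q\lhd N$ sits inside the defect group of every block of every intermediate subgroup, a short defect count shows that, among blocks of $N$ of defect group $Q$, this second datum is forced, leaving exactly the $N$-orbit. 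This yields the stated bijection, which visibly sends the principal block of $N$ to that of $L$. For (ii), the principal block idempotent $b_0=b_{B_0(H)}$ is characterised by $\lambda_{B_0(H)}(\widehat{\mathcal K})=|\mathcal K|\cdot 1_{\bbF}$ on conjugacy-class sums; since $|\mathcal K\cap C|\equiv|\mathcal K|\pmod{p}$ (count the fixed points of $Q$ acting on $\mathcal K$), one gets $\lambda_{B_0(N)}\circ\Br_Q=\lambda_{B_0(H)}$, i.e.\ $B_0(N)^{H}=B_0(H)$, and likewise with $C$ in place of $N$; that $B_0(N)$ is the only correspondent of $B_0(H)$, so that $\Br_Q(b_0)$ is exactly the principal block idempotent of $\bbF N$, is the additional content of Brauer's third Main Theorem, which I would quote (e.g.\ from [NagaoTsu]) rather than reprove.

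The step I expect to be the genuine obstacle is the relative-trace bookkeeping of the second paragraph: showing cleanly that $\Br_Q(b_B)$ is a nonzero idempotent precisely when $Q$ is subconjugate to a defect group of $B$, and a single primitive idempotent of defect group $Q$ exactly when $Q$ is a defect group, together with the converse control on defect groups of $B'^{H}$. This is the only genuinely non-formal input; the refinement of $\Br_Q$ to $\Z(\bbF N)$, the Clifford-theoretic descent, and the principal-block computation are then routine, modulo quoting Brauer's third Main Theorem. Once Alperin--Brou\'e Brauer pairs are available a cleaner route presents itself --- both $B$ and its correspondent are singled out as the unique blocks of $H$, resp.\ $N$, admitting a prescribed maximal Brauer pair $(Q,e_Q)$ with $e_Q\in\Bl(C)$ --- but that theory rests on the same relative-trace computations.
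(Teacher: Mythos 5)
The paper states \Th{BrThs} as a recall of Brauer's classical first and third Main Theorems, with no proof (it refers to [NagaoTsu] and [AschKeOl]); so there is no paper argument to measure your sketch against, and it has to be judged on its own terms. The overall plan --- central characters and relative traces for the left-hand arrow, Clifford theory along $L=Q\cent HQ\lhd\norm HQ$ for the right-hand arrow, and a fixed-point count plus a quotation of the Third Main Theorem for (ii) --- is the standard one, and the paragraph on (ii) is clean and correct. One small but genuine slip: $Q\le\zent L$ is false unless $Q$ is abelian, since an element $q\in Q$ centralizes $\cent HQ$ but centralizes $Q$ only when $q\in\zent Q$, so that $Q\cap\zent L=\zent Q$. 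This is harmless for what you draw from it --- the nilpotence of $\ker\bigl(\bbF L\twoheadrightarrow\bbF(L/Q)\bigr)$ already follows from $Q\lhd L$ being a normal $p$-subgroup, which is all you actually use.

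The real gap is the ``short defect count'' that is supposed to force the Fong--Reynolds datum. Fong--Reynolds matches blocks of $\norm HQ$ over the $\norm HQ$-orbit of $b\in\Bl(L\mid Q)$ with blocks of the inertia group $T=\Stab_{\norm HQ}(b)$ over $b$, preserving defect groups; but nothing in ``$Q$ lies inside every defect group'' tells you that there is a \emph{unique} block of $T$ over $b$ with defect group $Q$. Indeed the right-hand arrow, read literally as a bijection onto all $\norm HQ$-orbits of $\Bl(Q\cent HQ\mid Q)$, fails in small examples: take $H=\Sym 4$, $p=2$, $Q=V_4=\OP{\Sym 4}$, so $\norm HQ=H$ and $L=\cent HQ=V_4$. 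Then $\bbF\Sym 4$ has a single $2$-block, of defect group a Sylow $2$-subgroup of order $8$, so $\Bl(\Sym4\mid V_4)=\emptyset$; whereas $\Bl(V_4\mid V_4)$ is a singleton, $\norm HQ$-stable, giving a one-element right-hand side. What is actually true, and what your argument needs to isolate and prove, is that when $b$ arises as a root of a defect-$Q$ block of $H$ the inertia quotient $T/Q\cent HQ$ is a $p'$-group --- a nontrivial lemma, and in the $\Sym 4$ example the quotient is $\Sym 3$, which is exactly why the lone $b$ does not arise. In the subpair language developed later in the paper (\Th{subp}, Proposition~\ref{maxic}), the precise form of the second correspondence is with $\norm HQ$-classes of \emph{maximal} subpairs $(Q,b_Q)$, i.e.\ centric ones with $\norm H{Q,b_Q}/Q\cent HQ$ a $p'$-group; the $p'$-condition is precisely what your defect count silently drops. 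So the Clifford-theoretic route is right in spirit, but it needs the root/maximal-subpair refinement and the $p'$-quotient lemma spelled out, and as a side effect you would then also notice that the paper's streamlined statement of the second arrow is really a bijection onto this smaller set.
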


\medskip

\noindent {\bf Blocks in the defining characteristic.} Let us return to our finite reductive groups $G=\GF$, or more generally simple groups of characteristic $p$ type (see 1.C).

\begin{pro}[Dagger-Humphreys, see {[Hum3, \S 8.5]}]\label{blochp}
	Assume $H$ is a finite simple group of characteristic $p$ type. Then the non principal $p$-blocks of $H$ all have defect $\{1\}$.
\end{pro}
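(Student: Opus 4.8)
The plan is to exploit Brauer's First Main Theorem (\Th{BrThs}) to reduce a statement about defect groups of blocks of $H$ to a statement about $p$-local subgroups, where the characteristic $p$ type hypothesis applies. Let $B$ be a non-principal block of $H$ with defect group $D$, and suppose for contradiction that $D\neq\{1\}$. First I would invoke \Th{BrThs}(i): $\Br_D$ induces a bijection $\Bl(H\mid D)\longleftrightarrow\Bl(\norm HD\mid D)$, and moreover the block $b$ of $\norm HD$ corresponding to $B$ has $D$ as a defect group with $D\lhd\norm HD$. Since $D\neq\{1\}$, the subgroup $X\deq\norm HD$ is a $p$-local subgroup of $H$, so the characteristic $p$ type hypothesis gives $\cent X{\OP X}\leq\OP X$.

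The key point is that $D\lhd X$ forces $D\leq\OP X$, and hence $\OP X$ is a normal $p$-subgroup of $X$ containing the defect group $D$ of $b$. Here I would use the standard fact that a normal $p$-subgroup of a finite group is contained in every defect group of every block of that group (equivalently, $\OP X$ acts trivially on every simple $\bbF X$-module, so lies in the kernel of the action on any block; more precisely $\OP X\leq D$ for the defect group $D$ of any block, by a Brauer-morphism computation or by [NagaoTsu]). Combined with $D\leq\OP X$, this yields $D=\OP X$.

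Now I bring in $\cent X{\OP X}=\cent XD$. On one hand, $\cent XD\leq\OP X=D$ by the characteristic $p$ type inequality. On the other hand, a defect group $D$ of the block $b$ of the group $X$ in which $D$ is normal satisfies $D\leq\cent XD\cdot D=\cent XD\cdot\OP X$; more usefully, there is the classical inequality that for a block with normal defect group $D$ one has $\zent D\leq D\cap\cent XD$ trivially, but what we really need is that $\cent XD\geq\zent D$ together with the structure forcing $\cent XD$ to control the block. I would instead argue directly: since $D=\OP X$ and $\cent XD\leq D$, we get $\cent XD=\zent D$. A block with defect group $D$ satisfying $\cent XD\leq D$ (so $\cent XD=\zent D$) and $D\lhd X$ is, by Brauer's First Main Theorem applied again inside $X$ and the theory of blocks with central defect — or more simply because $D\cent XD=D$ so $\Bl(D\cent XD\mid D)$ has a unique element, the principal block of $D$ — forced to correspond to the principal block; hence $b$ is principal, and therefore $B=B_0(H)$ by \Th{BrThs}(ii), contradicting the choice of $B$.

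The main obstacle is the middle step: pinning down exactly why $D\lhd X$ together with $\cent XD\leq D$ forces the corresponding block $b$ of $X$ to be principal. The cleanest route is \Th{BrThs}(i)–(ii) applied with $H$ replaced by $X$ and $Q=D$: since $D\lhd X$ we have $\norm XD=X$, and $\Bl(X\mid D)\longleftrightarrow\Bl(D\cent XD\mid D)/X\text{-}\conj$; but $\cent XD\leq D$ gives $D\cent XD=D$, whose only block is the principal one, of defect $D$; so $\Bl(X\mid D)$ has exactly one member, and by part (ii) it is the principal block $B_0(X)$. Then $B$ corresponds to $B_0(X)=B_0(\norm HD)$ under the First Main Theorem bijection, so by part (ii) again $B=B_0(H)$ — contradiction. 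Hence $D=\{1\}$, as claimed. \qed
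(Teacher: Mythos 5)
Your proof is correct and follows essentially the same route as the paper's: pass to $X=\norm HD$, use the characteristic $p$ type inequality $\cent X{\OP X}\leq\OP X$ to conclude $D\cent HD$ is a $p$-group, note a $p$-group has a unique block, and invoke Brauer's first and third Main Theorems to conclude $B$ is principal. The one genuine difference is a matter of rigor rather than route: the paper simply asserts "we clearly have $D\cent HD\leq\OP X\cent X{\OP X}$," whereas you make explicit the underlying fact that $\OP X\leq D$ (a normal $p$-subgroup lies in every defect group, [NagaoTsu, 5.2.8]), hence $D=\OP X$ and $\cent HD=\cent X{\OP X}\leq\OP X=D$; this is exactly the justification the paper leaves to the reader, so your version is the more complete one.
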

 \begin{proof}
 	Let $D$ be a defect group $\not=\{1\}$ of a $p$-block $B$ of $H$. By (i) of the above theorem, $\Bl(D\cent HD\mid D)\not=\emptyset$. The condition that $H$ has characteristic $p$ type implies that $\cent X{\OP{X}}\leq\OP{X}$ for $X=\norm HD$. But we clearly have $$D\cent HD\leq \OP X\cent X{\OP X},$$ so $D\cent HD$ is a $p$-group. A $p$-group has only one simple module over $\bbF$ (see [Benson, 3.14.1]), hence only one $p$-block, the principal block. So by (ii) of the above theorem, $B$ is the principal block of $H$.
 	\end{proof}

We will see later that blocks of defect $\{1\}$ actually exist in this case, corresponding to the so-called Steinberg module, see \Th{StM}.

%%%%%%%%%%%%%%%%%%%%%%%%%%%%%%%%%%%%%%%%%%%%%%%%%%%%%%%%%%%%%%%%

%%%%%%%%%%%%%%%%%%%%%%%%%%%%%%%%%%%%%%%%%%%%%%%%%%%%%%%%%%%%%%%%

{}
\bigskip

{}

\section{Yokonuma-Hecke algebras in characteristic $p$}

Iwahori-Hecke algebras are algebras similar to the group algebras of Coxeter groups $(W,S)$, only the quadratic relations $s^2=1$ ($s\in S$) have been there replaced by an equation $$s^2=(q-1)s+q$$ where $q$ is a scalar. See [GePf00]. This models the endomorphism algebras of induced representations $\Ind^G_B1$ for $G=\GF$ as before with Weyl group $W$ and $q$ the order of root subgroups $X_\al$.
  
\sing{Yokonuma-Hecke algebras} are a bit larger and serve as model for the endomorphism algebra of induced representations $\Ind^G_U1$.

\subsection{Self-injective endomorphism algebras, a theorem of Green}

We first present a general theorem of J.A. Green about certain $A$-modules where $A$ is a finite dimensional $\bbF$-algebra. Green's theorem shows that if $Y$ is an $A$-module such that $\End_A(Y)$ is self-injective then $\End_A(Y)$-modules give a lot of information on $A$-modules, in particular the simple submodules of $Y$. This will be applied to $A=\bbF G$ ($\bbF$ and $G$ as in Sect. 1.B), $Y=\Ind ^G_U\bbF$, so that $\End_{\bbF G}(\Ind ^G_U\bbF)$ is a Yokonuma-Hecke algebra in characteristic $p$.

In the following $Y$ is a finitely generated left module over the finite dimensional $\bbF$-algebra $A$, $E\deq \End_A(Y)^\op$ and one considers the functor $H_Y$\index{$H_Y$} sending an $A$-module $M$ to the $E$-module $H_Y(M)\deq \Hom_A(Y,M)$, $E$ acting through composition by elements of $\End_A(Y)$ on the right. Note that $H_Y(Y)=E$ the regular left module.

\begin{thm}\label{Green}
	Assume \begin{enumerate}[\rm(1)]
		\item there is a linear map $\la\co E\to\bbF$ such that for any $x\in E$, $x\not=0$, one has $\la(xE)\not=0\not=\la(Ex)$, and
		\item any simple $A$-module is both a submodule and a quotient of $Y$.
	\end{enumerate} Then the functor $H_Y$ sends simple $A$-modules to simple $E$-modules and this induces a bijection between isomorphism types of simple modules for both algebras.
\end{thm}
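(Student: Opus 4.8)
\smallskip\noindent\textbf{Proof plan.}
The plan is to study $H_Y$ through the adjunction it sits in, feeding in hypothesis~(1) only through the fact that it makes $E$ self-injective. View $Y$ as an $(A,E)$-bimodule (left $A$-action, right $E$-action $y\cdot\phi\deq\phi(y)$ for $\phi\in\End_A(Y)$). Then $Y\otimes_E(-)$ and $H_Y$ form an adjoint pair between left $E$-modules and left $A$-modules, with $H_Y(Y)=E$ (the regular left $E$-module), $Y\otimes_E E=Y$, the unit $N\to H_Y(Y\otimes_E N)$ an isomorphism for $N$ projective, and the counit $Y\otimes_E H_Y(M)\to M$ an isomorphism for $M$ in $\operatorname{add}(Y)$. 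Hypothesis~(1) says exactly that the pairing $(x,y)\mapsto\la(xy)$ on $E$ is non-degenerate on each side, i.e.\ $E$ is a Frobenius algebra, hence self-injective: the regular module ${}_EE=H_Y(Y)$ is then injective as well as projective, the indecomposable projective $E$-modules coincide with the indecomposable injective ones, each such module has a simple head and a simple socle, and $\soc({}_EE)$ contains a copy of every simple $E$-module.

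I would then split the statement into four points for simple $A$-modules $S,S'$: (a)~$H_Y(S)\neq0$; (b)~$H_Y(S)$ is simple; (c)~$S\not\cong S'\Rightarrow H_Y(S)\not\cong H_Y(S')$; (d)~every simple $E$-module is some $H_Y(S)$. Point~(a) is immediate from hypothesis~(2): $S$ is a quotient of $Y$, so $\Hom_A(Y,S)\neq0$. For (c)--(d) I would use Krull--Schmidt: write $Y=\bigoplus_i Y_i^{a_i}$ with the $Y_i$ pairwise non-isomorphic and indecomposable, so that $E=\bigoplus_i P_i^{a_i}$ with the $P_i\deq H_Y(Y_i)$ the pairwise non-isomorphic indecomposable projective $E$-modules; put $T_i\deq\hd P_i$, the simple $E$-modules. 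The $\Hom$-identification $\Hom_E(H_Y(Y_i),H_Y(M))\cong\Hom_A(Y_i,M)$ — which is formal, being the adjunction together with $Y\otimes_E H_Y(Y_i)\cong Y_i$ (counit invertible on $\operatorname{add}(Y)$) — gives $[\,H_Y(M):T_i\,]=\dim\Hom_A(Y_i,M)$, so for simple $M=S$ one gets $[\,H_Y(S):T_i\,]=[\hd Y_i:S]$; hypothesis~(2) makes every $S$ a head-constituent of some $Y_i$ and a submodule of some $Y_j$, and one reads off (c)--(d) from this matching of index sets.

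The heart is (b). Here I would use the ``submodule'' half of hypothesis~(2): fix an embedding $S\hookrightarrow Y$; since $H_Y$ is left exact it induces an embedding of left $E$-modules $H_Y(S)\hookrightarrow H_Y(Y)=E$, realizing $H_Y(S)$ as the left ideal $I_S\deq\{\,g\in\End_A(Y)\mid\operatorname{im}g\subseteq S\,\}$ of $E$, of $\bbF$-dimension $\dim\Hom_A(\hd Y,S)$. What remains is to prove $I_S$ is a \emph{simple} left $E$-module, and this is where the self-injective structure of $E$ must genuinely be used: one already knows the composition factors of $H_Y(S)$ from the previous paragraph, and one knows $\soc({}_EE)=\bigoplus_i(\soc P_i)^{a_i}$ with $\soc P_i$ running without repetition over all simple $E$-modules (each $P_i$ being an indecomposable injective); the task is to locate $I_S$ inside $\soc({}_EE)$ and then pin it down to a single simple summand by matching multiplicities, which forces a dimension identity whose proof uses the symmetrizing form $\la$ on $E$. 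Once (a)--(d) are in place, verifying that $H_Y$ preserves isomorphism classes is routine and completes the bijection.

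The step I expect to be the real obstacle is exactly this last one — passing from ``$H_Y(S)$ is a nonzero submodule of ${}_EE$ with known composition factors'' to ``$H_Y(S)$ is simple.'' The inclusion $H_Y(S)\subseteq{}_EE$ and the head-side bookkeeping are essentially formal, but simplicity uses the Frobenius structure in an essential way, and the asymmetry between the two sides is the crux: the identity $\Hom_E(H_Y(Y_i),H_Y(M))\cong\Hom_A(Y_i,M)$ governing heads is formal, whereas the dual identity $\Hom_E(H_Y(S),H_Y(Y_i))\cong\Hom_A(S,Y_i)$ governing socles is not — the counit $Y\otimes_E H_Y(S)\to S$ need not be invertible for simple $S$ — so the socle analysis has to be carried out by an explicit argument with $\la$ (equivalently, with the non-degenerate form it defines on $E$), and getting that argument right is where the work concentrates.
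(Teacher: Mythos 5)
Your route is genuinely different from the paper's: you set up the adjunction $(Y\otimes_E -)\dashv H_Y$, decompose $Y$ and $E$ via Krull--Schmidt, and track composition multiplicities via the correct identity $[H_Y(S):T_i]=\dim\Hom_A(Y_i,S)=[\hd Y_i:S]$. That bookkeeping is fine. But you correctly flag that the whole weight of the theorem falls on step (b), the simplicity of $H_Y(S)$, and that step is not carried out — and, as described, the plan for it does not go through. Two concrete problems. First, the inclusion $I_S\subseteq\soc({}_EE)$ you want to "locate" is equivalent to $J(E)\cdot I_S=0$, i.e.\ that $g\circ\tilde e=0$ whenever the image of $g$ lies in $S$ and $\tilde e\in\End_A(Y)$ is non-invertible; nothing in the hypotheses gives this directly, and in fact it is precisely what becomes true once one already knows $H_Y(S)$ is simple, so trying to establish it first is circular as stated. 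Second, the multiplicity formula you prove does not by itself bound the length of $H_Y(S)$: a priori some $Y_i$ could have $S$ with multiplicity $\geq 2$ in its head, or two non-isomorphic $Y_i,Y_j$ could both have $S$ in their heads; either would make $H_Y(S)$ of length $\geq 2$. Ruling those scenarios out is exactly as hard as the theorem, so "matching multiplicities" does not discharge the difficulty — it reformulates it.

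The paper's proof circumvents all of this through one self-contained lemma, stated without any Krull--Schmidt data: for \emph{any} left $E$-submodule $V\subseteq E$, writing $V.Y\deq\sum_{f\in V}f(Y)\subseteq Y$, one has $H_Y(V.Y)=V$. Its proof is essentially the injectivity argument you were circling around (if $V\subsetneq U\subseteq H_Y(V.Y)$ with $U/V$ simple, embed $U/V$ into $E$ by self-injectivity, extend to a map $E\to E$, observe this map is $H_Y(\phi')$ for some $\phi'\in\End_A(Y)$, and derive a contradiction from $V.Y=U.Y$). Once the lemma is in hand, simplicity is immediate: if $0\neq V\subsetneq H_Y(S)$ with $S$ simple, then $V.Y=S$ by simplicity of $S$, so $V=H_Y(V.Y)=H_Y(S)$, a contradiction. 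The same lemma also yields (c) and (d) in a line or two each — notably, (c) in the paper again uses injectivity of $E$ to extend an abstract isomorphism $H_Y(M')\to H_Y(M)$ of submodules of $E$ to a map $H_Y(\phi)\co E\to E$ and then restricts $\phi$ to $M'$; your claim that (c)--(d) can be "read off from this matching of index sets" is not justified as written. The cleanest way to salvage your write-up is to isolate and prove the lemma above; you already had all the ingredients (the embedding $H_Y(S)\hookrightarrow E$, the use of self-injectivity, the identification of $\End_E({}_EE)$ with $H_Y(\End_A(Y))$), they just need to be aimed at the class of \emph{all} $E$-submodules of $E$ rather than at the socle of $E$.
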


The relevance of self-injectivity (implied by the slightly stronger hypothesis (1) above, see [Benson, \S 1.6]) essentially lies in the following lemma where we keep the same assumptions.

\begin{lem}\label{VY} Let $V\inn E=\End_A(Y)$ an $E$-submodule. Denote $V.Y\deq \sum_{f\in V}f(Y)\inn Y$. Then $H_Y(V.Y)=V$ by taking the image of the latter inclusion. 
\end{lem}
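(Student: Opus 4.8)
The claim relates two operations: given an $E$-submodule $V \subseteq E = \End_A(Y)$, form $V.Y = \sum_{f \in V} f(Y) \subseteq Y$, then apply $H_Y$. We must show $H_Y(V.Y) = V$, where the inclusion $V \subseteq H_Y(V.Y) = \Hom_A(Y, V.Y)$ comes from the fact that each $f \in V$ has image inside $V.Y$, so $f$ factors through $V.Y$. The easy inclusion is $V \subseteq H_Y(V.Y)$: this is immediate from the definition of $V.Y$. The content is the reverse inclusion $H_Y(V.Y) \subseteq V$, and this is exactly where hypothesis (1) — that $E$ is (in fact symmetric, hence) self-injective — must be used.

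For the reverse inclusion, the plan is: let $g \in \Hom_A(Y, V.Y)$, regarded as an element of $E = \End_A(Y)$ via the inclusion $V.Y \subseteq Y$. I want to show $g \in V$. The natural strategy is to use the symmetrizing form $\lambda$ from hypothesis (1) to detect membership in $V$. Concretely: $V$ is a left $E$-submodule of the regular module $E$, and for a self-injective (symmetric) algebra, a submodule $V$ equals its double annihilator with respect to the bilinear form $(x,y) \mapsto \lambda(xy)$; that is, $V = \{x \in E \mid \lambda(x V^{\perp}) = 0\}$ where $V^{\perp} = \{y \in E \mid \lambda(Vy) = 0\}$ is the right annihilator-type orthogonal. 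Since $\lambda(xE) \neq 0$ for $x \neq 0$, the form is non-degenerate, and the double-orthogonal equals $V$ because $V$ is a one-sided submodule (this is the standard fact about symmetric algebras). So it suffices to show: if $y \in E$ satisfies $\lambda(Vy) = 0$, then $\lambda(g y) = 0$.

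The key computation will be to unwind $\lambda(fy)$ for $f \in V$ and $\lambda(gy)$ in terms of the action on $Y$. Here I expect to need the compatibility between $\lambda$ and the module structure — since $\lambda$ is a symmetrizing form on $E = \End_A(Y)$, it should arise from (or be compatible with) some trace-like pairing, so that $\lambda(fy) = 0$ for all $f$ supported on $V.Y$ forces $y$ itself to vanish on the relevant submodule. More precisely, I would argue: the condition $\lambda(Vy) = 0$ translates into a statement about $y$ restricted to or composed with $V.Y$; and since $g$ also has image in $V.Y$, the same condition gives $\lambda(gy) = 0$. Thus $g$ lies in the double orthogonal of $V$, which is $V$.

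The main obstacle I anticipate is making the orthogonality argument precise, i.e. correctly identifying $V.Y$ on the "module side" with the orthogonal complement structure on the "algebra side." The cleanest route is probably: show directly that for any $E$-submodule $V$, one has $\{f \in E \mid f(Y) \subseteq V.Y\} = V^{\perp\perp}$ and invoke $V^{\perp\perp} = V$; alternatively, one can avoid forms entirely and use self-injectivity as an injectivity/lifting property of $E$ as a module over itself, extending a suitable map to deduce that any $g$ with $g(Y) \subseteq V.Y$ already lies in $V$. Either way, the heart of the matter is the symmetric-algebra identity $V^{\perp\perp} = V$ together with the translation of "image lands in $V.Y$" into an orthogonality condition; the rest is bookkeeping with the functor $H_Y$.
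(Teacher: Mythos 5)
Your route via the bilinear form $(x,y)\mapsto\lambda(xy)$ and the double orthogonal is genuinely different from the paper's. The paper argues by contradiction: if $V\subsetneq H_Y(V.Y)$, choose $U$ with $V\subsetneq U\inn H_Y(V.Y)$ and $U/V$ simple; self-injectivity (projective $=$ injective, so every simple embeds in the regular module) supplies a $\phi\colon U\to E$ with $\phi(V)=0\ne\phi(U)$, which extends to $\hat\phi\colon E\to E$; then $\hat\phi=H_Y(\phi')$ for some $A$-endomorphism $\phi'$ of $Y$, and $\phi'(V.Y)=0\ne\phi'(U.Y)$ contradicts $V.Y=U.Y$. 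Your approach instead stays entirely inside $E$ and its form.

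However the step you correctly single out as the heart of the matter --- translating ``$g(Y)\inn V.Y$'' into ``$\lambda(gV^\perp)=0$'' --- is left as an expectation rather than an argument, and it is not automatic: the abstract form $\lambda$ on $E$ has no built-in link to how $E$ acts on $Y$. Here is the argument that closes the gap, and it uses both that $V$ is a left ideal and the full strength of hypothesis (1). If $y\in V^\perp$, fix $v\in V$ and set $w\deq vy$; for every $e\in E$ one has $ew=(ev)y$ with $ev\in V$, so $\lambda(Ew)=0$ and hence $w=0$ by (1). Thus $V^\perp$ is exactly the right annihilator of $V$ in $E$. Since $vy=y\circ v$ in $\End_A(Y)$ (recall $E=\End_A(Y)^\op$), this says every $y\in V^\perp$ kills $v(Y)$ for all $v\in V$, i.e.\ $y(V.Y)=0$. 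Therefore if $g(Y)\inn V.Y$ then $y\circ g=0$, so $gV^\perp=0$ and in particular $\lambda(gV^\perp)=0$; by nondegeneracy of the form (pure linear algebra on a finite-dimensional space) this gives $g\in{}^\perp(V^\perp)=V$, as wanted. One small caveat: hypothesis (1) gives a Frobenius form, not necessarily a symmetric one, so your appeals to $\lambda(xy)=\lambda(yx)$ are unjustified --- but nothing above needs symmetry, only nondegeneracy on both sides.
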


\begin{proof}  We clearly have $V\inn \Hom_A(Y,V.Y)=H_Y(V.Y)$ as subspaces of $\Hom_A(Y,Y)=H_Y(Y)$. If the inclusion is strict, since it is an inclusion of $E$-modules, there is $$V\subsetneq U\inn H_Y(V.Y)\inn E ,$$ $E$-modules with $U/V$ simple. Hypothesis (1) implies that projective and injective $E$-modules coincide (see [NagaoTsu, 2.8.11]), so any finitely generated module injects into a free one and any simple module into the regular one. So we have a map $\phi\co U\to E$ of $E$-modules such that $\phi(U)\not= 0=\phi(V)$. By injectivity of the regular module $E$ the map $\phi$ extends into $\wh\phi\co E\to E$.
	
\centerline{	$\xymatrix{0\ar[r]&U\ar[d]_\phi\ar@{^{(}->}[r]&E\ar@{.>}[dl]^{\wh\phi}\\&E&}$} 

Such a map $\wh\phi$ is clearly of the form $H_Y(\phi ')$ for some $\phi '\co Y\to Y$ a map of $A$-modules. Now $\wh\phi (V)=0\not=\wh\phi(U)$ implies $\phi'(V.Y)=0\not=\phi'(U.Y)$. But on the other hand $V.Y= U.Y$ since $V.Y\inn U.Y\inn H_Y(V.Y)\inn V.Y$. This contradiction finishes the proof.
\end{proof}

The proof of the Theorem goes as follows.
Let $M$ be a simple $A$-module. Then $M$ is a factor and a submodule of $Y$ by (2), so $E=H_Y(Y)\nni H_Y(M)=\Hom_A(Y,M)\not= 0$. Let now $0\not=V\subsetneq H_Y(M)\inn E$ an $E$-submodule. By simplicity of $M$, $V.Y=M$, but the lemma tells us that $V=H_Y(V.Y)=H_Y(M)$. This shows that $H_Y(M)$ is simple. 

Moreover, every simple $E$-module $V$ is obtained that way since $V$ embeds in $E=H_Y(Y)$ as seen before, thus allowing to form $V.Y$ and the Lemma gives $V=H_Y(V.Y)$. If $M$ is a simple submodule of $V.Y$, then $$0\not= H_Y(M)\inn H_Y(V.Y)=V$$ so indeed $V=H_Y(M)$.

Eventually, if $M,M'$ are simple $A$-modules and $H_Y(M)\cong H_Y(M')$, then $M$ and $M'$ can be assumed to be submodules of $Y$, so that $H_Y(M)$ and $ H_Y(M')$ are seen as submodules of $E$. Now the isomorphism $H_Y(M')\to H_Y(M)$ extends to some map $E\to E$ that writes $H_Y(\phi)$ for $\phi\co Y\to Y$. The restriction of $\phi$ to $M$ gives a non zero map $M\to M'$, and therefore $M\cong M'$.
	
\centerline{	$\xymatrix{0\ar[r]&H_Y(M)\ar[d]_{\not=0}\ar@{^{(}->}[r]&E=H_Y(Y)\ar@{.>}[d]^{}\\0\ar[r]&H_Y(M')\ar@{^{(}->}[r]&E=H_Y(Y)}$} 

\begin{exm}\label{Hec}
Assume now that $H$ is a finite group and $X$ a subgroup, let $k$ be any commutative ring. The $kH$-module $Y=\Ind_X^Hk =kH\otimes_Xk$ is the permutation module on the set of cosets $\{hX\mid h\in H  \} $. Denote $\omega\in Y$ the element corresponding to the coset $X$ or $1\otimes 1\in kH\otimes_Xk$. If $M$ is a $kH$-module, one denotes by $M^X$ the space of fixed points under $X$. By Frobenius reciprocity, one can identify explicitly $$ \Hom_{kH}(Y,M)\xrightarrow{\sim}M^X\ ,\ f\mapsto f(\omega).$$ 

This can serve first to give a basis of $\End_{kH}(Y)\cong Y^X$ as a vector space. One has $\End_{kH}(Y)=\oplus_{n\in X\backslash H/X}k.a'_n$ where $a'_n$ \index{$a'_n$} is the $kH$-linear map $Y\to Y$ defined by \begin{equation}
\label{a's}
a'_{XhX}(\omega)=\sum_{y\in XhX\omega}y=\sum_{x\in X/X\cap {}^hX }xh\omega .\end{equation}

One has $\End_{kH}(Y)\cong \End_{kH}(Y)^\op$ by $a'_n\mapsto a_n\deq a'_{n^\mm}$.

Moreover, through the identification above the action of $a_{XhX}$ on $ M^X$ is by \begin{equation}
\label{aact}m\mapsto a_{XhX}(m)=\sum_{x\in X/X\cap X^h}xh^\mm m.
\end{equation}

\end{exm}

\subsection{Yokonuma-Hecke algebras: a presentation}

As said before we will apply Green's theorem to $A=\bbF G$, $Y=\Ind^G_U\bbF$ in the notations of Sect. 1.B. We recall the subgroups $B=\BB^F$, $U=\OP{B}$, $T=\TT^F$, $W=W(\GG,\TT)^F$, etc.. In order to simplify a bit we assume that $F$ acts trivially on $W(\GG,\TT)$, so that also $S=\ov S$.

One writes $U_-=U^{w_S}$ where $w_S$ is the longest element of $W$ with regard to $S$.

\begin{defn}\label{an's} Let $\HGU =\End_{kG}(\Ind^G_U\bbF)$\index{$\HGU $}.
	For $n\in N$, let $a_n\co Y\to Y$ defined by $a_n(1\otimes 1)=\sum_{u\in U\cap U_-^n}un^\mm\otimes 1$.
\end{defn}

For $s\in S$ corresponding to some $\dd\in \Delta$ through (\ref{deltas}), one defines $$T_s\deq T\cap \spann <X_\dd , X_{-\dd}>$$ and one can find some representative \index{$\dot s$}$$\dot s\in N\cap X_\dd X_{-\dd}X_\dd$$ (see [CaEn, 6.3.(i)]). Moreover for $s_1,s_2\in S$ and $r$ the order of the product $s_1s_2$ in $W$,  one has \begin{equation}\label{dotsi}
\dot s_1\dot s_2\dots =\dot s_2\dot s_1\dots (r \textrm{ terms on each side}).\end{equation}

\begin{thm}\label{Yoko} Let $n,n'\in N$, $s\in S$.
	\begin{enumerate}[\rm(1)]
		\item  $a_na_{n'}=a_{nn'}$ as soon as $l_S(nn'T)=l_S(nT)+l_S(n'T)$.
		\item The $a_t$'s for $t\in T$ generate a semi-simple subalgebra $\cong \bbF T$.
		\item $(a_{\dot s})^2= -|T_s|^\mm a_{\dot s}\sum_{t\in T_s}a_t$.
		\item $\HGU$ is presented as an algebra by symbols $a_n$ ($n\in N$) subject to the relations (1) and (3) above.
	\end{enumerate}
\end{thm}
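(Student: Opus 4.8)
The theorem describes the Yokonuma–Hecke algebra $\HGU = \End_{\bbF G}(\Ind^G_U\bbF)$ via a basis $(a_n)_{n\in N}$ and a presentation. I would organize the proof around the explicit model of Example~\ref{Hec} with $H=G$, $X=U$, so that $\HGU \cong (Y^U)$ with $Y=\Ind^G_U\bbF$, and $\End$ has basis $\{a_n : n\in U\backslash N/U\}$; since $U\cap N = \{1\}$ (as $U$ is the unipotent radical of $B$ and $N\cap B = T$, while $U\cap T=\{1\}$... more precisely $U\backslash G/U$ is parametrized by $N$ via the Bruhat decomposition refined by $T$), the set $\{a_n : n\in N\}$ is an $\bbF$-basis. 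Actually the key input is the \emph{sharp form of Bruhat decomposition}: $G = \bigsqcup_{w\in W} U w B = \bigsqcup_{n\in N} U\dot w (U\cap U_-^{\dot w}) $, equivalently $U\backslash G / U \leftrightarrow N$, which follows from (\ref{fBru}) together with the fact that $U_w := U\cap U_-^w$ gives a cross-section of $BwB/B$. This gives part of the basis claim implicitly assumed throughout, and pins down the formula for $a_n(1\otimes 1)$ in Definition~\ref{an's} as the standard double-coset basis element of (\ref{a's}).

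\textbf{Steps.} First I would establish (1): the multiplicativity $a_n a_{n'} = a_{nn'}$ when $\ell_S(nn'T) = \ell_S(nT)+\ell_S(n'T)$. This is the length-additivity property $U n U n' U = U nn' U$ for reduced products, combined with counting: when lengths add, $|U\cap U_-^{nn'}| = |U\cap U_-^n|\cdot|U\cap U_-^{n'}|$ and the product of the two sums collapses without cancellation — this is the standard computation (e.g. [CaEn, Ch. 6]), reduced ultimately to the rank-one and commuting-roots cases via the commutator formula (\ref{ComFla}). For (2), the elements $a_t$ with $t\in T$ have $\ell_S(tT) = 0 = \ell_S(t'T)$, so (1) gives $a_t a_{t'} = a_{tt'}$; thus $t\mapsto a_t$ is an algebra homomorphism $\bbF T \to \HGU$, injective because the $a_t$ are part of a basis, and $\bbF T$ is semisimple since $T$ is a $p'$-group (order prime to $p$, as $T=\TT^F$ is a torus-fixed-point group) — invoking Maschke. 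For (3), the quadratic relation, I would work in the minimal parabolic $P_{\{s\}} = B\cup B\dot s B$: compute $a_{\dot s}^2$ acting on $1\otimes 1$ by expanding $a_{\dot s}(a_{\dot s}(1\otimes 1))$, using that $U\dot s U \dot s U \subseteq B \cup U\dot s B$, so the result lies in the span of $\{a_t : t\in T_s\}$ and $\{a_{\dot s t} : t\in T_s\}$; then extract the coefficients by a rank-one calculation in $\SL_2$ or $\langle X_\dd, X_{-\dd}\rangle$, which is where the factor $-|T_s|^{-1}$ enters (it comes from the $1\times 1$ "diagonal part" of the commutator identity $x_\dd(\lambda)\dot s x_\dd(\lambda) = \dot s \cdot (\text{torus elt})\cdot x_{-\dd}(\ast)$ for $\lambda\neq 0$).

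\textbf{The presentation (4) and the main obstacle.} For (4), I would show the abstract algebra $\tilde A$ defined by generators $a_n$ ($n\in N$) and relations (1), (3) surjects onto $\HGU$ (clear, since these relations hold) and has dimension at most $|N|$ — then compare with $\dim_{\bbF}\HGU = |U\backslash G/U| = |N|$. The dimension bound is the crux: using (1), any $a_n$ can be rewritten, picking a reduced expression $\dot s_{i_1}\cdots \dot s_{i_k}$ for (a representative of) $nT$ times an element $a_t$, as $a_t a_{\dot s_{i_1}}\cdots a_{\dot s_{i_k}}$ up to the braid relations (\ref{dotsi}) — so $\tilde A$ is spanned by products $a_t a_{\dot s_{i_1}}\cdots a_{\dot s_{i_k}}$ over reduced words. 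To bound this by $|T|\cdot|W| = |N|$ one needs: (a) the braid relations (\ref{dotsi}) suffice to pass between any two reduced words for $w\in W$ (Matsumoto/Tits), and (b) relation (3) lets one reduce any non-reduced word, shortening it or folding an $a_t$ to the left past the $a_{\dot s}$'s — here one must check the torus elements $T_s$ commute appropriately with the $a_{\dot s}$ and with each other (the precise conjugation $a_{\dot s} a_t = a_{{}^s t} a_{\dot s}$ needs its own derivation from (1), since $\ell_S(\dot s t T) = \ell_S(\dot s T)$). The genuine obstacle, and where I expect to spend real effort, is exactly this bookkeeping: showing the span-by-monomials argument terminates and yields at most $|N|$ elements, which is the analogue of the standard "straightening"/deformation argument for Iwahori–Hecke algebras but complicated by the presence of the whole torus $T$ rather than just $\pm 1$. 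I would handle it by a length induction: order monomials by the $S$-length of the underlying Weyl word, use (3)+(1) to express any length-non-reduced monomial in strictly-shorter ones, and use (\ref{dotsi}) plus the $a_{\dot s}a_t = a_{{}^s t}a_{\dot s}$ relations to bring every monomial to the canonical form $a_t a_{\dot s_{i_1}}\cdots a_{\dot s_{i_k}}$ with $s_{i_1}\cdots s_{i_k}$ a fixed chosen reduced word for each $w\in W$ — giving exactly $|T|\cdot|W|$ spanning monomials and hence equality of dimensions, forcing the surjection $\tilde A \twoheadrightarrow \HGU$ to be an isomorphism.
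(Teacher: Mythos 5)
Your proposal follows essentially the same route as the paper: double-coset/root counting for (1), Maschke together with (1) for (2), the rank-one computation inside $\langle X_\delta, X_{-\delta}\rangle$ for (3), and the standard Iwahori--Hecke straightening and dimension-count argument for (4), which the paper itself only sketches by pointing to Curtis--Reiner. The one small slip is the parenthetical in your treatment of (3): the ``diagonal part'' ($v=1$, contributing $|X_\delta|\,a_{\dot s^2}$) in fact \emph{vanishes} because $|X_\delta|=q\equiv 0$ in $\bbF$, and the coefficient $-|T_s|^{-1}$ arises instead from the $q-1\equiv -1$ off-diagonal terms being distributed equally over the $|T_s|$ torus elements $t(v)$---your rank-one computation would surface this, but the essential use of characteristic $p$ deserves to be flagged explicitly.
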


\begin{proof} (1)
		 The additivity of lengths implies that $U\cap U_-^{nn'}=U\cap U_-^{n'}.(U\cap U_-^{n})^{n'}$ with uniqueness by considerations on roots. Note that this is the same argument as for the corresponding equation in Iwahori-Hecke algebras $\End_{\bbC G}(\Ind^G_B1)$. The equality $a_na_{n'}=a_{nn'}$ then follows by the definition of the $a_n$'s.
		 
	(2) Clear from the first point, noting that $T$ has order prime to $p$.
		
		(3) Let $\dd\in \Delta$ correspond to $s$ by (\ref{deltas}), so that $X_\dd =U\cap U_-^s$. The Bruhat decomposition  (\ref{fBru}) in $L_s$ implies that $\spann<X_\dd ,X_{-\dd}> =X_\dd T_s\cup X_\dd T_s\dot s X_\dd$. For $v\in X_\dd\setminus\{1\}$ one denotes $t(v)\in T_s$ such that $$ \dot s^\mm v\dot s^\mm \in X_\dd t(v)^\mm \dot s^\mm X_\dd .$$ From Definition~\ref{an's}, one gets clearly $$(a_{\dot s})^2(1\otimes 1)=\sum_{u\in X_\dd}\dot s^{-2}u\otimes 1+\sum_{u\in X_\dd\setminus\{1\}}a_{\dot st(u)}(1\otimes 1).$$ The first term is 0 since each $u$ acts trivially on $1\otimes_U 1$. The second term gives what is claimed once we check that the cardinality $|\dot sX_\dd\dot s\cap X_\dd\dot s tX_\dd|$ is the same for any $t\in T_s$. This is an easy check in the group $\spann <X_\dd , X_{-\dd}>$ which in our hypotheses is a quotient of $\cong \SL_2(q)$.
		
		(4) The proof is similar to the one for Iwahori-Hecke algebras [CurtisRei, \S 67].

	\end{proof}

\subsection{Yokonuma-Hecke algebras: simple modules}

\begin{pro}\label{lform} Let $n_S$ be an element of $N$ whose class mod $T$ is the element $w_S\in W$ of largest $S$-length. Let $$\la\co\HGU\to \bbF$$ the $\bbF$-linear map sending $a_{n_S}$ to 1 and $a_n$ for $n\in N$, $n\not=n_S$ to 0. Then $\la$ vanishes on no non-zero left or right ideal of $\HGU$. 
\end{pro}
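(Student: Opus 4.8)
The plan is to prove that the pairing $(a,b)\mapsto\lambda(ab)$ on $\HGU$ is non-degenerate on each side, exploiting that $a_{n_S}$ sits at the top of a length filtration. First I would recall (Example~\ref{Hec} together with the Bruhat decomposition) that $(a_n)_{n\in N}$ is an $\bbF$-basis of $\HGU$, and that the Coxeter length $n\mapsto l_S(nT)$ attains its maximum $l_S(w_S)$ precisely on the coset $n_ST$. Setting $\HGU^{\leq d}\deq\spann<a_n\mid l_S(nT)\leq d>$, I would then record, exactly as for Iwahori--Hecke algebras and by reducing words with relations (1) and (3) of Theorem~\ref{Yoko}, that $\HGU^{\leq d}\cdot\HGU^{\leq d'}\subseteq\HGU^{\leq d+d'}$, that $a_na_{n'}=a_{nn'}$ whenever $l_S(nn'T)=l_S(nT)+l_S(n'T)$, and that $a_na_{n'}\in\HGU^{<l_S(nT)+l_S(n'T)}$ otherwise. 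In particular, if $l_S(nT)+l_S(n'T)\leq l_S(w_S)$, then $a_{n_S}$ occurs in $a_na_{n'}$ only when $l_S(nT)+l_S(n'T)=l_S(w_S)$, the lengths add, and $nn'=n_S$, in which case its coefficient is $1$.

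Granting this, the argument for a left ideal is quick. Given $0\neq x=\sum_{n\in N}c_na_n$, put $d_0\deq\max\{l_S(nT)\mid c_n\neq 0\}$, pick $n_1$ of class $w_1$ with $c_{n_1}\neq 0$ and $l_S(n_1T)=d_0$, and consider $a_mx$ with $m\deq n_Sn_1^{-1}$. The class of $m$ is $w_Sw_1^{-1}$, so $l_S(mT)=l_S(w_S)-d_0$ by the identity $l_S(w_Su)=l_S(w_S)-l_S(u)$; in particular $l_S(mT)+l_S(nT)\leq l_S(w_S)$ for every $n$ with $c_n\neq 0$. In the expansion $a_mx=\sum_nc_n\,a_ma_n$, a summand with $l_S(nT)<d_0$ lies in $\HGU^{<l_S(w_S)}$ and carries no $a_{n_S}$; a summand with $l_S(nT)=d_0$ and $n\neq n_1$ carries none either, since by the first paragraph $a_{n_S}$ would force $mn=n_S$, i.e. $n=m^{-1}n_S=n_1$; and the summand $n=n_1$ gives $a_ma_{n_1}=a_{mn_1}=a_{n_S}$, the lengths adding to $l_S(w_S)$. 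Hence $\lambda(a_mx)=c_{n_1}\neq0$, so $\lambda$ is non-zero on $\HGU x$, hence on every non-zero left ideal. For right ideals the same computation applies with $m\deq n_1^{-1}n_S$ and $xa_m$ in place of $a_mx$.

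The one genuinely non-formal ingredient is the length-filtration claim in the first paragraph: that a product of two basis elements never reaches above the sum of their lengths, equality forcing the additive rule $a_na_{n'}=a_{nn'}$. I expect this to be the main obstacle. I would prove it as in the Iwahori--Hecke case, by writing $a_{n'}$ as a product of the generators $a_{\dot s}$ (times an element of $T$) and multiplying into $a_n$ one generator at a time, using relation (3) of Theorem~\ref{Yoko} to rewrite each non-length-increasing step so that it strictly lowers the length. Once that is in hand the rest is bookkeeping with Coxeter lengths, the relevant facts being the maximality of $l_S(w_S)$ and $l_S(w_Su)=l_S(uw_S)=l_S(w_S)-l_S(u)$.
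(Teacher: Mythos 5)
Your proposal is correct and takes the same route as the paper: establish the length filtration $a_na_{n'}\in a_{nn'}+\sum_{n''}\bbF a_{n''}$ (sum over $n''$ with $l_S(n''T)<l_S(nT)+l_S(n'T)$) from Theorem~\ref{Yoko}, then multiply by $a_{n_Sn_1^{-1}}$ on the left (or $a_{n_1^{-1}n_S}$ on the right) to extract the coefficient $c_{n_1}$ of a basis element $a_{n_1}$ of maximal length, exactly as the paper does with its $n_0$. The only slip is cosmetic: relation (3) of Theorem~\ref{Yoko} has no constant term (unlike the generic Iwahori--Hecke quadratic relation), so a non-length-increasing multiplication $a_ma_{\dot s}$ yields terms all of length exactly $l_S(mT)$ — the length is preserved rather than strictly lowered — but this is all that is needed for the filtration and your conclusion is unaffected.
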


\begin{proof}
	From Theorem~\ref{Yoko} it is clear that when $n,n'\in N$, the product $a_na_{n'}$ is always in $a_{nn'}+\sum_{n''}\bbF a_{n''}$ where the sum is over $n''\in N$ with $l_S(n''T)<l_S(nT)+l_S(n'T)$. Now if $0\not= x=\sum_{n\in N}\mu_na_n$ with $\mu_n\in \bbF$, let $n_0$ be such that $\mu_{n_0}\not=0$, with $l_S(n_0T)$ maximal as such. Then $a_{n_0}a_{n_0^\mm n_S}=a_{n_Sn_0^\mm}a_{n_0}=a_{n_S}$ and therefore $\la (xa_{n_0^\mm n_S})=\la(a_{n_Sn_0^\mm}x)=\mu_{n_0}\not=0$.
\end{proof}

\begin{defn}\label{adm} For $\th \co T\to \Fm$ a group morphism, let \index{$S_\th$}$S_\th\deq \{s\in S\mid \th(T_s)=1  \}$. One calls \sing{admissible pair} any pair $(\th ,I)$ where $\th{\in\Hom (T,\Fm)}$ and $I\inn S_\th$.
\end{defn}

\begin{thm}\label{simH} The simple $\HGU$-modules are one-dimensional. Seen as maps $\HGU\to\bbF$, they are of the form $\psi_{(\th ,I)}$ where $(\th ,I)$ is an admissible pair and $\psi_{(\th ,I)}$ is defined by \begin{enumerate}[\rm(a)]
		\item $\psi_{(\th ,I)}(a_t)=\th (t)$ for any $t\in T$
		\item $\psi_{(\th ,I)}(a_{\dot s})=-1$ for $s\in I$, 0 otherwise.
	\end{enumerate} 
\end{thm}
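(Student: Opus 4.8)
The strategy is to combine the presentation of $\HGU$ from Theorem~\ref{Yoko}(4) with the structure of the semisimple subalgebra generated by the $a_t$. First I would decompose the identity of $\HGU$ using the central idempotents of the subalgebra $\bbF T\cong \spann<a_t\mid t\in T>$: since $T$ is abelian of order prime to $p$, we have $\bbF T=\bigoplus_\th \bbF e_\th$ where $\th$ ranges over $\Hom(T,\Fm)$ and $e_\th=|T|^{-1}\sum_{t\in T}\th(t^{-1})a_t$ is the primitive idempotent on which $a_t$ acts by $\th(t)$. Any simple $\HGU$-module $V$ is annihilated by all but one $e_\th$, so $a_t$ acts on $V$ by the scalar $\th(t)$; this gives point (a) and fixes $\th$.

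Next I would analyse how the generators $a_{\dot s}$ act on such a $V$. Relation (3) of Theorem~\ref{Yoko} reads $(a_{\dot s})^2=-|T_s|^{-1}a_{\dot s}\sum_{t\in T_s}a_t$, and after multiplying by $e_\th$ the operator $|T_s|^{-1}\sum_{t\in T_s}a_t$ acts as the scalar $|T_s|^{-1}\sum_{t\in T_s}\th(t)$, which is $1$ if $\th(T_s)=1$ and $0$ otherwise (orthogonality of characters restricted to the subgroup $T_s$). Hence on $V$ the element $x_s\deq a_{\dot s}$ satisfies $x_s^2=-x_s$ when $s\in S_\th$ (so $x_s$ acts with eigenvalues in $\{0,-1\}$) and $x_s^2=0$ when $s\notin S_\th$ (so $x_s$ acts nilpotently). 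To force $V$ to be one-dimensional I would use relation (1) together with the braid relations (\ref{dotsi}): the subalgebra generated by the $a_{\dot s}$ modulo the two-sided ideal already imposed is a quotient of a ``$0$-Hecke-type'' algebra, and one shows its simple modules are one-dimensional. Concretely, for $s\notin S_\th$ the nilpotent $x_s$ must act as $0$ on a simple module (its kernel is a nonzero submodule, hence everything); for $s\in S_\th$, the two idempotents $-x_s$ and $1+x_s$ split $V$, and using the braid relations among the $x_s$ ($s\in S_\th$) together with relation (1) one checks that a simple module is a common eigenspace, with $x_s$ acting by $0$ or $-1$; the set $I\deq\{s\in S_\th\mid x_s\text{ acts by }-1\}$ is then an arbitrary subset of $S_\th$, giving an admissible pair $(\th,I)$ and point (b).

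Finally I would verify that each $\psi_{(\th,I)}$ so defined is actually a well-defined algebra homomorphism $\HGU\to\bbF$ — i.e. that the prescribed values respect relations (1) and (3) — and that distinct admissible pairs give non-isomorphic (hence distinct, being one-dimensional) modules, the latter because $(\th,I)$ is recovered from $\psi_{(\th,I)}$ via its values on the $a_t$ and the $a_{\dot s}$. Checking (1) requires knowing how $l_S$ behaves and that $a_{nn'}$ on the module side is consistent; checking (3) is the computation $(-1)^2=-1\cdot 1$ for $s\in I$, $0=0$ for $s\in S_\th\setminus I$, and $0=0$ for $s\notin S_\th$, all immediate.

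\textbf{Main obstacle.} The delicate point is the passage from ``each $a_{\dot s}$ acts with eigenvalues $0$ or $-1$'' to ``$V$ is one-dimensional and the $a_{\dot s}$ act diagonally with $I$ unconstrained inside $S_\th$''. This is exactly the representation theory of a degenerate (nil/0-)Hecke algebra attached to the Coxeter system $(W,S)$, and it must be handled using the braid relations (\ref{dotsi}) for the $\dot s$; one has to rule out higher-dimensional simple modules and show that no compatibility among the $x_s$ beyond $x_s^2=-x_s$ (for $s\in I$) and commutation/braid relations is imposed. I expect this combinatorial-algebraic step — essentially reproving that $0$-Hecke algebras of Coxeter groups have only one-dimensional simple modules, indexed by subsets of $S$ — to be where the real work lies; everything else is bookkeeping with the character theory of the prime-to-$p$ torus $T$.
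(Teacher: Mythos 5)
Your plan starts the same way the paper does, by decomposing a simple module $V$ under the commutative semisimple subalgebra $\bigoplus_t\bbF a_t\cong \bbF T$, but it then takes a wrong turn. The idempotents $e_\th$ are primitive central idempotents of $\bbF T$, \emph{not} of $\HGU$: from relation (1) of Theorem~\ref{Yoko} one gets $a_n a_t=a_{\,{}^{n}t}\,a_n$, so conjugation by $a_{\dot s}$ sends $e_\th$ to $e_{{}^{\dot s}\th}$ and in particular $a_{\dot s}$ maps $e_\th V$ into $e_{{}^{\dot s}\th}V$. Consequently the assertion that ``any simple $\HGU$-module $V$ is annihilated by all but one $e_\th$'' does not follow from the semisimplicity of $\bbF T$ — it is essentially equivalent to the one-dimensionality you are trying to prove, and nothing in the proposal supplies it. The same problem recurs later: for $s\notin S_\th$ you claim the kernel of $a_{\dot s}$ is a nonzero $\HGU$-submodule, but that kernel is only visibly stable under the $a_t$'s, not under the other $a_{\dot {s'}}$'s, so the deduction $a_{\dot s}=0$ is again unjustified before one knows $\dim V=1$. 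In short, the reduction to ``$0$-Hecke per $\th$'' is circular.

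The paper's actual proof closes exactly this gap by a maximality trick: it picks a $T$-eigenline $L\subseteq V$, then $n_0\in N$ with $a_{n_0}L\ne 0$ and $l_S(n_0T)$ maximal, and shows that the \emph{line} $\bbF a_{n_0}L$ is $\HGU$-stable. Stability under the $a_t$ is the conjugation computation $a_t a_{n_0}L=a_{n_0}a_{t^{n_0}}L=a_{n_0}L$; stability under $a_{\dot s}$ splits into two cases on $l_S(s n_0 T)$: when the length goes up, $a_{\dot s}a_{n_0}L=a_{\dot s n_0}L=0$ by maximality, and when it goes down, relations (1) and (3) of Theorem~\ref{Yoko} give $a_{\dot s}a_{n_0}\in a_{n_0}(\bigoplus_t\bbF a_t)$, so $a_{\dot s}a_{n_0}L\subseteq a_{n_0}L$. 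Simplicity then forces $V=\bbF a_{n_0}L$, which is a line, and the description of the one-dimensional representations as $\psi_{(\th,I)}$ follows from the presentation in Theorem~\ref{Yoko}(4) by the orthogonality computation you correctly outline. So the last part of your proposal (reading off $(\th,I)$ and checking the relations) is fine, but the core one-dimensionality argument needs to be replaced by something like the paper's maximal-length argument.
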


\begin{proof}
Let $V$ be a simple $\HGU$-module. The subalgebra $\oplus_{t\in T}\bbF a_t$ being commutative, semi-simple with $\bbF$ algebraically closed, $V$ decomposes as a sum of lines stable under the $a_t$'s. Let $L\inn V$ be such a line and $n_0\in N$ such that $a_{n_0}.L\not= 0$ and $l_S(n_0T)$ is maximal as such. One shows that $\bbF a_{n_0}.L$ is stable under $\HGU$. For $t\in T$, one has $a_t a_{n_0}.L =a_{tn_0}.L=a_{n_0}a_{t^{n_0}}.L=a_{n_0}.L$. For $s\in S$, if $l_S(sn_0T)=l_S(n_0T)+1$ then Theorem~\ref{Yoko}.(1) and maximality of $n_0$ imply $a_{\dot s}a_{n_0}L=a_{\dot s n_0}L=0$. If $l_S(sn_0T)=l_S(n_0T)-1$ then Theorem~\ref{Yoko}.(1) and (3) imply $a_{\dot s}a_{n_0}=a_{\dot s}a_{\dot s}a_{\dot s^\mm n_0}\in a_{n_0}(\oplus_{t\in T}a_t)$ hence $a_{\dot s}a_{n_0}L\inn a_{n_0}L$. We get our claim by noting that the $a_t$'s and the $a_{\dot s}$'s generate $\HGU$ by Theorem~\ref{Yoko}.(4).

The form of the $\bbF$-algebra morphisms $\HGU\to \bbF$ is easy to deduce from Theorem~\ref{Yoko}.(4).
\end{proof}

%%%%%%%%%%%%%%%%%%%%%%%%%%%%%%%%%%%%%%%%%%%%%%%%%%%%%%%%%%%%%%%%

%%%%%%%%%%%%%%%%%%%%%%%%%%%%%%%%%%%%%%%%%%%%%%%%%%%%%%%%%%%%%%%%

{}
\bigskip

{}

\section{Simple $\bbF G$-modules and $p$-blocks }

As announced before, we now apply \Th{Green} and the information gathered on $\HGU$ to simple $\bbF \GF$-module. This theory is due to J.A. Green (see [Gre78], [Tin79], [Tin80]). This provided a more conceptual framework to a classification of simple modules of split BN-pairs due to Curtis-Richen [Cu70], [Ri69] (see also [CaLu74]).

Among other properties of the simple $\bbF \GF$-modules we show the existence of a block of defect zero (see Proposition~\ref{blochp}) associated to the so-called Steinberg module.

The notations are the same as in the preceding chapter.

\subsection{Simple $\bbF G$-modules, the Steinberg module}

\begin{thm}\label{MU}
For any simple $\bbF G$-module $M$, the subspace of fixed points $M^U$ is a line. Moreover $M=\bbF U_-.M^U$. 
\end{thm}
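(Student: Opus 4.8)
The plan is to use Green's theorem (\Th{Green}) with $A = \bbF G$ and $Y = \Ind^G_U\bbF$, whose opposite endomorphism algebra $E = \HGU$ we have just analyzed. First I need to verify the two hypotheses of \Th{Green}. Hypothesis (1) is exactly the content of Proposition~\ref{lform}: the linear form $\la$ sending $a_{n_S}$ to $1$ and all other $a_n$ to $0$ vanishes on no nonzero left or right ideal. Hypothesis (2) — that every simple $\bbF G$-module $M$ is both a submodule and a quotient of $Y = \Ind^G_U\bbF$ — follows because $Y$ is the permutation module on $G/U$ with $U$ a $p$-group, hence a Sylow $p$-subgroup of $G$: by Frobenius reciprocity $\Hom_{\bbF G}(Y,M) \cong M^U$ and $\Hom_{\bbF G}(M,Y) \cong (M^*)^U \cong (M^U)^*$ using self-duality of the permutation module, so it suffices that $M^U \neq 0$. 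This in turn holds because $U$ is a $p$-group acting on the nonzero $\bbF$-vector space $M$ over a field of characteristic $p$, so it has a nonzero fixed vector (the standard orbit-counting / Brauer argument, as in [Benson, 3.14.1]).

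With both hypotheses in place, \Th{Green} gives that $H_Y(M) = \Hom_{\bbF G}(Y,M) \cong M^U$ is a simple $E$-module for every simple $\bbF G$-module $M$. But by \Th{simH} every simple $\HGU$-module is one-dimensional. Therefore $\dim_\bbF M^U = 1$, which is the first assertion.

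For the second assertion, $M = \bbF U_-.M^U$, I would argue as follows. Let $0 \neq v \in M^U$ and set $M' \deq \bbF U_- . v \subseteq M$; I claim $M' = M$. By simplicity of $M$ it is enough to show $M'$ is a $\bbF G$-submodule, equivalently that it is $B$-stable and $\dot s$-stable for each $s \in S$ (since $B$ together with the $\dot s$ generate $G$, by the Bruhat decomposition (\ref{fBru})). Stability under $B = U T$ is clear: $U$ fixes $v$, $T$ normalizes $U_-$ and fixes the line $\bbF v$ (as $M^U$ is $T$-stable and one-dimensional, $T$ acts by a scalar), and $U_-$ has a filtration by root subgroups so $U U_- \subseteq U_- U \cdot(\text{lower terms})$ by the commutator formula — more cleanly, one uses that $B U_- $ is dense, i.e. $G = \bigsqcup_w B \dot w U_-$ refined appropriately, or simply that $U_-$ is normalized modulo $U$. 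The delicate point — and the part I expect to be the main obstacle — is the $\dot s$-stability: one must show $\dot s \cdot (\bbF U_- v) \subseteq \bbF U_- v$. Here one uses the rank-one computation inside $\langle X_\delta, X_{-\delta}\rangle \cong$ (a quotient of) $\SL_2(q)$: writing $U_- = X_{-\delta}\cdot U_-'$ where $U_-' = U_- \cap {}^{\dot s}U_-$ is $\dot s$-stabilized (mod $U$), one reduces to controlling $\dot s X_{-\delta} v$, and the Bruhat relation $X_{-\delta}\dot s^{-1} \subseteq X_\delta \dot s^{-1} X_\delta T_s \cup X_\delta T_s$ from the proof of \Th{Yoko}.(3) lets one rewrite $\dot s u v$ (for $u \in X_{-\delta}$, using $X_{-\delta} = U_-^{?}$ — more precisely writing everything in terms of the action on $v$) as a combination of terms $u'' \cdot v$ with $u'' \in U_-$, because the $X_\delta$- and $T_s$-parts act on $v \in M^U$ via the torus scalar only. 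Running this through carefully gives $\dot s M' \subseteq M'$, hence $M' = M$, completing the proof. Alternatively, and perhaps more transparently, one can deduce $M = U_- . M^U$ from the fact that $M^U = H_Y(M)$ sits inside $Y = \Ind^G_U\bbF = \bigoplus_{w} \bbF\, (\text{basis indexed by }B\dot w U_- /U)$ and that the submodule generated by any vector lying over the open cell, under the algebra generated by $a_{\dot s}$'s acting as in \Th{Yoko}, is all of the simple quotient — i.e. transport the statement back through the equivalence $H_Y$ and use that $E = H_Y(Y)$ acts on $H_Y(M)$ with the $a_{\dot s}$ ``moving mass along $U_-$''. I would present the direct $\SL_2$-reduction as the cleaner writeup.
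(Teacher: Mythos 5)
Your use of Green's theorem (\Th{Green}) together with \Th{simH} for the first assertion ($\dim M^U = 1$) is exactly the paper's route and is fine. The problem is in the second half. You generate $G$ as $\langle B,\dot s : s\in S\rangle$ and then claim that $B$-stability of $M' := \bbF U_-\,v$ is ``clear.'' It is not. $T$-stability is indeed clear (since $T$ normalizes $U_-$ and scales the line $M^U$), but $U$-stability of $\bbF U_- v$ is genuinely false to assert directly: for $u\in U$ and $u_-\in U_-$ the product $u u_-$ need not lie in $U_- B$ (already in $\SL_2$, with $u=\begin{pmatrix}1&a\\0&1\end{pmatrix}$, $u_-=\begin{pmatrix}1&0\\b&1\end{pmatrix}$ and $ab=-1$, the product $uu_-$ has vanishing $(1,1)$-entry and falls outside the big cell). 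The heuristics you offer to patch this — ``$UU_-\subseteq U_-U\cdot(\text{lower terms})$ by the commutator formula,'' ``$U_-$ is normalized modulo $U$'' — do not hold: the commutator formula (\ref{ComFla}) only governs pairs of linearly independent roots, and precisely the opposite pair $(\alpha,-\alpha)$ is the one causing trouble.

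The paper sidesteps this by generating $G$ differently: it writes $G=\langle U_-, T, \dot s : s\in S\rangle$ rather than $\langle B,\dot s\rangle$. Then $U_-$-stability of $\bbF U_-.M^U$ is trivial, $T$-stability is the easy observation you already made, and the only real work is the $\dot s$-stability, which you do sketch along the correct lines (reduce via $sU_- = s(U_-\cap U_-^s)X_{-\delta}$, then use Bruhat decomposition in $L_s$ and \Th{MthI}.(ii) to handle the two cases $x\in X_\delta s T X_\delta$ and $x=1$). Once $\dot s$-stability is established, $U$-stability of $M'$ is a corollary, not an input. So your overall structure for the second half is salvageable, but the logical order must be inverted: you cannot take $U$-stability as a given ingredient in proving $\dot s$-stability; rather, both $U$- and hence $B$-stability only come for free after the $\dot s$-computation is done.
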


\begin{thm}\label{MthI}
There is a bijection between the isomorphism types of simple $\bbF G$-modules and the set of admissible pairs (see Definition~\ref{adm}). 

Let the simple $\bbF G$-module $M$ correspond to the pair $(\th ,I)$ then \begin{enumerate}[\rm(i)]
	\item $T$ acts by $\th$ on the line $M^U$
	\item for $s\in S$ associated with $\dd\in \Delta$ and $m\in M^U$ one has $\sum_{u\in X_\dd}u\dot s.m=-m$ if $s\in I$, 0 if $s\in S\setminus I$.
	
	\item (Smith 1982 [Sm82]) if $J\inn S$ and $U_JL_J$ is the Levi decomposition of the parabolic subgroup $P_J$, then $M^{U_J}$ is a simple $\bbF L_J$-module associated with the admissible pair $(\th , J\cap I)$ of $L_J$.
\end{enumerate}
\end{thm}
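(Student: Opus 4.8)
The plan is to combine Green's correspondence (\Th{Green}) with the classification of simple $\HGU$-modules (\Th{simH}) to transport everything from the Hecke side. First I would check that the hypotheses of \Th{Green} hold for $A=\bbF G$ and $Y=\Ind^G_U\bbF$: hypothesis (1) is exactly Proposition~\ref{lform} (self-injectivity of $\HGU$ via the symmetrizing-type form $\la$), and hypothesis (2) says every simple $\bbF G$-module is both a sub and a quotient of $\Ind^G_U\bbF$. For hypothesis (2) I would argue that since $U$ is a $p$-group and $\bbF$ has characteristic $p$, the trivial module $\bbF$ is the unique simple $\bbF U$-module, so $\Res^G_U M$ has a trivial sub and a trivial quotient for any simple $\bbF G$-module $M$; by Frobenius reciprocity (Example~\ref{Hec}, the identification $\Hom_{\bbF G}(\Ind^G_U\bbF,M)\cong M^U$ and dually $\Hom_{\bbF G}(M,\Ind^G_U\bbF)\cong (M_U)^*$, using that $\Ind^G_U$ is self-dual up to the trivial character) this gives $M\hookrightarrow \Ind^G_U\bbF$ and $M\twoheadleftarrow \Ind^G_U\bbF$. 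Then \Th{Green} yields a bijection $M\mapsto H_Y(M)=\Hom_{\bbF G}(\Ind^G_U\bbF,M)\cong M^U$ between simple $\bbF G$-modules and simple $\HGU$-modules, and by \Th{simH} the latter are the one-dimensional $\psi_{(\th,I)}$ indexed by admissible pairs. This simultaneously proves \Th{MU} (that $M^U$ is a line, being a simple $\HGU$-module, which \Th{simH} says is one-dimensional) and the bijection claimed in \Th{MthI}.

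Next I would unwind the $\HGU$-action on $M^U$ in terms of the concrete formula (\ref{aact}) of Example~\ref{Hec}: for $m\in M^U$ and $n\in N$, $a_n$ acts on $m$ by $\sum_{u}un^{\mm}m$ summed over the appropriate transversal, matching Definition~\ref{an's}. Part (i) then follows: the element $a_t$ for $t\in T$ acts on $M^U$ by $t^{\mm}$ up to the relevant normalization, so the character by which $T$ acts on the line $M^U$ is (a twist of) $\th$; I would match conventions so that it is exactly $\th$. Part (ii) follows from identifying $\psi_{(\th,I)}(a_{\dot s})$ with the scalar by which $\sum_{u\in X_\dd}u\dot s$ acts on $M^U$ via formula (\ref{aact}) and Definition~\ref{an's} (note $X_\dd=U\cap U_-^s$), giving $-1$ for $s\in I$ and $0$ otherwise, which is precisely \Th{simH}(b).

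For part (iii), the natural approach is to apply the whole machinery inside the Levi $L_J=\LL_J^F$, whose Sylow $p$-subgroup is $U\cap L_J$ and whose "unipotent radical part" satisfies $U=U_J\rtimes(U\cap L_J)$. I would show $M^{U_J}$ is an $\bbF L_J$-module (clear, since $U_J\lhd P_J\ge L_J$) and that it is simple, then identify its admissible pair. Simplicity is the crux: one way is to observe $M^U=(M^{U_J})^{U\cap L_J}$ is a line, so if $M^{U_J}$ were not simple, a composition factor not containing that line in its $(U\cap L_J)$-fixed points would force—via the $L_J$-analogue of \Th{MU}—a contradiction, since every simple $\bbF L_J$-module has a one-dimensional space of $(U\cap L_J)$-fixed points and $M^{U_J}=\bbF(U_-\cap L_J).M^U$ would have to be generated by that line under $L_J$. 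Then the admissible pair of $M^{U_J}$ is read off from the $T$-action (still $\th$, since $T\le L_J$) and the $\dot s$-action for $s\in J$, which by parts (i)–(ii) applied inside $L_J$ gives exactly $J\cap I$. The main obstacle I expect is precisely establishing that $M^{U_J}$ is simple as an $\bbF L_J$-module and that $M=\bbF U_-.M^U$ restricts correctly to give $M^{U_J}=\bbF(U_-\cap L_J).M^U$ with $U_-\cap L_J$ the relevant lower unipotent; this requires a careful Bruhat/Levi decomposition bookkeeping argument (the $U_-$ for $G$ factors compatibly over $L_J$, using $\Phi(G,T)\cap\bbR\delta(J)$ versus its complement), and is where Smith's original argument [Sm82] does the real work. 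Everything else is a transport of structure along the equivalence furnished by \Th{Green}.
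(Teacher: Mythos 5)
Your overall architecture matches the paper exactly: verify the hypotheses of \Th{Green} for $A=\bbF G$, $Y=\Ind_U^G\bbF$, transport the classification of simple $\HGU$-modules from \Th{simH} to simple $\bbF G$-modules, and read off (i)–(ii) from the explicit formulas for the Hecke action. Your verification of hypothesis (2) via the coinvariants $M_U$ and Frobenius reciprocity is an equivalent reformulation of the paper's argument using self-duality of $Y$ (the two are the same up to dualizing once). On parts (i)–(ii), you are vague about the sign/inverse convention between $a_t$ and $\th$, but you flag this yourself and it is indeed just bookkeeping.

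On part (iii), two remarks. First, your sketch of simplicity via ``a composition factor not containing that line'' has a subtle flaw: composition factors are subquotients, not submodules, and taking $(U\cap L_J)$-fixed points is only left exact, so a composition factor's fixed points need not embed in $(M^{U_J})^{U\cap L_J}$. The correct move (which is what the paper does) is to take an arbitrary \emph{simple submodule} $M'\subseteq \bbF L_J.M^U$, observe $0\ne M'^{U\cap L_J}\subseteq M^{U_J\cdot(U\cap L_J)}=M^U$, and conclude $M'\supseteq M^U$, hence $M'=\bbF L_J.M^U$, so that module has a unique (hence self-equal) simple submodule and is simple. Second, you are trying to prove the \emph{full} statement (iii), that $M^{U_J}$ itself is simple; the paper deliberately does not. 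It explicitly says it proves only the weaker statement that $\bbF L_J.M^U$ is simple (which suffices for \Th{AlpG} later) and attributes the full statement — which indeed requires $M^{U_J}=\bbF(U_-\cap L_J).M^U$ and the Bruhat/Levi bookkeeping you correctly identify as the crux — to Smith [Sm82]. So your ``main obstacle'' is genuine, but it is an obstacle the paper chooses to walk around rather than over; if you want to keep your route you should either supply Smith's argument or, like the paper, scale back to the weaker claim.
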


\begin{thm}\label{StM}
Keep $M$ a simple $\bbF G$-module associated with the admissible pair $(\th ,I)$. Then $M$ is projective if and only if $I=S$.

If moreover $G$ is perfect and $\ov G\deq G/\zent G$ is simple, then $\ov G$ has only two $p$-blocks\begin{itemize}
	\item the principal block
	\item the block whose unique simple module is the $\bbF \ov G$-module corresponding to the simple $\bbF  G$-module associated with the admissible pair $(1,S)$.
\end{itemize}
\end{thm}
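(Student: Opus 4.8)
The plan is to prove Theorem~\ref{StM} in two halves: first the projectivity criterion, then the block count.

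\textbf{Projectivity.} First I would identify when the simple $\bbF G$-module $M = M_{(\th,I)}$ is projective. The idea is to use Green's theorem (\Th{Green}) with $A = \bbF G$, $Y = \Ind^G_U\bbF$, $E = \HGU$, which applies since $M^U \ne 0$ for every simple $M$ (\Th{MU}) so (2) holds, and (1) holds by Proposition~\ref{lform}. Under this correspondence $M$ goes to the one-dimensional $\HGU$-module $\psi_{(\th,I)}$ of \Th{simH}. Since $E = \HGU$ is self-injective (hypothesis (1) of \Th{Green}), its projective and injective modules coincide, and a simple $E$-module is projective iff it is injective. I expect the cleanest route is: $M$ is projective as $\bbF G$-module iff $H_Y(M) = \psi_{(\th,I)}$ is projective as $\HGU$-module, and then to compute directly when the one-dimensional module $\psi_{(\th,I)}$ is a direct summand of $\HGU$. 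Using the presentation of \Th{Yoko}.(4), the element $e_I \deq \prod_{s\in I}(1 + a_{\dot s})$ (suitably normalized by the $|T_s|^{-1}\sum_{t\in T_s}a_t$ idempotents, reflecting relation (3)) behaves like the longest-element analogue; the point is that $\psi_{(\th,I)}$ is projective exactly when one can build an idempotent projecting onto it, which via the $a_n$ with $n$ of maximal length $w_S$ forces $I = S$ (for $I \subsetneq S$ the module is ``too small'' — it has nontrivial self-extensions coming from the missing $a_{\dot s}$). Alternatively, and perhaps more robustly, one argues on the $\bbF G$ side: $M$ projective $\Rightarrow$ $M$ injective $\Rightarrow$ $M$ appears in the socle of a free module; and the Steinberg module ($I=S$) is classically known to be the unique simple projective, which can be seen from $\dim \psi_{(1,S)}$-data or from the fact that $M_{(1,S)}$ is a direct summand of $\bbF G/\bbF U \cdot (\text{something})$ cut out by the full idempotent $\sum_{w}(-1)^{\ell(w)}\dots$. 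The main obstacle here is making the ``only if'' direction clean: showing $I \subsetneq S$ forces non-projectivity. I would do this by exhibiting a non-split self-extension of $\psi_{(\th,I)}$ using a single $a_{\dot s}$ with $s \in S \setminus I$ — relation (3) of \Th{Yoko} shows $a_{\dot s}^2 = -|T_s|^{-1}a_{\dot s}\sum_{t\in T_s}a_t$, so on a module where $\th(T_s)=1$... but wait, $s \notin S_\th$ is possible too; one splits into the cases $\th(T_s)=1$ (then $a_{\dot s}^2 = -a_{\dot s}$, an idempotent situation forcing a length-2 indecomposable when $s\notin I$) and $\th(T_s)\ne 1$ (then $\sum_{t\in T_s}\th(t) = 0$ so $a_{\dot s}$ acts nilpotently with $a_{\dot s}^2=0$, again yielding a non-split extension).

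\textbf{Block count.} Now assume $G$ is perfect and $\ov G = G/\zent G$ is simple. By Proposition~\ref{blochp}, applicable since $\ov G$ has characteristic $p$ type by Corollary~\ref{charp}, every non-principal $p$-block of $\ov G$ has defect $\{1\}$, i.e.\ is a block of defect zero, hence is simple with a single simple module which is projective. So the non-principal blocks of $\ov G$ are in bijection with the projective simple $\bbF\ov G$-modules that are \emph{not} in the principal block. By the projectivity criterion just proved, the projective simple $\bbF G$-modules are exactly $M_{(\th,S)}$ as $\th$ ranges over $\Hom(T,\Fm)$ with $S = S_\th$, i.e.\ $\th(T_s)=1$ for all $s$. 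I would then argue that among these, only $M_{(1,S)}$ descends to $\ov G = G/\zent G$: a simple $\bbF G$-module factors through $\ov G$ iff $\zent G$ acts trivially, and one needs $\zent G \le \ker(\text{central character})$; for the Steinberg-type modules $M_{(\th,S)}$ the central character is determined by $\th|_{\zent G}$ (since $\zent G \le T$), and the claim is that $\th$ with $S_\th = S$ and $\th$ trivial on $\zent G$ forces $\th = 1$ — this should follow because $T$ is generated by $\zent{\GG}^F$ together with the images of the coroots, i.e.\ by $\zent G$ together with the $T_s$, using that $\GG$ is generated by $\TT$ and the $\XX_{\pm\dd}$; since $\th$ kills all $T_s$ and all of $\zent G$, it kills all of $T$. (Here one needs the hypothesis that $\ov G$ is simple, which via \Th{Tits}--\Th{BTits} ensures $\GG$ has the right connectedness/generation properties.) Finally, $M_{(1,S)}$ is genuinely not in the principal block: it is projective simple, hence its block has defect zero, whereas the principal block has full defect (a Sylow $p$-subgroup $U \ne 1$), so the two blocks are distinct. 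Counting: the principal block plus the single defect-zero block with simple module $M_{(1,S)}$ exhaust $\Bl(\ov G)$. The main obstacle in this half is the generation statement ``$\th$ trivial on all $T_s$ and on $\zent G$ $\Rightarrow$ $\th$ trivial on $T$'' — I expect this needs the precise relationship between $T$, the subtori generated by $\langle \XX_\dd, \XX_{-\dd}\rangle \cap \TT$, and $\zent\GG$, which in turn uses that the root datum has no ``extra'' part beyond roots and the center, guaranteed by $\ov G$ being simple together with the fact that $\GG$ can be taken with $\GG = [\GG,\GG]\cdot\zent\GG$ and $[\GG,\GG]$ semisimple generated by its root subgroups.
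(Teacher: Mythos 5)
The block-count half of your argument follows the paper's structure quite closely (Proposition~\ref{blochp} and Corollary~\ref{charp} reduce to showing there is a unique simple projective $\bbF\ov G$-module; $\zent G$ is a $p'$-group, so this lifts to a simple projective $\bbF G$-module of type $M_{(\th,S)}$; then one must show $\th=1$). Your proposed generation argument for the final step is a plausible alternative to the paper's hint via Theorem~\ref{MthI}.(iii), and both are left as sketches, so this half is acceptable.

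The projectivity half, however, has a genuine gap. Your main route asserts that $M$ is projective over $\bbF G$ if and only if $H_Y(M)=\psi_{(\th,I)}$ is projective over $\HGU$, and attributes this to Green's theorem. But Theorem~\ref{Green} only produces a bijection on isomorphism classes of \emph{simple} modules; it is not a Morita equivalence and does not transport projectivity. In fact the functor $H_Y=\Hom_{\bbF G}(Y,-)$ is not even exact here, because $Y=\Ind^G_U\bbF$ is not a projective $\bbF G$-module ($U$ is a non-trivial $p$-group), so there is no general reason $H_Y$ should carry a projective $\bbF G$-module to a projective $\HGU$-module or conversely. Your ``alternative'' route (citing the Steinberg module as ``classically known'' to be the unique simple projective) is circular, and the proposed self-extension computation lives entirely on the $\HGU$ side and so again presupposes the unjustified transfer. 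The paper avoids all this by going through the Sylow $p$-subgroup $U_-$: since $M=\bbF U_-\!\cdot M^U$ (Theorem~\ref{MU}), one has $\Res^G_{U_-}M\cong\bbF U_-/V$ with $V$ the annihilator of $M^U$, and $M$ is projective over $\bbF G$ iff this restriction is free over the local ring $\bbF U_-$, i.e.\ iff $V=0$, i.e.\ iff the socle generator $\sigma=\sum_{u\in U_-}u$ does not kill $M^U$; that quantity is then computed to be $\prod_i\psi_{(\th,I)}(a_{\dot s_i})$ via (\ref{an0}), which is nonzero exactly when $I=S$. You would need to either adopt this Sylow restriction argument or supply a separate proof that $H_Y$ detects projectivity in this specific situation before your Hecke-algebra approach can be made to work.
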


The projective simple module associated with the admissible pair $(1,S)$ is called the \sing{Steinberg module}.

\noindent{\it Proof of the Theorems.}
	One applies Theorem~\ref{Green} to $A=\bbF G$, $Y=\Ind_U^G\bbF$. The condition (1) of the theorem is satisfied by Proposition~\ref{lform}. The condition (2) comes from the fact that if $M$ is simple $\Hom_{\bbF G}(Y,M)\cong M^U\not= 0$ since $U$ is a $p$-group and $Y$ is isomorphic with its $\bbF$-dual. The first statement of Theorem~\ref{MU} along with Theorem~\ref{MthI}.(i) and (ii) then come from Theorem~\ref{Green} and the explicit description we made of the functor $H_Y$ in our case, see (\ref{aact}).  
	
	For the equality $M=\bbF U_-.M^U$, it is enough to show that $\bbF U_-.M^U$ is stable under $G$. The latter is generated by $U_-$, $T$ and $S$, so we just check stability under $T$ and $S$. First $\bbF U_-.M^U$ is $T$-stable since $T$ normalizes $U$ and $U_-$. Let now $s\in S$ corresponding to $\dd\in\Delta$. Then $$sU_-M^U=s(U_-\cap U_-^s)X_{-\dd}M^U\inn U_-sX_{-\dd}M^U$$ so it suffices to show that $sxM^U\inn \bbF U_-M^U$ for any $x\in X_{-\dd}$. Using the Bruhat decomposition in $L_s=TX_\dd \cup X_\dd sTX_\dd$, one gets $x\in X_\dd sTX_\dd$ or  $x=1$. In the first case $$sxM^U\inn X_{-\dd}TX_\dd M^U=X_{-\dd}M^U\inn U_-M^U.$$ On the other hand $ (\sum_{x\in X_{-\dd}}x)sM^U\inn M^U$ by (ii) of Theorem~\ref{MthI}, so the case $x=1$ can be deduced from the first case just treated.
	
	(iii) The weaker statement that $\bbF L_J.M^U$ is a simple $\bbF L_J$-module is enough for our purpose. Indeed if $M'\inn \bbF L_J.M^U$ is a simple $\bbF L_J$-submodule, it has fixed points under the $p$-subgroup $U\cap L_J$, but $M'\inn M^{U_J}$ since $U_J$ is normalized by $L_J$, so $M'{}^{U\cap L_J}\inn M^{U_J(U\cap L_J)} = M^U$ by the Levi decomposition. Since $M^U$ is a line, $M'$ must contain it and therefore $M'\nni \bbF L_J.M^U$. 
	
On \Th{StM}. $M$ is projective if and only if its restriction to the Sylow
	 $p$-subgroup $U_-$ is projective (see for instance [Benson, 3.6.9] or (\ref{Vertexp}) below), i.e. free as an $\bbF U_-$-module. By
	  Theorem~\ref{MU}, the restriction to $U_-$ is $\bbF U_-/V$ where $V=\{v\in \bbF U_-\mid v.M^U=0 \}$, and $\bbF U_-/V$ 
	  is free if and only if $V=0$. Since the only simple submodule of $\bbF U_-$ is the line $\bbF \si$ where $\si =\sum_{u\in 
	  	U_-}u$, one gets that $M$ is projective if and only if $(\sum_{u\in U_-}u)M^U\not= 0$. Let $n_0\deq \dot s_1\dot s_2\dots \dot s_l$ where $s_1s_2\dots s_l$ is a decomposition of 
  	the $l_S$-longest element of $W$. By the Theorem~\ref{MthI}.(ii) giving the action of $\HGU$ on the line $M^U$, one has  \begin{equation}\label{an0}
  n_0^\mm (\sum_{u\in U_-}u)M^U=(\sum_{u\in U_-}u) n_0^\mm 
  	M^U=\psi_{\th ,I} (a_{n_0})M^U=\big(\prod_{i=1}^{l}\psi_{\th ,I}
  	(a_{\dot s})\big)M^U. 	\end{equation}	By the definition of $\psi_{\th ,I}$ and since $\{s_1,\dots , s_l\}=S$, one has 
  $\prod_{i=1}^{l}\psi_{\th ,I}(a_{\dot s})\not= 0$ if and only if $I=S$.

Now concerning the $p$-blocks. We know from Proposition~\ref{blochp} and Corollary~\ref{charp} that $\ov G$ has only one block of defect $\not=\{1\}$. Let's see that there is only one block of defect $\{1\}$. This would correspond to a simple projective $\bbF \ov G$-module. On the other hand $\zent G$ is a $\pp$-group (since $U\cap U_-=\{1\}$), so we get an $\bbF G$-module simple and projective. By Theorem~\ref{MthI} it has to be associated with an admissible pair $(\th ,S)$. It is not too difficult to check that when $G$ is perfect the condition $S_\th =S$ implies $\th =1$ (show first that the associated $M$ is one-dimensional as a consequence of Theorem~\ref{MthI}.(iii)).
\qed

\subsection{Relation with weight modules}

Assume that our pair $(\GG,F)$ from Sect. 1.B is such that $F$ acts trivially on the Weyl group $\norm{\GG}{\TT}/\TT$. The simple $\bbF\GF$-modules have been classified in the following way by R. Steinberg in the 60's (see [St63], [Hum3, \S 2.11]). First the irreducible rational representations $$\GG\to\GL_n(\bbF)$$ are classified by the subset of so-called \sing{dominant  weights} $X(\TT)^+\inn X(\TT)$ where $\la\in X(\TT)$ is dominant if and only if $(\la, \dd^\vee)\geq 0$ for any fundamental coroot $\dd^\vee\in \Phi(\GG,\TT)^\vee$, $\dd\in\Delta$. Let us denote by $\MM(\la)$ the corresponding $\GG$-module.

Most of the features described in 3.A above are also present regarding the rational modules $\MM(\la)$. The link between the two situations is provided by the following.

\begin{thm}\label{Stein} The $\MM(\la)$ for $\la$ such that $0\leq (\la, \dd^\vee)\leq q-1$ have irreducible restrictions to $\GF$. This gives all simple $\bbF\GF$-modules only once when $\GG=[\GG,\GG]$.
\end{thm}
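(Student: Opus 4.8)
The plan is to connect the Steinberg classification of rational simple modules $\MM(\la)$ with the classification of simple $\bbF\GF$-modules by admissible pairs obtained in \Th{MthI}. First I would recall the structure of $\MM(\la)$ as a $\GG$-module: it has a highest weight space $\MM(\la)_\la$ which is a line, it is generated as a $\UU_-$-module by that line (this is the rational analogue of $M=\bbF U_-.M^U$), and $\TT$ acts on $\MM(\la)_\la$ by the character $\la$. The key local observation is that $\MM(\la)^\UU = \MM(\la)_\la$ when $\MM(\la)$ is simple: any $\UU$-fixed vector is a sum of weight vectors of weights $\le\la$, and simplicity forces it to lie in the top weight space. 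Restricting to $\GF$, the subspace $\MM(\la)^U$ still contains $\MM(\la)_\la$; one must check it is exactly this line, which I expect to follow by the same weight argument together with the density of $\GF$-orbits, or more simply because $\MM(\la)^U$ being $T$-stable and a single line forces the restriction to be simple by the uniqueness in \Th{MU}--\Th{MthI} once we know $\Res^\GG_\GF\MM(\la)$ has a simple submodule with one-dimensional $U$-fixed space.

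Next I would identify, for $0\le(\la,\dd^\vee)\le q-1$, the admissible pair attached to (a simple submodule of) $\Res^\GG_\GF\MM(\la)$. The character $\th$ is just the restriction of $\la$ to $T=\TT^F$. For the set $I\inn S$, I would use \Th{MthI}.(ii): for $s\in S$ associated with $\dd\in\Delta$, $s\in I$ if and only if $\sum_{u\in X_\dd}u\dot s$ acts by $-1$ on $M^U$. One computes this inside the rank-one group $\spann<\XX_\dd,\XX_{-\dd}>\cong\SL_2$ or $\PGL_2$: the relevant $\SL_2(q)$-submodule generated by the highest weight vector is the restriction of the rational module of $\SL_2$ of highest weight $m=(\la,\dd^\vee)$, and one knows explicitly that $\sum_{u}u\dot s$ acts on the highest weight line by a scalar which is $-1$ precisely when $0\le m\le q-1$ (for $m=0$ this is the trivial module and the scalar is $q\equiv 0$, for $1\le m\le q-1$ it is $-1$). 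Hence $I=\{s\in S\mid (\la,\dd_s^\vee)\ge 1\}$ and, crucially, $(\la,I)$ is automatically admissible because $(\la,\dd^\vee)=0$ for $s\notin I$ gives $\th(T_s)=1$, so $S_\th\supseteq I$.

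Finally I would check this assignment $\la\mapsto(\th,I)$ is a bijection onto all admissible pairs when $\GG=[\GG,\GG]$, which by \Th{MthI} is equivalent to each simple $\bbF\GF$-module arising exactly once. Surjectivity: given $(\th,I)$ admissible, since $\GG$ is simply connected semisimple the restriction map $X(\TT)\to\Hom(T,\Fm)$ is surjective (one has $\Hom(T,\Fm)\cong X(\TT)/(F-1)X(\TT)$ by Lang's theorem, and $F-1=q-1$ on $X(\TT)$), so pick $\la$ with $\la|_T=\th$ and $0\le(\la,\dd^\vee)\le q-1$ for all $\dd\in\Delta$; adjusting $\la$ by $(q-1)X(\TT)$ one can further arrange $(\la,\dd_s^\vee)\ge 1$ exactly for $s\in I$ using that $\th(T_s)=1$ forces $(q-1)\mid(\la,\dd_s^\vee)$ for $s\notin I$. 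Injectivity / ``only once'': two such $\la,\la'$ giving the same pair agree on $T$, so $\la-\la'\in(q-1)X(\TT)$, and the constraint $0\le(\la,\dd^\vee)\le q-1$ together with matching $I$ pins down each $(\la,\dd^\vee)$ in its residue class, giving $\la=\la'$. I expect the main obstacle to be the explicit $\SL_2$-computation of the scalar by which $\sum_{u\in X_\dd}u\dot s$ acts on the highest weight line of $\MM(m)$ and the bookkeeping with the semisimple simply connected hypothesis needed for the weight lattice to be large enough; the rest is formal once \Th{MthI} and \Th{MU} are in hand.
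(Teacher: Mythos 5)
Your overall strategy — reducing to the admissible-pair classification of Theorem~\ref{MthI} and computing the scalar action of the Yokonuma--Hecke generator $a_{\dot s}$ in the rank-one subgroup — is the right one and is what lies behind Proposition~\ref{lath}. But the rank-one computation is wrong, and the error contaminates everything downstream. For $\SL_2$ acting on $\mathrm{Sym}^m(V)$ with $v_+=e_1^m$, $u(a)\colon e_2\mapsto ae_1+e_2$, $\dot s\colon e_1\mapsto -e_2,\ e_2\mapsto e_1$, one gets
\[
\sum_{a\in\bbF_q}u(a)\,\dot s\,v_+=(-1)^m\sum_{k=0}^{m}\binom{m}{k}\Big(\sum_{a\in\bbF_q}a^k\Big)e_1^ke_2^{m-k},
\]
and $\sum_{a\in\bbF_q}a^k=0$ for $0\le k\le q-2$ (for $k=0$ this is $q\equiv 0$), while $\sum_{a\in\bbF_q}a^{q-1}=-1$. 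So the sum vanishes for all $0\le m\le q-2$ and equals $-v_+$ only when $m=q-1$. The correct set is therefore $I=\{s\in S\mid(\la,\dd_s^\vee)=q-1\}$, exactly as Proposition~\ref{lath} states, and not $\{s\mid(\la,\dd_s^\vee)\ge 1\}$.

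This is not a cosmetic slip: with your $I$ the pair $(\th,I)$ is in general not even admissible. Admissibility requires $I\subseteq S_\th$, yet you argue the irrelevant inclusion $S\setminus I\subseteq S_\th$; and for $s\in I$ with $1\le(\la,\dd_s^\vee)\le q-2$ the character $\th|_{T_s}\colon t\mapsto t^{(\la,\dd_s^\vee)}$ is nontrivial, so $s\notin S_\th$. With the corrected $I$ the check is immediate, since $(\la,\dd_s^\vee)=q-1$ kills $\th$ on $T_s$. The surjectivity and uniqueness bookkeeping in your final paragraph also needs to be redone with the correct dictionary: $\th$ determines each $(\la,\dd_s^\vee)$ modulo $q-1$, and the only ambiguity in the restricted range $\{0,\dots,q-1\}$, namely $0$ versus $q-1$, arises precisely when $\th(T_s)=1$ and is resolved by whether $s\in I$. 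Once this is fixed, the bijection in the theorem follows along the lines you outline.
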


Among the properties of the $\MM(\la)$'s is the fact that $\MM(\la)$ has a line of fixed points under $\UU$. The torus $\TT$ acts by $\la$ on that line, and one proves easily the following relation with the description given before

\begin{prop}[{}]\label{lath} The admissible pair associated to $\Res^\GG_\GF\MM(\la)$ is $(\th, I)$ where $\th =\Res^\TT_{\TT^F} \la$ and $I\inn S$ is in bijection by (\ref{deltas}) with the fundamental roots $\delta$ such that $(\la, \delta^\vee)=q-1$.
\end{prop}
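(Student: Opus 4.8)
The strategy is to pin down the admissible pair $(\th,I)$ attached to the restriction $\Res^\GG_\GF\MM(\la)$ directly from its characterization in Theorem~\ref{MthI}, parts (i) and (ii), by identifying the line of $\UU$-fixed points and computing the $\HGU$-action of the $a_{\dot s}$ on it. First I would recall that $\MM(\la)$ has a one-dimensional $\UU$-fixed subspace on which $\TT$ acts by the character $\la$; by Theorem~\ref{Stein} the restriction to $\GF$ is simple, and $\MM(\la)^{\UU}\subseteq \MM(\la)^{U}$ is still a line since $U=\UU^F$ is a $p$-group and the Steinberg module argument (or Theorem~\ref{MU}) shows $\MM(\la)^U$ is exactly one-dimensional. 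Restricting $\la$ to $T=\TT^F$ then shows $T$ acts by $\th\deq\Res^\TT_{\TT^F}\la$ on the line $M^U$, which by Theorem~\ref{MthI}.(i) gives the first coordinate of the admissible pair.

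Next I would compute, for each $s\in S$ corresponding to $\dd\in\Delta$, the element $\sum_{u\in X_\dd}u\dot s.m$ for $m$ spanning $M^U$, and show it equals $-m$ precisely when $(\la,\dd^\vee)=q-1$ and $0$ otherwise; by Theorem~\ref{MthI}.(ii) this identifies $I$ as the set of $s$ with $(\la,\delta_s^\vee)=q-1$. The natural way to do this is to restrict to the rank-one situation: the subgroup $\langle \XX_\dd,\XX_{-\dd},\TT\rangle$ is (essentially) of type $A_1$, its fixed-point-under-$F$ group involving an $\SL_2(q)$ or $\GL_2(q)$-like factor, and the submodule of $\MM(\la)$ generated by the highest weight line under this rank-one group is the restriction of a rational $\SL_2$-module of highest weight $\langle\la,\dd^\vee\rangle$. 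One then reduces to the explicit computation inside $\SL_2(q)$ acting on the space of degree-$d$ binary forms with $0\le d\le q-1$: the operator $\sum_{u\in X}u\cdot \dot s$ applied to the highest weight vector is a classical computation (a Vandermonde-type sum over $\bbF_q$), yielding a nonzero multiple — which one normalizes to $-1$ as in the setup of $a_{\dot s}$ — exactly when $d=q-1$, and $0$ when $d<q-1$. This is the computation underlying part (ii) of Theorem~\ref{MthI} in the first place, so it can be quoted or re-derived quickly.

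The one point that needs a little care, and which I expect to be the main obstacle, is matching conventions: the description in Theorem~\ref{MthI}.(ii) involves the operator $\sum_{u\in X_\dd}u\dot s$ and a specific representative $\dot s\in N\cap X_\dd X_{-\dd}X_\dd$, while the statement of the proposition is phrased in terms of the coroot pairing $(\la,\dd^\vee)$ and the bound $0\le(\la,\dd^\vee)\le q-1$ coming from Theorem~\ref{Stein}. One must check that the ``$\langle\la,\dd^\vee\rangle=q-1$'' condition in the rank-one $\SL_2$-module is the same as the condition ``$s\in I$'' for the admissible pair, i.e. that the normalizations chosen for $a_{\dot s}$ (so that its eigenvalue on a simple module is $-1$ or $0$) are compatible with the sign in the Vandermonde computation. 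Granting the rank-one reduction and this bookkeeping, the result follows, since by Theorem~\ref{MthI} the simple $\bbF G$-module $M$ is determined by the pair $(\th,I)$, and we have computed both components.
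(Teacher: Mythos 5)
The paper gives no proof here — it only remarks that ``one proves easily'' the identification — so there is nothing to compare against; your plan is a correct and natural way to fill the gap and is almost certainly the intended one. Pinning down $\th$ via Theorems~\ref{MU} and \ref{MthI}.(i) from the $\UU$-fixed (hence $U$-fixed) line, and then reading off $I$ from Theorem~\ref{MthI}.(ii) by a rank-one computation, is the right structure.

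Two small points worth making explicit when you write it up. First, the Vandermonde step needs no normalization: writing $d=(\la,\dd^\vee)$ and $v^-=\dot s\cdot v^+$ for the lowest-weight vector of the $\dd$-weight string through $\la$, the coefficient of $v^+$ in $\sum_{a\in\bbF_q}u_a v^-$ is literally $\sum_{a\in\bbF_q}a^d$, which equals $-1$ exactly when $d=q-1$ and $0$ for $0\le d<q-1$; one also checks that the other weight coefficients vanish. Second, for $q\ne p$ the simple rational $\SL_2$-module of highest weight $d\le q-1$ is not the $d$-th symmetric power of the natural module but a Steinberg tensor product over the $p$-adic digits $d=\sum_i d_ip^i$, and the fact that the $\dd$-weight string of $\MM(\la)$ through $\la$ is isomorphic to it as an $\SL_2$-module is a result of Smith (the same [Sm82] cited for Theorem~\ref{MthI}.(iii)); the computation still goes through because on the $i$-th Frobenius-twisted factor $u_a$ acts as $u_{a^{p^i}}$, so the $v^+$-coefficient of $u_av^-$ is $\prod_i a^{p^id_i}=a^d$, and the remaining weight coefficients involve $\sum_a a^m$ with $0<m<q-1$, hence vanish. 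With these details filled in the proof is complete.
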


\subsection{Alperin weight conjecture}

\bigskip

For $\bbF$ an algebraic closure of $\bbF_p$ and $H$ a finite group, let us recall $\Alp (H)$ \index{$\Alp (H)$} \index{Alperin weight conjecture}the set of $H$-conjugacy classes of pairs $(Q,\pi)$ where $Q$ is a $p$-subgroup of $H$ and $\pi$ is a simple projective $\bbF (\norm HQ/Q)$-module.
Alperin conjectured that 
\begin{equation}\label{AWC}
\text{  $    |\Alp (H)|$ equals the number of simple $\bbF H$-modules }
\end{equation} for all finite groups $H$ and primes $p$ [Al87].

\smallskip

We take $G=\GG^F$ as before.

\begin{thm}\label{AlpG} $G$ satisfies Alperin's weight conjecture (\ref{AWC}) for the prime $p$. Moreover there is a map $M\mapsto (Q,\pi)$ inducing a bijection $\IBr (G)\to \Alp(G)$ and such that \begin{enumerate}[\rm(i)]
		\item the bijection is $\Aut (G)$-equivariant
	\item $\pi$, seen as an $\bbF\norm GQ$-module, is a submodule of $M^Q$.
\end{enumerate}
\end{thm}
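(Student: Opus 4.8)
The strategy is to exploit Theorem~\ref{MthI}.(iii), which describes how the $U_J$-fixed points of a simple module restrict to the Levi $L_J$, together with the classification of simple projective modules provided by \Th{StM}. Given a simple $\bbF G$-module $M$, associated by Theorem~\ref{MthI} to an admissible pair $(\th,I)$, I would define $J\deq I$ and $Q\deq U_I=\OP{P_I}$. Since $\norm G{U_I}=P_I=U_I\rtimes L_I$ by Example~\ref{UIs}, the group $\norm G{U_I}/U_I$ is isomorphic to $L_I$, and I must produce a simple projective $\bbF L_I$-module $\pi$. The natural candidate is $M^{U_I}$: by Theorem~\ref{MthI}.(iii) this is a simple $\bbF L_I$-module associated with the admissible pair $(\th, I\cap I)=(\th,I)$ \emph{of} $L_I$. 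Now $I$ is the full set of Coxeter generators of the Weyl group of $L_I$, so by the projectivity criterion in \Th{StM} applied inside $L_I$ (namely, a simple $\bbF L_I$-module is projective iff its admissible pair has second component the whole generating set), the module $M^{U_I}=\pi$ is projective over $\bbF L_I$. This gives the map $M\mapsto (U_I,M^{U_I})$; property (ii) holds by construction since $\pi=M^{U_I}=M^Q$ is literally (the restriction to $\norm GQ$ of) a submodule of $M^Q$, in fact all of it.

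\textbf{Bijectivity.} To see the map lands in $\Alp(G)$ up to conjugacy and is well-defined I note that $(\th,I)$ determines $M$ up to isomorphism (Theorem~\ref{MthI}) and conversely; and I must check injectivity and surjectivity onto $\Alp(G)$. For injectivity: if $M\mapsto(U_I,\pi)$ and $M'\mapsto(U_{I'},\pi')$ with $(U_I,\pi)$ conjugate to $(U_{I'},\pi')$, then by Theorem~\ref{BTits}.(2) the conjugacy ${}^g(U_I)=U_{I'}$ forces $I=I'$ and $g\in P_I$; then $\pi$ and $\pi'$ are conjugate simple $\bbF L_I$-modules via an element of $P_I$, hence isomorphic, so their admissible pairs $(\th,I)$ and $(\th',I)$ agree, giving $M\cong M'$. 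For surjectivity I would argue that by Theorem~\ref{BTits}.(1) every $p$-radical subgroup of $G$ — and only $p$-radical subgroups $Q$ can carry a projective module for $\norm GQ/Q$, since a simple projective $\bbF(\norm GQ/Q)$-module forces $\OP{\norm GQ/Q}=\{1\}$, i.e. $Q=\OP{\norm GQ}$ — is conjugate to some $U_I$; then a simple projective $\bbF L_I$-module is, again by \Th{StM} inside $L_I$, exactly one associated with an admissible pair $(\th,I)$ of $L_I$, which is the image of the simple $\bbF G$-module attached to $(\th,I)$. This also yields the count $|\Alp(G)|=|\IBr(G)|$, i.e.~(\ref{AWC}).

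\textbf{$\Aut(G)$-equivariance.} For (i), I would observe that the construction $M\mapsto(U_I,M^{U_I})$ is canonical: an automorphism $\alpha\in\Aut(G)$ sends a Borel $\BB^F$ to another Borel, hence (after adjusting by an inner automorphism, which does not change the conjugacy class of the output) can be taken to stabilize $B$, $T$, and permute $\ov S$; it then carries $U_I$ to $U_{\alpha(I)}$ and $M^{U_I}$ to $(\alpha M)^{U_{\alpha(I)}}$, so the map commutes with the $\Aut(G)$-actions on $\IBr(G)$ and on $\Alp(G)$. Some care is needed with graph automorphisms and, when $F$ acts nontrivially, with the identification $\ov S=S/\langle F\rangle$, but the naturality of fixed-point functors handles this uniformly.

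\textbf{Main obstacle.} The delicate point is surjectivity onto $\Alp(G)$: one must be sure that \emph{every} simple projective $\bbF(\norm GQ/Q)$-module arises this way, which requires (a) knowing that $Q$ must be $p$-radical and hence $G$-conjugate to a $U_I$ (this is \Th{BTits}), and (b) knowing the simple projective modules of the Levi $L_I$ explicitly — but $L_I=\LL_I^F$ is itself a group of the form studied in Sections 1--3, so \Th{StM} and Theorem~\ref{MthI} apply to it verbatim, provided one is careful that $L_I$ need not be quasi-simple and may have disconnected center or several Lie-type factors. Ensuring \Th{StM}'s classification of simple projectives holds for such possibly-non-perfect $L_I$ (the relevant statement being just the projectivity criterion $I=S$ of the first part of \Th{StM}, which does not need perfectness) is the part I would check most carefully.
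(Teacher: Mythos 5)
Your proposal is correct and follows essentially the same route as the paper's own proof: both pass through Theorem~\ref{MthI}.(iii) to identify the $\bbF L_I$-module carried by $U_I$, use the projectivity criterion of \Th{StM} applied inside $L_I$ (which, as you rightly observe, needs only the first half of that theorem and hence no perfectness hypothesis), invoke \Th{BTits} to classify the relevant $p$-radical subgroups, and reduce both $\IBr(G)$ and $\Alp(G)$ to admissible pairs. You also correctly identify $\pi$ with all of $M^Q=M^{U_I}$, which matches the paper's $\bbF\norm GQ M^U$.

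The one spot where the two treatments genuinely diverge is the $\Aut(G)$-equivariance. You argue that, after composing with an inner automorphism, any $\alpha\in\Aut(G)$ may be taken to stabilize $B$ and $T$ and permute $\ov S$. That conclusion is true, but it quietly relies on the nontrivial fact (Steinberg's description of $\Aut(\GF)$) that abstract automorphisms of the finite group $G$ respect its Lie-type structure — in particular that they send Borel subgroups to Borel subgroups; this is not something Sylow theory alone provides (Sylow gives conjugacy of $U$'s, not of pairs $(B,T)$). The paper sidesteps this: it characterizes $Q$ intrinsically as the smallest $p$-radical subgroup of $G$ normal in $U$ such that $\bbF\norm GQ M^U$ is simple projective for $\norm GQ/Q$, a description that manifestly commutes with any automorphism preserving $U$, and then invokes only the Sylow property of $U$ to upgrade this to full $\Aut(G)$-equivariance. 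If you want your version of (i) to be self-contained, you should either cite the structure theorem for $\Aut(\GF)$ or replace the Borel-fixing step with the paper's $U$-only argument.
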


\begin{proof} Let us first note that for any $(Q,\pi)$ in $\Alp (G)$, the subgroup $Q$ is $p$-radical, or equivalently that $\OP L=\{1\}$ for $L\deq \norm GQ/Q$. Indeed $L$ has an $\bbF L$-module $\pi$ that is simple and projective. Then $\pi^{\OP L}$ is a non trivial $\bbF L$-submodule, so $\OP L$ acts trivially on $\pi$. On the other hand $\pi$ remains projective when restricted to $\OP L$, so it is a free $\bbF \OP L$-module. This is possible only if $\OP L=\{1\}$.
	
	Now by Theorem~\ref{BTits}.(1) and (2), $\Alp (G)$ is in bijection with the set of pairs $(U_I,\pi)$ where $I\inn S$ and $\pi$ is an $\bbF L_I$-module that is simple and projective. From Theorem~\ref{MthI} we know that any such module is associated to an admissible pair $(\th ,I)$ of $L_I$, i.e., $\th\co T\to\Fm$ has to be such that $\th (T_\dd)=\{1\}$ for any $\dd\in \Delta $ corresponding to an element of $I$. 
	
	By Theorem~\ref{MthI} we then see that our sets $\IBr(G)$ and $\Alp(G)$ are both in bijection with the admissible pairs for $G$. 
	
	Moreover if $M\leftrightarrow (\th ,I)\leftrightarrow (Q,\pi)=(U_I, M_{L_I}(\th ,I))$, one has that $Q$ is the smallest $p$-radical subgroup of $G$ normal in $U$ such that $\bbF \norm G{Q}M^U$ is a simple projective module for $\norm GQ/Q$ (use Theorem~\ref{MthI}.(iii)). Then $\pi =\bbF \norm G{Q}M^U$. This intrinsic definition of the map shows that it is equivariant for automorphisms that preserve $U$. The latter being a Sylow $p$-subgroup and inner automorphisms acting trivially on both $\IBr(G)$ and $\Alp(G)$, we actually get equivariance for $\Aut(G)$.
	\end{proof}

\begin{rem}\label{remA} (a) Let us recall Green's notion of \sing{vertex} of an indecomposable $kH$-module $M$ (see [NagaoTsu, \S 4.3]). It is a subgroup $V$ of $H$ minimal for the property that $M$ is isomorphic to a direct summand of $\Ind_V^G\Res_G^VM$. It is easy to see that of \begin{equation}\label{Vertexp}
	\text{  $  M  $ is a direct summand of $\Ind_V^G\Res_G^VM$ for $V$ a Sylow $p$-subgroup of $H$. }
	\end{equation} Consequently the vertex of an indecomposable module is always a $p$-subgroup. 
	
On the other hand, Alperin's conjecture asks for associating to each simple $\bbF H$-module a conjugacy class of $p$-subgroups of $H$. When $H$ is $p$-solvable, the vertex of the given simple $\bbF H$-module provides such a correspondence (see [Oku81, 4.1], [IsNa95]). This can't be a solution for our groups $G=\GF$ since there, when $G/\Z(G)$ is simple non abelian, vertices of simple modules are Sylow $p$-subgroups except for simple projective modules (Dipper, see [Di80], [Di83]).
		
		(b) In this case of the defining characteristic a more suggestive definition for $Q$ associated with a simple $\bbF G$-module $M$ is as follows. The module $\Ind^G_U\bbF$ decomposes as a sum $Y_1\oplus \dots \oplus Y_v$ where the $Y_i$ are indecomposable. Then $M$ is the quotient of a single $Y_{i_0}$, and $Q$ is the vertex of that $Y_{i_0}$.
		
		(c) In our case, all $p$-radical subgroups of $G$ are present in $\Alp(G)$. Whether this is a general fact relates strongly with the question of quasi-simple groups having blocks of central defect (case of $Q=\OP{G}$). Quasi-simple groups $\GF$ have such blocks for all primes. For the primes 2 and 3, there are infinitely many alternating groups $\Alt_n$ without block of defect zero. For all that, see below \Th{10.1}.

\end{rem} 
\bigskip

{ }
 
\bigskip

{ }

\bigskip

{ }

%%%%%%%%%%%%%%%%%%%%%%%%%%%%%%%%%%%%%%%%%%%%%%%%%%%%%%%%%%%%%%%%

%%%%%%%%%%%%%%%%%%%%%%%%%%%%%%%%%%%%%%%%%%%%%%%%%%%%%%%%%%%%%%%%

\centerline{\bf II. NON-DEFINING CHARACTERISTIC ($\ell \not= p$)}

%%%%%%%%%%%%%%%%%%%%%%%%%%%%%%%%%%%%%%%%%%%%%%%%%%%%%%%%%%%%%%%%

%%%%%%%%%%%%%%%%%%%%%%%%%%%%%%%%%%%%%%%%%%%%%%%%%%%%%%%%%%%%%%%%

{}
\bigskip

{}

\section{Rational series and $\ell$-blocks}

From now on we will be looking at modular aspects of the representations of our groups $\GF$ (see Sect. 1.B) with regard to a prime $\ell$ different from the defining prime $p$. So we assume essentially that a prime $\ell\not= p$ has been chosen and also that we have a so-called \sing{$\ell$-modular system} $(\OO ,K,k)$ where 
$\OO$ is a complete valuation ring with $\ell\in J(\OO)$, $K$ its fraction field, $k=\OO/J(\OO)$. One assumes that $\OO$ contains $|H|$-th roots of 1 for any finite group $H$ we encounter (see [NagaoTsu, \S 3.6]). 

\subsection{Blocks and central functions}

Let us recall the notion of $\ell$-blocks of a finite group $H$ as decomposing the group algebra $\ov k H=B_0\times B_1\times \cdots \times B_\nu$ as in (\ref{Blo}) where $B_i=\ov k Hb_i$ with $b_i$ primitive idempotents of the center $\Z ({\ov k H})$. The field $k$ having enough roots of unity with regard to $H$, the $b_i$ belong in fact to $k H$ (one can also impose that $k=\ov k$, see [AschKeOl, Sect. IV.4.2]). On the other hand it is easy to see that reduction mod $J(\OO)$ induces an epimorphism
\begin{equation}\label{ilift}
\Z ({ \OO H})\rightarrow\Z ({ k H})\end{equation}
 which implies through the lifting of idempotents that the blocks of $\OO H$ and the $p$-blocks of $H$ identify by $\OO H e_i\mapsto kHb_i$ where $b_i$ is the reduction mod $J(\OO)$ of $e_i$. Now the $p$-blocks of $H$ also induce a partition of $\Irr (H)$, whose elements are seen as isomorphism types of simple $K H$-modules, namely $\Irr(H)=\sqcup_i\Irr (B_i)$ \index{$\Irr (B)$}corresponding to the decomposition $KH=\prod_i KHe_i$.
We will also need to look at character values, that is we see the elements of $\Irr (H)$ as central functions $H\to K$. Noting that the elements of $\Irr(H)$ take values in the subring of $K$ generated by the $|H|$-th roots of 1, we even see $\Irr (H)$ as a $\bbC$-basis of $\CF (H)$ \index{$\CF (H)$} the complex vector space of central functions $H\to \bbC$, hence with the decomposition \begin{equation}\label{blCF}
\CF(H)=\oplus_i\CF (H\mid B_i).\end{equation} Each element $\chi\in\Irr (H)$ defines some central idempotent $e_\chi\deq {\chi(1)\over |H|}\sum_{h\in H}\chi (h^\mm)h\in \Z(K H)$\index{$e_\chi$} and \begin{equation}\label{echi}
e_i=\sum_{\chi\in\Irr(B_i)}e_\chi .\end{equation}

We are later interested in ``union of blocks" in $\Irr(H)$. It is an easy exercice to prove the following.

\begin{pro}\label{regB} Let $A\inn\Irr(H)$. The three statements below are equivalent. \begin{enumerate}[\rm(i)]
		\item $A$  is a union of subsets $\Irr (B_i)$.
		\item $\sum_{\chi\in A}^{}e_\chi\in \OO H$.
		\item The projection $\pr_A\co\CF(H)\to\CF(H)$ associated to $A$, sends the regular character $\reg_H$\index{$\reg_H$} to a central function with values in $|H|\OO =|H|_\ell\OO$, namely \begin{equation}
		\pr_A(\reg_H)(h)\in |H|_\ell\OO \text{ for all }h\in H.
		\end{equation}
	\end{enumerate} 
\end{pro}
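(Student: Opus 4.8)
The plan is to prove the cycle of equivalences (i)$\Rightarrow$(ii)$\Rightarrow$(iii)$\Rightarrow$(i), working throughout with the primitive central idempotents $e_i\in\OO H$ (lifted from the $b_i$ as explained just before the statement) and the character idempotents $e_\chi\in\Z(KH)$, recalling the relation $e_i=\sum_{\chi\in\Irr(B_i)}e_\chi$ from \eqref{echi}. I would set $e_A\deq\sum_{\chi\in A}e_\chi\in\Z(KH)$ throughout.

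First, (i)$\Rightarrow$(ii): if $A=\bigsqcup_{i\in J}\Irr(B_i)$ for some index set $J$, then $e_A=\sum_{i\in J}e_i$ by \eqref{echi}, and each $e_i\in\OO H$, so $e_A\in\OO H$. Next, (ii)$\Rightarrow$(iii): I would compute $\pr_A(\reg_H)$ explicitly. Since $\reg_H=\sum_{\chi\in\Irr(H)}\chi(1)\chi$ and $e_\chi=\frac{\chi(1)}{|H|}\sum_{h}\chi(h^\mm)h$, one checks that $\pr_A(\reg_H)$ is the class function whose value at $h$ is $\sum_{\chi\in A}\chi(1)\chi(h)$; on the other hand this is $|H|$ times the coefficient of $h^\mm$ in $e_A$ when $e_A$ is written out in the basis $H$ of $KH$ (using that the coefficient of the identity in $e_\chi\cdot$(anything) extracts character values). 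Concretely, writing $e_A=\sum_{h}c_h\, h$ with $c_h\in K$, one has $c_h=\frac{1}{|H|}\sum_{\chi\in A}\chi(1)\chi(h^\mm)=\frac{1}{|H|}\pr_A(\reg_H)(h^\mm)$, so $\pr_A(\reg_H)(h)=|H|\,c_{h^\mm}$. Hence $e_A\in\OO H$ forces all $\pr_A(\reg_H)(h)\in|H|\OO$; and since $\ell$ is the only relevant prime in $\OO$ (the $\ell'$-part of $|H|$ is a unit in $\OO$), $|H|\OO=|H|_\ell\OO$, giving (iii).

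Finally, (iii)$\Rightarrow$(i): reversing the above computation, the hypothesis $\pr_A(\reg_H)(h)\in|H|_\ell\OO=|H|\OO$ for all $h$ says exactly that $e_A=\sum_h c_h h$ has all coefficients $c_h\in\OO$, i.e. $e_A\in\OO H$. Now $e_A$ is a sum of a subset of the orthogonal primitive idempotents $\{e_\chi\}$ of $\Z(KH)$, hence an idempotent of $\Z(KH)$, and it lies in $\Z(\OO H)$. The key point is that reduction mod $J(\OO)$ gives $\Z(\OO H)\twoheadrightarrow\Z(kH)$ as in \eqref{ilift} and, by the lifting of idempotents, the primitive idempotents of $\Z(\OO H)$ are exactly the $e_i$. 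Therefore any idempotent of $\Z(\OO H)$ is a sum of a subset of the $e_i$. Writing $e_A=\sum_{i\in J}e_i$ and intersecting with \eqref{echi}, orthogonality of the $e_\chi$ forces $A=\bigsqcup_{i\in J}\Irr(B_i)$, which is (i).

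The main obstacle is the bookkeeping in (ii)$\Leftrightarrow$(iii): one must be careful about the inversion $h\mapsto h^\mm$ and the normalization factor $\chi(1)/|H|$ in $e_\chi$, and one must justify the identification $|H|\OO=|H|_\ell\OO$ — this rests on the standing assumption that $\OO$ contains enough roots of unity and $\ell\in J(\OO)$, so that the $\ell'$-part of $|H|$ is invertible in $\OO$. Everything else is a routine application of lifting of idempotents (the surjection \eqref{ilift}) together with the orthogonality and primitivity of the $e_\chi$ and $e_i$. I would present the argument compactly, doing the coefficient computation once and then reading both implications (ii)$\Rightarrow$(iii) and (iii)$\Rightarrow$(ii)-style direction off it.
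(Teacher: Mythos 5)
Your proof is correct. Note that the paper explicitly declares this proposition ``an easy exercise'' and supplies no proof of its own, so there is nothing to compare your argument against; what you give is the natural argument and it is complete. The only delicate points are exactly the ones you flag: the coefficient computation relating $e_A=\sum_{\chi\in A}e_\chi=\sum_h c_h h$ to $\pr_A(\reg_H)$, which you handle correctly (the coefficient is $c_h=\frac{1}{|H|}\pr_A(\reg_H)(h^\mm)$, so integrality of $e_A$ is literally the divisibility condition in (iii) after the inversion $h\mapsto h^\mm$); the observation that $|H|\OO=|H|_\ell\OO$ because the $\ell'$-part of $|H|$ is invertible in the local ring $\OO$ with $\ell\in J(\OO)$; and, for (iii)$\Rightarrow$(i), the fact that an idempotent of $\Z(\OO H)$ must be a sum of a subset of the block idempotents $e_i$. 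For that last step it is worth being slightly more explicit than ``lifting of idempotents'': the $e_i$ are a complete set of pairwise orthogonal primitive idempotents of $\Z(\OO H)$ (this is what the surjection \eqref{ilift} together with idempotent lifting over the complete local ring $\OO$ gives), hence $\Z(\OO H)\cong\prod_i e_i\Z(\OO H)$ with each factor a local ring, and in such a product every idempotent is a $0$--$1$ vector, i.e.\ a sum of a subset of the $e_i$. Combining with \eqref{echi} and linear independence of the $e_\chi$ then forces $A$ to be the corresponding union of $\Irr(B_i)$'s, as you say.
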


\subsection{Uniform and $p$-constant functions}

We return to our groups $G=\GF$ keeping the same notation except for the basic pair $\TT\leq\BB$ of $F$-stable maximal torus and Borel, that we rename $\TT_0\leq\BB_0$ since we will now allow our maximal tori (even when $F$-stable) {\bf not} to be included in $F$-stable Borel subgroups.  

We give some elements of Deligne-Lusztig's theory on $\Irr(\GF)$ in a very quick fashion. We refer to the contributions by Geck and Dudas for a more in-depth introduction (see [Ge17], [Du17]). 

There are basically two very important facts about ordinary characters of finite groups of Lie type. The {\bf functors} $\Lu\LL\GG$ \index{$\Lu\LL\GG$}and the existence of {\bf rational series}. A third basic feature -- a so-called {\bf Jordan decomposition of characters} -- will be seen later (see Sect. 4.D).

The functor $\Lu\LL\GG\co \bbZ\Irr(\LF)\to \bbZ\Irr(\GF)$ is defined as follows. 

One takes $\PP =\LL \RU(\PP)$ a Levi decomposition of a parabolic subgroup of $\GG$ and one assumes that $\LL$ (and not necessarily $\PP$) is $F$-stable. Then the variety 
\begin{equation}\label{YPP}
\text{  $    \YY_\PP = \{g\RU(\PP)\mid g^\mm F(g)\in \RU(\PP)F(\RU(\PP)) \}$\index{$    \YY_\PP$} }
\end{equation} is clearly acted on by $\LF$ on the right and $\GF$ on the left. Understandably any cohomology theory of that object would produce modules acted on by those finite groups on those sides. One denotes by $\rH^i_c(\YY_\PP)$\index{$\rH^i_c(\YY_\PP)$} the $i$-th cohomology group defined by $\ell$-adic cohomology with compact support of $\YY_\PP$ tensored by $\bbC$ (see more details in Sect. 9.A below), so as to give a $\bbC \GF\times \LF^\op$-module. 

\begin{defn}\label{RLG}
	One defines $\Lu{\LL}{\GG}\co \bbZ\Irr (\LF)\to \bbZ\Irr (\GF)$ by $$[M]\mapsto \sum_{i\in \bbZ}^{}(-1)^i[\rH^i_c(\YY_\PP)\otimes_{\bbC\LF} M]$$ where $[M]$ is the class of a $\bbC \LF$-module $M$ in $\bbN\Irr (\LF)$. One denotes by $$\slu{\LL}{\GG}\co \bbZ\Irr (\GF)\to \bbZ\Irr (\LF)$$ \index{$\slu{\LL}\GG$}the adjoint map for the usual scalar product of central functions.
\end{defn}

 \begin{rem}\label{4.4} (a) The maps defined above are independent of the choice of $\PP$ for a given $\LL$ in many cases in particular when $\LL$ is a torus (see [DigneMic, Ch. 11]). The independence in the general case relates with the validity of a reasonable Mackey formula similar to the one known for induction/restriction of characters of finite groups. The most complete result about such a formula is due to Bonnaf\'e-Michel [BoMi11] (see also [Tay17]).
 		
 		(b) When $\PP$ is $F$-stable, $\YY_\PP$ identifies with the finite set $\GF/\RU(\PP)^F$ and the functor $\Lu{\LL}{\GG}$ with the so-called parabolic or \sing{Harish-Chandra induction}, making any representation of $\LF$ into a representation of $\PP^F$ (through trivial action of $\RU(\PP)^F$) then applying ordinary induction $\Ind^\GF_{\PP^F}$. The formula 
 		\begin{equation}\label{D_G}
 		\text{  $  D_G=\sum_{I\inn S}(-1)^{|I|}\Lu{\LL_I}{\GG}\slu{\LL_I}{\GG}  $ }
 		\end{equation}
 involves only Harish-Chandra induction and is called \sing{Alvis-Curtis duality}. 
 
 (c) In order to see how many more functors Deligne-Lusztig theory constructs as opposed to usual functors defined from subgroups of $\GF$, let us focus on the case of $\LL$ a torus. In the finite group $G=\GF$ there is only one conjugacy class of subgroups one would call a maximal torus, the one denoted $T=\TT_0^F$ at the beginning of this section. Allowing any group $\TT^F$ for $\TT$ an $F$-stable maximal torus brings a lot more to the picture. Starting from our reference $\TT_0$, the $\GF$-conjugacy classes of $F$-stable maximal tori are in bijection \begin{equation}\label{FTori}
 g\TT_0g^\mm\mapsto g^\mm F(g)\TT_0\in W(\GG,\TT_0)
 \end{equation}with $W=W(\GG,\TT_0)$ mod the relation $w\sim_F vwF(v)^\mm$ for any $w,v\in W$. The element $g^\mm F(g)\TT_0$, or its $\sim_F$-class is called the {\bf type} of the $F$-stable maximal torus $g\TT_0g^\mm$\index{type of maximal torus}. This is a classical consequence of Lang's theorem (see [Ge17, 2.3]). For $\GL_n(\bbF)$ with $F$ the usual Frobenius on matrix entries, this gives as many classes of tori as the number of conjugacy classes of $\Sym n$.
  
 \end{rem}

 An important fixed point property of \'etale cohomology implies the following \sing{character formula} ([DL76, \S 3-4]).

\begin{pro}\label{chfla} If $(u,v)\in \GF\times \LF$ is assumed to be unipotent, let $$Q_\LL^\GG(u,v)=\sum_{i\in \bbZ}^{}(-1)^i \tr ((u,v), \rH^i_c(\YY_\PP)).$$
	
	If $su$ is the Jordan decomposition of an element of $\GF$ and $f\in\CF(\LF)$, then $$ \Lu{\LL}{\GG}(f)(su)=|\LF|^\mm |\ccent\GG s^F|^\mm \sum_{\{g\in\GF\mid s\in{}^g\LL\} } |\ccent{^g\LL} s^F|\sum_{v\in \ccent{^g\LL} s^F_u|}Q_{\ccent{^g\LL} s}^{\ccent{\GG} s}(u,v^\mm){}f(g^\mm svg).$$
\end{pro}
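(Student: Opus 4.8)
The plan is to derive the value $\Lu{\LL}{\GG}(f)(su)$ from the defining trace formula together with the Lefschetz fixed point theorem applied to the action of $\langle su\rangle\times\LF$ on the variety $\YY_\PP$. First I would unwind Definition~\ref{RLG}: for a class function $f$ written as a $\bbC$-combination of virtual characters of $\bbC\LF$-modules, the value of $\Lu{\LL}{\GG}(f)$ at $g_0\in\GF$ equals $|\LF|^{-1}\sum_{v\in\LF}\mathcal S(g_0,v)f(v^{-1})$, where $\mathcal S(g_0,v)=\sum_i(-1)^i\tr((g_0,v),\rH^i_c(\YY_\PP))$ is the full Lefschetz number of the pair acting on $\YY_\PP$. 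This is a purely formal manipulation of the tensor product $\rH^i_c(\YY_\PP)\otimes_{\bbC\LF}M$, using that $\tr(x,P\otimes_{\bbC\LF}M)=|\LF|^{-1}\sum_{v}\tr((x,v),P)\tr(v^{-1},M)$ for $P$ a $\bbC\GF\times\LF^{\op}$-module.

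The substantial input is the geometric evaluation of the Lefschetz number $\mathcal S(su,v)$. Here I would invoke the standard consequences of the Deligne--Lusztig fixed point analysis (as in [DL76, \S3--4] or [DigneMic, Ch.~10]): since $s$ is semisimple and $u$ unipotent with $su=us$, the fixed points $\YY_\PP^{(su,v)}$ are nonempty only when $v$ is $\LF$-conjugate into a torus meeting $\cent{\GG}{s}$ appropriately, and the Lefschetz number factorises through the variety attached to $\ccent{\GG}{s}$. Concretely, one uses (i) that $\rH^i_c$ of a variety on which a semisimple element acts can be computed on the fixed-point subvariety (the $s$-part), reducing $\YY_\PP$ for $\GG$ to the analogous variety $\YY_{\PP\cap\ccent\GG s}$ for the connected reductive group $\ccent{\GG}{s}$; and (ii) that the remaining unipotent parts $(u,v)$ contribute the Green-function-type numbers $Q_{\ccent{^g\LL}s}^{\ccent\GG s}(u,v^{-1})$. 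Summing the contributions of the various $\GF$-conjugates of $\LL$ that contain $s$, with the correct normalising orders $|\ccent{^g\LL}s^F|$ and $|\ccent\GG s^F|$, and re-indexing the sum over $v\in\LF$ as a sum over $g$ with $s\in{}^g\LL$ and then over $v\in\ccent{^g\LL}{s}^F_u$ (the unipotent elements of that centraliser), yields exactly the stated formula, the factor $f(g^{-1}svg)$ appearing because $v\mapsto g^{-1}svg$ tracks the conjugation identifying ${}^g\LL$ with $\LL$.

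The main obstacle, as expected, is step (ii): the reduction to centralisers of the semisimple part and the precise bookkeeping of conjugacy classes and centraliser orders. This is where one must be careful that $\ccent{\GG}{s}$ is connected (it is, since $\GG$ has connected centre in the untwisted reductive setting, or one argues with $\ccent\GG s$ in general and the component group contributes the indexing over $g$), that the parabolic $\PP\cap\ccent\GG s$ really has $\ccent\LL s$ as a Levi complement, and that the disjoint decomposition of $\YY_\PP^{(s,\cdot)}$ over double cosets produces precisely one term per relevant $g$. I would present this as a citation to the Deligne--Lusztig character formula in the form given in [DigneMic, 10.1--10.2] rather than reproving the fixed-point count, since the latter is the genuinely technical heart and is standard; the contribution of this proof is the repackaging of that count into the displayed identity and the verification that the adjointness and tensor-product formalism of Definition~\ref{RLG} matches the Lefschetz-number bookkeeping.
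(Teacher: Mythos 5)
Your plan is correct and matches the route the paper itself takes: the paper states Proposition~\ref{chfla} without proof, citing [DL76, \S 3--4] for the fixed-point analysis, and your sketch (trace formula via $\tr(x,P\otimes_{\bbC\LF}M)=|\LF|^{-1}\sum_v\tr((x,v),P)\tr(v^{-1},M)$, then reduction of the Lefschetz number of $(su,v)$ on $\YY_\PP$ to the variety attached to $\ccent{\GG}{s}$ and its Green-function contribution) is precisely the content being cited. You are also right that the genuine work is step (ii), and deferring it to [DL76] or [DigneMic] is exactly what the paper does.
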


One lists below some consequences of the character formula that prove useful in the proof of Brou\'e-Michel's theorem on $\ell$-blocks.

\begin{defn}\label{unif}
	We call \sing{uniform functions} the elements of $\CF(\GF)$ that are $\bbC$-linear combinations of $\Lu{\TT}{\GG}\th$ for $\TT$ an $F$-stable maximal torus and $\th\in\Irr(\TT^F)$.
	
	Some $f\in\CF(\GF)$ is called \sing{$p$-constant} if and only if $f(su)=f(s )$ for any Jordan decomposition $su\in G$.
\end{defn}

\begin{lem}\label{omni} Let $f\in\CF(G)$ be $p$-constant. \begin{enumerate}[\rm(i)]
		\item $f$ is uniform.
		\item If $\LL$ is an $F$-stable Levi subgroup of $\GG$ and $f'\in\CF(\LF)$, then $\Lu{\LL}{\GG}(f)f'=\Lu{\LL}{\GG}(f\Res^\GF_\LF f')$.
		\item If $\chi\in\CF(G)$ then $D_G(f\chi)=fD_G(\chi)$.
		\item $|G|_\pp^\mm D_G(\reg_G)$ is the characteristic function of the set of unipotent elements (i.e. $p$-elements) of $G$.
	\end{enumerate}
\end{lem}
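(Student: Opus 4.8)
\textbf{Proof plan for Lemma~\ref{omni}.}
The plan is to deduce all four statements from the character formula in Proposition~\ref{chfla}, proving (iv) first (it is the most concrete), then using it to get (iii) and (ii), with (i) following from (iv) as well. First I would prove (iv). Apply the character formula to $\GG$ itself with $\LL=\GG$, $\PP=\GG$ and $f=\reg_G$: for $su$ a Jordan decomposition, $\Lu{\GG}{\GG}=\id$ on class functions, so the formula degenerates and expresses $\reg_G(su)$ in terms of the Green functions $Q^{\ccent\GG s}_{\ccent\GG s}(u,v)$. A cleaner route: recall the known identity $D_G(\reg_G)=\sum_{I\inn S}(-1)^{|I|}\Lu{\LL_I}{\GG}\slu{\LL_I}{\GG}(\reg_G)$, and that $\slu{\LL_I}{\GG}(\reg_G)=\reg_{L_I}\cdot[\GF:\LL_I^F]_{?}$-type expression; more efficiently, use that in the Grothendieck group $D_G$ applied to the regular character picks out, via the character formula specialised to $\LL=\TT$, the uniform projection, and the fixed-point interpretation $Q^\GG_\TT(u,1)$ summed over tori computes the number of Borel subgroups containing $u$. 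The key classical input is that $\sum_{w}|\{$Borels of type $w$ containing $u\}|$ vanishes unless $u=1$ after the alternating sum, which is exactly the statement that $|G|_\pp^{-1}D_G(\reg_G)$ is supported on unipotent elements and equals $1$ there; this is the Steinberg-character/Curtis-type computation and I would cite or reproduce it from [DigneMic, Ch.~9] or [DL76]. The value at a unipotent element being exactly $1$ comes from $D_G$ being, up to sign, the Alvis--Curtis dual and $D_G(\St)=\reg$-type normalisation; concretely $|G|_\pp = |U|$ and the count of Borels containing $1$ gives the normalisation.

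Next, (i): a $p$-constant $f$ satisfies $f(su)=f(s)$, so $f$ is determined by its values on semisimple elements. Since $|G|_\pp^{-1}D_G(\reg_G)$ is the characteristic function of unipotent elements by (iv), one checks that $f$ can be written as a $\bbC$-combination of the functions $g\mapsto \Lu{\TT}{\GG}(\th)(g)$ by the following standard argument: the uniform functions are exactly the class functions lying in the span of the $\Lu{\TT}{\GG}(\th)$, and a class function is uniform iff it is orthogonal to every $\slu{\TT}{\GG}$-kernel; equivalently, by the second orthogonality/Mackey-type relations, $f$ is uniform iff its values are "controlled by semisimple parts" in a precise sense --- the cleanest phrasing is that the orthogonal projection $p_{\mathrm{unif}}$ onto uniform functions commutes with the operation $f\mapsto f\cdot(\text{char. fn. of unipotents})$, and a $p$-constant function is its own image under convolution against $|G|_\pp^{-1}D_G(\reg_G)$. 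I would instead argue directly: by the character formula with $\LL=\TT$, $\Lu{\TT}{\GG}(\th)(su)$ depends on $u$ only through Green functions $Q^{\ccent\GG s}_{\TT}(u,1)$, and summing over all tori and characters with suitable coefficients (coming from the known decomposition of the identity into Deligne--Lusztig characters, [DigneMic, 12.14]) reconstructs any function that is constant on $p$-parts --- this is precisely where one uses that $\sum_{\TT}\frac{1}{|W(\TT)|}\Lu{\TT}{\GG}\slu{\TT}{\GG}$ is the projector onto uniform functions and acts as the identity on $p$-constant functions because, evaluated via the character formula, it only sees semisimple parts.

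For (ii), the projection formula: use $\Lu{\LL}{\GG}(f\cdot\Res^\GF_\LF f')$ versus $\Lu{\LL}{\GG}(f)\cdot f'$. Both sides are class functions on $\GF$; I would compare their values at $su$ using Proposition~\ref{chfla}. Because $f$ is $p$-constant, in the inner sum $f(g^{-1}svg)=f(g^{-1}sg)$ (the $v$ is unipotent, sitting in $\ccent{{}^g\LL}{s}^F_u$, so it does not change the semisimple part), which lets the factor $f$ be pulled outside the sum over $v$ and even outside the sum over $g$ in a way that matches the character formula for $\Lu{\LL}{\GG}(f')$ multiplied by $f$. The identity $\Res^\GF_\LF$ appearing on the right is accounted for because on the left $f$ is already a function on $\GF$ restricted along the embedding ${}^g\LL\hookrightarrow\GG$. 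This is essentially [DigneMic, 12.6] and I would either cite it or carry out this bookkeeping. Statement (iii) is then immediate from (ii): $D_G=\sum_I(-1)^{|I|}\Lu{\LL_I}{\GG}\slu{\LL_I}{\GG}$ by \eqref{D_G}, and applying (ii) with $f$ and $f'=\slu{\LL_I}{\GG}(\chi)$ to each term --- together with the adjoint identity $\slu{\LL}{\GG}(f\chi)=(\Res^\GF_\LF f)\slu{\LL}{\GG}(\chi)$ for $p$-constant $f$, which follows from (ii) by adjunction --- gives $D_G(f\chi)=f D_G(\chi)$ term by term.

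\textbf{Main obstacle.} The genuinely substantial point is (iv), and within it the precise normalisation constant $|G|_\pp$: showing the function is supported exactly on unipotent elements is a clean alternating-sum-of-fixed-points argument (the Euler characteristic of a suitable space of Borels), but pinning down that the value on the unipotent set is the constant $1$ rather than some other $\ell$-adic unit requires the Steinberg-module computation $D_G(1_G)=\pm\St$ and $\St(1)=|G|_\pp$, i.e.\ the known fact that $|U|$ equals the largest power of $p$ dividing $|G|$. Once (iv) is secured with the right constant, (i)--(iii) are formal consequences of the character formula and the projection formula, so the weight of the proof is concentrated there; in a survey I would likely just invoke [DigneMic, Ch.~9, Ch.~12] for (iv) and (ii) and spend the words on (i) and (iii).
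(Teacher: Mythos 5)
The paper does not prove this lemma at all: it is presented as a list of ``consequences of the character formula,'' so there is no proof to compare against and I am evaluating your proposal on its own terms. The overall architecture you give is sound and is the standard one: (ii) by pulling a $p$-constant factor out of the double sum in the character formula, (iii) from (ii) by adjunction and the Alvis--Curtis decomposition (\ref{D_G}), (iv) by the Steinberg/Alvis--Curtis normalisation, and (i) via the uniform projector. Your bookkeeping for (ii) --- using $f(g^{-1}svg)=f(g^{-1}sg)=f(s)$ to extract $f$ from the inner sums --- is exactly right, and the adjoint identity $\slu\LL\GG(f\chi)=(\Res^\GF_\LF f)\slu\LL\GG(\chi)$ (plus the easy observation that $\bar f$ is again $p$-constant) does give (iii) term by term, as you say.

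Two genuine slips to fix. First, you misread the paper's notation: here $\pp$ denotes $p'$, so $|G|_\pp$ is the $p'$-part of $|G|$, not the $p$-part. Thus $\St(1)=|G|_p$, not $|G|_\pp$, and your normalisation argument as written produces the wrong constant. The correct pinning-down of $c$ in $D_G(\reg_G)=c\gamma_u$ uses $\langle D_G(\reg_G),1_G\rangle=\langle\reg_G,\St\rangle=|G|_p$ together with Steinberg's count of unipotent elements, $|G^F_{\text{unip}}|=|G|_p^2$, giving $c\cdot|G|_p^2/|G|=|G|_p$, i.e.\ $c=|G|_{p'}=|G|_\pp$. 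Second, your first paragraph on (i) --- that a $p$-constant function is ``its own image under convolution against $|G|_\pp^{-1}D_G(\reg_G)$'' --- does not make sense (pointwise multiplication by $\gamma_u$ kills $f$ off the unipotent set and is nowhere near the identity on $p$-constant functions), and indeed (i) does not follow from (iv); you rightly abandon this and pivot to the projector argument, which is the correct direction. The cleanest route, worth stating explicitly, is that the characteristic function of each semisimple conjugacy class is uniform (this is [DigneMic, 12.20], proved directly from the character formula with $\LL=\TT$), and any $p$-constant class function is by definition a $\bbC$-linear combination of these; no appeal to (iv) is needed. Finally, in (iv) one must also argue that $D_G(\reg_G)$ is \emph{constant} on unipotent elements, not just supported there; showing vanishing off the unipotent set from the $U_I$-support argument is the easy half, and the constancy is where the Curtis/Green function input actually enters.
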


\subsection{Rational series and Brou\'e-Michel's theorem}
 
 An important case of the functor $\Lu{\LL}{\GG}$ is when $\LL$ is a maximal torus. It allows an important partition of $\Irr(\GF)$. 
 
 One defines first a group $\GG^*$ dual to $\GG$. This means that a choice has been made of maximal tori $\TT_0\leq\GG$, $\TT_0^*\leq\GG^*$ such that $\Hom (\TT_0,\Fm)\cong \Hom (\Fm ,\TT_0^*)$ and $\Hom (\TT_0^*,\Fm)\cong \Hom (\Fm ,\TT_0)$ in a way that is compatible with roots and coroots. One also assumes that all those are stable under compatible Frobenius endomorphisms $F\co\GG\to\GG$, $F^*\co\GG^*\to\GG^*$.
 The interest of groups in duality is in the parametrization of characters of finite tori $\TT^F$. Indeed for $w\in W(\GG,\TT_0)$ identified with $w^*\in W(\GG^*,\TT_0^*)$, one has isomorphisms $\Irr(\TT_0^{wF})\cong {\TT_0^*}^{w^*F^*}$ (where the notation $wF$ stands for $F$ followed by conjugation by $w$).
 
 One also gets a bijection  
 
 	 $\{ \GF$-conjugacy classes of pairs $(\TT ,\th)$ where $\TT$ is an $F$-stable maximal torus of $\GG$ and $\th\in\Irr(\TT^F)\}$ 
 	 \begin{equation}\label{dualT}
 	 \updownarrow
 	 \end{equation}

 $\{\GG^*{}^{F^*}$-classes of pairs $(\TT^*, s)$ where $\TT^*$ is an $F^*$-stable maximal torus of $\GG^*$ and $s\in\TT^*{} ^{F^*}\}$.

\begin{thm}[Deligne-Lusztig]\label{Lseries}
	For $s\in \GG^*{}^{F^*}$ a semi-simple element, one defines $\ser\GF s$ the set of irreducible components of generalized characters $\Lu{\TT}{\GG}\th$ for $(\TT ,\th)$ corresponding to some $(\TT^*,s)$ through the above correspondence.
		
		One gets a partition \begin{equation}\label{series}\Irr(\GF)=\sqcup_s\ser{\GF}{s}
		\end{equation} where $s$ ranges over semi-simple classes of $\GG^*{}^{F^*}$.
\end{thm}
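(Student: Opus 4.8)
The plan is to establish the partition \eqref{series} directly from the parametrization \eqref{dualT} together with a disjointness statement for the Deligne--Lusztig virtual characters attached to non-conjugate semisimple classes. First I would recall that, by definition, $\ser{\GF}{s}$ consists of those $\chi\in\Irr(\GF)$ appearing with nonzero multiplicity in some $\Lu{\TT}{\GG}\th$ with $(\TT,\th)\leftrightarrow(\TT^*,s)$. Since every $\chi\in\Irr(\GF)$ occurs in some $\Lu{\TT}{\GG}\th$ — this follows from the fact that the (virtual) characters $\Lu{\TT}{\GG}\th$, as $(\TT,\th)$ ranges over all $\GF$-classes of pairs, span the space of uniform functions, and in particular their span contains a function whose scalar product with any given $\chi$ is nonzero (concretely, one can use that $\langle\Lu{\TT}{\GG}\th,\Lu{\TT}{\GG}\th\rangle$ computations via the character formula force $\chi$ to lie in the image) — the sets $\ser{\GF}{s}$ cover $\Irr(\GF)$ as $s$ ranges over semisimple classes of $\GD$. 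The bijection \eqref{dualT} guarantees that each pair $(\TT,\th)$ belongs to exactly one semisimple class, so the only content left is disjointness.

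For disjointness, the key input is the scalar product formula for Deligne--Lusztig characters: for $F$-stable maximal tori $\TT,\TT'$ and characters $\th\in\Irr(\TT^F)$, $\th'\in\Irr(\TT'^F)$ one has
\begin{equation}\label{DLscalar}
\langle \Lu{\TT}{\GG}\th,\Lu{\TT'}{\GG}\th'\rangle = \#\{\, w\in \TT^F\backslash\{n\in\GF\mid {}^n\TT=\TT'\}/\TT'^F \mid {}^w\th=\th'\,\}.
\end{equation}
This is proved from the Mackey-type statement for $\Lu{\TT}{\GG}$ when $\LL$ is a torus (\Th{4.4}(a): the torus case of the Mackey formula is unconditional, see [DigneMic, Ch.~11]). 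Granting \eqref{DLscalar}, the point is that if $(\TT,\th)$ corresponds to $(\TT^*,s)$ and $(\TT',\th')$ to $(\TT'^*,s')$ with $s,s'$ not $\GD^{F^*}$-conjugate, then there is no $w\in\GF$ with ${}^w(\TT,\th)=(\TT',\th')$ — precisely because such a $w$ would, under the duality \eqref{dualT}, transport $(\TT^*,s)$ to $(\TT'^*,s')$ and in particular conjugate $s$ to $s'$ in $\GD^{F^*}$. Hence the right-hand side of \eqref{DLscalar} is $0$, so $\Lu{\TT}{\GG}\th$ and $\Lu{\TT'}{\GG}\th'$ share no irreducible constituent; summing over the finitely many pairs in each class, $\ser{\GF}{s}\cap\ser{\GF}{s'}=\emptyset$.

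I would then assemble these: the covering property plus pairwise disjointness gives the partition \eqref{series}, indexed by semisimple classes of $\GD^{F^*}$ because \eqref{dualT} matches $\GF$-classes of pairs $(\TT,\th)$ with $\GD^{F^*}$-classes of pairs $(\TT^*,s)$, and two such pairs give the same $\ser{\GF}{s}$ exactly when the $s$'s are conjugate. The main obstacle is really the scalar product formula \eqref{DLscalar}, i.e.\ having the Mackey formula available in the torus case; everything else is bookkeeping with the duality bijection. A secondary point requiring care is the covering claim — that no $\chi$ is orthogonal to every $\Lu{\TT}{\GG}\th$ — which uses that $\sum_\TT \frac{1}{|\TT^F|}\sum_{\th}\Lu{\TT}{\GG}\th\,\overline{\Lu{\TT}{\GG}\th}$ applied appropriately detects $\chi$, equivalently that the regular character of $\GF$ lies in the span of the $\Lu{\TT}{\GG}\th$ up to the unipotently-supported correction of \Th{omni}(iv); I would cite [DL76] for this rather than reprove it.
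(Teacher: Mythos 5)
Your disjointness argument has a genuine gap, and it is exactly the gap that makes this theorem hard. The scalar product formula you write is correct --- it is the unconditional torus case of the Mackey formula, see [DigneMic, Ch.~11] --- and it does show that $\Lu{\TT}{\GG}\th$ and $\Lu{\TT'}{\GG}\th'$ are \emph{orthogonal} whenever $(\TT,\th)$ and $(\TT',\th')$ are not $\GF$-conjugate, hence in particular whenever $s,s'$ are not $\GD^{F^*}$-conjugate via \eqref{dualT}. But $\Lu{\TT}{\GG}\th$ is a \emph{virtual} character (an alternating sum of cohomology modules), not an actual character, and orthogonal virtual characters can perfectly well share irreducible constituents: $\chi_1-\chi_2$ and $\chi_1+\chi_2$ are orthogonal yet both contain $\chi_1$ and $\chi_2$. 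So the inference ``$\langle\Lu{\TT}{\GG}\th,\Lu{\TT'}{\GG}\th'\rangle=0$, therefore no common constituent'' is simply false, and disjointness of the sets $\ser{\GF}{s}$ does not follow from what you have.

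This is precisely what the paragraph after the statement flags: the proof in [DigneMic, Ch.~14] is ``quite indirect, going through the intermediate notion of geometric series and using a regular embedding $\GG\inn\w\GG$'' with connected centre. The actual route has two stages, neither of which is bookkeeping. First one proves the exclusion theorem of [DL76, Th.~6.2]: if $\chi$ occurs in both $\Lu{\TT}{\GG}\th$ and $\Lu{\TT'}{\GG}\th'$, the pairs are \emph{geometrically} conjugate --- proved not by orthogonality of the $\Lu{\TT}{\GG}\th$'s but by comparing character values on semisimple elements, exploiting through the character formula (Proposition~\ref{chfla}) that the Green function parts are independent of $\th$. Second, because geometric conjugacy is coarser than $\GD^{F^*}$-conjugacy of semisimple elements when $\Z(\GG)$ is disconnected, one must pass from geometric to rational series via a regular embedding $\GG\hookrightarrow\w\GG$ with $\Z(\w\GG)$ connected and a Clifford-theoretic analysis of restriction from $\w\GG^F$ to $\GF$. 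Your covering observation is essentially fine and citing [DL76, Cor.~7.7] for it is the right move; but disjointness is where the theorem lives, and the orthogonality shortcut does not reach it.
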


The subsets $\ser{\GF}{s}$ for $s\in \GG^*_{\mathrm{ss}}{}^{F^*}$ are called the \sing{rational series} of $\Irr(\GF)$.
The proof of the Theorem, given in [DigneMic, Ch. 14] is quite indirect, going through the intermediate notion of geometric series and using a regular embedding $\GG\inn\w\GG$ (see [Ge17, \S 6], {[CaEn, \S 15.1]}) with connected $\zent{\w\GG}$.
It is easier to show that $\Lu{\LL}{\GG}$ sends $\ser{\LF}{s}$ into $\bbZ\ser{\GF}{s}$ via the correspondence between Levi subgroups of $\GG$ and $\GG^*$ [CaEn, 15.7]. This implies in particular that Alvis-Curtis duality (see Remark~\ref{4.4}.(b) above) satisfies \begin{equation}\label{Dser}
D_G(\ser Gs)\inn \bbZ\ser Gs \text{  for any semi-simple  } s\in \GG^*{}^{F^*}.
\end{equation}

\begin{thm}[Brou\'e-Michel]\label{BM90} 
	For $s\in \GG^*{}^{F^*}$ a semi-simple element of order prime to $\ell$, one defines $$\lser\GF s\deq \bigcup_{t\in\cent{\GG^*}{s}^{F^*}_\ell}\ser{\GF}{st}.$$ This is a union of $\ell$-blocks.\index{$\lser\GF s$}
\end{thm}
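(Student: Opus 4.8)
The goal is to show that $\lser{\GF}{s}$, as a subset of $\Irr(\GF)$, is a union of $\ell$-blocks; by Proposition~\ref{regB} this is equivalent to proving that $\sum_{\chi\in\lser{\GF}{s}}e_\chi\in\OO\GF$, or equivalently that $\pr_{A}(\reg_G)$ takes values in $|G|_\ell\OO$ where $A=\lser{\GF}{s}$. First I would reduce the statement to a statement about projections of class functions. The natural projector onto the span of a single rational series $\ser{\GF}{t}$ is, up to normalization, the class function $\sum_i(-1)^i[\rH^i_c(\YY_{\TT})]$ summed over appropriate $(\TT,\th)$; but it is more convenient to work with the ``disconnected'' description. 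The key point is that the projection onto $\bigoplus_{t}\CF(\GF\mid\ser{\GF}{st})$, where $t$ runs over the $\ell$-elements of $\cent{\GD}{s}^{\FD}$, can be realized by a truncation formula coming from the character formula in Proposition~\ref{chfla}: the class function that is constant on the rational series indexed by $\ell$-singular multiples of $s$ is obtained by restricting the sum to $\ell$-parts. Concretely, I would use that for a semisimple $\ell'$-element $s$, the orthogonal projection onto $\bigoplus_{t\in\cent{\GD}{s}^{\FD}_\ell}\CF(\GF\mid\ser{\GF}{st})$ sends $\reg_G$ to a class function $f_s$ which can be written, via Deligne--Lusztig induction from the centralizer $\cent{\GD}{s}$ and the Jordan-decomposition-type character formula, in terms of Green functions and the regular character of a smaller group.

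\textbf{Main steps.} (1) Express the projector $\pr_A$ onto $\CF(\GF\mid\lser{\GF}{s})$ using the functors $\Lu{\TT}{\GG}$ and the duality $(\TT,\th)\leftrightarrow(\TT^*,s)$. Using that $\Lu{\LL}{\GG}$ maps $\ser{\LF}{s}$ into $\bbZ\ser{\GF}{s}$ (stated just before \Th{BM90}) and the transitivity of $\Lu{}{}$, one reduces, replacing $\GG$ by $\cent{\GD}{s}$ on the dual side via a Levi-type argument, to the case $s=1$, i.e.\ to the statement that the union of unipotent series $\ser{\GF}{t}$ over $\ell$-elements $t$ is a union of $\ell$-blocks — this is essentially the assertion that $\ell'$-regular rational series ``glue'' correctly. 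Here the subtlety is that $\cent{\GD}{s}$ need not be a Levi subgroup of $\GD$, so one either works with a regular embedding $\GG\hookrightarrow\w\GG$ (as used for \Th{Lseries}) to make centralizers connected and then use that $\cent{\GD}{s}$ becomes Levi in $\w\GG^*$, or one invokes the Bonnaf\'e--Rouquier type reduction. (2) For the reduced statement, use Lemma~\ref{omni}: the relevant projected regular character should, after multiplication by $|G|_\pp^{\mm}$ or a similar $p$-power, coincide with a $p$-constant function times something manifestly $\OO$-integral. The characteristic function of unipotent elements appearing in Lemma~\ref{omni}(iv), namely $|G|_\pp^{\mm}D_G(\reg_G)$, is exactly the kind of integral building block one needs. (3) Combine: $\pr_A(\reg_G)$ is obtained from $\reg_G$ by first projecting with the unipotent-support truncation (an $\OO$-integral operation by (2) and Proposition~\ref{regB}(iii) applied to the smaller group $\cent{\GD}{s}^{\FD}$ or $\w\GG^{F}$), then inducing up via $\Lu{\LL}{\GG}$, which by Lemma~\ref{omni}(ii) is compatible with multiplication and preserves integrality up to the index factors that are absorbed by $|G|_\ell$.

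\textbf{The hard part.} The main obstacle is step (1): controlling what happens when $\cent{\GD}{s}$ is disconnected, i.e.\ relating rational series of $\GF$ to those of $\cent{\GD}{s}^{\FD}$ when the latter is not a Levi subgroup. This forces the introduction of a regular embedding $\GG\hookrightarrow\w\GG$ with $\zent{\w\GG}$ connected, a careful comparison of $\ell$-blocks of $\GF$ and $\w\GG^{F}$ under restriction (Clifford theory, using that $\w\GG^{F}/\zent{\w\GG}^{F}\GF$ is an $\ell'$-group when $\ell\nmid|\GG^{*F^*}/\zent{}|$-type conditions hold, or a more general argument), and the fact that in $\w\GG^*$ the centralizer of a semisimple element is connected hence a Levi. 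Everything else — the integrality estimates via the character formula, the use of Lemma~\ref{omni}, and Proposition~\ref{regB} — is essentially bookkeeping with $p$-parts and $\ell$-parts of group orders once the structural reduction to connected centralizers is in place.
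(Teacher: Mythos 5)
Your step (1), the reduction to $s=1$ by passing to $\cent{\GD}{s}$, is where the proposal breaks down, and the paper's proof does something genuinely different. You claim that after a regular embedding $\GG\hookrightarrow\w\GG$, the centralizer $\cent{\w\GG^*}{s}$ becomes a Levi subgroup of $\w\GG^*$. Connectedness is indeed gained that way, but a connected centralizer of a semisimple element is still in general only a reductive subgroup of maximal rank, not a Levi subgroup: think of $\Sp_{2a}\times\Sp_{2b}$ inside $\Sp_{2n}$, or pseudo-Levi subgroups in exceptional types. So for quasi-isolated $s$ there is no proper Levi $\LD\supseteq\cent{\GD}{s}$ at all, and the proposed ``replace $\GG$ by $\cent{\GD}{s}$'' step has nowhere to go. Invoking a ``Bonnaf\'e--Rouquier type reduction'' instead would also be circular at this stage of the theory: the Morita equivalence of \Th{BRMorita} presupposes that $e_\ell(\GF,s)$ is a central idempotent, which is precisely what Brou\'e--Michel is needed to establish.

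The paper's proof avoids any reduction to the unipotent case and treats all $s$ uniformly. It keeps the projection $\pr=\pr_{\lser{\GF}{s}}$ on $\GF$ itself throughout, and chains together: (a) the identity $\pr(\dd_\lp f)=\dd_\lp\pr(f)$ for $f$ uniform, proven by expanding $f$ in terms of $\Lu\TT\GG\th$ and using the duality $(\TT,\th)\leftrightarrow(\TT^*,s')$ to observe that multiplying $\th$ by $\dd_\lp^{(\TT^F)}$ only perturbs $s'$ by its $\ell$-part, so stays inside $\lser{\GF}{s}$; (b) the $\OO$-integrality of $\dd_{\{p,\ell\}}$ via Brauer's characterization of generalized characters; (c) Alvis--Curtis duality $D_G$, which commutes with $\pr$ by (\ref{Dser}), together with Lemma~\ref{omni}(iii)--(iv) to convert the integral expression $\dd_\lp\pr(D_G(\dd_{\{p,\ell\}}))$ into $|G|_\pp^{\mm}\pr(\reg_G)$. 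You had several of the right ingredients in view (Proposition~\ref{regB}, Lemma~\ref{omni}, $p$-constant functions), but the decisive point is that these suffice \emph{without} any passage to a smaller group; once you try to reduce to $s=1$ you are using machinery that is both unavailable in general and logically downstream of the theorem being proved.
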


\begin{proof}
We abbreviate $\GG^F=G$. We show that the projection $\pr\co\CF(G)\to \CF(G)$ associated with the subset $\lser{G}{s}\inn\Irr(G)$, satisfies $\pr(\reg_G)(G)\inn|G|_\ell \OO$. This will give our claim by Proposition~\ref{regB}.
	\index{$\delta_\pi$}
	For $\pi$ a set of primes, one denotes by $\dd_\pi$ the characteristic function of $\pi$-elements of $G$. 
	
	Note that \begin{equation}\label{ddpi}
\dd_\pi\in \OO\Irr(G) \text{ as soon as  } \ell\in\pi
	\end{equation} thanks to a classical consequence of Brauer's characterization of generalized characters (see [NagaoTsu, 3.6.15.(iii)]). Note also that $\dd_\pi$ is $p$-constant as soon as $p\in\pi$.
	
	We also prove \begin{equation}\label{prdd}
\pr(	\dd_\lp f)=\dd_\lp\pr(f) \text{ for any uniform } f\in \CF(G). 
	\end{equation} 
	
	To show (\ref{prdd}) it suffices to show the equality with $f=\Lu{\TT}{\GG}\th$ for some $F$-stable maximal torus $\TT$ and $\th\in\Irr(\TT^F)$. The claim then reduces to showing that $\dd_\lp\Lu{\TT}{\GG}\th \in \bbC \lser G s$ when $(\TT ,\th)\leftrightarrow (\TT^*,s')$ (see (\ref{dualT})) for some $s'\in $ with $s'_\lp=s$. Note that this can be done for any semi-simple $\lp$-element, conjugate or not to the $s$ we are given. By Lemma~\ref{omni}.(ii), we have $\dd_\lp\Lu{\TT}{\GG}(\th) =\Lu{\TT}{\GG}(\dd_\lp^{(\TT^F)}\th)$. On the other hand $\dd_\lp^{(\TT^F)}\th =\sum_{\th '}^{}\th '$ where the sum is over $\th '\in \Irr(\TT^F)$ with $\th '_\lp =\th_\lp$ (we consider $\Irr(\TT^F)$ as a multiplicative group). But it is easy to check from the identifications of duality that if $s'_\lp =s$ then $(\TT ,\th ')\leftrightarrow (\TT^*, s '')$ with $s''_\lp =s$. This gives $\dd_\lp\Lu{\TT}{\GG}\th \in \bbC \lser G s$.
	
	Now the proof of the Theorem goes as follows. From Lemma~\ref{omni}.(i) we know that $\dd_{\{p,\ell \}}$ is uniform and (\ref{prdd}) gives now
	\begin{equation}\label{ddpr}
	\dd_\lp\pr (\dd_{\{p,\ell \}})=\pr (\dd_{\{p\}}).
	\end{equation}
	
	The image of the right hand side by $D_G$ is $D_G\circ\pr(\dd_p)=\pr\circ D_G(\dd_p) =|G|^\mm_\pp\pr(\reg_G)$ thanks to (\ref{Dser}) and Lemma~\ref{omni}.(iv). Using (iii) of the same lemma, the image by $D_G$ of (\ref{ddpr}) now gives
	\begin{equation}\label{dpr}
	|G|_\pp^\mm \pr (\reg_G)=\dd_\lp .\pr (D_G(\dd_{\{p,\ell \}})).
	\end{equation}
	We have seen (\ref{ddpi}) that $\dd_{\{p,\ell \}}$ hence also $D_G(\dd_{\{p,\ell \}})\in\OO\Irr(G)$. So the right hand side of (\ref{dpr}) takes values in $\OO$. Then indeed 
	$	\pr (\reg_G)$ takes values in $|G|_\pp\OO =|G|_\ell\OO$ as claimed.
\end{proof}

The sum of blocks of $\OO G$ corresponding to the theorem is denoted as follows.

\begin{defn} \label{BGs}
One denotes $e_\ell (\GF ,s)\deq\sum_{\chi\in\lser \GF s}^{}e_\chi\in \Z(\OO \GF )$, $\ov e_\ell (\GF ,s)$ its image in $\zent{k\GF }$, and $B_\ell (\GF ,s)\deq\OO \GF e_\ell (\GF ,s)$.  \index{$e_\ell (\GF ,s$}\index{$\ov e_\ell (\GF ,s$}\index{$B_\ell (\GF ,s$}
\end{defn}

Combining (\ref{prdd}) and the fact that multiplication by $\dd_\lp$ preserves $\ell$-blocks (see below Brauer's ``second Main Theorem") one easily gets
\begin{pro}[Hiss]\label{Hiss} 
For every $p$-block $B$ of $\GF $ such that $\Irr(B)\cap \lser \GF s\not=\emptyset$, one has $\Irr(B)\cap \ser \GF s\not=\emptyset$.
\end{pro}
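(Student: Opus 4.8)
The plan is to reduce, via Brou\'e--Michel's theorem, to a statement about a single $\ell$-block inside $\lser\GF s$, and then to play off the $\ell'$-truncation of Deligne--Lusztig characters against $\ell$-blocks. First, by Theorem~\ref{BM90} the set $\lser\GF s$ is a union of $\ell$-blocks, so the hypothesis forces $\Irr(B)\subseteq\lser\GF s=\bigcup_{t\in\cent{\GG^*}{s}^{F^*}_\ell}\ser\GF{st}$, and I must exhibit a character of $B$ in the piece $\ser\GF s$ (the case $t=1$). I will use two properties of $\dd_\lp$, the characteristic function of the $\ell$-regular elements of $G=\GF$: since $\ell\neq p$ it is $p$-constant, hence uniform by Lemma~\ref{omni}(i); and multiplication by $\dd_\lp$ preserves $\ell$-blocks, i.e. commutes with every block projection $\pr_B$ of Proposition~\ref{regB} --- the consequence of Brauer's second Main Theorem recalled below.

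The first real step is an identity: for an $F$-stable maximal torus $\TT$ and $\th\in\Irr(\TT^F)$, writing $\th=\th_\lp\th_\ell$ for the factorization into $\ell'$- and $\ell$-parts in the abelian group $\Irr(\TT^F)$, one has $\dd_\lp\,\Lu{\TT}{\GG}\th=\dd_\lp\,\Lu{\TT}{\GG}\th_\lp$. Indeed a character of $\ell$-power order is trivial on $\ell$-regular elements, so $\th$ and $\th_\lp$ agree on the $\ell$-regular elements of $\TT^F$, and the projection formula Lemma~\ref{omni}(ii) applied to the $p$-constant function $\dd_\lp$ gives
\[
\dd_\lp\,\Lu{\TT}{\GG}\th=\Lu{\TT}{\GG}\bigl(\dd_\lp^{(\TT^F)}\th\bigr)=\Lu{\TT}{\GG}\bigl(\dd_\lp^{(\TT^F)}\th_\lp\bigr)=\dd_\lp\,\Lu{\TT}{\GG}\th_\lp .
\]
Furthermore, if $(\TT,\th)$ corresponds to $(\TT^*,s')$ under (\ref{dualT}) then, since the identification $\Irr(\TT^F)\cong\TT^*{}^{F^*}$ respects the splitting of these abelian groups into $\ell$- and $\ell'$-parts, $(\TT,\th_\lp)$ corresponds to $(\TT^*,s'_\lp)$; as the rational series partition $\Irr(\GF)$, every irreducible constituent of $\Lu{\TT}{\GG}\th_\lp$ then lies in the single series $\ser\GF{s'_\lp}$, so $\Lu{\TT}{\GG}\th_\lp\in\bbZ\,\ser\GF{s'_\lp}$.

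Now I would take any $\chi\in\Irr(B)\subseteq\lser\GF s$. By definition $\chi$ occurs in some $\Lu{\TT}{\GG}\th$ with $(\TT,\th)\leftrightarrow(\TT^*,s')$, where $s'$ is $\GG^*{}^{F^*}$-conjugate to $st_0$ for an $\ell$-element $t_0\in\cent{\GG^*}{s}^{F^*}$; thus $s'_\lp$ is conjugate to $s$ and $\Lu{\TT}{\GG}\th_\lp\in\bbZ\,\ser\GF s$. Applying $\pr_B$ to the identity above and commuting $\dd_\lp$ past $\pr_B$ on each side yields $\dd_\lp\,\pr_B(\Lu{\TT}{\GG}\th_\lp)=\dd_\lp\,\pr_B(\Lu{\TT}{\GG}\th)$. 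Because $\chi$ occurs in $\Lu{\TT}{\GG}\th$ with $\chi\in\Irr(B)$, the class function $\pr_B(\Lu{\TT}{\GG}\th)$ is nonzero; if I can show it does not vanish on all $\ell$-regular elements, then the right-hand side above is nonzero, hence so is $\pr_B(\Lu{\TT}{\GG}\th_\lp)$, and since $\Lu{\TT}{\GG}\th_\lp\in\bbZ\,\ser\GF s$ this last projection is a nonzero $\bbZ$-combination of characters lying in $\Irr(B)\cap\ser\GF s$, which is therefore nonempty.

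The hard part will be exactly this non-vanishing: that $\pr_B(\Lu{\TT}{\GG}\th)$, a nonzero element of $\CF(G\mid B)$ in which $\chi$ genuinely appears, does not vanish identically on the $\ell$-regular elements of $G$. Equivalently one needs that the restrictions to $\ell$-regular elements of the characters in $\ser\GF s$ $\bbC$-span those of all the characters in $\lser\GF s$ --- i.e. that $\ser\GF s$ is a basic set for the union of $\ell$-blocks $\lser\GF s$ --- so that no nonzero projection of a Deligne--Lusztig character into $\CF(G\mid B)$ is annihilated by $\dd_\lp$. This is the step carrying the substance of the argument; one obtains it from the Geck--Hiss theory of basic sets, or --- in the main cases, bad primes excluded --- by a direct decomposition-matrix argument resting on the projection formula used above. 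With that input in hand, the manipulations above conclude the proof.
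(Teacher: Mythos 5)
Your skeleton is exactly the one the paper has in mind when it says the result follows "by combining (\ref{prdd}) and the fact that multiplication by $\dd_\lp$ preserves $\ell$-blocks": you use the factorization $\dd_\lp\,\Lu{\TT}{\GG}\th=\dd_\lp\,\Lu{\TT}{\GG}\th_\lp$ together with the duality-compatibility $(\TT,\th_\lp)\leftrightarrow(\TT^*,s'_\lp)$, then push block projections past $\dd_\lp$. Those identifications are correct, and your honesty in flagging the last step is warranted: the paper's ``one easily gets'' does compress a genuine point, namely that some character of $\ser\GF s$ lands in $B$, which is not a formal consequence of (\ref{prdd}) and Brauer's second Main Theorem alone. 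Your diagnosis that the missing input is the basic set statement --- $\ell$-regular restrictions of $\ser\GF s$ span those of $\lser\GF s$ --- is the standard way to close it (Geck--Hiss). So the proposal is correct in structure, and the residual step you identify really is where the work lies.

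Two smaller remarks on the way you close. First, rather than arguing about whether $\pr_B(\Lu{\TT}{\GG}\th)$ vanishes on $\ell$-regular elements (which is actually a slightly \emph{stronger} statement than the one you need and is vulnerable to cancellation among the constituents of $\Lu{\TT}{\GG}\th$), it is cleaner to work directly with $\chi$ itself: $\dd_\lp\chi\neq 0$ because it agrees with $\chi$ at $1$, and if the basic set statement holds one may write $\dd_\lp\chi=\sum_{\mu\in\ser\GF s}a_\mu\,\dd_\lp\mu$; applying $\pr_B$, which commutes with $\dd_\lp$ and fixes $\dd_\lp\chi$, forces some $\mu$ to lie in $\Irr(B)$, and you are done without any reference to $\Lu{\TT}{\GG}\th$. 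Second, note that if one is willing to invoke the Geck--Hiss basic set theorem, the whole intermediate manoeuvre with Deligne--Lusztig characters becomes superfluous (the proposition is an immediate consequence of that theorem); the point of the more delicate argument you set up --- and which the paper alludes to --- is precisely to get by with less, and there the non-vanishing you isolate is the genuine obstruction.
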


\subsection{Jordan decomposition and $\ell$-blocks}

We keep $\GG ,F ,\GG^*, F^*$, etc... as before.

\begin{defn} \label{ublo}
The elements of $\ser{\GF}{1}$ are called \sing{unipotent characters}. Similarly $\ell$-blocks $B$ of $\GF$ such that $\Irr(B)\cap\ser{\GF}{1}\not=\emptyset$ are called \sing{unipotent blocks}.
\end{defn}

The set of unipotent characters tends to be sensitive only to the root system of $\GG$ and the action of $F$ on it. In particular one has bijections (see [DigneMic, 13.20])
\begin{equation}\label{uCh}
\text{  $ \ser{\GF}{1}\leftrightarrow \ser{[\GG ,\GG]^F}{1}\leftrightarrow \ser{(\GG/\Z(\GG))^F}{1}$ }
\end{equation} and $\ser{\GF}{1}\leftrightarrow\ser{\GD^\FD}{1}$ but also $\ser{\GF}{1}\leftrightarrow \ser{\GG^{F^m}}{1}$ ($m\geq 1$) when $F$ acts trivially on the root system of $\GG$. However the last bijection relates characters of different degree, though the degree is the same polynomial in various powers of $q$, see [Carter2, Sect. 13.8].

\begin{exm}\label{uGL}	We consider the case of $\GF=\GL_n(\bbF_q)$, see Example~\ref{GL}. For $w\in Sym n$ let $\TT_w$ denote an $F$-stable torus of type $w$ with regard to the diagonal torus in the sense of Remark~\ref{4.4}.c. The set of unipotent characters is in bijection with $\Irr(\Sym n)$ by the map $$\chi\mapsto R_\chi = n!^\mm \sum_{w\in\Sym n}{\chi(w)}\Lu{\TT_w}{\GG}1$$ which takes values in $\pm\ser{\GF}{1}$ and with adequate signs gives indeed a bijection $\Irr(\Sym n)\to\ser{\GF}{1}$ (see for instance [DigneMic, \S 15.4]).
\end{exm}

It is customary to call {\bf Jordan decomposition}\index{Jordan decomposition of characters} of $\Irr (\GF)$ any bijection $$\ser{\GF}{s}\leftrightarrow \ser{\cent{\GD}{s}^\FD}1$$ where $s\in (\GD)^\FD_\pp$. However the definition we gave of unipotent characters applies only to connected groups $\GG$, so the set $\ser{\cent{\GD}{s}^\FD}1$ above would be defined as the set of constituents of induced characters $\Ind^{\cent{\GD}{s}^\FD}_{\ccent{\GD}{s}^\FD}\zeta$ for $\zeta\in\ser{\ccent{\GD}{s}^\FD}1$. The existence of such a Jordan decomposition compatible with the $\Lu{\LL}{\GG}$ functors has been shown by Lusztig [Lu88], here again in a quite indirect way, the results being a lot more complete in the cases where $\Z (\GG)$ is connected, which in turn ensures that $\cent{\GD}{s}$ is then connected. A basic idea is that Jordan decomposition should behave like a $\Lu{\bf C}{\GG}$ functor for a suitable $\CC$.

For the following, see {[DigneMic, 13.25]}.

\begin{thm}[Lusztig] \label{LCGs} 
Assume $\LL^*$ is an $\FD$-stable Levi subgroup of $\GD$ such that $\cent{\GD}{s}\leq \LD$. Then $\LL \deq (\LD)^*$ can be seen as a Levi subgroup of $\GG$ and there is a sign $\eps_{\LL ,\GG}\in \{1,-1\}$ such that $\eps_{\LL,\GG}\Lu{\LL}{\GG}$ induces a bijection $$\eps_{\LL,\GG}\Lu{\LL}{\GG} \co\ser{\LF}{s}\to \ser{\GF}{s}.$$
\end{thm}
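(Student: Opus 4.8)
We sketch the argument. The plan is to show that, restricted to the sublattice $\bbZ\ser{\LF}{s}$, the maps $\Lu{\LL}{\GG}$ and $\slu{\LL}{\GG}$ are mutually inverse isometries onto $\bbZ\ser{\GF}{s}$, and that up to a global sign $\eps_{\LL,\GG}$ depending only on $\LL$ and $\GG$ the first one identifies the $\bbC$-bases $\ser{\LF}{s}$ and $\ser{\GF}{s}$. Here $\LL\deq(\LD)^*$ is a Levi subgroup of $\GG$ because Levi subgroups of $\GG$ and $\GD$ correspond, and one may also assume $s\in\LD$ up to conjugacy, so that $\cent{\LD}{s}=\cent{\GD}{s}$.

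The decisive point is the identity $\slu{\LL}{\GG}\circ\Lu{\LL}{\GG}=\id$ on $\bbZ\ser{\LF}{s}$. To get it I would expand $\slu{\LL}{\GG}\Lu{\LL}{\GG}$ by the Mackey formula for Lusztig induction (in the generality needed here, as in [BoMi11]), obtaining a sum over representatives $g$ of certain double cosets of composites $\Lu{\LL\cap{}^g\LL}{\LL}\circ\slu{\LL\cap{}^g\LL}{{}^g\LL}\circ({}^g-)$. The term indexed by $g$ vanishes unless $\slu{\LL\cap{}^g\LL}{\LL}$ is non-zero on $\bbZ\ser{\LF}{s}$, which by transitivity of rational series under ${}^*R$ (recalled before the theorem) forces $s$ to be $\GD$-conjugate into $(\LL\cap{}^g\LL)^*=\LD\cap{}^{g^*}\LD$, and simultaneously forces ${}^{g^*}s$ to be $\LD$-conjugate to $s$. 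Since $\cent{\GD}{s}\inn\LD$, the latter gives $g^*\in\cent{\GD}{s}\LD=\LD$, hence $g\in\LF$: only the trivial double coset survives, and it contributes $\slu{\LL}{\LL}\Lu{\LL}{\LL}=\id$. From this and adjunction, $\langle\Lu{\LL}{\GG}\chi,\Lu{\LL}{\GG}\chi'\rangle_{\GF}=\langle\chi,\chi'\rangle_{\LF}$ for $\chi,\chi'\in\bbZ\ser{\LF}{s}$, while $\Lu{\LL}{\GG}$ maps $\ser{\LF}{s}$ into $\bbZ\ser{\GF}{s}$ (again noted before the theorem). So for $\chi\in\ser{\LF}{s}$ the virtual character $\Lu{\LL}{\GG}\chi$ is an integral combination of $\ser{\GF}{s}$ of norm $1$, hence $\Lu{\LL}{\GG}\chi=\eps(\chi)\psi(\chi)$ with $\eps(\chi)\in\{1,-1\}$ and $\psi(\chi)\in\ser{\GF}{s}$; orthogonality rules out $\psi(\chi)=\pm\psi(\chi')$ for $\chi\ne\chi'$. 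For surjectivity of $\chi\mapsto\psi(\chi)$: any $\psi\in\ser{\GF}{s}$ is a constituent of some $\Lu{\TT}{\GG}\th$ with $(\TT,\th)\leftrightarrow(\TT^*,s)$; then $\TT^*\inn\cent{\GD}{s}\inn\LD$, so after conjugation $\TT\inn\LL$ and $\Lu{\TT}{\GG}\th=\Lu{\LL}{\GG}\Lu{\TT}{\LL}\th$ with $\Lu{\TT}{\LL}\th\in\bbZ\ser{\LF}{s}$, whence $\psi$ occurs in $\Lu{\LL}{\GG}$ of a $\bbZ$-combination of elements of $\ser{\LF}{s}$, forcing $\psi=\psi(\chi)$ for some $\chi$. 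Thus $\chi\mapsto\psi(\chi)$ is a bijection $\ser{\LF}{s}\to\ser{\GF}{s}$.

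It then remains to check that $\eps(\chi)$ does not depend on $\chi$; one sets $\eps_{\LL,\GG}$ to be this common value, which should come out to be the standard sign $\eps_\LL\eps_\GG$ attached to the pair (product of the signs $(-1)^{\bbF_q\text{-rank}}$). For this I would test against Deligne--Lusztig characters: choosing $\TT\inn\LL$ with $(\TT,\th)\leftrightarrow(\TT^*,s)$, transitivity for tori together with the identity of Step one gives $\slu{\TT}{\GG}\Lu{\LL}{\GG}\chi=\slu{\TT}{\LL}\slu{\LL}{\GG}\Lu{\LL}{\GG}\chi=\slu{\TT}{\LL}\chi$, hence $\langle\Lu{\LL}{\GG}\chi,\Lu{\TT}{\GG}\th\rangle_{\GF}=\langle\chi,\Lu{\TT}{\LL}\th\rangle_{\LF}$. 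Letting $\TT$ run over the (classes of) tori with $\TT^*\inn\cent{\GD}{s}$ and invoking the known distribution of $\eps$-adjusted Deligne--Lusztig characters over rational series (the existence of a distinguished ``semisimple'' constituent occurring with a controlled sign in $\eps_\GG\eps_\TT\Lu{\TT}{\GG}\th$, compatibly on the $\LL$- and $\GG$-sides), one reads off $\eps(\chi)=\eps_\LL\eps_\GG$ for every $\chi$.

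The main obstacle is the Step establishing $\slu{\LL}{\GG}\circ\Lu{\LL}{\GG}=\id$ on the series: it genuinely needs the Mackey formula for $\Lu{\LL}{\GG}$ for non-torus Levi subgroups, which is the delicate input of Bonnafé--Michel, and it needs the clean translation of the hypothesis $\cent{\GD}{s}\inn\LD$ into the vanishing of all but the trivial double coset. The sign bookkeeping in the last step is the other point that requires care rather than being routine.
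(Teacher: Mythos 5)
The paper itself offers no proof of this statement; it only cites [DigneMic, 13.25], so there is no in-text argument to compare against. Taken on its own terms your sketch is a valid proof, but it follows a genuinely different route from Lusztig's and Digne--Michel's. You take the Mackey formula for $\Lu{\LL}{\GG}$ ([BoMi11]) as a black box and from it deduce $\slu{\LL}{\GG}\circ\Lu{\LL}{\GG}=\id$ on $\bbZ\ser{\LF}{s}$ directly, the hypothesis $\cent{\GD}{s}\inn\LD$ forcing all non-trivial double cosets to contribute zero; injectivity up to sign and surjectivity then fall out by the orthogonality and transitivity arguments you give. The argument behind [DigneMic, 13.25] predates the general Mackey formula: it establishes the isometry only on the \emph{uniform} parts of the two series (where the Mackey formula for tori, due to Deligne--Lusztig, suffices), and then passes to the full series through Lusztig's parametrisation of $\Irr(\GF)$ and a reduction to connected centre via regular embeddings. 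Your route is shorter and more conceptual once [BoMi11] is granted; the classical route avoids that difficult input but leans on essentially the whole structure theory of unipotent characters and Jordan decomposition.

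Two remarks on the details. In the Mackey step you wrote that the $g$-term ``vanishes unless $\slu{\LL\cap{}^g\LL}{\LL}$ is non-zero on $\bbZ\ser{\LF}{s}$''; the functor actually applied to ${}^g\chi$ is $\slu{\LL\cap{}^g\LL}{{}^g\LL}$, and to conclude one must also use non-vanishing of the other restriction $\slu{\LL\cap{}^g\LL}{\LL}\chi$ (which appears when you pair the $g$-term against $\chi$). Combining the two conjugacy constraints with $\cent{\GD}{s}\inn\LD$ does force $g^*\in\LD^{F^*}$ and hence $g\in\LF$, but the bookkeeping (tracking conjugation on both sides of duality) is a little longer than your two sentences suggest. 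Finally, identifying the constant sign as $\eps_{\LL}\eps_{\GG}$ via scalar products with Deligne--Lusztig characters is more work than needed: once $\Lu{\LL}{\GG}\chi=\eps(\chi)\,\psi(\chi)$, evaluate at $1$; the degree formula $\Lu{\LL}{\GG}\chi(1)=\eps_{\GG}\eps_{\LL}\,|\GF|_{p'}\,|\LF|_{p'}^{-1}\,\chi(1)$ (an early, Mackey-free consequence of the character formula, see [DigneMic, Ch.~12]) together with positivity of $\chi(1)$ and $\psi(\chi)(1)$ gives $\eps(\chi)=\eps_{\GG}\eps_{\LL}$ at once, with no extra input.
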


In this situation and with $s$ being an $\lp$-element it is not difficult to prove that $\eps_{\LL,\GG}\Lu{\LL}{\GG}$ also induces a bijection $$\eps_{\LL,\GG}\Lu{\LL}{\GG}\co\lser{\LF}{s}\to \lser{\GF}{s}.$$

\begin{thm} [Brou\'e] \label{broIso} 
The above bijection preserves the partitions induced by $\ell$-blocks. Moreover two $\ell$-blocks that thus correspond have defect groups of the same order.
\end{thm}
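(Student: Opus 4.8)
The plan is to deduce both assertions from the statement that $I\deq\eps_{\LL,\GG}\Lu{\LL}{\GG}$ is a \emph{perfect isometry} (in Brou\'e's sense) from $\bbZ\lser{\LF}{s}$ to $\bbZ\lser{\GF}{s}$; the tools assembled above reduce the whole problem to a single $\ell$-adic integrality property. First I would record that $I$ does restrict to a bijection of these two sets carrying irreducible characters to irreducible characters: since $\cent{\GD}{s}\leq\LD$, each $\ell$-element $t\in\cent{\GD}{s}^{\FD}$ is a power of $st$, so $\cent{\GD}{st}\leq\cent{\GD}{s}\leq\LD$, and Theorem~\ref{LCGs} applied with $st$ in place of $s$ (with the same sign $\eps_{\LL,\GG}$, which depends only on $\LL$ and $\GG$) gives bijections $\ser{\LF}{st}\to\ser{\GF}{st}$; their union over $t$ is the bijection $\lser{\LF}{s}\to\lser{\GF}{s}$ noted right after Theorem~\ref{LCGs}. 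In particular $I$ sends an orthonormal basis to an orthonormal basis, so it is an isometry.

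Granting that $I$ maps $\ell$-blocks to $\ell$-blocks, the statement on defect groups follows at once from the degree formula, with no further input. Indeed for $\chi\in\lser{\LF}{s}$ the character $I(\chi)$ is genuine and irreducible, hence $I(\chi)(1)=|\GF:\LF|_\pp\,\chi(1)$ and, as $\ell\neq p$, $\nu_\ell(I(\chi)(1))=\nu_\ell(|\GF:\LF|)+\nu_\ell(\chi(1))$. The defect of an $\ell$-block $B$ of a finite group $H$ obeys $|D(B)|=\ell^{d(B)}$ with $d(B)=\nu_\ell(|H|)-\min_{\chi\in\Irr(B)}\nu_\ell(\chi(1))$; combining this with $\nu_\ell(|\GF|)=\nu_\ell(|\LF|)+\nu_\ell(|\GF:\LF|)$ shows that if $I(\Irr(b))=\Irr(b')$ for $\ell$-blocks $b$ of $\LF$ and $b'$ of $\GF$, then $d(b)=d(b')$, i.e.\ $|D(b)|=|D(b')|$.

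So the core is to show that $I$ carries $\ell$-blocks to $\ell$-blocks. By Theorem~\ref{BM90} every $\ell$-block of $\LF$ lies either inside $\lser{\LF}{s}$ or entirely outside it; fix one, say $b$, with $\Irr(b)\inn\lser{\LF}{s}$. By Proposition~\ref{regB}(iii) it is enough to show that $\pr_{I(\Irr(b))}(\reg_\GF)$ takes values in $|\GF|_\ell\OO$. Using $I(\chi)(1)=|\GF:\LF|_\pp\,\chi(1)$ one gets the identity $\pr_{I(\Irr(b))}(\reg_\GF)=|\GF:\LF|_\pp\cdot I(\pr_{\Irr(b)}(\reg_\LF))$, with $I$ extended $\bbC$-linearly on $\bbC\lser{\LF}{s}$. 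Now $\pr_{\Irr(b)}(\reg_\LF)$ has values in $|\LF|_\ell\OO$ by Proposition~\ref{regB}(iii) inside $\LF$; since $|\GF:\LF|_\pp=|\GF:\LF|_\ell\cdot|\GF:\LF|_{\{p,\ell\}'}$ with the second factor a unit of $\OO$, and $|\GF|_\ell=|\LF|_\ell\,|\GF:\LF|_\ell$, the claim reduces to: \emph{$\eps_{\LL,\GG}\Lu{\LL}{\GG}$ sends $\OO$-valued class functions supported on $\lser{\LF}{s}$ to $\OO$-valued class functions on $\GF$.} Running the same computation with the adjoint $\eps_{\LL,\GG}\slu{\LL}{\GG}$, which is $I^{-1}$ on these subspaces, gives the analogous statement in the opposite direction; the two together force $I(\Irr(b))$ to be a single block, not a proper union of blocks (equivalently, a perfect isometry has a perfect inverse, and perfect isometries between unions of blocks respect the block partition).

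The $\OO$-integrality statement just isolated is the crux, and I expect it to be the main obstacle; it is precisely the divisibility half of perfectness. I would prove it in the spirit of the proof of Theorem~\ref{BM90}, feeding the character formula of Proposition~\ref{chfla} into a bookkeeping of $\ell$-adic valuations: there $\Lu{\LL}{\GG}(f)(su)$ at a Jordan decomposition $su\in\GF$ is expressed through the Green functions $Q^{\ccent{\GG}{s}}_{\ccent{^g\LL}{s}}$ and through values of $f$ at elements of $\cent{^g\LL}{s}^F$ sharing $s$ as semisimple part, with prefactors $|\LF|^\mm|\ccent{\GG}{s}^F|^\mm|\ccent{^g\LL}{s}^F|$; since $s$ is an $\ell'$-element one can control the $\ell$-adic valuations that occur and show that the denominators are compensated, using Lang--Steinberg counts for the $Q$-functions and the divisibility data furnished by Proposition~\ref{regB}. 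One also needs the separation half of perfectness, namely that the transfer function of $I$ vanishes off the union of the $\ell$-regular-by-$\ell$-regular and the $\ell$-singular-by-$\ell$-singular parts of $\GF\times\LF$; this again comes from Proposition~\ref{chfla} (twisted induction leaves the semisimple part unchanged) together with the compatibility of $\Lu{\LL}{\GG}$ with rational series recalled just before the theorem. As the paper warns, outside the ``main cases'' one may have to restrict to good primes to keep both this valuation bookkeeping and the input Theorem~\ref{LCGs} under control.
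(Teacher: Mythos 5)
Your plan correctly isolates the pivot point: the two assertions are formal consequences of the fact that the bi-character $\sum_{\chi\in\lser{\LF}{s}}\eps_{\LL,\GG}\Lu{\LL}{\GG}(\chi)\otimes\ov\chi$ is \emph{perfect} in Brou\'e's sense. Your treatment of the defect-group claim is a genuine elementary shortcut the paper does not spell out: once block partitions are known to correspond, the constant degree ratio $\eps_{\LL,\GG}\Lu{\LL}{\GG}(\chi)(1)=|\GF:\LF|_{p'}\,\chi(1)$ together with the standard defect formula $d(B)=\nu_\ell(|H|)-\min_{\chi\in\Irr(B)}\nu_\ell(\chi(1))$ immediately gives equality of defects. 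The paper instead cites Brou\'e's general theorem on perfect isometries, which yields that conclusion as well as equality of the number of simple modules (which your degree argument does not give). Likewise, your derivation of block-partition preservation from $\OO$-integrality via Proposition~\ref{regB} and the back-and-forth with the inverse isometry is correct, but it is essentially a re-proof of one implication of the Brou\'e theorem from [Bro90a] that the paper simply invokes.

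There is a genuine gap exactly where you expect one, and your proposed route around it misses the paper's decisive idea. The $\OO$-integrality (the divisibility half of perfectness) is \emph{not} obtained by $\ell$-adic valuation bookkeeping in the character formula of Proposition~\ref{chfla}. It is deduced from the observation, stated in the paragraphs preceding the theorem, that the trace character of a bi-projective $\OO\GF\otimes\OO\LF^\op$-bimodule is automatically a perfect bi-character (divisibility and separation both); and the virtual bimodule realizing $\Lu{\LL}{\GG}$, namely the $\ell$-adic cohomology of the Deligne--Lusztig variety $\YY_\PP$, is bi-projective because $\GF$ and $\LF$ act freely on $\YY_\PP$ (Deligne--Lusztig, [DL76, 3.5]). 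Your plan of pushing Green-function valuations through the character formula would, as far as I can tell, amount to re-deriving this bi-projectivity in disguise, and would need much more than Lang--Steinberg counts; the module-theoretic interpretation of $\Lu{\LL}{\GG}$ via \'etale cohomology is the input missing from your sketch, and it is what makes the perfectness claim short rather than painful.
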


\noindent{\it About the proof.} We sketch the main ideas of the proof, based on Brou\'e's notion of perfect bi-characters {(see [Bro90a])}. For finite groups $H$, $L$, a bi-character $\mu\in\bbZ \Irr (H\times L)$ is called {\bf perfect}\index{perfect bi-character} if and only if for all $(h,l)\in H\times L$, $\mu (h,l)\in |\cent Hh|\OO\cap |\cent Ll|\OO$ and whenever $\mu(h,l)\not= 0$ then $h\in H_\lp$ if and only if $l\in L_\lp$.
	
This is a $\bbZ$-submodule of $\bbZ \Irr (H\times L)$ and the trace character of an $\OO H\times L^\op$-bimodule which is projective on each side is perfect. This last property is very important since it gives an arithmetic test for bicharacters that could come from a Morita equivalence of blocks over $\OO$ or even a derived equivalence since by a theorem of Rickard such equivalences are induced by complexes of bi-projective bimodules [Rick89]. 

Brou\'e shows that if \begin{enumerate}[a)] \item $I\inn\Irr(H)$ and $J\inn \Irr(L)$ are unions of $\ell$-blocks, with \item $\si\co J\to I$ a bijection and \item one has signs $(\eps_\chi)_{\chi\in J}$ such that $\sum_{\chi\in J} \eps_\chi\si(\chi)\otimes \ov\chi$ is perfect, \end{enumerate} 

 then \begin{enumerate}[\rm(i)] \item $\si$ preserves the partition of $I$ and $J$ into $\ell$-blocks and \item  corresponding blocks have defect groups of the same order and same number of simple modules over $k=\OO/J(\OO)$. \end{enumerate}
	
In order to apply this to our situation $H=\GF$, $L=\LF$, $I=\lser{\LF}{s}$ and the bijection of Theorem~\ref{LCGs}, it just remains to show that our bi-character is perfect. This is a consequence of what has been said about bi-projective modules producing perfect trace characters and the fact the action of $\GF\times \LF$ on the variety $\YY_\PP$ is free on each side. This last fact translates into a related property of $\ell$-adic cohomology groups as $\OO\GF\times \LF^\op$-modules, thanks to a result of Deligne-Lusztig ([DL76, 3.5]) which is also key to the proof of Proposition~\ref{chfla} above.
	 \qed
	 
It is important to notice that in the above the isometry of characters is without signs as would happen in the case of a Morita equivalence. Indeed Brou\'e made the conjecture that this corresponds to a Morita equivalence $$B_\ell (\LF ,s)\text{-}\mo \xrightarrow{\sim} B_\ell (\GF ,s)\text{-}\mo$$ between the module categories of $B_\ell (\GF ,s)$ and $B_\ell (\LF ,s)$.
	 
This was proved by Bonnaf\'e-Rouquier [BoRo03]. In Sect. 9 below, we try to give an idea of their proof which goes very deep into the definition of $\Lu{\LL}{\GG}$ functors. The result was completed recently by Bonnaf\'e-Dat-Rouquier [BoDaRo17] into a statement showing isomorphism of defect groups and local structure.
	 
In order to get a Morita equivalence from an isometry of characters, one needs essentially to have the latter induced by a bi-projective module thanks to the following 
	 
\begin{lem}[{Brou\'e [Bro90b]}]\label{K2O} 
Assume $H$, $L$ are finite groups, $B$, $C$ are sums of blocks of $\OO H$, $\OO  L$ respectively. Assume $M$ is a $B\otimes C^\op$-bimodule that is bi-projective (i.e., projective on restricting to the subalgebras $B\otimes \OO$ and $\OO\otimes C$). Then $$M\otimes_{\OO L}-\co C\text{-}\mo \to B\text{-}\mo$$ is an equivalence of categories if and only if $M\otimes K$ induces a bijection of ordinary characters $\Irr (C)\to\Irr(B)$.
\end{lem}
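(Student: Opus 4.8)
The plan is to establish both implications through the standard yoga relating Morita equivalences, bimodule functors, and their behaviour under extension of scalars to $K$. For the forward direction, if $M\otimes_{\OO L}-$ is an equivalence $C\text{-}\mo\to B\text{-}\mo$, then tensoring with $K$ produces an equivalence $(KM)\otimes_{KL}-\co KC\text{-}\mo\to KB\text{-}\mo$, and since $KB$, $KC$ are products of matrix algebras over $K$ (as $K$ is large enough), any such equivalence permutes the simple modules, hence induces a bijection $\Irr(C)\to\Irr(B)$; one checks this bijection is exactly the one induced on characters by $KM\otimes_{KL}-$. This direction is essentially formal.

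The substance is the converse. Suppose $M\otimes_K-$ gives a bijection $\Irr(C)\to\Irr(B)$. First I would use bi-projectivity of $M$ to produce an adjoint: set $M^\vee\deq\Hom_{\OO H}(M,\OO H)$ (or equivalently the $\OO$-dual twisted appropriately), which is then a $C\otimes B^\op$-bimodule, again bi-projective, and the functors $M\otimes_{\OO L}-$ and $M^\vee\otimes_{\OO H}-$ form an adjoint pair in both directions (bi-projectivity makes the two one-sided adjoints agree up to the duality, by the usual argument that a bimodule projective on both sides is a direct summand of a bimodule of the form $\OO He\otimes_{\OO}f\OO L$). The counit and unit of this adjunction are morphisms $M\otimes_{\OO L}M^\vee\to B$ and $C\to M^\vee\otimes_{\OO H}M$ of bimodules over $B$, resp. $C$. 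It now suffices to show these are isomorphisms; since source and target are $\OO$-free of finite rank, it is enough to show they become isomorphisms after $-\otimes_\OO K$. But over $K$ everything is semisimple: $M\otimes_K-$ being a character bijection means precisely that $KM$ induces a Morita equivalence $KC\text{-}\mo\to KB\text{-}\mo$ (a fully faithful, essentially surjective $K$-linear functor between module categories of semisimple algebras sending simples to simples bijectively is an equivalence), so the $K$-linearized counit and unit are isomorphisms. Hence the integral counit and unit are isomorphisms, and $M\otimes_{\OO L}-$ is an equivalence.

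The main obstacle is the descent step: passing from ``$M\otimes_K-$ is an equivalence over $K$'' back to ``$M\otimes_\OO-$ is an equivalence over $\OO$''. The key point that makes this work is that the unit and counit maps are $\OO$-module morphisms between finitely generated $\OO$-free modules, and an $\OO$-linear map between $\OO$-free modules that becomes bijective after $\otimes_\OO K$ is injective with torsion-free cokernel, but need not be surjective — so one genuinely needs the bimodules $M\otimes_{\OO L}M^\vee$ and $B$ (resp.\ $M^\vee\otimes_{\OO H}M$ and $C$) to have the \emph{same} $\OO$-rank, which follows from the character bijection counting simple $K$-modules with multiplicity, combined with bi-projectivity controlling the ranks. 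I would therefore spell out the rank comparison carefully using that $M$ bi-projective forces $M\otimes_{\OO L}M^\vee$ to be a projective $B\otimes B^\op$-bimodule whose $K$-span is $KB$ as a bimodule, and similarly on the other side, so the integral counit is a surjection of projective bimodules inducing an isomorphism over $K$, hence an isomorphism. This is the standard argument (going back to Broué, cf.\ [Bro90b]); the remaining verifications — compatibility of the adjunction with the duality $M\mapsto M^\vee$, and that the character bijection on the nose matches the equivalence $KM\otimes_{KL}-$ — are routine.
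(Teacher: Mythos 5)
Your overall plan — pass to $K$, identify the counit/unit of an adjunction coming from the $\OO$-dual bimodule, and try to descend the resulting isomorphisms back to $\OO$ — is the right framework and agrees with the paper up to the final step. But the descent step, which you correctly identify as the crux, is where your argument breaks down.

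First, a factual slip: a $B\otimes C^\op$-bimodule that is projective on both sides is \emph{not} in general a direct summand of something of the form $\OO He\otimes_\OO f\OO L$; that would say $M$ is projective as a $B\otimes C^\op$-module, which is a genuinely stronger condition than bi-projectivity (it would force $M\otimes_C N$ to be a projective $B$-$B$-bimodule, i.e.\ would make $B$ behave as if it were separable over $\OO$, which block algebras are not). So the later assertion that ``$M\otimes_{\OO L}M^\vee$ is a projective $B\otimes B^\op$-bimodule'' is false. Second, and more centrally: you assert that the integral counit $\eta\colon M\otimes_C N\to B$ is a \emph{surjection}. You never justify this, and it is exactly the thing one cannot assume — in Morita-theoretic terms, surjectivity of the evaluation map is the generator condition, i.e.\ part of what you are trying to prove, not a free consequence of bi-projectivity plus a $K$-linear isomorphism. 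Without surjectivity, neither the ``same rank'' observation nor projectivity of the source or target gives that $\eta$ is an isomorphism (the map $\ell\colon\OO\to\OO$ is a running counterexample for your intermediate claims).

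The device the paper actually uses, and which is missing from your proposal, is the \emph{triangle identity} of the adjunction: the composite $N\xrightarrow{\eps\otimes\id}N\otimes_BM\otimes_CN\xrightarrow{\id\otimes\eta}N$ is the identity. What bi-projectivity and the $K$-linear character bijection do give you (via the standard fact that finitely generated projective $\OO H$-modules are determined by their $K$-spans) is that $(M\otimes_C N)_B\cong B_B$, ${}_B(M\otimes_CN)\cong{}_BB$, $(N\otimes_BM)_C\cong C_C$, ${}_C(N\otimes_BM)\cong{}_CC$ as \emph{one-sided} modules — abstract isomorphisms, not yet the counit or unit. Plugging these into the triangle identity, the outer module $N$ and the middle module $N\otimes_BM\otimes_CN$ are abstractly isomorphic as right $B$-modules, so a split monomorphism between isomorphic finitely generated $\OO$-modules must be an isomorphism; hence both maps in the triangle are isomorphisms, and tensoring by $M$ then recovers that $\eps$ itself (and likewise $\eta$) is an isomorphism. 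This bootstrap via the triangle identity is the step your proposal needs and does not supply.
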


\begin{proof} Let $N:= \Hom_\OO (M,\OO)  $. This is a $C\otimes B^\op$-bimodule, projective on each side. 
	
	We have \begin{equation}\label{NadM}
	 C\text{-}\mo\xrightarrow{M\otimes_C-} B\text{-}\mo \text{ and }   B\text{-}\mo\xrightarrow{N\otimes_B-} C\text{-}\mo  \text{ are left and right adjoint. }
	\end{equation}
	
Indeed the classical (left) adjoint for the tensor product functor $M\otimes_C-$ is $\Hom_B(M,-)$. But the $B$-projectivity of $M$ allows to identify $\Hom_B(M,-)$ with $\Hom_B(M,B) \otimes_B-$. On the other hand, the algebra $B$ is symmetric over $\OO$, namely the restriction to $B$ of the evaluation of the coordinate at 1 in the group algebra yields a linear map $\la\co B\to \OO$ inducing an isomorphism between $B$ and its $\OO$-dual and such that $\la (bb')=\la (b'b)
$ for all $b,b'\in B$ (compare with assumption (1) in Theorem~\ref{Green} above). A basic property is then that \begin{equation}\label{la} \Hom_B(M,B)\cong N\text{ by the map }f\mapsto \la\circ f.
\end{equation}. This and exchanging the roles of $B$ and $C$ gives (\ref{NadM}).

Using the subscript $K$ to denote tensoring by $K$ for $B,C, M,N$, we have the same as (\ref{NadM}) for the semi-simple algebras $B_K$ and $C_K$. The assumption on $M_K$ implies that $M_K\otimes_{C_K}- $ and $N_K\otimes_{B_K}-$ are inverse functors and therefore \begin{equation}\label{MK}
M_K\otimes_{C_K}N_K\cong B_K \text{ and }N_K\otimes_{B_K}M_K\cong C_K \text{ as bi-modules.}
\end{equation}

On the other hand $B_B$ and $(M\otimes_CN)_B$ are projective as right $B$-modules thanks to the bi-projectivity of $M$ and $N$ for the second. But (\ref{MK}) above tells us that they are isomorphic once tensored with $K$ as $B_K$-modules. It is well-known that two projective $\OO H$ modules are isomorphic if and only if they are so when tensored with $K$, see for instance [Du17, \S 4.4]. So we get \begin{equation}\label{MN}
(M\otimes_{C}N)_B\cong B_B,\ {}_B(M\otimes_{C}N)\cong {}_BB,\ (N\otimes_{B}M)_C\cong C_C \text{ and }{}_C(N\otimes_{B}M)\cong {}_CC 
\end{equation} by the symmetry of the situation.

The adjunction between the functors $M\otimes_C-$ and $N\otimes_B-$ mentioned above provide natural transformations of the composites into identity functors. In the case of tensor products functors, this means we have bimodule maps $$\eps\co  C\to N\otimes_BM\ \ \ \text{   and   }\ \ \ \eta\co  M\otimes_CN \to B.$$ Note that they can be made explicit
by following the steps used above, for instance $\eta (m\otimes n)=\la^*(n)(m)$ where $\la^*$ is the inverse of the map (\ref{la}). The basic property of adjunctions (see [McLane, IV.1]) implies that the composite 
\begin{equation}\label{NMN}
N\xrightarrow{\eps\otimes\id_N}N\otimes_{B}M\otimes_{C}N\xrightarrow{\id_N\otimes \eta}N
\end{equation} is the identity. Keeping only the action of $B$ on the right, the three modules are all isomorphic thanks to the first statement in (\ref{MN}) and the maps are inverse isomorphisms. So the maps in (\ref{NMN}) are indeed isomorphisms. But then, tensoring by $M$ on the right gives an isomorphism $$N\otimes_BM\xrightarrow{\eps\otimes\id_{N\otimes M}}N\otimes_BM\otimes_CN\otimes_BM.$$ By the last statement of (\ref{MN}) this means that $\eps$ was an isomorphism in the first place. We also get the same for $\eta$ and this is enough to conclude that our functors $M\otimes_C-$ and $N\otimes_B-$ induce inverse (Morita) equivalences.
\end{proof}

A first application of the lemma is to show that the map of Theorem~\ref{LCGs} is induced by a Morita equivalence in a special case.

\begin{cor} Assume the hypotheses of Theorem~\ref{LCGs} with moreover that $\LL$ is a Levi subgroup of an $F$-stable parabolic subgroup $\PP$. Then the functor $\OO \GF/\RU(\PP)^F\otimes_{\LL^F}
-$ induces a Morita equivalence $B_\ell (\LF ,s)\text{-}\mo \xrightarrow{} B_\ell (\GF ,s)\text{-}\mo$	\end{cor}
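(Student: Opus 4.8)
The idea is to realise the sign-corrected map $\eps_{\LL,\GG}\Lu{\LL}{\GG}\co\lser{\LF}{s}\to\lser{\GF}{s}$ of Theorem~\ref{LCGs} (restricted to $\ell$-blocks as noted just after it) as the effect of a genuine bimodule on the level of ordinary characters, and then invoke Lemma~\ref{K2O}. Since $\PP$ is now assumed $F$-stable, Remark~\ref{4.4}.(b) tells us that $\YY_\PP$ is just the finite set $\GF/\RU(\PP)^F$ and $\Lu{\LL}{\GG}$ is ordinary Harish-Chandra induction; concentrated in cohomological degree $0$, the relevant bimodule is simply $\OO[\GF/\RU(\PP)^F]=\OO\GF\otimes_{\OO\RU(\PP)^F}\OO$, on which $\GF$ acts on the left and $\LF$ acts on the right through the Levi quotient $\PP^F/\RU(\PP)^F\cong\LF$. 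Call this bimodule $\MM$. First I would record that $\MM$ is projective on each side: as a left $\OO\GF$-module it is a direct summand of $\OO\GF$ (indeed $\RU(\PP)^F$ is a $p$-group, $p$ invertible in $\OO$, so $\OO[\GF/\RU(\PP)^F]$ is a summand of $\OO\GF$ via the averaging idempotent), and as a right $\OO\LF$-module it is free, being $\OO\GF$ viewed over $\OO\PP^F$ times the free module $\OO\LF$.

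\textbf{Cutting by the block idempotents.} Next I would pass to the block $\MM':=e_\ell(\GF,s)\,\MM\,e_\ell(\LF,s)$, a $B_\ell(\GF,s)\otimes B_\ell(\LF,s)^\op$-bimodule which remains bi-projective (a summand of the bi-projective $\MM$). The point to check is that over $K$ the functor $\MM'\otimes_{\LF}-$ induces exactly the bijection $\lser{\LF}{s}\leftrightarrow\lser{\GF}{s}$ on ordinary characters, up to the sign $\eps_{\LL,\GG}$ being irrelevant for an isomorphism of the associated blocks. Here is where Theorem~\ref{LCGs} enters: Harish-Chandra induction $\MM\otimes_{\LF}-$ is, on the Grothendieck group, the functor $\Lu{\LL}{\GG}$, which by that theorem carries $\ser{\LF}{s}$ bijectively (up to sign) onto $\ser{\GF}{s}$, and after tensoring $\dd_\lp$ it carries $\lser{\LF}{s}$ onto $\lser{\GF}{s}$; cutting by $e_\ell(\GF,s)$ and $e_\ell(\LF,s)$ on both sides kills all other rational series, so $\MM'\otimes_K-$ sends each $\chi\in\Irr(B_\ell(\LF,s))$ to $\pm$ an irreducible of $B_\ell(\GF,s)$ and this assignment is a bijection $\Irr(B_\ell(\LF,s))\to\Irr(B_\ell(\GF,s))$. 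Hence the hypothesis of Lemma~\ref{K2O}, namely that $\MM'_K\otimes_{(\LF)_K}-$ induces a bijection of ordinary characters, is met; Lemma~\ref{K2O} then upgrades this to a Morita equivalence $B_\ell(\LF,s)\text{-}\mo\xrightarrow{\sim}B_\ell(\GF,s)\text{-}\mo$ induced by $\MM'\otimes_{\LF}-$, which is the desired statement (and $\MM'\otimes_{\LF}-$ agrees with $\OO\GF/\RU(\PP)^F\otimes_{\LF}-$ after the obvious block truncation).

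\textbf{Main obstacle.} The genuinely delicate point is the verification that $\MM'_K\otimes-$ really induces a \emph{bijection} of irreducible characters of the two blocks, not merely a virtual-character map that happens to be $\pm$(irreducible) on each basis element. On the Grothendieck-group level this is precisely the content of Theorem~\ref{LCGs} combined with the block-refined statement following it; but one must be a little careful that the block idempotents $e_\ell(\GF,s)$ and $e_\ell(\LF,s)$ interact correctly with $\Lu{\LL}{\GG}$, i.e. that $\Lu{\LL}{\GG}$ really does map $\bbC\lser{\LF}{s}$ into $\bbC\lser{\GF}{s}$ and restricts to an isometry there. The first inclusion follows from the fact (recalled before Theorem~\ref{BM90}) that $\Lu{\LL}{\GG}$ sends $\ser{\LF}{s}$ into $\bbZ\ser{\GF}{s}$ and commutes suitably with $\dd_\lp$ as in the proof of Theorem~\ref{BM90}; the isometry on the nose is Theorem~\ref{LCGs}. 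So the obstacle is more bookkeeping than substance here, because the hard analytic input — perfectness of the bicharacter, freeness of the $\GF\times\LF$-action, Deligne--Lusztig's disjointness result — has already been absorbed into Theorems~\ref{LCGs} and~\ref{broIso}. I would therefore allot most of the written proof to (a) the two bi-projectivity checks for $\MM$, which are short, and (b) a clean statement that on $K$-characters $\MM'\otimes-$ is the bijection of Theorem~\ref{LCGs} restricted to $\ell$-series, after which Lemma~\ref{K2O} finishes.
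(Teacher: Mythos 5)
Your proposal is essentially the same as the paper's proof: both observe that the bimodule is bi-projective because it can be written $\OO\GF e$ with $e$ the averaging idempotent over $\RU(\PP)^F$, that on ordinary characters the functor is Harish-Chandra induction which coincides with $\Lu{\LL}{\GG}$ and is therefore a bijection by Theorem~\ref{LCGs}, and then invoke Lemma~\ref{K2O}. Your write-up is merely more explicit about cutting by the block idempotents and about the right-projectivity check (which you correctly reduce to $\LF\cap\RU(\PP)^F=\{1\}$), details the paper leaves to the reader.
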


The proof simply consists in noting that the functor given induces on characters the Harish-Chandra induction which coincides with $\Lu{\LL}{\GG}$ in our case (see Remark~\ref{4.4}.b), hence a bijection by Theorem~\ref{LCGs}, and on the other hand this bimodule is clearly bi-projective since one may write it $\OO \GF e$ where $e$ is the idempotent $|\RU(\PP)^F|^\mm\sum_{u\in \RU(\PP)^F}u$.
%%%%%%%%%%%%%%%%%%%%%%%%%%%%%%%%%%%%%%%%%%%%%%%%%%%%%%%%%%%%%

%%%%%%%%%%%%%%%%%%%%%%%%%%%%%%%%%%%%%%%%%%%%%%%%%%%%%%%%%%%%%

%%%%%%%%%%%%%%%%%%%%%%%%%%%%%%%%%%%%%%%%%%%%%%%%%%%%%%%%%%%%%

{}
\bigskip

{}

\section{Local methods for blocks of finite quasi-simple groups}

We give more material on general methods for blocks of finite groups. We then illustrate them with the case of symmetric groups. We conclude with a brief discussion of Chuang-Rouquier theorems [ChRo08].

\subsection{Subpairs and local structure of an $\ell$-block}

We go back to $H$ some abstract finite group, and $\ell$ a prime, with $(K,\OO ,k)$ an associated $\ell$-modular system.

An \sing{$\ell$-subpair} in $H$ is any pair $(Q,b_Q)$ where $Q$ is an $\ell$-subgroup of $H$ and $b_Q$ is a primitive idempotent of $\Z(\OO\cent HQ)$. Recall (see Sect. 1.E above) that for $C$ a finite group we have bijections (where prid stands for primitive idempotents)

$$\text{blocks of }\OO C\leftrightarrow \text{prid}(\Z(\OO C))\leftrightarrow \text{prid}(\Z(k C))\leftrightarrow \text{blocks of }k C $$ 

where the middle map is $i\mapsto \ov i$ (reduction mod $J(\OO)$) whose inverse is given by idempotent lifting.

We identify all four kinds of objects above and thus extend the notations $\Irr (B)$, $\CF (H\mid B)$ already seen.

We already introduced the Brauer morphism $\Br_Q\co \Z(kH)\to k\cent HQ$ in Sect. 1.E, but in fact it can be defined on a bigger algebra. Denoting by $(kH)^Q$ the fixed point subalgebra for the conjugacy action of $Q$, one has an algebra morphism

\begin{align*}
	\Br_Q\co(kH)^Q&\to k\cent HQ\\
	\sum_{h\in H}\la_hh&\mapsto \sum_{h\in \cent HQ}\la_hh
\end{align*}

One defines an order relation $\leq$ on $\ell$-subpairs of $H$ by transitive closure of the following

\begin{defn}[Alperin-Brou\'e]\label{AlBr} 
$(Q',b')\lhd (Q,b)$ if and only if \begin{itemize}
	\item $Q$ normalizes $Q'$ and $b'$ (so that $\ov b'\in (kH)^Q$) and
	\item $\Br_Q(\ov b')\ov b=\ov b$.
\end{itemize}
\end{defn}

The $\ell$-blocks of $H$ itself can be seen as $\ell$-subpairs of type $(\{1\},b_1)$. An inclusion $(\{1\},b_1)\lhd (Q,b)$ would exist if and only if $\Br_Q(\ov b_1)\not= 0$ which is the criterion we have seen to define defect groups (see 1.E).

\begin{thm} [Alperin-Brou\'e]\label{subp}
\begin{enumerate}[\rm(i)]
	\item If $(Q,b)$ is an $\ell$-subpair in $H$ and $Q'$ is some subgroup of $Q$, then there is a single subpair with $(Q', b')\leq (Q,b)$.
	\item If $(\{1\}, b_1)$ is an $\ell$-subpair of $H$, the $\leq$-maximal subpairs of $H$ containing it are all $H$-conjugate and are of type $(D,b)$ where $D$ is a defect group of the $\ell$-block $kH\ov b_1$.\index{maximal subpairs}
\end{enumerate}	 
\end{thm}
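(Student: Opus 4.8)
The plan is to establish part (i) first and then derive part (ph) from it. For part (i), by the definition of the order relation as transitive closure of $\lhd$, it suffices to treat the case where $Q'\lhd Q$ with $[Q:Q']$ small, and more importantly to prove existence and uniqueness at each step of a chief series of $Q$. So first I would reduce to showing: given $(Q,b)$ and a subgroup $Q'$ of $Q$, there is exactly one primitive idempotent $b'$ of $\Z(\OO\cent H{Q'})$ with $(Q',b')\leq (Q,b)$. The key tool is the Brauer morphism on the larger fixed-point algebra $(kH)^{Q}$ together with the fact that $\Br_Q$ is a surjective algebra homomorphism onto $k\cent HQ$ (this is standard; it follows from the observation that the fixed points of $Q$ on the $H$-conjugacy basis modulo those with strictly larger stabilizer map bijectively onto the basis of $k\cent HQ$, and the ``error terms'' are nilpotent). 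Applying $\Br_Q$ to $\Z(k\cent H{Q'})^{Q}$ (a subalgebra of $(kH)^{Q}$ once one checks $\cent HQ\le\cent H{Q'}$ and that $Q$ acts on $\cent H{Q'}$), one transports the block idempotent decomposition of $\ov b$ downward. Concretely: $\ov b=\sum_j \Br_Q(\ov e_j)\ov b$ where the $\ov e_j$ run over the block idempotents of $k\cent H{Q'}$ fixed by $Q$ that are not killed by $\Br_Q$; since $\ov b$ is primitive in $\Z(k\cent HQ)$, exactly one summand is nonzero, say for $j=j_0$, and then $b'\deq e_{j_0}$ (lifted to $\OO$) is the unique choice. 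Uniqueness for a general subgroup then follows because any two subpairs below $(Q,b)$ of the same $Q'$ are forced to agree at every stage of a common chief series refinement.

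For part (ii), the existence of $\leq$-maximal subpairs containing $(\{1\},b_1)$ is immediate (finiteness), and by part (i) any such maximal subpair $(D,b_D)$ is the unique subpair of its first coordinate lying below it. The point is to identify $D$ with a defect group of $kH\ov b_1$ and to prove the $H$-conjugacy. I would argue: the condition $(\{1\},b_1)\leq (D,b_D)$ unwinds (via the chain defining $\leq$) to $\Br_D(\ov b_1)\neq 0$, which by the definition of defect groups recalled in Sect.~1.E forces $|D|\le |$defect group$|$; conversely, if $D$ were not $\leq$-maximal, one could enlarge it, so by a standard argument (using that $\Br_Q(\ov b_1)\ne 0$ for $Q$ a defect group, Brauer's First Main Theorem \ref{BrThs}, and the fact that $\OP{\norm HQ}\le Q$ inside the relevant normalizer) one shows $D$ must actually be a full defect group. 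The conjugacy of maximal subpairs is the subtle part: I would fix a defect group $D$ with block idempotent $b_D$, and for an arbitrary maximal subpair $(D',b')$ use a Sylow-type / Frattini argument together with part (i) — $(D,b_D)$ and $(D',b')$ have conjugate first coordinates since both are defect groups, and once $D'={}^h D$ one checks $b'$ and ${}^h b_D$ are both the unique idempotent below the (unique) maximal subpair on $D'$, forcing $(D',b')={}^h(D,b_D)$.

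The main obstacle I expect is the conjugacy statement in (ii): establishing that any two $\leq$-maximal subpairs over $(\{1\},b_1)$ are $H$-conjugate genuinely requires more than the pointwise ``descent'' machinery of part (i). The cleanest route is probably to first prove the ``going-up'' / maximality characterization — that $(D,b_D)$ is $\leq$-maximal iff $D$ is a defect group and $b_D$ is its (unique) associated idempotent — and then deploy a transitivity/uniqueness argument modeled on the Sylow theorems, where the role of "Sylow intersections" is played by part (i). A delicate bookkeeping point throughout is checking that $Q$ normalizes the relevant idempotents so that the Brauer morphism is even defined on them; I would handle this by always working with $Q$-fixed idempotents in $\Z(\OO\cent H{Q'})$ from the outset. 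Everything else — surjectivity of $\Br_Q$, nilpotence of kernels, idempotent lifting between $\OO$ and $k$ — is routine and can be cited from [NagaoTsu] and [AschKeOl, Ch.~IV].
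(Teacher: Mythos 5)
The paper states Theorem~\ref{subp} with an attribution to Alperin--Brou\'e and gives no proof, so there is no internal argument to compare your attempt against; I can only assess it on its own merits. The Brauer-morphism approach you take for part (i) when $Q'\lhd Q$ is the standard one and is essentially correct, modulo one point worth making explicit: after decomposing $1$ into $Q$-orbit sums of block idempotents of $k\cent H{Q'}$, one must observe that any orbit of size $>1$ is a relative trace $\mathrm{tr}_{Q_0}^Q(e)$ from a proper subgroup $Q_0<Q$ and is therefore annihilated by $\Br_Q$; this is what guarantees that the single orbit sum with nonzero image against $\ov b$ is a singleton, i.e.\ a genuinely $Q$-fixed block idempotent $b'$, as the definition of $\lhd$ requires.

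The real gap is the passage from the normal case to a general subgroup $Q'\le Q$. You write that uniqueness ``follows because any two subpairs below $(Q,b)$ of the same $Q'$ are forced to agree at every stage of a common chief series refinement,'' but two chains of $\lhd$-inclusions from $(Q',b')$ and $(Q',b'')$ up to $(Q,b)$ will in general pass through entirely different intermediate subgroups, and there is no ``common refinement'' at which the second coordinates can be compared. This is precisely where the argument in [AlBr79] (and its textbook versions, e.g.\ [Thev, \S 40], [AschKeOl, IV.3]) requires a genuinely new idea: one fixes a primitive idempotent $i$ of $(\OO H)^Q$ with $\Br_Q(\ov i)\ov b\ne 0$ and reformulates $(Q',b')\leq(Q,b)$ as a condition on $\Br_{Q'}(\ov i)$ that visibly determines $b'$ and is stable under the normal step --- or equivalently runs a careful double induction after first proving the ``squeeze'' lemma that $\leq$ coincides with $\lhd$ when $Q'\lhd Q$. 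Without one of these devices the uniqueness for a non-normal $Q'$ does not follow from the normal case alone. Similarly, in part (ii), your plan for conjugacy starts from ``$(D,b_D)$ and $(D',b')$ have conjugate first coordinates since both are defect groups,'' but that both maximal subpairs have defect-group first coordinates is exactly what one is trying to establish; the standard route is first to exhibit, via Brauer's first Main Theorem, \emph{one} maximal subpair whose first coordinate is a defect group, and then to prove conjugacy of all maximal subpairs to this one by a Frattini-type argument that leans on the uniqueness from part (i) --- so (ii) is genuinely downstream of the full strength of (i), including the non-normal case you have not yet secured.
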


To an $\ell$-block $B$ of $H$ with defect group $D\leq H$, one can associate a finite category similar to the fusion system of Definition~\ref{FusCat}, see [AschKeOl, IV.2.21].

\begin{defn}[{}]\label{FDbD} Let $(D,b_D)$ be a maximal $\ell$-subpair containing $(\{1\},B)$. Then $\cF_{(D,b_D)}(B)$\index{$\cF_{(D,b_D)}(B)$} is the category whose objects are the subgroups of $D$ and if $D_1,D_2\leq D$, one defines $$\Hom_{\cF_{(D,b_D)}(B)}(D_1,D_2)$$ as the set of maps $D_1\to D_2$ of the form $x\mapsto c_h(x)=hxh^\mm$ where $h\in H$ is such that one has $\ell$-subpair inclusions $$^h(D_1,b_1)\leq (D_2,b_2)\leq (D,b_D)\geq (D_1,b_1).$$
\end{defn}

Like $\cF_Q(H)$ from Definition~\ref{FusCat} on $Q$, the above defines a fusion system in the sense of [AschKeOl] on the $\ell$-group $D$. The ``local structure" of the $\ell$-block $B$ \index{local structure of a block}usually means the knowledge of $\cF_{(D,b_D)}(B)$, which of course does not depend on the choice of the maximal subpair $(D,b_D)$.

\subsection{Brauer's second Main Theorem}

We need first to define the generalized decomposition map $d^x$ ($x$ an $\ell$-element) on central functions. We already had a glimpse of the ordinary decomposition map (when $x=1$) in the form of multiplication by the function denoted by $\dd_\lp$ in the proof of \Th{BM90}.

\begin{defn}
	For $x\in H_\ell$ let $$d^x\co\CF (G)\to \CF(\cent Hx)$$ defined by $d^x(f)(y)=f(xy)$ if $y\in \cent Hx_\lp$, $d^x(f)(y)=0$ otherwise.\index{$d^x$}
\end{defn}

\begin{thm}[Brauer 1959]\label{Br2nd}  Let $x\in H_\ell$.
	Let $(\{1\},b_1)$, $(\spann <x>,b_x)$ be $\ell$-subpairs of $H$. Let $\chi\in\Irr (b_1)$ and assume $d^x(\chi)\in \CF (\cent Hx)$ has non-zero projection on $\CF (\cent Hx\mid b_x)$. Then $$(\{1\},b_1)\leq (\spann <x>,b_x).$$
\end{thm}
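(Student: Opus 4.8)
\smallskip
\noindent\textbf{Strategy of proof.}
First I would unwind the conclusion. By Definition~\ref{AlBr} (whose normalizing condition is automatic here, $b_1$ being central in $\OO H$), to obtain $(\{1\},b_1)\le(\spann<x>,b_x)$ it suffices to prove $\Br_{\spann<x>}(\ov b_1)\,\ov b_x=\ov b_x$. Since $\ov b_1\in\Z(kH)$ is $\spann<x>$-stable, $\Br_{\spann<x>}(\ov b_1)$ is an idempotent; it commutes with each $c\in\cent Hx$ (as $\Br_{\spann<x>}$ is multiplicative on $(kH)^{\spann<x>}$ and fixes such $c$), so it is a \emph{central} idempotent of $k\cent Hx$, whence $\Br_{\spann<x>}(\ov b_1)\,\ov b_x\in\{0,\ov b_x\}$. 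Thus it is enough to prove $\Br_{\spann<x>}(\ov b_1)\,\ov b_x\neq0$, and I would do this by contraposition: assuming $\Br_{\spann<x>}(\ov b_1)\,\ov b_x=0$, I would show $d^x_{\chi\psi}=0$ whenever $\chi\in\Irr(b_1)$ and $\psi\in\IBr(b_x)$ --- which is precisely the statement that $d^x(\chi)$ has zero projection on $\CF(\cent Hx\mid b_x)$ for every $\chi\in\Irr(b_1)$.

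Fix $\psi\in\IBr(b_x)$ and pick a primitive idempotent $e\in\OO\cent Hx$ with $\OO\cent Hx\,e$ a projective cover of the simple module afforded by $\psi$; so $e=b_xe$, and because $\spann<x>$ is a central $\ell$-subgroup of $\cent Hx$ the restriction of $\OO\cent Hx\,e$ to $\spann<x>$ is free. The plan is to study the bimodule $N\deq b_1\,\OO H\,e$, with $\OO H$ acting on the left and $\OO\spann<x>$ on the right through $e$. Viewed as an $\OO[H\times\spann<x>]$-module, $N$ is a direct summand of the restriction to $H\times\spann<x>$ of the permutation module $\OO H$ (on the basis $H$), hence an $\ell$-permutation module, so the Brauer construction and the $\ell$-permutation Brauer-character formula (see [AschKeOl, Ch.~IV]) are available. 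Over $K$ one has, as $(KH,K\spann<x>)$-bimodules, $N\otimes K\cong\bigoplus_{i}\bigl(b_1\,\Ind^{H}_{\cent Hx}E_i\bigr)\boxtimes\lambda^{i}$, where $\lambda\co\spann<x>\to\OO^\times$ has order $|x|$, $\zeta\deq\lambda(x)$, and $E_i$ is the $\zeta^{i}$-eigenspace of $x$ acting on $\bigl(\OO\cent Hx\,e\bigr)\otimes K$; and a direct rewriting of the definition of $d^x_{\chi\psi}$ (via the standard formula $d^x_{\chi\psi}=|\cent Hx|^{-1}\sum_{c\in\cent Hx_{\lp}}\chi(xc)\,\ov{\Phi_\psi(c)}$ and this eigenspace decomposition) gives $d^x_{\chi\psi}=\sum_i\langle\Res^{H}_{\cent Hx}\chi,\ \chi_{E_i}\rangle\,\zeta^{i}$. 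Combining the two, the character of the $\OO[H\times\spann<x>]$-module $N$ satisfies $\chi_N(xc,\,x^{-1})=\sum_{\chi\in\Irr(b_1)}\chi(xc)\,\ov{d^x_{\chi\psi}}$ for all $c\in\cent Hx$.

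Next I would compute $N(\nabla)$ for the antidiagonal subgroup $\nabla\deq\{(x^{i},x^{-i})\}\le H\times\spann<x>$. By functoriality of the Brauer construction and its behaviour on the product group, $N(\nabla)$ is $0$ unless some $h_0\in H$ inverts $x$, in which case $N(\nabla)=\Br_{\spann<x>}(\ov b_1)^{h_0}\cdot k\cent Hx\cdot\ov e$; but such an $h_0$ normalizes both $\spann<x>$ and $\cent Hx$ and fixes $b_1$ (central in $\OO H$), so conjugating the hypothesis $\Br_{\spann<x>}(\ov b_1)\,\ov b_x=0$ by $h_0$ gives $\Br_{\spann<x>}(\ov b_1)\cdot{}^{h_0}\ov b_x=0$, hence $\Br_{\spann<x>}(\ov b_1)^{h_0}\ov e=0$ and $N(\nabla)=0$ in all cases. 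Therefore the $\ell$-permutation Brauer-character formula gives $\chi_N(xc,\,x^{-1})=0$ for every $c\in\cent Hx_{\lp}$; substituting $\chi(xc)=\sum_{\psi_1\in\IBr(\cent Hx)}d^x_{\chi\psi_1}\psi_1(c)$ and using the linear independence of the $\psi_1$ on $\ell'$-classes, one extracts $\sum_{\chi\in\Irr(b_1)}d^x_{\chi\psi_1}\ov{d^x_{\chi\psi}}=0$ for all $\psi_1\in\IBr(\cent Hx)$; taking $\psi_1=\psi$ yields $\sum_{\chi\in\Irr(b_1)}|d^x_{\chi\psi}|^{2}=0$, so $d^x_{\chi\psi}=0$ for every $\chi\in\Irr(b_1)$, as required.

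The main obstacle is the middle step: one must set up carefully the Brauer construction for $\ell$-permutation bimodules, its functoriality and its description on a product group at a (anti)diagonal subgroup, in order to identify $N(\nabla)$; and one must establish the translation of ``$N(\nabla)=0$'' into the arithmetic statement about the $d^x_{\chi\psi}$ through the eigenspace analysis of the free $\spann<x>$-action --- this is where the hypothesis is really used, and it is the genuine content of the theorem. A reader who prefers to avoid this machinery can instead run Brauer's original $1959$ argument with the central characters $\omega_\chi$ on the $x$-section of $H$; there the delicate point is to upgrade a congruence modulo $J(\OO)$ to an exact identity, which needs the orthogonality relations among generalized decomposition numbers that the argument above sidesteps by manufacturing the positive-definite quantity $\sum_{\chi}|d^x_{\chi\psi}|^{2}$ directly.
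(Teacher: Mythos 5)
The paper does not prove this theorem: it is recorded as a classical result of Brauer (1959), with the ambient references [NagaoTsu], [AschKeOl, Ch.~IV] expected to supply the argument. So there is no paper proof for your proposal to be compared against; what you have written is a self-contained proof, and it is correct.

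Your route is the modern $\ell$-permutation (``$p$-permutation'') module argument, essentially Brou\'e's, rather than Brauer's original computation with central characters. The structure is sound: reducing to showing $\Br_{\spann<x>}(\ov b_1)\,\ov e=0$ forces $N(\nabla)=0$ for $N=b_1\OO He$ is the right idea, and your identification of $N(\nabla)$ can be justified in detail. Indeed, for $g\in H^\nabla$, restricting $\ov b_1\,g\,\ov e$ to the basis elements of $H^\nabla$ kills every term $h_1gh_2$ with $h_1\notin\cent Hx$ (since $h_2\in\cent Hx$ and $g$ normalizes $\cent Hx$, the condition $h_1gh_2\in H^\nabla$ forces $h_1\in\cent Hx$), which replaces $\ov b_1$ by exactly $\Br_{\spann<x>}(\ov b_1)$; since $\ov b_1$ is central and $g$ normalizes $\spann<x>$ and $\cent Hx$, $\Br_{\spann<x>}(\ov b_1)$ commutes with conjugation by $g$, whence $\Br_{\spann<x>}(\ov b_1)\,g\,\ov e=g\,\Br_{\spann<x>}(\ov b_1)\,\ov e=0$. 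Two points deserve care in a full writeup but are not gaps. First, the phrasing ``conjugating the hypothesis by $h_0$'' is a slight detour: since $\ov e=\ov b_x\ov e$ and $\Br_{\spann<x>}(\ov b_1)$ is central in $k\cent Hx$, one gets $\Br_{\spann<x>}(\ov b_1)\ov e=0$ directly, and the $h_0$-conjugation can be absorbed as above. Second, the identity $\chi_N(xc,x^{-1})=\sum_{\chi\in\Irr(b_1)}\chi(xc)\,\ov{d^x_{\chi\psi}}$ requires tracking a left/right-module and contragredient-duality convention (the right $\spann<x>$-action on $V_\chi^*e$ and whether $e\OO\cent Hx$ covers $\psi$ or $\ov\psi$); this can introduce a harmless relabelling $\psi\leftrightarrow\ov\psi$ or $x\leftrightarrow x^{-1}$ that does not affect the final positive-definiteness argument $\sum_{\chi\in\Irr(b_1)}|d^x_{\chi\psi}|^2=0$. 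Compared with Brauer's original central-character proof (which you mention), your approach trades the delicate lifting of a mod-$J(\OO)$ congruence for the machinery of the Brauer construction on $\ell$-permutation bimodules; the pay-off is that the hypothesis $\Br_{\spann<x>}(\ov b_1)\ov b_x=0$ is used exactly once, in the clean statement $N(\nabla)=0$.
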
\index{Brauer's second Main Theorem}

\subsection{Centric or self-centralizing subpairs}

\begin{defn}\label{ctric}
	Let $(Q,b_Q)$ be an $\ell$-subpair of $H$. Then it is called {\bf centric} \index{centric subpair}if and only if $b_Q$ has defect group $\Z(Q)$ in $\cent HQ$. Then there is a single $\zeta\in\Irr(b_Q)$ with $\Z(Q)$ in its kernel, this is called the {\bf canonical character} \index{canonical character} of the centric subpair. 
\end{defn}

It is easy to show the uniqueness of $\zeta$ above, using that $k\cent HQ\ov b_Q$ has a single simple module, hence a single projective indecomposable module. One can recover $b_Q$ from $\zeta$ by the formula \begin{equation}\label{canon}
b_Q={\zeta(1)\over|\cent HQ|}\sum_{h\in\cent HQ_\lp}\zeta (h)h^\mm .
\end{equation}

\begin{thm}[Brauer]\label{inctric} 
	Let $(Q,b)$, $(Q',b')$ some centric $\ell$-subpairs of $H$, with $Q'\lhd Q$. Let $\zeta\in\Irr(b)$,  $\zeta'\in\Irr(b')$ the canonical characters. Then $(Q',b')\leq  (Q,b)$ if and only if $\zeta '$ is $Q$-stable and the multiplicity of $\zeta$ in $\Res_{\cent H{Q}} ^{\cent H{Q'}}\zeta '$ is in $\bbN\setminus \ell\bbN$. 
\end{thm}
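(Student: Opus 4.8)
The plan is to reduce the statement to Brauer's second Main Theorem (\Th{Br2nd}) by exploiting the explicit idempotent formula (\ref{canon}) that recovers a block idempotent from its canonical character. First I would fix the setup: let $Q'\lhd Q$, write $C=\cent H{Q'}$ and $C_0=\cent HQ\leq C$, and note that $Q/Q'$ acts on $C$ by conjugation, with $C_0=\Cent_C(Q/Q')$ essentially (more precisely $\cent C{Q}=\cent HQ$). Since $(Q',b')$ is centric, $k C\ov{b'}$ has a unique simple module and hence a unique block; the canonical character $\zeta'$ is $Q$-stable precisely when $\ov{b'}$ lies in $(kC)^{Q/Q'}$, so that the Brauer morphism $\Br_{Q/Q'}$ applied to $\ov{b'}$ makes sense and lands in $k\cent C{Q/Q'}=k\cent HQ=kC_0$. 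Unwinding Definition~\ref{AlBr}, the inclusion $(Q',b')\leq(Q,b)$ holds if and only if $\zeta'$ is $Q$-stable and $\Br_{Q/Q'}(\ov{b'})\,\ov b=\ov b$, i.e. the image $\Br_{Q/Q'}(\ov{b'})\in\Z(k C_0)$ has $\ov b$ as one of its block constituents.

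Next I would compute $\Br_{Q/Q'}(\ov{b'})$ using (\ref{canon}). Applying the Brauer morphism to the formula $\ov{b'}=\overline{\zeta'(1)|C|^{-1}\sum_{h\in C_{\ell'}}\zeta'(h)h^{-1}}$ kills all group elements outside $C_0=\cent HQ$, so
\begin{equation*}
\Br_{Q/Q'}(\ov{b'})=\overline{\frac{\zeta'(1)}{|C|}\sum_{h\in (C_0)_{\ell'}}\zeta'(h)\,h^{-1}}.
\end{equation*}
This is, up to the invertible scalar adjustment relating $|C|$ and $|C_0|$ and the degree $\zeta'(1)$, exactly the image in $\Z(kC_0)$ of the class function $d^{x}(\zeta')$-type restriction of $\zeta'$ to $\ell'$-elements of $C_0$; more intrinsically it is the reduction of the central element $\zeta'(1)|C|^{-1}\sum_{h\in (C_0)_{\ell'}}\zeta'(h)h^{-1}$ attached to $\Res^{C}_{C_0}\zeta'$. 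The key point is then to identify which block idempotents $\ov b$ of $kC_0$ occur with nonzero coefficient in this element: expanding $\Res^C_{C_0}\zeta'=\sum_{\eta\in\Irr(C_0)}m_\eta\,\eta$ and using orthogonality of central characters, $\ov b$ occurs in $\Br_{Q/Q'}(\ov{b'})$ iff there is some $\eta\in\Irr(kC_0\ov b)$ with $m_\eta$ nonzero \emph{and} the central character $\omega_\eta$ not vanishing on the relevant idempotent mod $\ell$ — which, since $(Q,b)$ is centric with canonical character $\zeta$ the unique character of $kC_0\ov b$ lifting the unique simple module, forces $\eta=\zeta$ and reduces the nonvanishing condition mod $\ell$ to $m_\zeta=\langle\Res^{C}_{C_0}\zeta',\zeta\rangle\notin\ell\bbZ$. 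This is precisely the multiplicity condition in the statement.

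For the converse direction I would run the same computation backwards: given that $\zeta'$ is $Q$-stable and $\langle\Res^{\cent H{Q'}}_{\cent H Q}\zeta',\zeta\rangle\notin\ell\bbZ$, the displayed formula shows $\ov b$ is a constituent of $\Br_{Q/Q'}(\ov{b'})$, hence $\Br_{Q/Q'}(\ov{b'})\ov b\neq 0$; since the $\ov b$ are orthogonal primitive idempotents of $\Z(kC_0)$ and $\Br_{Q/Q'}(\ov{b'})$ is itself an idempotent (Brauer morphism is an algebra map and $\ov{b'}$ is idempotent in $(kH)^{Q/Q'}$), $\Br_{Q/Q'}(\ov{b'})\ov b\neq 0$ forces $\Br_{Q/Q'}(\ov{b'})\ov b=\ov b$, which is Definition~\ref{AlBr}. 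The main obstacle I anticipate is the bookkeeping around the Brauer morphism relative to the quotient $Q/Q'$ rather than $Q$ itself: one must check carefully that $\cent HQ=\cent C{Q/Q'}$ where $C=\cent H{Q'}$ (clear) and that the centricity hypothesis on $(Q,b)$ really does pin down $\zeta$ as the unique irreducible character of $C_0$ lying in $\ov b$ with $\Z(Q)$ in its kernel, so that the mod-$\ell$ nonvanishing of the central character collapses exactly to the integer $m_\zeta\bmod\ell$; invoking Brauer's second Main Theorem in the form \Th{Br2nd} (applied inside $C$ with $\ell$-element a generator of a cyclic section of $Q/Q'$, or more directly its subpair reformulation) is what guarantees that no spurious blocks $\ov b$ can occur, i.e. that the support of $\Br_{Q/Q'}(\ov{b'})$ in $\Z(kC_0)$ is controlled by subpair inclusion and hence the computation above captures all cases.
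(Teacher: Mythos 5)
The paper does not supply a proof of Theorem~\ref{inctric}; it is quoted as a theorem of Brauer, so there is nothing internal to compare against. Your plan is the right general strategy — evaluate the Brauer morphism on the canonical idempotent, then detect $\ov b$ via the central character $\lambda_b:\Z(k\cent HQ)\to k$ — and the reduction of "$\zeta'$ is $Q$-stable" to "$Q$ fixes $\ov{b'}$" via formula~(\ref{canon}) is sound, as is the observation that $\Br_Q(\ov{b'})\ov b$ is either $0$ or $\ov b$ because $\Z(k\cent HQ\ov b)$ is local. But the step labelled "up to the invertible scalar adjustment relating $|C|$ and $|C_0|$ and the degree $\zeta'(1)$" hides the whole difficulty and is where the argument breaks.

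Carry the computation through and you do not get the ordinary inner product. Writing $C=\cent H{Q'}$, $C_0=\cent HQ$, $D=\Z(Q)$, $D'=\Z(Q')$, one finds that $\lambda_b(\Br_Q(\ov{b'}))$ is the reduction of
\[
\omega_\zeta\!\left(\frac{\zeta'(1)\,|D'|}{|C|}\sum_{h\in (C_0)_{\lp}}\zeta'(h)\,h^{-1}\right)
\;=\;\frac{\zeta'(1)\,|D'|\,|C_0|}{|C|\,\zeta(1)}\,\Big\langle\, d^{1}\!\Res^{C}_{C_0}\zeta',\ \zeta\,\Big\rangle_{C_0},
\]
where $d^1$ is truncation to $\ell'$-elements (and note, incidentally, that~(\ref{canon}) as printed is missing the factor $|\Z(Q)|$ in the numerator, which you have inherited). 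Because $\zeta$ and $\zeta'$ lie in blocks of central defect $D$ and $D'$ respectively, their values vanish off elements whose $\ell$-part lies in $D$ (resp.\ $D'$); using this one gets
\[
\big\langle\Res^{C}_{C_0}\zeta',\ \zeta\big\rangle_{C_0} \;=\; |D\cap D'|\cdot\big\langle d^{1}\!\Res^{C}_{C_0}\zeta',\ \zeta\big\rangle_{C_0},
\]
so the scalar relating $\lambda_b(\Br_Q(\ov{b'}))$ to the \emph{ordinary} multiplicity $m_\zeta=\langle\Res^{C}_{C_0}\zeta',\zeta\rangle$ has $\ell$-adic valuation $v_\ell(|D|)-v_\ell(|D\cap D'|)=v_\ell\big(|D/(D\cap D')|\big)$. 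This vanishes precisely when $\Z(Q)\leq\Z(Q')$. In the forward direction this inclusion is free, by Proposition~\ref{maxic}, so your identification of the criterion with $m_\zeta\notin\ell\bbZ$ is fine there. But in the converse direction — the one you dispatch with "running the same computation backwards" — nothing forces $\Z(Q)\leq\Z(Q')$, so the "displayed formula" does not show that $m_\zeta\notin\ell\bbZ$ implies $\Br_Q(\ov{b'})\ov b\neq 0$. Indeed one can manufacture a concrete failure: take $H=\Alt5$, $\ell=5$, $Q'=\{1\}$ and $b'$ the defect-zero block of $H$ carrying the degree-$5$ character $\zeta'$, and $Q$ a Sylow $5$-subgroup with $b$ the unique block of $\cent HQ=Q$, so that $\zeta=1_Q$. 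Then $\zeta'$ is $Q$-stable and $\langle\Res^H_Q\zeta',1_Q\rangle=1\notin 5\bbZ$, yet $\Br_Q(\ov{b'})=0$ because $b'$ has defect zero. This shows that either the statement carries an implicit hypothesis (e.g.\ that both subpairs lie over a common block, or that $\Z(Q)\leq\Z(Q')$), or the correct invariant is the $\ell'$-truncated pairing $\langle d^1\Res\zeta',\zeta\rangle$ rather than the ordinary multiplicity. Your proposal does not notice the discrepancy, treats the two pairings as interchangeable, and treats the scalar as invertible without argument, so the converse implication is not established.
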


In practice, centric subpairs lead easily to maximal subpairs.
\begin{pro}\label{maxic} A subpair inclusion $(Q_1,b_1)\leq (Q_2,b_2)$ with centric $(Q_1,b_1)$ implies that $(Q_2,b_2)$ is also centric and $\zent{Q_2}\leq \zent{Q_1}$. A subpair $(Q,b_Q)$ is maximal if and only if it is centric and $\norm H{Q,b_Q}/Q\cent HQ$ is an $\ell '$-group.
\end{pro}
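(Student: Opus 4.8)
The plan is to reduce both assertions to two elementary manipulations of the Brauer homomorphism, used together with the characterisation of defect groups as the $\ell$-subgroups $D$ with $\Br_D(\ov b)\ne 0$ (up to conjugacy). \emph{First}, a comparison: if $Q_1\le Q$ are $\ell$-subgroups of $H$ and $e$ is a $Q$-stable block of $\cent H{Q_1}$, then, writing $D\deq\cent{Q}{Q_1}$, one has $\cent HQ=\cent{\cent H{Q_1}}{Q}\subseteq\cent{\cent H{Q_1}}{D}$, and since $\operatorname{supp}(\ov e)\subseteq\cent H{Q_1}$ the element $\Br_Q(\ov e)$ is obtained from $\Br^{\cent H{Q_1}}_{D}(\ov e)$ by deleting terms; hence $\Br_Q(\ov e)\ne 0$ forces $\Br^{\cent H{Q_1}}_{D}(\ov e)\ne 0$, i.e.\ $D$ lies in a defect group of $e$ — and no conjugation is needed when that defect group is central in $\cent H{Q_1}$. \emph{Second}, an enlargement principle: if $b$ is a block of $\cent HQ$ with a defect group $\delta\not\subseteq\zent Q$, then $\delta\not\subseteq Q$, so $Q\delta$ is an $\ell$-group properly containing $Q$ with $Q\lhd Q\delta$ and $\cent H{Q\delta}=\cent{\cent HQ}{\delta}$, and $\Br_{Q\delta}(\ov b)=\Br^{\cent HQ}_{\delta}(\ov b)\ne 0$; choosing a block $c$ of $\cent H{Q\delta}$ with $\Br_{Q\delta}(\ov b)\ov c=\ov c$ produces a proper enlargement $(Q,b)\lhd(Q\delta,c)$ in the sense of Definition~\ref{AlBr}.

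For the first statement, by induction on the length of a $\lhd$-chain realising $\le$ it suffices to treat $(Q_1,b_1)\lhd(Q_2,b_2)$ with $(Q_1,b_1)$ centric. Then $b_1$ has the central $\ell$-subgroup $\zent{Q_1}$ of $\cent H{Q_1}$ as its unique defect group; since $\Br_{Q_2}(\ov b_1)\ov b_2=\ov b_2\ne 0$, the comparison gives $\cent{Q_2}{Q_1}\subseteq\zent{Q_1}$, whence $\zent{Q_2}\subseteq\cent{Q_2}{Q_1}=\zent{Q_1}$. To see $(Q_2,b_2)$ is centric, suppose a defect group $\delta_2$ of $b_2$ satisfies $\delta_2\not\subseteq\zent{Q_2}$; the enlargement principle gives $(Q_2,b_2)\lhd(Q_2\delta_2,c)$, and by transitivity of the Brauer homomorphism along $Q_2\lhd Q_2\delta_2$ the element $\Br_{Q_2\delta_2}(\ov b_1)=\Br^{\cent H{Q_2}}_{\delta_2}\big(\Br_{Q_2}(\ov b_1)\big)$ has the non-zero idempotent $\Br^{\cent H{Q_2}}_{\delta_2}(\ov b_2)$ as one of its pairwise orthogonal summands, so it is non-zero; the comparison with $Q=Q_2\delta_2$ then gives $\cent{Q_2\delta_2}{Q_1}\subseteq\zent{Q_1}$, while $\delta_2\le\cent H{Q_2}\le\cent H{Q_1}$ forces $\delta_2\subseteq\cent{Q_2\delta_2}{Q_1}\subseteq\zent{Q_1}$; hence $\delta_2\subseteq\zent{Q_1}\cap\cent H{Q_2}=\zent{Q_2}$ (the last equality using $\zent{Q_2}\subseteq\zent{Q_1}$), a contradiction, so $\delta_2=\zent{Q_2}$.

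For the ``if'' direction of the second statement, assume $(Q,b_Q)$ centric with $\norm H{Q,b_Q}/Q\cent HQ$ an $\ell'$-group, and suppose $(Q,b_Q)\lhd(R,c)$ with $R\supsetneq Q$. Then $Q\lhd R\le\norm H{Q,b_Q}$, and being an $\ell$-group $R$ maps trivially to the $\ell'$-group $\norm H{Q,b_Q}/Q\cent HQ$, so $R\subseteq Q\cent HQ$ and hence $R=Q\cent RQ$ with $\cent RQ=R\cap\cent HQ$. Now $\Br_R(\ov b_Q)=\Br^{\cent HQ}_{\cent RQ}(\ov b_Q)\ne 0$, so $\cent RQ$ lies in the (central) defect group $\zent Q$ of $b_Q$, giving $R=Q\zent Q=Q$, a contradiction. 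Thus $(Q,b_Q)$ has no proper $\lhd$-enlargement, and since the first step of any $\lhd$-chain issuing from it is then forced to have the same $\ell$-group and the same block (the latter by orthogonality of block idempotents of $\cent HQ$), $(Q,b_Q)$ is maximal. For the ``only if'' direction, let $(Q,b_Q)$ be maximal; it is centric, because a non-central defect group of $b_Q$ would, by the enlargement principle, produce a proper enlargement. For the inertial condition I would pass, via Theorem~\ref{subp}(ii), to the description of $(Q,b_Q)$ as a maximal $B$-subpair with $B\deq kH\ov b_0$ and $(\{1\},b_0)$ the unique subpair below it, so $Q$ is a defect group of $B$; then Theorem~\ref{BrThs}(i) supplies the Brauer correspondent $\beta$ of $B$ in $\norm HQ$, a block with defect group $Q$ covering the block $b_Q$ of $\cent HQ$ — centric with defect group $\zent Q$ — and a Fong--Reynolds reduction to the inertia group $\norm H{Q,b_Q}$ together with Kn\"orr's theorem on defect groups of covered blocks forces $\norm H{Q,b_Q}/Q\cent HQ$ to have order prime to $\ell$.

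The routine parts are the two Brauer-homomorphism facts and the two contradiction arguments above, which are purely formal idempotent book-keeping. I expect the real obstacle to be the inertial-quotient statement in the ``only if'' direction: it hinges on knowing precisely how the defect group $\zent Q$ of the covered block $b_Q$ sits inside the defect group $Q$ of the covering block $\beta$, which is a genuine input from the Clifford theory of blocks (Brauer's First Main Theorem and Kn\"orr's theorem on covered blocks) rather than a formal consequence of manipulating the Brauer morphism — this is the point at which one must either carry out that argument in detail or defer to [AschKeOl, Ch.~IV] and Alperin--Brou\'e.
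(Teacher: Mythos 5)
The paper does not actually supply a proof of Proposition~\ref{maxic} (it is quoted as a known consequence of Alperin--Brou\'e subpair theory), so there is no paper argument to compare against; the task reduces to auditing your proof on its own.

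The two preliminary observations (``comparison'' and ``enlargement'') are sound, and the proof of the first sentence is complete and correct: in particular, the identity $\Br_{Q_2\delta_2}(\ov b_1)=\Br^{\cent H{Q_2}}_{\delta_2}\bigl(\Br_{Q_2}(\ov b_1)\bigr)$ holds because the support of $\ov b_1$ lies in $\cent H{Q_1}$ and $\cent H{Q_2\delta_2}=\cent{\cent H{Q_2}}{\delta_2}$, and the orthogonality argument correctly extracts the summand $\Br^{\cent H{Q_2}}_{\delta_2}(\ov b_2)$. The ``if'' direction of the second sentence is also correct; the key identification $\Br_R(\ov b_Q)=\Br^{\cent HQ}_{\cent RQ}(\ov b_Q)$ is valid precisely because, after using that $N_H(Q,b_Q)/Q\cent HQ$ is $\ell'$, one knows $R=Q\cent RQ$, i.e.\ $R$ is of the shape $Q\delta$ with $\delta\le\cent HQ$ --- this is exactly the range in which $\cent{\cent HQ}{R}$ and $\cent{\cent HQ}{\cent RQ}$ coincide. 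The ``centric'' half of the ``only if'' direction is correct as well, again via the enlargement principle. So everything you actually carry out is right, and the bookkeeping with orthogonal idempotents and transitivity of the Brauer map is tight.

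The honestly flagged gap is real, and your diagnosis is accurate. For a general $\ell$-subgroup $R$ with $Q\lhd R\le N_H(Q,b_Q)$ but $R\not\subseteq Q\cent HQ$, the equality $\Br_R(\ov b_Q)=\Br^{\cent HQ}_{\cent RQ}(\ov b_Q)$ that powers your elementary arguments \emph{fails}: $\Br_R$ projects onto $\cent{\cent HQ}{R}$, which is in general a proper subgroup of $\cent{\cent HQ}{\cent RQ}$. Thus showing that a non-$\ell'$ inertial quotient forces a proper enlargement genuinely cannot be done by idempotent manipulation alone. A clean way to organise the missing step, staying close to your outline: first observe that $b_Q$ is itself a block of $M:=Q\cent HQ$ with defect group $Q$ (unique covering block since $M/\cent HQ$ is an $\ell$-group, and its defect group intersects $\cent HQ$ in $\Z(Q)$ while mapping onto $M/\cent HQ$, so has order $|Q|$, and two such subgroups are conjugate by Schur--Zassenhaus applied in $M/\Z(Q)$). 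Then, for any $\ell$-group $R$ with $Q\lhd R\le N_H(Q,b_Q)$ and $R\cap M=Q$, a Schur--Zassenhaus argument in $RM/Q$ identifies $R$ with a defect group of $b_Q$ as a block of $RM$, giving $\Br_R(\ov b_Q)\ne 0$. This turns maximality into the precise condition that no such $R\supsetneq Q$ exists. To conclude $N_H(Q,b_Q)/M$ is $\ell'$, one now does need Kn\"orr's theorem on defect groups of covered blocks (applied to a suitable block of $N_H(Q,b_Q)$ covering $b_Q$ and having defect group projecting onto a Sylow of the inertial quotient); a priori such a block might fail to cover $b_Q$ or might have small defect, so the precise selection is where the Clifford-theoretic input enters. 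In short: your sketch via Brauer's First Main Theorem, Fong--Reynolds, and Kn\"orr is the standard route, it is correct in outline, and the remaining work is genuinely not formal --- one more reason the paper simply cites this fact rather than proving it.
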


\subsection{Two main theorems of Brauer and blocks of quasi-simple groups}

Assume we are given a finite group $H$ and a prime $\ell$. We assume we have some information on $\Irr(H)$ and some character values, especially in the form of algorithms reducing to the related questions for smaller groups of the same type. 

Using local methods we want to determine the splitting of $\Irr(H)$ into sets $\Irr(B)$ for $B$ the $\ell$-blocks of $H$, along with the $\ell$-subpairs of $H$ (which includes determining the defect groups of $\ell$-blocks).

For $\chi\in\Irr(H)$, let us denote by $b_H(\chi)$ the $\ell$-block such that $\chi\in\Irr(b_H(\chi))$.

Starting with $  \chi\in \Irr (H)$, two possiblities occur. Either there is some $1\not= x\in H_\ell $ such that $d^x\chi \not= 0$ or $\chi (H\setminus H_\lp)=0$. 

In the second case it is classical that $b_H(\chi)$ has defect $\{1\}$ and is the only character of that block. In the first case it's often the case that such an $x$ can be found non-central. Then Brauer's second Main Theorem allows to get an inclusion $$(\{1\},b_H(\chi))\leq (\spann <x>, b').$$ If now $H'\deq\cent Hx$ has a similar structure as $H$, or say we know $\Irr(H')$ just as well, and if the maps $d^{x'}\co \Irr(H')\to \CF(\cent{H'}{x'})$ are also not too difficult to compute, we can do for $H'$ the same as above. 

This subpair enlargement process will provide us with an inclusion $$(\{1\},b_H(\chi))\leq (A,b_A)$$ where $A$ is an abelian $\ell$-subgroup and $b_A$ has central defect group in $\cent HA$. So we can assume that $(A,b_A)$ is centric.

Using now Theorem~\ref{inctric}, including $(A,b_A)$ into other centric subpairs is a relatively classical problem of character restrictions. One then gets to a maximal subpair $(D,b_D)\geq (\{1\},b_H(\chi))$. By conjugacy of maximal subpairs, this solves the problem of saying when two characters $\chi$, $\chi'$ of $H$ belong to the same block. One has $b_H(\chi)=b_H(\chi ')$ if and only if the corresponding pairs $(D,b_D)$ and $(D',b_{D'})$ are conjugate.

This is not precisely the pattern followed by Brauer-Robinson to determine the blocks of symmetric groups first conjectured by Nakayama (see [Naka41b],[Br47]) but it was used by others (see [MeTa76] and Sect. 5.F below) and by Fong-Srinivasan for the blocks of finite classical groups ([FoSr82] and [FoSr89]).

\begin{rem}
Note that we have avoided the question of characters that would vanish on $H\setminus \Z(H)H_\lp$ but are not in an $\ell$-block of central defect. This can happen only if $\ell\mid |\Z(H)|$. If we have started with a quasi-simple group $H$, this means that $\ell$ divides the order of the Schur multiplier of a simple group. Indeed for $H$ the double cover of alternating or symmetric groups, the 2-blocks of faithful characters had to be determined by other methods (Bessenrodt-Olsson [BeOl97]). But such a phenomenon seems a bit isolated and not present in finite groups of Lie type.
\end{rem}

\subsection{The symmetric group: characters}

Let us recall the parametrization of $\Irr(\Sym{n})$ and the formula of Murnaghan-Nakayama giving the character values. We refer for instance to [JamesKer] for the classical theory while [Klesh] gives a very direct approach to a more general setting.

For $n\geq 0$, one defines $\cP(n)=\{\la\mid\la\vdash n \}$ the set of integer partitions of $n$, $\la =(\la_1\geq \la_2\geq \dots \geq\la_k)$ with $\la_k > 0$ and $\la_1+ \la_2+ \dots +\la_k=n$. One also denotes $|\la|=n$. This includes $\emptyset \vdash 0$.

One has a bijection \begin{align*}
	\cP(n)&\to\Irr(\Sym n),\\
	\la&\mapsto \zeta_\la\ .
\end{align*}

The trivial character corresponds to the partition $(n)$ through this bijection.
We don't give its definition but go to the Murnaghan-Nakayama rule that allows to compute inductively the character values.

Let $d\geq 1$. To each $\la\vdash n$ is associated the set $\hoo_d(\la)$ of its \sing{hooks} of length $d$ and for each $\tau\in\hoo_d(\la)$ there is a removal operation producing $\la -\tau\vdash n-d$. Each hook has a {\bf height} $h(\tau)\in\bbN$. The \sing{Murnaghan-Nakayama rule} is as follows. 

Assume $1\leq d\leq n$ and $x\in \Sym{n}$ writes as $x=x'c$ where $x'\in\Sym{n-d}$ and $c$ is a cycle of order $d$ on $\{n-d+1,\dots,n\}$. Let $\la\vdash n$. Then [JamesKer, 2.4.7] \begin{equation}\label{MNrule}
\zeta_\la (x)=\sum_{\tau\in\hoo_d(\la)}^{}(-1)^{h(\tau)}\zeta_{\la-\tau}(x').\end{equation}
  
Let's be more explicit on hooks and the removal process. Partitions are often represented by \sing{Young diagrams}, where $\la=(\la_1\geq \la_2\geq\cdots)$ is represented by rows of boxes of sizes $\la_1$, $\la_2$, etc.. The rows are aligned on the left and all boxes are identical. Below is the diagram for the partition $(4,3,1,1)\vdash 9$. The \sing{rim} of the diagram consists of the boxes such that no box is at the same time under and on the right of them. On the first diagram below the rim of 7 boxes is dotted. A hook is an interval in this rim starting and finishing at some box with no box under or right of it. Its length is the number of boxes it comprises. Its {\bf height} is the number of rows affected minus 1. Below are six hooks with length $d$ and height $h$ indicated. (Exercise: find the four hooks missing.)

$$\begin{array}{c}
{\begin{array}{ccccc}\cline{1-4}
	\multicolumn{1}{|c}{\phantom{\bf .}}	&\multicolumn{1}{|c}{\phantom{\bf .}}&\multicolumn{1}{|c}{ {\bf .}}&\multicolumn{1}{|c}{ {\bf .}} &\multicolumn{1}{|c}{\phantom{\bf .}} \\ \cline{1-4}
\multicolumn{1}{|c}{ {\bf .}}	&\multicolumn{1}{|c}{ {\bf .}}	&\multicolumn{1}{|c}{ {\bf .}}&\multicolumn{1}{|c}{\phantom{\bf .}}&  \\ \cline{1-3}
 \multicolumn{1}{|c}{ {\bf .}} &\multicolumn{1}{|c}{\phantom{\bf .}}	& &  & \\ \cline{1-1}
 \multicolumn{1}{|c}{ {\bf .}} &\multicolumn{1}{|c}{\phantom{\bf .}}	& &  & \\ \cline{1-1}	\end{array}}
\\
\\
(d,h)=(7,3)\ \  \end{array} \ \ 
\begin{array}{c}
{\begin{array}{ccccc}\cline{1-4}
	\multicolumn{1}{|c}{\phantom{\bf .}}	&\multicolumn{1}{|c}{\phantom{\bf .}}&\multicolumn{1}{|c}{\phantom{\bf .}}&\multicolumn{1}{|c}{{\bf .}} &\multicolumn{1}{|c}{\phantom{\bf .}} \\ \cline{1-4}
	\multicolumn{1}{|c}{\phantom{\bf .}}	&\multicolumn{1}{|c}{\phantom{\bf .}}	&\multicolumn{1}{|c}{\phantom{\bf .}}&\multicolumn{1}{|c}{\phantom{\bf .}}&  \\ \cline{1-3}
	\multicolumn{1}{|c}{\phantom{\bf .}} &\multicolumn{1}{|c}{\phantom{\bf .}}	& &  & \\ \cline{1-1}
	\multicolumn{1}{|c}{\phantom{\bf .}} &\multicolumn{1}{|c}{\phantom{\bf .}}	& &  & \\ \cline{1-1}	\end{array}}
\\
\\
 (1,0)\ \  \end{array} \ \ \begin{array}{c}
{\begin{array}{ccccc}\cline{1-4}
	\multicolumn{1}{|c}{\phantom{\bf .}}	&\multicolumn{1}{|c}{\phantom{\bf .}}&\multicolumn{1}{|c}{\phantom{\bf .}}&\multicolumn{1}{|c}{\phantom{\bf .}} &\multicolumn{1}{|c}{\phantom{\bf .}} \\ \cline{1-4}
	\multicolumn{1}{|c}{\phantom{\bf .}}	&\multicolumn{1}{|c}{{\bf .}}	&\multicolumn{1}{|c}{{\bf .}}&\multicolumn{1}{|c}{\phantom{\bf .}}&  \\ \cline{1-3}
	\multicolumn{1}{|c}{\phantom{\bf .}} &\multicolumn{1}{|c}{\phantom{\bf .}}	& &  & \\ \cline{1-1}
	\multicolumn{1}{|c}{\phantom{\bf .}} &\multicolumn{1}{|c}{\phantom{\bf .}}	& &  & \\ \cline{1-1}	\end{array}}
\\
\\
 (2,0)\ \  \end{array} \ \ %\begin{array}{c}
%{\begin{array}{ccccc}\cline{1-4}
%	\multicolumn{1}{|c}{\phantom{.}}	&\multicolumn{1}{|c}{\phantom{.}}&\multicolumn{1}{|c}{{.}}&\multicolumn{1}{|c}{{.}} &\multicolumn{1}{|c}{\phantom{.}} \\ \cline{1-4}
%	\multicolumn{1}{|c}{\phantom{.}}	&\multicolumn{1}{|c}{\phantom{.}}	&\multicolumn{1}{|c}{{.}}&\multicolumn{1}{|c}{\phantom{.}}&  \\ \cline{1-3}
%	\multicolumn{1}{|c}{\phantom{.}} &\multicolumn{1}{|c}{\phantom{.}}	& &  & \\ \cline{1-1}
%	\multicolumn{1}{|c}{\phantom{.}} &\multicolumn{1}{|c}{\phantom{.}}	& &  & \\ \cline{1-1}	\end{array}}
%\\
%\\
% (3,1)\ \  \end{array} \ \ 
\begin{array}{c}
{\begin{array}{ccccc}\cline{1-4}
	\multicolumn{1}{|c}{\phantom{.}}	&\multicolumn{1}{|c}{\phantom{.}}&\multicolumn{1}{|c}{{.}}&\multicolumn{1}{|c}{{.}} &\multicolumn{1}{|c}{\phantom{.}} \\ \cline{1-4}
	\multicolumn{1}{|c}{\phantom{.}}	&\multicolumn{1}{|c}{{.}}	&\multicolumn{1}{|c}{{.}}&\multicolumn{1}{|c}{\phantom{.}}&  \\ \cline{1-3}
	\multicolumn{1}{|c}{\phantom{.}} &\multicolumn{1}{|c}{\phantom{.}}	& &  & \\ \cline{1-1}
	\multicolumn{1}{|c}{\phantom{.}} &\multicolumn{1}{|c}{\phantom{.}}	& &  & \\ \cline{1-1}	\end{array}}
\\
\\
 (4,1)\ \  \end{array} \ \ \begin{array}{c}
{\begin{array}{ccccc}\cline{1-4}
	\multicolumn{1}{|c}{\phantom{.}}	&\multicolumn{1}{|c}{\phantom{.}}&\multicolumn{1}{|c}{\phantom{.}}&\multicolumn{1}{|c}{\phantom{.}} &\multicolumn{1}{|c}{\phantom{.}} \\ \cline{1-4}
	\multicolumn{1}{|c}{{.}}	&\multicolumn{1}{|c}{{.}}	&\multicolumn{1}{|c}{{.}}&\multicolumn{1}{|c}{\phantom{.}}&  \\ \cline{1-3}
	\multicolumn{1}{|c}{{.}} &\multicolumn{1}{|c}{\phantom{.}}	& &  & \\ \cline{1-1}
	\multicolumn{1}{|c}{{.}} &\multicolumn{1}{|c}{\phantom{.}}	& &  & \\ \cline{1-1}	\end{array}}
\\
\\
(5,2)\ \  \end{array} \ \ $$

It is clear that removing a hook $\tau$ of length $d$ gives a Young diagram with $n-d$ boxes, hence the meaning of $\la -\tau\vdash n-d$ above. Note that in (\ref{MNrule}) above $d$ can be equal to 1. This case of the Murnagan-Nakayama rule gives the restriction of $\chi\in\Irr(\Sym n)$ to $\Sym{n-1}$ and is called the \sing{branching rule}.

 Note that when $\la$ has no $d$-hook, then (\ref{MNrule}) gives $\zeta_\la(x'c)=0$. A partition $\la$ is said to be a $d${\bf -core}\index{$d$-core} if and only if $\hoo_d(\la)=\emptyset$. For instance the partition above is a 6-core. 

For a given $d$, starting with some partition, the hook removal can be iterated $\la\mapsto \la-\tau_1\mapsto (\la-\tau_1)-\tau_2\mapsto \cdots$ where $\tau_1\in\hoo_d(\la)$, $\tau_2\in\hoo_d(\la-\tau_1)$, etc.. until we get a $d$-core. This is done below with $d=2$, the hook removed next being dotted. 

$${\begin{array}{ccccc}\cline{1-4}
	\multicolumn{1}{|c}{\phantom{.}}	&\multicolumn{1}{|c}{\phantom{.}}&\multicolumn{1}{|c}{\phantom{.}}&\multicolumn{1}{|c}{\phantom{.}} &\multicolumn{1}{|c}{\phantom{.}} \\ \cline{1-4}
	\multicolumn{1}{|c}{\phantom{.}}	&\multicolumn{1}{|c}{\phantom{.}}	&\multicolumn{1}{|c}{\phantom{.}}&\multicolumn{1}{|c}{\phantom{.}}&  \\ \cline{1-3}
	\multicolumn{1}{|c}{{.}} &\multicolumn{1}{|c}{\phantom{.}}	& &  & \\ \cline{1-1}
	\multicolumn{1}{|c}{{.}} &\multicolumn{1}{|c}{\phantom{.}}	& &  & \\ \cline{1-1}	\end{array}} 
\begin{array}{c}  \longmapsto \\ \\ \\
\end{array}
 \ \ 
{\begin{array}{ccccc}\cline{1-4}
	\multicolumn{1}{|c}{\phantom{.}}	&\multicolumn{1}{|c}{\phantom{.}}&\multicolumn{1}{|c}{\phantom{.}}&\multicolumn{1}{|c}{\phantom{.}} &\multicolumn{1}{|c}{\phantom{.}} \\ \cline{1-4}
	\multicolumn{1}{|c}{\phantom{.}}	&\multicolumn{1}{|c}{{.}}	&\multicolumn{1}{|c}{{.}}&\multicolumn{1}{|c}{\phantom{.}}&  \\ \cline{1-3} \\
	&&&& 
		\end{array}} \ \ \begin{array}{c}  \longmapsto \\ \\ \\
	\end{array} \ \ 
	{\begin{array}{ccccc}\cline{1-4}
	\multicolumn{1}{|c}{\phantom{.}}	&\multicolumn{1}{|c}{\phantom{.}}&\multicolumn{1}{|c}{{.}}&\multicolumn{1}{|c}{{.}} &\multicolumn{1}{|c}{\phantom{.}} \\ \cline{1-4}
	\multicolumn{1}{|c}{\phantom{.}}	&\multicolumn{1}{|c}{\phantom{.}}	& & &  \\ \cline{1-1} \\
	&&&& 
	\end{array}}  \ \ \begin{array}{c}  \longmapsto \\ \\ \\
\end{array}\ \ 
{\begin{array}{ccc}\cline{1-2}
\multicolumn{1}{|c}{\phantom{.}}	&\multicolumn{1}{|c}{\phantom{.}}&\multicolumn{1}{|c}{\phantom{.}} \\ \cline{1-2}
\multicolumn{1}{|c}{\phantom{.}}	&\multicolumn{1}{|c}{\phantom{.}}	&   \\ \cline{1-1} \\
&& 
\end{array}}  $$

It can be proved that given $d$ and $\la$, this process of hook removal always ends in the same $d$-core $\la_{(d)}$ and that the sign $\eps_{\la ,d}=(-1)^{h(\tau_1)+h(\tau_2)+\cdots}$ also does not depend on the path followed. One then gets the following {\bf iterated Murnaghan-Nakayama rule} [JamesKer, 2.7.27]
\begin{equation}\label{iMN}
\zeta_\la (x'c_1c_2\dots c_w)=\eps_{\la ,d}N_{\la ,d}\zeta_{\la_{(d)}}(x')
\end{equation}
where $x'\in\Sym{n-wd}$, $c_1, c_2,\dots ,c_w$ are disjoint cycles of order $d$ on $\{n-wd+1,\dots ,n\}$, and $N_{\la ,d}$ is the number of ways to go from $\la$ to $\la_{(d)}$ by successive $d$-hook removals.

\begin{rem}
Young diagrams were essentially created to fill the boxes with additional information. Working with hooks and cores is made easier by using $\beta${\bf -numbers}\index{$\beta$-numbers} instead of partitions. One replaces the partition $\la =(\la_1\geq \la_2\geq \dots \geq\la_k)$ by the set $\beta:=\{ \la_1+k-1, \la_2+k-2,\dots ,\la_k\}$. A hook of length $d$ is then replaced by a pair $\{a,a-d\}$ such that $a\in\beta$ and $0\leq a-d\not\in\beta$. The removal $\la\mapsto \la-\tau$ becomes $\beta\mapsto \beta\setminus\{a\}\cup\{a-d\}$. The iteration with a fixed $d$ is then easy to control and uniqueness of the outcome is quite clear. The height of hook is the number of elements of $\beta$ between $a$ and $a-d$, so that the sign $(-1)^{h(\tau)}$ can be interpreted as the signature of a cycle and the product of signs at the end of the process is clearly independent of the path followed. This makes clear how to get (\ref{iMN}) from (\ref{MNrule}).

The following fact is also made trivial by working with $\beta$-numbers. \begin{equation}\label{d-dd'}
\text{If  } \la\vdash n \text{  and  } \hoo_d(\la)=\emptyset \text{  then  } \hoo_{dd'}(\la)=\emptyset \text{  for any  } d'\geq 1.
\end{equation}
\end{rem}
\subsection{The symmetric group: blocks} We give here the classification of blocks of symmetric groups by use of local methods. The approach to $\Irr(\Sym{n})$ described in [Klesh] gives more generally the blocks of all Iwahori-Hecke algebras of type $\tA$ (see [Klesh, 9.6.2]).

Let $n\geq 1$ and $\ell$ be a prime.

\begin{thm}[Brauer-Robinson]\label{BrRo} 
The $\ell$-blocks of $\Sym n$ are parametrized  $$\kappa\mapsto B_\kappa$$ by the $\ell$-cores $\kappa$ such that $\kappa\vdash n-w\ell$ for some $w\geq 0$.
One has $\zeta_\la\in\Irr(B_\kappa)$ if and only if $\la_{(\ell)}=\kappa$.

The Sylow $\ell$-subgroups of $\Sym{n-|\kappa|}$ are defect groups of $B_\kappa$.
\end{thm}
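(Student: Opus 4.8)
The plan is to prove the three assertions together by induction on $n$, with Brauer's second Main Theorem (Theorem~\ref{Br2nd}) and the subpair formalism (Theorems~\ref{subp} and~\ref{inctric}) as the engine and the Murnaghan--Nakayama rule~(\ref{MNrule}) together with its iterate~(\ref{iMN}) as the combinatorial input. Write $w$ for the $\ell$-weight of a partition, so $\la\vdash n$ with $\la_{(\ell)}=\kappa$ has $|\kappa|=m:=n-w\ell$; write $b(\chi)$ for the $\ell$-block of $\Sym n$ containing $\chi$, and $\nu_\ell$ for the $\ell$-adic valuation. I use freely two routine $\beta$-number facts: removing from a partition a hook of length divisible by $\ell$ leaves the $\ell$-core unchanged, and for any $\ell$-core $\kappa$ and any $w\ge0$ there is a partition of $|\kappa|+w\ell$ with $\ell$-core $\kappa$ and weight $w$. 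For the base case $w=0$: if $\la$ is an $\ell$-core, no hook length of $\la$ is divisible by $\ell$, so the hook length formula gives $\nu_\ell(\zeta_\la(1))=\nu_\ell(|\la|!)=\nu_\ell(|\Sym{|\la|}|)$, i.e.\ $\zeta_\la$ has $\ell$-defect zero; thus each $\ell$-core $\kappa\vdash n$ yields a block $B_\kappa=\{\zeta_\kappa\}$ whose defect group $\{1\}$ is the Sylow $\ell$-subgroup of $\Sym{n-|\kappa|}=\Sym0$.

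Now fix $\la\vdash n$ with $w\ge1$ and set $\kappa=\la_{(\ell)}$. Take $x$ to be a single $\ell$-cycle, so $\cent{\Sym n}{x}=\spann<x>\times\Sym{n-\ell}$; by~(\ref{MNrule}), $d^x(\zeta_\la)$ is supported on $\{1\}\times\Sym{n-\ell}$ and there equals $\sum_{\tau\in\hoo_\ell(\la)}(-1)^{h(\tau)}\dd_\lp\zeta_{\la-\tau}$. Distinct hooks give distinct $\la-\tau$, so $d^x(\zeta_\la)\ne0$; and every $\la-\tau$ has $\ell$-core $\kappa$ and weight $w-1$, so by the inductive hypothesis for $\Sym{n-\ell}$ all the $\zeta_{\la-\tau}$ lie in one block, which I call $B_\kappa^{(n-\ell)}$ (note multiplication by $\dd_\lp$ preserves blocks). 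Hence $d^x(\zeta_\la)$ has nonzero projection onto the block $b_x$ of $\spann<x>\times\Sym{n-\ell}$ lying over $B_\kappa^{(n-\ell)}$, and Theorem~\ref{Br2nd} gives $(\{1\},b(\zeta_\la))\le(\spann<x>,b_x)$. Only $w\ge1$ was used, so the \emph{same} subpair $(\spann<x>,b_x)$ lies above $(\{1\},b(\zeta_\mu))$ for every $\mu\vdash n$ with $\mu_{(\ell)}=\kappa$; by the uniqueness in Theorem~\ref{subp}(i) there is a unique block below $(\spann<x>,b_x)$, so $b(\zeta_\mu)=b(\zeta_\la)=:B_\kappa$ for all such $\mu$. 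Combined with the distinctness of the $B_\kappa$ proved below, this gives $\Irr(B_\kappa)=\{\zeta_\la:\la_{(\ell)}=\kappa\}$ and that $\kappa\mapsto B_\kappa$ is a bijection from the $\ell$-cores $\kappa$ with $|\kappa|\le n$ and $|\kappa|\equiv n\pmod\ell$ onto $\Bl(\Sym n)$.

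For the defect group and distinctness, take instead $x$ to be a product of $w$ disjoint $\ell$-cycles, an element of order $\ell$; then $\spann<x>\cong C_\ell$, $x\in\zent A$, and $\cent{\Sym n}{x}=A\times\Sym m$ with $A=\cent{\Sym{w\ell}}{x}\cong C_\ell\wr\Sym w$. If $y_1\in A$ is $\ell$-regular the $\ell$-part of $xy_1$ is $x$, so every cycle of $xy_1$ has length $\ell b$ with $b$ prime to $\ell$; stripping these $\ell$-divisible hooks off $\la$ via~(\ref{MNrule}), every partition reached has $\ell$-core $\kappa$ and size $m=|\kappa|$, hence equals $\kappa$. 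Thus $d^x(\zeta_\la)=\Psi\otimes\dd_\lp\zeta_\kappa$ for a class function $\Psi$ on $A$ supported on $\ell$-regular classes, and $\Psi\ne0$ since at $y_1=1$ the iterated rule~(\ref{iMN}) returns $\eps_{\la,\ell}N_{\la,\ell}\ne0$. So $d^x(\zeta_\la)$ has nonzero projection onto a block $\widetilde B\otimes\{\zeta_\kappa\}$ of $A\times\Sym m$, Theorem~\ref{Br2nd} gives $(\{1\},B_\kappa)\le(\spann<x>,\widetilde B\otimes\{\zeta_\kappa\})$, and enlarging --- via the standard correspondence between subpairs above $(\spann<x>,\cdot)$ in $\Sym n$ and subpairs of that block of $\cent{\Sym n}{x}=A\times\Sym m$, and since $\{\zeta_\kappa\}$ has defect zero --- produces a maximal subpair $(D,b_D)$ of $B_\kappa$ with $D$ a defect group of $\widetilde B$ in $A$, so $D\le A\le\Sym{w\ell}$. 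A hook-length count gives $\min\{\nu_\ell(\zeta_\mu(1)):\mu_{(\ell)}=\kappa\}=\nu_\ell(n!)-\nu_\ell((w\ell)!)$, whence $B_\kappa$ has defect $\nu_\ell((w\ell)!)=\nu_\ell(|\Sym{w\ell}|)$ and $|D|=|\Sym{w\ell}|_\ell$; an $\ell$-subgroup of $\Sym{w\ell}$ of this order is a Sylow $\ell$-subgroup, which is the defect-group assertion since $w\ell=n-|\kappa|$. Finally these maximal subpairs are pairwise non-conjugate in $\Sym n$ (Theorem~\ref{subp}(ii)): different weights give defect groups of different order, and for equal weights the canonical characters $\zeta_\kappa,\zeta_{\kappa'}$ of the $\Sym m$-factor (maximal subpairs are centric, Proposition~\ref{maxic}) are distinct and preserved by any relabelling of points --- so distinct $\ell$-cores give distinct blocks.

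The combinatorial ingredients ($\beta$-numbers, the existence of a partition over a prescribed core, the hook-length valuation count) are routine. The real difficulty is the subpair bookkeeping: identifying precisely which block of the centralizer $d^x(\zeta_\la)$ meets, and --- the genuinely delicate point --- controlling the enlargement from $(\spann<x>,\cdot)$ to a maximal subpair so that the defect group emerges as the \emph{full} Sylow $\ell$-subgroup of $\Sym{w\ell}$ rather than a proper subgroup. Above this is forced a posteriori by the degree count; alternatively one tracks it directly through the iterated wreath-product structure of the Sylow subgroups of symmetric groups, and this is the step where care is needed.
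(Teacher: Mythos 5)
Your first subpair enlargement, taking $x$ a single $\ell$-cycle, has a genuine gap. You assert that $d^x(\zeta_\la)\ne0$ because distinct $\ell$-hooks give distinct partitions $\la-\tau$; but $d^x(\zeta_\la)$ is the signed sum $\sum_\tau(-1)^{h(\tau)}\zeta_{\la-\tau}$ \emph{multiplied by} $\dd_\lp$, and the functions $\dd_\lp\zeta_\mu$ for distinct $\mu$ are not linearly independent --- they span a space of dimension only $|\IBr(\Sym{n-\ell})|$, strictly less than $|\Irr(\Sym{n-\ell})|$. So the signed sum can collapse to zero after applying $\dd_\lp$. Concretely, for $\ell=2$, $n=4$, $\la=(2,2)$ and $x$ a transposition: the two $2$-hooks of $(2,2)$ have heights $1$ and $0$ and remove to $(1,1)$ and $(2)$, on $\Sym2$ one has $\dd_{2'}\zeta_{(1,1)}=\dd_{2'}\zeta_{(2)}$, the two terms cancel, and indeed $d^x(\zeta_{(2,2)})=\zeta_{(2,2)}((3,4))=0$. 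Brauer's second Main Theorem gives nothing from this $x$, and the induction breaks for this $\la$.

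The fix is already inside your own proposal: your second computation, with $x$ a product of $w$ disjoint $\ell$-cycles, uses~(\ref{iMN}) to write the $\Sym m$-component of $d^x(\zeta_\la)$ as $\eps_{\la,\ell}N_{\la,\ell}\dd_\lp\zeta_\kappa$, a nonzero integer times a single irreducible, so no cancellation is possible. Applied to every $\la$ with $\la_{(\ell)}=\kappa$ it already forces all of them below one common subpair, giving the block classification directly, without induction and without the single-cycle step. That is exactly the paper's route, and the two facts you skip over are what make ``nonzero projection on the right block'' precise there: $W\cong\cent{\Sym{w\ell}}{c}$ has a \emph{unique} $\ell$-block because $\cent W{\OP W}\leq\OP W$, giving~(\ref{BlCW}); and $\zeta_\kappa$ is \emph{alone} in its $\ell$-block because $\ell$-core characters vanish off $\lp$-elements (Lemma~\ref{zetaka}). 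Together these identify the block of $\cent{\Sym n}{c}$ meeting $d^c\zeta_\la$ with no ambiguity.

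On the defect group your route is genuinely different, and it also has a soft spot. The hook-length valuation count yielding $|D|=(w\ell)!_\ell$ is correct, but you reach $D\le A\le\Sym{w\ell}$ by appealing to a ``standard correspondence between subpairs above $(\spann<x>,\cdot)$ and subpairs of $\cent{\Sym n}{x}$'' that, in the form you need, does not exist: a maximal subpair $(D,b_D)\ge(\spann<x>,b_x)$ need not have $x\in\zent D$, so $D\le\cent{\Sym n}{x}=A\times\Sym m$ is not automatic, and ``$D$ is a defect group of $\tilde B$ in $A$'' needs an argument. The paper sidesteps this entirely by writing down the maximal subpair: $D$ a Sylow $\ell$-subgroup of $S'$ (the symmetric group on the last $w\ell$ points) chosen to contain the cycles $c_1,\dots,c_w$, and $B_D=b_{\Sym m}(\zeta_\kappa)\cdot B_0(\cent{S'}{D})$, verified centric and maximal via Theorem~\ref{inctric} and Proposition~\ref{maxic}. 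If you want to keep the degree-count argument you should still exhibit such a $D$ explicitly; the count then serves as a confirmation rather than as the source of the defect group.
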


\begin{lem}\label{zetaka} If $\la\vdash n$ is an $\ell$-core, then $\zeta_\la$ vanishes outside $\lp$-elements of $\Sym{n}$.
\end{lem}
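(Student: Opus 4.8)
The plan is to reduce the statement directly to the Murnaghan--Nakayama rule (\ref{MNrule}), combined with the observation (\ref{d-dd'}) that an $\ell$-core carries no hook whose length is divisible by $\ell$.

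First I would observe that if $x\in\Sym n$ is not an $\lp$-element, then $\ell$ divides the order of $x$, and hence the cycle decomposition of $x$ contains at least one cycle whose length $d$ is divisible by $\ell$. Since $\zeta_\la$ is a class function, I may replace $x$ by a conjugate so that this cycle $c$ acts on $\{n-d+1,\dots,n\}$ and the remaining cycles make up an element $x'\in\Sym{n-d}$ supported on $\{1,\dots,n-d\}$; this puts the pair $(x',c)$ into exactly the normalized form required by (\ref{MNrule}).

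Then the Murnaghan--Nakayama rule gives $\zeta_\la(x)=\sum_{\tau\in\hoo_d(\la)}(-1)^{h(\tau)}\zeta_{\la-\tau}(x')$. Because $\la$ is an $\ell$-core we have $\hoo_\ell(\la)=\emptyset$, and since $\ell\mid d$, writing $d=\ell d'$ and invoking (\ref{d-dd'}) yields $\hoo_d(\la)=\hoo_{\ell d'}(\la)=\emptyset$. The sum is therefore empty, so $\zeta_\la(x)=0$, which is the assertion.

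I do not expect any real obstacle here; the only points worth a word are the elementary passage from ``$x\notin\Sym n$ is not $\lp$'' to ``$x$ has a cycle of length divisible by $\ell$'', and the harmless conjugation bringing $(x',c)$ into the form to which (\ref{MNrule}) applies verbatim.
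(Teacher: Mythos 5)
Your proposal is correct and follows exactly the same route as the paper: extract a cycle of length divisible by $\ell$ from a non-$\lp$-element, apply the Murnaghan--Nakayama rule~(\ref{MNrule}), and conclude via~(\ref{d-dd'}) that the hook set is empty. The only difference is that you spell out the conjugation step, which the paper leaves implicit.
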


To prove the above, let $x\in\Sym n$ with $x_\ell\not= 1$. Then it has in its cycle decomposition a cycle of order $t$ a multiple of $\ell$, $x=x'c$ with $c$ a cycle of order $t$ and $x'$ fixing any element in the support of $c$. Then $\hoo_t(\la)=\emptyset$ by (\ref{d-dd'}) above and the Murnaghan-Nakayama rule (\ref{MNrule}) gives $\zeta_\la(x'c)=0$ as claimed.

We now prove Theorem~\ref{BrRo}. Let $\la\vdash n$ with $\la_{(\ell)}\vdash n-w\ell$, $w\geq 0$. Let $c$ a product of $w$ disjoint cycles of order $\ell$ on $\{n-w\ell +1,\dots , n\}$. Then $\cent{\Sym n}{c}=\Sym {n-w\ell}\times W$ where $W$ is isomorphic to the centralizer of a product of $w$ disjoint cycles of order $\ell$ in $\Sym{w\ell}$.  We have
\begin{equation}\label{CW} \cent{W}{\OP{W}}\leq {\OP{W}}.
\end{equation}

If $c=c_1\dots c_w$ is the product of our disjoint cycles of order $\ell$, it is clear that the $c_i$'s are permuted by any element of $W$, so $C\deq\spann <c_1,\dots ,c_w>\cong (\bbZ/\ell\bbZ)^w$ is normal in $W$. On the other hand $\cent WC=C$ since a permutation centralizing $C$ has to stabilize the support of each $c_i$ and the centralizer of a cycle of order $\ell$ in $\Sym\ell$ is clearly the cyclic subgroup this cycle generates. Thus (\ref{CW}).

Note that (\ref{CW}) implies that \begin{equation}\label{BlCW}
W \text{ has a single }\ell\text{-block  } B_0(W)
\end{equation}
Indeed the defect group of an $\ell$-block of $W$ must contain $\OP W$ (see for instance [NagaoTsu, 5.2.8.(i)]) and then we can argue as in the proof of Proposition~\ref{blochp}.

Let $B\deq b_{\Sym {n-w\ell}}(\zeta_{\la_{(\ell)}}).B_0(W)\in\Bl(\Sym {n-w\ell}\times W)$. We prove
\begin{equation}\label{Blincl}
(\{1\},b_{\Sym {n}}(\zeta_\la))\lhd (\spann <c>,B).
\end{equation}

By Brauer's second Main Theorem (Theorem~\ref{Br2nd}), it suffices to prove that $d^c\zeta_\la$ has non zero projection on $\CF(\Sym {n-w\ell}\times W\mid B)$. Since $W$ has only one block $\CF(\Sym {n-w\ell}\times W\mid B)=\CF(\Sym {n-w\ell}\mid b_{\Sym {n-w\ell}}(\zeta_{\la_{(\ell)}}))\times \CF(W)$. On the other hand, Lemma~\ref{zetaka} implies that $\zeta_{\la_{(\ell)}}$ is the only irreducible character in its $\ell$-block (see for instance [NagaoTsu, 3.6.29]) so $\CF(\Sym {n-w\ell}\times W\mid B)=\bbC\zeta_{\la_{(\ell)}} \times \CF(W)$. If the projection of $d^c\zeta_\la$ were 0 on it, we would have $\Res^{\Sym {n-w\ell}\times W}_{\Sym {n-w\ell}}d^c\zeta_\la\in \bbC(\Irr(\Sym {n-w\ell})\setminus\{\zeta_{\la_{(\ell)}}\})$.
Using the usual inner product on central functions, this would give \begin{equation}\label{sca}
\sum_{x\in(\Sym{n-w\ell})_\lp}^{}\zeta_\la(xc)\zeta_{\la_{(\ell)}}(x^\mm)=0.
\end{equation}

But we have \begin{eqnarray*}
\sum_{x\in(\Sym{n-w\ell})_\lp}^{}\zeta_\la(xc)\zeta_{\la_{(\ell)}}(x^\mm)&=&\sum_{x\in\Sym{n-w\ell}}^{}\zeta_\la(xc)\zeta_{\la_{(\ell)}}(x^\mm) \text{ by Lemma~\ref{zetaka}} \\
&=&\eps_{\la ,\ell}N_{\la,\ell}\sum_{x\in\Sym{n-w\ell}}^{}\zeta_{\la_{(\ell)}}(x)\zeta_{\la_{(\ell)}}(x^\mm) \text{ by (\ref{iMN})} \\
&=&\eps_{\la ,\ell}N_{\la,\ell}|\Sym{n-w\ell}|\not=0, \text{ a contradiction.} \end{eqnarray*}{}

Note that having (\ref{Blincl}) proves at once that $b_{\Sym {n}}(\zeta_\la)=b_{\Sym {n}}(\zeta_\mu)$ as soon as $\la,\mu\vdash n$ have same $\ell$-core $\kappa\vdash n-w\ell$. This gives the map $\kappa\mapsto B_\kappa$ announced. It is also easy to include 
the second subpair of (\ref{Blincl}) into a maximal one. Let $D$ be a Sylow $\ell$-subgroup of $S'$, the symmetric group on $\{n-w\ell+1,\dots ,n\}$. Assume $D$  contains the cycles of which $c$ is a product. Then the centralizer of $D$ in $\Sym n$ is $\Sym{n-w\ell}\times \cent{S'}D$ and we define $B_D=b_{\Sym {n-w\ell}}(\zeta_\kappa).B_0(\cent{S'}D)$ where $B_0(\cent{S'}D)$ is the principal block of $\cent{S'}D$. Using Theorem~\ref{inctric} and Proposition~\ref{maxic}, one gets 
inclusions
\begin{equation}\label{Maxinc}
(\{1\},B_\kappa)\leq (\spann <c>,B)  \text{  and  }  (\spann <c>,B)\leq (D,B_D)
\end{equation}
the latter being maximal.
\qed

\begin{rem}\label{FusSn} 
	It is easy to check that (\ref{CW}) above is true for any centralizer of an $\ell$-subgroup of $\Sym {m}$ having no fixed point on $\{1,\dots , m \}$.
	Let $P$ be an $\ell$-subgroup of $\Sym{n}$. We assume that its fixed points in $\{1,\dots ,n \}$ are $\{1,\dots ,n_P \}$, so that $\cent{\Sym{n}}{P}=\Sym{n_P}\times W_P$ where $W_P$ has only one $\ell$-block by (\ref{BlCW}). Let $B_\kappa$ an $\ell$-block of $\Sym{n}$ as in \Th{BrRo}, let $b_\kappa^{(n)}\in \zent{k\Sym{n}} $ the corresponding central idempotent in the group algebra of characteristic $\ell$. From the above, one computes easily the Brauer morphism 
	 \begin{equation}
	\Br_P(b_\kappa^{(n)})=
	\begin{cases}
	b_\kappa^{(n_P)}\otimes 1_{kW_P}, & \text{if}\ n_P\geq |\kappa| , \\
	0, & \text{otherwise.}
	\end{cases}
	\end{equation}

This shows that the fusion system of $\ell$-subpairs of $B_\kappa$ (see Definition~\ref{FDbD}) is isomorphic with the fusion system of $\ell$-subgroups of  $\Sym{n-|\kappa|}$ (Brou\'e-Puig, see [Bro86, 2.B.4]).

\end{rem}

\subsection{The symmetric group:  Chuang-Rouquier's theorems}

We keep $\ell$ a prime number. 

\begin{thm}[Chuang-Rouquier, 2008]\label{ChRo} If two $\ell$-blocks $A\inn\OO\Sym{n}$ and $A'\inn\OO\Sym{n'}$ have isomorphic defect groups then $$D^b(A\text{-}\mo)\cong D^b(A'\text{-}\mo).$$
\end{thm}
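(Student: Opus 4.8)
\emph{Reduction.} The plan is first to reduce to a purely combinatorial statement about $\ell$-weights. By the Brauer--Robinson classification (\Th{BrRo}), every $\ell$-block of a symmetric group is $B_\kappa$ for an $\ell$-core $\kappa\vdash n-w\ell$, with defect group a Sylow $\ell$-subgroup of $\Sym{w\ell}$; since a Sylow $\ell$-subgroup of $\Sym{N}$ is a direct product of iterated wreath products $C_\ell\wr\cdots\wr C_\ell$ indexed, with multiplicity, by the base-$\ell$ digits of $N$, uniqueness of direct-product decompositions shows this group determines $N$, hence that $B_\kappa$ and $B_{\kappa'}$ have isomorphic defect groups precisely when they have the same $\ell$-weight $w$. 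So I would replace the theorem by the assertion that \emph{any two $\ell$-blocks of symmetric groups of $\ell$-weight $w$ have equivalent bounded derived categories}, and it then suffices to join an arbitrary weight-$w$ block to one fixed model by a chain of derived equivalences.

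\emph{The categorical action.} Next I would bring in the $\asl_\ell$-categorification on $\bigoplus_{n\ge 0}\OO\Sym n\text{-}\mo$: for each residue $i\in\bbZ/\ell\bbZ$ one has biadjoint exact functors $E_i,F_i$ given by the $i$-eigenspaces, for the Jucys--Murphy action, of ordinary restriction $\OO\Sym n\to\OO\Sym{n-1}$ and induction $\OO\Sym{n-1}\to\OO\Sym n$. On Grothendieck groups these realise the basic integrable representation $L(\Lambda_0)$ of $\asl_\ell$ (Lascoux--Leclerc--Thibon, Ariki), whose weight spaces are exactly the blocks: adding an $\ell$-ribbon to a partition adds one box of every residue, so all partitions in $B_\kappa$ share the same residue content, and the blocks of $\ell$-weight $w$ are precisely the weights at depth $w$, which form a single orbit of the affine Weyl group $\widehat W$ (the classical bijection between $\ell$-cores and the extreme weights $\widehat W\cdot\Lambda_0$, shifted by $-w\delta$). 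Thus it is enough to produce, for each simple reflection $s_i$ and each weight-$w$ block $B$, a derived equivalence $D^b(B\text{-}\mo)\xrightarrow{\sim}D^b(s_i(B)\text{-}\mo)$.

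\emph{Rickard complexes.} Fixing $i$, I would observe that $(E_i,F_i)$ together with the endomorphism $X\in\End(F_i)$ coming from the relevant Jucys--Murphy element and the endomorphism $T\in\End(F_i^2)$ coming from a transposition makes $\bigoplus_n\OO\Sym n\text{-}\mo$ into an $\fsl_2$-categorification in the sense of Chuang--Rouquier: $\End(F_i^m)$ acts through the degenerate affine Hecke algebra of rank $m$, so the symmetric-group idempotents inside it provide well-defined divided powers $E_i^{(m)},F_i^{(m)}$. On a weight space where $h_i$ acts by $m\ge 0$ one forms the Rickard complex
\[
\Theta_i=\bigl(\cdots\to F_i^{(m+2)}E_i^{(2)}\to F_i^{(m+1)}E_i^{(1)}\to F_i^{(m)}\to 0\bigr)
\]
with $F_i^{(m)}$ in degree $0$. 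Chuang--Rouquier's main theorem says that on any $\fsl_2$-categorification $\Theta_i$ is invertible in $\HO$ and interchanges the weight-$m$ and weight-$(-m)$ spaces; restricting it to a block $B$ gives the desired equivalence with $s_i(B)$, and composing finitely many $\Theta_i$ along a word in $\widehat W$ carrying one weight-$w$ block to another then finishes the proof.

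\emph{Main obstacle.} The whole argument rests on the invertibility of $\Theta_i$, which is the hard core of [ChRo08] and cannot be seen directly. The route is a ``minimal categorification'' argument: one builds an explicit $\fsl_2$-categorification on modules over wreath products of the Brauer-tree (zig-zag) algebra --- which, by a theorem of Chuang--Kessar, is Morita equivalent to a ``Rouquier block'' of a symmetric group of the appropriate weight --- and there proves $\Theta_i$ invertible by a direct analysis combined with an induction on $w$ that peels off one application of $F_i$ at a time. A functoriality argument then transports this: an arbitrary $\fsl_2$-categorification is controlled, on the subcategory generated by a given weight space, by the minimal one via the biadjunction, so invertibility of $\Theta_i$ propagates to the symmetric-group categorification in general. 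Making this transport precise --- in what sense an arbitrary categorification is ``assembled from'' the minimal one, and checking that the unit and counit of $(\Theta_i,\Theta_i^{-1})$ become isomorphisms after the reduction --- is where essentially all the difficulty lies.
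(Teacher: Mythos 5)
Your proposal follows the paper's own sketch essentially step for step: both reduce via Brauer--Robinson and the Lascoux--Leclerc--Thibon $\asl_\ell$-picture (\Th{LLT}) to joining weight-$w$ blocks by a word in the affine Weyl group, then invoke the strong $\fsl_2$-categorification on $\bigoplus_n k\Sym n\text{-}\mo$ for each residue together with the reflection complex of \Th{Theta} to realise each simple reflection as a homotopy equivalence of blocks, iterating and lifting to $\OO$ at the end. The extra material you supply --- the Sylow-subgroup argument unpacking why isomorphic defect groups force equal $\ell$-weight (which the paper subsumes in \Th{LLT}.(iii)), the explicit two-sided Rickard complex for $\Theta_i$, and the remarks on the minimal-categorification strategy behind its invertibility --- faithfully expands the paper's compressed account rather than departing from it.
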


The proof introducing to representations of finite groups the notion of \sing{categorifications} certainly opens a new chapter of representation theory. See [Mazor] for a very complete introduction. [ChRo08] appeared on arXiv in 2004 and categorification was then a very recent notion. At that time MathSciNet had only 10 papers with the word in their title, the first in 1998; there are now on average 20 per year. 

The common feature of the various instances of categorification is for $A$ a ring the existence of exact endofunctors of an abelian category $\cC$ such that their images in the endomorphism ring of the Grothendieck group give a representation $$A\to \End(K_0(\cC)).$$ This is generally verified by using a presentation of $A$ and checking that our endofunctors induce endomorphisms of $K_0(\cC)$ satisfying the relations presenting $A$. The phrase ``categorical action" of $A$ is also used (see [DuVV15]).

Concerning blocks of symmetric groups, it was known for some time that the size of the defect group of a block of $\Sym{n}$ determined all its numerical invariants (see [En90]). The sum $$\cG\deq\oplus_{n\geq 0}K_0(k\Sym{n})$$ of the Grothendieck groups of all symmetric groups had been considered by various authors (see [Ze81]) in connection with the branching rule. More recently, Lascoux-Leclerc-Thibon had shown an action of affine Lie algebras on it related with \sing{Jucys-Murphy elements} $ L_i=(1,i)+(2,i)+\cdots +(i-1,i) $. See also [Klesh, \S 9]. The $L_i$'s pairwise commute and the algebra they generate compares with the Cartan subalgebra of a Lie algebra, thus bringing to symmetric groups a key feature of Lie theory. 

Recall the Lie algebra $\fsl_n$\index{$\fsl_n$} over $\bbZ$ of $n\times n$ matrices with trace 0. It can be presented by generators and relations satisfying the Chevalley-Serre relations. In the case of $n=2$, we get a Lie algebra $\fsl_2=\bbZ E\oplus \bbZ F\oplus \bbZ H$ defined by 
the relations $[E,F]=H$, $[H,E]=2E$, $[H,F]=-2F$.

The affine Lie algebra $\asl_n$ is generated by the elements $E_0,\dots ,E_{n-1}$, $F_0,\dots ,F_{n-1}$, $H_0,\dots ,H_{n-1}$ subject to the relations 
\begin{equation}\label{ChS1}
\text{  $ [E_i,F_j]=\delta_{i,j}H_i   $, $[H_i,E_j]=C_{i,j}E_j $ , $[H_i,F_j]=-C_{i,j}F_j $ }
\end{equation}
\begin{equation}\label{ChS2}
\text{  $ (\ad_{E_i})^{1-C_{i,j}}(E_j)= (\ad_{F_i})^{1-C_{i,j}}(F_j)=0 $ for $i\ne j $. }
\end{equation} where $C_{i,j}$ is the Cartan matrix of the affine root system of type $\wh\tA_{n-1}$.

Let $a\in \bbF_\ell$. For $M$ a $k\Sym{n}$-module one denotes $$F_{a,n}(M)=\{v\in M\mid av=\big((1,n)+(2,n)+\cdots+(n-1,n)\big).v\}$$ the eigenspace of the $n$-th Jucys-Murphy element. This is $\Sym{n-1}$-stable. This gives a decomposition of the additive restriction functor $$\Res_{\Sym{n-1}}^{\Sym{n}}=\oplus_{a\in\bbF_\ell}F_{a,n}\colon k\Sym{n}\text{-}\mo\to k\Sym{n-1}\text{-}\mo .$$ Analogously one gets a decomposition $ \Ind_{\Sym{n-1}}^{\Sym{n}}=\oplus_{a\in\bbF_\ell}E_{a,n}$ with corresponding adjunctions. One defines \begin{equation}\label{EaFa}
\text{  $   E_a=\oplus _{n\geq 1} E_{a,n},\   F_a=\oplus _{n\geq 1} F_{a,n}\ \colon \oplus_{n\geq 1}k\Sym{n}\text{-}\mo \to\oplus_{n\geq 1}k\Sym{n}\text{-}\mo $. }
\end{equation}

\begin{thm}[Lascoux-Leclerc-Thibon]\label{LLT}\begin{enumerate}[\rm(i)]
		\item The action of the above $E_0, \dots ,$ $E_{\ell-1}, F_0, \dots , F_{\ell-1}$ induces an action of $\asl_\ell$ on the Grothendieck group $\cG $.
		\item The decomposition of $\cG$ induced by $\ell$-blocks corresponds to a decomposition into weight spaces (for the subalgebra generated by the $H_a=[E_a,F_a]$'s).
		\item Two $\ell$-blocks have same defect group if and only if they are in the same orbit under the action of the Weyl group of $\asl_\ell$.
	\end{enumerate}	
\end{thm}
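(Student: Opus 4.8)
The plan is to obtain the $\asl_\ell$-action as a specialisation of the Misra--Miwa realisation of the basic module on Fock space, and then to read the block theory of $\Sym n$ off the weight data. Write $\cF\deq\bigoplus_{n\geq 0}\bbZ\Irr(\Sym n)=\bigoplus_n\bbZ\cP(n)$ for the Fock space; the decomposition maps assemble into a surjection $\cF\twoheadrightarrow\cG$. For (i) I would first check that $E_a,F_a$ descend to $\cG$: for $\Sym{n-1}\leq\Sym n$ both $\Ind$ and $\Res$ are exact, the $n$-th Jucys--Murphy element $L_n=(1,n)+\cdots+(n-1,n)$ commutes with $\Sym{n-1}$ so that its generalised eigenspaces on $\Res^{\Sym n}_{\Sym{n-1}}M$ are $k\Sym{n-1}$-submodules, and passing to such a summand is exact; hence $E_a,F_a$ are exact and induce $\bbZ$-linear endomorphisms of $\cG$. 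Since the $L_i$ lie in $\bbZ\Sym n$ and act on Specht modules with integer eigenvalues (the contents of boxes), compatibly with reduction modulo $\ell$, the surjection $\cF\twoheadrightarrow\cG$ is $\asl_\ell$-equivariant, so it suffices to verify the Chevalley--Serre relations (\ref{ChS1})--(\ref{ChS2}) on $\cF$. There the branching rule -- equation (\ref{MNrule}) with $d=1$, refined by the residue modulo $\ell$ of the box involved -- gives $F_a[\zeta_\la]=\sum_\square[\zeta_{\la-\square}]$ over removable boxes $\square$ of $\la$ of $\ell$-residue $a$, and dually $E_a[\zeta_\la]=\sum_\square[\zeta_{\la+\square}]$ over addable boxes of residue $a$. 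Then (\ref{ChS1}) follows from the Mackey decomposition $\Res^{\Sym{n+1}}_{\Sym n}\Ind^{\Sym{n+1}}_{\Sym n}M\cong M\oplus\Ind^{\Sym n}_{\Sym{n-1}}\Res^{\Sym n}_{\Sym{n-1}}M$ (there being exactly two $\Sym n$-double cosets in $\Sym{n+1}$) together with the bookkeeping of residues, which also identifies $H_a=[E_a,F_a]$ as the diagonal operator acting on $[\zeta_\la]$ by the number of addable boxes of $\la$ of residue $a$ minus the number of removable ones; the Serre relations (\ref{ChS2}) then become a purely combinatorial identity for the box-adding operators, which is precisely the Misra--Miwa presentation of the level-one Fock space -- the only slightly delicate case is $\ell=2$, where an off-diagonal Cartan entry is $-2$.

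For (ii), the formula for $F_a$ shows $[\zeta_\emptyset]$ is a highest-weight vector of weight $\Lambda_0$, killed by every $E_a$, and that $F_a$ lowers the weight by $\alpha_a$; hence $\mathrm{wt}([\zeta_\la])=\Lambda_0-\sum_{a\in\bbF_\ell}n_a(\la)\,\alpha_a$, where $n_a(\la)$ is the number of boxes of the Young diagram of $\la$ of $\ell$-residue $a$. As the $\alpha_a$ are linearly independent, $[\zeta_\la]$ and $[\zeta_\mu]$ lie in the same weight space of a fixed degree iff $n_a(\la)=n_a(\mu)$ for all $a$. Using the classical fact that the boxes of a rim $\ell$-hook have contents forming $\ell$ consecutive integers -- hence one box of each residue -- removal of a rim $\ell$-hook decreases every $n_a$ by $1$, so $n_a(\la)=n_a(\la_{(\ell)})+w$ with $w$ the $\ell$-weight of $\la$, independently of $a$; thus the tuple $(n_a(\la))_a$ is equivalent data to the pair $(\la_{(\ell)},w)$. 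Consequently each weight space of $\cG$ -- once one also records the grading $\cG=\bigoplus_n\cG_n$ -- is spanned by $\{[\zeta_\la]\mid\la_{(\ell)}=\kappa\}$ for a single $\ell$-core $\kappa$, which by Brauer--Robinson (\Th{BrRo}) is exactly $K_0$ of the block $B_\kappa$. So the weight-space decomposition of $\cG$ coincides with its $\ell$-block decomposition, which is (ii).

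For (iii), by \Th{BrRo} the defect group of $B_\kappa$, with $\kappa\vdash n-w\ell$, is a Sylow $\ell$-subgroup of $\Sym{w\ell}$; hence two $\ell$-blocks of (possibly different) symmetric groups have isomorphic defect groups iff their $\ell$-weights coincide. On the other hand, the identity above gives $\mathrm{wt}(B_\kappa)=\mathrm{wt}([\zeta_\kappa])-w\delta$, where $\delta=\alpha_0+\cdots+\alpha_{\ell-1}$ is the null root, and $\kappa\mapsto\mathrm{wt}([\zeta_\kappa])$ is a $\wh W$-equivariant bijection from the set of $\ell$-cores onto the orbit $\wh W\cdot\Lambda_0$ of the affine Weyl group $\wh W$ of $\asl_\ell$ -- this is the classical abacus identification of $\ell$-cores with $\wh W/W\cong\wh W\cdot\Lambda_0$, $W=\Sym\ell$ being the stabiliser of $\Lambda_0$. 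Now $\wh W$ fixes $\delta$ and preserves the invariant form, for which $(\delta,\delta)=0$ and $(\Lambda_0,\delta)\neq 0$; so $(\mathrm{wt}([\zeta_\kappa]),\mathrm{wt}([\zeta_\kappa]))$ is independent of $\kappa$, the self-pairing of $\mathrm{wt}([\zeta_\kappa])-w\delta$ determines $w$, and therefore $\mathrm{wt}(B_\kappa)$ and $\mathrm{wt}(B_{\kappa'})$ lie in a common $\wh W$-orbit iff $w=w'$. Combined with the first sentence this is (iii).

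The step I expect to cost the most work is (i): making the Mackey-and-residue argument for (\ref{ChS1}) precise directly on $\cG$ -- which rests on the compatibility of Jucys--Murphy eigenspaces with reduction modulo $\ell$ and with short exact sequences -- and carrying out the combinatorial verification of the Serre relations (\ref{ChS2}). Once (i) is in hand, (ii) and (iii) are essentially translations of the standard core/weight dictionary, resting only on \Th{BrRo} and elementary $\ell$-core combinatorics.
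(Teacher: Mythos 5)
The paper does not prove this theorem: it is stated as a cited result of Lascoux--Leclerc--Thibon, so there is no ``paper's route'' to compare against. On its own merits your argument is correct and is the standard modern proof: realise the $\asl_\ell$-action first on the ordinary Fock space $\cF=\bigoplus_n\bbZ\Irr(\Sym n)$ via residue-refined box addition/removal (Misra--Miwa), check the operators descend through the decomposition map $\cF\twoheadrightarrow\cG$ because Jucys--Murphy eigenvalues on Specht modules are box contents and reduce mod~$\ell$ to residues, verify the Chevalley--Serre relations on $\cF$ by the Mackey decomposition plus residue bookkeeping, and then read (ii) and (iii) off the weight formula $\mathrm{wt}[\zeta_\la]=\Lambda_0-\sum_a n_a(\la)\alpha_a$ together with Brauer--Robinson (\Th{BrRo}) and the abacus identification of $\ell$-cores with $\wh W/\Sym\ell\cong\wh W\!\cdot\!\Lambda_0$.

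One small redundancy: in (ii) you appeal to ``recording the grading $\cG=\bigoplus_n\cG_n$'' in addition to the weight, but the weight alone already determines $n$. Indeed, as you observe in (iii), the self-pairing $\bigl(\Lambda_0-\sum_a n_a\alpha_a,\;\Lambda_0-\sum_a n_a\alpha_a\bigr)=(\Lambda_0,\Lambda_0)-2w$ recovers $w$, and the $\wh W$-orbit element $\mathrm{wt}[\zeta_\la]+w\delta$ recovers $\kappa$, hence $n=|\kappa|+w\ell$; so the block is a function of the weight with no extra grading needed. This does not affect the correctness of the argument. The only genuine labour, as you note, is in (i) — the exactness of the $i$-restriction/$i$-induction summands (which you justify correctly by functoriality of the generalised $L_n$-eigenspace decomposition applied to a short exact sequence) and the verification of the Serre relations on $\cF$, including the cubic relation when $\ell=2$.
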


For each pair $a\in \bbF_\ell$, the above situation restricts to actions of $\fsl_2$. This is called more generally by Chuang-Rouquier a weak $\fsl_2$-categorification. A \sing{strong $\fsl_2$-categorification } is defined as follows\index{categorification}. We give the version actually used for blocks of symmetric groups, the one in [ChRo08] uses a parameter $q$ which is 1 here.

\begin{defn}[Chuang-Rouquier]\label{StrCat} Let $\cA$ be a $k$-linear abelian category with finiteness properties (satisfied in the application given). A strong $\fsl_2$-categorification is the data of $a\in k$, exact functors $$E,F\colon \cA\to \cA$$ and natural transformations $$X\colon E\to E\ ,\  T\colon E^2\to E^2$$ such that  
	\begin{enumerate}[]
		\item $(E,F)$ is an adjoint pair and $F$ is isomorphic to a left adjoint of $E$,
		\item $E$ and $F$ induce on the Grothendieck group $K_0(\cA)$ a locally finite representation of $\fsl_2$,
		\item the simple objects of $\cA$ are weight vectors for the above in $K_0(\cA)$,
		\item $(\id_ET)\circ (T\id_E)\circ (\id_ET)= (T\id_E)\circ (\id_ET)\circ (T\id_E)$ as natural transformations  $E^3\to E^3$,
		\item $T^2=\id_{E^2}$ and $T\circ (\id_EX)\circ T=X\id_E-T$ as natural transformations  $E^2\to E^2$, 
		\item $X-a\id_E$ is locally nilpotent.
	\end{enumerate}
\end{defn}

A very important feature is of course the role of the endomorphisms of functors $X$ and $T$ and the relations they satisfy. Categorification techniques lead to consider functors as objects and natural transformations as morphisms. Note that in equations like (4) and (5) above, $\circ$ denotes the classical composition of natural transformations of functors. Meanwhile, an expression like $\id_ET$ means the endomorphism of $EE^2$ obtained functorially from endomorphisms of $E$ and $E^2$. Note that in the case of module categories (or categories closely related with due adaptations), functors are mostly tensor products by bi-modules which in turn are easy to consider as objects of an abelian category as in the proof of Lemma~\ref{K2O}.

In practice, one will define several $\fsl_2$-categorifications that come from a structure involving a whole $\asl_\ell$ where $\ell$ is the characteristic of $k$. On top of the ``representations" of $\fsl_2$ one gets, the various $X$'s and $T$'s will contribute to controlling the modules produced through the action of (affine) Hecke algebras. 

In the setting of Definition~\ref{StrCat}, Chuang-Rouquier prove the following fundamental theorem (see [Du17, Sect. 1.2-3] for the categories $\HO$ and $\tD^b$).

\begin{thm}[{[ChRo08, 6.4]}]\label{Theta} There is an equivalence of categories $$\Theta\colon\HO(\cA)\to \HO(\cA)$$ inducing the action of the reflection of the Weyl group of $\fsl_2$ on the Grothendieck group $K_0(\cA)$.
\end{thm}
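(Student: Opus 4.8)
The plan is to produce $\Theta$ explicitly as the \emph{Rickard complex} attached to the categorical $\fsl_2$-action of Definition~\ref{StrCat}, and then to check that it is invertible in $\HO(\cA)$ by a homotopy-cancellation argument. First I would exploit that the data $(E,F,X,T)$ makes the nilHecke algebra $NH_i$ act on the functor $E^i$, with $X$ acting on the $j$-th factor and $T$ on the $j$-th and $(j{+}1)$-st factors; relations (4)--(6) of Definition~\ref{StrCat} are exactly what is needed for these endomorphisms to satisfy the nilHecke relations. Since $NH_i$ is isomorphic to a matrix algebra over its centre, its identity is a sum of $i!$ conjugate primitive idempotents, so $E^i$ decomposes as a direct sum of $i!$ copies of a single functor $E^{(i)}\deq E^i e_i$, the $i$-th \emph{divided power}; using the biadjunction in Definition~\ref{StrCat} one transports this to define $F^{(i)}$ as well. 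Local finiteness of the $\fsl_2$-representation on $K_0(\cA)$ then guarantees that on any object only finitely many $E^{(j)}$ and $F^{(j)}$ are nonzero, and $\cA$ decomposes into weight subcategories $\cA=\bigoplus_\mu\cA_\mu$ with $E$, $F$ raising and lowering $\mu$.

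Next I would write down $\Theta$ weight-block by weight-block. On $\cA_\mu$ with $\mu\le 0$, putting $n=-\mu$, set
\[
\Theta_\mu\ \deq\ \bigl(\ \cdots\ \longrightarrow\ E^{(n+2)}F^{(2)}\ \longrightarrow\ E^{(n+1)}F^{(1)}\ \longrightarrow\ E^{(n)}\ \bigr),
\]
with $E^{(n+i)}F^{(i)}$ in cohomological degree $-i$ and the differentials assembled from the unit and counit of the $(E,F)$-adjunction; on $\cA_\mu$ with $\mu>0$ one uses the mirror complex with $E$ and $F$ interchanged. Two points need checking: that consecutive differentials compose to zero, and that the complex is bounded. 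Boundedness is the local finiteness just recalled ($F^{(i)}$ kills a given object once $i$ exceeds the relevant weight bound). The identity $d\circ d=0$ is a computation inside the $NH_i$'s; it, and indeed the whole construction, is cleanest to carry out once and for all in the \emph{minimal} $\fsl_2$-categorification built directly from the nilHecke algebras, and then to transfer to an arbitrary $\cA$ using that the complex is manufactured functorially from $(E,F,X,T)$.

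Then I would prove $\Theta$ is an auto-equivalence by producing a quasi-inverse $\Theta'$ --- the mirror construction, with $E$ and $F$ swapped --- and showing $\Theta\circ\Theta'\cong\id$ and $\Theta'\circ\Theta\cong\id$ in $\HO(\cA)$. The composite $\Theta\circ\Theta'$ is the total complex of a double complex whose entries are functors of the shape $E^{(a)}F^{(b)}E^{(c)}F^{(d)}$. Using the categorical commutation isomorphisms
\[
E^{(a)}F^{(b)}\ \cong\ \bigoplus_{j\ge 0}\ \bigl(F^{(b-j)}E^{(a-j)}\bigr)^{\oplus m_{j}}\qquad\text{on }\cA_\mu,
\]
with multiplicities $m_j\in\bbN$ dictated by the analogous identity $e^{(a)}f^{(b)}=\sum_j m_j\,f^{(b-j)}e^{(a-j)}$ in $U(\fsl_2)$ restricted to weight $\mu$ (these isomorphisms are themselves extracted from the nilHecke relations), one rewrites the composite and then strips away contractible direct summands by iterated Gaussian elimination of complexes, until only $\id$ remains. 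The symmetric argument gives $\Theta'\circ\Theta\cong\id$. Finally, passing to Grothendieck groups and taking Euler characteristics, $[\Theta]$ acts on $K_0(\cA_\mu)$ by $\sum_{i\ge 0}(-1)^i e^{(n+i)}f^{(i)}$ (with $n=|\mu|$, and $e,f$ swapped for $\mu>0$), which is precisely the simple reflection of the Weyl group of $\fsl_2$ on an integrable module; this gives the asserted statement on $K_0(\cA)$.

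The step I expect to be the main obstacle is the cancellation in $\Theta\circ\Theta'$: making the ``obvious'' Gaussian elimination rigorous requires the commutation isomorphisms above to be chosen \emph{coherently} and the relevant connecting maps to be genuinely invertible, not merely invertible modulo lower-order terms. This is exactly why one first establishes everything in the universal setting of the nilHecke algebras --- where $NH_i$ being a matrix algebra over its centre makes the needed splittings canonical --- and only afterwards specialises to an arbitrary $\cA$. In particular the natural transformations $X$ and $T$, and relations (4)--(6) of Definition~\ref{StrCat}, are indispensable: without the resulting nilHecke action neither the divided-power functors nor these commutation relations would be available.
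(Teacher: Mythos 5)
Your construction of $\Theta$ matches Chuang--Rouquier's: the divided powers $E^{(i)}$, $F^{(i)}$ are cut out of $E^i$ by nilHecke idempotents coming from $X$ and $T$, the Rickard complex $\cdots\to E^{(n+2)}F^{(2)}\to E^{(n+1)}F^{(1)}\to E^{(n)}$ is assembled from adjunction morphisms weight block by weight block, and the idea of building the complex once in the minimal nilHecke categorification and transporting it is also part of their architecture. Up to that point you have reproduced the actual definition.

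Where there is a genuine gap is in the invertibility argument. You propose to exhibit a quasi-inverse $\Theta'$ (the mirror complex) and prove $\Theta\circ\Theta'\cong\id$ directly in $\HO(\cA)$ by Gaussian elimination, driven by the categorified commutation relations. The paper flags a different and essential step: ``the proof of invertibility of $\Theta$ is another challenge, where a key step is to show invertibility of the induced functor $\tD^b(\cA)\to\tD^b(\cA)$.'' Chuang--Rouquier first establish the \emph{derived} equivalence --- where one can test on generators, exploit the weight filtration of simple objects, and never perform a global cancellation --- and only then upgrade to $\HO(\cA)$ using that $\Theta$ is a bounded complex of biadjoint, hence biprojective, functors (a Rickard-type lifting). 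Your direct cancellation in $\HO$ runs squarely into the coherence problem you flag at the end: the isomorphisms $E^{(a)}F^{(b)}\cong\bigoplus_j\bigl(F^{(b-j)}E^{(a-j)}\bigr)^{\oplus m_j}$ live at the level of functors, while the differentials of $\Theta\circ\Theta'$ are built from units and counits of adjunction; nothing in the nilHecke relations guarantees that, after transporting through those isomorphisms, the resulting connecting maps become genuinely invertible rather than invertible only modulo a filtration --- even in the minimal categorification. The missing ingredient in your sketch is the detour through $\tD^b(\cA)$, which is precisely what makes the Chuang--Rouquier proof go through.
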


We have skipped the (difficult) definition of $\Theta$ (originally due to Rickard for blocks of symmetric groups). A key point is to find an equivalent of the divided powers $e^m/m!$, $f^m/m!$ ($e$ and $f$ being the images of $E,F\in \fsl_2$ in a representation over $\bbQ$) that are necessary to define the action $\exp(-f)\exp(e)\exp(-f)$ of the Weyl group on a representation. See [ChRo, 5.13, 6.1] for the model proposed. Then the proof of invertibility of $\Theta$ is another challenge, where a key step is to show invertibility of the induced functor $\tD^b(\cA)\to \tD^b(\cA)$, see [ChRo, 6.4, 6.6] and proofs.

This theorem, with an additional parameter $q\in k^\times$, has several applications in [ChRo08] beyond symmetric groups, namely blocks of Ariki-Koike algebras, of finite general linear groups, or the so-called category $\OO$ (see [ChRo08, \S 7]).

With \Th{Theta} in hand, the proof of \Th{ChRo} consists then in constructing a strong $\fsl_2$-categorification for each $a\in \bbF_\ell$ with $\cA=\oplus_{n\geq 1}k\Sym{n}\text{-}\mo$. The functors $E_a$, $F_a$ have been seen above. The natural transformations $$X_a\colon E_a\to E_a, \ T_a\colon E_a^2\to E_a^2$$ are defined as follows. The functor $E_a=\oplus_{n\geq 1}E_{a,n}$ is such that $$E_{a,n}\colon k\Sym{n-1}\text{-}\mo\to  k\Sym{n}\text{-}\mo$$ is a direct summand of induction, hence induced by a direct summand of the $k\Sym{n-1}\otimes  k\Sym{n}^\op$-bimodule $  k\Sym{n}$. One defines $X_a$ there as the right multiplication by $(1,n)+(2,n)+\cdots+(n-1,n)$ on the bimodule. On the other hand $E_a^2=\oplus_{n\geq 2} E_{a,n}E_{a,n-1}$ where the $n$-th term is a direct summand of the functor $k\Sym{n-2}\text{-}\mo\to  k\Sym{n}\text{-}\mo$ induced by the $k\Sym{n-2}\otimes  k\Sym{n}^\op$-bimodule $  k\Sym{n}$. The natural transformation $T_a$ is then defined by right multiplication by $(n-1,n)$.
One proves that this provides a strong $\fsl_2$-categorification. Working with $\cA=\oplus_{n\geq 1}k\Sym{n}\text{-}\mo$, \Th{Theta} then allows to deduce an equivalence 
\begin{equation}\label{A2A'}
\text{  $\HO(\cA)\to \HO(\cA)$ restricting to $\HO(A\text{-}\mo)\to \HO(A'\text{-}\mo)$}
\end{equation} for each pair $(A,A')$ of $\ell$-blocks of symmetric groups such that $A'$ is the image of $A$ by a fundamental reflection in \Th{LLT}.(iii). Using the integral nature of all functors involved one can lift that to algebras over $\OO$ or even $\bbZ_\ell$. Using \Th{LLT}.(iii) one can iterate this strengthened version of (\ref{A2A'}) to get equivalences $\HO(A\text{-}\mo)\to \HO(A'\text{-}\mo)$ for each pair of $\ell$-blocks $A\inn\OO\Sym{n}$ and $A'\inn\OO\Sym{n'}$ with isomorphic defect groups.

Using the knowledge of the Brauer morphism (see Remark~\ref{FusSn}), Chuang-Rouquier prove an even stronger equivalence of the blocks $A$, $A'$ concerned, namely a Rickard equivalence {(see Definition~\ref{RickEqu} below)} that basically preserves the local structures of the blocks and whose image by the Brauer morphism induces similar equivalences at the local level [ChRo08, 7.2].

%%%%%%%%%%%%%%%%%%%%%%%%%%%%%%%%%%%%%%%%%%%%%%%%%%%%%%%%%%%%%%%%

%%%%%%%%%%%%%%%%%%%%%%%%%%%%%%%%%%%%%%%%%%%%%%%%%%%%%%%%%%%%%%%%

%%%%%%%%%%%%%%%%%%%%%%%%%%%%%%%%%%%%%%%%%%%%%%%%%%%%%%%%%%%%%%%%

{}
\bigskip

{}

\section{Local methods for unipotent blocks: the strategy}

In view of a possible Jordan decomposition of characters inducing a strong equivalence of $\ell$-blocks (see Sect. 9 below) it may make sense to give details about local methods only for unipotent blocks (see Definition~\ref{ublo}). This is what we do in Sections 6 to 8.

\subsection{Generalized $d$-Harish-Chandra theory}

We keep $\GG$ and $F\co\GG\to \GG$ as before (Ch. 4). 

We have until now defined Levi subgroups as complements in a decomposition of a parabolic subgroup $\PP =\RU(\PP)\rtimes\LL$. A more intrinsic definition is by saying that they are centralizers of tori (not necessarily maximal), see for instance [DigneMic, 1.22].

We will need to speak of \sing{cyclotomic polynomials}\index{$\phi_d$}. So, if $d\geq 1$, recall that $\phi_d(x)\in \bbZ[x]$ denotes the $d$-th cyclotomic polynomial, whose complex roots are the roots of unity of order $d$.

\begin{defn}\label{PSF} Any $F$-stable torus $\bS$ of $\GG$ has a so-called \sing{polynomial order} $P_{\bS ,F}\in \bbZ [x]$ defined by $$|\bS^{F^{m}}|=P_{\bS ,F} (q^m)$$ for some $a\geq 1$ and any $m\in 1+a\bbN$. Moreover $P_{\bS ,F}$ is a product of cyclotomic polynomials $P_{\bS,F}=\Pi_{d\geq 1}\phi_d^{n_d}$, ($n_d\geq 0$).\index{$P_{\bS ,F}$}
\end{defn}

We give below a fundamental example.

\begin{exm}\label{Fmax} Let $\TT_0$ be an $F$-stable maximal torus of a group $(\GG ,F)$ such that $F$ acts on $Y(\TT_0)$ by $q$. Note that this is the case as soon as the coroots $\Phi(\GG,\TT_0)^\vee$ generate a lattice of finite index of $Y(\TT_0)$ and $F$ fixes them. Let $w\in \W(\GG,\TT_0)$ and assume $\TT$ is an $F$-stable maximal torus of type $w$ in the sense of Remark~\ref{4.4}.(c). Then the pair $(\TT ,F)$ is made isomorphic to 
	 $(\TT_0,wF)$ through conjugation by $g\in \GG$ such that $g^\mm F(g)\TT_0=w$. Therefore $|\TT^{F^m}|=|\TT_0^{(wF)^m}|$ for any $m\geq 1$. It is an elementary fact that $\TT_0^{(wF)^m}\cong Y(\TT_0)/(1-{(wF)^m}) Y(\TT_0)$ see [DigneMic, 13.7]. Such a quotient is finite if and only if the endomorphism $1-{(wF)^m}$ of the lattice $Y(\TT_0)$ has non zero determinant and the cardinality of $Y(\TT_0)/(1-{(wF)^m}) Y(\TT_0)$ is $|\det (1-{(wF)^m}) |$. In our case we get the characteristic polynomial of $w^\mm$ at $q^m$. So if $a$ is the order of $w$ and $m\in 1+a\bbN$, we actually get $|\TT^{F^m}|=P_{\TT ,F}(q^m)$ for $P_{\TT ,F}$ the characteristic polynomial of $w$ on $Y(\TT_0)$. It is a product of cyclotomic polynomials since it is a monic polynomial whose zeroes are roots of unity, $w$ having finite order.
	 
	 In all cases of interest, 
	 $F$ induces a map of the form $q\phi$ on $Y(\TT_0)$ where $\phi$ is an automorphism of finite order. Then the above applies almost unchanged. 
\end{exm}

The polynomial orders of tori have many properties of orders of abelian groups, only cyclotomic polynomials play now the role of prime divisors.

\begin{pro}\label{Tphid} Let $\bS$ be an $F$-stable torus of $\GG$. If $P_{\bS,F}=\Pi_{d\geq 1}\phi_d^{n_d}$, then for any $d\geq 1$, there is a unique subtorus $\bS_d\leq \bS$ such that $P_{\bS_d,F}=\phi_d^{n_d}$.
\end{pro}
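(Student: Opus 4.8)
The plan is to pass to cocharacter lattices and reduce the statement to linear algebra over $\bbZ$. Recall that an $F$-stable torus $\bS$ is determined by the lattice $Y\deq Y(\bS)=\Hom(\Fm,\bS)$ with its induced $F$-action, and that the $F$-stable subtori $\bS'\le\bS$ correspond bijectively to the \emph{pure} $F$-stable sublattices $Y'\inn Y$ (pure meaning $Y/Y'$ torsion-free, equivalently $Y'=Y\cap(\bbQ\otimes_\bbZ Y')$), via $\bS'\mapsto Y(\bS')$. Embedding $\bS$ in an $F$-stable maximal torus of $\GG$ (one may take it inside the $F$-stable Levi $\cent\GG\bS$) and restricting, Example~\ref{Fmax} gives that $F$ acts on $Y$ as $q\phi$ for some $\phi\in\GL(Y)$ of finite order; in particular $\phi$ is semisimple over $\bbQ$, its eigenvalues are roots of unity, and the computation of Example~\ref{Fmax} identifies $P_{\bS,F}$ with the characteristic polynomial $\det(x\cdot\id_Y-\phi)$ of $\phi$ on $Y$ — and likewise $P_{\bS',F}=\det(x\cdot\id_{Y(\bS')}-\phi|_{Y(\bS')})$ for any $F$-stable subtorus $\bS'$. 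I would also record one lattice fact used repeatedly: a pure $F$-stable sublattice $Y'\inn Y$ is automatically $\phi$-stable, since $\phi(Y')=q^{-1}F(Y')\inn q^{-1}Y'$ and $\phi(Y')\inn\phi(Y)\inn Y$, whence $\phi(Y')\inn Y\cap q^{-1}Y'=\{y\in Y\mid qy\in Y'\}=Y'$ by purity.

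For existence: as $\phi$ is semisimple over $\bbQ$ with minimal polynomial dividing $x^{a}-1=\prod_{e\mid a}\phi_{e}$ (with $a$ the order of $\phi$), the operator $\phi_{d}(\phi)$ is an integral endomorphism of $Y$, and I set $Y_{d}\deq\ker\bigl(\phi_{d}(\phi)\co Y\to Y\bigr)$. Being the kernel of an endomorphism of a free $\bbZ$-module, $Y_{d}$ is pure in $Y$; it is $\phi$-stable (hence $F$-stable) because $\phi_{d}(\phi)$ commutes with $\phi$. Let $\bS_{d}\le\bS$ be the associated $F$-stable subtorus. On $\bbQ\otimes Y_{d}=\ker\bigl(\phi_{d}(\phi)\ \text{on}\ \bbQ\otimes Y\bigr)$ the operator $\phi$ is annihilated by the irreducible $\phi_{d}$, so its characteristic polynomial there, namely $P_{\bS_{d},F}$, is a power $\phi_{d}^{m_{d}}$. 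Using the primary decomposition $\bbQ\otimes Y=\bigoplus_{d}(\bbQ\otimes Y_{d})$ of the semisimple operator $\phi$, the characteristic polynomial of $\phi$ on $\bbQ\otimes Y$ is $\prod_{d}\phi_{d}^{m_{d}}$; comparing with $P_{\bS,F}=\prod_{d}\phi_{d}^{n_{d}}$ and using unique factorization in $\bbQ[x]$ yields $m_{d}=n_{d}$, so $P_{\bS_{d},F}=\phi_{d}^{n_{d}}$. For uniqueness, let $\bS'\le\bS$ be an $F$-stable subtorus with $P_{\bS',F}=\phi_{d}^{n_{d}}$ and put $Y'=Y(\bS')$, a pure $F$-stable, hence $\phi$-stable, sublattice. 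Then every eigenvalue of $\phi$ on $\bbQ\otimes Y'$ is a primitive $d$-th root of unity, so $\phi_{d}(\phi)$ annihilates $\bbQ\otimes Y'$ and therefore $Y'$; thus $Y'\inn Y_{d}$. Since $\operatorname{rank}Y'=\deg P_{\bS',F}=\deg\phi_{d}^{n_{d}}=\operatorname{rank}Y_{d}$, the inclusion $Y'\inn Y_{d}$ has finite index, and as $Y'$ is pure in $Y$ it follows that $Y'=Y_{d}$, i.e.\ $\bS'=\bS_{d}$.

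The argument is largely routine; the points that need care are the bookkeeping with purity. On the existence side one must exhibit $Y_{d}$ as a kernel, so that it is automatically pure and hence really is the cocharacter lattice of a subtorus rather than of a finite extension of one; on the uniqueness side one uses purity of $Y(\bS')$ to upgrade ``of finite index in $Y_{d}$'' to ``equal to $Y_{d}$''. Everything else is the semisimple primary decomposition of the finite-order operator $\phi=q^{-1}F$ on $Y$ together with the identification of polynomial orders with characteristic polynomials furnished by Example~\ref{Fmax}.
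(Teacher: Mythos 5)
Your proof is correct and is essentially the natural fleshing-out of the paper's one-line remark (that $F$-stable subtori of $\bS$ correspond to pure $F$-stable sublattices of $Y(\bS)$), reducing the statement to the primary decomposition of the finite-order semisimple operator $\phi=q^{-1}F$ on $\bbQ\otimes Y(\bS)$. The paper does not actually carry out the lattice computation; you supply exactly the two bookkeeping points (kernels of integral endomorphisms are pure, and purity plus $F$-stability forces $\phi$-stability) needed to make the reduction rigorous, so your argument and the paper's intended one are the same.
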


Indeed an $F$-stable torus $\bS$ of $\GG$ is essentially characterized as a subtorus of a maximal one $\TT$ by the $F$-stable pure sublattice $Y(\bS)$ of $Y(\TT)$.

\begin{defn}\label{dsplit} A \sing{$\phi_d$-torus} of $\GG$ is an $F$-stable torus 
	whose polynomial order is a power of $\phi_d$.
	
	A  \sing{$d$-split Levi subgroup} is any $\cent{\GG}{\bS}$ where $\bS$ is a $\phi_d$-torus of $\GG$.
\end{defn}

\begin{exm}\label{dLevi}
(a) For $d=1$, the $1$-split Levi subgroups are the one that are complements of $F$-stable parabolic subgroups, hence $\GF$-conjugate to the standard Levi subgroups $\LL_I$, for $I\inn S$, $F(I)=I$.

(b)  Let $\GG =\GL_n(\bbF)$ with $F$ the raising of matrix entries to the $q$-th power. Let $\TT_1$ the diagonal torus. By (\ref{FTori}) the $\GF$-classes of maximal tori are indexed by conjugacy classes of $\Sym n$, or equivalently partitions of $n$. For $\la\vdash n$, denote by $\TT_\la$ an $F$-stable maximal torus in the corresponding class. If $\la =(\la_1\geq \la_2\dots )$ then the polynomial order of $\TT_\la$ is $(x^{\la_1}-1)(x^{\la_2}-1)\cdots$ by Example~\ref{Fmax}. One calls
$\TT_{(n)}$ a Coxeter torus.  This maximal torus $\TT_{(n)}$ is the only $n$-split proper
Levi subgroup of
$\GG$ up to $\GF$-conjugation. To see this, note that its polynomial order $x^n-1$ is the only
polynomial order of an $F$-stable maximal torus divisible by $\phi_n$. 

Let $d\geq 1$, $m\geq 0$ such that $md\leq n$. Let $\bS_{(d)}$ be a
Coxeter torus of GL$_d(\bbF )$. Let $\LL^{(m)}$ be ${\rm GL}_{n-md}(\bbF
)\times (\bS_{(d)})^m$ embedded in $\GG ={\rm GL}_n(\bbF )$ via the diagonal subgroup
${\rm GL}_{n-md}(\bbF
)\times ({\rm GL}_{d}(\bbF
))^m$. Then $\LL^{(m)}$ is
$d$-split thanks to the above. A maximal $d$-split proper Levi subgroup $\LL$ of $\GG$ is
isomorphic to $({\rm GL}_m)^d\times {\rm GL}_{n-md}$ with $\LF\cong {\rm
	GL}_m(q^d)\times {\rm GL}_{n-md}(q)$.
\end{exm}

For finite group theorists, \sing{Harish-Chandra theory} consists in relating two elements of $\Irr(\GF)$ whenever they are constituents of the same $\Lu{\LL}{\GG}\zeta$ for  $\zeta\in\Irr(\LL^F)$ and $\LL$ an $F$-stable Levi subgroup of {\bf an $F$-stable parabolic subgroup}. This leads quickly to the notion of \sing{cuspidal characters}, i.e. characters that are in no $\Lu{\LL}{\GG}\zeta$ as above unless $\LL =\GG$. From the fact that $\Lu{\LL}{\GG}\zeta\in \bbN\Irr(\GF)$ it is easy to see that each set in our partition of $\Irr(\GF )$ then coincides with the set of components of some $\Lu{\LL}{\GG}\zeta$ with $\zeta$ a cuspidal character of $\LL^F$.

The idea of Brou\'e-Malle-Michel [BrMaMi93] is to generalize that to $d$-split Levi subgroups in the place of the $1$-split ones considered in Harish-Chandra theory.

\begin{defn}\label{leqd} One writes 
$(\LL_1,\chi_1)\leq _d(\LL_2,\chi_2)$ when $\LL_i$ are $d$-split Levi subgroups in 	$\GG$, $\chi_i\in\ser{\LL_i^F}{1}$ and $\chi_2$ is a component of $\Lu{\LL_1}{\LL_2}\chi_1$.\index{$\leq_d$}

A character $\chi\in\ser{\GF}{1}$ is said to be \sing{$d$-cuspidal} if a relation $(\LL,\zeta) \leq_d (\GG,\chi)$ is possible only with $\LL=\GG$. A pair $(\LL,\zeta)$ with $\LL$ a $d$-split Levi subgroup and a $d$-cuspidal $\zeta\in\ser{\LL^F}{1}$ is called a \sing{unipotent $d$-cuspidal pair} of $\GF$.
\end{defn}

The following is due to Brou\'e-Malle-Michel, building on observations made by Fong-Srinivasan [FoSr86] in non-exceptional types.
One keeps $\GG$, $F$ as before, and $d\geq 1$.
\begin{thm}[{[BrMaMi93, 3.2]}]\label{BMM}  \begin{enumerate}[\rm(i)]
		\item $\leq_d$ is transitive among pairs of the type considered in Definition~\ref{leqd}
		\item If $(\LL ,\zeta)$ is a unipotent $d$-cuspidal pair of $\GF$, then for any component $\chi$ of $\Lu{\LL}{\GG}\zeta$ one has \begin{equation}
		\slu{\LL}{\GG}\chi= N\sum_{g\in \norm{\GG}{\LL}^F/\norm \GF{\LL ,\zeta}}{}^g\zeta
		\end{equation} where $N=\langle\chi,\Lu{\LL}{\GG}\zeta\rangle_\GF\not= 0$.
\item $\ser{\GF}{1}=\dot\cup_{(L,\zeta)}\Irr(\GF\mid\Lu{\LL}{\GG}\zeta)$ where $(\LL,\zeta)$ ranges over $\GF$-conjugacy classes of unipotent $d$-cuspidal pairs  and $\Irr(\GF\mid\Lu{\LL}{\GG}\zeta)$ denotes the set of irreducible components of the generalized character $\Lu{\LL}{\GG}\zeta$.
	\end{enumerate}
\end{thm}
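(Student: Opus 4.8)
The proof combines three ingredients. The first is structural: $\phi_d$-tori and $d$-split Levi subgroups compose. If $\bS$ is a $\phi_d$-torus of $\GG$ and $\bS'$ a $\phi_d$-torus of $\cent\GG\bS$, then the $F$-stable torus generated by $\bS$ and $\bS'$ is again a $\phi_d$-torus (Proposition~\ref{Tphid}, applied inside the maximal torus containing both) and $\cent\GG{\bS\bS'}=\cent{\cent\GG\bS}{\bS'}$; conversely an $F$-stable torus of $\GG$ that is a $\phi_d$-torus and lies in a $d$-split Levi $\LL$ is automatically a $\phi_d$-torus of $\LL$, the condition referring only to the polynomial order. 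Hence the $d$-split Levi subgroups of a $d$-split Levi $\LL$ of $\GG$ are precisely the $d$-split Levi subgroups of $\GG$ contained in $\LL$; in particular ``$d$-cuspidal'' is an intrinsic property. The second ingredient is transitivity of Lusztig induction among Levi subgroups: for $\MM\inn\LL\inn\GG$ one has $\Lu{\LL}{\GG}\circ\Lu{\MM}{\LL}=\Lu{\MM}{\GG}$ and, by adjunction, $\slu{\MM}{\LL}\circ\slu{\LL}{\GG}=\slu{\MM}{\GG}$; recall also that $\slu{}{}$ preserves rational series, so it carries unipotent characters to $\bbZ$-combinations of unipotent characters. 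The third, and the real engine, is a Mackey-type formula expressing $\slu{\LL}{\GG}\,\Lu{\LL'}{\GG}$ for $d$-split Levi subgroups $\LL,\LL'$ of $\GG$ as a sum over suitable double-coset representatives $w$ of the terms $\Lu{\LL\cap{}^w\LL'}{\LL}\,\slu{\LL\cap{}^w\LL'}{{}^w\LL'}\,{}^w(-)$. For tori this is Deligne--Lusztig; in general it is the delicate point of Remark~\ref{4.4}.(a), now known in essentially complete generality by Bonnaf\'e--Michel [BoMi11] (see also [Tay17]), while the weaker forms available when [BrMaMi93] was written --- valid on unipotent characters through their compatibility with Lusztig's parametrization --- already suffice below.

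Granting these, part (ii) is almost formal. Let $(\LL,\zeta)$ be a unipotent $d$-cuspidal pair and $\chi$ a component of $\Lu{\LL}{\GG}\zeta$. Insert the Mackey formula into $\slu{\LL}{\GG}\,\Lu{\LL}{\GG}\zeta$. For every $w$ with ${}^w\LL\not=\LL$ the structural facts make $\LL\cap{}^w\LL$ a \emph{proper} $d$-split Levi of ${}^w\LL$, so $\slu{\LL\cap{}^w\LL}{{}^w\LL}\,{}^w\zeta$ is an honest combination of unipotent characters of $(\LL\cap{}^w\LL)^F$ which must vanish because $\zeta$ is $d$-cuspidal; the remaining $w$ normalise $\LL$ and each contributes ${}^w\zeta$ with multiplicity one. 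Hence $\slu{\LL}{\GG}\,\Lu{\LL}{\GG}\zeta$, and therefore $\slu{\LL}{\GG}\chi$, is supported on the $\norm\GG{\LL}^F$-orbit of $\zeta$ in $\Irr(\LL^F)$. As every $g\in\norm\GG{\LL}^F$ lies in $\GF$ and acts there by an inner automorphism, $\langle\chi,\Lu{\LL}{\GG}\,{}^g\zeta\rangle=\langle{}^{g^{\mm}}\!\chi,\Lu{\LL}{\GG}\zeta\rangle=\langle\chi,\Lu{\LL}{\GG}\zeta\rangle$, so all coefficients equal $N\deq\langle\chi,\Lu{\LL}{\GG}\zeta\rangle\not=0$; collecting the orbit into the sum over $\norm\GG{\LL}^F/\norm\GF{\LL,\zeta}$ yields the stated equality.

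Part (iii) splits into disjointness and covering. For disjointness, if $\chi$ is a common component of $\Lu{\LL}{\GG}\zeta$ and $\Lu{\LL'}{\GG}\zeta'$ for unipotent $d$-cuspidal pairs $(\LL,\zeta)$ and $(\LL',\zeta')$, then $\langle\Lu{\LL}{\GG}\zeta,\Lu{\LL'}{\GG}\zeta'\rangle\not=0$; inserting the Mackey formula and using $d$-cuspidality of \emph{both} $\zeta$ and $\zeta'$ forces a $w$ with ${}^w\LL'=\LL$ and ${}^w\zeta'=\zeta$, and a Lang-theorem argument transfers $w$ into $\GF$, so $(\LL,\zeta)$ and $(\LL',\zeta')$ are $\GF$-conjugate. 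For covering, induct on $\dim\GG$: if $\chi\in\ser{\GF}{1}$ is $d$-cuspidal take $(\GG,\chi)$; otherwise $\chi$ is a component of some $\Lu{\MM}{\GG}\psi$ with $\MM$ a proper $d$-split Levi and $\psi\in\ser{\MM^F}{1}$, and by induction $\psi$ is a component of $\Lu{\LL}{\MM}\zeta$ for a unipotent $d$-cuspidal pair $(\LL,\zeta)$ of $\MM$, which by the structural facts is still $d$-cuspidal in $\GG$; transitivity then places $\chi$ among the components of $\Lu{\LL}{\GG}\zeta$. The one genuinely non-formal point here --- and also what settles (i) --- is that, $\Lu{\LL}{\MM}\zeta$ being merely a virtual character, the identity $\Lu{\MM}{\GG}\circ\Lu{\LL}{\MM}=\Lu{\LL}{\GG}$ does not by itself let one pass from ``$\chi$ a component of $\Lu{\MM}{\GG}\psi$, $\psi$ a component of $\Lu{\LL}{\MM}\zeta$'' to ``$\chi$ a component of $\Lu{\LL}{\GG}\zeta$'': cancellation must be ruled out. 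This is achieved through the sharper description of $\Lu{\LL}{\GG}\zeta$ for unipotent $\zeta$ --- its irreducible constituents are in multiplicity-preserving bijection with $\Irr$ of the relative Weyl group $\norm\GF{\LL,\zeta}/\LL^F$, an analogue of Howlett--Lehrer's computation of the endomorphism algebra of $\Lu{\LL}{\GG}\zeta$ --- which, combined with the orthogonality above and a count of $|\ser{\GF}{1}|$ drawn from Lusztig's classification of unipotent characters, forces the partition of (iii) to be consistent and, the constituents of each $\Lu{\LL}{\GG}\zeta$ now being determined, removes the only ambiguity in the transitivity of $\leq_d$ in (i).

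The principal obstacle is thus the Mackey formula for Lusztig induction between $d$-split Levi subgroups: once it (or the version on unipotent characters used here) is in hand, everything above is essentially bookkeeping together with Lang-theorem manoeuvres, but that formula lies deep and was only fully established well after [BrMaMi93]. A secondary obstacle is the multiplicity control of $d$-Harish-Chandra series by relative Weyl groups, which rests ultimately on Lusztig's classification of unipotent characters rather than on pure character theory; it is precisely this that prevents (i) and the covering part of (iii) from following, as in the classical case $d=1$, from a naive positivity argument (there $\Lu{}{}$ and $\slu{}{}$ are honest induction and restriction of modules).
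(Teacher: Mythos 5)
Your approach is genuinely different from the paper's. The paper's proof (following [BrMaMi93]) is an explicit case-by-case verification, relying on the computed tables of $\Lu\LL\GG$ on unipotent characters (Asai for classical types, Brou\'e--Malle--Michel for exceptional types); the paper presents no conceptual derivation. You instead organize the argument around the Mackey formula for Lusztig induction and structural facts about $\phi_d$-tori, and you are upfront that this anachronistically uses Bonnaf\'e--Michel and that combinatorial input (the relative-Weyl-group parametrization, essentially \Th{BMM'} in the paper, and Lusztig's classification) is still needed for (i) and the covering part of (iii). That honest assessment is correct, and a cleaner modern treatment would indeed go this route. So as a comparison: the paper's method is elementary but opaque; yours is conceptual but deferred to deep inputs that were unavailable to, and in part produced by, [BrMaMi93].

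There is, however, one genuine gap in your argument for (ii) beyond what you acknowledge. From the Mackey formula you correctly deduce that $\slu\LL\GG\Lu\LL\GG\zeta=\sum_{w\in\norm\GG\LL^F/\LL^F}{}^w\zeta$ is supported on the $\norm\GG\LL^F$-orbit of $\zeta$. You then write ``and therefore $\slu\LL\GG\chi$ is supported on the orbit.'' This step is not valid: $\Lu\LL\GG\zeta$ is only a virtual character, so writing $\slu\LL\GG\Lu\LL\GG\zeta=\sum_\chi a_\chi\,\slu\LL\GG\chi$ with $a_\chi\in\bbZ$ of either sign allows the extra, unwanted constituents of the individual $\slu\LL\GG\chi$ to cancel in the sum. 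This is precisely the positivity obstruction you correctly identify as blocking a naive argument for (i) and the covering in (iii): in the classical case $d=1$ one has honest induction and restriction of modules, so containment of supports does pass from the sum to each summand, but for $d>1$ it does not. Your inner-product computation does correctly show that every ${}^g\zeta$ ($g\in\norm\GG\LL^F$) appears in $\slu\LL\GG\chi$ with coefficient exactly $N$; what is missing is the proof that nothing \emph{else} appears. Closing this requires a further argument (for instance, first establishing disjointness of $d$-series and then arguing via $\slu\MM\GG=\slu\MM\LL\circ\slu\LL\GG$ that any additional constituent of $\slu\LL\GG\chi$ would place $\chi$ above a strictly smaller unipotent $d$-cuspidal pair, contradicting disjointness), or else it must come from the same explicit case-by-case computations on which the rest of the proof already leans.

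Two small remarks. Your structural claim that $\LL\cap{}^w\LL$ is a $d$-split Levi (of $\LL$ and of ${}^w\LL$) is correct --- it equals $\cent\GG{\bS\,{}^w\bS}$ and $\bS\,{}^w\bS$ is again a $\phi_d$-torus --- but this deserves a line of justification since intersections of Levi subgroups are notoriously not Levi subgroups in general; here it works because both sides are centralizers of subtori of a common maximal torus. And the closing sentence of your treatment of (i), in which a count of $|\ser\GF1|$ ``removes the only ambiguity in the transitivity of $\leq_d$,'' is not a proof sketch but a pointer to the deferred input; it would be clearer to state (i) as an input granted from [BrMaMi93] rather than as something deduced.
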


It is fairly clear that (ii) and (iii) above are easy consequences of each other and both consequences of the first point. The proof of the theorem is by a case by case analysis and relies in fact on the explicit description of the sets of unipotent characters and the computation of each Lusztig functor $$\Lu{\LL}{\GG}\colon \ser{\LL^F}{1}\to \bbZ\ser{\GG^F}{1}.$$ Such a computation was done by Asai for classical types ([As84a] and [As84b]) and by Brou\'e-Malle-Michel for exceptional types (see [BrMaMi93, Tables 1,2]). 

Brou\'e-Malle-Michel also give a parametrization of $\Irr(\GF\mid\Lu{\LL}{\GG}\zeta)$ much in the spirit of McKay's conjecture (see [Sp17, 3.A]) on character degrees
\begin{thm}[Brou\'e-Malle-Michel]\label{BMM'}  If $(\LL ,\zeta)$ is a unipotent $d$-cuspidal pair of $\GF$, then one has a bijection $$\Irr(\norm{\GG}{\LL,\zz}^F/\LL^F)\to \Irr(\GF\mid\Lu{\LL}{\GG}\zeta)$$ with good equivariance properties.
\end{thm}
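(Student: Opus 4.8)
\noindent{\it About the proof.} The plan is to reduce the statement to the structure of the relative Weyl group $W_{\GF}(\LL,\zz)\deq\norm{\GG}{\LL,\zz}^F/\LL^F$ and of an associated Hecke algebra, in parallel with ordinary Harish-Chandra theory. First I would record what \Th{BMM} already gives about the scalar product $\langle\Lu{\LL}{\GG}\zz,\Lu{\LL}{\GG}\zz\rangle_{\GF}$: by \Th{BMM}(ii) each component $\chi$ of $\Lu{\LL}{\GG}\zz$ with multiplicity $N_\chi$ satisfies $\slu{\LL}{\GG}\chi=N_\chi\sum_{g}{}^g\zz$ with $g$ running over $\norm{\GG}{\LL}^F/\norm{\GF}{\LL,\zz}$; feeding this into the adjunction $\langle\Lu{\LL}{\GG}\zz,\Lu{\LL}{\GG}\zz\rangle_\GF=\langle\zz,\slu{\LL}{\GG}\Lu{\LL}{\GG}\zz\rangle_\LF$ and using that exactly the trivial coset contributes to $\langle\zz,\sum_g{}^g\zz\rangle_\LF$ collapses the right-hand side to $\sum_\chi N_\chi^2$. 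Thus the task is to match the $\chi$'s with $\Irr(W_{\GF}(\LL,\zz))$, which in particular forces the numerical identity $\sum_\chi N_\chi^2=|W_{\GF}(\LL,\zz)|$.

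Next I would dispose of the case $d=1$ with $\LL$ the Levi complement of an $F$-stable parabolic $\PP$, where $\Lu{\LL}{\GG}\zz=\Ind^{\GF}_{\PP^F}\zz$ is a genuine module: Howlett--Lehrer's extension of the Tits--Iwahori description recalled in Section~2 identifies $\End_{\GF}(\Ind^{\GF}_{\PP^F}\zz)$ with a Hecke algebra of the (here real) reflection group $W_{\GF}(\LL,\zz)$ with suitable parameters, split semisimple and abstractly isomorphic over $\bbC$ to $\bbC W_{\GF}(\LL,\zz)$, which yields the bijection together with its equivariance. For general $d$ there is only the virtual character $\Lu{\LL}{\GG}\zz$, so I would instead lean on the proof of \Th{BMM} itself, which already consists of the explicit evaluation of the functors $\Lu{\LL}{\GG}\colon\ser{\LF}{1}\to\bbZ\ser{\GF}{1}$ (Asai's formulas in classical types, the tables of [BrMaMi93] in exceptional types): from that evaluation one reads off the precise list of components of $\Lu{\LL}{\GG}\zz$ and checks, case by case --- through $d$-hook/$d$-cohook combinatorics in the classical types and by inspection in the exceptional ones --- that their number equals $|\Irr(W_{\GF}(\LL,\zz))|$. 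To upgrade this count to a canonical bijection with the required equivariance I would then use that $W_{\GF}(\LL,\zz)$ is a complex reflection group (a fact coming out of the classification), attach to $(\LL,\zz)$ a generic cyclotomic Hecke algebra $\cH(W_{\GF}(\LL,\zz),\mathbf{x})$, and match the degrees of the components of $\Lu{\LL}{\GG}\zz$ --- obtained from Lusztig's degree formula for unipotent characters --- with the generic degrees of $\cH$ specialised at the relevant power of $q$; a Tits deformation argument then pins down the correspondence. Equivariance under field and graph automorphisms would follow since every ingredient (the series $\ser{\cdot}{1}$, the functor $\Lu{\LL}{\GG}$, the pair $(\LL,\zz)$, the group $W_{\GF}(\LL,\zz)$ and its Hecke algebra) is visibly compatible with them.

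The hard part is the absence of a uniform argument: the proof rests on Lusztig's classification of unipotent characters and on the explicit case-by-case computation of the functors $\Lu{\LL}{\GG}$, and the genuinely delicate issue --- making precise and then securing ``good equivariance properties'' --- will require controlling the residual choices in the Hecke-algebra isomorphism through degree considerations, with a handful of small or twisted cases calling for ad hoc verification.
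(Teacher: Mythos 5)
The paper does not prove this theorem: it is stated without proof, attributed to [BrMaMi93], with only the remark that it is ``much in the spirit of McKay's conjecture''. There is therefore no in-paper argument to compare against, and your sketch should be judged against the actual proof in [BrMaMi93]. On that score it is a fair reconstruction: the $d=1$ case via Howlett--Lehrer, the reduction of general $d$ to the explicit computation of $\Lu{\LL}{\GG}$ on unipotent characters carried out by Asai (classical types) and by the authors themselves (exceptional types), the observation that the relative Weyl group $W_{\GF}(\LL,\zeta)=\norm\GG{\LL,\zeta}^F/\LF$ is a complex reflection group, the attachment of a cyclotomic Hecke algebra, and the Tits deformation plus degree matching to pin down the correspondence and secure equivariance --- this is indeed the route [BrMaMi93] takes.

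One step in your first paragraph is circular as written. The identity $\langle\Lu{\LL}{\GG}\zeta,\Lu{\LL}{\GG}\zeta\rangle_\GF=\sum_\chi N_\chi^2$ already follows from orthogonality of $\Irr(\GF)$ and the definition $N_\chi=\langle\chi,\Lu{\LL}{\GG}\zeta\rangle_\GF$; feeding \Th{BMM}(ii) into adjunction merely reproduces the same number on the other side and does not give $\sum_\chi N_\chi^2=|W_{\GF}(\LL,\zeta)|$. To obtain that numerical identity you need the Mackey formula for Lusztig induction (Remark~\ref{4.4}(a), [BoMi11]), which yields $\slu{\LL}{\GG}\Lu{\LL}{\GG}\zeta=\sum_{w\in\norm{\GG}{\LL}^F/\LF}{}^w\zeta$ and hence $\langle\zeta,\slu{\LL}{\GG}\Lu{\LL}{\GG}\zeta\rangle_\LF=|W_{\GF}(\LL,\zeta)|$. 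This is an important point because [BrMaMi93] in fact had to work around the then-unproven Mackey formula by drawing the identity instead from their explicit case-by-case tables; you should either invoke the Mackey formula explicitly or make clear that the equality is read off the same explicit computations that underlie \Th{BMM}.
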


\begin{exm}\label{key}
Let us describe the partition of generalized $d$-Harish-Chandra theory in the case of $\GG =\GL_n(\bbF)$, $\GF=\GL_n(q)$. We have seen in Example~\ref{uGL} the parametrization $$\chi\mapsto R_\chi =\pm n!^\mm \sum_{w\in\Sym n}{\chi(w)}\Lu{\TT_w}{\GG}1$$ of $\ser{\GF}{1}$ by $\Irr(\Sym n)$.  

Let $\LL^{(m)}=\GL_{n-dm}(\bbF)\times (\bS_{(d)})^m$ as in Example~\ref{dLevi}.

Let us compose the above parametrization of $\ser{\GF}{1}$ with the parametrization of $\Irr(\Sym n)$ by partitions of $n$ (see \S~5.E above), thus giving $\la\mapsto\chi_\la^\GG\in \ser{\GF}{1}$. Let $\la\vdash n$, then 
\begin{equation}\label{RLGd}
\slu{\LL^{(1)}}{\GG}(\chi_\la^\GG)=\sum_{\tau\in\hoo_d(\la)}(-1)^{h(\tau)}\chi_{\la-\tau}^{\LL^{(1)}}
\end{equation} where the notations about partitions and hooks is the one of \S~5.E and where we identify $\ser{\LL^{(1)}{}^F}1=\ser{\GL_{n-d}(q)}{1}$.

The proof of (\ref{RLGd}) relies on computing each $\slu{\LL^{(1)}}{\GG}(\Lu{\TT_w}\GG 1)$ by means of a Mackey type formula (see Remark~\ref{4.4} above) and the observation that $\LL^{(1)}$ can contain a $\GF$-conjugate of $\TT_w$ only if $w$ is conjugate in $\Sym n$ to $w'c$ where $w'\in \Sym{n-d}$ and $c=(n-d+1, n-d+2,\dots ,n)$. Then (\ref{RLGd}) is just a consequence of the Murnaghan-Nakayama rule (\ref{MNrule}).

Like in the case of the symmetric group, (\ref{RLGd}) can be iterated as long as $d$-hooks can be removed and one gets the equivalent of (\ref{iMN}), namely
\begin{equation}\label{iRLGd}
\slu{\LL^{(m)}}{\GG}(\chi_\la^\GG)=N\chi_{\kappa}^{\LL^{(m)}}
\end{equation}
where $\kappa\vdash n-md$ is the $d$-core of $\la$ and $N$ is a non-zero integer.

It is not too difficult to show that $\chi_{\kappa}^{\LL^{(m)}}$ is $d$-cuspidal.

So indeed we get enough unipotent $d$-cuspidal pairs $(\LL,\zeta)$ with any $\chi\in\ser{\GF}{1}$ being in one of the disjoint sets $\Irr (\GF\mid \Lu{\LL^{(m)}}{\GG}(\zeta))$.

More work with (\ref{MNrule}) as main ingredient would tell us that the above are all the unipotent $d$-cuspidal pairs and that Theorem~\ref{BMM} holds.
\end{exm}

\subsection{The
	 theorem}

The relation between $\ell$-blocks and $d$-Harish-Chandra theory is given by the following kind of theorem.

\begin{thm}[Cabanes-Enguehard] \label{CuspBl} Let $\GG$ be a reductive group defined over the finite field $\bbF_q$, and let $F\colon \GG\to\GG$ be the associated Frobenius map. Assume $\ell$ is a prime $\geq 7$, not dividing $q$. Let $d$ the (multiplicative) order of $q$ mod $\ell$. Then there is a bijection$$(\LL,\zeta)\mapsto B_\GF(\LL,\zeta)$$\index{$B_\GF(\LL,\zeta)$} between $\GF$-classes of unipotent $d$-cuspidal pairs and unipotent blocks (see Definition~\ref{ublo}). One has 
\begin{enumerate}[\rm(i)]
	\item $\Irr(B_\GF(\LL,\zeta))\cap\ser{\GF}{1}=\Irr(\GF\mid\Lu{\LL}{\GG}\zeta)$,
	\item the Sylow $\ell$-subgroups of $\ccent{\GG}{[\LL ,\LL]}^F$ are defect groups of $B_\GF(\LL,\zz)$.
\end{enumerate}
\end{thm}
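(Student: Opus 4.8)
\medskip\noindent{\it About the proof.}
The plan is to combine three inputs: Brou\'e--Michel's partition of $\Irr(\GF)$ into $\ell$-blocks (Theorem~\ref{BM90}), the $d$-Harish-Chandra partition of the unipotent characters $\ser{\GF}{1}$ (Theorem~\ref{BMM}), and a ``Sylow $\phi_d$-theory'' tying $\ell$-local subgroups of $\GF$ to $d$-split Levi subgroups. First I would record the consequences of $\ell\geq 7$ (with $\ell\nmid q$): $\ell$ is good for $\GG$, divides neither $|\zent\GG/\czent\GG|$ nor the relevant component-group orders, and does not divide $|W(\GG,\TT)|$ in the ranges that occur. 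From Definitions~\ref{PSF} and~\ref{dsplit} and Proposition~\ref{Tphid} one then gets that, up to a controlled central $\ell'$-correction, every $\ell$-subgroup $Q\leq\GF$ lies inside $\bS^F$ for a $\phi_d$-subtorus $\bS$, and that $\cent\GF Q$ and $\norm\GF Q$ are commensurable with $\LL^F$, respectively $\norm\GF\LL$, where $\LL=\cent\GG\bS$ is the corresponding $d$-split Levi subgroup. This is the bridge that lets local methods interact with the functors $\Lu\LL\GG$.

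Next I would show that each $d$-Harish-Chandra series $\Irr(\GF\mid\Lu\LL\GG\zeta)$ lies in a single $\ell$-block, by induction on $\dim\GG$. Fix a unipotent $d$-cuspidal pair $(\LL,\zeta)$ and a $\phi_d$-torus $\bS$ with $\LL=\cent\GG\bS$. If $\GG$ is not itself $d$-cuspidal (for $\zeta$), choose a non-trivial $\ell$-element $t\in\bS^F$ such that $\MM\deq\cent\GG t$ is a proper $d$-split Levi subgroup containing $\LL$. Using the character formula of Proposition~\ref{chfla} together with Lemma~\ref{omni} and the defining vanishing property of the $d$-cuspidal character $\zeta$, one checks that the generalized decomposition map $d^t$ carries $\Lu\LL\GG\zeta$ to $\pm\Lu\LL\MM\zeta$ (inflated along $\MM^F=\cent\GF t$). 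Hence for every component $\chi$ of $\Lu\LL\GG\zeta$ the class function $d^t\chi$ has non-zero projection onto the $d$-Harish-Chandra series of $\MM^F$ attached to $(\LL,\zeta)$, which by the inductive hypothesis lies in a single block $b$ of $\MM^F$; Brauer's second Main Theorem (Theorem~\ref{Br2nd}) then forces $(\{1\},b_\GF(\chi))\leq(\spann <t>,b)$ with the \emph{same} $b$ --- hence the same $b_\GF(\chi)$ --- for all such $\chi$. Call this common block $B_\GF(\LL,\zeta)$. The base case, where $(\GG,\zeta)$ is already $d$-cuspidal, has to be treated directly: there the Sylow $\phi_d$-torus of $\GG$ is central, and one argues (using the numerical facts granted by $\ell\geq 7$) that $\zeta$ sits in a block whose defect group is the Sylow $\ell$-subgroup of $\czent\GG^F=\ccent\GG{[\GG,\GG]}^F$, a central $\ell$-subgroup.

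It remains to identify defect groups and prove bijectivity. Let $D$ be a defect group of $B_\GF(\LL,\zeta)$. By Brauer's first Main Theorem and the subpair formalism (Theorems~\ref{BrThs} and~\ref{subp}), the Brauer morphism $\Br_D$ does not kill the block idempotent, and the Sylow $\phi_d$-theory of the first step localizes $D$ inside $\norm\GF{\bS'}$ for a $\phi_d$-subtorus $\bS'$. A comparison of polynomial orders --- the $\phi_d$-part of $|\GF|$ against the fact that, by $d$-cuspidality and the size of $\ell$, the $\ell$-block of $\LL^F$ containing $\zeta$ has defect group a Sylow $\ell$-subgroup of $\czent\LL^F$ --- then pins down $|D|=|\ccent\GG{[\LL,\LL]}^F|_\ell$, which is assertion~(ii). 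For the bijection: every unipotent block contains a unipotent character (Hiss, Proposition~\ref{Hiss}), and that character lies in a unique $d$-Harish-Chandra series by Theorem~\ref{BMM}(iii), so $(\LL,\zeta)\mapsto B_\GF(\LL,\zeta)$ is onto the set of unipotent blocks; it is one-to-one, and assertion~(i) holds, once one knows that $B_\GF(\LL,\zeta)$ meets no other $d$-Harish-Chandra series --- again a $d^t$-argument, now with $t$ running over $D$, combined with Brauer's second Main Theorem --- so that the unipotent characters of $B_\GF(\LL,\zeta)$ are exactly $\Irr(\GF\mid\Lu\LL\GG\zeta)$ and distinct pairs yield disjoint such sets.

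The hard part is the inductive gluing step: establishing the identity $d^t\circ\Lu\LL\GG=\pm\Lu\LL{\cent\GG t}$ on the characters at hand requires the full strength of the character formula and a Mackey-type control of $\Lu\LL\GG$, and both the $d$-cuspidal base case and the ``no extra unipotent characters'' point are delicate; moreover the whole scheme rests on the case-by-case input behind Theorem~\ref{BMM} and on the arithmetic that the bound $\ell\geq 7$ is designed to guarantee (goodness of $\ell$, non-division of $|W(\GG,\TT)|$, and control of the $\ell$-adic valuation of $\phi_d(q)$ and hence of the defect groups).
\qed
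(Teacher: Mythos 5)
Your overall architecture is the right one and matches the paper's: subpair enlargement via Brauer's second Main Theorem, a Fong--Srinivasan-style commutation between $d^t$ and twisted induction, and the dictionary between $\phi_d$-tori and $\ell$-local structure (Section 7 of the text). But there are several places where the sketch states things that are not true as written and would need genuine repair.

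First, the claim that ``up to a controlled central $\ell'$-correction, every $\ell$-subgroup $Q\le\GF$ lies inside $\bS^F$ for a $\phi_d$-subtorus $\bS$'' is false. The Sylow $\ell$-subgroups of $\GF$ are typically of the form $Z\rtimes N$ where $Z$ is torus-like but $N$ is a non-trivial $\ell$-subgroup of a relative Weyl group (Remark~\ref{B0G} and the discussion around it); $N$ does not embed into any maximal torus and is not a central correction. What the proof actually needs, and what Proposition~\ref{ZellF} supplies, runs in the opposite direction: given a $\phi_d$-torus $\bS$ with Levi $\LL=\cent\GG\bS$, one has $\LL=\ccent\GG{\bS^F_\ell}=\ccent\GG{\Z(\LL)^F_\ell}$ and $\LF=\cent\GF{\Z(\LL)^F_\ell}$. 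This weaker, one-directional statement is the bridge; the converse containment of arbitrary $\ell$-subgroups in tori is neither true nor used.

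Second, the ``identity $d^t\circ\Lu\LL\GG=\pm\Lu\LL{\cent\GG t}$ on the characters at hand'' is not correct as a formula, and it is not what the Fong--Srinivasan observation gives. Proposition~\ref{dx*R} commutes $d^x$ with the \emph{adjoint} functors $\slu\LL\GG$, not with $\Lu\LL\GG$ itself, and passing to $\Lu\LL\GG$ requires taking adjoints of the $d^x$-maps, which are not $d^x$. What the paper actually does (proof of Theorem~\ref{GenInc}) is compute the inner product $\langle d^{z,\GF}\chi,\Lu\LL\HH\zz\rangle_{\HH^F}$, move $d^z$ across $\slu\LL\GG$ using Proposition~\ref{dx*R}, and then invoke the explicit form of $\slu\LL\GG\chi$ from Theorem~\ref{BMM}(ii) to see it is non-zero. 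Your formula would also have to contend with multiplicities and the stabilizer $\norm{\GG}{\LL,\zz}^F$, which that computation handles correctly and the clean sign-formula elides.

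Third, the defect group determination and injectivity require more than a comparison of polynomial orders. The paper constructs an explicit \emph{centric} subpair $(\Z(\LL)^F_\ell,b_\LF(\zz))$ --- centricity resting on the degree fact $\zz(1)_\ell=|\LF/\czent\LL^F|_\ell$ from Corollary~\ref{dpair}(ii) --- and then enlarges it by Theorem~\ref{inctric} to a maximal subpair $(D,b_D)$ with $D$ a Sylow $\ell$-subgroup of $\ccent\GG{[\LL,\LL]}^F$ (Proposition~\ref{DefUn}). Injectivity then follows from conjugacy of maximal subpairs together with Lemma~\ref{LL}, which says that $d$-split Levi subgroups with the same derived subgroup are conjugate. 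This last step is essential and is not captured by ``another $d^t$-argument with $t$ running over $D$'': two unipotent $d$-cuspidal pairs could a priori produce the same block while generating genuinely different series, and it is the rigid geometry of maximal subpairs plus the Sylow $\phi_d$-theorem that rules this out.

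So: the skeleton (induction, Brauer II, Fong--Srinivasan commutation, $\phi_d$-Sylow theory) is the paper's, but the bridging claim in paragraph one is false, the central identity in paragraph two is not the one that holds, and the endgame in paragraph three omits the maximal-subpair construction and the conjugacy lemma that actually deliver the defect group and the bijection.
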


The theorem has many precursors, first of all by Fong-Srinivasan ([FoSr82] and [FoSr89]) who treat all blocks (not just unipotent) for classical groups. Note that it is possible to show that just like unipotent characters are insensitive to the center of the group, unipotent blocks are basically the same for all groups of same type and rank (see [CaEn, \S 17]), so the above could be deduced from Fong-Srinivasan's work in many cases. We have chosen the statement for its simplicity and its relatively straightforward proof sketched in the next section.

The theorem essentially relates the splitting of $\ser{\GF}{1}$ into $\ell$-blocks with the Lusztig functor. More theorems of the same kinds were given by Cabanes-Enguehard (all $\ell$-blocks, $\ell\geq 5$ [CaEn99]), Enguehard (unipotent blocks for all primes [En00]) and recently by Kessar-Malle (all blocks and primes [KeMa15]). Note that given Bonnaf\'e-Dat-Rouquier's theorem showing equivalence of blocks in a very strong sense with blocks of generally smaller groups, the above is interesting only for blocks in $\lser{\GF}{s}$ (see Definition~\ref{BGs}) where $\ccent{\GD}{s}$ can't be embedded in a proper Levi subgroup of $\GD$ ("isolated" series), which brings us close to unipotent blocks.

The statement by Kessar-Malle is as follows. Here $\GG$ is assumed to be an $F$-stable Levi subgroup of a simple simply connected group. One keeps $\ell$ a prime not dividing $q$ and $d$ the order of $q$ mod $\ell$ when $\ell$ is odd, while $d$ is the order of $q$ mod 4 when $\ell=2$. One denotes $\cE(\GG^F,\ell')$ the union of rational series $\cE(\GG^F, s)$ with $s\in \GG^*{}^{F^*}$ semi-simple of order prime to $\ell$.

\begin{thm}[{[KeMa15, Th. A]}]
 \label{thmA}
	\begin{enumerate}[\rm(i)]
		\item For any $d$-Jordan-cuspidal pair $(\LL,\la)$ of $\GG$ such that
		$\la\in\cE(\LL^F,\ell')$, there exists a unique $\ell$-block
		$b_{\GG^F}(\LL,\la)$ of $\GG^F$ such that all irreducible constituents
		of $\Lu\LL\GG(\la)$ lie in $b_{\GG^F}(\LL,\la)$.
		\item The map \begin{equation}\label{Xi}
		\text{  $  (\LL,\la)\mapsto b_{\GG^F}(\LL,\la)  $ }
		\end{equation} is a
		surjection from the set of $\GG^F$-conjugacy classes of $d$-Jordan-cuspidal
		pairs $(\LL,\la)$ of $\GG$ such that $\la \in \cE(\LL^F,\ell')$ to the set
		of $\ell$-blocks of $\GG^F$.
		\item The map (\ref{Xi}) restricts to a surjection from the set of
		$\GG^F$-conjugacy classes of $d$-Jordan quasi-central cuspidal pairs
		$(\LL,\la)$ of $\GG$ such that $\la \in \cE(\LL^F,\ell')$ to the set
		of $\ell$-blocks of $\GG^F$.
		\item For $\ell\ge3$ the map (\ref{Xi}) restricts to a bijection between the set
		of $\GG^F$-conjugacy classes of $d$-Jordan quasi-central cuspidal pairs
		$(\LL,\la)$ of $\GG$ with $\la \in \cE(\LL^F,\ell')$ and the set of
		$\ell$-blocks of $\GG^F$.
		\item The map (\ref{Xi}) is bijective if $\ell\geq 3$ is a good prime for $\GG$,
		and $\ell\ne 3$ if $\GG^F$ has a factor $\tw3D_4(q)$.
	\end{enumerate}
\end{thm}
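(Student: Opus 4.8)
The plan is to reduce Theorem~\ref{thmA} to the classification of unipotent $\ell$-blocks of connected reductive groups, and then to combine Theorem~\ref{CuspBl} (and its extensions to small and bad primes) with a case-by-case analysis. First I would fix a semisimple $\ell'$-element $s\in\GD^{\FD}$ and work inside the union of blocks $\lser{\GF}{s}$ (Theorem~\ref{BM90}). Whenever $\cent{\GD}{s}$ is contained in a proper $\FD$-stable Levi subgroup $\LD$ of $\GD$, Theorems~\ref{LCGs} and~\ref{broIso} together with the Bonnaf\'e--Rouquier theorem yield a Morita equivalence $B_\ell(\LF,s)\text{-}\mo\xrightarrow{\sim}B_\ell(\GF,s)\text{-}\mo$ induced by $\Lu\LL\GG$; iterating this, one reduces to the case where $s$ is quasi-isolated in $\GD$. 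One has to check that this reduction is compatible with the parametrizing data, i.e.\ that the Levi correspondence $\LD\leftrightarrow\LL$ carries $d$-split Levi subgroups to $d$-split Levi subgroups and $d$-Jordan-cuspidal (resp.\ $d$-Jordan quasi-central cuspidal) pairs to pairs of the same type; this follows from transitivity of Lusztig induction and of $\leq_d$ (Theorem~\ref{BMM}(i)) and from the compatibility of Jordan decomposition with $\Lu\LL\GG$.

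Second, for quasi-isolated $s$ I would invoke a Jordan decomposition of characters and $\ell$-blocks: $\lser{\GF}{s}$ is in block-preserving bijection, compatibly with Lusztig induction, with the set of unipotent $\ell$-blocks of the centralizer $\cent{\GD}{s}^{\FD}$. The delicate point is that for quasi-isolated $s$ and bad $\ell$ this centralizer is usually disconnected, so a ``unipotent $\ell$-block'' here must be understood via Clifford theory over $\ccent{\GD}{s}^{\FD}$ and the action of the component group $\cent{\GD}{s}/\ccent{\GD}{s}$; this is precisely where the passage from ``$d$-Jordan-cuspidal'' to ``$d$-Jordan quasi-central cuspidal'' in parts (iii)--(v) becomes necessary. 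After this step the statement is reduced to the following: for a connected reductive $\GG$ and an arbitrary prime $\ell\nmid q$, classify the unipotent $\ell$-blocks of $\GF$ by $d$-cuspidal unipotent pairs.

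Third, for that reduced problem I would follow the strategy of Sections~6--8. Assertions (i) and (ii) amount to showing that $\Irr(\GF\mid\Lu{\LL}{\GG}\zeta)$, which lies in a single rational series by (\ref{Dser}), in fact meets only one $\ell$-block; this combines the generalized $d$-Harish-Chandra theory of Theorem~\ref{BMM}, the parametrization of Theorem~\ref{BMM'}, Brauer's three Main Theorems and the subpair bookkeeping of Section~5.D, after which the defect group is identified with a Sylow $\ell$-subgroup of $\ccent{\GG}{[\LL,\LL]}^F$ by an explicit computation of Brauer morphisms. Surjectivity of (\ref{Xi}) follows because every block lies in some $\lser{\GF}{s}$ and hence corresponds to a unipotent block of a centralizer, for which the parametrization by $d$-cuspidal pairs is known when $\ell$ is good (Cabanes--Enguehard, Theorem~\ref{CuspBl}, together with Enguehard's extension to all primes). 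The remaining assertions (iv) and (v) require determining the exact fibers of (\ref{Xi}), i.e.\ when two $d$-Jordan-cuspidal (resp.\ quasi-central cuspidal) pairs give the same block; for $\ell\geq 3$ one shows the quasi-central refinement collapses the fibers to singletons, and one locates the residual exception ($\ell=2$, or $\ell=3$ with a $\tw3D_4(q)$-factor) by inspecting the decomposition-matrix and $d$-Harish-Chandra data in the offending exceptional and small-rank cases.

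The hardest part, I expect, is this final fiber analysis together with the disconnected-centralizer bookkeeping: for bad $\ell$ there is genuine fusion of distinct $d$-cuspidal unipotent pairs into a single block, and locating (or excluding) it demands detailed knowledge of unipotent blocks in exceptional groups and in small-rank classical groups, as well as control of the interplay between the component group of $\cent{\GD}{s}$, the associated Clifford theory, and the parametrization of Theorem~\ref{BMM'}. The ``quasi-central'' notion is engineered precisely to make (\ref{Xi}) injective in (iv); verifying that it does, and that it cannot be improved at $\ell=2$ or for a $\tw3D_4(q)$-factor at $\ell=3$, is the technical core of the argument.
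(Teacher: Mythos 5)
The survey does not prove this theorem: it is quoted verbatim from Kessar--Malle ([KeMa15, Th.~A]) and the paper only glosses the terminology ($d$-Jordan cuspidal, quasi-central) in the sentence following the statement. So there is no internal proof to compare against; what I can do is assess your sketch against the actual Kessar--Malle argument that the citation points to.

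Your first and third steps are sound and do match the real strategy: the Bonnaf\'e--Rouquier (and Bonnaf\'e--Dat--Rouquier) Morita equivalences reduce the classification problem to blocks lying over \emph{quasi-isolated} semisimple $s$, with the needed compatibility of the Levi correspondence with $d$-split Levis and with $\le_d$; and, once one is in the unipotent (or more generally good-prime) regime, Theorem~\ref{CuspBl} together with Enguehard's extension to all primes supplies the classification, while parts (iv) and (v) ultimately rest on a detailed case-by-case examination of exceptional types and of the $\tw3D_4$, $\ell=3$, and $\ell=2$ phenomena. The observation that the quasi-central refinement is designed exactly so that (\ref{Xi}) becomes injective for $\ell\ge3$ is also correct.

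The genuine gap is your second step. You propose to handle quasi-isolated $s$ by a Jordan decomposition of $\ell$-blocks that puts $\lser{\GF}{s}$ into block-preserving, Lusztig-compatible bijection with the unipotent $\ell$-blocks of $\cent{\GD}{s}^{\FD}$. No such reduction is available precisely in this case: a Lusztig-type Jordan decomposition of blocks in the spirit of Theorem~\ref{broIso} or Theorem~\ref{BDR1.1} requires $\cent{\GD}{s}$ (or $\ccent{\GD}{s}$) to lie in a \emph{proper} $\FD$-stable Levi subgroup of $\GD$, which by definition of quasi-isolated it does not unless $s$ is central. This is not a technicality to be patched by Clifford theory over $\ccent{\GD}{s}^{\FD}$; for bad $\ell$ and quasi-isolated $s$ there is no intermediate group to reduce to, and Kessar--Malle instead attack these blocks head-on with the subpair machinery of Section~7 (Brauer's second Main Theorem, explicit computation of $\Br_Q(\ov e_\ell(\GF,s))$ via Theorem~\ref{Brell} and its generalizations, centric subpairs, identification of defect groups inside $\GF$ itself). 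The disconnectedness of $\cent{\GD}{s}$ enters through the local subpair analysis and the defect-group computations, not as a correction term to a non-existent block-theoretic Jordan decomposition. Your plan as written would stall at exactly the quasi-isolated bad-prime cases that are the entire content of parts (iii)--(v).
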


Here the notion of $d$-Jordan cuspidal character (or pair) is adapted from the unipotent case through Jordan decomposition. Quasi-central means belonging to a block of $\LF$ covering a block of $[\LL ,\LL]^F$ of central defect (see [KeMa15, \S 2]).

%%%%%%%%%%%%%%%%%%%%%%%%%%%%%%%%%%%%%%%%%%%%%%%%%%%%%%%%%%%%%%%%

%%%%%%%%%%%%%%%%%%%%%%%%%%%%%%%%%%%%%%%%%%%%%%%%%%%%%%%%%%%%%%%%

%%%%%%%%%%%%%%%%%%%%%%%%%%%%%%%%%%%%%%%%%%%%%%%%%%%%%%%%%%%%%%%%

{}
\bigskip

{}

\section{Local methods: unipotent blocks and $d$-Harish-Chandra theory}

The proofs of Theorems~\ref{CuspBl}, \ref{thmA} follow the pattern described in \S~5.D above through subpair enlargement and use of Brauer's second main Theorem.

\subsection{The main subpair inclusion}

\begin{lem}[{see [DigneMic, 13.15.(i)]}] \label{NonC}
	If $x\in \GG^F_\pp$ then $\cent{\GG}{x}/\ccent{\GG}{x}$ has exponent dividing the order of $x$ and injects into $\Z(\GD)/\Z^\circ(\GD)$.
\end{lem}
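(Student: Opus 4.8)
The statement is a standard fact about centralizers of semisimple elements in connected reductive groups, and the route I would take relies on the structure of $\cent{\GG}{x}$ together with duality. First I would recall that for $x \in \GG^F_\pp$ semisimple, $\ccent{\GG}{x}$ is a closed connected reductive subgroup of $\GG$ of maximal rank (it contains a maximal torus $\TT$ with $x \in \TT$), and it is normal in $\cent{\GG}{x}$, so the quotient $A(x) \deq \cent{\GG}{x}/\ccent{\GG}{x}$ is a finite group. The key observation is that $\cent{\GG}{x} = \ccent{\GG}{x} \cdot \cent{\TT}{\text{(everything)}}$-type considerations do not quite suffice; instead one uses that every element $g \in \cent{\GG}{x}$ normalizes $\ccent{\GG}{x}$ and hence some maximal torus of it, so modulo $\ccent{\GG}{x}$ we may represent $g$ by an element normalizing a fixed maximal torus $\TT \ni x$. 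This reduces the exponent statement to a computation inside $\norm{\GG}{\TT}/\TT = \WGT$: if $g$ represents $w \in \WGT$ and $g$ centralizes $x$, then $w$ fixes $x$, and I would show $g^{|x|} \in \ccent{\GG}{x}$ by noting that the order of $x$ controls how far $w$ can move things in the relevant sublattice of $X(\TT)$.

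The cleaner and more conceptual approach, which I expect to be the one intended, goes through the dual group. Passing to $\GG^*$, the element $x \in \GG$ corresponds (after choosing a maximal torus through $x$ and the dual torus) to an abstract semisimple class, but it is better to argue as follows: $\ccent{\GG}{x}$ has root system $\Phi_x = \{\al \in \PGT \mid \al(x) = 1\}$ for a suitable maximal torus $\TT \ni x$, while $\cent{\GG}{x}$ is generated by $\ccent{\GG}{x}$ and those Weyl group elements $n_w \in \norm{\GG}{\TT}$ with $w(x) = x$. The component group $A(x)$ is thus $\Stab_{\WGT}(x) / W(\Phi_x)$. Now I would invoke the standard isomorphism (dual to the one relating $\Z(\GG)$ to $X(\TT)/\bbZ\PGT$): the component group of the centralizer of a semisimple element embeds into $\Z(\GD)/\czent{\GD}$, which is the fundamental group of the dual root datum. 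Concretely, $\Stab_{\WGT}(x)/W(\Phi_x)$ injects into $(Y(\TT)/\bbZ\PGT^\vee)_{\text{tor}}$-type data, which is exactly $\Z(\GD)/\czent{\GD}$ as abstract finite abelian groups via the identification of root data under duality.

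For the exponent bound: writing $x$ as an element of the torus $\TT$, any $w$ fixing $x$ and any lift $n_w$ satisfy $n_w^{|x|} \in \TT$ and in fact $n_w^{|x|} \in \ccent{\GG}{x}$, because raising to the $|x|$-th power kills the obstruction living in the torsion of $X(\TT)/(\text{sublattice fixed by } w)$, whose exponent divides $|x|$ precisely because $x$ has order $|x|$. I would spell this out using the pairing between $X(\TT)$ and $Y(\TT)$ and the explicit description of $\TT^F$-type quotients as in Example~\ref{Fmax}. Since the lemma is quoted from [DigneMic, 13.15.(i)], in the survey it would be legitimate simply to cite that reference; but if a self-contained argument is wanted, the main obstacle is getting the duality identification $A(x) \hookrightarrow \Z(\GD)/\czent{\GD}$ cleanly — this requires carefully matching the root datum of $\GG$ with the coroot datum of $\GD$ and checking that the inclusion $\Stab_{\WGT}(x)/W(\Phi_x) \hookrightarrow (\text{fundamental group of } \GD)$ is the natural one, which is where one must be most careful about conventions. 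The exponent statement, by contrast, is an elementary consequence of the torus computation once the structural picture is in place.
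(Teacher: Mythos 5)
The paper gives no proof of Lemma~\ref{NonC} — it is quoted directly from [DigneMic, 13.15.(i)] — so there is no internal argument to compare against; what can be assessed is whether your sketch would actually work as a self-contained proof. The structural part is right: $A(x)\deq\cent\GG x/\ccent\GG x$ does identify with $\Stab_{\WGT}(x)/W(\Phi_x)$ (after conjugating inside $\ccent\GG x$ so that a representative normalizes $\TT$), and the target of the injection is indeed governed by root-datum duality.

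However, your argument for the exponent bound is incorrect as written. The claim that ``any lift $n_w$ satisfies $n_w^{|x|}\in\TT$'' fails already in small examples. Take $\GG=\SL_4/\{\pm1\}$ and $x$ the image of $\diag(1,1,-1,-1)$, so $|x|=2$; then $w=(1\,3\,2\,4)\in\Sym 4$ fixes $x$ (it sends $\diag(1,1,-1,-1)$ to its negative, which is the same element of $\GG$), has order $4$, and $w^{2}=(1\,2)(3\,4)\neq 1$ in $\WGT$. So $n_w^{|x|}=n_w^{2}$ has nontrivial image in $\WGT$ and is not in $\TT$; it does lie in $\ccent\GG x$ (because $(1\,2)(3\,4)\in W(\Phi_x)$), which is the statement you actually need, but your justification — torsion of $X(\TT)$ modulo a fixed sublattice — does not produce it, and you have shifted the lattice to the character side, where the obstruction does not live. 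The correct obstruction sits in $\ker(\pi\colon \GG_\SC\to[\GG,\GG])$, a subquotient of the \emph{cocharacter} lattice.

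Both parts of the lemma fall out at once from the standard construction you did not quite reach: choose $\tilde x\in\GG_\SC$ lifting (the $[\GG,\GG]$-part of) $x$, lift $g\in\cent\GG x$ to $\tilde g\in\GG_\SC$, and set $k(g)\deq[\tilde g,\tilde x]\in K\deq\ker\pi\leq\Z(\GG_\SC)$. This is a well-defined group homomorphism on $\cent\GG x$ independent of the lifts because $K$ is central; its kernel is exactly $\ccent\GG x$ by Steinberg's connectedness theorem applied to $\cent{\GG_\SC}{\tilde x}$; and $k(g)^{|x|}=[\tilde g,\tilde x^{|x|}]=1$ because $\tilde x^{|x|}\in K$ is central. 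Thus $A(x)\hookrightarrow K$ with exponent dividing $|x|$, and the remaining step is purely combinatorial: $K$ is (abstractly) isomorphic to $\Z(\GD)/\czent\GD$ via the duality of root data ($K^\wedge\cong X(\TT_\SC)/X(\TT\cap[\GG,\GG])$ pairs perfectly with $(Y(\TT)/\bbZ\PGT^\vee)_{\mathrm{tor}}\cong X(\Z(\GD))_{\mathrm{tor}}$). I would recommend either citing [DigneMic, 13.15.(i)] as the paper does, or writing out the $[\tilde g,\tilde x]$ argument; trying to prove the exponent bound directly in $\norm\GG\TT$ is possible but requires tracking $W(\Phi_x)$ carefully, and your current write-up does not do so.
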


The following compatibility between generalized decomposition maps $d^x$ and $\slu{\LL}{\GG}$ functors is of crucial importance. It was first spotted by Fong-Srinivasan [FoSr82, (2C)].

\begin{pro}\label{dx*R}
	Let $\PP=\RU(\PP)\LL$ be a Levi decomposition in $\GG$ with $F$-stable $\LL$. Let $\ell$ a prime $\ne p$, let $x\in \LL^F_\ell$. Then $\ccent \GG x$ is an $F$-stable reductive group and $\ccent{\PP}{x}=\ccent{\RU(\PP)}{x}\ccent{\LL}{x}$ the Levi decomposition of a parabolic subgroup. Moreover $$d^{x,\LF}\circ\slu{\LL\inn\PP}{\GG}=\slu{\ccent{\LL}{x}\inn\ccent{\PP}{x}}{\ccent{\GG}{x}}\circ d^{x,\GF}$$ on $\CF(\GF,K)$.
\end{pro}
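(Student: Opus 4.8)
The plan is to dispose of the structural claims first and then to derive the displayed identity by expanding both composites through the character formula for Lusztig restriction and matching them term by term. For the structure: since $\ell\ne p$ the element $x$ is a $\pp$-element of $\GF$, hence semisimple, so $\ccent{\GG}{x}$ is connected reductive (see [DigneMic]) and $F$-stable because $F(x)=x$; writing $\PP=\RU(\PP)\rtimes\LL$ with $x\in\LL$, one has $\cent{\PP}{x}=\cent{\RU(\PP)}{x}\rtimes\cent{\LL}{x}$ with $\cent{\RU(\PP)}{x}$ connected (fixed points of a semisimple automorphism of a connected unipotent group), whence $\ccent{\PP}{x}=\ccent{\RU(\PP)}{x}\rtimes\ccent{\LL}{x}$ is the Levi decomposition of a parabolic subgroup of $\ccent{\GG}{x}$ with Levi complement $\ccent{\LL}{x}$. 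By Lemma~\ref{NonC} the quotients $\cent{\GG}{x}/\ccent{\GG}{x}$ and $\cent{\LL}{x}/\ccent{\LL}{x}$ are $\ell$-groups, so every $\lp$-element of $\cent{\GF}{x}$ (resp.\ $\cent{\LL}{x}^{F}$) already lies in $\ccent{\GG}{x}^{F}$ (resp.\ $\ccent{\LL}{x}^{F}$); as $d^{x}$ produces class functions supported on $\lp$-elements, both sides of the asserted equality are naturally class functions on $\ccent{\LL}{x}^{F}$, and the right-hand side makes sense after restricting $d^{x,\GF}\zeta$ to $\ccent{\GG}{x}^{F}$.

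The identity itself I would check on an arbitrary $\zeta\in\CF(\GF,K)$ and an arbitrary $l\in\ccent{\LL}{x}^{F}$, with Jordan decomposition $l=l_{\pp}l_{p}$. If $l$ is not an $\lp$-element, the left-hand side is $0$ by the definition of $d^{x}$; and the right-hand side is $0$ too, since $l_{\pp}$ then has non-trivial $\ell$-part, so every value of $d^{x,\GF}\zeta$ occurring in the character formula for $\slu{\ccent{\LL}{x}}{\ccent{\GG}{x}}(d^{x,\GF}\zeta)(l)$ is taken at some $l_{\pp}v$ with $v$ a $p$-element, an element again of non-trivial $\ell$-part, hence outside $\cent{\GF}{x}_{\lp}$. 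If $l$ is an $\lp$-element, then $x$ and $l_{\pp}$ have coprime orders and commute, $s\deq xl_{\pp}$ is semisimple with $\langle s\rangle=\langle x\rangle\times\langle l_{\pp}\rangle$, and the connected centralizers iterate: $\ccent{\GG}{s}=\ccent{\ccent{\GG}{x}}{l_{\pp}}$, $\ccent{\LL}{s}=\ccent{\ccent{\LL}{x}}{l_{\pp}}$ and $\ccent{\PP}{s}=\ccent{\ccent{\PP}{x}}{l_{\pp}}$. Since $l$ commutes with $x$ and $xl=s\,l_{p}$ is the Jordan decomposition of $xl\in\LF$, the left-hand side equals $\slu{\LL}{\GG}(\zeta)(xl)$, which the character formula for $\slu{\LL}{\GG}$ (the adjoint of $\Lu{\LL}{\GG}$, dual to Proposition~\ref{chfla}; see [DigneMic, Ch.\ 12]) expresses as a combination $\sum_{v}q_{v}\,\zeta(sv)$ over unipotent $v\in\ccent{\GG}{s}^{F}$, the $q_{v}$ depending only on $v$, $l_{p}$ and the triple $(\ccent{\GG}{s},\ccent{\PP}{s},\ccent{\LL}{s})$. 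The same formula applied inside $\ccent{\GG}{x}$ expresses $\slu{\ccent{\LL}{x}}{\ccent{\GG}{x}}(d^{x,\GF}\zeta)(l)$ as $\sum_{v}q'_{v}\,(d^{x,\GF}\zeta)(l_{\pp}v)$ over unipotent $v\in\ccent{\ccent{\GG}{x}}{l_{\pp}}^{F}$, with $q'_{v}$ built from the triple $(\ccent{\ccent{\GG}{x}}{l_{\pp}},\ccent{\ccent{\PP}{x}}{l_{\pp}},\ccent{\ccent{\LL}{x}}{l_{\pp}})$. By the centralizer identifications the index sets coincide and $q_{v}=q'_{v}$; and since $l_{\pp}$ and $v$ are commuting $\lp$-elements centralizing $x$, the product $l_{\pp}v$ lies in $\cent{\GF}{x}_{\lp}$, so $(d^{x,\GF}\zeta)(l_{\pp}v)=\zeta(x\,l_{\pp}v)=\zeta(sv)$, and the two combinations agree term by term.

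The two genuine inputs are thus the character formula for $\slu{\LL}{\GG}$ dual to Proposition~\ref{chfla}, and the fact that connected centralizers iterate, i.e.\ $\ccent{\GG}{xl_{\pp}}=\ccent{\ccent{\GG}{x}}{l_{\pp}}$ with its parabolic refinement $\ccent{\PP}{xl_{\pp}}=\ccent{\ccent{\PP}{x}}{l_{\pp}}$ — this last being what forces the two ambient parabolics, and hence the two families $q_{v}$, $q'_{v}$, to be literally the same; I expect it to be the most delicate bookkeeping step, though it is standard and rests only on the coprimality of the orders of $x$ and $l_{\pp}$, which holds precisely where $d^{x}$ is supported, so the case analysis and the structure theory fit together without friction. (One could instead argue cohomologically, via the identification of the fixed locus of the left translation by $x$ on $\YY_{\PP}$ with the Deligne--Lusztig variety $\YY_{\ccent{\PP}{x}}$ equivariantly for the residual actions, together with the Lefschetz fixed-point formula; the character-formula route keeps the bookkeeping lighter.)
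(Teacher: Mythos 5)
Your argument is correct and is exactly the paper's route made explicit: the paper disposes of the structural part as ``classic,'' invokes Lemma~\ref{NonC} to make the composite well-defined, and then says the identity ``is an easy consequence of the character formula'' --- precisely the term-by-term matching through the connected-centralizer identifications $\ccent{\GG}{xl_{\pp}}=\ccent{\ccent{\GG}{x}}{l_{\pp}}$ (with the $\PP$ and $\LL$ analogues) that you carry out. No gap.
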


\begin{proof}
  The group theoretic part of the proposition is classic and was already used in our statement of the character formula (Proposition~\ref{chfla}). The composition $\slu{\ccent{\LL}{x}\inn\ccent{\PP}{x}}{\ccent{\GG}{x}}\circ d^{x,\GF}$ makes sense thanks to the inclusion $\cent{\GG}{x}^F_\lp\inn {\ccent{\GG}{x}}^F$ ensured by Lemma~\ref{NonC}. The formula itself is an easy consequence of the character formula.
\end{proof}
Though we will apply this property mainly to unipotent blocks, it is fundamental to the proof of a theorem of Brou\'e-Michel on general sums of blocks $e_\ell (\GF,s)$ (see \S~4.C, Definition~\ref{BGs}).

We keep $\GG$, $F$ as before and $\ell$ some prime $\not= p$.

\begin{thm}[Brou\'e-Michel {[BrMi89]}]\label{Brell} Let $s\in (\GD)^{F^*}_\lp$ a semi-simple element and $e_\ell(\GF,s)$ the central idempotent of $\OO\GF$ associated (see Definition~\ref{BGs}). Denote $\ov e_\ell(\GF,s)$ its image in $k\GF$. Let $x\in\GG^F_\ell$. Then $$\Br_x(\ov e_\ell(\GF,s))=\sum_{t\ \mid\ i_x(t)=s}\ov e_\ell(\ccent{\GG}{x}^F,t),$$ where $i_x$ is a map associating conjugacy classes of semi-simple elements of $\GG^*{}^{F^*}$ to conjugacy classes  of semi-simple elements of $\ccent{\GD}{x}^*{}^{F^*}$ through pairs $(\TT^* ,t)\to (\TT ,\th)\to (\TT_1^*,s)$ using (\ref{dualT}) above.
\end{thm}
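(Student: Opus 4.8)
\noindent{\it About the proof.} The plan is to decide, for each $\ell$-block $b'$ of $\cent{\GG^F}{x}$, whether $\ov b'$ divides each of the two sides, and then to match the answers. Both sides lie in $\zent{k\cent{\GG^F}{x}}$: for the right-hand side one uses that conjugation by $\cent{\GG^F}{x}$ permutes the idempotents $\ov e_\ell(\ccent{\GG}{x}^F,t)$ while preserving the fibre $\{t\mid i_x(t)=s\}$, together with Lemma~\ref{NonC} to pass between $\ccent{\GG}{x}^F$ and $\cent{\GG^F}{x}$ --- in the case where $\cent\GG x$ is connected (e.g. $\zent\GG$ connected) one has $\cent\GG x=\ccent\GG x$ and this point evaporates. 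Being idempotents, both sides are sums of block idempotents of $k\cent{\GG^F}{x}$. By the uniqueness statement of \Th{subp} applied to $\{1\}\leq\spann<x>$, and by the very definition of the subpair order (Definition~\ref{AlBr}), one has $\ov b'\mid\Br_x(\ov e_\ell(\GF,s))$ if and only if the unique block $b$ of $\GF$ with $(\{1\},b)\leq(\spann<x>,b')$ satisfies $\Irr(b)\inn\lser\GF s$ (recall $\lser\GF s$ is a union of $\ell$-blocks by \Th{BM90}, and the supports of the $\Br_x(\ov b)$ for distinct $b$ are disjoint by \Th{subp}). By Brauer's second Main Theorem (\Th{Br2nd}) this happens exactly when some $\chi\in\lser\GF s$ has $d^x(\chi)$ with non-zero projection on $\CF(\cent{\GG^F}{x}\mid b')$. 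Thus the statement reduces to locating the class functions $d^x(\chi)$, $\chi\in\lser\GF s$, inside the block decomposition of $\CF(\cent{\GG^F}{x})$.

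This is the crux, and I would run it parallel to the last part of the proof of \Th{BM90}. Since the Deligne--Lusztig characters $\Lu{\TT}{\GG}\th$, with $(\TT,\th)$ dual to some $(\TT^*,s')$ having $s'_\lp$ conjugate to $s$, span $\bbC\,\lser\GF s$, it suffices to compute $d^x\big(\Lu{\TT}{\GG}\th\big)$. For this I would use the character formula (Proposition~\ref{chfla}) together with Proposition~\ref{dx*R}, which transports $d^x$ through Lusztig restriction: combined, they express $d^x\big(\Lu{\TT}{\GG}\th\big)$ through Lusztig induction inside the reductive group $\ccent{\GG}{x}$, and the duality chain $(\TT^*,t)\to(\TT,\th)\to(\TT_1^*,s)$ defining $i_x$ is exactly the bookkeeping matching the two parametrizations. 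Tracking $\lp$-parts of semisimple elements through that chain --- the same computation as in the final paragraph of the proof of \Th{BM90}, now performed inside $\ccent{\GG}{x}$ --- one gets $d^x\big(\Lu{\TT}{\GG}\th\big)\in\bigoplus_{i_x(t)=s}\CF\big(\ccent{\GG}{x}^F\mid\lser{\ccent{\GG}{x}^F}{t}\big)$. Granting this, and applying \Th{BM90} inside $\ccent{\GG}{x}$ so that each $\lser{\ccent{\GG}{x}^F}{t}$ is a union of $\ell$-blocks, the reduction above yields the inclusion $\Br_x(\ov e_\ell(\GF,s))\leq\sum_{i_x(t)=s}\ov e_\ell(\ccent{\GG}{x}^F,t)$.

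The opposite inclusion is then automatic. Summing the inclusion just obtained over representatives $s$ of the semisimple $\lp$-classes of $\GD^{F^*}$, the left-hand sides total $\Br_x\big(\sum_s\ov e_\ell(\GF,s)\big)=\Br_x(1)=1$ (the $\lser\GF s$ partitioning $\Irr(\GF)$), and the right-hand sides total $1$ as well (the fibres of $i_x$ partitioning the semisimple $\lp$-classes of $\ccent{\GG}{x}^{*F^*}$); since $\{\Br_x(\ov e_\ell(\GF,s))\}_s$ and $\{\sum_{i_x(t)=s}\ov e_\ell(\ccent{\GG}{x}^F,t)\}_s$ are each complete systems of pairwise orthogonal idempotents, and the former is term-by-term dominated by the latter, each individual inclusion must be an equality. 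The hard part will be the membership $d^x\big(\Lu{\TT}{\GG}\th\big)\in\bigoplus_{i_x(t)=s}\CF(\ccent{\GG}{x}^F\mid\lser{\ccent{\GG}{x}^F}{t})$: it demands a careful marriage of the character formula with Proposition~\ref{dx*R} and tight control of the $\lp$-parts of semisimple elements through the duality correspondence that defines $i_x$. A second, more technical, hurdle is the possible disconnectedness of $\cent\GG x$, which I would reduce to the connected case via Lemma~\ref{NonC} and ordinary Clifford theory relating $\ccent{\GG}{x}^F$ to $\cent{\GG^F}{x}$.
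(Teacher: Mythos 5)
Your proposal takes a genuinely different route from the paper's, though the engine is the same. The paper translates the idempotent equality directly into an equality of class functions $(d^x\circ P_s^{(\GF)})(\gamma_{\GF.x})= \sum_{t\ \mid\ i_x(t)=s}P_t^{(\ccent{\GG}{x}^F)}(\gamma_{1})$ (where $\gamma_{\GF.x}$ is the class indicator function of $x$), exploits that $\gamma_{\GF.x}$ is \emph{uniform} and that $\gamma_1=d^x(\gamma_{\GF.x})$ to reduce to a single operator identity $d^{x,\GF}\circ\Lu{\TT}{\GG}=|\ccent{\GG}{x}^F|^\mm \sum_{h\ \mid\ x\in {}^h\TT}\Lu{^h\TT}{\ccent{\GG}{x}}\circ d^{x,^h\TT^F} \circ \ad_h$, and derives this directly by taking adjoints in Proposition~\ref{dx*R}. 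You instead go block by block through subpairs of $\cent{\GF}x$, reduce to a membership statement about $d^x$ applied to the whole of $\lser\GF s$, and supply equality via a counting argument over all fibres of $i_x$. Your subpair reduction is more formal and transparent about \emph{why} the statement is an equivalent reformulation, at the cost of needing more Brauer-theoretic input; the paper's route is more computational but goes straight to the Deligne--Lusztig character formula without touching block idempotents again after the first reduction. The ``hard part'' you flag --- $d^x(\Lu\TT\GG\th)\in\bigoplus_{i_x(t)=s}\CF(\ccent{\GG}{x}^F\mid\lser{\ccent{\GG}{x}^F}{t})$ --- is precisely what the paper's displayed adjoint formula delivers in one stroke, so the two proposals converge at that point.

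There is one genuine gap to flag. In the step ``By Brauer's second Main Theorem (\Th{Br2nd}) this happens exactly when some $\chi\in\lser\GF s$ has $d^x(\chi)$ with non-zero projection on $\CF(\cent{\GG^F}{x}\mid b')$,'' the ``exactly when'' is not what \Th{Br2nd} as stated gives. That theorem gives only the implication that a non-zero $b'$-component of $d^x(\chi)$ forces the subpair inclusion; you also need the converse --- that whenever $(\{1\},b)\leq(\spann<x>,b')$, some $\chi\in\Irr(b)$ has a non-zero $b'$-component in $d^x(\chi)$ (equivalently, the generalized decomposition matrix block indexed by $\Irr(b)\times\IBr(b')$ is non-zero). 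This is indeed a standard fact (it follows, e.g., from the invertibility of the full generalized decomposition matrix, or from the block-diagonal structure of the Cartan matrix), but it is strictly stronger than the paper's \Th{Br2nd} and should be invoked explicitly. The counting argument at the end does not dispense with this: you still need the converse to establish the one-sided domination $\Br_x(\ov e_\ell(\GF,s))\leq\sum_{i_x(t)=s}\ov e_\ell(\ccent{\GG}{x}^F,t)$ for each $s$ individually, and only after that can summing over $s$ force equality. The paper's reformulation through $\gamma_{\GF.x}$ and $\gamma_1$ sidesteps the issue because it never has to certify which blocks of $\cent\GF x$ appear on each side --- it verifies an equality of central functions outright.
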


\begin{proof} Through Brauer's second Main Theorem it is easy to see that the main statement is equivalent to checking that 
	\begin{equation}\label{dxP}
	\text{  $  (d^x\circ P_s^{(\GF)})(\gamma_{\GF.x})= \sum_{t\ \mid\ i_x(t)=s}P_t^{(\ccent{\GG}{x}^F)}(\gamma_{1}) $ }
	\end{equation}
	where $ P_s^{(\GF)}\co \CF(\GF)\to \CF(\GF,B_\ell(\GF,s))$ is the projection and $\gamma_{\GF.x}$ is the function being 1 on the conjugacy class of $x$ and 0 elsewhere. One has $\gamma_{1}=d^x(\gamma_{\GF.x})$ and $\gamma_{\GF.x}$ is uniform (apply Lemma~\ref{omni}.(i)), so it is easy to reduce (\ref{dxP}) to the following 	\begin{equation}\label{dxRTG}
	\text{  $  d^{x,\GF}\circ\Lu{\TT}{\GG}=|\ccent{\GG}{x}^F|^\mm \sum_{h\in\GF\ \mid\ x\in {}^h\TT}\Lu{^h\TT}{\ccent{\GG}{x}}\circ d^{x,^h\TT^F} \circ \ad_h$ }
	\end{equation} where $\ad_h$ is the conjugation by $h$ of central functions. This in turn can be deduced from Proposition~\ref{dx*R} by taking adjoints. Note that, for $x$ an $\ell$-element of a finite group $H$, the adjoint of $d^x\colon\CF(H,K)\to \CF(\cent Hx,K)$ is the map sending the central function $f\colon \cent Hx\to K$ to $f'\colon H\to K$ defined by $$f'(h)=|\cent Hx|^\mm\sum_{v\in H\ \mid\ h_\ell = vxv^\mm}f(x^\mm h^v).$$
\end{proof}

Now for unipotent blocks and with the aim of proving \Th{CuspBl}, the main step is achieved by the following

\begin{thm}\label{GenInc} Let $\LL$ be an $F$-stable Levi subgroup of $\GG$ and $\zeta\in\ser{\LF}{1}$. Assume \begin{enumerate}[\rm(a)]
		\item $\LF=\cent{\GG}{\Z(\LL)_\ell^F}^F$, and
		\item for all $A\leq \Z(\LL)_\ell^F$ and $\chi_A$ an irreducible component of $\Lu{\LL}{\ccent{\GG}{A}}\zeta$, denoting $\HH={\ccent{\GG}{A}}$, one has $$\slu{\LL}{\HH}\chi_A =\langle\slu{\LL}{\HH}\chi_A ,\zeta\rangle_\LF\sum_{g\in\norm\HH\LL^F/\norm{\HH}{\LL,\zz}^F}{}^g\zeta.$$
	\end{enumerate}Then for any irreducible component $\chi$ of $\Lu{\LL}{\GG}\zeta$ one has an inclusion of $\ell$-subpairs $$(\{1\},b_\GF(\chi))\leq (\Z(\LL)_\ell^F,b_\LF(\zz)).$$
\end{thm}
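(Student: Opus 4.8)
The plan is to follow the subpair enlargement scheme of \S~5.D: beginning with the given $\chi$, build an increasing chain of $\ell$-subpairs of $\GF$ whose top term is $(\Z(\LL)_\ell^F,b_\LF(\zz))$, each single enlargement being justified by Brauer's second Main Theorem (Theorem~\ref{Br2nd}), and the requisite values of generalized decomposition maps being supplied by their compatibility with Lusztig functors (Proposition~\ref{dx*R}) together with hypotheses~(a) and~(b). Write $A\deq\Z(\LL)_\ell^F$. Hypothesis~(a) gives $\cent\GF A=\LF=\ccent\GG A^F$, so the chain must terminate at $\LF$, where $\Lu\LL\LL\zz=\zz$ leaves $\zz$ as the only available unipotent character, as desired. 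Fix a chain of subgroups $1=A_0\leq A_1\leq\cdots\leq A_r=A$ of the abelian $\ell$-group $A$ with each quotient $A_i/A_{i-1}$ of order $\ell$, choose $x_i\in A_i$ whose image generates $A_i/A_{i-1}$, and set $\HH_i\deq\ccent\GG{A_i}$, so $\HH_0=\GG$, $\HH_r=\LL$, $\LL\leq\HH_i$, and $\HH_i=\ccent{\HH_{i-1}}{x_i}$. By transitivity of the order on subpairs and its compatibility with passage to centralizers, it suffices to produce for each $i$ an irreducible constituent $\chi_i$ of $\Lu\LL{\HH_i}\zz$, regarded as a character of $\cent\GF{A_i}$ (which contains $\HH_i^F$, the quotient being an $\ell$-group by Lemma~\ref{NonC}), subject to $\chi_0=\chi$, $\chi_r=\zz$, and such that
$$d^{x_i}(\chi_{i-1})\text{ has non-zero projection onto }\CF\bigl(\cent\GF{A_i}\mid b_{\cent\GF{A_i}}(\chi_i)\bigr);$$
Theorem~\ref{Br2nd}, applied inside $\cent\GF{A_{i-1}}$ to the element $x_i$, then yields the inclusion $(A_{i-1},b_{\cent\GF{A_{i-1}}}(\chi_{i-1}))\leq(A_i,b_{\cent\GF{A_i}}(\chi_i))$ of subpairs of $\GF$.

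To construct the $\chi_i$ I would work \emph{downwards} from $\chi_0=\chi$. Suppose $\chi_{i-1}$, an irreducible constituent of $\Lu\LL{\HH_{i-1}}\zz$, has been chosen; put $x\deq x_i$, which lies in $\Z(\LL)^F$, hence is central in $\LF$ and satisfies $\ccent\LL x=\LL$. Proposition~\ref{dx*R}, applied inside $\HH_{i-1}$ with the Levi $\LL$ and the $\ell$-element $x$, gives $d^{x}\circ\slu\LL{\HH_{i-1}}=\slu\LL{\HH_i}\circ d^{x}$, so $\slu\LL{\HH_i}(d^{x}\chi_{i-1})=d^{x}(\slu\LL{\HH_{i-1}}\chi_{i-1})$. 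Hypothesis~(b), applied to the subgroup $A_{i-1}\leq\Z(\LL)_\ell^F$ and the component $\chi_{i-1}$, shows $\slu\LL{\HH_{i-1}}\chi_{i-1}$ is a non-zero integer multiple of $\sum_{g}{}^g\zz$, the sum running over $\norm{\HH_{i-1}}{\LL}^F/\norm{\HH_{i-1}}{\LL,\zz}^F$; and since $x$ is central in $\LF$ it acts on $\zz$ by a root of unity, so $d^{x}\zz$ is a non-zero multiple of the restriction of $\zz$ to $\ell'$-elements. Pairing $d^{x}(\slu\LL{\HH_{i-1}}\chi_{i-1})=\slu\LL{\HH_i}(d^{x}\chi_{i-1})$ against $d^{x}\zz$ one finds that $d^{x}(\chi_{i-1})$ has an irreducible constituent $\eta$ which is a component of $\Lu\LL{\HH_i}\zz$, i.e.\ $\eta\in\Irr(\HH_i^F\mid\Lu\LL{\HH_i}\zz)$ (equivalently $(\LL,\zz)\leq_d(\HH_i,\eta)$). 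Let $\chi_i$ be the character of $\cent\GF{A_i}$ lying over $\eta$ — well defined because $\cent\GF{A_i}/\HH_i^F$ is an $\ell$-group by Lemma~\ref{NonC} — so that $b_{\cent\GF{A_i}}(\chi_i)$ is the block covering that of $\eta$ and $d^{x}(\chi_{i-1})$ meets it; at $i=r$ one necessarily recovers $\chi_r=\zz$ since $\HH_r=\LL$ and $\Lu\LL\LL\zz=\zz$. Here the coherence of $d$-Harish-Chandra series (Theorem~\ref{BMM}) is what ensures that the blocks $b_{\cent\GF{A_i}}(\chi_i)$ are governed by $(\LL,\zz)$ alone and that the successive choices fit together.

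The hard part is the nonvanishing: after applying $d^{x}$ one must show the arithmetic quantity $\langle d^{x}(\sum_{g}{}^g\zz),\,d^{x}\zz\rangle$ is non-zero. Its diagonal term $\langle d^{x}\zz,d^{x}\zz\rangle$ is positive because $\zz(1)\ne0$ and $1$ is an $\ell'$-element, but one must rule out cancellation by the conjugates ${}^g\zz\ne\zz$, and this is precisely where hypothesis~(b) (which pins the components of $\slu\LL{\HH_{i-1}}\chi_{i-1}$ down to exactly the conjugates of $\zz$ occurring there) and the explicit behaviour of decomposition maps on unipotent characters must be brought to bear. Two further, more clerical, points also require attention: that $\cent\GF{A_i}$ may properly contain $\HH_i^F=\ccent\GG{A_i}^F$, so that $\eta$ and its block have to be transported from $\HH_i^F$ to $\cent\GF{A_i}$ (Lemma~\ref{NonC} keeps the discrepancy an $\ell$-group, which makes the transport unambiguous); and that the one-step inclusions obtained inside the groups $\cent\GF{A_{i-1}}$ reassemble, via the compatibility of the subpair order with passing to centralizers, into a chain of $\GF$-subpairs ending at $(\Z(\LL)_\ell^F,b_\LF(\zz))$ itself and not merely at a $\GF$-conjugate of it.
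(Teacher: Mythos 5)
Your framework is essentially the right one — Brauer's second Main Theorem driven by Proposition~\ref{dx*R} and hypothesis~(b), assembled by transitivity into a chain of subpair inclusions terminating at $(\Z(\LL)_\ell^F,b_\LF(\zz))$ — and you correctly identified the one genuinely non-trivial step. The paper takes a slightly cleaner organizational route: instead of fixing an explicit composition series of $A=\Z(\LL)_\ell^F$ and running $r$ enlargements, it uses a single induction on $|\GF/\LF|$, picking one $z\in\Z(\LL)^F_\ell$ non-central in $\GF$ (such $z$ exists by~(a) when $\LF\ne\GF$), passing to $\HH\deq\ccent\GG z\supsetneq\LL$, and invoking the induction hypothesis inside $\HH$. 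Your chain formulation works too, modulo the cosmetic issue that some links may be trivial (when $x_i$ lands in $\Z(\GG)$); both organizations ultimately reduce to the same pointwise estimate.

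But you have left precisely that estimate as a gap. You correctly flag that the scalar product $\bigl\langle d^{x}\bigl(\sum_g{}^g\zz\bigr),\,d^{x}\zz\bigr\rangle_\LF$ is a sum whose individual off-diagonal terms $\langle d^{x}({}^g\zz),d^{x}\zz\rangle$ could in principle have either sign, so positivity of the diagonal term alone proves nothing — and then you stop, saying ``this is precisely where hypothesis~(b) and the explicit behaviour of decomposition maps\ldots{} must be brought to bear.'' That is a statement of the difficulty, not a resolution of it, and hypothesis~(b) alone does not give it to you: (b) pins down which constituents appear, not the sign of the resulting pairing. The paper closes the gap with a short but essential argument: since $x\in\Z(\LL)^F$ lies in the kernel of every unipotent character of $\LF$, one has $d^{x,\LF}\zz'=d^{1,\LF}\zz'$ for every conjugate $\zz'$, so the quantity you need is
\[
\sum_{\zz'\in\norm\GG\LL^F\!.\,\zz}\langle d^{1,\LF}\zz',d^{1,\LF}\zz\rangle_\LF
\;=\;|\norm\GG\LL^F\!.\,\zz|^{-1}\,\langle f,f\rangle_\LF,
\]
where $f\deq\sum_{\zz'}d^{1,\LF}\zz'$ is $\norm\GG\LL^F$-invariant (which is what collapses the sum) and satisfies $f(y^{-1})=\overline{f(y)}$, so $\langle f,f\rangle_\LF$ is the squared norm of a visibly non-zero central function (its value at $1$ is $|\norm\GG\LL^F\!.\,\zz|\cdot\zz(1)$). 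This is the entire content of the non-vanishing, and without it your proposed proof does not close. You should also be explicit, when quoting Theorem~\ref{Br2nd} inside $\cent{\GF}{A_{i-1}}$, about the standard but non-trivial compatibility that promotes the resulting subpair inclusion in $\cent{\GF}{A_{i-1}}$ to an inclusion of $\ell$-subpairs of $\GF$ itself (the paper tacitly uses the same fact when moving from $\HH^F$ to $\GF$); you gesture at it at the end, but it does real work and merits a reference.
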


\begin{proof} 
	One uses an induction on $|\GF/\LF|$. Everything is clear when $\GF =\LF$.
	
	Assume $\LF\ne\GF$, so that, thanks to (a), one can pick $z\in \Z(\LL)^F_\ell$ non central in $\GF$. Let $\HH\deq\ccent{\GG}{z}\nni\LL$. Let us show\begin{equation}\label{non0}
	\text{  $  \langle d^{z,\GF}\chi , \Lu{\LL}{\HH}\zz\rangle_{\HH^F}\ne 0.  $ }
	\end{equation}
	One has indeed $d^{z,\GF}\chi\in\CF(\HH^F,K)$ since $\cent{\GG}{z}^F_\lp\inn \HH^F$ thanks to Lemma~\ref{NonC}. We have 
	\begin{eqnarray*}
	 \langle d^{z,\GF}\chi , \Lu{\LL}{\HH}\zz\rangle_{\HH^F}&=& \langle  d^{z,\LF} \slu{\LL}{\GG}\chi ,\zz\rangle_{\LL^F} \text{ by Proposition~\ref{dx*R}} \\
		&=&\langle \Lu{\LL}{\GG}\zz,\chi\rangle_\GF\sum_{\zz'\in\norm{\GG}{\LL}^F.\zz}\langle  d^{z,\LF}\zz',\zz\rangle_\LF \text{  \  by (b) with }A=\{1\}  \\
		&=&\langle \Lu{\LL}{\GG}\zz,\chi\rangle_\GF\sum_{\zz'\in\norm{\GG}{\LL}^F.\zz}\langle d^{1,\LF}\zz',\zz\rangle_\LF  \text{ since  %$\zz$ is unipotent and
			 $z\in\Z(\LL)^F\leq\ker(\zeta)$} \\
		&=& \langle \Lu{\LL}{\GG}\zz,\chi\rangle_\GF\sum_{\zz'\in\norm{\GG}{\LL}^F.\zz}\langle  d^{1,\LF}\zz',d^{1,\LF}\zz\rangle_\LF \\
		&=&\langle \Lu{\LL}{\GG}\zz,\chi\rangle_\GF|\norm{\GG}{\LL}^F.\zz|^\mm \langle  f,f\rangle_\LF    \end{eqnarray*} $ \text{ for  } f:=\sum_{\zz'\in\norm{\GG}{\LL}^F.\zz}d^{1,\LF}\zz'$. But 
$f\in\CF(\LF,K)$ is clearly a central function such that $f(x^\mm)$ is the complex conjugate of $f(x)$ for any $x\in\LF$ and $f\not=0$ by the value at 1. So $\langle  f,f\rangle_\LF\ne0$ and we get (\ref{non0}) from the above. 

Now (\ref{non0}) implies that there is an irreducible component $\chi _\HH$ of $\Lu{\LL}{\HH}\zz$ such that $d^{z,\GF}\chi$ has a non-zero projection on $\CF(\HH^F\mid b_{\HH^F}(\chi_\HH))$.

One may apply the induction hypothesis to $\HH, \LL,\zz$ replacing $\GG,\LL,\zz$ since (a) and (b) are clearly satisfied there. The fact that $\chi _\HH$ is a component of $\Lu{\LL}{\HH}\zz$ implies the subpair inclusion $$(\{1\},b_{\HH^F}(\chi_\HH))\leq (Z,b_\LF(\zz)) \text{ in }  \HH^F$$ where we abbreviate $Z=\Z(\LL)^F_\ell$. Assume $\HH^F=\cent{\GF}{z}$. Then it is easy to deduce from the above the subpair inclusion 
\begin{equation}\label{zZ}
\text{  $ (\spann <z>,b_{\HH^F}(\chi_\HH))\leq (Z,b_\LF(\zz))   $\ \ \ \ \ in\ \ \ $\GF$.}
\end{equation}
	On the other hand the fact that $d^{z,\GF}\chi$ has a non-zero projection on $\CF(\HH^F\mid b_{\HH^F}(\chi_\HH))$ implies that we have \begin{equation}\label{1z}
	\text{  $(\{1\},b_\GF(\chi))\leq (\spann <z>,b_{\HH^F}(\chi_\HH))  $\ \ \ \ \ in\ \ \ $\GF$}
	\end{equation} thanks to Brauer's second Main Theorem. We then get our claim from (\ref{zZ}) and (\ref{1z}) by transitivity of subpair inclusion.
	
	We have assumed for simplification that $\HH^F=\cent{\GF}{z}$. In general we only have $\HH^F\lhd\cent{\GF}{z}$ with index a power of $\ell$ thanks to Lemma~\ref{NonC}. Then it is easy to define the unique block $b'$ of $\cent{\GF}{z}$ covering $b_{\HH^F}(\chi_\HH)$ and prove the analogues of (\ref{zZ}) and (\ref{1z}) with it.
\end{proof}

\subsection{$\phi_d$-tori and $\ell$-subgroups}

We keep $\GG$, $F$ as before over $\bbF_q$, and $\ell$ a prime $\nmid q$. We also assume now that $\ell\geq 7$.

Note that $\ell$ divides $\phi_m(q)$ if and only if $m_\lp =d$ (see for instance [Serre, \S~II.3.2]).
\begin{pro}\label{ZellF}
Assume $\ell$ divides $\phi_m(q)$ but neither $|\Z(\GG)^F/\Z^\circ (\GG)^F|$ nor\break $|\Z(\GG^*)^F/\Z^\circ (\GG^*)^F|$. Let $\bS$ be a $\phi_m$-torus (see Definition~\ref{dsplit}), $\LL\deq\cent{\GG}{\bS}$, $Z\deq \Z(\LL)$. Then \begin{enumerate}[\rm(i)]
	\item $\LL=\ccent{\GG}{\bS_\ell^F}=\ccent{\GG}{Z_\ell^F}$ and
	\item $\LF=\ccent{\GG}{Z_\ell^F}^F=\cent{\GF}{Z_\ell^F}.$
\end{enumerate}
\end{pro}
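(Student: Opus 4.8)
The plan is to exploit the interplay between the polynomial order of tori and the prime $\ell$, using the hypothesis $\ell\geq 7$ to guarantee that $\ell$-local structure matches the $\phi_m$-structure. First I would recall the basic dictionary: for an $F$-stable torus $\bS$, the $\ell$-part of $|\bS^{F}|$ is governed by the factor $\phi_d^{n_d}$ appearing in $P_{\bS,F}$ (with $d=m_{\ell'}$), and since $\ell\geq 7$ one has $\ell^2\nmid\phi_d(q)$ is not needed, but the key fact is that $\phi_d(q)\equiv 0\pmod\ell$ while $\phi_e(q)\not\equiv 0\pmod\ell$ for $e$ with $e_{\ell'}\neq d$. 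Consequently, for a $\phi_m$-torus $\bS$ (polynomial order a power of $\phi_m$), the subgroup $\bS_\ell^F$ is ``cofinal'' in $\bS$ in the sense that $\ccent{\GG}{\bS_\ell^F}=\ccent{\GG}{\bS}=\cent{\GG}{\bS}=\LL$ (the last equality because a centralizer of a torus is connected). The inclusion $\bS\leq Z=\Z(\LL)$ is automatic since $\bS$ is central in its own centralizer, so $\ccent{\GG}{Z_\ell^F}\leq\ccent{\GG}{\bS_\ell^F}=\LL$; conversely $\LL$ centralizes $Z$ hence $Z_\ell^F$, giving $\LL\leq\ccent{\GG}{Z_\ell^F}$. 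This yields~(i). The point where $\ell\geq 7$ (rather than just $\ell$ large) enters is in ensuring that no small-torsion pathology in $Z(\LL)/Z^\circ(\LL)$ or in the component groups interferes; here the hypothesis that $\ell$ divides neither $|\Z(\GG)^F/\Z^\circ(\GG)^F|$ nor $|\Z(\GG^*)^F/\Z^\circ(\GG^*)^F|$ is used to transfer this to the Levi $\LL$, via the standard fact (see [DigneMic] or [CaEn]) that these component-group orders for $\LL$ divide those for $\GG$ up to primes dividing $|W|$, all of which are $<7$.

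For part~(ii), the inclusion $\LF\leq\ccent{\GG}{Z_\ell^F}^F$ is immediate from~(i). The subtle point is $\ccent{\GG}{Z_\ell^F}^F=\cent{\GF}{Z_\ell^F}$, i.e.\ that the centralizer in $\GF$ of the $\ell$-group $Z_\ell^F$ is already connected. I would argue this by Lemma~\ref{NonC} (with the roles of $\GG$ adapted): for $x\in Z_\ell^F$, $\cent{\GG}{x}/\ccent{\GG}{x}$ injects into $\Z(\GD)/\Z^\circ(\GD)$ and has exponent dividing the order of $x$, hence is an $\ell$-group; but by the hypothesis on $|\Z(\GG^*)^F/\Z^\circ(\GG^*)^F|$ being prime to $\ell$, applied after taking $F$-fixed points (and using that the relevant component group of $\GD$ has $\ell'$-order), this forces $\cent{\GF}{x}=\ccent{\GG}{x}^F$. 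Taking the intersection over a generating set of the abelian group $Z_\ell^F$, one gets $\cent{\GF}{Z_\ell^F}=\bigcap_x\ccent{\GG}{x}^F=\big(\bigcap_x\ccent{\GG}{x}\big)^F=\ccent{\GG}{Z_\ell^F}^F$, where the middle equality holds because an intersection of $F$-stable connected subgroups taken as an algebraic group has the expected fixed points. Combined with~(i) this gives~(ii).

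The main obstacle I anticipate is the careful bookkeeping in part~(ii): making rigorous the passage from ``$\cent{\GG}{x}/\ccent{\GG}{x}$ is an $\ell$-group'' to ``$\cent{\GF}{x}=\ccent{\GG}{x}^F$'', which requires controlling the $F$-fixed points of the component group $\cent{\GG}{x}/\ccent{\GG}{x}$ and invoking the Lang–Steinberg exact sequence, together with the hypothesis on $\Z(\GD)$; and then ensuring that intersecting these equalities over the (finitely many) generators of $Z_\ell^F$ does not introduce new disconnectedness. The hypothesis $\ell\geq 7$ ultimately serves to sweep away all the bad primes ($2,3,5$ dividing orders of Weyl groups and fundamental-group torsion of simple factors), so the bulk of the work is in citing the correct structural lemmas from [DigneMic]/[CaEn] on centralizers of semisimple elements and component groups of Levi subgroups, rather than in any genuinely new argument.
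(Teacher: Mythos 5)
Your part~(ii) follows the paper's approach: for $x\in Z_\ell^F$ (a $p'$-element since $\ell\neq p$), Lemma~\ref{NonC} identifies $\cent\GG x/\ccent\GG x$ with an $\ell$-subgroup of $\Z(\GD)/\Z^\circ(\GD)$, and the hypothesis $\ell\nmid|\Z(\GG^*)^F/\Z^\circ(\GG^*)^F|$ then forces the $F^*$-fixed part to be trivial. The paper states this in one line; your extra step of intersecting over generators of $Z_\ell^F$ is a reasonable way to spell it out.

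Part~(i) has a real gap. You dispose of the key inclusion $\ccent\GG{\bS_\ell^F}\subseteq\cent\GG\bS$ by calling $\bS_\ell^F$ ``cofinal'' in $\bS$, deducing this from $\ell\mid\phi_m(q)$. That divisibility only tells you that $\bS_\ell^F\neq 1$ whenever $\bS\neq 1$; it does not show that anything centralizing the finite $\ell$-group $\bS_\ell^F$ must centralize the whole torus $\bS$. The paper proves the reverse inclusion by induction on $\dim\GG$. Set $\CC=\ccent\GG{\bS_\ell^F}$, a Levi subgroup because $\ell\geq 7$ is good. If $\CC\neq\GG$, the induction hypothesis applied inside $\CC$ gives $\LL=\CC$. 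If $\CC=\GG$, i.e.\ $\bS_\ell^F\subseteq\Z(\GG)$, one must still show that $\bS$ itself is central. For this the paper passes to $\GG_\ad=\GG/\Z(\GG)$ via $\pi$, uses the Lang exact sequence
$$1\to\pi(\bS^F)\to\pi(\bS)^F\to[\bS,F]\cap\Z(\GG)/[\Z(\GG),F]\to 1,$$
and it is precisely here that the hypothesis $\ell\nmid|\Z(\GG)^F/\Z^\circ(\GG)^F|$ enters, to get $\pi(\bS)_\ell^F=\pi(\bS_\ell^F)$. Then $\bS_\ell^F\subseteq\Z(\GG)$ gives $\pi(\bS)_\ell^F=1$, while $|\pi(\bS)^F|$ is a power of $\phi_m(q)\equiv 0\pmod\ell$; this forces $\pi(\bS)=1$, i.e.\ $\bS\subseteq\Z^\circ(\GG)$, so $\LL=\GG$ and the claim is trivial. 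Your account misattributes the role of the hypothesis on $\Z(\GG)$ (it is not about ``transferring component-group bounds to $\LL$''); more importantly, without the induction and the reduction to $\GG_\ad$, the reverse inclusion in~(i) --- which is the whole content of the proposition --- is left unproved.
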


\begin{proof} (i) It suffices to check the first equality. We show it by induction on the dimension of $\GG$. Let $\pi :\GG\to\GG_\ad \deq \GG/\zent{\GG}$ the reduction mod $\zent{\GG}$. 
	
 By a classical argument we have an exact sequence
$$ 1\to \pi(\bS^F)\to \pi (\bS)^F\to [\bS,F]\cap {\rm Z}(\GG )/[{\rm Z}(\GG ) ,F]\to 1 .$$ By
Lang's theorem, $[{\rm Z}(\GG ),F]\supseteq {\rm Z}^\circ (\GG )$, so $[\bS,F]\cap {\rm
	Z}(\GG )/[{\rm Z}(\GG ) ,F]$ is a section of 
$ {\rm Z}(\GG ) /{\rm Z}^\circ(\GG )$ on which the action of $F$ is trivial.
But $\ell$ does not divide $|\zent{\GG}^F/\czent{\GG}^F|$ so
$([\bS  ,F]\cap {\rm Z}(\GG ) )/[{\rm Z}(\GG )  ,F]$ is
$\ell'$~; thus
$\pi(\bS )_\ell^F\inn\pi(\bS^F)$. Moreover, if $s$ is of finite order, then
$\pi(s_\ell)=\pi(s)_\ell$. This implies 
\begin{equation}\label{piZFell}
\text{  $ \pi(\bS )_\ell^F=\pi(\bS^F_\ell)   $ .}
\end{equation} 

Now denote
$\CC\deq \ccent\GG {\bS ^F_\ell }$.  The fact that $\ell\geq 7$ eliminates some exceptional behaviour (``bad" primes\index{good primes}, see [GeHi91, 2.1] or [CaEn, \S 13.2]) and ensures that $\CC$ is a Levi subgroup of $\GG$.
One has clearly
$\LL\inn\CC$. If $\CC\not= \GG$, then the induction hypothesis gives $\LL =\CC$,
that is our claim.

Assume $\CC = \GG $, that is $\pi (\bS ^F_\ell )=\{1\}$. By (\ref{piZFell}), this implies $\pi
(\bS )^F_\ell =\{1\}$. But $\pi (\bS )$ is a $\phi_m$-torus of
$\GG_\ad$ whose number of fixed points under $F$ is a power of $\phi_m(q)$.
This is prime to $\ell$ only if this exponent is 0, that is $\bS_{}
\inn\czent\GG$. This implies $\LL =\GG$ and our claim is trivial.

(ii) The first equality comes from (i). For the second we have an inclusion $\ccent{\GG}{Z_\ell^F}^F\lhd \cent{\GF}{Z_\ell^F}$. But the factor group is trivial thanks to Lemma~\ref{NonC} and the hypothesis on $\ell$ with regard to $\GD$.
\end{proof}

\begin{cor}\label{dpair} Let $\ell$ be a prime $\geq 7$ and $\ne p$. Let $d$ be the order of $q$ mod $\ell$.
Let $(\LL,\zz)$ be a unipotent $d$-cuspidal pair of $\GG$. Then \begin{enumerate}[\rm(i)]
	\item $\LF=\cent{\GF}{Z_\ell^F}$ and 
	\item $\zz (1)_\ell =|\LF/\zent \LL^F|_\ell=|\LF/\czent \LL^F|_\ell$.
\end{enumerate}
\end{cor}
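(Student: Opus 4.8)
The plan is to read part (i) directly off Proposition~\ref{ZellF}, which is tailor-made for it, and to deduce part (ii) from the theory of generic degrees of unipotent characters, using crucially that $\zz$ is $d$-cuspidal.

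For (i): since $(\LL,\zz)$ is a unipotent $d$-cuspidal pair, $\LL$ is in particular a $d$-split Levi subgroup of $\GG$, so $\LL=\cent{\GG}{\bS_0}$ for some $\phi_d$-torus $\bS_0$. Put $\bS\deq(\czent{\LL})_{\phi_d}$, the $\phi_d$-part of the connected centre of $\LL$, which exists by Proposition~\ref{Tphid}; it is a $\phi_d$-torus. Since $\bS_0$ is connected and central in $\cent{\GG}{\bS_0}=\LL$ we have $\bS_0\inn\czent{\LL}$, hence $\bS_0\inn\bS$, and therefore $\LL=\cent{\GG}{\bS_0}\nni\cent{\GG}{\bS}\nni\cent{\GG}{\czent{\LL}}=\LL$ (the last equality because a Levi subgroup is the centraliser of its connected centre), so $\LL=\cent{\GG}{\bS}$. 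As $d$ is the order of $q$ mod $\ell$ we have $\ell\mid\phi_d(q)$, so Proposition~\ref{ZellF} applies with $m=d$ (its hypotheses on $\zent{\GG}/\czent{\GG}$ and $\zent{\GD}/\czent{\GD}$ being either standing assumptions here or else arranged by the usual passage to a group with connected centre and simply connected derived group, which affects neither unipotent characters nor unipotent blocks). With $Z\deq\Z(\LL)$ this yields $\LF=\ccent{\GG}{Z_\ell^F}^F=\cent{\GF}{Z_\ell^F}$, which is exactly (i).

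For (ii): the equality $|\LF/\zent{\LL}^F|_\ell=|\LF/\czent{\LL}^F|_\ell$ amounts to $\zent{\LL}^F/\czent{\LL}^F$ being an $\ell'$-group, which holds for $\ell\geq7$ good by the structure theory (in the spirit of Lemma~\ref{NonC}). It remains to prove $\zz(1)_\ell=|\LF/\czent{\LL}^F|_\ell$. Since $\zz\in\ser{\LF}{1}$, it is trivial on $\czent{\LL}^F$ (every $\Lu{\TT}{\LL}1$ is), so $\zz(1)$ divides $|\LF/\czent{\LL}^F|$ and hence $\zz(1)_\ell\mid|\LF/\czent{\LL}^F|_\ell$; I must establish the opposite divisibility. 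Write $\zz(1)=\mathrm{Deg}_\zz(q)$ with $\mathrm{Deg}_\zz(x)=n_\zz^{-1}x^{a}\prod_{m\geq1}\phi_m(x)^{b_m}$ the generic degree; as $\ell\geq7$ exceeds every prime that can divide the denominator $n_\zz$ of a unipotent generic degree, and $\ell\ne p$, one gets $v_\ell(\zz(1))=\sum_m b_m\,v_\ell(\phi_m(q))$, where $v_\ell$ denotes $\ell$-adic valuation. Now $v_\ell(\phi_m(q))\ne0$ exactly for $m\in\{d\ell^{\,j}:j\geq0\}$ (recalled before Proposition~\ref{ZellF}), and for each such $m$ the $d$-cuspidality of $\zz$ forces $b_m$ to take its maximal possible value, namely the multiplicity of $\phi_m$ in the polynomial order $P_{\LL,F}/P_{\czent{\LL},F}$ of $[\LL,\LL]$: for $j=0$ this is the classical characterisation of $d$-cuspidal unipotent characters as those whose degree has maximal $\phi_d$-part; for $j\geq1$ it comes from the explicit list of $d$-cuspidal unipotent characters (Asai in classical types, Brou\'e-Malle-Michel in exceptional types). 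Since likewise $v_\ell(|\LF/\czent{\LL}^F|)=\sum_m\big(\text{mult. of }\phi_m\text{ in }P_{\LL,F}/P_{\czent{\LL},F}\big)\,v_\ell(\phi_m(q))$, comparison gives the required equality.

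The step I expect to be the main obstacle is precisely the assertion in (ii) that $d$-cuspidality forces the $\phi_{d\ell^{\,j}}$-part of $\zz(1)$ to be maximal for all $j\geq0$ and not merely for $j=0$: one has to rule out that, for some $j\geq1$, the polynomial $\phi_{d\ell^{\,j}}$ divides the order polynomial of $[\LL,\LL]$ strictly more often than it divides $\mathrm{Deg}_\zz$. I would handle this either by the case-by-case inspection of $d$-cuspidal generic degrees just cited, or -- if its use is allowed here -- by quoting the defect-group statement \Th{CuspBl}(ii) applied to the pair $(\LL,\zz)$ inside $\LL$ itself, which forces $\zz$ to lie in an $\ell$-block of $\LF$ of defect order $|\czent{\LL}^F|_\ell$ and so pins $v_\ell(\zz(1))$ from below. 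Everything else -- the choice of $\bS$, the invocation of Proposition~\ref{ZellF}, the vanishing of $\zz$ on $\czent{\LL}^F$, and the arithmetic of $v_\ell(\phi_m(q))$ by lifting the exponent -- is routine.
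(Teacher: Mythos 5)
Your route for part (i) is the paper's route --- apply Proposition~\ref{ZellF} after exhibiting $\LL$ as the centraliser of a $\phi_d$-torus --- and your derivation $\LL=\cent{\GG}{\bS}$ with $\bS=(\czent\LL)_{\phi_d}$ is fine. The gap is in how you discharge the hypothesis of Proposition~\ref{ZellF} that $\ell$ divide neither $|\Z(\GG)^F/\Z^\circ(\GG)^F|$ nor $|\Z(\GG^*)^F/\Z^\circ(\GG^*)^F|$. You assert this is either standing or can be ``arranged by the usual passage to a group with connected centre and simply connected derived group, which affects neither unipotent characters nor unipotent blocks.'' But the statement being proved, $\LF=\cent{\GF}{\Z(\LL)^F_\ell}$, is an equality of subgroups of the specific finite group $\GF$; passing to a regular embedding $\GG\hookrightarrow\w\GG$ changes $\GF$, $\LF$, $\Z(\LL)^F_\ell$, and the centraliser, so the claim does not simply transport along the embedding. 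The paper instead confronts the case where the hypothesis fails: since $\ell\geq 7$, this essentially forces $\GG$ of type $\tA_{n-1}$ with $\ell\mid q-\eps$, in which case $d\in\{1,2\}$, the only unipotent $d$-cuspidal pair is $(\TT_1,1)$ for the maximally split (resp.\ diagonal) torus $\TT_1$, and the equality $\TT_1^F=\cent{\GF}{(\TT_1^F)_\ell}$ is then verified directly. You need this separate argument; the reduction you invoke does not supply it.

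On part (ii), you follow the paper's line: both equalities rest on the known generic degrees of $d$-cuspidal unipotent characters from [BrMaMi93], whose $\phi_{d\ell^a}$-parts match those of the order polynomial, with denominators involving only bad primes (hence prime to $\ell\geq7$). You correctly identify the delicate step --- maximality of the $\phi_{d\ell^j}$-exponents for $j\geq1$, not just $j=0$ --- and your first proposed remedy (case-by-case inspection of the tables in Asai/Brou\'e--Malle--Michel) is exactly what the paper relies on. Your second proposed remedy, invoking Theorem~\ref{CuspBl}(ii), would be circular here: Corollary~\ref{dpair} sits in Section 7, which is precisely the proof of Theorem~\ref{CuspBl}, so the defect-group statement is not yet available. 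You flag this yourself (``if its use is allowed here''); it is not, so you must take the first route.
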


\begin{proof}
(i) To deduce this from Proposition~\ref{ZellF}.(ii) we essentially have to show that the condition $\ell\nmid |\Z(\GG)^F/\Z^\circ (\GG)^F|.|\Z(\GG^*)^F/\Z^\circ (\GG^*)^F|$ can be assumed. Since $ \ell$ is large, this concerns chiefly groups of type $\tA_{n-1}$ with $\ell$ dividing $q-\eps$ with $\eps=1$ or $-1$ according to the action of $F$ on roots being trivial or not, respectively (see [CaEn, 13.11]). Let $\TT_1$ be the diagonal maximal torus. That is the one whose image in $\GG_\ad$ is such that $F$ acts trivially on the associated Weyl group. Then we have 
\begin{equation}\label{L=T1}
\text{$\LL=\TT_1$ and   $  \LF=\cent{\GF}{\LL^F_\ell}  $. }
\end{equation} 
Indeed one can then assume $d=1$ or $2$ according to $\eps =1$ or $-1$. On the other hand it is well-known that $\ser{\GF}{1}$ is the set of components of $\Lu{\TT_1}{\GG}1$, so $(\TT_1,1)$ is the only unipotent $d$-cuspidal pair. This forces $\LL=\TT_1$. The second statement is an easy verification in PGL.

(ii) The degrees of $d$-cuspidal characters are known from [BrMaMi93] and, up to integral scalars involving only bad primes, they are polynomials in $q$ where the power of each $\phi_{d\ell^a}$ is the same as in the order of the group. 
\end{proof}

\subsection{Defect groups}

We finish to review the proof of Theorem~\ref{CuspBl} whose hypotheses we keep.

We have a unipotent $d$-cuspidal pair $(\LL,\zz)$ and we have seen that if $ (\LL,\zz)\leq_d(\GG ,\chi)$ (see Definition~\ref{leqd}) then one has $\LF= \cent{\GF}{\zent{\LL}_\ell^F}$ and the inclusion of $\ell$-subpairs 
\begin{equation}\label{1stIncl}
\text{  $  (\{1\},b_\GF(\chi))\leq (\zent{\LL}_\ell^F,b_\LF(\zz))  $. }
\end{equation}
This is obtained by applying Corollary~\ref{dpair}.(i) and Theorem~\ref{GenInc} above.
Note that this already implies that we can define $B_\GF(\LL,\zz)$ as the $\ell$-block $B$ such that $\Irr(B)$ contains all irreducible components of $\Lu{\LL}{\GG}\zz$. Concerning defect groups we prove
\begin{pro}\label{DefUn} Let $D$ be a Sylow $\ell$-subgroup of $\cent{\GG}{[\LL,\LL]}^F$ containing $\zent{\LL}^F_\ell$. Then $\Res^\LF_{\cent{\GF}{D}} \zz$ is irreducible and
	$$(\Z(\LL)^F_\ell, b_\LF(\zz))\leq (D,b_{\cent \GG D^F}(\Res^\LF_{\cent{\GF}{D}} \zz)).$$ Both subpairs are centric and the second is a maximal subpair.
\end{pro}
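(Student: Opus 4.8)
\noindent The plan is to enlarge the subpair $(\zent{\LL}^F_\ell,b_\LF(\zz))$ of (\ref{1stIncl}) to a maximal one, using the subgroup $\MM\deq\cent{\GG}{[\LL,\LL]}$; its identity component is reductive, $\LL\cap\MM=\zent{\LL}$, and, since $\ell\geq7$ is large (as in Proposition~\ref{ZellF} and Corollary~\ref{dpair}), the quotient $\zent{\LL}^F/\czent{\LL}^F$ is an $\ell'$-group, so $\zent{\LL}^F_\ell=\czent{\LL}^F_\ell$ and $D$ is a Sylow $\ell$-subgroup of $\MM^F$ containing $\zent{\LL}^F_\ell$. First I would note that $(\zent{\LL}^F_\ell,b_\LF(\zz))$ is already centric with canonical character $\zz$: a unipotent character is trivial on the connected centre (its central character on $\czent{\LL}^F$ corresponds to $s=1$), so $\zz$ is trivial on $\zent{\LL}^F_\ell$, while $\zz(1)_\ell=|\LF/\czent{\LL}^F|_\ell=|\LF/\zent{\LL}^F_\ell|_\ell$ by Corollary~\ref{dpair}.(ii). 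As $\zent{\LL}^F_\ell$ is central in $\LF$ it lies in every defect group, so these two facts force $b_\LF(\zz)$ to have defect group exactly $\zent{\LL}^F_\ell$ and $\zz$ to be its unique irreducible character; thus $(\zent{\LL}^F_\ell,b_\LF(\zz))$ is centric, and even maximal inside $\LF$, with canonical character $\zz$ (Definition~\ref{ctric}).

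Next I would compute $\cent{\GF}{D}$ and prove that $\zz_D\deq\Res^\LF_{\cent{\GF}{D}}\zz$ is irreducible. Since $\zent{\LL}^F_\ell\leq D$ and $\LF=\cent{\GF}{\zent{\LL}^F_\ell}$ (Corollary~\ref{dpair}.(i)), one has $\cent{\GF}{D}=\cent{\LF}{D}$; and as $D\leq\MM=\cent{\GG}{[\LL,\LL]}$, every element of $[\LL,\LL]$ centralizes $D$, whence $[\LL,\LL]^F\leq\cent{\GF}{D}$ and, writing $\LL=[\LL,\LL]\cdot\zent{\LL}$, $\cent{\GF}{D}=[\LL,\LL]^F\cdot\cent{\zent{\LL}^F}{D}$ up to a finite $\ell'$-index — controlled, as in the proof of Theorem~\ref{GenInc}, by component groups and Lemma~\ref{NonC}. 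Now $\cent{\zent{\LL}^F}{D}$ is a subgroup of $\zent{\LL}^F$ on which $\zz$ is trivial modulo an $\ell'$-group, it commutes with $[\LL,\LL]^F$, and $\zz$ restricts irreducibly to $[\LL,\LL]^F$ ($\ell$ large; see [CaEn]); hence $\zz_D$ is irreducible and $b_D\deq b_{\cent{\GF}{D}}(\zz_D)$ is a well-defined block of $\cent{\GF}{D}$.

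The last step is to check that $(D,b_D)$ is centric, that $(\zent{\LL}^F_\ell,b_\LF(\zz))\leq(D,b_D)$, and that $(D,b_D)$ is maximal. Since $\zz$, hence $\zz_D$, is trivial on $\zent{\LL}^F_\ell$, one has $\zz_D(1)_\ell=\zz(1)_\ell=|\LF/\zent{\LL}^F_\ell|_\ell$; combining this with the description of $\cent{\GF}{D}$ and the fact that $D$ is a Sylow $\ell$-subgroup of $\MM^F$, one gets $\zz_D(1)_\ell=|\cent{\GF}{D}/\zent{D}|_\ell$, so $b_D$ has central defect group $\zent{D}$ in $\cent{\GF}{D}$: that is, $(D,b_D)$ is centric with canonical character $\zz_D$. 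Moreover $\zent{\LL}^F_\ell$ is normal in $D$ — it is the base, over the $\phi_d$-torus $\czent{\LL}$, of the wreath-product-like Sylow $\ell$-subgroup of $\MM^F$ — so there is a subnormal chain $\zent{\LL}^F_\ell=Q_0\lhd Q_1\lhd\dots\lhd Q_r=D$ through centric subpairs; at each link Theorem~\ref{inctric} applies, the canonical characters being compatible because $\zz$ is the (unique, $D$-stable) character of $b_\LF(\zz)$, and chaining these inclusions gives $(\zent{\LL}^F_\ell,b_\LF(\zz))\leq(D,b_D)$. Maximality then follows from Proposition~\ref{maxic}: $(D,b_D)$ is centric, and $\norm{\GF}{D,b_D}/D\cent{\GF}{D}$ is an $\ell'$-group, since $D$ is already a Sylow $\ell$-subgroup of $\MM^F$ and a strictly larger subpair would produce an $\ell$-subgroup outside $\MM^F$, which the structure excludes.

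I expect this last step to be the main obstacle: one must pin down the Sylow $\ell$-subgroup $D$ of $\cent{\GG}{[\LL,\LL]}^F$ precisely enough — its wreath-type structure over the $\phi_d$-torus $\czent{\LL}$, the subnormality of $\zent{\LL}^F_\ell$ in it through centric subpairs, and the degree identity $\zz_D(1)_\ell=|\cent{\GF}{D}/\zent{D}|_\ell$ — together with the irreducible restriction of the $d$-cuspidal unipotent character $\zz$ to $[\LL,\LL]^F$. All of this uses in an essential way that $\ell$ is large, exactly as Proposition~\ref{ZellF} and Corollary~\ref{dpair} already do.
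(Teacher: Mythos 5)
Your proof follows essentially the same route as the paper: irreducibility of $\zz_D$ via the sandwich $[\LL,\LL]^F\leq\cent{\GF}{D}\leq\LF$ together with (\ref{uCh}), centricity of $(\zent{\LL}^F_\ell,b_\LF(\zz))$ from Corollary~\ref{dpair}, the inclusion via Theorem~\ref{inctric}, and maximality resting on the structure of $D$ (Remark~\ref{B0G}, [Ca94]). You correctly identify that last structural input as the crux.

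Two remarks where you deviate or slip. First, for irreducibility of $\zz_D$ you do not need the decomposition ``$\cent{\GF}{D}=[\LL,\LL]^F\cdot\cent{\zent{\LL}^F}{D}$ up to $\ell'$-index'': once $[\LL,\LL]^F\leq\cent{\GF}{D}\leq\LF$ is known and $\Res^\LF_{[\LL,\LL]^F}\zz$ is irreducible by (\ref{uCh}), the intermediate restriction $\Res^\LF_{\cent\GF D}\zz$ is automatically irreducible (any irreducible constituent, restricted to $[\LL,\LL]^F$, must contain the full $\Res^\LF_{[\LL,\LL]^F}\zz$ and hence already have the same degree as $\zz$). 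Your component-group argument is a detour and its ``up to a finite $\ell'$-index'' phrasing is not really a well-formed statement. Second, a small slip: $b_\LF(\zz)$ is \emph{not} a block with a unique irreducible ordinary character — a block with central defect group $Z=\zent\LL^F_\ell$ is nilpotent (Theorem~\ref{PuigNil}) and has $|Z|$ characters in $\Irr(b_\LF(\zz))$; what you need, and what is true, is that $\zz$ is the unique one trivial on $Z$, i.e.\ the canonical character in the sense of Definition~\ref{ctric}. Finally, your degree identity $\zz_D(1)_\ell=|\cent\GF D/\zent D|_\ell$ for centricity of $(D,b_D)$, the normality of $\zent\LL^F_\ell$ in $D$ (which makes a single application of Theorem~\ref{inctric} enough — no chain is actually needed), and the maximality are asserted rather than proved; these are the same steps the paper also elides, deferring to Remark~\ref{B0G} and [Ca94], so your honesty about the obstacle is exactly right.
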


\begin{proof} For the first statement  notice that $[\LL,\LL]^F\leq\cent{\GF}{D}\leq\cent{\GF}{\zent{\LL}^F_\ell}= \LF$. Unipotent characters of $\LF$ restrict irreducibly to $[\LL,\LL]^F$ thanks to (\ref{uCh}) above, hence irreducibly to $\cent{\GF}{D}$. From Corollary~\ref{dpair} we know that $(\Z(\LL)^F_\ell, b_\LF(\zz))$ is a centric subpair. The subpair inclusion of the proposition is then easily checked by applying 
Theorem~\ref{inctric}. The maximality of the subpair $(D,b_{\cent \GG D^F}(\Res^\LF_{\cent{\GF}{D}} \zz))$ is not too difficult, see also Remark~\ref{B0G} below.	\end{proof}

We have now proved almost all of Theorem~\ref{CuspBl}. There remains to show that if two unipotent $d$-cuspidal pairs $(\LL_1,\zz_1)$,  $(\LL_2,\zz_2)$ are such that  $B_\GF(\LL_1,\zz_1)=B_\GF(\LL_2,\zz_2)$ then they are $\GF$-conjugate. In such a case the maximal subpairs given by Proposition~\ref{DefUn} would be $\GF$-conjugate by \Th{subp}.(ii). But since in Proposition~\ref{DefUn}, one has $[\LL,\LL]\leq\cent{\GG}{D}\leq \LL$ and therefore $[\LL ,\LL]=[\cent{\GG}{D},\cent{\GG}{D}]$, one gets easily that the pairs $([\LL_i,\LL_i], \Res^{\LL_i}_{[\LL_i,\LL_i]}\zz_i)$ are $\GF$-conjugate. The lemma below shows that one may assume $(\LL_1, \Res^{\LL_1}_{[\LL_1,\LL_1]}\zz_1)=(\LL_2, \Res^{\LL_2}_{[\LL_2,\LL_2]}\zz_2)$. But then $\zz_1=\zz_2$ by (\ref{uCh}).

\begin{lem}\label{LL}
If two $d$-split Levi subgroups $\LL_1$, $\LL_2$ of $\GG$ have same derived subgroup, then they are $\ccent{\GG}{[\LL_1,\LL_1]}^F$-conjugate.
\end{lem}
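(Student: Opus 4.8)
The plan is to reduce the statement to a $\phi_d$-Sylow conjugacy inside the group $\DD\deq\ccent{\GG}{[\LL_1,\LL_1]}$. Write $\MM\deq[\LL_1,\LL_1]=[\LL_2,\LL_2]$; since $\LL_1,\LL_2$ are $d$-split (hence $F$-stable) Levi subgroups, $\MM$ is an $F$-stable connected semisimple subgroup and $\DD$ is an $F$-stable connected reductive subgroup of $\GG$ (it is generated by a subtorus of any maximal torus of $\LL_i$ together with the root subgroups commuting with $\MM$). First I would recall two standard facts about a Levi subgroup $\LL=\LL_i$: it decomposes as $\LL=\MM\czent{\LL}$ with $\MM\cap\czent\LL$ finite, and it is recovered from its radical by $\LL=\cent\GG{\czent\LL}$. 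Since $\czent\LL$ centralizes $\MM$ we get $\czent\LL\inn\DD$, and a dimension count (using that the rank of $\GG$ equals the rank of $\MM$ plus $\dim\czent{\LL}$, together with the fact that a maximal torus of $\DD$ commutes with a maximal torus of $\MM$ and meets it in a finite group) shows that $\czent{\LL_i}$ is in fact a maximal torus of $\DD$; moreover $\DD\cap\LL_i=\zent{\MM}\czent{\LL_i}$, so $(\DD\cap\LL_i)^\circ=\czent{\LL_i}$. Applying Proposition~\ref{Tphid} inside $\czent{\LL_i}$ and using $\cent\GG{\czent{\LL_i}}=\LL_i$, one sees that $\bS_i\deq(\czent{\LL_i})_d$ is a $\phi_d$-torus with $\cent\GG{\bS_i}=\LL_i$ and $\bS_i\inn\DD$.

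The heart of the argument is to show that $\bS_i$ is a $\phi_d$-Sylow torus of $\DD$, i.e. a maximal $\phi_d$-torus of $\DD$. Suppose $\bS'$ is a $\phi_d$-torus of $\DD$ with $\bS_i\inn\bS'$. Then $\bS'\inn\cent\DD{\bS'}\inn\cent\DD{\bS_i}=\DD\cap\cent\GG{\bS_i}=\DD\cap\LL_i$, whose identity component is exactly the maximal torus $\czent{\LL_i}$ of $\DD$ found above; as $\bS'$ is connected, $\bS'\inn\czent{\LL_i}$, and being a $\phi_d$-torus it lies in the $\phi_d$-part, so $\bS'\inn(\czent{\LL_i})_d=\bS_i$. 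Hence $\bS'=\bS_i$, which is the desired maximality. This is the step I expect to require the most care: pinning down that $(\cent\DD{\bS_i})^\circ=\czent{\LL_i}$, and checking that $\DD$ is genuinely connected reductive and $F$-stable so that $\phi_d$-Sylow theory may be applied to it.

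Finally, since $\MM=[\LL_1,\LL_1]=[\LL_2,\LL_2]$, both $\bS_1$ and $\bS_2$ are $\phi_d$-Sylow tori of the same $F$-stable connected reductive group $\DD$; by the $\phi_d$-analogue of Sylow's theorem (see [BrMaMi93]) there is $g\in\DD^F$ with ${}^g\bS_1=\bS_2$. Then ${}^g\LL_1={}^g\cent\GG{\bS_1}=\cent\GG{{}^g\bS_1}=\cent\GG{\bS_2}=\LL_2$, so $\LL_1$ and $\LL_2$ are conjugate under $\DD^F=\ccent{\GG}{[\LL_1,\LL_1]}^F$, as claimed.
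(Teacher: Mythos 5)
Your proposal is correct and follows essentially the same route as the paper: you identify $\czent{\LL_i}$ as a maximal torus of $\CC\deq\ccent{\GG}{[\LL_i,\LL_i]}$, show that its $\phi_d$-part is a maximal $\phi_d$-torus of $\CC$ using $\cent\GG{(\czent{\LL_i})_{\phi_d}}=\LL_i$, and conclude by the Sylow theorem for $\phi_d$-tori. The paper states these same facts in one compressed paragraph with a pointer to [BrMa92], [CaEn, 13.18]; your write-up simply makes the verifications explicit.
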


To show this one notices first that $\CC\deq\ccent{\GG}{[\LL_i,\LL_i]}$ is a reductive group where $\czent{\LL_i}$ is a maximal torus. Moreover $\czent{\LL_i}_{\phi_d}$ is a maximal $\phi_d$-torus in $\CC$. Both properties are by computing the centralizers in $\CC$ and remembering that $\LL_i=\cent{\GG}{\czent{\LL_i}_{\phi_d}}$ by definition. But then the Sylow theorem for maximal $\phi_d$-tori (see [BrMa92], [CaEn, 13.18]) implies our claim.

\begin{rem}\label{B0G} Using \Th{CuspBl} the principal $\ell$-block of $\GF$ is described in the following fashion. It corresponds to $(\LL,\zz)$ with $\LL=\cent{\GG}{\bS}$ where $\bS$ is a maximal $\phi_d$-torus of $(\GG ,F)$ and $\zz=1$ is the trivial character of $\LF$. With the hypothesis we have on $\ell$ (which may be loosened to include $\ell =5$) one can prove that the defect group $D$ may be taken normalizing $Z\deq \zent{\LL}^F_\ell$ with the additional property that 
	\begin{equation}\label{ZinD}
	\text{  $ Z$ is the unique maximal abelian normal subgroup of $D   $ }
	\end{equation}
	 (see [Ca94]). This gives a quite handy property of Sylow $\ell$-subgroups of finite groups of Lie type for the transversal primes. The exceptions for the primes $2,3$ are given in [Ma07, 5.14, 5.19]. Indeed the subgroup $Z$ is therefore characteristic in $D$. This can help conclude about the maximality of the subpair proposed in Proposition~\ref{DefUn} (see {[KeMa15]}). 
	 
	 One also has $M\deq\norm{\GG}{\bS}^F=\norm{\GF}{Z}\geq\norm{\GF}{D}$ and any automorphism of $\GF$ preserving $D$ will obviously preserve $M$ thanks to (\ref{ZinD}). This explains why, paving the way for future checkings in finite groups of Lie type, the inductive conditions for McKay or Alperin-McKay conjectures consider a possible overgroup for the normalizer of the defect group involved in the original statement of the conjectures (see {[IsMaNa07, \S 10 (2)]}, [Sp13, 7.2]).
\end{rem}
\subsection{Non-unipotent characters of unipotent blocks}

Brauer's second Main Theorem can also be used to give a complete description of $\Irr(B_\GF(\LL,\zz))$ (see \Th{CuspBl} above) in terms of Lusztig's functor.

We keep the context of  \Th{CuspBl} where $\ell$ is a prime $\geq 7$ different from the defining characteristic of $\GF$. We assume $(\GG ,F)$ is in duality with some $(\GD, F^*)$. Let $t\in(\GD)^{F^*}_\ell$. Then $\ccent{\GD}{t}$ is a Levi subgroup of $\GD$, which by duality yields an $F$-stable Levi subgroup $\GG(t)$ of $\GG$ and a linear character $\wh t\colon\GG(t)^F\to \bbC^\times$. A quite elementary generalization of \Th{LCGs} (see [CaEn, 15.10]) shows that there is a sign $\eps_{\GG,t}\in\{1,-1\}$ such that we get a map 
\begin{eqnarray*}
\psi_t\ \colon	\ \ser{\GG(t)^F}{1}&\to& \bbN\ser{\GF}{t} \text{\ \ \ \ \   by}\\
\chi_t	&\mapsto&\eps_{\GG,t}\Lu{\GG(t)}{\GG}(\wh t\chi_t) . \end{eqnarray*}
\begin{thm}[See [CaEn, \S 23.1]\label{NoUn}
Keep the hypotheses of \Th{CuspBl}. Let $(\LL,\zz)$ be a unipotent $d$-cuspidal pair in $\GG$. Let $t\in(\GD)^{F^*}_\ell$, $\chi_t\in  \ser{\GG(t)^F}{1}$. Then $\psi_t(\chi_t)$ has components in $\Irr(B_\GF(\LL,\zz))$ if and only if there is a unipotent $d$-cuspidal pair $(\LL_t,\zz_t)$ in $\GG(t)$ such that \begin{enumerate}[\rm(i)]
	\item $(\LL_t,\zz_t)\leq_d (\GG(t),\chi_t)$ in $\GG(t)$, with
	\item $[\LL,\LL]=[\LL_t,\LL_t]$ and $\Res^{\LL_t}_{[\LL_t,\LL_t]}\zz_t=\Res^{\LL}_{[\LL,\LL]}\zz$.
\end{enumerate}

Then all components of $\psi_t(\chi_t)$ are in $\Irr(B_\GF(\LL,\zz))$.
\end{thm}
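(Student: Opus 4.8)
The strategy is to mimic the argument already used for unipotent characters (Theorem~\ref{GenInc} and Proposition~\ref{DefUn}), transporting everything along the Jordan-type map $\psi_t$ and the compatibility of generalized decomposition with twisted induction (Proposition~\ref{dx*R}). First I would reduce to the semisimple $\ell$-element $t$: by Brauer's second Main Theorem the block $B_\GF(\LL,\zz)$ containing the components of $\Lu{\LL}{\GG}\zz$ is linked, via an inclusion $(\{1\},b_\GF(\chi))\leq(\zent\LL_\ell^F,b_\LF(\zz))$, to the local data at $\ell$-subgroups of $\GF$; and the defect group $D$ furnished by Proposition~\ref{DefUn} is a Sylow $\ell$-subgroup of $\cent\GG{[\LL,\LL]}^F$. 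The point is that $\Irr(B_\GF(\LL,\zz))\cap\ser\GF t$ can be detected by testing whether $d^{x}$ of a character in that series has nonzero projection on a block of $\cent H{x}$ that sits inside the relevant subpair. So I would take $x$ a generator-type element of a suitable $\ell$-subgroup, compute $\ccent\GG x$, and use that $\ccent\GG x$ contains a conjugate of $\GG(t)$ when $x$ is $\GD$-dual to (a power of) $t$.

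The core computation is the following identity, which is the non-unipotent analogue of the displayed scalar-product computation in the proof of Theorem~\ref{GenInc}: for $x\in\GG^F_\ell$ dual to $t$ one has, by Proposition~\ref{dx*R} applied to $\GG(t)\leq\GG$ and by the definition of $\psi_t$,
$$
d^{x,\GF}\bigl(\psi_t(\chi_t)\bigr)
=\eps_{\GG,t}\,\wh t(x)\,\Lu{\ccent{\GG(t)}{x}}{\ccent\GG x}\bigl(d^{x,\GG(t)^F}\chi_t\bigr),
$$
up to normalizing constants coming from $|\ccent\GG x^F|$. Iterating this as in Theorem~\ref{Brell}, the projection of $d^{x}\psi_t(\chi_t)$ onto the block $b_{\ccent\GG x^F}$ attached to a unipotent $d$-cuspidal pair of $\ccent\GG x$ is nonzero precisely when $\chi_t$ lies over $\Lu{\LL_t}{\GG(t)}\zz_t$ for a $d$-cuspidal pair $(\LL_t,\zz_t)$ of $\GG(t)$; Theorem~\ref{GenInc} (applied inside $\GG(t)$, using Corollary~\ref{dpair}.(i)) then gives the subpair inclusion $(\{1\},b_{\GG(t)^F}(\chi_t))\leq(\zent{\LL_t}_\ell^F,b_{\LL_t^F}(\zz_t))$. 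Translating this back through the identification of $\zent{\LL_t}_\ell^F$ with an $\ell$-subgroup of $\GF$ centralizing $x$, and comparing with the maximal subpair $(D,b_{\cent\GG D^F}(\Res^\LF_{\cent\GF D}\zz))$ of Proposition~\ref{DefUn}, one sees that the two maximal subpairs agree exactly when $(\LL,\zz)$ and $(\LL_t,\zz_t)$ share a common derived subgroup and restricted character, which is condition (ii). The ``only if'' direction follows because $\Br_x(e)=0$ for the block idempotent $e=e_\ell(\GF,\dots)$ whenever no such $d$-cuspidal pair exists, forcing $d^x\psi_t(\chi_t)$ to have zero projection on every relevant local block and hence $\psi_t(\chi_t)$ to miss $B_\GF(\LL,\zz)$. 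The final assertion --- that \emph{all} components of $\psi_t(\chi_t)$ then lie in $B_\GF(\LL,\zz)$ --- comes from the fact that $\psi_t$ is, by Theorem~\ref{broIso} and the discussion following it, induced by a bi-projective bimodule (the twisted cohomology of $\YY_\PP$ for $\PP$ with Levi $\GG(t)$), so it respects the block partition: a single $\Lu{\GG(t)}{\GG}(\wh t\chi_t)$ cannot be split between two blocks.

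The main obstacle I expect is controlling the bookkeeping of the Jordan decomposition in the non-connected-center case: $\ccent\GD t$ need not be connected, so $\GG(t)^F$ may properly contain $\ccent{\GG}{\text{(dual torus)}}^F$ and ``$\ser{\GG(t)^F}{1}$'' must be interpreted via the induced-character convention of Definition~\ref{ublo}'s surrounding discussion. Keeping the signs $\eps_{\GG,t}$ straight and verifying that the index $[\cent\GF x:\ccent\GG x^F]$ is an $\ell$-power (so that the ``unique block covering'' trick at the end of the proof of Theorem~\ref{GenInc} applies verbatim) requires Lemma~\ref{NonC} together with the hypothesis $\ell\geq 7$ eliminating the bad-prime obstructions to $\ccent\GD t$ and $\ccent\GG x$ being Levi subgroups. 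Once those technical points are dispatched exactly as in \S~7, the argument is a formal iteration of Proposition~\ref{dx*R} and Brauer's second Main Theorem, with no genuinely new input beyond Theorem~\ref{GenInc}.
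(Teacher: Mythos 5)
The paper's proof is considerably shorter and hinges on a trick you have not spotted: one applies Proposition~\ref{dx*R} with $x=1$, not with an $\ell$-element ``dual to $t$''. Since $\wh t$ is a linear character of $\GG(t)^F$ of $\ell$-power order (it corresponds to the $\ell$-element $t\in\zent{\GG(t)^*}^{F^*}$ under duality), it is trivial on all $\ell$-regular elements, so $d^1(\wh t\chi_t)=d^1(\chi_t)$ and hence
$$d^{1,\GF}\bigl(\psi_t(\chi_t)\bigr)=\eps_{\GG,t}\,\Lu{\GG(t)}{\GG}\bigl(d^{1,\GG(t)^F}(\wh t\chi_t)\bigr)=\pm\, d^{1,\GF}\bigl(\Lu{\GG(t)}{\GG}\chi_t\bigr).$$
This strips off the twist $\wh t$ entirely and reduces the theorem to the purely \emph{unipotent} question of which components of $\Lu{\GG(t)}{\GG}\chi_t$ lie in $B_\GF(\LL,\zz)$; that is settled by Theorem~\ref{CuspBl}.(i) plus further combinatorics on $d$-split Levi subgroups and centralizers of $\ell$-subgroups. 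The block uniformity of the components of $\psi_t(\chi_t)$ then follows because $d^1$ commutes with block projections (Brauer's second Main Theorem at $x=1$) and $d^1$ is injective on $\bbZ\ser{\GF}{t}$, the Jordan-decomposition analogue of [CaEn, 14.6] already invoked in the proof of Proposition~\ref{Nilpo}.

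Your plan has two concrete gaps. First, ``$x\in\GF_\ell$ dual to $t$'' is not a well-defined object: duality relates characters of $F$-stable tori of $\GG$ to semisimple elements of $\GD$ (see (\ref{dualT})), not $\ell$-elements of $\GF$ to $\ell$-elements of $\GD$, so there is no canonical $x$ attached to $t$ and the displayed formula in your plan cannot be made precise; the $x=1$ choice sidesteps this entirely. Second, your justification of the final assertion via Theorem~\ref{broIso} and bi-projectivity is off target: Theorem~\ref{broIso} and Theorem~\ref{LCGs} require the semisimple label $s$ to be an $\lp$-element, whereas here $t$ is an $\ell$-element, and $\psi_t\colon\ser{\GG(t)^F}{1}\to\bbN\ser{\GF}{t}$ is not a block-preserving bijection of that kind. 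A priori a single $\psi_t(\chi_t)$ could be spread across several blocks; showing it is not is precisely what must be proved, and is obtained from the $d^1$-injectivity argument above, not from any perfect-isometry or Morita-equivalence input.
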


Note that condition (ii) above implies that $t$ must be in the centralizer of $[\LL^*,\LL^*]$ for $\LL^*$ an $F^*$-stable Levi subgroup of $\GD$ corresponding to $\LL$ by duality. This condition looks like dual to the condition for an element of $\GF$ of being in the defect group of $B_\GF(\LL,\zz)$, see \Th{CuspBl}.(ii).

The proof of \Th{NoUn} is by using Proposition~\ref{dx*R} with $x=1$. One gets $$d^1\psi_t(\chi_t)=\pm d^1 (\Lu{\GG(t)}{\GG}\chi_t)$$ which by Brauer's second Main Theorem must have a non-zero projection on $B_\GF(\LL,\zz)$. This reduces the theorem to a question about unipotent characters. It is solved by studying a bit more the relation between $d$-split Levi subgroups and centralizers of $\ell$-subgroups beyond what has been seen in \S 7.B above (see [CaEn, \S 23.1]).

\subsection{Unipotent blocks are non-exotic}
 
 One of the main questions about blocks of quasi-simple groups in relation with fusion systems is to relate their fusion systems (see Definition~\ref{FDbD}) with the ones of finite groups, or equivalently principal blocks (Open problems 1 and 3 in [AschKeOl, Sect. IV.7]). Fusion systems on a $p$-group that are not isomorphic to a $\cF_Q(H)$ (see Definition~\ref{FusCat}) for a Sylow $p$-subgroup $Q$ of a finite group $H$ are called exotic\index{exotic fusion system}. See Remark~\ref{FusSn} above for the case of symmetric groups; a similar result is also known for general linear and unitary groups [Bro86, 3.8].
 
 We show here the same in the context of \Th{CuspBl}. In other words unipotent $\ell$-blocks ($\ell\geq 7$) are non-exotic (sorry). This builds on an earlier theorem [CaEn99b] showing control of fusion in the sense of [Thev, \S 49], a slightly weaker statement.

 \begin{thm}[{}]\label{NonExo} Keep the assumptions and notations of \Th{CuspBl}. Let $(\LL,\zz)$ be a unipotent $d$-cuspidal pair and $B_\GF(\LL,\zz)$ the associated $\ell$-block of which a Sylow $\ell$-subgroup $D$ of $\ccent{\GG}{[\LL ,\LL]}^F$ is a defect group.  \begin{enumerate}[\rm(i)]
 		\item There exists a subgroup
 		$H\leq {\rm N}_\GF ([\LL ,\LL ],\Res^{\LL^F}_{[\LL ,\LL ]^F}\zeta)$ such that 
 	\begin{enumerate}[\rm(a)]
 		\item $D$ is a Sylow $\ell$-subgroup of $H$, and
 		\item 
 		  $H[\LL ,\LL
 		]^F={\rm N}_\GF ([\LL ,\LL ],\Res^{\LL^F}_{[\LL ,\LL ]^F}\zeta)$~.
 	\end{enumerate}
 		\item For any $H$ satisfying the above, the fusion system of  $B_\GF(\LL,\zz)$ is isomorphic to $\cF_D(H)$.
 	\end{enumerate}
 \end{thm}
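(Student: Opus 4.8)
The goal is to identify the fusion system $\cF_{(D,b_D)}(B_\GF(\LL,\zz))$ with the group fusion system $\cF_D(H)$ for a suitable subgroup $H$, so the plan is to reduce everything to a computation of Brauer morphisms applied to the block idempotent $\ov e$ of $B_\GF(\LL,\zz)$, much in the spirit of Remark~\ref{FusSn} for symmetric groups. First I would record that by Proposition~\ref{DefUn} the maximal subpair may be taken to be $(D,b_D)$ with $D$ a Sylow $\ell$-subgroup of $\ccent\GG{[\LL,\LL]}^F$ containing $\zent\LL^F_\ell$ and $b_D=b_{\cent\GF D}(\Res^\LF_{\cent\GF D}\zz)$, the canonical character being the restriction of $\zz$ (irreducible by Proposition~\ref{DefUn}). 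The point of part (i) is purely group-theoretic: one wants a complement-type subgroup $H$ inside $\norm\GF([\LL,\LL],\Res\zz)$ containing a Sylow $\ell$-subgroup $D$ and with $H[\LL,\LL]^F$ equal to the full normalizer. I would obtain this by a Frattini/Schur–Zassenhaus-style argument: since $[\LL,\LL]^F$ is the derived-group part, $D\cap [\LL,\LL]^F$ is a Sylow $\ell$-subgroup there, and using that $D$ normalizes $Z=\zent\LL^F_\ell$ (from Remark~\ref{B0G}, where moreover $Z$ is characteristic in $D$) one can take $H=\norm\GF(D)[\LL,\LL]^F$ intersected appropriately, or more precisely invoke the Sylow theorem for maximal $\phi_d$-tori (as in the proof of Lemma~\ref{LL}) to conjugate things into place so that $H$ absorbs a full Sylow $\ell$-subgroup while $H[\LL,\LL]^F$ is all of $\norm\GF([\LL,\LL],\Res\zz)$.

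For part (ii), fix such an $H$. The two fusion systems $\cF_{(D,b_D)}(B)$ and $\cF_D(H)$ are both fusion systems on $D$, so by Alperin's fusion theorem (the block version, via Theorem~\ref{subp} and the $\ell$-subpair inclusions of Definition~\ref{AlBr}) it suffices to compare them on a generating set, namely on normalizers of centric (self-centralizing) subpairs, equivalently on pairs $N_H(Q)$ versus $\norm\GF(Q,b_Q)$ for $Q\leq D$ $\ell$-centric in the relevant sense. The key step is then to compute $\Br_Q(\ov e)$ where $\ov e=\ov e(B_\GF(\LL,\zz))$: I expect that for $Q\leq D$ one gets $\Br_Q(\ov e)=\ov e(B_{\cent\GF Q}(\LL_Q,\zz_Q))$ for a naturally associated unipotent $d$-cuspidal pair $(\LL_Q,\zz_Q)$ of $\ccent\GG Q$ when $Q$ is "large enough" (e.g. contains enough of $Z$), and $\Br_Q(\ov e)=0$ otherwise. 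This should follow from Theorem~\ref{Brell} (Broué–Michel's formula for $\Br_x(\ov e_\ell(\GF,s))$) applied fibrewise over the $\ell$-elements of $Q$, combined with Proposition~\ref{ZellF} and Corollary~\ref{dpair} identifying $\ccent\GG{Q}$-Levi data: since $\ccent\GG{Z_\ell^F}=\LL$, centralizing a subgroup $Q$ with $Z\leq Q\leq D$ keeps us inside (a twisted form of) $\ccent\GG{[\LL,\LL]}$, and the unipotent block of $\cent\GF Q$ attached to the restricted cuspidal pair is exactly the one with $\Br_Q(\ov e)$ as (a sum of) its idempotents. Granting this, an inclusion of $\ell$-subpairs $(Q_1,b_1)\leq(Q_2,b_2)$ below $(D,b_D)$ translates, via the explicit $\Br$ computation and the uniqueness clause of Theorem~\ref{subp}.(i), into the statement that $b_i$ is the block of $\cent\GF{Q_i}$ governed by $\Res^\LF_{[\LL,\LL]^F}\zz$; then $h\in\GF$ realizing a subpair morphism must normalize $([\LL,\LL],\Res\zz)$ up to the required centralizer conditions, hence lies in $H[\LL,\LL]^F$, and since $[\LL,\LL]^F$ acts trivially on $\ell$-subgroups of $D$ modulo $\cent\GF D$ one may take $h\in H$. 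Conversely every $H$-conjugation clearly preserves the subpair structure since $H$ normalizes $([\LL,\LL],\Res\zz)$. This gives $\Hom_{\cF_{(D,b_D)}(B)}(Q_1,Q_2)=\Hom_{\cF_D(H)}(Q_1,Q_2)$ on the generating family, and Alperin's theorem upgrades this to an isomorphism of fusion systems.

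\textbf{Main obstacle.} The delicate point is the precise computation of $\Br_Q(\ov e(B_\GF(\LL,\zz)))$ for $Q$ not of the form $\spann<x>$. Theorem~\ref{Brell} as stated handles $\Br_x$ for a single $\ell$-element; extending it to an arbitrary $\ell$-subgroup $Q\leq D$ requires iterating and checking that the associated pairs $(\LL_Q,\zz_Q)$ of $\ccent\GG Q$ are well-defined unipotent $d$-cuspidal pairs with $[\LL_Q,\LL_Q]=[\LL,\LL]$ and $\Res\zz_Q=\Res\zz$ — i.e. that passing to centralizers of $\ell$-subgroups is "compatible with $d$-Harish-Chandra data." This is exactly where the hypothesis $\ell\geq 7$ (ruling out bad-prime pathology, as in Proposition~\ref{ZellF} and Corollary~\ref{dpair}) and the structural fact (\ref{ZinD}) that $Z$ is the unique maximal abelian normal, hence characteristic, subgroup of $D$ do the heavy lifting: they guarantee $\ccent\GG{Q_\ell^F}$ is again a Levi subgroup and force the cuspidal datum to descend canonically. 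Making this descent rigorous — and checking the vertex/centric conditions so that one only needs the generating family of centric subpairs — is the bulk of the work; everything else is bookkeeping with Alperin's fusion theorem and the uniqueness of subpair inclusions.
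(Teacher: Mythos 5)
Your overall plan — reduce to centric subpairs via the ``essential objects are centric'' principle, identify $b_Q$ as the block of $\cent\GF Q$ governed by $\Res^\LF_{[\LL,\LL]^F}\zz$, and deduce the fusion system isomorphism — aims at the right destination and matches the paper's conclusion. But the route you propose through the Brauer morphism has a genuine gap, and the paper takes a different and more efficient path.

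The main problem is with your key step, computing $\Br_Q(\ov e)$ for $\ov e = \ov e(B_\GF(\LL,\zz))$ and an arbitrary $\ell$-subgroup $Q\leq D$. Theorem~\ref{Brell} does not apply here: it describes $\Br_x$ for a \emph{single} $\ell$-element $x$ applied to $\ov e_\ell(\GF,s)$, which is the idempotent of a \emph{sum} of blocks. Passing from the sum $\ov e_\ell(\GF,1)$ to an individual unipotent block idempotent $\ov e$, and passing from cyclic $\langle x\rangle$ to an arbitrary $Q$, are both serious steps for which you offer no mechanism; ``applied fibrewise over the $\ell$-elements of $Q$'' is not a valid shortcut, since Brauer morphisms for overlapping cyclic subgroups do not compose in any simple way and the idempotent you get is, a priori, a $\norm{H}{Q}$-orbit sum of blocks, not a single one. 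Your ``main obstacle'' remark flags something real but underestimates it: making this computation rigorous would essentially require redoing the paper's actual argument first, so your logical dependency is inverted.

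The paper avoids computing $\Br_Q(\ov e)$ entirely. Instead, starting from the maximal centric subpair $(D,b_D)$ of Proposition~\ref{DefUn}, it identifies the unique subpair $(X,b_X)\leq(D,b_D)$ for each $X\leq D$ by a variant of Theorem~\ref{inctric}: the canonical character of $b_X$ is exhibited explicitly as the restriction $\zz_X$ of a fixed unipotent character $\w\zz$ of a Levi subgroup $\KK=\GA\ccent\GG{\zent D}$. The $\GA\GB$-decomposition of $\GG$ is essential here: it gives $\KK_\bbb=[\LL,\LL]$, forces centric $X$ into $\KK_\aaa$, and guarantees $[\LL,\LL]^F\subseteq\cent\GF X$. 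Without it you cannot establish the crucial fact (iii) in the paper's proof — that $\Res_{{\mathrm C}^\circ_\GG (X)^F}\zz_X$ is the unique unipotent character extending $\Res^\LF_{[\LL,\LL]^F}\zz$ — which is what makes ``$g$ normalizes $(X,b_X)$'' equivalent to ``$g$ normalizes $X$ and $([\LL,\LL],\Res\zz)$.'' Your proposal does not mention the $\GA\GB$-decomposition at all, and your substitute ``$[\LL,\LL]^F$ acts trivially on $\ell$-subgroups of $D$ modulo $\cent\GF D$'' is not the right statement (you need $[\LL,\LL]^F\subseteq\cent\GF X$ for the \emph{variable} centric $X$, not $D$). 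So the structural backbone of the argument is missing. For part (i) the paper also just refers to the group-theoretic computations in $\norm\GG\TT$ from [CaEn, Ex 23.1] and [CaEn99b], so your brevity there is excusable, but note that ``$\norm\GF D$'' alone is not obviously contained in $\norm\GF([\LL,\LL],\Res\zz)$; the observation that $[\LL,\LL]=[\cent\GG D,\cent\GG D]$ (from Proposition~\ref{DefUn}) would be needed to make that sketch honest.
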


\begin{proof} The first point is purely group theoretic. The proof uses basically considerations in $\norm \GG\TT$, where $\TT$ is a maximally split torus of ${[\LL ,\LL]}\ccent{\GG}{[\LL ,\LL]}$, see [CaEn, Ex 23.1], [CaEn99b, 6-7] for all details. We now prove (ii). 
	
	Denote  $Z:=\zent \LL ^F_\ell$, $\CC\deq \ccent\GG {[\LL,\LL] }$. Note that $\CC^F\lhd H$. Recall from (\ref{1stIncl}) and Proposition~\ref{DefUn} that we have subpair inclusions in $\GF$
\begin{equation}\label{1ZD}
\text{  $  (\{1\},B_\GF(\LL,\zz))\leq (Z,b_\LF(\zz))\leq (D,b_D)  $ }  
\end{equation}
	where the middle one is centric and $(D,b_D)$ is maximal with $$b_D=b_{\cent{\GF}{D}}(\Res_{\cent{\GF}{D}}^\LF\zz).$$ For $X\leq D$ denote $(X,b_X)$ the unique subpair of $\GF$ such that $$(X,b_X)\leq (D,b_D).$$ Our category isomorphism will be (see Definition~\ref{FDbD}) \begin{eqnarray*}
		\cF_{(D,b_D)}(b_\GF(\LL,\zz))&\to&\cF_D(H), \\
		(X,b_X)&\mapsto&X . \end{eqnarray*}

From the theory of fusion systems, essentially the fact that ``$\cF$-essential objects are $\cF$-centric" see [AschKeOl, \S I.3] and the identification of centric objects in those categories with what we have called so until now [AschKeOl, IV.3.20], it suffices to check for $X\leq D$ 
\begin{equation}\label{NorCic}
\text{  $  \norm{\GF}{(X,b_X)}=\norm{H}{X}\cent{\GF}{X}  $ if $X$ in $H$ or $(X,b_X)$ in $\GF$ is centric.}
\end{equation}

Note that $\CC^F\lhd H$ with $\lp$ index by assumption (i.a), so a subgroup of $D$ is centric in $H$ if and only if it is centric in $\CC^F$.

Let us recall the decomposition $\GG =\GA\GB$ associated to a pair $(\GG ,F)$ and a prime $\ell$ (see [CaEn, 22.4]). In the decomposition of $$[\GG ,\GG]=\GG_1\dots \GG_m$$ as a central product of $F$-stable closed subgroups, one defines $\GA=\czent{\GG}\GG_\aaa'$ where $\GG_\aaa'$ is the subgroup generated by the $\GG_i$'s such that $(\GG_i)^F_\ad\cong {\mathrm {PGL}}_{n_i}(\eps_i q^{m_i})$ with $\ell$ dividing $  q^{m_i}-\eps_i $. The other $\GG_i$'s generate by definition $\GB$. From the properties of $|\zent{\GG_\SC}^F|$ according to the type of $(\GG ,F)$ it is easy to see that 
\begin{equation}\label{ZGb}
\text{  $ \zent{\GB}^F$ and $\GF/\GG_\aaa^F\GG_\bbb^F$ are abelian $\lp$-groups. }
\end{equation} An important (but easy) consequence for centric $\ell$-subgroups is the following [CaEn, 22.5.(ii)], where $Y$ denotes an $\ell$-subgroup of $\GF$:
\begin{equation}\label{CGY}
\text{ If\ \  $ \zent{\cent{\GF}{Y}} _\ell \inn \GA $ \ then \ $Y\inn \GA$. }
\end{equation}

Let us define $\KK\deq \GA{\mathrm C}^\circ_\GG (\zent D)$. Arguing as in the proof of Proposition~\ref{ZellF}, one sees that $\KK$ is a Levi subgroup of $\GG$ such that $\KK\nni \LL=\bS\KK_\bbb$, where $\bS$ is a diagonal torus of $\KK_\aaa$ (therefore $[\LL,\LL] =\KK_\bbb$), and $D\inn\KK_\aaa$. 

By (\ref{uCh}), restriction maps induce bijections $$\ser{\KK^F}{1}\cong \ser{\KK_\aaa^F}{1} \times \ser{\KK_\bbb^F}{1}\cong \ser{\KK_\aaa^F}{1}\times \ser{\LL^F}{1}.$$ We then define $\w \zz\in\ser{\KK^F}{1}$ corresponding to $(1_{\KK_\aaa^F},\zz)$ in the last product.

Assume $X\leq D$ is either centric in $\CC^F$ or $(X,b_X)$ is centric in $\GF$. By Proposition~\ref{maxic}, $\zent D\inn Z\cap \zent X$ and therefore $\KK$ contains ${\mathrm C}_\GF (X)$, and ${\mathrm C}^\circ_\GG (X)$. Iterating the above (\ref{CGY}) it is easy to see that $X\inn\KK_\aaa$.

Let $\zz_X:=\Res^{\KK^F}_{{\mathrm C}_\GF (X)}\w\zz$ whose restriction to $[\LL,\LL]^F$ is of central defect by applying for instance \Th{CuspBl} to $[\LL ,\LL]$. Note that $b_D$ is the block of $\zz_D$. By a slight variant of \Th{inctric} (see [CaEn, 5.29]) one gets the subpair inclusion $(X,b_{{\mathrm C}_\GF (X)}(\zz_X))\leq (D,b_D)$ and therefore 
\begin{equation}\label{bX}
\text{  $ b_X=b_{{\mathrm C}_\GF (X)}(\zz_X).   $ }
\end{equation} If $X$ is assumed centric in $\CC^F$, then  $(X,b_X)$ is centric, or equivalently $\zz_X(1)_\ell\geq |{\mathrm C}_\GF (X)/\zent X|_\ell$ because
\begin{eqnarray*} |{\mathrm C}_\GF (X)/\zent X|_\ell &=&|({\mathrm C}_{\KK_\aaa} (X){\KK_\bbb} )^F/\zent X|_\ell   \\ &=& |{\mathrm C}_{\KK_\aaa} (X)^F/\zent X|_\ell  .|\KK_\bbb^F|_\ell  {\rm \ \ by\ \ (\ref{ZGb})}\\ &=& |\KK_\bbb^F|_\ell {\rm \ \ (}X{\ \rm centric\ in}\ {\KK_\aaa^F}\inn {\mathrm C}^\circ_\GG (\KK_\bbb)^F)\\ &=&\zz (1)_\ell {\rm \ ,\ \ see\ above.} \end{eqnarray*}

Now assume conversely the weaker assumption that $(X,b_X)$ is centric. First $\zz_X$ is the canonical character of $b_X$ because it has $\zent X\in\KK_\aaa^F$ in its kernel. Moreover ${\mathrm C}^\circ_\GG (X)={\mathrm C}^\circ_{\KK_\aaa} (X){\KK_\bbb}$ has its first term of $\aaa$-type (an easy check by induction on $|X|$ in groups of type $\tA$) so  
\begin{equation}\label{Kb}
\text{  ${\mathrm C}^\circ_\GG (X)_\bbb =[\LL,\LL]= \KK_\bbb$. }
\end{equation} The restriction of $\w \zz$ to ${\mathrm C}^\circ_\GG (X)^F$ (or any $(\MM{\KK_\bbb} )^F$ with $\MM$ an $F$-stable connected reductive subgroup of ${\KK_\aaa}$) is a unipotent character, it is the unique one whose restriction to $[\LL,\LL]^F={\KK_\bbb^F}$ is the restriction of $\zz$. 

So we get 

\noindent (iii) {\it $\Res^{{\mathrm C}_\GF (X)}_{{\mathrm C}^\circ_\GG (X)^F}\zz_X\in\Irr ({\mathrm C}^\circ_\GG (X)^F)$
 is the only unipotent character $\zz_X^\circ\in\ser{{\mathrm C}^\circ_\GG (X)^F}1$ such that $\Res^{{\mathrm C}^\circ_\GG (X)^F}_{[\LL,\LL]^F}\zz^\circ_X=\Res^\LF_{[\LL,\LL]^F}\zz$. }

\smallskip

Let's keep $(X,b_X)$ centric. If $g\in\GF$ normalizes it, the above implies that $g$ normalizes $[\LL,\LL]$ while the canonical character of $b_X$ restricts to $[\LL,\LL]^F$ as $\Res^\LF_{[\LL,\LL]^F}\zz$. Then $g$ normalizes $([\LL,\LL] ,\Res^\LF_{[\LL,\LL]^F}\zz )$ and therefore $g\in H[\LL,\LL]^F\inn H{\mathrm C}_\GF (X)$ by assumption (i.b).

Conversely, if $h$ normalizes $X$ and $([\LL,\LL] ,\Res^\LF_{[\LL,\LL]^F}\zz )$, it normalizes ${\mathrm C}^\circ_\GG (X)$ and sends $\Res_{{\mathrm C}^\circ_\GG (X)^F}^{{\mathrm C}_\GG (X)^F}(\zz_X)$ to a unipotent character whose restriction to $[\LL,\LL]^F$ is $^h\Res_{[\LL,\LL]^F}^{{\mathrm C}_\GG (X)^F}(\zz_X)={}^h\Res^\LF_{[\LL,\LL]^F}\zz = \Res^\LF_{[\LL,\LL]^F}\zz$ by (iii) above. So $h$ fixes $\Res_{{\mathrm C}^\circ_\GG (X)^F}^{{\mathrm C}_\GG (X)^F}(\zz_X)$. By [NagaoTsu, 5.5.6], $b_X$ is the unique block covering $b_{{\mathrm C}^\circ_\GG (X)^F}(\Res_{{\mathrm C}^\circ_\GG (X)^F}^{{\mathrm C}_\GG (X)^F}(\zz_X))$ since the index is a power of $\ell$ (use Lemma~\ref{NonC}). So $b_X$ is fixed by $h$. By assumption (i.b) this applies to any $h\in \norm HX$. This completes the proof of (\ref{NorCic}).
\end{proof}

\subsection{A theorem of Broto-M\o ller-Oliver}

Until now we have compared only fusion systems of $\ell$-blocks of groups $\GF$ in the same defining characteristic $p$. Broto-M\o ller-Oliver [BrMO12] have proved a very impressive theorem showing equivalence of fusion systems of $\ell$-subgroups for groups $\GF$ of various defining characteristics. 

We give the theorem in a simplified form (the original one is stronger, see [BMO, Th. A]).

\begin{thm}\label{BMO}
Let $\GG$ a reductive group over $\bbF$. Let $\HH$ a reductive group over a field of $\Bbb K$ of characteristic $r$. Assume that for some maximal tori $\TT\leq\GG$, $\bS\leq \HH$, the two groups have same quadruple $\Hom (\TT ,\Fm)\nni \Phi(\GG ,\TT)$, $\Hom (\Fm ,\TT)\nni \Phi (\GG,\TT)^\vee$. Let $F\co\GG\to\GG$ a Frobenius endomorphism acting trivially on the root system, let $q$ the power of $p$ associated (for instance $q=|\XX_\al^F|$ for all $\al\in \Phi(\GG ,\TT)$). Let $F'\co \HH\to\HH$ a similar endomorphism for $\HH$ and $q'$ the corresponding power of $r$. 

Assume $\ell$ is a prime $\not\in\{  2,p,r\} $, assume $q$, $q'$ have same multiplicative order $d$ mod $\ell$ and that $(q^d-1)_\ell =(q'{}^d-1)_\ell $. 

Then $\GG^F$ and $\HH^{F'}$ have isomorphic fusion systems of $\ell$-subgroups.
\end{thm}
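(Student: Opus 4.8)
\textbf{Proof proposal for Theorem~\ref{BMO}.}

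The plan is to deduce the statement from the $d$-local description of fusion systems of unipotent blocks (Theorem~\ref{NonExo}) applied to the \emph{principal} $\ell$-block on both sides, and then to match the relevant groups across the two characteristics. First I would observe that, since $\ell\notin\{2,p,r\}$ and (after passing to the $\ell$-part, which is the only thing that matters for fusion of $\ell$-subgroups) one may reduce to the situation where the derived groups $[\GG,\GG]$ and $[\HH,\HH]$ are simply connected or at least where $\ell$ divides neither $|\zent\GG^F/\czent\GG^F|$ nor the corresponding order for $\HH$; here the hypothesis $\ell\ge7$ of Theorem~\ref{CuspBl} is too strong, so I would instead invoke the versions for all primes $\ell\ge5$ or the Kessar--Malle form, but the $d$-Harish-Chandra mechanism is identical. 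The principal block corresponds (Remark~\ref{B0G}) to the unipotent $d$-cuspidal pair $(\LL_\GG,1)$ with $\LL_\GG=\cent\GG{\bS_\GG}$ for $\bS_\GG$ a maximal $\phi_d$-torus of $(\GG,F)$, and likewise $(\LL_\HH,1)$ on the $\HH$-side. A Sylow $\ell$-subgroup $D$ of $\GG^F$ is then a Sylow $\ell$-subgroup of $\ccent\GG{[\LL_\GG,\LL_\GG]}^F$, and Theorem~\ref{NonExo} gives $\cF_D(\GG^F)\cong\cF_D(H_\GG)$ where $H_\GG\le\norm{\GG^F}{[\LL_\GG,\LL_\GG]}$ contains $D$ with $H_\GG[\LL_\GG,\LL_\GG]^F=\norm{\GG^F}{[\LL_\GG,\LL_\GG]}$.

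Next I would unwind what these data depend on. The root datum quadruple $(\Hom(\TT,\Fm)\supseteq\Phi(\GG,\TT),\ \Hom(\Fm,\TT)\supseteq\Phi(\GG,\TT)^\vee)$ together with the (here trivial) action of $F$ on it, and the single numerical parameter $q$, determine $\GG$ and $F$ up to the ambiguity of the underlying field, hence determine: the Weyl group, the $\phi_d$-tori and their polynomial orders (Example~\ref{Fmax}, Proposition~\ref{Tphid}), hence $\LL_\GG$ up to $\GG^F$-conjugacy as a centralizer of a maximal $\phi_d$-torus, hence $\norm{\GG^F}{[\LL_\GG,\LL_\GG]}$ and the relative Weyl group $\norm\GG{\LL_\GG}^F/\LL_\GG^F$, all as \emph{abstract} objects expressed polynomially in $q$. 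The key point is that the \emph{$\ell$-local} structure of all of these — the group $D=\mathrm{Syl}_\ell(\ccent\GG{[\LL_\GG,\LL_\GG]}^F)$ and the fusion-carrying overgroup $H_\GG$ up to the relevant isomorphism — depends on $q$ only through the order $d$ of $q$ mod $\ell$ and the $\ell$-adic valuation $(q^d-1)_\ell$. Indeed $\ccent\GG{[\LL_\GG,\LL_\GG]}$ is (up to central torus factors, which contribute $\ell'$-groups to the $F$-fixed points when $d>1$, by the computation in the proof of Proposition~\ref{ZellF}) a product of groups of the shape $\GL$ over $q^d$, whose Sylow $\ell$-subgroup is an iterated wreath product built from the cyclic $\ell$-group of order $(q^d-1)_\ell$; and the group $H_\GG/[\LL_\GG,\LL_\GG]^F$ is a relative Weyl group, i.e. a complex reflection group depending only on the root datum and on $d$. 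Since $(\GG,\TT,\Phi)$ and $(\HH,\bS,\Phi)$ share the same root datum and $q,q'$ share the same $d$ and the same $(q^d-1)_\ell=(q'^d-1)_\ell$, I would produce an isomorphism $D\xrightarrow{\ \sim\ }D'$ carrying $H_\GG$ to $H_\HH$ compatibly, whence $\cF_D(H_\GG)\cong\cF_{D'}(H_\HH)$, and concatenating with the two applications of Theorem~\ref{NonExo} gives $\cF_D(\GG^F)\cong\cF_{D'}(\HH^{F'})$ as desired.

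The main obstacle — and the part requiring real work rather than bookkeeping — is the claim that the pair $(D,H_\GG)$ is determined up to isomorphism by $(\text{root datum},d,(q^d-1)_\ell)$ \emph{as an abstract fusion-carrying pair}, not merely that $D$ and $H_\GG/[\LL_\GG,\LL_\GG]^F$ are individually so determined. One must show the action of the relative Weyl group on $D$, and the way $D$ sits inside $\norm{\GG^F}{[\LL_\GG,\LL_\GG]}$, is controlled by the same invariants; this is essentially the content of the detailed analysis of $\norm\GG\TT$ with $\TT$ a maximally split torus of $[\LL_\GG,\LL_\GG]\ccent\GG{[\LL_\GG,\LL_\GG]}$ referenced in the proof of Theorem~\ref{NonExo} (see [CaEn, Ex.~23.1], [CaEn99b]), now carried out uniformly in $q$. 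A secondary subtlety is handling the small exceptional primes and the groups of type $\tw{3}D_4$, $\tw{2}B_2$, etc.\ excluded from our standing hypotheses; since $\ell\ge5$ suffices for all the block-theoretic inputs used and $\ell=2$ is excluded by hypothesis, these can be dealt with by the explicit tables, but the honest route is the original argument of Broto--M\o ller--Oliver, which in fact works at the level of the $p$-compact groups / homotopy theory and bypasses block theory entirely; the block-theoretic proof sketched here should be regarded as the structural reason behind their theorem in the cases where Theorem~\ref{NonExo} applies.
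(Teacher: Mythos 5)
The paper does not actually prove Theorem~\ref{BMO}: it explicitly defers to the topological argument of Broto--M\o ller--Oliver, resting on the homotopy-theoretic machinery of Broto--Levi--Oliver, Friedlander's theorem on \'etale homotopy of algebraic groups, and Martino--Priddy. The Remark following the statement only pushes the group-theoretic methods of Sections 6--7 as far as an isomorphism of \emph{Sylow $\ell$-subgroups} $Z\rtimes N$ readable off the root datum, and says explicitly that comparing the fusion systems would further require determining essential subgroups and the action of their normalizers (Jianbei An's case-by-case program). Your idea of invoking Theorem~\ref{NonExo} for the principal block is therefore a genuinely different, block-theoretic route and, in spirit, a more structural one, since Theorem~\ref{NonExo} already packages the fusion system as $\cF_D(H)$ for an explicit overgroup $H$.

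The gap is the one you flag yourself: the claim that the fusion-carrying pair $(D,H)$ is determined, uniformly across defining characteristics, by the root datum together with $(d,(q^d-1)_\ell)$. Asserting that the analysis of [CaEn, Ex.~23.1], [CaEn99b] ``carried out uniformly in $q$'' gives this is precisely the step you do not supply; the two finite groups ${\rm N}_{\GF}([\LL,\LL])$ and its analogue in $\HH^{F'}$ have different orders, and producing a compatible isomorphism of pairs from their polynomial descriptions is exactly where every purely group-theoretic attempt has so far devolved into the essential-subgroup computations the Remark alludes to. There is also a mismatch in the range of $\ell$: Theorem~\ref{BMO} permits $\ell=3$, while Theorems~\ref{CuspBl} and \ref{NonExo} are proved for $\ell\ge7$ (at best $\ell\ge5$ in the variants you mention), so the small primes escape the tools you invoke. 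This is a reasonable research program, not a proof --- and you candidly say as much at the end.
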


The proof would be too long to sketch here, see also [AschKeOl, Sect. III.1.7]. Let's say just that it uses all the strength of the topological methods developed by Broto-Levi-Oliver along with an old theorem of Friedlander [Fr82, 12.2] on \'etale homotopy of the algebraic groups $\GG$ and a less old one by Martino-Priddy (see [Mis90], [MaPr96]).

\begin{rem}
With the elementary group theoretical methods used in the proof of \Th{CuspBl} (Sylow $\phi_d$-tori and their normalizers) and under the same assumptions about $\ell$, it is easy to describe the Sylow $\ell$-subgroups of $\GF$ as semi-direct products $$Z\rtimes N$$ (see [Ca94, 4.4]) where \begin{itemize}
	\item  $Z=\zent{\cent\GG{\bS}}^F$ for $\bS$ a Sylow $\phi_d$-torus of $\GG$ (see also Remark~\ref{B0G}),
	\item $N$ is a Sylow $\ell$-subgroup of $(W_{\cent\GG{\bS}}(\TT)^\perp)^F$, where $\TT$ is an $F$-stable maximal torus of $\cent\GG{\bS}$ and $W_{\cent\GG{\bS}}(\TT)^\perp$ is the subgroup of the Weyl group $W_\GG(\TT)$ generated by reflections through roots orthogonal to any $\al\in\PGT$ with $\al(\bS)=1$
	\item  the action of $N$ on $Z$ comes from the inclusion $Z\leq\TT^F$.
\end{itemize}  All the above can be read in the ``root datum" quadruple of the pair $(\GG ,F)$. This would imply that the two finite groups $\GF$ and $\HH^{F'}$ of the Theorem above have isomorphic Sylow $\ell$-subgroups. Comparing the fusion systems needs to find the {\it essential} subgroups of $ZN$ in the sense of [AschKeOl, \S I.3] and the action of their normalizers. This has been determined in many cases by Jianbei An as a by-product of his program to determine radical subgroups (see Definition~\ref{radp}) in finite groups of Lie type and check Alperin's weight conjecture (\ref{AWC}) for those groups. See [AnDi12, \S 3] for essential $\ell$-subgroups of finite classical groups. See [AnDiHu14] and the references given there for many exceptional types.
\end{rem}
%%%%%%%%%%%%%%%%%%%%%%%%%%%%%%%%%%%%%%%%%%%%%%%%%%%%%%%%%%%%%%%%

%%%%%%%%%%%%%%%%%%%%%%%%%%%%%%%%%%%%%%%%%%%%%%%%%%%%%%%%%%%%%%%%

{}
\bigskip

{}

\section{Some applications}

\subsection{Abelian defect}

When the defect group of some block $B$ defined by \Th{CuspBl} is assumed to be abelian, the description of $\Irr(B)$ simplifies a lot. One keeps the same hypotheses on $\GG$, $F$, $\ell$, $d$, $(\LL,\zz)$.

\begin{thm}\label{AbDef}
	Assume the defect $\ell$-groups of $B_\GF(\LL,\zz)$ are abelian. Then $$\Irr(B_\GF(\LL,\zz))=\bigcup_{t,\chi_t}\Irr(\Lu{\GG(t)}{\GG}(\wh t\chi_t))$$ where $t$ and $\chi_t$ are subject to the following conditions\begin{enumerate}[\rm(a)]
		\item $t\in(\GD)^{F^*}_\ell$ ,
		\item $\LL\inn\GG(t)$ where the latter is a Levi subgroup in duality with $\ccent{\GD}{t}$ ,
		\item $\chi_t$ is an irreducible component of $\Lu{\LL}{\GG(t)}\zz$.
	\end{enumerate}
	\end{thm}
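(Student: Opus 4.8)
\noindent\emph{Plan of the proof.} The plan is to deduce this from \Th{CuspBl} and \Th{NoUn}: under the abelian defect hypothesis the auxiliary unipotent $d$-cuspidal pair $(\LL_t,\zz_t)$ that appears in \Th{NoUn} may always be taken to be $(\LL,\zz)$ itself, and then the criterion ``$\psi_t(\chi_t)$ meets $B_\GF(\LL,\zz)$'' turns into conditions (a)--(c). The inclusion $\supseteq$ needs no extra hypothesis: if $\LL\inn\GG(t)$ then $\cent{\GG(t)}{\bS}=\GG(t)\cap\cent{\GG}{\bS}=\LL$, so $\LL$ is a $d$-split Levi subgroup of $\GG(t)$, while $d$-cuspidality of $\zz$ only concerns $\LL$; hence $(\LL,\zz)$ is a unipotent $d$-cuspidal pair of $\GG(t)$, and \Th{NoUn} applied with $(\LL_t,\zz_t)=(\LL,\zz)$ shows that all constituents of $\eps_{\GG,t}\Lu{\GG(t)}{\GG}(\wh t\chi_t)$ lie in $B_\GF(\LL,\zz)$ as soon as $\chi_t$ is a component of $\Lu{\LL}{\GG(t)}\zz$.

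For the reverse inclusion the first step is to identify the defect group. Recall that $\zent{\LL}^F_\ell\inn D$ and that $(D,b_D)$ is a maximal subpair with $b_D=b_{\cent{\GF}{D}}(\Res^{\LF}_{\cent{\GF}{D}}\zz)$ (Proposition~\ref{DefUn}), and that $\cent{\GF}{\zent{\LL}^F_\ell}=\LF$ (Corollary~\ref{dpair}). Since $D$ is abelian, $D\inn\cent{\GF}{D}\inn\cent{\GF}{\zent{\LL}^F_\ell}=\LF$; thus $D$ lies in $\LL$ and, being a subgroup of $\cent{\GG}{[\LL,\LL]}$, commutes with $[\LL,\LL]$, so $D\inn\cent{\LL}{[\LL,\LL]}^F=\zent{\LL}^F$ and, $D$ being an $\ell$-group, $D\inn\zent{\LL}^F_\ell$. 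With $\zent{\LL}^F_\ell\inn D$ this gives $D=\zent{\LL}^F_\ell$; in particular $\zent{\LL}^F_\ell$ is a Sylow $\ell$-subgroup of $\CC^F$, where $\CC\deq\ccent{\GG}{[\LL,\LL]}$ (Theorem~\ref{CuspBl}(ii)).

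Next we dualise. By the remark following \Th{NoUn}, whenever $\psi_t(\chi_t)$ has a constituent in $B_\GF(\LL,\zz)$ the $\ell$-element $t$ lies in $\CC^*{}^{F^*}$ with $\CC^*\deq\ccent{\GD}{[\LL^*,\LL^*]}$ (for $\ell\geq7$ the semisimple $\ell$-element $t$ lands in the connected centralizer). As $\CC$ and $\CC^*$ are in duality they have the same order polynomial, so $|\CC^*{}^{F^*}|_\ell=|\CC^F|_\ell=|\zent{\LL}^F_\ell|$; comparing with $|\zent{\LL^*}^{F^*}_\ell|$, which equals $|\zent{\LL}^F|_\ell$ for $\ell\geq7$ --- apart from the type $\tA_{n-1}$ case with $\ell\mid q\mp1$ already isolated in Corollary~\ref{dpair}, where $\LL$ is a maximal torus and the statement is immediate --- one finds that $\zent{\LL^*}^{F^*}_\ell$ is a Sylow $\ell$-subgroup of $\CC^*{}^{F^*}$, so $t$ is $\GD^{F^*}$-conjugate into $\zent{\LL^*}^{F^*}$. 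A rational series depends only on the conjugacy class of its label (Theorem~\ref{Lseries}), so we may assume $t\in\zent{\LL^*}^{F^*}$, whence $\LL^*\inn\ccent{\GD}{t}$ and $\LL\inn\GG(t)$. Take now $\chi\in\Irr(B_\GF(\LL,\zz))$. By \Th{CuspBl} and the splitting of $\Irr(\GF)$ into rational series, $\chi\in\ser{\GF}{t}$ for some semisimple $\ell$-element $t$, and by Jordan decomposition (the maps $\psi_t$ recalled before \Th{NoUn}) $\chi$ is a constituent of $\psi_t(\chi_t)$ for some $\chi_t\in\ser{\GG(t)^F}{1}$; after the reduction above we take $\LL\inn\GG(t)$. \Th{NoUn} yields a unipotent $d$-cuspidal pair $(\LL_t,\zz_t)$ of $\GG(t)$ with $(\LL_t,\zz_t)\leq_d(\GG(t),\chi_t)$, $[\LL_t,\LL_t]=[\LL,\LL]$ and $\Res^{\LL_t}_{[\LL_t,\LL_t]}\zz_t=\Res^{\LL}_{[\LL,\LL]}\zz$. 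Both $\LL$ and $\LL_t$ are $d$-split Levi subgroups of $\GG(t)$ with derived subgroup $[\LL,\LL]$, hence are $\ccent{\GG(t)}{[\LL,\LL]}^F$-conjugate by Lemma~\ref{LL}; conjugating the data we may take $\LL_t=\LL$, and then $\zz_t=\zz$ since restriction to $[\LL,\LL]^F$ is injective on unipotent characters by (\ref{uCh}). So $\chi_t$ is a component of $\Lu{\LL}{\GG(t)}\zz$ and $\chi$ is a constituent of $\Lu{\GG(t)}{\GG}(\wh t\chi_t)$ with $(t,\chi_t)$ as in (a)--(c); all conjugations used lie in $\GF$ resp.\ $\GD^{F^*}$, so the bookkeeping is routine.

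The main obstacle is the dual-side argument of the third paragraph. That $D=\zent{\LL}^F_\ell$ follows easily from abelianity, but transferring this through duality to the statement that $\zent{\LL^*}^{F^*}_\ell$ is a Sylow $\ell$-subgroup of $\ccent{\GD}{[\LL^*,\LL^*]}^{F^*}$ requires careful control of order polynomials and of the $\ell$-parts of the (possibly disconnected) centres of $\LL$ and $\LL^*$; and the genuinely exceptional configurations --- essentially type $\tA_{n-1}$ with $\ell$ dividing $q\mp1$, where $\LL$ is forced to be a maximal torus --- must be checked separately, exactly as in the proof of Corollary~\ref{dpair}.
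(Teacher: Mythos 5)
Your proof is correct and follows essentially the same route as the paper's: identify the defect group as $\zent{\LL}^F_\ell$ from the abelianity and the centric subpair, transfer the Sylow property of $\czent{\LL}$ inside $\ccent{\GG}{[\LL,\LL]}$ to the dual side via order polynomials read off the root datum, conjugate $t$ into $\czent{\LL^*}^{F^*}$, and conclude via \Th{NoUn} together with Lemma~\ref{LL}. Your version fills in several steps the paper leaves implicit (the deduction $D\inn\cent{\GF}{D}\inn\LF$ giving $D=\zent{\LL}^F_\ell$, instead of an appeal to Proposition~\ref{maxic}; and the verification that $(\LL,\zz)$ remains a unipotent $d$-cuspidal pair inside $\GG(t)$), but the substance is the same.
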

\begin{proof} By Corollary~\ref{dpair} and Proposition~\ref{DefUn} we know that the defect group can be abelian only if the centric subpair $(\zent{\LL}_\ell^F,b_\LF(\zz))$ is maximal and $\zent{\LL}_\ell^F$ is a Sylow $\ell$-subgroup of $\ccent\GG{[\LL,\LL]}^F$. By Corollary~\ref{dpair}.(ii) this means that the polynomial order of $(\ccent\GG{[\LL,\LL]},F)$ has not more powers of cyclotomic polynomials $\phi_{m}$ with $m_\lp =d$ than its (maximal) torus $(\czent{\LL},F)$. This property can be written entirely in the groups $X(\TT_0)$ and $Y(\TT_0)$ of $\GG$, so they transfer to the same property in the dual, namely $\ccent\GD{[\LL^*,\LL^*]}^{F^*}$ has a Sylow $\ell$-subgroup in $\czent{\LD}^{F^*}$. Then when imposing the condition that $t$ commutes with $[\LL^*,\LL^*]$ from \Th{NoUn}, one may assume that $t\in\czent{\LL^*}$ and therefore $\LD\inn\ccent{\GD}{t}$. Then one may choose $\GG(t)\nni \LL$. The last point is then clear from  \Th{NoUn} by use of Lemma~\ref{LL}.
\end{proof}

\subsection{Brauer's height zero conjecture}

The description of \Th{AbDef}, along with the parametrization of \Th{BMM'}, leads quickly to check the degrees in $\Irr (B_\GF(\LL,\zz))$ when the unipotent block $B_\GF(\LL,\zz)$ has abelian defect (see [BrMaMi93, 5.15]), keeping the restrictions on $\ell$ of \Th{CuspBl}. In particular, $\chi(1)_\ell$ takes only one value for $\chi\in\Irr(B_\GF(\LL,\zz))$, thus confirming \sing{Brauer's height zero conjecture} (BHZC) \index{(BHZC)}
\begin{equation}\label{BHZC}
\text{  $ D   $ is abelian if and only if $|\{\chi(1)_\ell\mid \chi\in\Irr(B) \} |=1$  }
\end{equation}
where $B$ is an $\ell$-block of a finite group with defect group $D$.

Kessar-Malle have proven 

\begin{thm}[{see [KeMa13], [KeMa17]}]\label{KeMa17} The equivalence of (\ref{BHZC}) is true for all blocks of finite quasi-simple groups. 
\end{thm}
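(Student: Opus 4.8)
The plan is to follow the partition of finite quasi-simple groups provided by CFSG (Theorem~\ref{Tits} and the list after it): alternating groups and their covers, the sporadic groups and their covers, and the groups of Lie type $\GF$; and within the last family to separate the defining characteristic $\ell=p$ from the transversal case $\ell\ne p$. In each case one must verify both directions of (\ref{BHZC}): that a block with abelian defect group has all irreducible characters of height zero, and conversely that a block whose irreducible characters all have height zero has abelian defect group.

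First I would dispose of the combinatorial and sporadic cases. For $\Sym n$ the block distribution is Theorem~\ref{BrRo}: the block $B_\kappa$ with $\kappa\vdash n-w\ell$ has as defect group a Sylow $\ell$-subgroup of $\Sym{w\ell}$, which is abelian exactly when $w<\ell$; combining this with the $\ell$-quotient description of the characters $\zeta_\la$ with $\la_{(\ell)}=\kappa$ (Macdonald's height formula) one checks that all heights in $B_\kappa$ vanish precisely in that range, so (\ref{BHZC}) holds. The passage to $\Alt n$ and to the double covers is by Clifford theory, together with the separate treatment of the faithful $2$-blocks of the double covers (Bessenrodt--Olsson), as noted in the Remark after \Th{ChRo}. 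The sporadic groups and their covers form a finite list handled from the known character tables and decomposition matrices.

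The bulk of the work is for $G=\GF$ of Lie type. When $\ell=p$, Theorem~\ref{StM} says $\ov G=G/\zent G$ has only two $\ell$-blocks: the defect-zero block carrying only the Steinberg character, for which (\ref{BHZC}) is vacuous, and the principal block, whose defect group is a Sylow $p$-subgroup $U=\UU^F$. Here (\ref{BHZC}) amounts to: $U$ is abelian iff every non-Steinberg character of $\ov G$ has degree prime to $p$. One direction follows because $U$ abelian forces $\GG$ into a very restricted list of types with explicitly known character degrees; the other because when $U$ is non-abelian the commutator formula (\ref{ComFla}) produces unipotent characters whose degree is divisible by $p$ but not by $|U|$. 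For $\ell\ne p$ one reduces successively: by the Bonnaf\'e--Dat--Rouquier Morita equivalences (Section 9, which preserve defect groups and local structure) to quasi-isolated blocks, and then via the Jordan decomposition of $\ell$-blocks (the $\ell$-modular refinements of Theorem~\ref{broIso}, see [KeMa15]) essentially to unipotent blocks. For a unipotent block $B_\GF(\LL,\zz)$ one has Theorem~\ref{CuspBl} and its extension Theorem~\ref{thmA}: the defect group is a Sylow $\ell$-subgroup $D$ of $\ccent\GG{[\LL,\LL]}^F$ and $\Irr(B)$ is described through generalized $d$-Harish-Chandra theory. If $D$ is abelian, Theorem~\ref{AbDef} gives the explicit list $\Irr(B_\GF(\LL,\zz))=\bigcup_{t,\chi_t}\Irr(\Lu{\GG(t)}{\GG}(\wh t\,\chi_t))$, and a degree computation using Theorem~\ref{BMM'} (as in [BrMaMi93, 5.15]) shows that $\chi(1)_\ell$ is constant on $\Irr(B)$. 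Conversely, if $D$ is non-abelian then by Proposition~\ref{DefUn} and Corollary~\ref{dpair} the group $\czent\LL{}^F_\ell$ is strictly smaller than a Sylow $\ell$-subgroup of $\ccent\GG{[\LL,\LL]}^F$, so the polynomial order of $(\ccent\GG{[\LL,\LL]},F)$ has strictly more powers of cyclotomic polynomials $\phi_m$ with $m_{\ell'}=d$ than its maximal torus; the combinatorics of $\phi_d$-tori and their normalizers then yields a deeper unipotent $d$-cuspidal configuration and, via Theorem~\ref{NoUn}, a character in $B$ whose degree carries an $\ell$-part beyond the minimal one, i.e. of positive height.

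The hard part is the "$\Leftarrow$" direction for the groups of Lie type with $\ell\ne p$, in particular making the reduction to unipotent blocks and the defect-group bookkeeping uniform for the bad primes $\ell\in\{2,3\}$ — where the Cabanes--Enguehard hypothesis $\ell\ge 7$ of \Th{CuspBl} fails, $d$ must be taken as the order of $q$ mod $4$ for $\ell=2$, and the Jordan decomposition is no longer bijective because $\cent\GD s$ can be disconnected, so that defect groups must be tracked through Theorem~\ref{thmA}(v) rather than a clean bijection. This delicate case-by-case analysis of the isolated series and their defect groups is precisely the content of [KeMa17].
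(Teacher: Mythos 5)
The paper supplies no proof of this statement: it is a citation of [KeMa13] (for the direction ``abelian defect implies all heights zero'') and of [KeMa17] (for the full equivalence on quasi-simple groups), followed only by the remark that the remaining difficulty concentrates on groups of Lie type at the primes $\ell\le 5$. Your proposal is a fair summary of the broad strategy of those references, and you correctly locate the hard core of the matter (isolated series, bad primes, disconnected $\cent{\GD}{s}$); as an outline it is consistent with what the paper indicates, but it is not a proof, and you acknowledge as much in your last sentence.

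Several points in the outline should nevertheless be corrected. First, the chronology does not allow [KeMa13] to invoke [BoDaRo17]: what it has available is the earlier Bonnaf\'e--Rouquier theorem, i.e.\ \Th{BRMorita}, whose hypothesis $\cent{\GD}{s}^{F^*}\inn\LD$ is strictly stronger than the $\ccent{\GD}{s}\inn\LD$ of \Th{BDR1.1}; and for the quasi-isolated series no reduction is available at all, so that is precisely where the work lives. Second, even when the reduction applies, the Morita-equivalent target $\OO N e_\ell(\LF,s)$ in \Th{BDR1.1} has $N$ an extension of $\LF$ that is not itself a finite reductive group, so ``reduce to unipotent blocks'' must be read loosely, and the defect and height bookkeeping has to be carried through this extension. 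Third, in the defining characteristic the commutator formula~(\ref{ComFla}) does not ``produce'' a character of positive, non-maximal height; what is actually used is the explicit list of generic unipotent degrees together with the elementary observation that a non-abelian $U=\UU^F$ forces the algebraic rank of $\GG$ to be at least $2$, whence there is a third unipotent character besides the trivial one and the Steinberg character, necessarily of $p$-part $q^a$ with $0<a<|\Phi(\GG,\TT)^+|$ and hence of positive height in the principal block. Finally, your sketch of the converse via Proposition~\ref{DefUn}, Corollary~\ref{dpair} and \Th{NoUn} invokes tools proved in this text only for $\ell\ge7$; those simply do not exist for $\ell\in\{2,3,5\}$, and the content of [KeMa17] is exactly the case-by-case analysis that replaces them --- naming the obstacle is not the same as overcoming it.
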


Given past knowledge about alternating groups, sporadic groups and blocks of finite reductive groups for ``good" primes recalled above, Kessar-Malle's proof concentrates mostly on $\ell$-blocks of groups of Lie type for $\ell\leq 5$ where the challenge is still remarkably difficult.

 This type of verification in groups of Lie type is important because of the reduction theorems of Berger-Kn\"orr [BeKn88] and Navarro-Sp\"ath [NaSp14].

\begin{thm}[Berger-Kn\"orr]\label{BeKn} Let $\ell$ be a prime number.
	If for any $\ell$-block $B$ of a quasi-simple group with abelian defect group, $(\chi(1)_\ell)_{\chi\in\Irr(B)}$ is constant, then it is the case for any $\ell$-block with abelian defect of any finite group, i.e. (BHZ1), the ``only if" part of (\ref{BHZC}), holds.
\end{thm}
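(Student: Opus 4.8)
\noindent{\it About the proof.} This is a reduction theorem, so the natural approach is a minimal counterexample. Suppose (BHZ1) fails and pick a finite group $G$ of minimal order admitting an $\ell$-block $B$ with abelian defect group $D$ and some $\chi\in\Irr(B)$ of positive height; the aim is to derive a contradiction. For $\ell$-solvable groups the implication ``$D$ abelian $\Rightarrow$ all heights zero'' is classical (going back to work of Fong; see also Gluck--Wolf), so $G$ is not $\ell$-solvable and in particular its layer $E(G)$ is nontrivial.

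The first stage is the usual chain of Fong reductions, run so as to preserve both abelianness of $D$ and the existence of a positive-height character. If $\theta\in\Irr(O_{\ell'}(G))$ lies under $B$ with inertia group $I<G$, the Fong--Reynolds correspondence identifies $B$ with a block of $I$ of defect group $D$; that correspondence multiplies degrees by $|G:I|$ and divides $|G|$ by the same factor, hence leaves all heights unchanged, and minimality forces every such $\theta$ to be $G$-invariant. Fong's second reduction then allows passage to a suitable central extension so as to arrange $O_{\ell'}(G)=1$, still with $D$ and all heights intact. After this one has $F^*(G)=O_\ell(G)\,E(G)$, $\cent{G}{F^*(G)}\leq F^*(G)$, and $O_\ell(G)\leq D$ abelian.

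Next I would descend to the components. The block $B$ covers a block $\beta$ of $L\deq F^*(G)$, and a further Fong--Reynolds reduction (now for $L\lhd G$) lets me assume $\beta$ is $G$-invariant; by Kn\"orr's theorem $D\cap L$ is a defect group of $\beta$, so it is abelian. Writing $E(G)$ as a central product of quasi-simple subgroups, $\beta$ is covered by a block $\beta_i$ of each such factor, each with abelian defect group (a section of $D$). This is where the hypothesis is used: (BHZ1) for quasi-simple groups forces every irreducible character of every $\beta_i$ to have $\ell'$-degree; together with abelianness of $O_\ell(G)$ this shows that $D\cap L$ is a Sylow $\ell$-subgroup of $L$ and that the canonical character of $\beta$ has $\ell'$-degree. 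Thus the whole ``defect discrepancy'' of $B$ has been transferred off $L$.

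It remains to transfer the problem to the quotient $G/L$ and close the induction. From the stable Clifford theory of $B$ over the $G$-invariant block $\beta$ (Fong--Reynolds of the second kind, or the Dade--K\"ulshammer analysis of blocks over a stable normal subgroup) one obtains a block $\bar B$ with abelian defect group $D/(D\cap L)$, a priori of a twisted group algebra of $G/L$, whose ordinary characters are in a degree-scaled and hence --- using $|L|_\ell=|D\cap L|$ --- height-preserving bijection with $\Irr(B)$. Realising $\bar B$ as a genuine block of a central extension of $G/L$ and applying minimality yields the contradiction with the positive height of $\chi$. \emph{The main obstacle is exactly this last dovetailing}: making the height-preservation in stable Clifford theory over $F^*(G)$ completely precise (simultaneous control of defect groups, canonical characters and heights for a block covering a $G$-invariant block), and organising the induction --- by an invariant such as $|G/O_{\ell'}(G)|$ rather than $|G|$ --- so that the central $\ell'$-extensions forced by the Fong reductions do not break minimality. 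This technical core is where the full Clifford theory of blocks (Fong's reductions, Kn\"orr's theorem on defect groups of covered blocks, twisted group algebras) does the work, and it is the content of the Berger--Kn\"orr argument.
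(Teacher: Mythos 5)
Your proof follows the template of ``Fong reductions plus stable Clifford theory over $F^*(G)$'', which is roughly the scheme behind the Navarro--Sp\"ath reduction (Theorem~\ref{NaSp}) of the converse direction of (\ref{BHZC}); it is not the route Berger--Kn\"orr take. As remarked after Theorem~\ref{NaSp}, their proof is module-theoretic and rests on Kn\"orr's theorem on \emph{Green vertices of simple modules} [Kn79]: for any simple module in a block $B$ with defect group $D$, Kn\"orr produces a vertex $Q$ with $\cent{D}{Q}\leq Q\leq D$, so abelianness of $D$ forces $Q=D$. That vertex control, applied after the $\mathrm{O}_{\ell'}$-reduction, is what drives the descent to $E(G)$ and its quasi-simple components \emph{without} ever entering the stable Clifford theory of a block over a $G$-invariant covered block. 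The theorem of Kn\"orr you actually cite (that $D\cap N$ is a defect group of a covered block) is a different, older statement and is just part of the standard toolbox, not the decisive lever; your closing sentence, attributing the ``twisted group algebra'' mechanism to Berger--Kn\"orr, misidentifies the content of their argument.

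The gap is exactly where you flag it, and it cannot be waved away. The passage from $\Irr(B)$ to the characters of a block $\bar B$ of a twisted group algebra $k_\alpha[G/L]$ tensors with a projective representation attached to the canonical character of $\beta$, and degrees multiply accordingly. To close the induction you need, simultaneously, that this rescales the $\ell$-parts of all degrees by a single fixed factor, that $\bar B$ has abelian defect group $D/(D\cap L)$ in the sense appropriate for twisted algebras, and --- most seriously --- that $\bar B$ can be realised as an honest block of a finite group to which minimality applies. The last point is the rub: the $2$-cocycle $\alpha$ forces a central extension of $G/L$, which is not smaller than $G$ under the natural order invariants and reintroduces an $\ell'$-core that must be killed again, so the induction does not obviously terminate even after switching to $|G/\mathrm{O}_{\ell'}(G)|$. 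This is precisely the known obstacle that makes a purely Clifford-theoretic reduction of the height zero conjecture delicate, and why Navarro--Sp\"ath ultimately need the strictly stronger inductive Alperin--McKay hypothesis rather than just (BHZC) for quasi-simple groups. Berger--Kn\"orr sidestep the issue by arguing with vertices of modules; your proposal does not reproduce that escape, and without it the proof is incomplete. (A smaller point: Gluck--Wolf treat the $\ell$-solvable case of the \emph{if} direction of (\ref{BHZC}); the $\ell$-solvable case of (BHZ1) you invoke goes back to Fong.)
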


\begin{cor} [{see [KeMa13]}]\label{KeMa13}  If an $\ell$-block $B$ of a finite group has abelian defect groups, then $(\chi(1)_\ell)_{\chi\in\Irr(B)}$ is constant.
\end{cor}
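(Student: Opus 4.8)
The plan is to obtain Corollary~\ref{KeMa13} as a formal consequence of the two preceding results: the Berger--Kn\"orr reduction (\Th{BeKn}) together with the Kessar--Malle verification for finite quasi-simple groups (\Th{KeMa17}). The point is that \Th{BeKn} reduces the ``only if'' half of Brauer's height zero conjecture, i.e. the implication
\[
D \text{ abelian } \Longrightarrow |\{\chi(1)_\ell \mid \chi\in\Irr(B)\}| = 1 ,
\]
for an $\ell$-block $B$ of an \emph{arbitrary} finite group, to the same statement for $\ell$-blocks of quasi-simple groups; and this last case is exactly what \Th{KeMa17} supplies.

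First I would isolate from \Th{KeMa17} precisely the statement needed as input to \Th{BeKn}: for every prime $\ell$ and every $\ell$-block $B$ of a finite quasi-simple group whose defect groups are abelian, the function $\chi\mapsto\chi(1)_\ell$ is constant on $\Irr(B)$. This is just one of the two implications in the equivalence (\ref{BHZC}) asserted by \Th{KeMa17}, specialized to the case where the ambient group is quasi-simple; nothing further is required here beyond quoting the theorem. I would also double-check that the class of groups meant by ``quasi-simple'' in \Th{KeMa17} matches the one fed into the hypothesis of \Th{BeKn} — in particular that faithful blocks of covering groups of simple groups are included — so that the two statements really dovetail.

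Next I would invoke \Th{BeKn} with this input. Its hypothesis reads ``for any $\ell$-block $B$ of a quasi-simple group with abelian defect group, $(\chi(1)_\ell)_{\chi\in\Irr(B)}$ is constant'', which we have just verified; its conclusion is (BHZ1), the ``only if'' part of (\ref{BHZC}), for an arbitrary finite group. Since (BHZ1) applied to a block $B$ with abelian defect group is exactly the assertion that $(\chi(1)_\ell)_{\chi\in\Irr(B)}$ is constant, this is the statement of the corollary, and the argument is complete.

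The real difficulty, and hence the true obstacle, lies inside the two quoted theorems rather than in their combination. \Th{BeKn} itself rests on Clifford-theoretic analysis of a minimal counterexample and a careful passage through normal subgroups and central extensions down to a quasi-simple group, and one must be attentive to the bookkeeping there. But the genuinely hard step is \Th{KeMa17}: once the alternating and sporadic cases and the good-prime case for groups of Lie type are handled — the latter essentially by the abelian-defect description of Section~9 (see \Th{AbDef}) together with the parametrization of \Th{BMM'} — one is still left with $\ell$-blocks of groups of Lie type for the small/bad primes $\ell\le 5$, whose treatment demands substantially more delicate block-theoretic work. For the purposes of this corollary, however, all of that input may be taken as given, and the two-line deduction above is the entire proof.
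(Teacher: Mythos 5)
Your deduction is exactly how the paper intends the corollary to be read: Theorem~\ref{KeMa17} furnishes the hypothesis of the Berger--Kn\"orr reduction \Th{BeKn}, whose conclusion (BHZ1) is precisely the statement of the corollary. The paper gives no separate proof, and your two-step combination, including the sanity check that the class of ``quasi-simple groups'' in the two theorems really matches, is the intended argument.
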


The converse should be checked through Navarro-Sp\"ath's reduction theorem.
\begin{thm}[Navarro-Sp\"ath]\label{NaSp} If all blocks of finite quasi-simple groups satisfy the inductive Alperin-McKay condition of [Sp13, 7.2], then (\ref{BHZC}) holds.
\end{thm}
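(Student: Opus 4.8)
\textbf{Plan of proof for Theorem~\ref{NaSp}.} The aim is to deduce Brauer's height zero conjecture~(\ref{BHZC}) in both directions from the inductive Alperin--McKay condition, using a reduction that closely parallels the Navarro--Sp\"ath strategy. The first move is to separate the two implications. For the ``only if'' part, namely (BHZ1): if the defect group $D$ of a block $B$ is abelian then $\chi(1)_\ell$ is constant on $\Irr(B)$, one may appeal directly to Theorem~\ref{BeKn} (Berger--Kn\"orr) together with Corollary~\ref{KeMa13}, so that in effect BHZ1 is already available unconditionally for all finite groups. Hence the real content is the ``if'' part (BHZ2): if $\chi(1)_\ell$ is constant on $\Irr(B)$ then $D$ is abelian. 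It is BHZ2 that Navarro--Sp\"ath reduce to the inductive Alperin--McKay (iAM) condition.

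The plan for BHZ2 is a minimal-counterexample argument. Suppose $G$ is a finite group of minimal order admitting a block $B$ with $D$ non-abelian but $\chi(1)_\ell$ constant on $\Irr(B)$. One first reduces to the quasi-simple situation: using Brauer's first Main Theorem and Clifford theory for a minimal normal subgroup $N\lhd G$, together with Fong--Reynolds and covering-block techniques, one shows that a block of some section of $G$ built from a quasi-simple component $S$ of $N$ inherits the failure of BHZ2 --- this is exactly where the structure of $N$ as a direct product of copies of a simple group $S$, and the action of $G$, must be controlled. The key point inherited from the iAM setup is that, because $S$ satisfies the inductive Alperin--McKay condition, there is a bijection $\Irr_0(B)\to\Irr_0(b)$ (height-zero characters) between $B$ and its Brauer correspondent $b$ in $N_G(D)$ that is equivariant under the relevant automorphism and central-character data, and in particular preserves $\ell$-defects. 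One then counts: if $\chi(1)_\ell$ were constant on all of $\Irr(B)$, then $\Irr(B)=\Irr_0(B)$, the Brauer correspondent $b$ of $B$ in $N_G(D)$ would have $|\Irr_0(B)|=|\Irr_0(b)|=|\Irr(b)|$ by the height-zero-preserving bijection, and since $b$ has normal defect group $D$ one may invoke the known case of BHZ for blocks with normal defect group (Gluck--Wolf, Murai, or the Kn\"orr result) to conclude $D$ is abelian, contradicting the choice of $G$.

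The hard part will be the reduction step from a general finite group to the quasi-simple building blocks while keeping track of the extra data that the inductive condition requires: not merely the cardinalities $|\Irr_0(B)|$, but the action of $\mathrm{Aut}(S)$, the behaviour of central characters, and the compatibility of the Alperin--McKay bijection with Clifford correspondence and with the passage to quotients. Concretely, one must verify that the inductive AM condition for $S$ is strong enough that, for every block of every group $G$ with $S$ as a component of a minimal normal subgroup, one obtains an AM-bijection for $G$ itself preserving heights; this is the technical heart of Navarro--Sp\"ath's argument and involves the machinery of ``central isomorphisms of character triples'' and $\pi$-special characters. The remaining steps --- the reduction of BHZ1 via Berger--Kn\"orr, and the normal-defect-group case of BHZ --- are classical and can be quoted. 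I would therefore organize the write-up as: (1) record BHZ1 from Theorem~\ref{BeKn} and Corollary~\ref{KeMa13}; (2) set up the minimal counterexample to BHZ2; (3) perform the Clifford-theoretic reduction to a quasi-simple component, invoking iAM to transport the AM-bijection up to $G$; (4) count height-zero characters and close the contradiction using the normal-defect-group case. Only step (3) is substantial; the rest is bookkeeping and citation.
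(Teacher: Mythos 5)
The paper states this theorem without giving a proof, citing [NaSp14] and describing the argument as resting on ``the techniques described in [Sp17]''; there is therefore no in-paper proof to compare against, and I judge your sketch against the published Navarro--Sp\"ath reduction. Your broad strategy is pointed in the right direction: isolating the ``if'' part of (\ref{BHZC}) (call it (BHZ2)) as the only direction that needs the iAM hypothesis --- since (BHZ1) follows unconditionally from Theorem~\ref{BeKn} combined with Corollary~\ref{KeMa13} --- setting up a minimal counterexample, reducing via Clifford theory and Fong--Reynolds to quasi-simple components, and invoking the central-isomorphism-of-character-triples machinery that underlies the inductive condition.

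However, the closing step of your outline has a genuine gap. You write $|\Irr_0(B)|=|\Irr_0(b)|=|\Irr(b)|$, but only the first equality is supplied by (inductive) Alperin--McKay; Brauer correspondence does not preserve the total number of irreducible ordinary characters in a block, and $\Irr(B)=\Irr_0(B)$ does not entail $\Irr(b)=\Irr_0(b)$. A block with normal defect group can have characters of positive height even when its Brauer correspondent in the big group has none, so your proposed appeal to ``the normal-defect-group case of BHZ for $b$'' does not get off the ground. The actual Navarro--Sp\"ath argument closes differently: the Gluck--Wolf--Navarro--Tiep theorem (if $\theta\in\Irr(N)$ is $G$-invariant with $N\lhd G$ and $\chi(1)/\theta(1)$ is prime to $\ell$ for every $\chi\in\Irr(G\mid\theta)$, then $G/N$ has abelian Sylow $\ell$-subgroups) is used \emph{during} the reduction to bound the quotient over a suitable normal subgroup, not at the very end as a finished case of BHZ; the iAM bijections for quasi-simple components serve to transport height-zero and character-triple data up through Clifford theory, and the contradiction is then extracted from the resulting structural constraints on the minimal counterexample rather than from a count of $|\Irr(b)|$.
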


The reduction theorems for the two directions of Brauer's height zero conjecture are proven with very different methods pointing possibly to problems of quite different nature. While the proof of \Th{BeKn} uses module theoretic methods and a theorem of Kn\"orr on Green vertices of simple modules (see [Kn79]), the proof of \Th{NaSp} uses mainly the techniques described in [Sp17].

\subsection{Nilpotent blocks}

A {\it nilpotent} $\ell$-block is one such that any of its defect groups controls the fusion of its subpairs (see [Thev, Sect. 49], {[AschKeOl, Sect. IV.5.6]}). Namely

\begin{defn}\label{NilBl} An $\ell$-block $B$ of a finite group $H$ is called a \sing{nilpotent block} if and only if for any $B$-subpair $(P,b_P)$ in $H$ the quotient $$\text{  $   \norm{H}{P,b_P}/\cent{H}{P} $ }$$ is an $\ell$-group.
\end{defn}

As in all statements about the fusion system of subpairs, the condition above can be loosened to be asked only for {\it centric} $B$-subpairs $(P,b_P)$.
Note that the above condition about $\ell$-subgroups instead of subpairs would give the well-known local characterization of $\ell$-nilpotent groups (i.e., $H/{\rm O}_\lp(H)$ is an $\ell$-group) due to Frobenius [Asch, 39.4].

The main structure theorem about nilpotent blocks is due to Puig (see [Thev, \S 49-51], see also [Kuls, 15.3] for the easier version over a finite field).

\begin{thm}\label{PuigNil} Let $B$ be a nilpotent $\ell$-block seen as a subalgebra of $\OO H$. Let $D$ one of its defect group. Then there is an integer $m$ such that $$B\cong \Mat_m(\OO D).$$
\end{thm}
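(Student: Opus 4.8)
The plan is to follow Puig's structure-theoretic route, working over $\OO$ (the finite-field version over $k$ is easier but the $\OO$-statement is what is claimed). First I would recall that a nilpotent block $B$ with defect group $D$ and maximal $B$-subpair $(D,b_D)$ has the property that the \emph{Brauer pairs} of $B$ are totally controlled by $D$: for every $B$-subpair $(P,b_P)$ the group $\norm{H}{P,b_P}/\cent HP$ is an $\ell$-group, so in particular $\cent HP b_P$ is a nilpotent block of $\cent HP$ with defect group $\Z(P)\times D_P$ where $D_P$ is a defect group of $b_P$ inside $\cent HP$; iterating down to $P=1$, the canonical characters assemble coherently. The key reduction step is to pass to the \emph{source algebra}: choosing a primitive idempotent $i$ in $B^D$ with $\Br_D(i)\neq 0$, one studies $iBi$ as an interior $D$-algebra. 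The heart of the argument is then to build, by induction on $|D|$ and on the order of $H$, an endo-permutation $\OO D$-module $N$ (Puig's ``module attached to the block'') such that $iBi \cong \End_\OO(N)$ as interior $D$-algebras; here one uses that nilpotency is inherited by the various $\cent HP b_P$ together with an analysis of the fusion controlled by $D$ to glue the local data, and the classification/structure of endo-permutation modules for $\ell$-groups (the Dade group) enters to identify $N$.

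Once the source-algebra statement $iBi\cong\End_\OO(N)$ for an endo-permutation $\OO D$-module $N$ is in hand, the passage to the asserted Morita-type conclusion is the final step. One shows that $B$ is Morita equivalent to $\OO D$ via the bimodule $Bi\otimes_{\OO D}N^{\ast}$ (using that $i$ is a ``full'' idempotent in the appropriate sense coming from $\Br_D(i)\neq 0$ and the fact that $N$ is an $\OO$-free $\OO D$-module of $\ell'$-rank exactly $|H:D|_{\ell'}$-type constraints forcing the rank to be invertible mod $\ell$ only in the split case — in general one only gets Morita equivalence, and $B\cong\End_\OO(M)$ over $\OO D$ for a suitable $\OO D$-lattice $M$). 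To upgrade a Morita equivalence $B\text{-}\mo\simeq \OO D\text{-}\mo$ to an honest algebra isomorphism $B\cong\Mat_m(\OO D)$, I would invoke that a basic algebra of $B$ is then $\OO D$, and that $B$, being Morita equivalent to the (local, symmetric) algebra $\OO D$ with the Morita bimodule restricting to a progenerator whose endomorphism ring is $\OO D$, must be of the form $\Mat_m(\OO D)$ for $m$ the multiplicity — this uses the uniqueness of lifts of idempotents over the complete ring $\OO$ and the fact that projective $\OO D$-modules are free, so the progenerator is $(\OO D)^m$ and $B\cong\End_{\OO D}((\OO D)^m)=\Mat_m(\OO D)$. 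The integer $m$ is $\dim_k(B\,\ov i)/|D|$ where $\ov i$ reduces $i$, equivalently the common multiplicity of the unique simple $B$-module in a suitable projective.

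\textbf{Main obstacle.} The genuinely hard part is the inductive construction of the endo-permutation source module $N$ and the proof that $iBi$ is its endomorphism algebra as an \emph{interior} $D$-algebra: this is where Puig's theory of pointed groups, local points, multiplicity modules, and the behaviour of the Brauer construction on the source algebra all come in, and where one must control how the nilpotent fusion forces compatibility of the local endo-permutation modules so that they patch to a single $N$ over $D$. Everything after that — deducing the Morita equivalence and then the matrix-algebra form over the complete discrete valuation ring $\OO$ — is comparatively formal, relying only on lifting of idempotents, freeness of projective $\OO D$-modules, and the symmetry of the group algebra (the same kind of input used in the proof of Lemma~\ref{K2O} above). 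I would present the source-algebra step as a citation to Puig's theorem with a sketch of the induction, and give the passage to $\Mat_m(\OO D)$ in full since it is short.
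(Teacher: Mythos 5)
The paper states this theorem as a citation to Puig (referring the reader to [Thev, \S 49--51] and to [Kuls, 15.3] for the $k$-version) and gives no proof of its own, so there is nothing in the text to compare your sketch against; you are sketching Puig's argument. Your overall route --- reduce to a source algebra $iBi$, identify its structure via nilpotency and fusion control, then use that $\OO D$ is a local ring (so finitely generated projective $\OO D$-modules are free) to upgrade a Morita equivalence $B\text{-}\mo\simeq \OO D\text{-}\mo$ to an honest isomorphism $B\cong\End_{\OO D}\big((\OO D)^m\big)\cong\Mat_m(\OO D)$ --- is the right shape, and the final step is handled correctly.

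The central structural claim is however wrong as you have written it. You assert that the source algebra satisfies $iBi\cong\End_\OO(N)$ as interior $D$-algebras for some endo-permutation $\OO D$-module $N$. This cannot hold for $D\ne 1$: $\End_\OO(N)$ is a matrix ring over $\OO$ with a single irreducible $K$-character, whereas $iBi$ is Morita equivalent to $B$, which for a nilpotent block with defect group $D$ has $|\Irr(D)|$ irreducible ordinary characters; equivalently, $\OO D$ embeds into $iBi$ with $iBi$ free over it, so $\mathrm{rk}_\OO(iBi)$ is divisible by $|D|$, while $\mathrm{rk}_\OO\End_\OO(N)=(\mathrm{rk}_\OO N)^2$. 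Your statement would force $B$ to have defect zero. The correct form of Puig's theorem is that the source algebra is $iBi\cong\End_\OO(N)\otimes_\OO\OO D$ as interior $D$-algebras, the tensor factor $\OO D$ being essential; this is what makes $iBi\cong\Mat_{\mathrm{rk}N}(\OO D)$ and hence gives the Morita equivalence with $\OO D$. With that correction, the passage you describe to $B\cong\Mat_m(\OO D)$ goes through exactly as you say. (Two smaller inaccuracies: the defect group of $b_P$ in $\cent HP$ for a subpair $(P,b_P)\leq(D,b_D)$ is $\cent DP$, not a direct product $\Z(P)\times D_P$; and the bimodule giving the Morita equivalence is more naturally $Bi\otimes_{iBi}(N\otimes_\OO\OO D)$ than $Bi\otimes_{\OO D}N^*$, which does not typecheck.)
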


This of course implies the same over the finite field $k=\OO /J(\OO)$ and therefore the very important property 
\begin{equation}\label{l=1}
\text{  $|\IBr (B)|=1$. }
\end{equation} 
Determining nilpotent blocks of quasi-simple groups $H$ was achieved by An-Eaton. Their result implies

\begin{thm}[{[AnEa11,~1.1, 1.2], [AnEa13, 1.1, 1.3]}]\label{AnEa} Let $B$ be an $\ell$-block of a finite quasi-simple group $H$. Then $B$ is nilpotent if and only if $|\IBr(b_P)|=1$ for any $B$-subpair $(P,b_P)$. Moreover $B$ has abelian defect groups.
\end{thm}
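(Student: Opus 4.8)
The statement to prove is Theorem~\ref{AnEa}: for an $\ell$-block $B$ of a finite quasi-simple group $H$, $B$ is nilpotent if and only if $|\IBr(b_P)| = 1$ for every $B$-subpair $(P,b_P)$, and in that case $B$ has abelian defect groups. The ``only if'' direction is essentially formal: if $B$ is nilpotent, then for each $B$-subpair $(P,b_P)$ the group $\norm{H}{P,b_P}/\cent{H}{P}$ is an $\ell$-group, so by Brauer's first and third Main Theorems (\Th{BrThs}) applied to the local block $b_P$ — more precisely via the identification of $b_P$ with a block of $P\cent{H}{P}$ up to $\norm HP$-conjugacy — the block $b_P$ of $\cent H P$ has the property that its inertial quotient is an $\ell$-group, and combining this with the fact that $\cent HP / \zent P \cdot {\rm O}_\ell(\cdot)$ is $\ell$-nilpotent forces $|\IBr(b_P)| = 1$. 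Concretely one invokes the structure of local blocks: a block whose defect group is $P$ and whose inertial quotient is trivial (as an $\ell$-group it acts trivially on Brauer characters) has a unique simple module. So the first direction I would dispatch quickly using the material of Sect.~1.E and 5.A--C.

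The substance of the theorem is the converse together with the abelian-defect claim, and here the only honest route is to invoke the classification: $H$ runs over covers of alternating groups, sporadic groups, and groups of Lie type, and in each family the nilpotent blocks have been explicitly determined. The plan is therefore: first reduce to $H$ quasi-simple and replace $B$ by a block of a central quotient or cover when convenient (using that nilpotency and the defect group structure are essentially insensitive to the center in the relevant sense); second, for $H$ alternating or symmetric (and their double covers), use the Brauer--Robinson description \Th{BrRo} of blocks of $\Sym n$ by $\ell$-cores together with Remark~\ref{FusSn}, which identifies the fusion system of $B_\kappa$ with that of $\Sym{n - |\kappa|}$ — a symmetric group has a nilpotent principal block only when its Sylow $\ell$-subgroup is cyclic (i.e. $n - |\kappa| < \ell^2$), and then the defect group, being a wreath-free Sylow subgroup of a small symmetric group, is cyclic hence abelian; for the double covers one cites Bessenrodt--Olsson as mentioned in the Remark after \Th{Br2nd}. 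Third, for sporadic groups one cites the decomposition-matrix data directly (finitely many cases). Fourth, and this is the bulk, for $H$ of Lie type $\GF$ one reduces via \Th{broIso} and the Bonnaf\'e--Dat--Rouquier Morita equivalences to blocks lying in $\lser{\GF}{s}$ with $\ccent{\GD}{s}$ not contained in a proper Levi — the quasi-isolated case — and then to unipotent blocks via Jordan decomposition; for a unipotent block $B_\GF(\LL,\zz)$, \Th{NonExo} gives $\cF_D(B) \cong \cF_D(H)$ for an explicit overgroup $H$ of $\norm{\GF}{[\LL,\LL],\cdot}$, and nilpotency of this fusion system forces $D$ to be a Sylow $\ell$-subgroup of $\ccent{\GG}{[\LL,\LL]}^F$ that controls its own fusion; by Remark~\ref{B0G} and the root-datum description of such Sylow subgroups ($Z \rtimes N$), this happens only when $N$ is an $\ell$-group acting trivially, which combined with the large-prime hypotheses of \Th{CuspBl} (and the separate analyses of An--Eaton for $\ell \le 5$) pins down exactly which $(\LL,\zz)$ occur, all of which have $D$ abelian.

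\textbf{Main obstacle.} The hard part is emphatically the groups of Lie type for small primes $\ell \in \{2,3,5\}$, where the clean machinery of \Th{CuspBl} and \Th{NonExo} does not apply (bad primes, disconnected centralizers, exceptional Schur multipliers). There one cannot avoid the case-by-case labor of An--Eaton: working block by block through the classification of quasi-isolated blocks of exceptional groups, controlling defect groups via explicit computations in normalizers of $\phi_d$-tori and Sylow $\ell$-subgroup structure, and checking the fusion-control condition directly. A secondary subtlety is handling blocks of faithful characters of the exceptional covers (where $\ell$ divides the order of the Schur multiplier), which behave differently from the generic picture and must be treated by the ad hoc methods referenced above. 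I would structure the write-up so that the uniform arguments (alternating/sporadic, Lie type with $\ell$ large via \Th{NonExo}) are given in reasonable detail, and the remaining Lie-type cases are handled by explicit citation of [AnEa11], [AnEa13] with an indication of the strategy rather than reproduction.
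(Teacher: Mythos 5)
The paper does not give a proof of this theorem: it is stated purely as a citation to An--Eaton, whose argument is indeed a CFSG-based case-by-case analysis, so the overall shape of your plan (reduce to the CFSG families, use $\ell$-cores for $\Sym n$ and $\Alt_n$, Jordan decomposition and the quasi-isolated reduction for groups of Lie type, bare-hands verification for small primes) is the right one. What the paper actually proves in the vicinity is Proposition~\ref{Nilpo}, which covers only unipotent blocks at good $\ell\geq 5$ and does so by a considerably lighter route than what you propose: rather than invoking the fusion-system description of Theorem~\ref{NonExo} and the root-datum structure of Sylow $\ell$-subgroups, the paper uses the basic-set fact that unipotent characters restricted to $\ell$-regular elements are linearly independent to deduce from $|\IBr(B)|=1$ that $\Irr(B)\cap\ser{\GF}{1}$ is a singleton; the Brou\'e--Malle--Michel parametrization of Theorem~\ref{BMM'} then forces $\norm{\GF}{\LL,\zz}=\LF$, so the centric subpair $(\zent{\LL}^F_\ell,b_{\LF}(\zz))$ of Proposition~\ref{DefUn} is already maximal, and abelian defect and nilpotency drop out simultaneously. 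Your proposal, which also works for the same case, calls on heavier machinery than the paper needs and does not match the paper's own treatment.

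Your sketch of the ``only if'' direction does not hold up as written. Knowing that $\norm{H}{P,b_P}/\cent{H}{P}$ is an $\ell$-group for every $B$-subpair (the definition of nilpotency) does not combine with any general claim that $\cent{H}{P}$ modulo its center and $\ell$-core is $\ell$-nilpotent; there is no such general fact, and in the degenerate case $P=\{1\}$ the inertial quotient is trivial so your argument produces nothing at all, while one still needs $|\IBr(B)|=1$. The correct argument, which is genuinely formal, is that nilpotency is inherited by the local blocks: since the fusion system of $B$ is the trivial system $\cF_D(D)$, every $P\leq D$ is fully centralized and the centralizer subsystem on $\cent{D}{P}$ is again trivial, so $b_P$ is a nilpotent block of $\cent{H}{P}$ with defect group $\cent{D}{P}$; Puig's structure theorem (Theorem~\ref{PuigNil}) applied to $b_P$ then gives $|\IBr(b_P)|=1$. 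By contrast the abelian-defect claim is entirely part of the hard direction (An--Eaton's Theorem 1.1) and cannot be dispatched alongside the ``only if'' implication.
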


 We prove below a slightly stronger statement concentrating on the property (\ref{l=1}) again in the framework of unipotent $\ell$-blocks with $\ell$ not too bad.

We keep $\GF$, $\ell\geq 5$ a prime good for $\GG$ (see [GeHi91, 2.1] or [CaEn, \S 13.2]) not dividing $q$, and $B_\GF (\LL ,\zz )$ a unipotent $\ell$-block of $\GF$ as in \Th{CuspBl}.

\begin{prop}\label{Nilpo}  {\sl  
		Assume $B_\GF (\LL ,\zz )$ has just one Brauer character. Then $B_\GF (\LL ,\zz )$ is a nilpotent block and its defect groups are abelian.    }\end{prop}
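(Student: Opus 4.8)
The plan is to combine the defect group description from Theorem~\ref{CuspBl}.(ii) with the characterization of nilpotent blocks coming from the fusion system picture developed in Theorem~\ref{NonExo}. First I would recall that a defect group of $B_\GF(\LL,\zz)$ is a Sylow $\ell$-subgroup $D$ of $\ccent{\GG}{[\LL,\LL]}^F$, and that the fusion system $\cF_{(D,b_D)}(B_\GF(\LL,\zz))$ is isomorphic to $\cF_D(H)$ for a subgroup $H\leq\norm\GF{[\LL,\LL],\Res^{\LL^F}_{[\LL,\LL]^F}\zz}$ with $D\in\mathrm{Syl}_\ell(H)$ and $H[\LL,\LL]^F=\norm\GF{[\LL,\LL],\Res^{\LL^F}_{[\LL,\LL]^F}\zz}$. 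By the standard dictionary between nilpotent blocks and fusion systems (see [Thev, \S49], [AschKeOl, IV.5.6]), the block $B_\GF(\LL,\zz)$ is nilpotent precisely when $\cF_D(H)$ is the trivial fusion system on $D$, equivalently when $D$ controls its own fusion in $H$, i.e. $H$ is $\ell$-nilpotent with $\ell$-complement acting suitably; more precisely one needs $H/\cent HD\cdot\mathrm O_{\ell'}$ trivial-ish — the clean statement is that $B$ is nilpotent iff $\cF_D(H)=\cF_D(D)$.

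The key step is then to deduce $\cF_D(H)=\cF_D(D)$ from the hypothesis $|\IBr(B_\GF(\LL,\zz))|=1$. Here I would invoke the number of Brauer characters in terms of the local structure. The crucial point connecting the global invariant $|\IBr(B)|$ with fusion: by Theorem~\ref{AbDef}-type analysis, or more directly by Theorem~\ref{BMM'} and the parametrization of $\Irr(\GF\mid\Lu{\LL}{\GG}\zz)$ by $\Irr(\norm\GG{\LL,\zz}^F/\LL^F)$, the relative Weyl group $W_\zz\deq\norm\GG{\LL,\zz}^F/\LL^F$ governs both the character count and the fusion. Specifically, the inertial quotient of $B_\GF(\LL,\zz)$ at its maximal subpair is (up to an $\ell$-group) the $\ell'$-part of this relative Weyl group acting on $\zent\LL_\ell^F$; and the fusion system $\cF_D(H)$ is built from this action together with the reflections through roots orthogonal to $\bS$, as spelled out in the final Remark of Section~8. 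If $|\IBr(B)|=1$ then in particular $B$ has a unique projective indecomposable, which forces (via the theory of the Brauer correspondent and Fong reduction for the maximal subpair) the inertial quotient of every centric $B$-subpair to be an $\ell$-group. That is exactly the nilpotency condition.

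So concretely I would argue: by Proposition~\ref{DefUn} the subpair $(\zent\LL_\ell^F,b_\LF(\zz))$ is centric with canonical character $\zz$ (or rather a restriction thereof), and any $B$-subpair $(P,b_P)$ enlarges to the maximal $(D,b_D)$; using Theorem~\ref{inctric} repeatedly, $\norm\GF{P,b_P}/\cent\GF P$ is controlled by the action on the chain of canonical characters, all of which are restrictions of the single unipotent character $\zz$ to centralizers of $\ell$-subgroups. If this automizer group were not an $\ell$-group, it would act nontrivially by $\ell'$-elements on $\IBr(b_P)$, and the gluing of Brauer characters (the fact that $\sum$-formula / Brauer's first main theorem counts $\IBr(B)$ via the local data) would produce more than one Brauer character in $B$ — I would make this precise by noting that a nontrivial $\ell'$-automizer at a centric subpair produces, via Clifford theory downstairs, extra simple $k\cent\GF P$-modules in $b_P$ that do not fuse, contradicting $|\IBr(B)|=1$ through the known equality relating $|\IBr(B)|$ to the local Brauer character data in this setting (e.g. Theorem~\ref{AnEa} or its proof ingredients, or directly the weight-conjecture-type count available here since defect is abelian once we know nilpotency is forced). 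Finally, abelianity of $D$: once $B_\GF(\LL,\zz)$ is nilpotent, by Puig's Theorem~\ref{PuigNil} one has $B\cong\Mat_m(\OO D)$, and since a nilpotent block with the above Weyl-group parametrization forces $\zent\LL_\ell^F$ to be Sylow in $\ccent\GG{[\LL,\LL]}^F$ (as in the proof of Theorem~\ref{AbDef}, the local control eliminates non-central reflections, so $N$ in the $Z\rtimes N$ description is trivial on the $\ell$-part), $D=\zent\LL_\ell^F$ is abelian.

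\textbf{Main obstacle.} The delicate point is the implication $|\IBr(B)|=1\Rightarrow$ nilpotent, i.e. transferring the global count of Brauer characters to a statement forcing every local inertial quotient to be an $\ell$-group. In the presence of abelian defect this would follow cleanly from Brauer's weight/height machinery, but a priori we do not yet know the defect is abelian, so one has to run the argument through the $d$-Harish-Chandra parametrization (Theorems~\ref{BMM}, \ref{BMM'}) and the explicit fusion description of Theorem~\ref{NonExo} rather than assuming niceness of $D$ in advance. I expect the bookkeeping of how the relative Weyl group $\norm\GG{\LL,\zz}^F/\LL^F$ sits inside both the Brauer-character count and the automizers of centric subpairs to be the technical heart of the proof.
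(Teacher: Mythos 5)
Your proposal correctly assembles the supporting cast (Theorem~\ref{BMM'}, the relative Weyl group $\norm\GG{\LL,\zz}^F/\LL^F$, Proposition~\ref{DefUn}, the fusion system picture), and you correctly flag that the implication $|\IBr(B)|=1\Rightarrow$ nilpotent is the delicate point. But the proposal stops short of supplying the one ingredient that makes this implication go through, and the structure of the rest of your argument is also reversed from the clean path.

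The missing key lemma is that the unipotent characters of $\GF$, restricted to the set $\GF_{\lp}$ of $\ell$-regular elements, are linearly independent (a ``basic set'' result, [CaEn, 14.6]). Combined with the fact that Brauer characters form a basis of $\CF(\GF_{\lp})$, this immediately gives: if $|\IBr(B_\GF(\LL,\zz))|=1$, then $|\Irr(B_\GF(\LL,\zz))\cap\ser\GF1|=1$. By Theorem~\ref{CuspBl}.(i), the latter set is $\Irr(\GF\mid\Lu\LL\GG\zz)$, so $\Lu\LL\GG\zz$ has a single irreducible constituent; then Theorem~\ref{BMM'} forces $\Irr(\norm\GF{\LL,\zz}/\LF)$ to be a singleton, i.e.\ $\norm\GF{\LL,\zz}=\LF$. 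Everything downstream flows from this equality. Your sketch of how $|\IBr(B)|=1$ ``forces the inertial quotient of every centric subpair to be an $\ell$-group'' via ``Brauer correspondent and Fong reduction'' is hand-waving precisely at the place where this basic-set fact is needed; without it you have no lever to translate a global character count into a local statement.

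Once $\norm\GF{\LL,\zz}=\LF$, the paper proves abelianity of $D$ and nilpotency \emph{simultaneously}, not sequentially as you propose. The centric subpair $(\zent\LL^F_\ell, b_\LF(\zz))$ of Proposition~\ref{DefUn} has $\norm\GF{\zent\LL^F_\ell,b_\LF(\zz)}=\LF=\cent\GF{\zent\LL^F_\ell}$, hence is normal in no strictly larger subpair; by Proposition~\ref{maxic} it is therefore maximal. So the defect group \emph{is} $\zent\LL^F_\ell$ — abelian — and the inertial quotient at the (unique up to conjugacy) maximal centric subpair is trivial, which is the nilpotency condition. Your route of first establishing nilpotency, then invoking Puig's theorem, then trying to deduce abelianity is unnecessary and, as written, circular, since your nilpotency argument never actually got off the ground.
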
 

\noindent{\it Proof.}  By [CaEn,~14.6], the restrictions of the elements of $\ser\GF 1$ to the set $\GG^F_\lp$ of $\ell$-regular elements of $\GF$ are distinct and linearly independent central functions. Since Brauer characters are a basis for the central functions on $\GG^F_\lp$, our hypothesis implies that $\ser\GF 1\cap\Irr (B_\GF (\LL ,\zz ))$ has a single element. By Theorem~\ref{CuspBl}, this implies that $\Lu\LL\GG(\zz )$ is a multiple of a single irreducible character. By \Th{BMM'}, this implies in turn that $\Irr ({\rm N}_\GF (\LL ,\zz )/\LF)$ has a single element and therefore $${\rm N}_\GF (\LL ,\zz )=\LF.$$ Then the centric subpair $(\zent{\LL}^F_\ell ,b_\LF (\zz ))$ of Proposition~\ref{DefUn} above is maximal (and the only centric subpair up to conjugacy). This can be seen by applying Proposition~\ref{maxic} and noting that $(\zent{\LL}^F_\ell ,b_\LF (\zz ))$ is normal in no other subpair since ${\rm N}_\GF (\zent{\LL}^F_\ell ,b_\LF (\zz ))=\LF={\rm C}_\GF (\zent{\LL}^F_\ell )$. This proves at the same time that the defect groups are abelian and that the block is nilpotent.
\qed

\subsection{Brou\'e's abelian defect conjecture when $\ell$ divides $q-1$}

Brou\'e's abelian defect conjecture [Bro90a, 6.2] is as follows. 

Let $H$ be a finite group, $(\OO,K,k)$ an associated $\ell$-modular system, $B$ a block of $\OO H$, $D$ its defect group and $B_D$ its Brauer correspondent (see \Th{BrThs}.(i) above) viewed as a subalgebra of $\OO \norm HD$. When $D$ is abelian, Brou\'e's \sing{abelian defect conjecture} says that the derived categories of $B$ and $B_D$ should be equivalent
\begin{equation}\label{ADC}
\text{  $ \mathsf{ D}^b(B\text{-}\mo ) \cong \mathsf{ D}^b(B_D\text{-}\mo )  $ }
\end{equation}
later strengthened to the requirement that
\begin{equation}\label{ADCho}
\text{  $ \HO(B\text{-}\mo ) \cong \HO(B_D\text{-}\mo )  $ }
\end{equation}
by a Rickard equivalence (see Definition~\ref{RickEqu} below), that is an equivalence of the homotopy categories with a strong compatibility with fusion. Note that here one does not expect consequences on the fusion systems of the blocks involved since in this case it is very simply the one of $B_D$ as a classical consequence of abelian defect.

In the case of principal blocks, Craven and Rouquier have proved a reduction theorem to simple groups [CrRo13]. The conjecture for arbitrary blocks with abelian defect has been checked in many cases. For the defining prime and $\SL_2(q)$ it was proved by Okuyama in the influential preprint [Oku00]. Chuang-Kessar showed it for certain blocks of symmetric groups [ChKe02]. This combined with \Th{ChRo} allow Chuang-Rouquier to also check it for blocks of symmetric groups [ChRo08, 7.6]. The same paper shows it for $\GL_n(q)$ for $\ell\nmid q$ as a consequence of the Rickard equivalences they prove between blocks of $\GL_n(q)$'s and theorems of Turner [Tu02] supplying results similar to [ChKe02] for those groups.  Dudas-Varagnolo-Vasserot in [DuVV15] and [DuVV17] have also checked Brou\'e's conjecture (and Rickard equivalences similar to \Th{ChRo}) for certain unipotent blocks of finite reductive groups of types $^2\tA$, $\tB$ and $\tC$ through categorifications they build for certain affine Lie algebras. For the application to Brou\'e's conjecture, some work of Livesey is also used to spot nicer representatives among Rickard equivalent blocks (see [Li15]).

We just prove here a very elementary case yet substantial where the equivalence is in fact a quite explicit Morita equivalence. The following is a simplification of a more general statement by Puig with a different proof [Puig90].

We keep $(\GG ,F)$ defined over $\bbF_q$.
\begin{thm}\label{Pu90} Let $\ell\geq 7$ be a prime dividing $q-1$. Let $D$ be a Sylow $\ell$-subgroup of $\GF$. Assume $D$ is abelian. Then the principal $\ell$-blocks over $\OO$ of $\GF$ and $\norm\GF D$ are Morita equivalent: $$ B_0(\GF)\mmo\cong  B_0(\norm\GF D)\mmo .$$
\end{thm}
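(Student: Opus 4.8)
The plan is to exhibit an explicit bi-projective bimodule between the two principal blocks and invoke Lemma~\ref{K2O}, so that everything comes down to identifying the local data and checking a bijection of ordinary characters. First I would record the local picture. Since $\ell\mid q-1$ the order of $q$ modulo $\ell$ is $d=1$, the relevant maximal $\phi_1$-torus is a maximally split torus $\TT_0$ (with $\cent\GG{\TT_0}=\TT_0$), and by \Th{CuspBl} the principal block is $B_0(\GF)=B_\GF(\TT_0,1)$. Write $T\deq\TT_0^F$, $U\deq\RU(\BB_0)^F$, $B\deq\BB_0^F=TU$. The hypothesis that the defect group $D$ is abelian forces, via the argument proving \Th{AbDef}, that $T_\ell$ is a Sylow $\ell$-subgroup of $\GF$ and that the centric subpair $(T_\ell,\,\text{principal block of }\OO T)$ of Proposition~\ref{DefUn} is already a maximal $B_0(\GF)$-subpair, so that $D=T_\ell$. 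Since $\ell\geq 7$, Corollary~\ref{dpair} gives $\cent\GF{T_\ell}=T$, whence $\norm\GF D=\norm\GF T=N\deq\norm\GG{\TT_0}^F$; as $D=T_\ell$ is then a Sylow $\ell$-subgroup of $N\leq\GF$ we get $\ell\nmid|W|$ (where $W=N/T$) and $D\lhd N$.

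For the bimodule I would take $\Omega\deq\OO[\GF/U]$, made into an $\OO\GF$-$\OO T$-bimodule via the right action of $B/U\cong T$. Writing $e'\deq|U|^{-1}\sum_{u\in U}u$, one has $\Omega\cong\OO\GF e'$ as a left module, and this is bi-projective over $(\OO\GF,\mathcal H)$ in the standard way (cf.\ the corollary following Lemma~\ref{K2O}), where $\mathcal H\deq\End_{\OO\GF}(\Omega)\cong e'\OO\GF e'$ is the Yokonuma--Hecke algebra over $\OO$ with parameter $q$. The algebra $\mathcal H$ has $\OO$-basis $\{a_n\}_{n\in N}$, the $a_t$ ($t\in T$) span a copy of $\OO T$, and the remaining relations are the integral analogues of those in Theorem~\ref{Yoko}. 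The idempotent $e\deq|T_\lp|^{-1}\sum_{t\in T_\lp}a_t$ is central in $\mathcal H$ (conjugation by the $a_{\dot s}$ permutes $T_\lp$), so $M\deq e_0(\GF)\,\Omega\,e$, with $e_0(\GF)$ the principal block idempotent of $\OO\GF$, is a bi-projective $B_0(\GF)$-$\mathcal He$-bimodule (truncating a bi-projective bimodule by central idempotents preserves bi-projectivity).

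Next I would identify $\mathcal He$ with $B_0(N)$; this is the step I expect to be the main obstacle, and the only place where the hypothesis $\ell\mid q-1$ is truly used. The idea is to decompose $\mathcal H$ over the $W$-orbits $[\theta]$ of the $\OO$-valued linear characters of $T$, so that $\mathcal He$ becomes the product of the blocks indexed by orbits of characters of $T_\ell$. Clifford theory for $T\lhd N$ identifies each such block with a matrix algebra over a $\gamma_\theta$-twisted Iwahori--Hecke algebra of the stabiliser $W_\theta$ with parameter $q$, where $\gamma_\theta=\theta\circ c$ for the $2$-cocycle $c$ of $1\to T\to N\to W\to1$ restricted to $W_\theta$. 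Because $q\equiv 1\pmod{J(\OO)}$ and $\ell\nmid|W_\theta|$, the reduction of this twisted Hecke algebra modulo $J(\OO)$ is the semisimple algebra $k_{\bar\gamma_\theta}W_\theta$, so it is a maximal $\OO$-order in $K_{\gamma_\theta}W_\theta$ and hence isomorphic to $\OO_{\gamma_\theta}W_\theta$. Running the same Clifford decomposition for $\OO N$ exhibits $B_0(N)$ as the product over the same orbits of the matrix algebras $\Mat_{[W:W_\theta]}(\OO_{\gamma_\theta}W_\theta)$, with the same cocycles $\gamma_\theta$; comparing gives $\mathcal He\cong B_0(N)$. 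The delicate points are that the two families of twisting cocycles really coincide and that the parameter-$q$ Hecke algebras collapse over $\OO$ to these (twisted) group algebras — this is, in essence, Puig's computation of the Yokonuma--Hecke algebra in the present situation.

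Finally I would check that $M\otimes K$ induces a bijection $\Irr(B_0(N))\to\Irr(B_0(\GF))$, after which Lemma~\ref{K2O} completes the argument. Since $K[\GF/U]=\bigoplus_{\theta\in\Irr(T)}\Lu{\TT_0}{\GG}\theta$ as $K\GF$-modules, classical Harish-Chandra theory makes $\Omega_K\otimes_{\mathcal H_K}-$ a Morita equivalence from $\mathcal H_K\mmo$ onto the sum of those blocks of $K\GF$ whose simple modules occur in some $\Lu{\TT_0}{\GG}\theta$, matching $\mathcal H_K$-blocks with $W$-orbits of characters of $T$; cutting by $e$ restricts this to the orbits of characters of $T_\ell=D$. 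On the other hand, by \Th{CuspBl} the unipotent characters of $B_0(\GF)$ form the principal Harish-Chandra series $\Irr(\GF\mid\Lu{\TT_0}{\GG}1)$, and by \Th{NoUn} — with the reasoning of \Th{AbDef}, using $[\LL,\LL]=1$ here — the remaining characters of $B_0(\GF)$ are the constituents of the $\Lu{\GG(t)}{\GG}(\wh t\,\chi_t)$ for $t\in(\GD)^{\FD}_\ell$ and $\chi_t$ in the principal Harish-Chandra series of $\GG(t)^F$; as $\wh t\,\chi_t$ lies in the Harish-Chandra series of the linear character $\wh t|_T$ of $T$, and these characters run exactly over $\Irr(T_\ell)$, it follows that $\Irr(B_0(\GF))$ is precisely the set of constituents of the $\Lu{\TT_0}{\GG}\theta$, $\theta\in\Irr(T_\ell)$. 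Thus the equivalence above, cut by $e$, identifies $\mathcal H_Ke\mmo$ with $B_0(\GF)_K\mmo$; so $M_K$ induces the required character bijection, and Lemma~\ref{K2O} yields $B_0(\GF)\mmo\cong B_0(\norm\GF D)\mmo$.
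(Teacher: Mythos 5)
Your overall strategy is the paper's: take the bi-projective bimodule afforded by induction from the unipotent radical of a Borel, identify its endomorphism algebra with the principal block of the normalizer, check the character bijection via Harish-Chandra theory, and invoke Lemma~\ref{K2O}. Indeed $\Omega e\cong\Ind_{UT'}^{\GF}\OO$ is the paper's bimodule $M$ and $\mathcal He\cong\End_{\OO\GF}(M)$, so your set-up matches Proposition~\ref{YHell} exactly, and your treatment of $\Irr(B_0(\GF))$ via \Th{CuspBl}, \Th{NoUn} and \Th{AbDef} is the same as the paper's equation~(\ref{B'G}).

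The gap is in the step you yourself flag as the obstacle: the identification of $\mathcal He$ with $B_0(\norm\GF D)$ as $\OO$-algebras. You propose to decompose $\mathcal He$ "into the product of blocks indexed by orbits of characters of $T_\ell$" and identify each piece with a matrix algebra over a twisted Hecke algebra of $W_\theta$, then match this with the same decomposition of $B_0(N)$. But $\OO T_\ell$ is a \emph{local} ring (the only idempotent is $1$), so the characters of $T_\ell$ do not give idempotents of $\OO T$ at all; the Clifford decomposition over $W$-orbits of $\Irr(T_\ell)$ exists only after tensoring with $K$. In fact $\mathcal He$ is a single $\OO$-block (and so is $B_0(N)$): no such product decomposition can exist integrally, because the factors would be blocks. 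The "maximal order" observation you invoke applies to each $K$-factor $K_{\gamma_\theta}W_\theta$ once the product has been formed over $K$, but it does not reassemble into an $\OO$-algebra isomorphism between $\mathcal He$ and $B_0(N)$: both contain a copy of $\OO T_\ell$, whose reduction $kT_\ell$ is not semisimple, so neither algebra is a maximal order and the argument does not close. Passing from the known $K$-isomorphism to the required $\OO$-isomorphism is exactly where the real work lies.

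The paper closes this gap differently (Proposition~\ref{YHell}): it works entirely integrally, observes that $N/T'\cong T_\ell\rtimes W$ because $W$ is an $\ell'$-group, maps $A=\mathcal He$ onto the Iwahori--Hecke algebra $\End_{\OO G}(\Ind^G_{UT}\OO)$, reduces that mod $J(\OO)$ (so $q_s\mapsto 1$ turns the quadratic relations into those of $W$), deduces a surjection $\rho\colon A\to kW$ with kernel $J(\OO T_\ell)A\subseteq J(A)$, and then splits the resulting group extension $1\to\Gamma\cap(1+J(\OO T_\ell)A)\to\Gamma\to W\to 1$ inside $A^\times$ using a lifting lemma for $J(A)$-adically closed subgroups. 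This produces an explicit subgroup $W'\leq A^\times$ with $A=\OO T_\ell W'\cong\OO(T_\ell\rtimes W)\cong B_0(N)$. You would need an argument of that kind, rather than the $K$-level Clifford decomposition, to finish.

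One small further point: to conclude $\OO(N/T')\cong B_0(N)$ you also need to observe that $\OO(N/T')$ is a single block. This is where $\cent{N/T'}{T_\ell}=T/T'=T_\ell$ (coming from $\cent\GF{T_\ell}=T$, which you correctly derive from Corollary~\ref{dpair}) is used: a normal $\ell$-subgroup that is self-centralizing forces the group algebra to have a unique block.
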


We will prove more concretely

\begin{pro}\label{YHell} Let $\TT\inn\BB$ both $F$-stable in $(\GG,F)$ as above. Let $\ell$ be a prime dividing $q-1$ and such that $\norm{\GG}{\TT}^F/\TT^F$ is an $\lp$-group. Let $U\deq \RU(\BB)^F$, $T'=\TT^F_\lp$. Let $M\deq\Ind_{UT'}^{\GF}\OO$. Then $$\End_{\OO\GF}M\cong \OO(\norm{\GG}{\TT}^F/\TT_\lp^F)$$ by an isomorphism mapping any $t\in \TT_\ell^F$ to the endomorphism of $M$ sending $1\otimes 1$ to $t^\mm\otimes 1$.
\end{pro}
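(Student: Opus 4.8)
The plan is to compute $\End_{\OO\GF}(M)$ for $M=\Ind_{UT'}^{\GF}\OO$ by relating it to a quotient of the Yokonuma-Hecke algebra $\End_{\OO\GF}(\Ind_U^\GF\OO)$, exploiting the fact that $\ell\mid q-1$. First I would recall the general description of endomorphisms of permutation modules from Example~\ref{Hec}: $\End_{\OO\GF}(\Ind_{UT'}^\GF\OO)\cong (\Ind_{UT'}^\GF\OO)^{UT'}$ has an $\OO$-basis indexed by double cosets $UT'\backslash\GF/UT'$, with the structure constants governed by the group geometry. Since $T'=\TT^F_{\ell'}$ is normal in $B=\TT^F U$ (indeed $\TT^F$ is abelian of order prime to $p$, so $B/UT'\cong \TT^F_\ell$), the double cosets $UT'\backslash\GF/UT'$ are in bijection with $\TT_\ell^F\backslash(B\backslash\GF/B)\slash\TT_\ell^F$-type data refined by the Bruhat decomposition; more precisely, using $\GF=\bigsqcup_{w\in W}BwB$ and $W=\norm\GG\TT^F/\TT^F$, one gets that $UT'\backslash\GF/UT'$ maps onto $\norm\GG\TT^F/\TT'=\TT_\ell^F\rtimes W$ (using the $\ell'$-hypothesis on $W$ to lift it). The key point will be that, because $\ell\mid q-1$, the element $T_s=\TT^F\cap\langle X_\delta,X_{-\delta}\rangle$ has order $q-1$ divisible by $\ell$, so in the quadratic relation (3) of Theorem~\ref{Yoko}, namely $(a_{\dot s})^2=-|T_s|^{-1}a_{\dot s}\sum_{t\in T_s}a_t$, the averaging idempotent $|T_s|^{-1}\sum_{t\in T_s}a_t$ is not defined over $\OO$ on the nose — but after passing to the quotient corresponding to $M$ (i.e. inducing further from $UT'$ rather than $U$), the relevant element becomes invertible.

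The cleanest route I would take: observe that $M=\Ind_{UT'}^\GF\OO=e_{T'}\cdot\Ind_U^\GF\OO$ where $e_{T'}=|T'|^{-1}\sum_{t\in T'}t\in\OO\TT^F$ is a well-defined idempotent (as $|T'|$ is prime to $\ell$, hence invertible in $\OO$), acting via the $\TT^F$-action on $\Ind_U^\GF\OO$ embedded inside $\HGU$-type formulas. Then $\End_{\OO\GF}(M)\cong e_{T'}\End_{\OO\GF}(\Ind_U^\GF\OO)e_{T'}$, a corner algebra of (the $\OO$-form of) the Yokonuma-Hecke algebra. In this corner, the generators $a_t$ for $t\in T'$ all become the identity (that is the effect of the idempotent), so the subalgebra $\OO\TT^F$ collapses to $\OO\TT_\ell^F$, and the quadratic relation for each $a_{\dot s}$ becomes $(a_{\dot s})^2=-|T_s|^{-1}a_{\dot s}\sum_{t\in T_s}a_t=-a_{\dot s}$, because $T_s\cap T'$ has index... — here I must be careful: $\ell\mid q-1=|T_s|$, but the image of $\sum_{t\in T_s}a_t$ in the corner algebra is $|T_s/(T_s\cap T')|\sum_{\bar t\in T_s\cap\TT_\ell^F}a_{\bar t}$, and since $\TT^F/T'\cong\TT_\ell^F$ is an $\ell$-group while I want a braid-type relation, the point is that $T_s$ being cyclic of order $q-1$ with $\ell$-part $(q-1)_\ell$, the quotient $a_{\dot s}^2$ simplifies. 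I would then verify that $e_{T'}a_{\dot s}e_{T'}$ satisfies $(e_{T'}a_{\dot s}e_{T'})^2=e_{T'}$ up to a unit — i.e. the braid group relations plus invertible squares, which is exactly a presentation of the twisted group algebra $\OO(\norm\GG\TT^F/T')=\OO(\TT_\ell^F\rtimes W)$ — using part (4) of Theorem~\ref{Yoko} for the presentation and (\ref{dotsi}) for the braid relations. The explicit map $t\mapsto(1\otimes1\mapsto t^{-1}\otimes1)$ for $t\in\TT_\ell^F$ then identifies the subalgebra $\OO\TT_\ell^F$, and the $\dot s$'s map to the Coxeter generators of $W$.

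The main obstacle I anticipate is getting the scalars exactly right in the quadratic relation after localizing at the idempotent $e_{T'}$: I need $(e_{T'}a_{\dot s}e_{T'})^2$ to be a \emph{unit} times $e_{T'}$ (not merely a non-unit multiple), and this is precisely where the hypothesis $\ell\mid q-1$ with $\norm\GG\TT^F/\TT^F$ an $\ell'$-group is used — one must check that $|T_s/(T_s\cap T')|=(q-1)_\ell$ is an $\ell$-power, hence a unit in $\OO$ only if... wait, $\ell$-powers are \emph{not} units in $\OO$. So the resolution must instead be that in the corner algebra $\sum_{t\in T_s}a_t$ becomes $(q-1)_{\ell'}\sum_{\bar t\in T_s\cap\TT_\ell^F}a_{\bar t}$ and the cyclic $\ell$-group $T_s\cap\TT_\ell^F$ acts, so that $e_{T'}a_{\dot s}e_{T'}$ is $-((q-1)_{\ell'}/|T_s|)a_{\dot s}(\sum_{\bar t}a_{\bar t})e_{T'}=-(1/(q-1)_\ell)(\cdots)$ — genuinely requiring a finer argument, perhaps reducing mod $J(\OO)$ is wrong and one instead notes $M$ has a $\TT_\ell^F$-action making it projective relative to $UT'\TT_\ell^F=B$, so $\End(M)$ is a free $\OO\TT_\ell^F$-module and the Yokonuma relation collapses to $a_{\dot s}^2=1$ exactly (the "$-|T_s|^{-1}\sum a_t$" term literally equals $-e_{T_s}$, an idempotent, which in the corner becomes the identity since $T_s\cap T'$ has index $(q-1)_\ell$ in $T_s$ and... ). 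I would resolve this by computing $\End_{\OO\GF}(M)$ directly from its double-coset basis and the multiplication rule (\ref{aact}), sidestepping the subtlety: for $w\in W$ with $\dot w$ a representative, $a_{\dot w}$ defined analogously to Definition~\ref{an's} but summing over $U\cap U_-^{\dot w}$ and over $T'\backslash\TT^F$-cosets, and the relation $a_{\dot s}^2=a_1$ holds because the double coset $UT'\dot s UT'$ in $L_s^F/T'\cong\PGL_2(q)$-like computation gives $|{\dot s}X_\delta{\dot s}\cap X_\delta{\dot s}tX_\delta|$ summed over $t\in T_s/(T_s\cap T')$ with the correct total, using that $\langle X_\delta,X_{-\delta}\rangle/T'$ is $\cong\SL_2(q)/(\text{central }\ell'\text{-part})$. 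This is the step that will require the most care, but it is a finite, explicit computation inside (a quotient of) $\SL_2(q)$, and the rest then follows formally from part~(4) of Theorem~\ref{Yoko} adapted to this setting. I will finish by invoking Green's Theorem~\ref{Green} or a direct argument to conclude Theorem~\ref{Pu90}: $\End_{\OO\GF}(M)\cong\OO(\norm\GG\TT^F/T')$ is Morita equivalent to $\OO\norm\GF D$ since $D$ is the Sylow $\ell$-subgroup of $\TT^F$ and $\norm\GF D=\norm\GG\TT^F$ under these hypotheses (abelian $D$), while $M$ is a progenerator for $B_0(\OO\GF)$ because every simple in the principal block appears in $\Ind_{UT'}^\GF\OO$.
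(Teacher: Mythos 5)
Your idea of realizing $\End_{\OO\GF}(M)$ as the corner $e_{T'}\End_{\OO\GF}(\Ind_U^G\OO)e_{T'}$ (with $e_{T'}=|T'|^{-1}\sum_{t\in T'}a_t$, well defined over $\OO$) is sound, and identifying the double-coset basis with $N/T'$ via Bruhat is exactly right. But the computation that is supposed to close the argument is where you go wrong, in two compounding ways.

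First, the quadratic relation you invoke from Theorem~\ref{Yoko}.(3), $(a_{\dot s})^2=-|T_s|^{-1}a_{\dot s}\sum_{t\in T_s}a_t$, is a \emph{defining-characteristic} identity: it holds in $\HGU=\End_{\bbF G}(\Ind_U^G\bbF)$ precisely because $q=0$ in $\bbF=\ovF_p$, which kills the term $q\,a_{\dot s^2}$ coming from the $v=1$ contribution in the proof of Theorem~\ref{Yoko}.(3). Over $\OO$ (where $q$ is a unit), the correct relation in $\End_{\OO G}(\Ind_U^G\OO)$ is
\[
a_{\dot s}^2 \;=\; q\,a_{\dot s^2}\;+\;a_{\dot s}\sum_{t\in T_s}a_t\,,
\]
and passing to the corner replaces $\sum_{t\in T_s}a_t$ by $|T_s\cap T'|\sum_{\bar t\in T_s/(T_s\cap T')}a_{\bar t}$. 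Writing $q=1+(q-1)$, the corrections $(q-1)\,a_{\dot s^2}$ and $(q-1)_{\ell'}a_{\dot s}\sum_{\bar t}a_{\bar t}$ both lie in $J(\OO T_\ell)A$ (using $\ell\mid q-1$ and $\sum_{\bar t\in (T_s)_\ell}\bar t\in J(\OO T_\ell)$), but they do \emph{not} vanish. So the assertion that the relation ``collapses to $a_{\dot s}^2=1$ exactly'' is false: $\End_{\OO G}(M)$ is a genuine $\OO$-deformation of $\OO(T_\ell\rtimes W)$, and the generators $a_n$ ($n\in N/T'$) do not satisfy the group relations on the nose.

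Second, and this is the real gap: because the relations only hold modulo $J(\OO T_\ell)A$, nothing you have written produces the isomorphism over $\OO$. This is precisely the hard part. The paper's route is to map $M\twoheadrightarrow\Ind_{UT}^G\OO$, so that $A:=\End_{\OO G}(M)$ surjects via $a_n\mapsto a_w$ onto the Iwahori-Hecke algebra; composing with reduction mod $J(\OO)$ and using $q\equiv 1\pmod\ell$ turns $(a_s)^2=(q_s-1)a_s+q_s$ into $s^2=1$ and yields a surjection $\rho\colon A\to kW$ with $\ker\rho=J(\OO T_\ell)A\subseteq J(A)$. In particular all $a_n$ become units; but the subgroup $\Gamma=\langle a_n\rangle$ of $A^\times$ does \emph{not} yet contain a copy of $W$. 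One has to lift a complement $W'\le A^\times$ to $W$ along $\rho$, which the paper does by a closed-subgroup lifting lemma in the $J(A)$-adic topology (splitting an extension of the $\ell'$-group $W$ by a pro-$\ell$ group), and only then does $A=\OO T_\ell W'$ follow by Nakayama. Without this lifting step — or an explicit replacement for it — your argument only proves the isomorphism after reduction to $k$, not over $\OO$ as the Proposition requires. Finally, the closing appeal to Green's Theorem~\ref{Green} is misplaced: the paper deduces Theorem~\ref{Pu90} from this Proposition via Brou\'e's Lemma~\ref{K2O} and the character-level identification (\ref{B'G}), not via Green's correspondence of simples.
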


Let's see first how this will give \Th{Pu90}. We will essentially apply Lemma~\ref{K2O} and \Th{AbDef}.

Let's note that 
\begin{equation}\label{B'G}
\text{  $ \Irr(B_0(\GF))=\Irr(\Ind^\GF_{UT'}1)   $ .}
\end{equation}

This is indeed easy to deduce from \Th{AbDef} and the fact that $(\TT, 1)$ is the $1$-cuspidal pair satisfying $(\GF,1)\geq (\TT, 1)$.

Let us denote $$A= \End_{\OO\GF}(\Ind_{UT'}^{\GF}\OO).$$ By Proposition~\ref{YHell}, $A$ and $B_0(\GF)$ are both blocks of finite groups. Moreover $M$ is a bi-projective $B_0(\GF)\otimes A^\op$-module. Indeed right projectivity is ensured by writing $M=\OO\GF e$ for $e=|\BB^F_\lp|^\mm\sum_{x\in \BB^F_\lp}x$. For the right projectivity it suffices to check the restriction to a Sylow $\ell$-subgroup of $\norm{\GG}{\TT}^F/T'$ through the isomorphism of Proposition~\ref{YHell}. By the assumption on $\norm{\GG}{\TT}^F/\TT^F$ this is $\TT_\ell^F$ whose action on the right is said to be through the left action of $\TT^F$, so has already been checked.

By Brou\'e's lemma (Lemma~\ref{K2O}), in order to get \Th{Pu90} it suffices to show that $K\otimes_\OO M$ induces a bijection between simple $K\otimes_\OO A$-modules and $\Irr(B_0(\GF))$. One has $K\otimes_\OO A=\End_{K\GF}(K\otimes_\OO M)$, so $K\otimes_\OO M$ bijects the simple $K\otimes_\OO A$-modules and the $\Irr(K\otimes_\OO M)$. Then (\ref{B'G}) gives our claim. 

Let us now prove Proposition~\ref{YHell}. We abbreviate $G=\GF$, $N=\norm{\GG}{\TT}^F$, $T=\TT^F$, $W=N/T$. Using again that $W$ is an $\lp$-group, one has 
\begin{equation}\label{TW}
\text{  $ N/T'\cong T_\ell\rtimes W   $ }
\end{equation} by a map leaving unchanged the elements of $T_\ell$. 

On the other hand by Example~\ref{Hec}, $A$ has a basis $(a_n)_{n\in UT'\backslash G/UT' } $ defined by (\ref{aact}). Note that one can take $n\in N/T'$ by Bruhat decomposition  (\ref{fBru}). This contains $T_\ell$ for which the action of $a_t$ ($t\in T_\ell$) is by multiplication by $t$. One has clearly 
\begin{equation}\label{anat}
\text{  $  a_na_t=a_{nt}=a_{^nt}a_n  $ for any $n\in N/T'$, $t\in T_\ell$.}
\end{equation}

Let us consider the map 
\begin{equation*}\label{}
\text{  $    M=\Ind^G_{UT'}\OO\to \Ind^G_{UT}\OO \ \text{ defined by }\   1\otimes_{UT'}1\mapsto 1\otimes_{UT}1.$ }
\end{equation*}
One sees easily that the kernel of this map is stable under $A$,
so any endomorphism of $M$ induces an endomorphism of $ \Ind^G_{UT}\OO$ seen as a quotient. The corresponding morphism between endomorphism rings is (notations of Example~\ref{Hec})
\begin{equation}\label{anaw}
\text{  $ a_n\mapsto a_w   $ }
\end{equation} for $n\in N/T'$ and $w=nT\in W$. The algebra on the right, $\End_{\OO G}(\Ind^G_{UT}\OO)$ is the well-known Iwahori-Hecke algebra whose generators satisfy $(a_s)^2=(q_s-1)a_s+q_s$ for $s\in S$, $q_s\deq |U/U\cap U^s|$ (a power of $q$) and $a_wa_{w'}=a_{ww'}$ when the $l_S$-lengths add (see for instance [CurtisRei, 67.3] or deduce it from the proof of \Th{Yoko}). By the assumption on 
$\ell$, by reduction mod. $J(\OO)$ the above relations become the defining relations of $W$. So composing (\ref{anaw}) with reduction mod $J(\OO).\End_{\OO G}(\Ind^G_{UT}\OO)$ gives a ring morphism 
\begin{equation}\label{rho}
\text{  $ \rho\colon A\to kW   $ such that $\rho(a_n)=nT\in W$.}
\end{equation}
The kernel of $\rho$ is clearly $J(\OO T_\ell)$ where we identify $\oplus_{t\in T_\ell}\OO a_t$ with $\OO T_\ell$ as said before. So we get an exact sequence of $\OO$-modules 
\begin{equation}\label{JAW}
\text{  $ 0\to J(\OO T_\ell)A\to A\xrightarrow{\rho}kW\to 0   .$ }
\end{equation} Note that $J(\OO T_\ell)A\inn J(A)$ (in fact an equality) thanks to (\ref{anat}), so that $a_n\in A^\times$ for any $n\in N/T'$. Let $\Gamma\leq A^\times$ the group generated by the $a_n$'s ($n\in N/T'$). So (\ref{JAW}) yields an exact sequence of groups \begin{equation}\label{1JA}
\text{  $ 1\to \Gamma\cap (1+J(\OO T_\ell)A)\to \Gamma\xrightarrow{\rho}W\to 1   $ }
\end{equation} where the second term acts trivially on $T_\ell$. If the above had been done with $\OO/J(\OO)^m$ ($m\geq 1$) instead of $\OO$ we would get some $\Gamma_m$ an extension of the $\lp$-group $W$ by a finite $\ell$-group, so (\ref{1JA}) would split. In the general case we consider the $J(A)$-adic topology on $A$ for which $\rho$ is continuous. We have an exact sequence of groups
\begin{equation}\label{C1J}
\text{  $ 1\to \cent{1+J(\OO T_\ell)A}{T_\ell}\to \cent{1+J(\OO T_\ell)A}{T_\ell}.\Gamma\xrightarrow{\rho}W\to 1   .$ }
\end{equation} The sequence splits by a classical lemma about lifting of $J(A)$-closed subgroups (see [CaEn, 23.18]), thus giving some $W'\leq \cent{1+J(\OO T_\ell)A}{T_\ell}.\Gamma$ isomorphic to $W$ by $\rho$ and acting the same on $T_\ell$. Then $A=\OO T_\ell W'$ by Nakayama's lemma and the equality $ \OO T_\ell W' +J(A)=A$ implied by $\rho ( \OO T_\ell W')=kW$. This shows that $A\cong \OO (T_\ell\rtimes W)$ as claimed. \qed

\begin{rem}\label{Div} 
	A typical example of Morita equivalence between algebras $A$, $B$ over $\OO$ that are sums of blocks over finite groups is when $$B\cong \Mat_n(A)$$ for some integer $n\geq 1$. This is equivalent to our Morita equivalence inducing a bijection of characters 
	$$\Irr (A\otimes_\OO K)\to \Irr (B\otimes_\OO K)\ ;\ \chi\mapsto \chi '\eqno(*)$$
	 where the ratio of degrees $\chi '(1)/\chi (1)$ is a constant integer $n$ (see [CaEn, Ex. 9.6]). Examples are \Th{PuigNil} and the equivalences of Bonnaf\'e-Dat-Rouquier, see below \Th{BDR1.1}. 
	
	In the case of \Th{Pu90} above it is generally not the case. For instance when $\GF$ is $\SL_2(q)$ for $q$ a power of 2 and $\ell\geq 7$ is a prime divisor of $q-1$, then $\norm{\GG}{\TT}^F$ is a dihedral group of order $2(q-1)$ whose principal $\ell$-block has $((q-1)_\ell -1)/2 \geq 2$ characters of degree 2. On the other hand the whole $\Irr(\GF)$ has only one character of even degree, the Steinberg character (see for instance [DiMi, \S 15.9]). This makes impossible any bijection as in ($*$) with the ratio $\chi '(1)/\chi(1)$ being always an integer, even depending on $\chi$.
\end{rem}
%%%%%%%%%%%%%%%%%%%%%%%%%%%%%%%%%%%%%%%%%%%%%%%%%%%%%%%%%%%%%%%%

%%%%%%%%%%%%%%%%%%%%%%%%%%%%%%%%%%%%%%%%%%%%%%%%%%%%%%%%%%%%%%%%

{}
\bigskip

{}

\section{Bonnaf\'e-Dat-Rouquier's theorems}

The main theorem of [BoDaRo17] is about the situation of \Th{LCGs} above
where $(\GG, F)$ is defined over $\bbF_q$ with dual $(\GD , F^*)$, $\ell$ is a prime not dividing $q$, $s\in (\GD)^{F^*}_\lp$ is a semi-simple element and $\LL^*$ is an $F^*$-stable Levi subgroup of $\GD$ such that 
\begin{equation}\label{CGsL}
\text{  $ \cent{\GD}{s} \inn \LD  $ }
\end{equation} a condition sometimes weakened to
\begin{equation}\label{CCGsL}
\text{  $ \ccent{\GD}{s} \inn \LD  .$ }
\end{equation}

Bonnaf\'e-Dat-Rouquier's main theorem [BoDaRo17] in that situation is the following.

\begin{thm}[{[BoDaRo17, Th. 1.1]}]\label{BDR1.1} Let $\LD$ minimal for the condition (\ref{CCGsL}), let $\LL$ an $F$-stable Levi subgroup of $\GG$ associated with $\LD$ by duality, so that $\ser{\LF}{s}$ and $e_\ell (\LF,s)$ (see Definition~\ref{BGs}) make sense. Let $N$ be the stabilizer of $ e_\ell (\LF,s)$ in $\norm{\GG}{\LL}^F$. Then one has a Morita equivalence $$\OO Ne_\ell (\LF,s)\longrightarrow \OO\GF e_\ell (\GF,s).$$ Moreover if two $\ell$-blocks are related by the above they have isomorphic defect groups and fusion systems in the sense of Definition~\ref{FDbD}.
\end{thm}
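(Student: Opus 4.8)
The plan is to construct an explicit bimodule realizing the Morita equivalence and then to read off the consequences on defect groups and fusion systems from its form. The key object is the variety $\YY_\PP$ of (\ref{YPP}) attached to a parabolic $\PP$ with Levi $\LL$ --- but here $\PP$ is \emph{not} $F$-stable, so $\YY_\PP$ is a genuine Deligne--Lusztig variety, not a finite set. First I would recall that the action of $\GF\times\LF^{\op}$ on $\YY_\PP$ is free on each side (the property used already in the proof of \Th{broIso} above, via [DL76, 3.5]), so that the $\ell$-adic cohomology complex $R\Gamma_c(\YY_\PP,\OO)$, after applying the block idempotents $e_\ell(\GF,s)$ and $e_\ell(\LF,s)$ on the two sides, is a complex of $(\OO\GF e_\ell(\GF,s))\otimes(\OO\LF e_\ell(\LF,s))^{\op}$-bimodules that are projective on each side. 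The deep input of Bonnaf\'e--Rouquier [BoRo03] is that, in the situation (\ref{CGsL}), after passing to a suitable cohomology degree (the middle degree, determined by $\dim\YY_\PP$) and to the corresponding summand, one obtains a \emph{single} bimodule $M$ inducing a Morita equivalence $\OO\LF e_\ell(\LF,s)\Mmo\xrightarrow{\sim}\OO\GF e_\ell(\GF,s)\Mmo$; on characters it induces the sign-corrected bijection $\eps_{\LL,\GG}\Lu{\LL}{\GG}$ of \Th{LCGs}, which is why Lemma~\ref{K2O} applies to upgrade the isometry to a genuine equivalence.

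\textbf{Passing from $\LF$ to $N$ and the weaker hypothesis.} The refinement due to Bonnaf\'e--Dat--Rouquier is twofold: first, to replace hypothesis (\ref{CGsL}) by (\ref{CCGsL}), and second, to track the normalizer $N=\Stab_{\norm{\GG}{\LL}^F}(e_\ell(\LF,s))$ rather than just $\LF$. For the first point, one notes that $\cent{\GD}{s}$ and $\ccent{\GD}{s}$ have the same derived subgroup and differ by a group of order prime to $\ell$ (Lemma~\ref{NonC}), so the block $e_\ell(\LF,s)$ is insensitive to the distinction after passing to a slightly larger Levi or to a covering; this is the technical heart of [BoDaRo17] and I would cite it rather than reprove it. For the second point, the group $N/\LF$ acts on $\YY_\PP$ compatibly with everything, and one extends the Morita bimodule $M$ over $\OO\LF$ to a bimodule over $\OO N$ by descent along this action (the obstruction lives in a Brauer group / $H^2$ which vanishes here because the relevant extension splits, exactly as in the lifting arguments of [CaEn, 23.18] used in the proof of \Th{Pu90}). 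This yields the stated Morita equivalence $\OO N e_\ell(\LF,s)\longrightarrow\OO\GF e_\ell(\GF,s)$.

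\textbf{Defect groups and fusion systems.} Once the equivalence is given by a bimodule $M$ that is \emph{bi-projective} and moreover --- crucially --- has a \emph{trivial source} component / is built from the cohomology of a variety with free action, one invokes the standard machinery: a bi-projective bimodule inducing a Morita equivalence between blocks $B$ and $B'$ with a vertex/source control (an ``$M$ with endopermutation source'' in Puig's sense, or simply the splendid/Rickard condition) forces the defect groups to be isomorphic and transports the fusion system $\cF_{(D,b_D)}(B)$ of Definition~\ref{FDbD} to $\cF_{(D',b_{D'})}(B')$. Concretely, one checks that the Brauer construction $M(\Delta Q)$ is non-zero exactly for $Q$ in a common defect group and computes $\Br_{\Delta Q}(M)$ to get the fusion isomorphism; this is where the geometry of $\YY_\PP$ re-enters, as the fixed points $\YY_\PP^Q$ for $\ell$-subgroups $Q$ control the local behaviour. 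The main obstacle, and the part that genuinely requires [BoDaRo17] rather than soft arguments, is establishing that the Bonnaf\'e--Rouquier bimodule can be chosen with the right source properties (so that it is ``splendid'' in Rickard's sense and hence preserves local structure) and that the descent to $N$ preserves this --- the character-level statement of \Th{broIso} gives the Morita equivalence on its own, but the isomorphism of fusion systems is a strictly finer phenomenon that only the explicit geometric construction delivers.
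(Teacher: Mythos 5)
Your proposal captures the broad strategy --- the $\ell$-adic cohomology bimodule of $\YY_\PP$, the bi-projectivity from the free action, the application of Lemma~\ref{K2O}, and the Rickard/Brauer-construction machinery for the local statement --- but two concrete steps are misidentified in a way that leaves real gaps.

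First, before one can even pose the problem of extending the $\OO\GF\otimes\OO\LF^{\op}$-bimodule $M=\rH^{\dim\YY_\PP}_c(\YY_\PP,\OO)e_\ell(\LF,s)$ to $\OO\GF\otimes\OO N^{\op}$, one must know that $M$ is \emph{invariant} under the conjugation action of $N$; but twisting by $x\in N$ replaces $\YY_\PP$ by $\YY_{\PP^x}$, and these parabolics are in general different. Showing that $\rH^{\dim\YY_\PP}_c(\YY_{\PP},k)\ov e_\ell(\LF,s)\cong\rH^{\dim\YY_{\PP'}}_c(\YY_{\PP'},k)\ov e_\ell(\LF,s)$ for any two parabolics with Levi complement $\LL$ is one of the genuinely new and hard theorems of [BoDaRo17] (our \Th{BDRinv}), requiring auxiliary ``intermediate'' varieties $\YY_{\PP,\PP'}$; it is not a consequence of the character-level independence [DigneMic, 13.28] and cannot be taken for granted. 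Your proposal never mentions this step, so the extension problem is not set up.

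Second, your proposed mechanism for the extension itself --- an $H^2$/Brauer group obstruction vanishing ``exactly as in the lifting arguments of [CaEn, 23.18]'' --- is not what happens. The reference [CaEn, 23.18] is the $J(A)$-adic lifting lemma used in the very different context of \Th{Pu90}. The actual route is via a regular embedding $\GG\hookrightarrow\w\GG$ with $\zent{\w\GG}$ connected: there the stronger condition (\ref{CGsL}) holds automatically (centralizers of semi-simple elements are connected), so Bonnaf\'e--Rouquier's theorem gives the Morita equivalence for $\w\GG$ outright; one then pulls this information down to $\GG$ using the structure of the idempotents $e_\ell(\LF,s)$ versus $e_\ell(\w\LL_s^F,\w t)$, and the extendibility of $M$ across $\cN/\cL\cong\NN_s^F/\LL_s^F$ follows because this quotient is an $\ell'$-group: when it is cyclic Dade's theorem [Da84, 4.5] applies directly, and the non-cyclic case (only in type $\tD_{2n}$) requires a separate case analysis (Remark~\ref{BDRatum}). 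Your vague ``descent along this action'' does not substitute for this chain of reductions, and the discussion of replacing (\ref{CGsL}) by (\ref{CCGsL}) as ``insensitive after passing to a covering'' is precisely the regular-embedding step that needs to be made explicit rather than waved at.

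The part of your proposal on defect groups and fusion systems is in the right spirit: the Rickard equivalence is obtained from $\mathrm{G}\Gamma_c(\YY_\PP,\OO)e_\ell(\LF,s)$ as an $\ell$-permutation complex, the key input being the control of fixed points $\YY_\PP^{\Delta Q}$ (Lemma~\ref{YQ}) and Bouc's lemma~\ref{BrE}, after which Puig's theorem~\ref{Pu99} yields the isomorphism of fusion systems. You correctly note that the fusion statement is strictly finer than the Morita equivalence and only comes from the geometric construction.
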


\subsection{Etale topology and sheaves} We refer to [CaEn, Ch. A2] for the basic notions about algebraic varieties.

\noindent {\bf Sheaves on topological spaces.} (See [KaSch].) If a topological space is given by the datum of the set $\Op_X$ of open subsets of a certain set $X$, $\Op_X$ \index{$\Op_X$}can be considered as a category with $\Hom(O,O')=\{\to \} $ (a single element) when $O'\inn O$,  $\Hom(O,O')=\emptyset$ otherwise. A \sing{presheaf} on this topological space is then any functor 
\begin{equation}\label{presh}
\text{$\cF\colon \Op_X\to \Se$ or  $\cF\colon \Op_X\to A\Mmo$      }
\end{equation} to the category of sets or the category of $A$-modules for $A$ a ring. An example is the constant presheaf. When $O'\inn O$ in $\Op_X$ and $s\in \cF(O)$ one denotes $s_{|O'}\deq \cF(\to)(s)\in \cF(O')$. One also generally denotes $$\Gamma (X,\cF)=\cF (X)$$
({\it global sections}). A \sing{sheaf} is a presheaf $\cF$ such that if $(O_i)_i$ is a family of elements of $\Op_X$ and $s^i\in \cF(O_i)$ is a family such that $s^i_{|O_i\cap O_j}=s^j_{|O_i\cap O_j}$ for any $i,j$, then there is a unique $s\in \cF(\cup_iO_i)$ such that $s_{|O_i}=s^i$ for any $i$. There is a canonical way, called ``sheafification", 
\begin{equation}\label{F+}
\text{  $ \cF\mapsto\cF^+   $ }
\end{equation}
 to associate a sheaf with a presheaf, adjoint to the forgetful functor $\cF\mapsto\cF$. The constant sheaf is the sheafification of the constant presheaf. For $M$ in $A\Mmo$, the associated constant sheaf $M_X$ on $X$ satisfies $M_X(U)=M^{\pi_0(U)}$ where $\pi_0(U)$ is the set of connected components of $U$. When $f\colon X\to X'$ is continuous and $\cF$, $\cF'$ are sheaves on $X$, $X'$ respectively the formulas $$f_*\cF (U')=\cF(f^\mm(U'))\ \ \ ,\ \ f^*\cF '(U)=\lim _{U'\nni f(U)}\cF '(U')  $$\index{$f_*$}\index{$f^*$}define \sing{direct and inverse images of sheaves} that are obvious presheaves. They are made into sheaves by (\ref{F+}) keeping the same notation. For the map $\si_X\colon X\to \{\bullet \}$, one gets
 \begin{equation}\label{si*}
 \text{   $(\si_X)_*\cF =\Gamma (X,\cF)$.  }
 \end{equation}
 
 When $j\colon X\to X'$ is an open immersion (i.e. a homeomorphism between $X$ and $j(X)\in \Op_{X'}$)%\index{{${j{_!}}$}}%\index{$j_{!}$}
 then one defines a presheaf by
 \begin{equation}\label{j!}
 {   j_!\cF (U')   = }\begin{cases}
 \cF (U')& \text{ if } U'\inn j(X)\cr
 0&\text{ otherwise. }
 \end{cases}
 \end{equation}
 This is also made into a sheaf by (\ref{F+}).
 
 Most sheaves of interest are deduced from locally constant sheaves by those operations. Assume $X$ is pathwise connected and locally simply connected. Let $\pi_1(X,x_0)$ its fundamental group (homotopy classes of loops based at a given $x_0$). The topological relevance of sheaves is partly contained in the elementary fact that locally constant sheaves with values in sets and some additional finiteness condition (finite stalks) are in bijection with continuous finite $\pi_1(X,x_0)$-sets.
 
 The category $Sh_A(X)$ \index{$Sh_A(X)$}of sheaves $\cF\colon \Op_X\to A\Mmo$ has enough injectives. When $f\colon X\to X'$ is continuous, we can right-derive the left exact functor $f_*\colon Sh_A(X)\to Sh_A(X')$ into $$\R f_*\colon \tD^+(Sh_A(X))\longrightarrow \tD^+(Sh_A(X')).$$ In the case of (\ref{si*}) one writes \index{$\R\Gamma (X,\cF)$}
 \begin{equation}\label{RGam}
 \text{  $    \R\Gamma (X,\cF) \in \tD^+(A\Mmo)$ }
 \end{equation}
 since $Sh_A(\{\bullet \})=A\Mmo$. The \sing{$i$-th cohomology $A$-module of $\cF$ } 
 is by definition $\rH^i(X,\cF)\deq\rH^i(\R\Gamma(X,\cF))$.

 \noindent {\bf \'Etale cohomology.}\index{\'etale cohomology.} (See [Milne], [CaEn, A3], [Du17, \S 2].)
Let $\XX$ be a variety over $\bbF$. The sheaves for the \'etale topology on $\XX$ and their cohomology are roughly defined as follows from the topological model sketched above. The topology on $\XX$ is not the Zariski topology but a Grothendieck topology where $\Op_X$ is replaced by the category $\XX_{et}$\index{$\XX_{et}$} whose objects are \'etale maps  of varieties over $\bbF$ with codomain $\XX$ $$\UU\to \XX$$  and morphisms are given by commutative triangles. Presheaves are defined with values in $A\Mmo$ for $A$ a ring that is generally finite of characteristic prime to $p$. 

A lot of adaptations are needed to define substitutes to intersections (pullbacks), coverings, sheaves, etc... 
One defines a certain category of sheaves $Sh_A(\XX_{et})$  (finiteness and constructibility assumptions) to which the homological constructions of above can apply.
The map $\si_X\colon X\to\{\bullet \}$ used above is replaced by the structural map $\si_\XX\colon \XX\to \mathrm{Spec}(\bbF)$. This leads to $\R\Gamma (\XX,\cF)\in \tD^+(A\mmo)$ and the corresponding cohomology modules. One has also a notion of \sing{cohomology with compact support}.  Assume one has a compactification $j\colon\XX \hookrightarrow \ov \XX $ (an open embedding with $\ov\XX$ complete), then 
\begin{equation}\label{RGac}
\text{  $    \R\Gamma_c(\XX,\cF)\deq \R(\si_{\ov\XX})_*j_!\cF\in \tD^+(A\mmo)$ }
\end{equation} with corresponding homology groups $\rH^i_c(\XX ,\cF)\deq \rH^i(\R\Gamma_c(\XX,\cF))$\index{$\rH^i_c(\XX ,\cF)$}\index{$\R\Gamma_c(\XX,\cF)$}.

The notion of \sing{$\ell$-adic cohomology} (here with compact support) is defined as follows. Denote $\OO^{(n)}\deq \OO/J(\OO)^n$ (recall $\OO$ is a finite extension of $\bbZ_\ell$). An $\ell$-adic sheaf is a projective system $\cF=(\cF^{(n)})_{n\geq 1}$ of sheaves where $\cF^{(n)}\in Sh_{\OO^{(n)}}(\XX)$  and $\cF^{(n)}=\cF^{(n+1)}\otimes (\OO^{(n)})$ . Then $$\rH^i_c(\XX ,\cF)\deq \varprojlim _n \rH^i_c(\XX ,\cF^{(n)}) \in \OO\mmo .$$

For instance the module $\rH^i_c(\YY_\PP)$ defining the functor $\Lu{\LL}{\GG}$ of Deligne-Lusztig theory in Definition~\ref{RLG} is $\rH^i_c(\YY_\PP)\deq\bbC\otimes_\OO \varprojlim _n \rH^i_c(\YY_\PP ,\OO^{(n)}_{\YY_\PP})$.

Compactifications give rise to the notion of \sing{ramification}. The context is roughly as follows. Assume one has a compactification $j\colon \XX\to \ov\XX   $ with smooth $\ov\XX$ and complement $\ov\XX\setminus \XX = \DD_1\cup \DD_2\cup\dots$ a smooth divisor with normal crossings. For each irreducible component $\DD_m$ let $$\ov\XX_m =\ov\XX\setminus\cup_{i\ne m}\DD_i\text{\ \ \ \  and \ \ \ \ } \XX\xrightarrow{j_m} \ov\XX_m\xleftarrow{i_m} \DD_m\setminus \cup_{i\mid i\ne m}(\DD_m\cap \DD_i)$$ the associated open and closed immersions.
\begin{defn}[]\label{Fram} One says that $\cF$ ramifies along $\DD_m$ when $\cF$ is not of the  form $j_m^*\cF_m$ for $\cF_m$ a sheaf on $\ov\XX_m$.
\end{defn}

Then by results from [SGA4.5], [SGA5] (see also the survey in [CaEn, A3.19]) one gets that the above condition is equivalent to 
\begin{equation}\label{iRj}
\text{  $ i_m^*\R(j_m)_*\cF =0   $ }
\end{equation}
(otherwise $\cF = j_m^*\cF_m$ for $\cF_m\deq(j_m)_*\cF$).

\subsection{Brou\'e's reduction} In the context of the functor $\Lu{\LL}{\GG}$, one starts in general with an $F$-stable Levi subgroup $\LL$ complement in a parabolic subgroup $\PP$. From (\ref{YPP}) recall the varieties
 $$\GG/\PP\nni \XX_\PP \deq\{g\PP\mid g^\mm F(g)\in \PP F(\PP) \}$$ and $$ \YY_\PP\deq\{g\RU(\PP)\mid g^\mm F(g)\in \RU(\PP) F(\RU(\PP)) \}$$ and the actions of $\GF$ and $\LF$ on them. One has  ${\pi}\colon \YY_\PP\to\XX_\PP$ the $\LF$-quotient map sending $g\RU(\PP)$ to $g\PP$.

\begin{defn}[{}]\label{cFs} Let $\LL$ and $s$ be like in \Th{broIso}. Recall $\OO^{(n)}\deq \OO/J(\OO)^n$.  Then let $\cF_s= (\cF_s^{(n)})_{n\geq 1}$ where $$\cF_s^{(n)}\deq \pi_*(\OO^{(n)}_{\YY_\PP})e_\ell (\LF,s)$$ recalling that $\pi_*$ sends sheaves of $\OO^{(n)}$-modules to sheaves of $\OO^{(n)}\LF$-modules.\index{$\cF_s^{(n)}$}\index{$\cF_s^{}$}
\end{defn}

For simplicity we assume that $\XX_\PP$ is affine. This is conjectured in general and known in many cases (see [Du17, ]). However what follows can be proven knowing just that it is quasi-affine, which is the case (see [CaEn, 7.15]).
\begin{thm}[{[Br90b]}]\label{Br90} If there exists a compactification $\XX_\PP\xrightarrow{j}\ov\XX_\PP\xleftarrow{i}\ov\XX_\PP\setminus\XX_\PP$ such that $$i^*\R j^*\cF^{(n)}_s =0$$ for all $n\geq 1$, then $$\varprojlim_n \rH^{\dim\YY_\PP}_c(\YY_\PP,\OO^{(n)})e_\ell (\LF,s)$$ induces a Morita equivalence $$B_\ell (\LF ,s)\mmo\to B_\ell (\GF,s)\mmo .$$
\end{thm}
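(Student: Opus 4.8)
The statement of Theorem~\ref{Br90} reduces the Morita equivalence of $B_\ell(\LF,s)$ and $B_\ell(\GF,s)$ to a vanishing condition on a single complex of sheaves, so the plan is to leverage the cohomological machinery built up in Sect.~9.A together with two classical ideas: first, that the action of $\GF\times\LF$ on $\YY_\PP$ is free on both sides (already invoked for Theorem~\ref{broIso}), which makes the candidate bimodule $M\deq\varprojlim_n \rH^{\dim\YY_\PP}_c(\YY_\PP,\OO^{(n)})e_\ell(\LF,s)$ projective on each side; and second, that the ramification hypothesis $i^*\R j^*\cF_s^{(n)}=0$ forces the cohomology of $\YY_\PP$ with coefficients twisted by $e_\ell(\LF,s)$ to be concentrated in a single degree. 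I would first record that, thanks to Deligne--Lusztig's freeness result ([DL76, 3.5]) and the fact that $\pi\co \YY_\PP\to\XX_\PP$ is an $\LF$-torsor, each $\cF_s^{(n)}$ is locally constant and $\rH^i_c(\XX_\PP,\cF_s^{(n)})\cong \rH^i_c(\YY_\PP,\OO^{(n)})e_\ell(\LF,s)$, so that the whole problem transfers to the affine (or quasi-affine) variety $\XX_\PP$.

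\textbf{Key steps.} (1) Using the compactification $\XX_\PP\xrightarrow{j}\ov\XX_\PP\xleftarrow{i}\ov\XX_\PP\setminus\XX_\PP$ and the standard triangle relating $j_!$, $\R j_*$ and $i_*i^*\R j_*$, the hypothesis $i^*\R j^* \cF_s^{(n)}=0$ gives $j_!\cF_s^{(n)}\xrightarrow{\sim}\R j_*\cF_s^{(n)}$ in the derived category of sheaves on $\ov\XX_\PP$; hence $\R\Gamma_c(\XX_\PP,\cF_s^{(n)})\cong \R\Gamma(\XX_\PP,\cF_s^{(n)})$. (2) Poincar\'e--Verdier duality on the smooth variety $\XX_\PP$ of pure dimension $\dim\YY_\PP$ then identifies $\rH^i_c$ with the dual of $\rH^{2\dim\XX_\PP-i}$, and combined with (1) and the Artin--Grothendieck affine vanishing theorem ($\rH^i$ vanishes above the dimension on an affine variety, $\rH^i_c$ below the dimension), one concludes that $\rH^i_c(\XX_\PP,\cF_s^{(n)})=0$ unless $i=\dim\YY_\PP$. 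Thus $\R\Gamma_c(\YY_\PP,\OO^{(n)})e_\ell(\LF,s)$ is, up to shift, the single module $M^{(n)}\deq\rH^{\dim\YY_\PP}_c(\YY_\PP,\OO^{(n)})e_\ell(\LF,s)$, and this module is projective as an $\OO^{(n)}\GF$-module and as an $\OO^{(n)}\LF$-module by freeness of the two actions. (3) Passing to the inverse limit in $n$ and tensoring with $K$, one has $M_K\otimes_{B_\ell(\LF,s)_K}-$ realising, on characters, exactly the map $\pm\Lu{\LL}{\GG}$ restricted to $\lser{\LF}{s}$, which is a bijection onto $\lser{\GF}{s}$ (this is the content underlying Theorem~\ref{broIso} and the surrounding discussion on rational series). (4) Finally, invoke Lemma~\ref{K2O} (Brou\'e's lemma): $M$ is a bi-projective $B_\ell(\GF,s)\otimes B_\ell(\LF,s)^\op$-bimodule, and $M_K$ induces a bijection of ordinary characters $\Irr(B_\ell(\LF,s))\to\Irr(B_\ell(\GF,s))$; therefore $M\otimes_{B_\ell(\LF,s)}-$ is a Morita equivalence $B_\ell(\LF,s)\mmo\to B_\ell(\GF,s)\mmo$.

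\textbf{Main obstacle.} Steps (1)--(2), the concentration of the twisted cohomology of $\YY_\PP$ in the middle degree, are the technical heart. The ramification vanishing $i^*\R j^*\cF_s^{(n)}=0$ is exactly the hypothesis, but one must be careful that the compactification $\ov\XX_\PP$ can be chosen smooth with complement a normal-crossing divisor (so that Definition~\ref{Fram} and~(\ref{iRj}) apply component by component), and one must control that the affine (or merely quasi-affine, via [CaEn, 7.15]) hypothesis on $\XX_\PP$ does deliver the one-sided Artin vanishing one needs; the quasi-affine case requires a small additional argument passing to an affine open or using the known vanishing range for quasi-affine varieties. The bi-projectivity in step (2) is routine once freeness of both actions on $\YY_\PP$ is granted, and lifting the isomorphisms of $B_K$-modules to $\OO$-modules in step (4) is the standard fact (quoted in the proof of Lemma~\ref{K2O}) that projective $\OO H$-modules are determined by their extension of scalars to $K$. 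So the genuinely delicate point is organising the sheaf-theoretic cohomology computation so that the hypothesis of the theorem is used precisely where needed and the degeneration to a single degree is clean.
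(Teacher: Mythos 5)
Your proposal matches the paper's proof essentially step for step: the open–closed triangle plus the hypothesis gives $j_!\cF_s^{(n)}\cong\R j_*\cF_s^{(n)}$ hence $\R\Gamma_c\cong\R\Gamma$, then Artin affine vanishing together with Poincar\'e–Verdier duality concentrates cohomology in degree $\dim\YY_\PP$, and finally bi-projectivity from the (almost) free action plus the character bijection of Theorem~\ref{LCGs} let you apply Lemma~\ref{K2O}. The only minor deviation is the claim that the $\GF$-action on $\YY_\PP$ is free; the paper is more careful in this proof and only needs that the stabilizers are finite unipotent groups, hence of order invertible in $\OO^{(n)}$, which still yields projectivity on the $\GF$-side.
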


\begin{proof} A first consequence of (\ref{iRj}) for $\cF_s^{(n)}$ is that 
	\begin{equation}\label{j!R}
	\text{  $ j_!\cF_s^{(n)}\cong \R j_*\cF_s^{(n) }  $ .}
	\end{equation}
	This is seen by applying to $\cF_s^{(n) }$ the open-closed exact sequence $$0\to j_!\to j_*\to i_*i^*j_*\to 0$$ suitably right-derived (see [Du17, 2.6]) into a distinguished triangle (note that $j_!$, $i^*$ and $i_*$ are right exact).
	
We omit the subscripts $\PP$ from now on. We denote $\ov\si\colon \ov \XX\to \SF$ and $\si =\ov\si\circ j\colon \XX\to \SF$
the structure morphisms of $\ov\XX$ and $\XX$. Then (\ref{j!R}) and the definition of $\R\Gamma$ and $\R\Gamma_c$ allow to write 
\begin{equation}\label{RRc}
\R\Gamma(\XX,\cF_s^{(n) } )=\R\si_*\cF_s^{(n) } =\R\ov\si_*\circ\R j_*\cF_s^{(n) } =\R\ov\si_*\circ j_!\cF_s^{(n) }  =\R\Gamma_c(\XX,\cF_s^{(n) } ).  \end{equation}
Since $\XX$ is affine of dimension the same $d$ everywhere, $\R\Gamma(\XX,\cF_s^{(n) } )$ has cohomology in degrees only within the interval $[0,d]$ (see [Du17, \S 2.1]). But by Poincar\'e-Verdier duality (see [Du17, 2.4]) since $\XX$ is smooth, $\R\Gamma_c(\XX,\cF_s^{(n) } )$ has cohomology in degrees $\in [d,2d]$. So (\ref{RRc}) implies that $\R\Gamma_c(\XX,\cF_s^{(n) } )=\R\Gamma_c(\YY ,\OO^{(n)}).e_\ell(\GF ,s)$ has cohomology in degree $d$ only. Let's call $H^{(n)}$ this cohomology $\OO^{(n)}$-module. One can prove that it is $\OO^{(n)}$-free. Moreover the groups $\GF$ and $\LL^F$ act on $\YY$ with stabilizers that are finite unipotent groups of order invertible in $\OO^{(n)}$ (trivial in the case of $\LL^F$). So applying for instance [Du17, 2.4] one gets that both restrictions of $H^{(n)}$ to $\OO^{(n)}\GF$ and $\OO^{(n)}\LF$ are projective. So the same is true for $H^\infty$ the limit over $n$. By definition $\bbC\otimes_\OO \varprojlim_n \R\Gamma_c(\YY ,\OO^{(n)})$ is the bimodule inducing the functor $\Lu{\LL\inn\PP}{\GG}$, so $H^\infty$ is actually a bi-projective $\OO\GF\otimes\OO\LF^\op$-module such that $K\otimes_\OO H^\infty$ induces the bijection between ordinary characters  $$\cE_\ell(\LF ,s)=\Irr (K\LF e_\ell(\LF ,s))\to \cE_\ell(\GF,s)=\Irr (K\GF e_\ell(\GF ,s))$$ thanks to \Th{LCGs}. Now we have everything to apply Lemma~\ref{K2O} and get our claim.
\end{proof}

\begin{rem}\label{L=T} When $\LL$ is a torus, Deligne-Lusztig have shown the existence of an $\ov \XX$ such that (\ref{iRj}) is satisfied [DeLu76, 9.14]. So the Morita equivalence holds in that case [Br90b, 3.6]. Note however that in that case methods similar to Sect. 7 above allow to show that $B_\ell(\GF, s)$ (see Definition~\ref{BGs}) is a single block which is nilpotent of defect $\LL_\ell^F$. The Morita equivalence is then a consequence of \Th{PuigNil} which gives the structure of nilpotent blocks in general.
\end{rem}

\subsection{Bonnaf\'e-Rouquier (2003)}
In view of \Th{Br90}, the main objective of [BoRo03] is to prove

\begin{thm}[{[BoRo03, 11.7]}]\label{BR03}
 There exists a smooth compactification $\XX_\PP\xrightarrow{j}\ov\XX_\PP\xleftarrow{i}\ov\XX_\PP\setminus\XX_\PP$ such that $i^*Rj^*\cF_s =0$.\end{thm}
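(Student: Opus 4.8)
The goal is to produce a nice compactification $\ov\XX_\PP$ of the Deligne--Lusztig variety $\XX_\PP$ along which the $\ell$-adic sheaf $\cF_s$ (from Definition~\ref{cFs}) does not ramify, i.e. $i^*\R j^*\cF_s=0$ in the sense of (\ref{iRj}). The strategy of Bonnaf\'e--Rouquier is to reduce the question to the already known torus case of Deligne--Lusztig (Remark~\ref{L=T}), where such a compactification exists by [DeLu76, 9.14]. First I would recall that the variety $\XX_\PP\inn\GG/\PP$ fibers, via natural $\GF$-equivariant maps, over the variety $\XX_\TT$ attached to an $F$-stable maximal torus $\TT\leq\LL$ contained in an $F$-stable Borel of $\LL$; concretely one writes $\GG/\PP$ as the base of a tower $\GG/\BB\to\GG/\PP$ and compares the corresponding $\XX$-varieties. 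The key point is that the sheaf $\cF_s$, built from $\pi_*\OO^{(n)}_{\YY_\PP}$ cut by the idempotent $e_\ell(\LF,s)$, can be expressed -- up to the harmless finite unipotent stabilizers -- in terms of the analogous sheaf on $\XX_\TT$ pushed forward along the (proper, smooth-fibered) map $\XX_\TT\to\XX_\PP$. Ramification being detected on the boundary and being compatible with proper pushforward (one uses the smooth base change / proper base change machinery of [SGA4.5],[SGA5] cited after Definition~\ref{Fram}), non-ramification of the torus sheaf transfers to non-ramification of $\cF_s$.

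\textbf{Key steps, in order.} (1) Fix $\TT\leq\LL$ an $F$-stable maximal torus lying in an $F$-stable Borel $\BB_\LL$ of $\LL$, and set $\BB=\BB_\LL\RU(\PP)$, an $F$-stable Borel of $\GG$ with $\TT\leq\BB\leq\PP$. Describe the natural morphism $\XX_\BB\to\XX_\PP$ and check it is proper with fibers isomorphic to (a twisted form of) $\LL/\BB_\LL$, in particular smooth and connected; likewise on the $\YY$-level, $\YY_\BB\to\YY_\PP$ is $\LF$-equivariant. (2) Using the Bruhat-type decomposition of $\LL/\BB_\LL$ and the fact that $s$ is an $\ell'$-element, express the semisimple part $e_\ell(\LF,s)\OO^{(n)}\LF$ of the group algebra through $e_\ell(\TT^F,s')\OO^{(n)}\TT^F$ for appropriate $s'$ via Harish-Chandra / Lusztig functors inside $\LL$; this is where Theorem~\ref{Br90} applied to $\LL$ (or directly Remark~\ref{L=T}) does the work, giving a Morita equivalence $B_\ell(\TT^F,s')\text{-}\mo\simeq B_\ell(\LF,s)\text{-}\mo$ realized by $\LL$-cohomology. (3) Deduce that $\cF_s$ on $\XX_\PP$ is, as a complex of sheaves, the pushforward along the proper map $\XX_\BB\to\XX_\PP$ of the corresponding torus sheaf $\cF_{s'}$ on $\XX_\BB$ (identified with $\XX_\TT$ up to the $\LL/\BB_\LL$-fibration), tensored by a locally constant smooth local system coming from the fibration. (4) Take for $\ov\XX_\PP$ the image (or a suitable blow-up making the boundary a normal-crossings divisor) of the Deligne--Lusztig compactification $\ov\XX_\BB=\ov\XX_\TT$ furnished by [DeLu76, 9.14]; one must arrange the maps $\XX_\BB\hookrightarrow\ov\XX_\BB$ and $\XX_\PP\hookrightarrow\ov\XX_\PP$ into a commutative square with proper vertical maps. (5) Apply proper base change together with the criterion (\ref{iRj}): since $i_m^*\R(j_m)_*\cF_{s'}=0$ on $\ov\XX_\BB$ along each boundary component, and pushforward along a proper map commutes with the relevant derived functors, one gets $i^*\R j^*\cF_s=0$ on $\ov\XX_\PP$.

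\textbf{Main obstacle.} The delicate part is step (4) together with the geometric control in step (5): constructing a compactification $\ov\XX_\PP$ that is simultaneously smooth, has the boundary $\ov\XX_\PP\setminus\XX_\PP$ a normal-crossings divisor, \emph{and} receives a proper morphism from $\ov\XX_\BB$ extending $\XX_\BB\to\XX_\PP$. The Deligne--Lusztig compactifications are built from closures of Bruhat cells and are not automatically compatible with the projection $\GG/\BB\to\GG/\PP$; reconciling the two requires a careful analysis of how the divisor components behave under this projection (which boundary strata of $\ov\XX_\BB$ map dominantly to boundary strata of $\ov\XX_\PP$ and which contract), possibly after equivariant resolution. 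Checking that $\cF_s$ does not ramify then amounts to controlling the monodromy of the local systems $\cF_{s'}$ restricted to the links of these strata and verifying it is trivial precisely because $s$ has order prime to $\ell$ and $\norm\GG\LL^F/\LL^F$ acts tamely -- this arithmetic input on $\ell$ is what ultimately forces the vanishing. I expect that assembling these geometric and arithmetic ingredients, rather than any single computation, is the crux, and it is exactly the content of [BoRo03, \S\S9--11].
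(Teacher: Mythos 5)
Your proposal does not match the route actually taken in the paper, and steps (2)--(3) contain a genuine gap. You want to ``express the semisimple part $e_\ell(\LF,s)\OO^{(n)}\LF$ \dots through $e_\ell(\TT^F,s')\OO^{(n)}\TT^F$ \dots via Harish-Chandra / Lusztig functors inside $\LL$'' using the Morita equivalence of Theorem~\ref{Br90} or Remark~\ref{L=T}, and then ``deduce that $\cF_s$ on $\XX_\PP$ is, as a complex of sheaves, the pushforward along the proper map $\XX_\BB\to\XX_\PP$ of the corresponding torus sheaf.'' That deduction does not follow: a Morita equivalence of $\OO$-algebras (or an isomorphism of module categories) does not produce a $\GF\times\LF$-equivariant identification of the constructible sheaves on $\XX_\PP$ that would let you propagate a non-ramification statement from the torus case. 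Indeed the equivalence in Remark~\ref{L=T} comes from nilpotency of the block (Theorem~\ref{PuigNil}), a purely algebraic fact carrying no geometric content, so it cannot be leveraged to say anything about $i^*\R j_*\cF_s$. The missing ingredient -- and it is the central new contribution of [BoRo03] -- is the derived generation theorem, Theorem~\ref{BRmod} ([BoRo03, Thm.~A]): the thick subcategory of $\tD^b(k\GF\mmo)$ generated by the complexes $\cS_{(w,\th)}$ for $(w,\th)\in\nabla$ contains the regular module $k\GF[0]$. Applied to $\LL$ (rather than to a Morita equivalence inside $\LL$), this lets one replace the problematic tensoring by $k\LF\ov e_\ell(\LF,s)$ in $\cF_s$ by the explicit complexes $\cS_{(w,\th)}^{\LL_I,\dot vF}$, which reduces $i^*\R j_*\cF_s=0$ to a corresponding vanishing for the sheaves $\cF_{(w\cup d_v,\th)}$ on Bott--Samelson--Demazure--Hansen type varieties $\XX(w\cup d_v)$.

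Your step (4) is also misaimed: the paper does not take a blow-up or image of the Deligne--Lusztig compactification of [DeLu76, 9.14], but introduces its own smooth compactifications $\ov\XX(w)=\bigcup_{w'\le w}\XX(w')$ of Bott--Samelson type, for which the ramification of $\cF_{(w,\th)}$ along each boundary stratum $\XX(w')$ is completely controlled by a purely combinatorial criterion ($w_\th\le w'$, Theorem~\ref{BRsh}). The passage from $\XX(w\cup d_v)$ to $\ov\XX_{I,v}$ is indeed by a proper map and base change, as you anticipate in step (5), but the vanishing on the Bott--Samelson side comes from Theorem~\ref{BRsh}, not from [DeLu76, 9.14]. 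In short, your plan substitutes a Morita equivalence for the derived generation theorem, and that substitution does not work; the generation theorem and the explicit Bott--Samelson compactifications with their ramification criterion are exactly what fill the gap you gesture at in your ``Main obstacle'' paragraph.
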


As a consequence the authors get 
\begin{thm}[{[BoRo03, Theorem B']}]\label{BRMorita} Assume $\cent{\GD}{s}^{F^*}\inn \LD$. One has a Morita equivalence $$ \OO\LF e_\ell(\LF,s)\text{-}\mo\to \OO\GF e_\ell(\GF,s)\text{-}\mo .$$
\end{thm}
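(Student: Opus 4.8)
The statement to prove is Theorem~\ref{BRMorita}: under the hypothesis $\cent{\GD}{s}^{F^*}\inn\LD$, there is a Morita equivalence $\OO\LF e_\ell(\LF,s)\mmo\to\OO\GF e_\ell(\GF,s)\mmo$. Let me plan how to derive this from the machinery already set up.

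The key is that we now have all the pieces. Theorem~\ref{BR03} (Bonnafé-Rouquier's geometric input) provides a smooth compactification $\XX_\PP\xrightarrow{j}\ov\XX_\PP\xleftarrow{i}\ov\XX_\PP\setminus\XX_\PP$ with $i^*Rj^*\cF_s=0$, i.e. the vanishing condition $i^*\R j^*\cF_s^{(n)}=0$ for all $n\geq 1$ (the $\ell$-adic sheaf $\cF_s$ being the projective system of the $\cF_s^{(n)}$ from Definition~\ref{cFs}). This is precisely the hypothesis of Theorem~\ref{Br90} (Broué's reduction). So the plan is simply to invoke Theorem~\ref{Br90}: its conclusion is that $\varprojlim_n \rH^{\dim\YY_\PP}_c(\YY_\PP,\OO^{(n)})e_\ell(\LF,s)$ induces a Morita equivalence $B_\ell(\LF,s)\mmo\to B_\ell(\GF,s)\mmo$, which by Definition~\ref{BGs} is exactly $\OO\LF e_\ell(\LF,s)\mmo\to\OO\GF e_\ell(\GF,s)\mmo$.

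Two routine points need to be checked to legitimately chain the two theorems. First, Theorem~\ref{Br90} was stated under the simplifying assumption that $\XX_\PP$ is affine, with the remark that quasi-affineness suffices; one should note that $\XX_\PP$ is indeed quasi-affine (cited from [CaEn, 7.15]) so the reduction applies in full generality here. Second, Theorem~\ref{Br90} requires a choice of $F$-stable Levi $\LL$ which is a complement in a parabolic $\PP$, and the variety $\YY_\PP$, $\cF_s$ attached to that datum; here $\LD$ is any $F^*$-stable Levi with $\cent{\GD}{s}^{F^*}\inn\LD$, $\LL$ the dual $F$-stable Levi subgroup of $\GG$, and $\PP$ an (a priori not $F$-stable) parabolic with Levi $\LL$ — this is exactly the setup of \S9.B, so $\ser{\LF}{s}$ and $e_\ell(\LF,s)$ make sense and the character-level bijection $\cE_\ell(\LF,s)\to\cE_\ell(\GF,s)$ of Theorem~\ref{LCGs} (together with its $\ell$-modular refinement to $\lser{\LF}{s}\to\lser{\GF}{s}$) is available, which is what Theorem~\ref{Br90}'s proof feeds into Lemma~\ref{K2O}.

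There is essentially no further obstacle: the substance of the argument is entirely contained in Theorem~\ref{BR03} (the hard geometric ramification statement, whose proof occupies [BoRo03] and is not reproduced here) and in Theorem~\ref{Br90} (Broué's homological reduction, proved above via Poincaré-Verdier duality, affineness bounds on cohomological amplitude, and Lemma~\ref{K2O}). So the proof I would write is short: cite Theorem~\ref{BR03} to verify the hypothesis of Theorem~\ref{Br90}, note quasi-affineness of $\XX_\PP$, and conclude. The only place where care is genuinely needed is confirming that the hypothesis $\cent{\GD}{s}^{F^*}\inn\LD$ (rather than the stronger/weaker variants $\cent{\GD}{s}\inn\LD$ or $\ccent{\GD}{s}\inn\LD$) is precisely the hypothesis under which Theorem~\ref{BR03} is stated — which it is, matching [BoRo03, Theorem B'].

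\begin{proof}
This is now immediate from the results assembled above. Let $\LL$ be an $F$-stable Levi subgroup of $\GG$ dual to $\LD$ and let $\PP$ be a parabolic subgroup of $\GG$ with Levi complement $\LL$ (not necessarily $F$-stable), so that the varieties $\XX_\PP$, $\YY_\PP$, the quotient map $\pi\colon\YY_\PP\to\XX_\PP$, and the $\ell$-adic sheaf $\cF_s=(\cF_s^{(n)})_{n\geq1}$ of Definition~\ref{cFs} are defined, and $\ser{\LF}{s}$, $e_\ell(\LF,s)$ make sense as in \S9.B. The variety $\XX_\PP$ is quasi-affine (see [CaEn, 7.15]), so the proof of Theorem~\ref{Br90} applies verbatim with ``affine'' replaced by ``quasi-affine'' throughout.

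By Theorem~\ref{BR03}, applied under the present hypothesis $\cent{\GD}{s}^{F^*}\inn\LD$, there exists a smooth compactification $\XX_\PP\xrightarrow{j}\ov\XX_\PP\xleftarrow{i}\ov\XX_\PP\setminus\XX_\PP$ with $i^*\R j^*\cF_s=0$, that is, $i^*\R j^*\cF_s^{(n)}=0$ for every $n\geq1$. This is exactly the hypothesis of Theorem~\ref{Br90}. Hence $\varprojlim_n \rH^{\dim\YY_\PP}_c(\YY_\PP,\OO^{(n)})e_\ell(\LF,s)$ induces a Morita equivalence
$$B_\ell(\LF,s)\mmo\longrightarrow B_\ell(\GF,s)\mmo.$$
By Definition~\ref{BGs}, $B_\ell(\LF,s)=\OO\LF e_\ell(\LF,s)$ and $B_\ell(\GF,s)=\OO\GF e_\ell(\GF,s)$, so this is precisely the asserted Morita equivalence $\OO\LF e_\ell(\LF,s)\text{-}\mo\to\OO\GF e_\ell(\GF,s)\text{-}\mo$.
\end{proof}
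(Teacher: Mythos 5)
Your proof is correct and is exactly the paper's own argument: the paper introduces Theorem~\ref{BRMorita} with the words ``As a consequence the authors get,'' meaning that it follows by chaining Theorem~\ref{BR03} (the ramification vanishing) into Theorem~\ref{Br90} (Brou\'e's reduction), with the quasi-affineness of $\XX_\PP$ noted just as you do. The only loose thread you flag — the small drift in the exact form of the hypothesis on $\cent{\GD}{s}$ among Theorems~\ref{LCGs}, \ref{Br90}, and \ref{BRMorita} — is an imprecision already present in the survey rather than a gap in your argument.
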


The construction of the smooth compactification for varieties $\XX_\PP$ with $\PP$ a Borel subgroup extends the one of Bott-Samelson-Demazure-Hansen for Schubert varieties (which are obtained by removing the condition involving $F$ in what follows). Let $\BB_0$, $\TT_0$ a pair of $F$-stable Borel and torus as before. Let \index{$\Sigma$}$$\Sigma\deq\bigcup_{m\geq 0}(S\cup\{1\})^m$$ the set of finite sequences of elements of $S\cup\{1\}$. One recalls that a lifting $S\to\norm{\GG}{\TT_0}$, denoted $s\mapsto \dot s$ has been chosen satisfying the braid relations of the Weyl group (see (\ref{dotsi}) above). For $w=(s_1,\dots ,s_r)\in \Sigma$, let\index{$\XX(w)$}\index{$\YY(w)$}

 \leftline{$\XX(w)\deq \{(X_1,\dots ,X_r)\in (\GG/\BB_0)^r\mid$ }
 
 \rightline{$ X_1^\mm X_2\in \BB_0s_1\BB_0,\dots ,X_{r-1}^\mm X_r\in \BB_0s_{r-1}\BB_0, X_r^\mm F(X_1)\in \BB_0s_r\BB_0   \}$}

 \leftline{$\YY(w)\deq\{(Y_1,\dots ,Y_r)\in (\GG/\UU_0)^r\mid $ }
 
 \rightline{$Y_1^\mm Y_2\in \UU_0\dot s_1\UU_0,\dots ,Y_{r-1}^\mm Y_r\in \UU_0\dot s_{r-1}\UU_0, Y_r^\mm F(Y_1)\in \UU_0\dot s_r\UU_0   \}.$}

Both are acted on by $\GF$ on the left, the first is also acted on by $\TT_0^{wF}\deq \TT_0^{s_1\dots s_rF}$ on the right.
The reduction mod $\BB_0$ gives \index{$\pi_w$}a finite quotient $$\pi_w\colon \YY(w)\to \XX(w)\cong \YY(w)/\TT_0^{wF}.$$

Let 
\begin{equation}\label{ovX}\index{$\ov\XX(w)$}
\text{  $  \ov\XX(w)= \bigcup_{w'\leq w}\XX(w')  $ }
\end{equation} where $w'\leq w$ means that $w'\in \{1,s_1\}\times\cdots\times\{1,s_r\}$. This is smooth just like $\BB_0\cup \BB_0s_i\BB_0$ is smooth, being an algebraic group.

Bonnaf\'e-Rouquier define the set $\nabla $ \index{$\nabla$}of pairs $(w,\th)$ where $w=(s_1,\dots ,s_r)\in\Sigma$ and $$\th\colon \TT_0^{wF}=\TT_0^{s_1\dots s_rF}\to k^\times$$ is a group morphism ([BoRo03, \S 4.4]). For such a pair they define $w_\th =(s'_1,\dots ,s'_r)$ by \index{$w_\th$}
 \begin{equation}\label{wtheta}
{   s'_i = }\begin{cases}
1& \text{ if } s_i\ne 1 \text{ and } \th\circ N_{s_1\dots s_r}(s_1\dots s_{i-1}(\delta_i^\vee))=1 \cr
s_i&\text{ otherwise, }
\end{cases}
\end{equation} where $\delta_i^\vee\in \Phi(\GG,\TT_0)^\vee$ is the fundamental coroot corresponding to $s_i$ and $N_{v}\colon Y(\TT_0)\to \TT_0^{vF}$ for $v\in W(\GG,\TT_0)$ is the norm map used in the classical identification $Y(\TT_0)/(1-vF)Y(\TT_0)\cong \TT_0^{vF}$ (see for instance [DigneMic, 13.7]).

They also define $\cF_{(w,\th)}$ and $\cS_{(w,\th)}$ as follows.

\begin{defn}[{}]\label{cFwth} Let $b_\th\in k\TT_0^{wF}$ the primitive idempotent such that $\th(b_\th)\ne 0$. Let $$\cF_{(w,\th)}=(\pi_w)_*k_{\YY(w)}.b_\th$$ a sheaf \index{$\cF_{(w,\th)}$}\index{$\cS_{(w,\th)}$}on $\XX(w)$ with values in $k$-vector spaces. Since $\R\Gamma (\YY(w),k_{\YY(w)})$ is represented by a complex of $k\GF\otimes k\TT_0^{wF}{}^\op$-modules, we can define $$\cS_{(w,\th)}\deq \R\Gamma (\YY(w),k_{\YY(w)})b_\th\in \tD^b(k\GF\mmo).$$
\end{defn}

One proves

\begin{thm}[{[BoRo03, 7.7]}]\label{BRsh}
	For $w'\leq w$, let $j_{w'}^{\ov w}\colon \XX(w')\to \ov\XX(w)$ \index{$j_{w'}^{\ov w}$}the inclusion from (\ref{ovX}). Then $\R(j_{w}^{\ov w})_*\cF_{(w,\th)}$ is annihilated by $
(j_{w'}^{\ov w})^*$ unless $w_\th\leq w'$.\end{thm}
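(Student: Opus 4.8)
The statement is a vanishing result for the étale local cohomology of the sheaf $\cF_{(w,\th)}$ along the boundary divisors of the Bott--Samelson--Demazure--Hansen type compactification $\ov\XX(w)$. The natural approach is by induction on the length $r$ of the sequence $w=(s_1,\dots,s_r)$, peeling off one coordinate at a time via the tower of fibrations $\XX(w)\to\XX(w'')$ where $w''=(s_1,\dots,s_{r-1})$ (or a suitable variant adapted to the cyclic condition involving $F$). The key geometric input is that each partial compactification $\ov\XX(w)$ is a successive $\mathbf P^1$-bundle-like object: the fibre of the projection forgetting the last coordinate is modelled on $\ov{\BB_0\cup\BB_0 s_r\BB_0}\cong\mathbf P^1$ when $s_r\ne 1$, and the boundary divisor $\XX(w')\setminus$(open part) sits inside as the complement of the big cell. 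So the first step is to reduce, by proper base change and the projection formula for $\R(j_w^{\ov w})_*$, to a purely one-dimensional statement about a $\mathbf P^1$ (or $\mathbb G_m\hookrightarrow\mathbb A^1$) fibre together with the monodromy action of a finite torus $\TT_0^{wF}$ coming from the quotient map $\pi_w\colon\YY(w)\to\XX(w)$.

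Second, I would analyze the local system $\cF_{(w,\th)}=(\pi_w)_*k_{\YY(w)}b_\th$ in the fibre direction. Over the big cell $\BB_0 s_r\BB_0/\BB_0\cong\mathbb G_m$ (the root subgroup $\XX_{\delta_r}$ up to the $F$-twist), the covering $\YY(w)\to\XX(w)$ restricts to an Artin--Schreier--Lang style $\TT_0^{wF}$-torsor, and the push-forward of the constant sheaf decomposes according to characters of $\TT_0^{wF}$; the piece cut out by $b_\th$ is the Kummer-type rank-one sheaf $\cL_\psi$ attached to the composite character $\th\circ N_{s_1\dots s_r}\circ(s_1\cdots s_{r-1})(\delta_r^\vee)\colon\mathbb G_m\to\TT_0^{wF}\to k^\times$. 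The crucial dichotomy is exactly the one encoded in (\ref{wtheta}): this composite character is trivial precisely when $(w_\th)_r=1$. When it is nontrivial, $\cL_\psi$ is a genuinely ramified (tamely ramified, since $\ell\ne p$ and the character has order prime to $p$... but actually here the relevant ramification is wild Artin--Schreier ramification at one point) local system on $\mathbb G_m$ that does not extend across the boundary point of $\mathbf P^1$ lying in $\XX(w')$, so $i^*\R j_*$ of it vanishes there by the standard ramification criterion (\ref{iRj}); when it is trivial the sheaf does extend and $i^*\R j_*$ need not vanish — which is why the conclusion is only asserted ``unless $w_\th\le w'$''. So the single-step claim is: $(j_{w'}^{\ov w})^*\R(j_w^{\ov w})_*\cF_{(w,\th)}=0$ unless the last entry of $w'$ dominates the last entry of $w_\th$, and this is exactly the content of the ramification analysis on the last $\mathbf P^1$.

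Third, I would propagate this through the induction. Having disposed of the last coordinate, one is left with $\cF_{(w'',\th'')}$-type sheaves on $\ov\XX(w'')$ for the shortened sequence, where $\th''$ is the character of $\TT_0^{w''F}$ obtained from $\th$ by the norm/restriction recipe; the combinatorial bookkeeping is that $(w_\th)_i$ for $i<r$ is computed from $\th''$ in the same way, so the inductive hypothesis gives $(j_{w''}^{\ov w''})^*\R(j_{w''}^{\ov w''})_*\cF_{(w'',\th'')}=0$ unless $w''_{\th''}\le w'''$, and these conditions assemble coordinatewise into $w_\th\le w'$. One has to be a little careful with the compatibility of $\R(j_w^{\ov w})_*$ with the fibration structure — this is where a Leray spectral sequence or the proper-smooth base change theorems of [SGA4.5], [SGA5] enter — and with the cyclic ``$F$-twist'' in the last relation $Y_r^\mm F(Y_1)\in\UU_0\dot s_r\UU_0$, which makes $\XX(w)$ a Deligne--Lusztig-style variety rather than an honest iterated bundle; but the Lang map trick (as in Example~\ref{Fmax} and the cited [DeLu76, 9.14]) linearizes this and reduces it to the untwisted Bott--Samelson geometry up to the action of a finite group of order prime to $\ell$.

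\textbf{Main obstacle.} The genuinely delicate point is the ramification computation on the last fibre: one must show that the rank-one summand of $(\pi_w)_*k_{\YY(w)}$ cut out by $b_\th$, restricted to the affine line compactifying the big cell of $\BB_0 s_r\BB_0/\BB_0$, is either unramified or has its unique singularity at the boundary point of $\ov\XX(w)$ lying over $\XX(w')$ — and that the trigger for ramification is precisely the vanishing/non-vanishing of $\th\circ N_{s_1\dots s_r}(s_1\cdots s_{i-1}(\delta_i^\vee))$. This forces one to make completely explicit how the torsor $\YY(w)\to\XX(w)$ degenerates at the boundary (it acquires an $\mathbb A^1$-worth of extra points, glued via an Artin--Schreier equation governed by the coroot $\delta_r^\vee$), and to invoke the $i_m^*\R(j_m)_*=0$ criterion of (\ref{iRj}). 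Everything else — the reduction to one dimension, the $\mathbf P^1$-bundle structure, the induction — is formal manipulation with the six functors, the proper base change theorem, and open-closed triangles; the arithmetic of the Artin--Schreier character attached to $(\ref{wtheta})$ is the heart of the matter.
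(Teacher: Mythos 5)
The paper does not actually prove this theorem; it simply cites [BoRo03, \S 4] and remarks that the proof ``needs a particularly deep study of the sheaves and tori actions involved.'' So I can only assess your sketch on its own terms. Your high-level picture --- compute the local monodromy of $\cF_{(w,\th)}$ around the boundary divisors of $\ov\XX(w)$, apply the vanishing criterion (\ref{iRj}), and show the monodromy is trivial precisely when the coroot evaluation in (\ref{wtheta}) equals $1$ --- is the right spirit. But your ramification analysis contains a concrete error that would derail it. The covering $\pi_w\colon\YY(w)\to\XX(w)$ is Galois with group $\TT_0^{wF}$, a finite torus group of order prime to $p$. Consequently the inertia subgroups at the boundary components of the normalized closure are $p'$-groups and the ramification is necessarily \emph{tame}. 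Your initial Kummer-type instinct was correct; the self-correction to ``wild Artin--Schreier ramification at one point,'' and the later phrase ``glued via an Artin--Schreier equation governed by the coroot,'' are wrong. In the model case $\GG=\SL_2$, $w=(s)$, the covering is the $\mu_{q+1}$-torsor given by the Drinfeld curve, and Riemann--Hurwitz shows the $q+1$ boundary points of $\mathbb{P}^1(\bbF_q)$ are totally and tamely ramified. Note moreover that if the ramification really were wild, $i_m^*\R(j_m)_*$ would vanish unconditionally and the theorem would have no ``unless $w_\th\leq w'$'' clause; the tame/Kummer dichotomy is exactly what produces the alternative. (You also write $\BB_0 s_r\BB_0/\BB_0\cong\mathbb G_m$; this big cell is $\cong\mathbb A^1$, a root subgroup, and the cocharacter $\delta_r^\vee$ enters through the torus action on the torsor, not as a coordinate on the cell.)

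The second, more structural gap is the reduction to one dimension. The map $(X_1,\dots,X_r)\mapsto(X_1,\dots,X_{r-1})$ does \emph{not} land in $\XX((s_1,\dots,s_{r-1}))$, because the last defining condition $X_r^{-1}F(X_1)\in\BB_0\dot s_r\BB_0$ couples $X_r$ to $X_1$; forgetting $X_r$ destroys the cyclic closing condition rather than shortening it. You flag this yourself with ``(or a suitable variant adapted to the cyclic condition involving $F$),'' but without specifying the variant the induction does not start, and what you call the ``main obstacle'' --- identifying the inertia subgroup at each $\DD_i$ with the cyclic group generated by $N_{s_1\dots s_r}(s_1\dots s_{i-1}(\delta_i^\vee))\in\TT_0^{wF}$ --- \emph{is} the theorem, not a preparatory reduction. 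Any complete argument has to perform that torus computation for all $r$ boundary components simultaneously, tracking the twisted $\BB_0$-bundle structure of $\ov\XX(w)$ rather than an honest tower of $\mathbb P^1$-bundles; this is exactly the content of [BoRo03, \S 4] that the paper declines to reproduce.
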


\begin{thm}[{[BoRo03, Th. A]}]\label{BRmod} The subcategory of $\tD^b(k\GF\text{-}\mo)$ generated (through shifts, direct sums, direct summands and mapping cones) by the $\cS_{(w,\th)}$ for $(w,\th)\in\nabla$ contains the regular module $k\GF[0]$.
\end{thm}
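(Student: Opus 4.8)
The plan is to build $k\GF[0]$ inside the triangulated subcategory $\langle \cS_{(w,\th)}\mid (w,\th)\in\nabla\rangle$ by an induction on a suitable complexity measure, using the varieties $\YY(w)$ for very long sequences $w$ as a resolution of the Deligne--Lusztig variety attached to the Coxeter element or, more robustly, of the full flag variety situation. The starting point is that for $w$ a constant sequence there is a $\GF$-equivariant fibration $\YY(w)\to\YY(w')$ where $w'$ is obtained by deleting a $1$ from $w$ or replacing a letter by $1$; on cohomology this yields distinguished triangles relating $\R\Gamma(\YY(w),k)$ to $\R\Gamma(\YY(w'),k)$ and to the cohomology of an affine line or a point, i.e. a shift. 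Decomposing along the characters $\th$ of $\TT_0^{wF}$ (which is licit since $|\TT_0^{wF}|$ is prime to $\ell$ precisely in the good situation, and in general one passes to the idempotents $b_\th$) turns these triangles into triangles among the $\cS_{(w,\th)}$. So the first step is to write down these ``deletion'' triangles carefully and observe that they stay inside $\langle\cS_{(w,\th)}\rangle$.

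Second, I would run the induction on the pair $(\ell(w), n(w))$ where $\ell(w)$ is the number of non-trivial letters of $w$ and $n(w)$ the number of letters equal to $1$, going downward in a way that at each stage either shortens $w$ or removes a $1$. The base case is $w=()$ the empty sequence, for which $\YY(())$ is a single $\GF$-torsor over $\Spec(\bbF)$ (a finite set of $|\GF|$ points with free $\GF$-action), so $\R\Gamma(\YY(()),k)=k\GF[0]$ on the nose; this is exactly the regular module we want to produce. The induction then expresses $k\GF[0]=\cS_{((),1)}$ as an iterated cone of $\cS_{(w,\th)}$'s with $w$ having at least one non-trivial letter, by repeatedly inserting letters and invoking the deletion triangles in reverse: inserting an $s$ adds a $\bbP^1$- or $\bbA^1$-type contribution, and after killing the extra shift one lands back on a term already known (inductively) to lie in the subcategory. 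The combinatorial bookkeeping is to make sure one can always increase $w$ in a direction that makes all the ``error terms'' of strictly smaller complexity, which is where one uses that $\ov\XX(w)$ is smooth and that the stratification $\ov\XX(w)=\bigcup_{w'\le w}\XX(w')$ is by locally closed smooth pieces, together with Theorem~\ref{BRsh} to control which strata actually contribute.

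Third, there is the issue that the statement is about $\tD^b(k\GF\text{-}\mo)$ and uses direct summands in the generation process: one has to check that the relevant cohomology complexes are perfect (bounded complexes of finitely generated modules) and that passing to the summand cut out by $b_\th$ is harmless, which follows because the actions of $\GF$ and of $\TT_0^{wF}$ on $\YY(w)$ have unipotent (hence $\ell'$, hence in $k$ invertible) point stabilizers, as in the proof of Theorem~\ref{Br90}; this guarantees the complexes $\R\Gamma(\YY(w),k_{\YY(w)})$ are perfect over $k\GF$ and that the idempotent truncations are direct summands in $\tD^b(k\GF\text{-}\mo)$.

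\textbf{Main obstacle.} The hard part will be the combinatorial induction in the second step: one must exhibit an explicit sequence of letter-insertions together with the matching cones so that the regular module appears as an iterated extension of the $\cS_{(w,\th)}$, controlling at each stage the ramification along the boundary divisors $\XX(w')\subset\ov\XX(w)$ via Theorem~\ref{BRsh} (the vanishing of $(j_{w'}^{\ov w})^*\R(j_w^{\ov w})_*\cF_{(w,\th)}$ unless $w_\th\le w'$). Getting the ordering right — so that the ``smaller'' terms produced really are smaller for the chosen complexity measure, and so that no infinite regress occurs — is the technical core; everything else (the fibration triangles, the perfectness, the idempotent decomposition) is comparatively routine given the earlier results in the paper.
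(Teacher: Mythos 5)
Your proposal never actually produces the regular module, and the claimed base case is false. From the definitions in the paper, $\YY(w)$ lives inside $(\GG/\UU_0)^r$ where $r$ is the length of the sequence $w$. For $w=()$ the empty sequence, $\YY(())$ is a single point, so $\cS_{((),1)}=k[0]$ is the \emph{trivial} module, not $k\GF$. For $w=(1)$ one gets $\YY((1))=\{\,g\UU_0\mid g^\mm F(g)\in\UU_0\,\}\cong \GF/\UU_0^F$ (a finite set, by Lang's theorem applied inside $\UU_0$), whose cohomology is the permutation module $\Ind_{\UU_0^F}^{\GF}k$, again not $k\GF$. More generally, every $\YY(w)$ sits inside a product of copies of $\GG/\UU_0$, so the $\GF$-action has unipotent point stabilizers that are typically non-trivial; none of the complexes $\cS_{(w,\th)}$ equals $k\GF$ or even has free underlying cohomology, and no amount of shuffling letters in and out of $w$ via fibrations and cones will change that. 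The ``deletion/insertion'' induction therefore has no anchor: deleting a letter $1$ from $w$ is an isomorphism of varieties (since $Y_{i-1}^\mm Y_i\in\UU_0$ forces $Y_{i-1}=Y_i$), not a fibration that lowers complexity, and inserting a letter $s$ relates $\YY(w)$ to $\YY(w\cup s)$ by a Bott--Samelson--type modification, which stays inside the family of $\cS_{(w,\th)}$ and never exits it.

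What is missing is the external geometric input that the actual proof in [BoRo03] (and in [Du17, \S 3.5], where the paper refers the reader) is built on: the group variety $\GG$ itself. One considers $\GG$ with the free $\GF$-action by left translation. The Lang map $g\mapsto g^\mm F(g)$ together with the Bruhat decomposition $\GG=\bigsqcup_{w\in W}\UU_0\dot w\TT_0(\UU_0\cap{}^{\dot w}\UU_0^-)$ stratifies $\GG$ $\GF$-equivariantly into pieces $L^\mm(\UU_0\dot w\TT_0\cdots)$, each an affine-space bundle over a classical Deligne--Lusztig variety $\tilde\XX(\dot w)$; after decomposing along the idempotents $b_\th$ of $\TT_0^{wF}$ and choosing reduced expressions, the cohomology of each stratum is a shift of a direct sum of $\cS_{(w,\th)}$'s. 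The open-closed distinguished triangles of this stratification then place $\R\Gamma_c(\GG,k)$ in the thick triangulated subcategory generated by the $\cS_{(w,\th)}$'s, and the free action of $\GF$ on the affine variety $\GG$ is what lets one extract a shifted copy of $k\GF$ as a direct summand. That is the genuinely new ingredient of the argument, and your proposal does not touch it; without it, the regular module is out of reach.

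As a smaller point, the role you assign to Theorem~\ref{BRsh} is misdirected: that theorem controls ramification of $\cF_{(w,\th)}$ along the boundary of $\ov\XX(w)$ and is the engine behind Theorem~\ref{BR03} (the vanishing $i^*\R j_*\cF_s=0$), not behind the generation statement~\ref{BRmod}.
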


Those two theorems, of a quite different nature, both concern only varieties associated to Borel subgroups, not parabolic subgroups. 
The proof of \Th{BRsh} needs a particularly deep study of the sheaves and tori actions involved, see [BoRo03, \S 4]. See also [BoRo09] on a related question. For the proof of \Th{BRmod}, see [Du17, \S 3.5].
Note that [BoDaRo17, 1.2] gives a strengthened version of that theorem (see also [Du17, 3.12]).

Let's sketch briefly how \Th{BR03} is deduced from those two theorems (proof of [BoRo03, 10.7]). The pair $(\LL ,F)$ can be changed into $(\LL_I,\dot vF)$ for some $I\inn S$ and $\dot v\in\norm{\GG}{\TT_0}$ with $vF(I)v^\mm =I$ through conjugation by an element of $\GG$. Then the varieties $\XX$ and $\YY$ of \Th{BR03} become $\XX_{I,v}=\{g\PP_I\mid g^\mm F(g)\in \PP_I\dot vF(\PP_I)\}$ and $\YY_{I,v}=\{g\UU_I\mid g^\mm F(g)\in \UU_I\dot vF(\UU_I)\}$ with evident  \index{$\XX_{I,v}$}\index{$\YY_{I,v}$}$\LL_I^{\dot vF}$-quotient map $\pi\colon \YY_{I,v}\to \XX_{I,v}$. Abbreviating $L=\LL_I^{\dot vF}$, one has to prove 
\begin{equation}\label{Rjpi}
\text{  $  i^*\R j_*(\pi_*k\otimes_{kL}kL\ov e_\ell(L,s)) =0$ }
\end{equation}
where we have kept the notation $i,j$ for the immersions associated with $\XX_{I,v}\inn \ov\XX_{I,v}$ the later being the Zariski closure in the complete variety $\GG/\PP_I$. By the generation property of \Th{BRmod} (applied to $\LL_I$) it suffices to check
\begin{equation}\label{RjpiS}
\text{  $  i^*\R j_*(\pi_*k\otimes_{kL}\cS_{(w,\th)}^{\LL_I,\dot vF}) =0 $ }
\end{equation} for any $(w,\th)\in\nabla^{\LL_I,\dot vF}$ relating to $s$ by duality. 

Let $d_v\in S^{l_S(v)}$ be a reduced expression of $v$ and $w\cup d_v$ \index{$w\cup d_v$}be the concatenation in $\Sigma$. Let $\tau\colon \XX(w\cup d_v)\to \XX_{I,v}$ defined by $(g_1\BB_0,\dots  )\mapsto g_1\PP_I$. By basic properties of (derived) direct image functors and an isomorphism of varieties related to the transitivity of Deligne-Lusztig induction, one gets 
\begin{equation}\label{Rtau}
\text{  $ \pi_*k\otimes_{kL}\cS_{(w,\th)}^{\LL_I,\dot vF}=\R\tau_*\cF_{(w\cup d_v,\th)}   $ .}
\end{equation} 
On the other hand we have $j\tau =\ov\tau j_{w\cup d_v}^{\ov{w\cup d_v}}$ where $\ov\tau\colon \ov\XX(w\cup d_v)\to \ov\XX_{I,v}$ is $(g_1\BB_0,\dots )\mapsto g_1\PP_I$, a proper morphism. So now (\ref{RjpiS}) reduces to  
\begin{equation}\label{Rjtau}
\text{  $  i^*\R\ov\tau_* \R( j_{w\cup d_v}^{\ov{w\cup d_v}})_*\cF_{(w\cup d_v,\th)}  =0 $. }
\end{equation}
 One now applies base change (see for instance [CaEn, A3.5]) and gets \begin{equation}\label{iRtau}
\text{  $  i^*\R\ov\tau_* =\R\tau'_*\circ i^*_v $ }
\end{equation} where 
\begin{equation}\label{iv}
\text{  $    i_v\colon \bigcup_{w'\leq w, v'\lneq d_v}\XX(w'\cup v')=\ov\XX(w\cup d_v)\setminus \ov\tau^\mm (\XX_{I,v})\longrightarrow \ov\XX(w\cup d_v)$ }
\end{equation} is the open immersion and $\tau'$ is the restriction of $\ov\tau$. In view of (\ref{Rjtau}) and (\ref{iRtau}) it then suffices to prove that \begin{equation}\label{iRt}
\text{  $  i_v^*\R( j_{w\cup d_v}^{\ov{w\cup d_v}})_*\cF_{(w\cup d_v,\th)}=0$. }
\end{equation} The situation is now close to the one covered by \Th{BRsh} for each inclusion $\XX(w'\cup v')\to\ov\XX(w\cup d_v)$. One checks that $(w\cup d_v)_\th =w_\th\cup d_v\not\leq w'\cup v'$ for pairs $(w,\th)$ relating to $s$. \Th{BRsh} then tells us $i_{w'\cup v'}^*\R( j_{w\cup d_v}^{\ov{w\cup d_v}})_*\cF_{(w\cup d_v,\th)}=0$ for each $i_{w'\cup v'}\colon \XX({w'\cup v'})\to \ov\XX(w\cup d_v)$ involved in (\ref{iv}). This implies (\ref{iRt}) by checking stalks.

\subsection{Bonnaf\'e-Dat-Rouquier (2017)}

{}

Among many results (see also [Du17, 3.12]) the paper [BoDaRo17] shows that the situation of \Th{BRMorita} implies more than a Morita equivalence. The hypothesis is also slightly strengthened assuming just (\ref{CCGsL}).

One takes $\GG$, $\GD$ in duality, $s$ a semi-simple $\lp$-element of $\GD^{F^*}$. One lets $$\LL^*_s\deq \cent{\GD}{\czent{\ccent\GD s}}\rhd \ccent{\GD}{s}  \text{ and } \NN^*_s =\cent{\GD}{s}^{F^*}\LL^*_s$$ so $\LL^*_s$ is the smallest Levi subgroup of $\GD$ containing $\ccent\GD s$\index{$\LL^*_s$}\index{$\LL_s$}\index{$\NN^*_s$}. Let $\LL_s$ be an $F$-stable Levi subgroup of $\GG$ in duality with $\LL^*_s$. Note that $\ser{\LL_s^F}{s}$ makes sense. 

 Let $\NN_s\leq \norm\GG{\LL_s}$ \index{$\NN_s$}such that $\NN_s/\LL_s$ identifies with $\NN^*_s/\LL^*_s$ through duality. It is then $F$-stable and $$\NN_s^{F}=\norm{\GF}{\LL_s,\ser{\LL_s^F}{s}}$$ so that $e_\ell (\LL_s^F,s)$ is a central idempotent of $\OO\NN_s^F$.

 The following establishes a Morita equivalence for the blocks in characteristic $\ell$.
 
\begin{thm}[{[BoDaRo17, 7.5]}]\label{BDR1}  Let $\PP= \RU(\PP)\LL_s$ be a parabolic subgroup having $\LL_s$ as Levi subgroup. We have: \begin{enumerate}[\rm(i)]
		\item  The action of $\GF\times (\LL_s^F) ^\op$ on  $\rH^{\dim\YY_\PP}_c(\YY_\PP,k)$ extends to $\GF\times (\NN_s^F) ^\op$.
		\item  The resulting bimodule induces a Morita equivalence $$k\NN_s^F\ov e_\ell(\LL_s^F,s)\text{-}\mo \longrightarrow k\GF \ov e_\ell(\GF,s)\text{-}\mo .$$
	\end{enumerate}
\end{thm}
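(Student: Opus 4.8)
\noindent{\it About the proof.} The plan is to realise the bimodule of the statement as the middle-degree $\ell$-adic cohomology of a Deligne--Lusztig variety, to extend the right action from $\LL_s^F$ to $\NN_s^F$ by transport of structure, and then to conclude with Brou\'e's criterion. First I would fix a parabolic subgroup $\PP=\RU(\PP)\LL_s$ of $\GG$ with Levi complement $\LL_s$. By \Th{BR03} there is a smooth compactification of $\XX_\PP$ along which the sheaf $\cF_s$ of Definition~\ref{cFs} (formed with $e_\ell(\LL_s^F,s)$) does not ramify, so the hypothesis of \Th{Br90} holds and the construction in its proof goes through unchanged: $\R\Gamma_c(\YY_\PP,\OO^{(n)})e_\ell(\LL_s^F,s)$ has cohomology concentrated in the single degree $d\deq\dim\YY_\PP$, the $\OO$-module $H\deq\varprojlim_n\rH^{d}_c(\YY_\PP,\OO^{(n)})e_\ell(\LL_s^F,s)$ is $\OO$-free and bi-projective as an $\OO\GF\otimes(\OO\LL_s^F)^\op$-module, and $\bbC\otimes_\OO H$ affords the restriction of $\Lu{\LL_s}{\GG}$ to $\lser{\LL_s^F}{s}$. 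Reducing mod $J(\OO)$ gives a bi-projective $k\GF\otimes(k\LL_s^F)^\op$-module $\ov H=\rH^{d}_c(\YY_\PP,k)\,\ov e_\ell(\LL_s^F,s)$.

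Next, to obtain (i), I would extend the right $\LL_s^F$-action on $\ov H$ to $\NN_s^F$. An element $n\in\NN_s^F$ is $F$-fixed and normalises $\LL_s$, and $g\RU(\PP)\mapsto gn^{-1}\RU({}^n\PP)$ is an isomorphism of varieties $\YY_\PP\xrightarrow{\sim}\YY_{{}^n\PP}$ which is $\GF$-equivariant on the left and carries right translation by $x\in\LL_s^F$ on the source to right translation by ${}^nx$ on the target. Composing on $\rH^d_c(-,k)$ with the canonical identification $\rH^d_c(\YY_{{}^n\PP},k)\cong\rH^d_c(\YY_\PP,k)$ that expresses the independence of the middle cohomology from the choice of parabolic with Levi $\LL_s$ (this refines, at the level of modules, the $\PP$-independence discussed in Remark~\ref{4.4}, and is part of the geometric input of [BoDaRo17]) yields, for each $n$, an automorphism $\phi_n$ of $\ov H$ that is $k\GF$-linear and $k$-semilinear for conjugation by $n$ on $\LL_s^F$. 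The crucial point is that these comparison isomorphisms are \emph{strictly} transitive, so that $\phi_n\phi_{n'}=\phi_{nn'}$; granting this, $\ov H$ becomes a $k\GF\otimes(k\NN_s^F)^\op$-module extending its $k\LL_s^F$-structure, and it is supported on the idempotent $\ov e_\ell(\LL_s^F,s)$, which is central in $k\NN_s^F$ by the very definition of $\NN_s$. Bi-projectivity of the extended bimodule is then easy: it is still projective on the left, and on the right it is projective over $k\LL_s^F$ while $\NN_s^F/\LL_s^F$ is a quotient of $\cent{\GD}{s}^{F^*}/\ccent{\GD}{s}^{F^*}$, which has exponent prime to $\ell$ by Lemma~\ref{NonC} since $s$ is an $\ell'$-element; hence $[\NN_s^F:\LL_s^F]$ is prime to $\ell$ and $\ov H$ is a direct summand of $\Ind_{\LL_s^F}^{\NN_s^F}\Res^{\NN_s^F}_{\LL_s^F}\ov H$, hence projective over $k\NN_s^F\ov e_\ell(\LL_s^F,s)$.

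For (ii) I would lift $\ov H$, with its $\NN_s^F$-action, to an $\OO\GF\otimes(\OO\NN_s^F)^\op$-module $H$ and invoke Lemma~\ref{K2O}: it remains to check that $K\otimes_\OO H$ induces a bijection $\Irr\bigl(K\NN_s^F e_\ell(\LL_s^F,s)\bigr)\to\Irr\bigl(K\GF e_\ell(\GF,s)\bigr)$. Restricted to $\LL_s^F$, the functor $K\otimes_\OO H\otimes_{K\LL_s^F}-$ is, up to a global sign, $\Lu{\LL_s}{\GG}$, which by \Th{LCGs} embeds $\lser{\LL_s^F}{s}$ into $\lser{\GF}{s}$; the remaining task is to promote this embedding to a bijection after passing from $\LL_s^F$ to $\NN_s^F$. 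I would do this by matching two pieces of Clifford theory along the normal inclusion $\LL_s^F\lhd\NN_s^F$, whose quotient is identified through duality with $\cent{\GD}{s}^{F^*}/(\cent{\GD}{s}^{F^*}\cap(\LL_s^*)^{F^*})$: on one side, the irreducible characters in $\Irr(K\NN_s^F e_\ell(\LL_s^F,s))$ are built from $\lser{\LL_s^F}{s}$ together with the $2$-cocycle of this extension; on the other side, the elements of $\lser{\GF}{s}$ not reached by $\Lu{\LL_s}{\GG}$ are precisely those obtained by Jordan decomposition for the disconnected group $\cent{\GD}{s}^{F^*}$, i.e. by induction from $\ccent{\GD}{s}^{F^*}$ to $\cent{\GD}{s}^{F^*}$, and are governed by a $2$-cocycle on the same quotient group. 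Checking that these two cocycles agree, so that induction and restriction across $\LL_s^F\lhd\NN_s^F$ carry the image of $\eps\,\Lu{\LL_s}{\GG}$ multiplicity-freely and bijectively onto $\lser{\GF}{s}$, is the main obstacle; once it is settled, Lemma~\ref{K2O} yields the Morita equivalence of (ii). In summary the two delicate points are the module-level parabolic-independence of the middle cohomology together with the strict transitivity of the comparison maps --- available only because the relevant complex has cohomology in a single degree --- and this final cocycle comparison.
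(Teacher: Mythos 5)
Your outline correctly identifies the geometric starting point (middle-degree cohomology of $\YY_\PP$, parabolic-independence of the middle cohomology after applying $\ov e_\ell(\LL_s^F,s)$, i.e.\ Theorem~\ref{BDRinv}), but there is a genuine gap at the two places you yourself flag as ``crucial.'' In step~(i) you claim that the comparison isomorphisms $\rH^d_c(\YY_{{}^n\PP},k)\ov e_\ell(\LL_s^F,s)\cong\rH^d_c(\YY_\PP,k)\ov e_\ell(\LL_s^F,s)$ are \emph{strictly transitive} and then say ``granting this.'' That is precisely the point that cannot be granted: Theorem~\ref{BDRinv} (i.e.\ [BoDaRo17, 6.5, 6.7]) produces, for each pair $\PP,\PP'$, \emph{some} bimodule isomorphism, not a canonical one, and the failure of these isomorphisms to assemble into a cocycle is exactly the obstruction to extending the $\LL_s^F$-action to $\NN_s^F$. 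In general a module that is merely \emph{invariant} under a finite group acting through outer automorphisms need not be extendible; there is a class in $H^2(\NN_s^F/\LL_s^F, k^\times)$ obstructing it. So what you present as a bookkeeping verification is the entire content of the extension step, and it is left unproved.

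The paper does something genuinely different here, and this is what your argument is missing. Instead of trying to rigidify the comparison isomorphisms, Bonnaf\'e--Dat--Rouquier pass to a regular embedding $\GG\hookrightarrow\w\GG$ with connected center, where $\cent{\w\GG^*}{\w t}$ is connected and the extension problem disappears (one is already in the Bonnaf\'e--Rouquier setting of Theorem~\ref{BRMorita}). Forming $\w M=\Ind^{\w\cL}_\cL M$ and using (\ref{IndY}), they identify $\w M e$ with a Deligne--Lusztig cohomology in $\w\GG$, conclude via (\ref{wMorita}) that $\w M e$ (and hence $\w M$) is multiplicity-free, and from this multiplicity-freeness extract extendibility of $M$ to $\cN$ --- directly when $\cN/\cL$ is cyclic (using [Da84, 4.5]), with the non-cyclic case deferred to the correction [BoDaRo17b] as explained in Remark~\ref{BDRatum}. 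This same regular-embedding device also replaces your proposed part~(ii): rather than comparing a Clifford-theoretic $2$-cocycle on $\NN_s^F/\LL_s^F$ with a cocycle coming from a disconnected Jordan decomposition (a comparison you acknowledge as ``the main obstacle'' and do not carry out), the paper descends the Morita equivalence (\ref{wMo2}) over $\w\GG^F$ to one over $\GF$ by a short transfer argument comparing endomorphism rings: the isomorphism $k\w\NN_s^F\ov e_\ell(\LL_s^F,s)\to\End_{k\w\GG^F}(\Ind^{\w\cN}_\cN M')$, together with $\End_{k\w\GG^F}(\Ind^{\w\cN}_\cN M')\cong\End_{k\GF}(M)\otimes_{\NN_s^F}\w\NN_s^F$, forces $k\NN_s^F\ov e_\ell(\LL_s^F,s)\to\End_{k\GF}(M')$ to be an isomorphism. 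In summary: your route would require you to prove canonicity/strict transitivity of the parabolic-independence isomorphisms and to verify a nontrivial cocycle identity on both sides of Jordan decomposition; neither is done, and the paper's regular-embedding reduction is precisely the device that sidesteps both.
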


\medskip\noindent{\bf A. Independence of the parabolic $\PP$.} A first step in proving \Th{BDR1}.(i) is to show that the $k\GF\otimes k\LL_s^F{}^\op$-module $\rH^{\dim\YY_\PP}_c(\YY_\PP,k)$ is invariant under the action of $\GF\times \NN_s^F$ (through automorphisms of $\GF\times \LL_s^F$ induced by $\GF\times \NN_s^F$). Only the action of some $x\in\NN_s^F$ needs to be checked. The action of $\GF\times \LL_s^F{}^\op$ twisted by $(1,x)$ on $\YY_\PP =\{ g\RU(\PP)\mid g^\mm F(g)\in \RU(\PP)F(\RU(\PP))  \}$ is clearly the action of $\GF\times \LL_s^F{}^\op$ on $\YY_\PP\cdot x =\YY_{\PP^x}$. The equivariance of \'etale cohomology (here the automorphism is even a homeomorphism for Zariski topology) implies that $\rH^{\dim\YY_\PP}_c(\YY_\PP,k)^{(1,x)}\cong \rH^{\dim\YY_\PP}_c(\YY_{\PP^x},k)$ and therefore $$\big( \rH^{\dim\YY_\PP}_c(\YY_\PP,k)\ov e_\ell (\LL_s^F,s)\big)^{(1,x)}\cong \rH^{\dim\YY_\PP}_c(\YY_{\PP^x},k)\ov e_\ell (\LL_s^F,s).$$  The parabolic subgroups $\PP$ and $\PP^x$ have both $\LL_s$ as a Levi complement, so the invariance sought is a consequence of the following.

\begin{thm}[{[BoDaRo17, 6.5, 6.7]}]\label{BDRinv} If $\PP$ and $\PP'$ are parabolic subgroups of $\GG$ admitting $\LL_s$ as Levi complement then $$\rH^{\dim\YY_\PP}_c(\YY_{\PP},k)\ov e_\ell (\LL_s^F,s)\cong \rH^{\dim\YY_{\PP '}}_c(\YY_{\PP '},k)\ov e_\ell (\LL_s^F,s)$$ as $k\GF\otimes k\LL_s^F{}^\op$-modules.
\end{thm}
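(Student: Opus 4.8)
The plan is to reduce the statement to the case where $\PP$ and $\PP'$ are ``adjacent'', meaning they share a common parabolic overgroup of the form $\QQ=\RU(\QQ)\MM$ with $\LL_s\leq \MM$ and $\MM$ of semisimple rank one more than that of $\LL_s$; since any two parabolic subgroups with a common Levi complement are connected by a chain of such adjacent pairs (inside a fixed $\GG$, the relevant combinatorics is that of the $\LL_s$-relative Weyl group and its chambers), transitivity of the isomorphism reduces us to that situation. So first I would set up this chain-of-adjacent-parabolics reduction carefully, using that the varieties $\YY_\PP$ only depend, up to the relevant cohomology, on $\RU(\PP)$ and the pair $(\LL_s,F)$, and that isomorphisms of $k\GF\otimes k\LL_s^F{}^\op$-modules compose.

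Next, in the adjacent case, I would exploit the transitivity of Deligne--Lusztig induction at the level of varieties. Writing $\YY^{\MM}_{\PP\cap\MM}$ for the Deligne--Lusztig variety attached to $\LL_s\leq\MM$, one has a geometric factorization relating $\YY_\PP$ (as a $\GF\times\LL_s^F$-variety) to an induction from $\MM$ of $\YY^{\MM}_{\PP\cap\MM}$ composed with the $\MM$-to-$\GG$ Deligne--Lusztig variety $\YY_\QQ$; concretely $\R\Gamma_c(\YY_\PP,k)\ov e_\ell(\LL_s^F,s)$ is computed by a complex built from $\R\Gamma_c(\YY_\QQ,k)$ and $\R\Gamma_c(\YY^{\MM}_{\PP\cap\MM},k)\ov e_\ell(\LL_s^F,s)$. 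The point of passing to $\MM$ is that $\MM/\LL_s$ has semisimple rank one, so the Deligne--Lusztig variety $\YY^{\MM}_{\PP\cap\MM}$ is (an affine line bundle over) a curve, and one can compute its cohomology explicitly together with the $\LL_s^F$-action. The two parabolics $\PP,\PP'$ inside $\QQ$ correspond to the two ``opposite'' choices of Borel-type subvariety in this rank-one picture, i.e. to $\YY^{\MM}_{\PP\cap\MM}$ and $\YY^{\MM}_{\PP'\cap\MM}$.

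The heart of the matter, then, is the rank-one computation: one must show that as $k\MM^F\otimes k\LL_s^F{}^\op$-modules the top (and only nonvanishing, after applying $\ov e_\ell(\LL_s^F,s)$) cohomology groups $\rH^{\dim}_c(\YY^{\MM}_{\PP\cap\MM},k)\ov e_\ell(\LL_s^F,s)$ and $\rH^{\dim}_c(\YY^{\MM}_{\PP'\cap\MM},k)\ov e_\ell(\LL_s^F,s)$ are isomorphic. Here the crucial input is the hypothesis on $s$: since $\ccent{\GD}{s}\subseteq\LD_s$, no nontrivial unipotent reflection datum of the relative Weyl group survives, so the relevant relative Weyl group of $\LL_s$ in $\MM$ has no fixed points acting on the dual side, which forces the cuspidal-descent part of the cohomology to be concentrated so that the two opposite varieties give the same module. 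In Bonnaf\'e--Rouquier's language this is exactly where one invokes the vanishing $i^*\R j_*\cF_s=0$ of Theorem~\ref{BR03} (equivalently the ramification statement of Theorem~\ref{BRsh}): applied in the rank-one group $\MM$ it shows that the relevant sheaf does not ramify along the boundary divisor separating $\YY_{\PP\cap\MM}^{\MM}$ from $\YY_{\PP'\cap\MM}^{\MM}$, so their cohomologies agree after multiplication by $\ov e_\ell(\LL_s^F,s)$. I expect the main obstacle to be precisely this rank-one analysis: making the geometric transitivity isomorphism equivariant for the full $\MM^F\times\LL_s^F$-action (not merely for $\GF\times\LL_s^F$), and identifying the boundary-divisor comparison cleanly enough that the $e_\ell$-projection kills the ramifying part; once that is in hand, the passage back up to $\GG$ via $\R\Gamma_c(\YY_\QQ,k)$ and the chain argument are essentially formal.
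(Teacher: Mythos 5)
Your overall strategy — reduce to adjacent parabolics sharing a rank-one overgroup $\MM\supseteq\LL_s$, use transitivity of the Deligne--Lusztig construction at the level of complexes, then treat the rank-one case — is genuinely different from what the paper does. The paper works directly with the intermediate varieties
$$\YY_{\PP,\PP'}=\{(g\RU(\PP),g'\RU(\PP'))\mid g^{-1}g'\in\RU(\PP)\RU(\PP'),\ g'{}^{-1}F(g)\in\RU(\PP')F(\RU(\PP))\}$$
which carry two natural maps (forgetting one coordinate, up to an affine fibration) to $\YY_\PP$ and $\YY_{\PP'}$; the whole proof consists in showing that the induced maps of cohomology complexes become quasi-isomorphisms once cut by $\ov e_\ell(\LL_s^F,s)$, by an argument in the spirit of the ramification analysis of $\cF_s$.

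The chain-of-adjacents reduction and the use of transitivity to pass from $\GG$ to $\MM$ are plausible, and something of this flavour does appear in Dat's expository note. But the crucial rank-one step, as you describe it, has a gap. You write that applying the non-ramification theorem (the vanishing $i^*\R j_*\cF_s=0$) ``shows that the relevant sheaf does not ramify along the boundary divisor separating $\YY^{\MM}_{\PP\cap\MM}$ from $\YY^{\MM}_{\PP'\cap\MM}$''. However, $\XX_{\PP\cap\MM}$ and $\XX_{\PP'\cap\MM}$ are open subvarieties of the two \emph{different} ambient spaces $\MM/(\PP\cap\MM)$ and $\MM/(\PP'\cap\MM)$; there is no single compactification whose boundary ``separates'' them. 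The non-ramification statement tells you that $\cF_s$ extends across boundary components of $\ov\XX_{\PP\cap\MM}$ inside $\MM/(\PP\cap\MM)$, but it does not by itself provide any comparison with cohomology computed on the other side of the correspondence, because the other side is not part of that compactification. Precisely to make this comparison possible one has to introduce a correspondence inside the product $\MM/(\PP\cap\MM)\times\MM/(\PP'\cap\MM)$ — and that is the intermediate variety $\YY_{\PP,\PP'}$. So even granting the adjacency reduction, the hard content you would still have to supply in the rank-one case is essentially the paper's intermediate-variety analysis; the ramification theorem is an ingredient in that analysis, not a substitute for it. Your remark that the relative Weyl group of $\LL_s$ in $\MM$ ``has no fixed points on the dual side'' as a consequence of $\ccent{\GD}{s}\subseteq\LD_s$ is also too vague to carry the weight you put on it: what the hypothesis actually controls is which boundary divisors of the Bott--Samelson type compactifications the sheaf $\cF_{(w,\th)}$ can ramify along (via the element $w_\th$), and translating that into the claimed module isomorphism requires tracking the correspondence carefully.
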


\begin{rem}\label{BDRem} 
Note that this answers the question of the dependence of the Morita equivalence from \Th{BRMorita} on the parabolic subgroup used. Note that the corresponding statement on characters was known for long [DigneMic, 13.28]. In the case of $F$-stable $\PP$, $\PP'$, one has $$\OO(\GF/\RU(\PP)^F)\cong \OO(\GF/\RU(\PP')^F)$$ as bimodules (Dipper-Du, Howlett-Lehrer [CaEn, 3.10]). In general one does not have \Th{BDRinv} without projecting on $\ov e_\ell (\LL_s^F,s)$. \Th{BDRinv} has also been used by Dat in a study of representations of $p$-adic groups [Dat16].\end{rem}

The proof is quite delicate ([BoDaRo17, Sect. 5-6], see additional explanations and perspective in [Dat15]). It involves ``intermediate" varieties of type $$\YY_{\PP ,\PP '} \deq \{(g\RU(\PP),g'\RU(\PP')) \mid g{}^\mm g'\in \RU(\PP)\RU(\PP')\ ,\ g'{}^\mm F(g)\in \RU(\PP')F(\RU(\PP)) \}$$ and maps from their cohomology complexes (up to a shift) to our $\R\Gamma_c(\YY_\PP ,k)$ or $\R\Gamma_c(\YY_{\PP'} ,k)$. Then getting quasi-isomorphism once those are projected on the sum of blocks $B_\ell (\LL_s^F,s)$ needs quite a lot of additional considerations in the spirit of the proof of \Th{BRsh}.

\medskip\noindent{\bf B. Extendibility.}
The next problem is to extend $\rH^{\dim\YY_\PP}_c(\YY_\PP,k)$ into a $k(\GF\times\NN_s^F{}^\op)$-module. %The following variation on Clifford theory will be applied to bimodules (on the other hand the group theoretic situation is similar to the one in [Sp17, Sect. 3]).

%\begin{prop}[{[BoDaRo17, 7.3]}]\label{BDRext}  Let $\cN,\w\cL$ be normal subgroups of the finite group $\w \cN$ such that $\w \cN=\cN\w\cL$. Denote $\cL\deq \w\cL\cap \cN$. Let $M$ be a $k\cL$-module. One assumes that \begin{enumerate}[\rm(a)] \item $|\w\cN/\w\cL|$ is $\lp$, \item $\Ind^{\w\cL}_{\cL}(M)$ is a direct sum of pairwise non isomorphic indecomposable $k\w\cL$-modules, \item $M$ is $\cN$-invariant and $\Ind^{\w\cL}_{\cL}(M)$ extends to $\w\cN$.\end{enumerate} Then $M$ extends to $\cN$.\end{prop} 

 %This will help showing 
 
 One actually shows that the obstruction to extending $\rH^{\dim\YY_\PP}_c(\YY_\PP,k)$ is the same in $\GG$ as it would be in an overgroup with connected center, where this obstruction does not exist.
One considers a regular embedding (see [CaEn, \S 15.1]), that is an inclusion of algebraic groups $$\GG\hookrightarrow \w\GG=\GG\zent{\w \GG}$$ with connected $\zent{\w \GG}$\index{$\w\GG$}. One may assume that $F$ extends to $\w\GG$. This induces a surjection $\si\colon \w\GG^*\to 
 \GD$ with connected central kernel. Denote $$ \w\PP\deq \zent{\w \GG}\PP \ ,\ \w\LL_s =\zent{\w \GG}\LL_s\ ,\ \w\NN_s\deq \zent{\w \GG}\NN_s .$$\index{$\w\PP$}\index{$\w\LL_s$}\index{$\w\NN_s$}
\begin{defn}\label{Je}
Let $J\inn \si^\mm (s)^{F^*}_\lp\inn \w\LL_s^*{}^{F^*}$ be a representative system for $\w\GG^*{}^{F^*}$-conjugacy in $\si^\mm (s)^{F^*}_\lp$. Let $$e\deq \sum_{\w t\in J}\ov e_\ell (\w\LL_s^F,\w t)\in \zent{k\w\LL_s^F}.$$ One also defines the following subgroups of  $\w\GG^F\times (\w\GG^F)^\op$ $$\w\cL\deq \w\GG^F\times (\w\LL_s^F{})^\op\lhd \w\cN \deq\w\GG^F\times (\w\NN_s^F{})^\op ,$$ \index{$\w\cL$}\index{$\w\cN$}\index{$\cL$}\index{$\cN$}  $$\cL\deq (\GG^F\times (\LL_s^F{})^\op)\Delta\w\LL_s^F\lhd \cN \deq(\GG^F\times (\NN_s^F{})^\op)\Delta\w\NN_s^F$$ (where $\Delta H=\{(h,h^\mm)\mid h\in H \}$ for a given subgroup $H\leq \w\GG^F$).
\end{defn} 

Note that $\w \cN/\w \cL\cong \NN_s^F/\LL_s^F\cong (\NN^*_s/\LL^*_s)^{F^*}\leq (\cent{\GD}{s}/\ccent{\GD}{s})^{F^*}$, an $\lp$-group by Lemma~\ref{NonC}. 

From the definition of $\YY_\PP$, it is clear that it is acted on by $\cL$, so we may consider $$M\deq \rH_c^d(\YY_\PP,k)\ov e_\ell (\LL_s^F,s)\ ,\ \w M=\Ind^{\w\cL}_\cL M.$$\index{$\w M$}  The variety $\YY_{\w\PP}\inn \w\GG/\RU(\PP)$ is defined with regard to the same unipotent subgroup as $\YY_\PP$ so it has same dimension as $\YY_\PP$ and one has (see for instance [CaEn, 12.15.(iii)])  
\begin{equation}\label{IndY}
\text{  $\w M\cong \rH_c^d (\YY_{\w\PP},k)\ov e_\ell(\LL_s^F,s).  $ }
\end{equation}
Some mild considerations in the dual groups show that 
\begin{equation}\label{eell}
\text{  $ \ov e_\ell (\GF,s)=\sum_{\w t\in J}^{} \ov e_\ell (\w\GG^F,\w t)  $ and $ \ov e_\ell (\LL_s^F,s)=\sum_{x\in \NN_s^F/\LL_s^F}e^x .$}
\end{equation}

In $\w\GG$, one has $\cent{\w\GG^*}{\w t}=\si^\mm (\ccent{\GG^*}{s})\inn \w\LL_s^*$, so Bonnaf\'e-Rouquier's theorem (\Th{BRMorita} above) implies 
\begin{equation}\label{wMorita}
\text{  $ \w Me\otimes_{\w\LL_s^F}-   $ induces a Morita equivalence $k\w\LL_s^Fe\text{-}\mo\longrightarrow k\w\GG^F\ov e_\ell(\GF ,s)\text{-}\mo$.}
\end{equation}

So $\w Me$ is a direct sum of pairwise non-isomorphic indecomposable $k\w\cL$-modules. The same applies to $\w M\cong \Res^{\w\cN}_{\w\cL}\Ind^{\w\cN}_{\w\cL}\w Me$.

The next step is to deduce from the above that $M$ extends to $\cN$. When the quotient $\cN/\cL$ is cyclic this is enough to extend the action of $\cL$ on $M$ into an action of $\cN$ (see for instance [Da84, 4.5]). For the general case, see Remark~\ref{BDRatum} below. One writes
\begin{equation}\label{M=Res}
\text{  $M=\Res^\cN_\cL M'$ for some $k\cN$-module $M'$. }
\end{equation} It is not too difficult to deduce from (\ref{wMorita}) that $\Ind^{\w\cN}_\cN M'$ induces a Morita equivalence 
\begin{equation}\label{wMo2}
\text{  $ k\w\LL_s^F\ov e_\ell (\LL_s^F,s)\text{-}\mo  \longrightarrow  k\w\GG^F\ov e_\ell (\GG^F,s)\text{-}\mo. $ }
\end{equation}
Then one shows that $M'$ induces the sought Morita equivalence $$k\NN_s^F \ov e_\ell (\LL_s^F,s)\longrightarrow k\GG^F\ov e_\ell (\GG^F,s).$$

For instance the canonical map $k\NN_s^F \ov e_\ell (\LL_s^F,s)\to \End_{k\GF}(M')$ is indeed an isomorphism since $k\w\NN_s^F \ov e_\ell (\LL_s^F,s)\to \End_{k\w\GG^F}(\Ind^{\w\cN}_\cN M')$ is one by (\ref{wMo2}) and one has $\End_{k\w\GG^F}(\Ind^{\w\cN}_\cN M')\cong \End_{k\GF}(M)\otimes_{\NN_s^F}\w\NN_s^F$.

This finishes the proof of \Th{BDR1}.
%\begin{proof}\end{proof}

\medskip\noindent{\bf C. Rickard equivalence and local structure.}
Bonnaf\'e-Dat-Rouquier prove then that \Th{BDR1} can be strengthened to a Rickard equivalence preserving the local structure of the blocks of $\GF$ and $\NN_s^F$ that are related through this equivalence. 

Recall [BoDaRo17, 2.A] \begin{defn}[{}]\label{RickEqu}
 A \sing{Rickard equivalence} between sums of block algebras $A$, $A'$ over $\Lambda\in\{\OO ,k \}$ is an equivalence $$\HO(A)\to \HO(A')$$ induced by a complex $C$ of bi-projective $A'\otimes_\Lambda A^\op$-modules such that the canonical maps $A\to  {\rm{End}}^\bullet_{A'}(C)$ and $A'\to  {\rm{End}}^\bullet_{A^\op}(C)$ are isomorphisms in $\HO(A\otimes_\Lambda A{}^\op\text{-}\mo)$ and $\HO(A'\otimes_\Lambda  A'{}^\op\text{-}\mo)$ respectively (notations of [Du17, \S 1.2]).\end{defn}

 This coincides with the usual definition requiring additional properties of the Green vertices of summands of $C$ by results of Rouquier [Rou01].
On the other hand those $\ell$-subgroups of the product of the two finite groups involved serve as a bridge between the local structures of blocks so related. While Rickard's original paper [Rick96] had the assumption that blocks involved have same defect group, one can prove that a Rickard equivalence in the above sense {\it implies} a strong relation at the level of local subgroups. The following is due to Puig.

\begin{thm}[{[Puig99, 19.7]}]\label{Pu99}
If two $\ell$-block algebras $A$, $A'$ over $\OO$ are Rickard equivalent then the defect groups are isomorphic $D\cong D'$ and the associated fusion systems (see Definition~\ref{FDbD}) on $D$ and $D'$ are equivalent. 
\end{thm}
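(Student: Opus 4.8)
The statement to be proved is Theorem~\ref{Pu99}: a Rickard equivalence between $\ell$-block algebras $A$, $A'$ over $\OO$ forces the defect groups to be isomorphic and the associated fusion systems (in the sense of Definition~\ref{FDbD}) to be equivalent. The natural strategy, following Puig, is to reformulate the Rickard equivalence as living inside the category of $A'\otimes_\OO A^\op$-modules and then \emph{localise it}, i.e. apply the Brauer construction to the complex $C$ realising the equivalence, reading off information one $\ell$-subgroup at a time.

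\emph{Step 1: set up the diagonal local structure.} Write $C$ for the bounded complex of bi-projective $A'\otimes_\OO A^\op$-modules inducing the equivalence $\HO(A)\to\HO(A')$, with $A\xrightarrow{\sim}\mathrm{End}^\bullet_{A'}(C)$ and $A'\xrightarrow{\sim}\mathrm{End}^\bullet_{A^\op}(C)$ in the relevant homotopy categories. View $A'\otimes_\OO A^\op$ as a block (sum of blocks) of $\OO(H'\times H)$ where $H$, $H'$ are the ambient finite groups. The key point, due to Rouquier [Rou01] as cited after the definition, is that the summands of $C$ have vertices that are \emph{twisted diagonal} subgroups of $H'\times H$: subgroups $\Delta\varphi=\{(\varphi(x),x)\mid x\in R\}$ for $R\leq D$ an $\ell$-subgroup and $\varphi\colon R\to D'$ an injective group homomorphism. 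This is what makes the vertices of $C$ into the "bridge" between the two local structures. First I would recall this vertex restriction and fix, by the bi-projectivity and the isomorphism conditions, a well-defined maximal such diagonal subgroup, which will have the form $\Delta\gamma$ with $\gamma\colon D\xrightarrow{\sim}D'$; this already yields $D\cong D'$.

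\emph{Step 2: apply the Brauer construction and pass to Brauer pairs.} For each $\ell$-subgroup $R\leq D$ with image $R'=\gamma(R)\leq D'$, apply the Brauer functor $\mathrm{Br}_{\Delta\gamma|_R}$ to $C$. Using that $C$ is a complex of $p$-permutation (trivial source) bimodules up to the relevant summands — this follows from bi-projectivity together with Rouquier's vertex analysis — the Brauer construction $C(\Delta\gamma|_R)$ is a bounded complex of $(k\cent{H'}{R'}\,\ov{b'_{R'}})\otimes(k\cent HR\,\ov{b_R})^\op$-modules, and the endomorphism-algebra isomorphisms are inherited at the Brauer-quotient level: one obtains a Rickard (in fact splendid) equivalence between the Brauer correspondent blocks $\cent HR\,b_R$ and $\cent{H'}{R'}\,b'_{R'}$. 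The canonical characters of centric subpairs (Definition~\ref{ctric}) then match up under this local equivalence. I would carry this out by induction on $[D:R]$, downward from $R=D$, so that at each stage the already-established correspondence of smaller subpairs can be used.

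\emph{Step 3: translate into fusion systems.} Using the subpair theory of Theorem~\ref{subp} and the description of $\cF_{(D,b_D)}(A)$ from Definition~\ref{FDbD} in terms of $\ell$-subpair inclusions, one shows that an inclusion $\tw h(D_1,b_1)\leq(D_2,b_2)\leq(D,b_D)$ in $H$ is carried by the Brauer-construction data of Step 2 to a corresponding inclusion $\tw{h'}(\gamma(D_1),b'_1)\leq(\gamma(D_2),b'_2)\leq(D',b'_{D'})$ in $H'$; conjugacy of maximal subpairs (Theorem~\ref{subp}(ii)) guarantees this does not depend on the chosen maximal pair. This gives a functor $\cF_{(D,b_D)}(A)\to\cF_{(D',b'_{D'})}(A')$ which is the identity on objects after identifying $D$ with $D'$ via $\gamma$, and which is an isomorphism of categories by the symmetry of the whole construction (run the same argument for a complex inducing the inverse equivalence, e.g. the $\OO$-dual of $C$ as in the proof of Lemma~\ref{K2O}).

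\emph{Main obstacle.} The hard part will be Step 2: controlling precisely which summands of $C$ survive the Brauer construction and proving that the \emph{surviving} part still induces a homotopy equivalence between the Brauer-correspondent blocks — i.e. that $\mathrm{Br}_{\Delta\gamma|_R}$ is compatible with the endomorphism-ring isomorphisms defining the Rickard equivalence. This requires the full trivial-source/$p$-permutation machinery for complexes of bimodules and Rouquier's characterisation of splendid (Rickard) equivalences; establishing that a Rickard equivalence in the weak sense of Definition~\ref{RickEqu} is automatically splendid (so that the Brauer construction behaves well) is exactly the content invoked from [Rou01], and is where most of the technical work resides.
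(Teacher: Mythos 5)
The paper offers no proof of this theorem: it is quoted directly from Puig's monograph [Puig99, 19.7], preceded only by the remark that by results of Rouquier [Rou01] the notion of Rickard equivalence in Definition~\ref{RickEqu} coincides with the more restrictive one involving twisted-diagonal vertices. There is therefore no paper argument to compare against; what follows is an assessment of your reconstruction on its own terms.

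Your outline does follow the accepted strategy (Puig's, and before him Rickard's for the splendid case), and Steps 1--3 identify the right ingredients. Two points, however, are passed over too quickly. First, in Step~1 you assert that one can ``fix a well-defined maximal such diagonal subgroup, which will have the form $\Delta\gamma$ with $\gamma\colon D\xrightarrow{\sim}D'$.'' This is not just a matter of looking at vertices of $C$: a priori the twisted-diagonal vertices of the indecomposable summands of $C$ are of the form $\Delta\varphi$ with $\varphi\colon R\to D'$ injective for various $R\leq D$, and there is no reason that the maximal ones are isomorphisms onto $D'$. To get an isomorphism $D\cong D'$ one must use the invertibility of the equivalence in an essential way: writing $C^\vee$ for the dual complex, $C\otimes_A C^\vee\simeq A'$ in $\HO(A'\otimes_\OO A'^\op\mmo)$, and the bimodule $A'$ is indecomposable with vertex $\Delta D'$; an analysis of vertices of tensor products (Mackey-type arguments) then forces some indecomposable summand of $C$ to have vertex $\Delta\gamma$ with $\gamma$ an isomorphism of $D$ onto $D'$. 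This is the step that actually produces $D\cong D'$, and the symmetry $C\leftrightarrow C^\vee$ is what makes $\gamma$ surjective as well as injective. Second, in Step~2 you apply the Brauer construction to $C$, which requires its terms to be $\ell$-permutation bimodules, not merely bi-projective ones. Definition~\ref{RickEqu} only requires bi-projectivity, so one must first invoke Rouquier's result that $C$ may be replaced, up to homotopy, by a complex of $\ell$-permutation bimodules with twisted-diagonal vertices --- exactly the passage the paper alludes to. You do acknowledge this under ``main obstacle,'' but it should be logically positioned before Step~2, not after, since without it the Brauer construction is not even defined on the complex you are working with.
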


Recall a theorem of Rickard (see [Du17, 2.2]).

\begin{thm}[{[Rick95], [Rou02]}]\label{Rick95}
The element $\R\Gamma_c (\YY_\PP,\OO)$ of $\tD^b(\OO\GF\otimes\OO\LL_s^F{}^\op)$ is represented by a well-defined element $\mathrm{G}\Gamma_c  (\YY_\PP,\OO)$ of $\HO(\OO\GF\otimes\OO\LL_s^F{}^\op)$\index{$\mathrm{G}\Gamma_c  (\YY_\PP,\OO)$}  whose terms are direct summands of modules of type $\Ind^{\GF\times \LL_s^F{}^\op}_Q\OO$ where $Q$ is an $\ell$-subgroup of $\GF\times \LL_s^F{}^\op$ such that $(\YY_\PP)^Q\ne\emptyset$. \end{thm}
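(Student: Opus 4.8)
The statement is a theorem of Rickard (see \cite[Rick95]{} and \cite[Rou02]{}) whose proof I would organise in three stages: first produce a homotopy-theoretic model of the cohomology complex with the correct vanishing properties, then control the shape of its terms via the action on a suitable compactification, and finally identify the $\ell$-subgroups that can occur as vertices of the summands with stabilisers of points of $\YY_\PP$. The starting point is the fact, used already in the proof of \Th{Br90}, that $\GF\times\LL_s^F{}^\op$ acts on $\YY_\PP$ with stabilisers that are $\ell'$-groups (unipotent groups of order invertible in $\OO$), together with the existence of a smooth compactification $j\colon\XX_\PP\hookrightarrow\ov\XX_\PP$. One lifts $\R\Gamma_c(\YY_\PP,\OO)$ to a bounded complex of $\OO\GF\otimes\OO\LL_s^F{}^\op$-modules that are \emph{$(\GF\times\LL_s^F{}^\op)$-permutation modules}; the existence of such a representative is the technical heart and is exactly what Rickard establishes by the method of descending from $\OO^{(n)}=\OO/J(\OO)^n$-coefficients and passing to the limit.

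First I would recall the standard construction of a homotopy-projective (indeed permutation) resolution of the constant sheaf on a variety equipped with a finite group action: one uses a $\GF\times\LL_s^F{}^\op$-stable affine open covering of $\ov\XX_\PP$ refined so that each finite intersection is $(\GF\times\LL_s^F{}^\op)$-stable, builds the associated \v Cech-type complex, and applies $\R\Gamma_c$ termwise. Because the group acts on $\YY_\PP$ with $\ell'$-stabilisers, each term of the resulting complex is, up to summands, a module of the form $\Ind^{\GF\times\LL_s^F{}^\op}_Q\OO$ with $Q$ the stabiliser of a point, hence with $(\YY_\PP)^Q\neq\emptyset$; the $\OO^{(n)}$-freeness and biprojectivity established in the proof of \Th{Br90} guarantee that the limit $\mathrm{G}\Gamma_c(\YY_\PP,\OO)$ is a well-defined object of $\HO(\OO\GF\otimes\OO\LL_s^F{}^\op)$ with biprojective terms. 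The uniqueness up to homotopy of this lift follows from the fact that two complexes of biprojective modules with quasi-isomorphic images in $\tD^b$ which are termwise permutation modules are homotopy equivalent, a standard consequence of the splendid-complex machinery.

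The main obstacle, as usual in this circle of ideas, is the passage from a $\tD^b$-statement to a genuine complex of permutation modules well-defined up to homotopy, i.e. the ``integral'' refinement of Poincar\'e--Verdier duality and the control of the limit over $n$; this is where the smoothness of $\ov\XX_\PP$, the constructibility assumptions, and Deligne's finiteness results for $\ell$-adic cohomology with compact support are essential. Once that is in place, the identification of the vertices of the summands with stabilisers of points of $\YY_\PP$ is immediate from the description of the terms, and the fact that such $Q$ are $\ell$-subgroups of $\GF\times\LL_s^F{}^\op$ follows since any stabiliser for the $\GF\times\LL_s^F{}^\op$-action on $\YY_\PP$ is, by the earlier remark, an $\ell'$-group, so its only relevant $\ell$-part is captured correctly by the vertex. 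I would then simply invoke \Th{Rick95} for the precise statement, since reproving it in detail would reproduce large parts of \cite{Rick95} and \cite{Rou02}; for the purposes of this survey the plan above indicates the structure, and the theorem is quoted as the input needed for the Rickard equivalence and local-structure statements of Bonnaf\'e--Dat--Rouquier discussed in part C.
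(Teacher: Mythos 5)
The main error is the claim, made twice, that $\GF\times\LL_s^F{}^\op$ acts on $\YY_\PP$ with $\ell'$-stabilisers. In the proof of \Th{Br90} the paper's observation concerns the two \emph{separate} actions: the left $\GF$-action has unipotent, hence $\ell'$, point stabilisers, and the right $\LL_s^F$-action is free; these facts give projectivity of the cohomology module after restriction to each factor (bi-projectivity). They say nothing about stabilisers for the \emph{product} group. Stabilisers of points of $\YY_\PP$ under $\GF\times\LL_s^F{}^\op$ contain diagonal-type elements $(x,y)$ with $x$ conjugate to $y^{-1}$, and these can certainly have non-trivial $\ell$-parts. If your claim were correct, every $\ell$-subgroup $Q$ with $(\YY_\PP)^Q\ne\emptyset$ would be trivial, $\mathrm{G}\Gamma_c(\YY_\PP,\OO)$ would be a complex of \emph{free} bimodules, and the Brauer constructions $\Br_{\Delta Q}$ used in the rest of the section would be vacuous. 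That is not the case: Lemma~\ref{YQ} shows precisely that the $\ell$-subgroups with fixed points on $\YY_\PP$ are, up to conjugacy, the subgroups of $\Delta(\LL_s^F)$, which are non-trivial in general, and the formula (\ref{BrC}) takes the resulting non-trivial Brauer quotients as its essential input. The theorem's entire content and usefulness lie in the potentially non-trivial vertices; your argument would make the statement degenerate.

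Beyond that, the paper does not prove this theorem but cites it from [Rick95] and [Rou02] (with an account in [Du17, 2.2]), and you correctly acknowledge at the end that this is the intended reading. Your higher-level outline—equivariant compactification, a decomposition by orbit type or equivariant open cover, passage to the limit over the coefficients $\OO/J(\OO)^n$, and uniqueness of the lift up to homotopy—is broadly consistent in spirit with Rickard and Rouquier, but the identification of the vertices must go through the actual stabilisers for the product-group action; the $\ell'$-ness of the stabilisers for the separate $\GF$- and $\LL_s^F$-actions is a different statement and by itself gives only bi-projectivity, not the permutation-module structure with the prescribed vertices.
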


The main result of [BoDaRo17] can then be stated as follows.

\begin{thm}[{[BoDaRo17, 7.7]}]\label{BDR2}
In the framework of \Th{BDR1} for $\GG$, $s$, $\PP\geq \LL_s\lhd \NN_s$ the complex $\mathrm{G}\Gamma_c  (\YY_\PP,\OO)e_\ell(\LL_s^F,s)$ induces a Rickard equivalence between $\OO\GF e_\ell (\GF,s)$ and $\OO\NN_s^Fe_\ell(\LL_s^F,s)$.
\end{thm}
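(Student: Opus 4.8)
The strategy is to upgrade \Th{BDR1}, which already produces the required Morita equivalence at the level of $k$, into an equivalence of homotopy categories over $\OO$ by tracking the \emph{whole} cohomology complex rather than just its middle cohomology group. The key input is \Th{Rick95}: the object $\R\Gamma_c(\YY_\PP,\OO)$ of $\tD^b(\OO\GF\otimes\OO\LL_s^F{}^\op)$ has a canonical lift $\mathrm{G}\Gamma_c(\YY_\PP,\OO)$ in $\HO(\OO\GF\otimes\OO\LL_s^F{}^\op)$ whose terms are direct summands of permutation modules $\Ind^{\GF\times\LL_s^F{}^\op}_Q\OO$ with $(\YY_\PP)^Q\ne\emptyset$, hence in particular bi-projective (the stabilizers of points are unipotent, of order prime to $\ell$). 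First I would project this complex onto the block idempotent, forming $C\deq\mathrm{G}\Gamma_c(\YY_\PP,\OO)e_\ell(\LL_s^F,s)$, a complex of bi-projective $\OO\GF e_\ell(\GF,s)\otimes(\OO\LL_s^F e_\ell(\LL_s^F,s))^\op$-modules.

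\textbf{Extending the action to $\NN_s^F$.} The heart of the argument is to show that $C$ extends, as a complex in the homotopy category, to an action of $\GF\times(\NN_s^F)^\op$. For this I would repeat over $\OO$ the two-step analysis already carried out in part A and part B of the discussion following \Th{BDR1}: first invariance of $C$ under the $\NN_s^F$-action using \Th{BDRinv} (independence of the parabolic $\PP$ up to the block projection — one needs its complex-level refinement, which is exactly what the proof of that theorem produces via the intermediate varieties $\YY_{\PP,\PP'}$); then the passage through a regular embedding $\GG\hookrightarrow\w\GG$ as in Definition~\ref{Je}, where the obstruction to extending vanishes because $\zent{\w\GG}$ is connected, so that $\w C e$ is a direct sum of pairwise non-isomorphic indecomposable complexes in $\HO(\w\cL)$ (using Bonnaf\'e-Rouquier's \Th{BRMorita} applied in $\w\GG$, which gives the Morita — hence Rickard — equivalence there). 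Descending along $\cN/\cL\cong\NN_s^F/\LL_s^F$, an $\ell'$-group by Lemma~\ref{NonC}, one obtains a complex $C'$ of bi-projective $\OO\GF e_\ell(\GF,s)\otimes(\OO\NN_s^F e_\ell(\LL_s^F,s))^\op$-modules with $\Res^{\cN}_{\cL}C'\cong C$ in the homotopy category.

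\textbf{Checking it is a Rickard equivalence.} It then remains to verify the two conditions of Definition~\ref{RickEqu}: that the canonical maps $\OO\GF e_\ell(\GF,s)\to\mathrm{End}^\bullet_{\OO\NN_s^F}(C')$ and $\OO\NN_s^F e_\ell(\LL_s^F,s)\to\mathrm{End}^\bullet_{\OO\GF}(C')$ are isomorphisms in the respective homotopy categories. Over $k$ both follow from \Th{BDR1} (a Morita equivalence is a fortiori a Rickard equivalence, and the endomorphism-complex condition holds degreewise after reduction). To lift from $k$ to $\OO$ I would use that the terms of $C'$ are $\OO$-free and bi-projective, so $\mathrm{End}^\bullet$ commutes with $-\otimes_\OO k$ and a homotopy equivalence over $k$ lifts to one over $\OO$ by the standard projectivity argument (two bi-projective complexes over $\OO$ that become homotopy equivalent mod $J(\OO)$ are homotopy equivalent — compare the argument lifting \eqref{A2A'} and the lifting of idempotents used throughout). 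An alternative, cleaner route for this last lift is to apply Lemma~\ref{K2O} termwise is not available (we are past the Morita regime), so one genuinely uses the Rouquier-type lifting. The Morita equivalence of \Th{BDR1} also identifies $K\otimes_\OO C'$ with the bimodule inducing $\pm\Lu{\LL_s}{\GG}$ on $\ser{\LL_s^F}{s}$, pinning down that $C'$ is non-zero. The isomorphism of defect groups and fusion systems in \Th{BDR1.1} is then immediate from \Th{Pu99}.

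\textbf{Main obstacle.} The genuinely hard step is the complex-level version of the extendibility argument (part B): extending the \emph{complex} $C$, not merely a module, to $\GF\times(\NN_s^F)^\op$ when $\NN_s^F/\LL_s^F$ is not cyclic. One cannot just invoke a one-cocycle computation as in the cyclic case; one needs the full strength of the ``$\ell'$-group descent'' for complexes in the homotopy category — controlling the obstruction class and using that $\w C e$ is multiplicity-free over $\w\cL$ so that $\Res\Ind\w C e$ decomposes canonically. Making this rigorous over $\OO$ (as opposed to $k$, where \Th{BDR1} lives) is where essentially all the difficulty concentrates; everything else is formal manipulation of derived/homotopy categories and reduction-mod-$J(\OO)$ arguments that are by now standard.
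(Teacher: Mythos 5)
There is a genuine gap, and it sits exactly where you declared the hard work done. You write that ``Over $k$ both [Rickard conditions] follow from \Th{BDR1} (a Morita equivalence is a fortiori a Rickard equivalence, and the endomorphism-complex condition holds degreewise after reduction).'' This is not correct. \Th{BDR1} gives a Morita equivalence induced by the single bimodule $\rH^d_c(\YY_\PP,k)\ov e_\ell(\LL_s^F,s)$; the theorem you are proving asserts a Rickard equivalence induced by the \emph{whole complex} $C=\mathrm{G}\Gamma_c(\YY_\PP,\OO)e_\ell(\LL_s^F,s)\otimes k$, and Definition~\ref{RickEqu} demands that the canonical maps into $\mathrm{End}^\bullet(C)$ be isomorphisms in the \emph{homotopy} category of bimodules, not merely the derived category. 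Knowing that $C$ has cohomology concentrated in one degree and that the resulting module is Morita gives the derived-category statement for free, but says nothing about the homotopy-category statement for the specific complex of $\ell$-permutation bimodules $C$ — and it is precisely that complex, with its vertices controlled by \Th{Rick95}, that carries the local information you ultimately feed into \Th{Pu99}.

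The actual heart of the paper's proof is Proposition~\ref{Endgr}: $\mathrm{End}^\bullet_{k\GF}(C)\cong \mathrm{End}_{\tD^b(k\GF)}(C)[0]$ in $\HO(k\LL_s^F\times\LL_s^F{}^\op)$. This is established by a local-to-global argument: Bouc's Lemma~\ref{BrE} reduces the homotopy-category claim to checking that the Brauer quotients $\Br_Q$ of the endomorphism complex, for all $\ell$-subgroups $Q$ of $\LL_s^F\times\LL_s^F{}^\op$, have homology in degree $0$ only. Lemma~\ref{YQ} shows only diagonal $\Delta Q$ matter; Rickard's theorem computes $\Br_{\Delta Q}(C)$ as a complex of the same nature on the fixed-point variety $\YY_{\cent\PP Q}$ inside the (possibly disconnected) reductive group $\cent\GG Q$ (formula~(\ref{BrC})); a generalisation of Brou\'e--Michel's formula (\Th{Brell}) for $\Br_Q(\ov e_\ell(\LL_s^F,s))$ identifies the idempotent; and finally the Bonnaf\'e--Rouquier Morita theorem is applied \emph{locally}, in each $\cent\GG Q^F$, to conclude that the Brauer quotients are concentrated in one degree. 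None of this appears in your proposal. You have also misidentified where the difficulty lies: you flag the non-cyclic case of the $\NN_s^F$-extendibility as the ``genuinely hard step,'' but the paper treats the analogues of your part B (Propositions~\ref{Step2}, \ref{Step3}) as ``less difficult'' and explicitly isolates the Brauer-construction argument for $\mathrm{End}^\bullet(C)$ as the main step. Your lifting-to-$\OO$ remarks and the invocation of \Th{Pu99} at the end are fine, but they are downstream of the missing core.
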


We sum up some of the main features of the proof ([BoDaRo17, \S 7.D]).

One works first over $k$. Denote $$C=\mathrm{G}\Gamma_c(\YY_\PP,\OO)e_\ell(\LL_s^F,s)\otimes k.$$ \index{$C$}The main step is to prove the following.

\begin{prop}[]\label{Endgr} ${\End}^\bullet_{k\GF}(C)\cong \End_{D^b(k\GF)}(C)[0]$ in $\HO (k\LL_s^F\times \LL_s^F{}^\op)$
\end{prop}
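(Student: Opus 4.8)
The plan is to show that $C\deq \mathrm{G}\Gamma_c(\YY_\PP,\OO)e_\ell(\LL_s^F,s)\otimes_\OO k$ is isomorphic in $\tD^b(k(\GF\times\LL_s^F{}^\op))$ to a single bimodule placed in one degree, compute $\R\Hom$ over $k\GF$ there, and then upgrade the resulting quasi\-isomorphism to a homotopy equivalence. Write $d=\dim\YY_\PP$ and $M=\rH^d_c(\YY_\PP,k)\ov e_\ell(\LL_s^F,s)$, the $k\GF\otimes k\LL_s^F{}^\op$-bimodule of \Th{BDR1}.

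First I would record the formal properties of $C$. By \Th{Rick95} the terms of $\mathrm{G}\Gamma_c(\YY_\PP,\OO)$ are direct summands of permutation modules $\Ind^{\GF\times\LL_s^F{}^\op}_Q\OO$ with $(\YY_\PP)^Q\ne\emptyset$; since $\GF$ acts on $\YY_\PP$ with $p$-group (hence $\ell'$) stabilizers and $\LL_s^F$ acts freely, each such term is projective as a left $k\GF$-module and as a right $k\LL_s^F$-module, so $C$ is a bounded complex of bi-projective $\ell$-permutation $k(\GF\times\LL_s^F{}^\op)$-modules. As those terms are $\OO$-free, $C$ represents $\R\Gamma_c(\YY_\PP,k)\ov e_\ell(\LL_s^F,s)$ in $\tD^b(k(\GF\times\LL_s^F{}^\op))$; and by the cohomology concentration that already underlies \Th{BDR1} — obtained from Theorems~\ref{Br90} and \ref{BR03} when $\cent{\GD}{s}^{F^*}\inn\LD$, and in general by passing to a regular embedding $\GG\hookrightarrow\w\GG$ for which $\cent{\w\GG^*}{\w t}=\si^\mm(\ccent{\GD}{s})\inn\w\LL_s^*$ reduces to that case (using that $\YY_{\w\PP}$ is induced from $\YY_\PP$, see~(\ref{IndY})) — this complex has cohomology concentrated in degree $d$. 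Hence $C\cong M[-d]$ in $\tD^b(k(\GF\times\LL_s^F{}^\op))$.

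Next I would compute $\R\Hom$ over $k\GF$. Forgetting the right $\LL_s^F$-action, $C$ is a bounded complex of projective $k\GF$-modules, so $\End^\bullet_{k\GF}(C)=\Hom^\bullet_{k\GF}(C,C)$ represents $\R\Hom_{k\GF}(C,C)$ as an object of $\tD^b(k(\LL_s^F\times\LL_s^F{}^\op))$. Now $M$ is projective as a left $k\GF$-module: the restriction of $\R\Gamma_c(\YY_\PP,k)\ov e_\ell(\LL_s^F,s)$ to $k\GF$ is a perfect complex, because $\GF$ acts on $\YY_\PP$ with $\ell'$-stabilizers, and its cohomology is concentrated in degree $d$, so $M$ is a perfect $k\GF$-module, hence projective since $k\GF$ is self-injective (compare the proof of \Th{Br90}). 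Therefore $\R\Hom_{k\GF}(C,C)\cong\R\Hom_{k\GF}(M,M)=\Hom_{k\GF}(M,M)[0]$, so $\End^\bullet_{k\GF}(C)$ has cohomology concentrated in degree $0$ equal to $\Hom_{k\GF}(M,M)=\Hom_{\tD^b(k\GF)}(M,M)=\End_{\tD^b(k\GF)}(C)$, the last identification because $C\cong M[-d]$ in $\tD^b(k\GF)$.

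It remains to promote this quasi\-isomorphism into an isomorphism in $\HO(k(\LL_s^F\times\LL_s^F{}^\op))$, i.e. to show that the bounded complex $\End^\bullet_{k\GF}(C)$, whose cohomology is concentrated in degree $0$, is homotopy equivalent to it. For this I would check that each $\Hom_{k\GF}(C^i,C^j)$ is again an $\ell$-permutation $k(\LL_s^F\times\LL_s^F{}^\op)$-module — via a Mackey-type computation of $\Hom_{k\GF}$ between permutation modules, using that the $C^i$ are bi-projective and of $\ell$-permutation type — so that $\End^\bullet_{k\GF}(C)$ is a bounded complex of $\ell$-permutation modules; passing to its reduced (contractible-summand-free) form in the sense of Rickard and using the concentration of its cohomology in degree $0$ then identifies that reduced form with $\End_{\tD^b(k\GF)}(C)$ placed in degree $0$. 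The hard part will be exactly this last step: since $C$ is only bi-projective — not a complex of projective $k(\GF\times\LL_s^F{}^\op)$-bimodules — and $\End_{\tD^b(k\GF)}(C)$ is in general not a projective $k(\LL_s^F\times\LL_s^F{}^\op)$-module, the passage from the derived-category statement to the homotopy-category statement is not formal, and one genuinely needs the $\ell$-permutation (``splendid'') structure of $\R\Gamma_c(\YY_\PP,-)$ supplied by \Th{Rick95}.
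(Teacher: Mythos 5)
There is a real gap at exactly the point you flag as hard, and the fix you sketch does not work. Your computation that $\End^\bullet_{k\GF}(C)$ has cohomology concentrated in degree $0$, equal to $\End_{k\GF}(M)$, is fine (and agrees with the paper). But the passage from there to a homotopy equivalence is where you go wrong: a bounded complex of $\ell$-permutation $kH$-modules whose cohomology sits in degree $0$ is \emph{not}, in general, homotopy equivalent to its $H^0$ placed in degree $0$, even after stripping contractible summands. What is true (Bouc, Lemma~\ref{BrE}) is that you get such a homotopy equivalence \emph{provided} $\Br_Q(E)$ has homology concentrated in degree $0$ for \emph{every} $\ell$-subgroup $Q\leq H$, not just for $Q=1$. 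Your argument only verifies the case $Q=1$, so the conclusion simply does not follow. Removing contractible summands and knowing the cohomology is concentrated does not identify the reduced complex with a single bimodule; that inference is false and is precisely what Bouc's criterion is designed to replace.

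The real content of the paper's proof is the verification of Bouc's hypothesis for $Q\ne 1$, and none of that appears in your proposal. The paper proceeds as follows: Lemma~\ref{YQ} reduces the relevant $\ell$-subgroups of $\LL_s^F\times\LL_s^F{}^\op$ to diagonal subgroups $\Delta Q$ with $Q\leq\LL_s^F$; Rickard's theorem ([Du17, 2.11]) identifies $\Br_{\Delta Q}(\mathrm{G}\Gamma_c(\YY_\PP,k))$ with $\mathrm{G}\Gamma_c((\YY_\PP)^{\Delta Q},k)$; [BoDaRo17, \S 3.A] shows $(\YY_\PP)^{\Delta Q}$ is a Deligne--Lusztig variety $\YY_{\cent\PP Q}$ for the (possibly non-connected) reductive group $\cent\GG Q$, yielding (\ref{BrC}); a generalization [BoDaRo17, 4.14] of Brou\'e--Michel's formula (\Th{Brell}) computes $\Br_Q(\ov e_\ell(\LL_s^F,s))$; and then Bonnaf\'e--Rouquier's cohomology-concentration theorem is re-applied in the local subgroup $\cent\GG Q^F$ to get that $\Br_{\Delta Q}(C)$ has homology in a single degree. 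Only then does Bouc's lemma deliver the statement of Proposition~\ref{Endgr}. So the missing idea in your proposal is Bouc's local criterion together with the whole ``Brauer construction of $\YY_\PP$'' machinery that allows one to check it; the ``reduced form'' shortcut you invoke in its place is not available.
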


The proof of that Proposition leads to checking the following about the action of $\GF\times \LL_s^F{}^\op$ on $\YY_\PP$.
	
\begin{lem}[{[BoDaRo17, 3.5]}]\label{YQ}	Assume $\PP=\RU(\PP)\LL{}$ is a Levi decomposition with $F(\LL)=\LL$.	If $Q$ is an $\ell$-subgroup of $\GF\times \LL^F{}^\op$ with fixed points on $\YY_\PP$, then $Q$ is $\GF\times \LL^F{}^\op$-conjugate to a subgroup of $\Delta(\LF)\deq \{(x,x^\mm)\mid x\in\LF  \}\inn \GF\times \LF{}^\op$.		
	\end{lem}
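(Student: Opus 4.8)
\textbf{Proof plan for Lemma~\ref{YQ}.}
The plan is to analyze the fixed points $(\YY_\PP)^Q$ directly using the description $\YY_\PP=\{g\RU(\PP)\mid g^{-1}F(g)\in\RU(\PP)F(\RU(\PP))\}$ and the two commuting actions: $\GF$ acts on the left by $g\mapsto hg$, and $\LF\cong\PP^F/\RU(\PP)^F$ acts on the right. First I would pick a point $g\RU(\PP)\in(\YY_\PP)^Q$ and unwind what it means for a pair $(h,l^{-1})\in Q$ (with $h\in\GF$, $l\in\LF$) to fix it: one gets $hg\RU(\PP)=g\dot l\RU(\PP)$ for a chosen lift $\dot l\in\PP^F$ of $l$, i.e. $h=g\dot l\RU(\PP)(g)^{-1}$ for some element of $\RU(\PP)$ depending on $g$. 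Conjugating $Q$ by $(g,1)$ (an element of $\GF\times\LF{}^\op$ once we pass to the $\GF$-translate, using that the right action factors through $\PP^F$) we may assume $g=1$, at the cost of replacing $\PP$ by a $\GF$-conjugate — but all such conjugates are again parabolic with an $F$-stable Levi, so this is harmless. After this normalization the fixed-point condition reads: the first-coordinate projection of $Q$ lands in $\PP^F$, and modulo $\RU(\PP)^F$ the two coordinates of each element of $Q$ agree (up to inversion), i.e. the image of $Q$ in $(\PP^F/\RU(\PP)^F)\times(\LF)^\op\cong\LF\times\LF{}^\op$ lies in the diagonal $\Delta(\LF)$.

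The remaining point is that the kernel of $Q\to\Delta(\LF)$ — which is a subgroup of $\RU(\PP)^F\times\{1\}$ — is trivial. This is where the coprimality is used: $Q$ is an $\ell$-group, $\RU(\PP)^F$ is a $p$-group, and $\ell\neq p$, so $Q\cap(\RU(\PP)^F\times\{1\})=\{1\}$. Hence $Q$ injects into $\Delta(\LF)$. To finish, I would lift this back: $Q$ is contained in the subgroup $\{(u\dot l,l^{-1})\mid l\in\LF,\ u\in\RU(\PP)^F\}$, which is precisely $\RU(\PP)^F\Delta(\LF)$ inside $\PP^F\times\LF{}^\op$; a standard argument (conjugate $Q$ by a suitable element $(u_0,1)$ with $u_0\in\RU(\PP)^F$ to absorb the unipotent parts, again possible because $Q$ is an $\ell$-group acting on the $p$-group $\RU(\PP)^F$ — here one invokes that a coprime automorphism group of a $p$-group fixes a point in any affine action, or simply the Schur--Zassenhaus complement/conjugacy) shows $Q$ is $\GF\times\LF{}^\op$-conjugate into $\Delta(\LF)$ itself.

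The main obstacle I expect is the bookkeeping in the last step: making precise that the $\RU(\PP)^F$-component of $Q$ can be conjugated away simultaneously for all elements of $Q$, not just one at a time. The clean way is to observe that $\RU(\PP)^F\Delta(\LF)=\RU(\PP)^F\rtimes\Delta(\LF)$ is a semidirect product with normal $p$-part $\RU(\PP)^F$ and $\ell'$-index pieces irrelevant, so any $\ell$-subgroup $Q$ of it is, by Sylow/Schur--Zassenhaus in $\RU(\PP)^F\rtimes\Delta(\LF_\ell)$ applied inside a Sylow $\ell$-subgroup, conjugate by an element of $\RU(\PP)^F$ to a subgroup of $\Delta(\LF)$. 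Once the reduction to $g=1$ and the coprimality argument are in place, this is routine; I would keep the parabolic-conjugation and the $\RU(\PP)^F$-conjugation steps clearly separated to avoid confusion about which group the conjugating element lies in.
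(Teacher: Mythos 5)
Your plan is correct when $F(\PP)=\PP$ (the Harish--Chandra case), but in the Deligne--Lusztig setting $\PP$ is precisely \emph{not} assumed $F$-stable (only $\LL$ is), and there the reduction ``we may assume $g=1$'' fails. When $F(\RU(\PP))\ne\RU(\PP)$, the variety $\YY_\PP$ has positive dimension while $\GF$ is finite, so $\GF$ does not act transitively on $\YY_\PP$ and the coset $g\RU(\PP)$ typically contains no $\GF$-rational point; there is no ``$\GF$-translate'' putting the fixed point at the base coset. Conjugating instead by $(g,1)\in\GG\times\LL^\op$ is \emph{not} a $\GF\times\LF^\op$-conjugation: it replaces $\GF$ by ${}^{g^{-1}}\GF=\GG^{F'}$ for the twisted Frobenius $F'=\ad(g^{-1}F(g))\circ F$, and $F'$ stabilizes neither $\PP$ nor $\LL$ (indeed $g^{-1}F(g)\in\RU(\PP)F(\RU(\PP))$ does not normalize $\LL$). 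So the objects $\PP^F$, $\RU(\PP)^F$ and the semidirect decomposition $\RU(\PP)^F\rtimes\Delta(\LF)$ that your last paragraph relies on are not available, and the Sylow/Schur--Zassenhaus step is being applied inside a group that does not contain (the conjugate of) $Q$. Note that if the normalization to $g=1$ \emph{were} available, the rest of your argument would go through even for non-$F$-stable $\PP$: for $(h,l)$ fixing $\RU(\PP)$ one has $hl\in\RU(\PP)\cap\GF$ directly, so everything lands in $(\RU(\PP)\cap\GF)\rtimes\Delta(\LF)$ with $\RU(\PP)\cap\GF$ a finite $p$-group. The entire difficulty is the $\GF$-rationality of the base point, and your proof elides it.

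What is actually needed is a rationality argument, and your coprimality instinct is the right ingredient for one part of it. One normalizes $g$ inside $g\RU(\PP)$ so that $v_0\deq g^{-1}F(g)\in\RU(\PP)$ (possible since $\RU(\PP)F(\RU(\PP))=\{uF(u')\mid u,u'\in\RU(\PP)\}$). The fixed-point condition gives $g^{-1}hg\cdot l\in\RU(\PP)$ for $(h,l)\in Q$, and $\phi(h,l)\deq g^{-1}hg\cdot l$ is a nonabelian $1$-cocycle of $Q$ with values in $\RU(\PP)$ for the action $(h,l)\colon\xi\mapsto l^{-1}\xi l$. Its finitely many values generate, with their $\pi_2(Q)$-conjugates, a finite $p$-subgroup $V\le\RU(\PP)$ (unipotent groups over $\ov{\bbF}_p$ are locally finite), and $H^1(Q,V)=1$ by coprimality, so $\phi$ is a coboundary by some $v\in V$, giving ${}^{vg^{-1}}h=l^{-1}$ for all $(h,l)\in Q$. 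This realizes the desired conjugation, but by an element $vg^{-1}\in\GG$ that is not in $\GF$. The remaining step --- absent from your proposal --- is to replace it by a genuine $c\in\GF$ with the same conjugation property; this is a Lang-type descent on the $F$-stable coset $(vg^{-1})\cent{\GG}{\pi_1(Q)}$ and one must control the component group of $\cent{\GG}{\pi_1(Q)}$ using the specific form of the obstruction $ (vg^{-1})^{-1}F(vg^{-1})$, which is the technical heart of [BoDaRo17, 3.5].
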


Let us now recall that for $H$ a finite group, an {$\ell$-permutation $kH$-module}\index{$\ell$-permutation module} is by definition any direct summand of a permutation module. For $Q$ an $\ell$-subgroup of $H$ and $M$ an $\ell$-permutation $kH$-module one denotes 
\begin{equation}\label{BrQ}
\text{  $ \Br_Q(M)\deq M^Q/(M^Q\cap J(kQ)M )  $ in $k\cent{H}{Q}\text{-}\mo$ }\index{$ \Br_Q(M)$}
\end{equation} the image of the $Q$-fixed points of $M$ in the cofixed points (see also [Du17, \S 2.3]). This induces an additive functor from $\ell$-permutation $kH$-modules to $\ell$-permutation $k\cent HQ$-modules. Note that if $\Omega$ is a set acted upon by $H$, then  $\Br_Q(k\Omega)=k(\Omega^Q)$ which allows to identify our first definition (\ref{Brmo}) of the Brauer morphism with a special case of the above.

The following, chiefly due to Bouc, is very useful to check homotopic equivalence locally.
\begin{lem}[{[Bouc98, 6.4, 6.9]}]\label{BrE}
Let $E$ be a bounded complex of $\ell$-permutation $kH$-modules. Assume that for any $\ell$-subgroup $Q\leq H$, $\Br_Q(E)$ has homology in degree 0 only. Then $$E\cong \rH^0(E)[0] \ \ \text{in   }\HO(kH\text{-}\mo).$$
\end{lem}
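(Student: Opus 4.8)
\textbf{Plan of proof for Lemma~\ref{BrE}.} The statement is a local criterion for a bounded complex of $\ell$-permutation modules to be homotopy-equivalent to its zero-th homology concentrated in degree $0$. The plan is to follow Bouc's argument, whose two ingredients are the interaction of $\Br_Q$ with the homotopy category of $\ell$-permutation modules, and a detection-by-vertices principle for contractibility.

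First I would recall the structural fact (due to Brou\'e, see also [Du17, \S 2.3]) that the additive functor $\Br_Q$ is well-behaved on $\ell$-permutation modules: it sends an indecomposable $\ell$-permutation $kH$-module $M$ with vertex $P$ to a (possibly zero) $\ell$-permutation $k\cent HQ$-module, which is nonzero precisely when $Q$ is contained (up to conjugacy) in a vertex of $M$, and moreover $\Br_Q$ is exact on the exact category of $\ell$-permutation modules. Consequently $\Br_Q$ extends termwise to a functor $\rC^b(k\mathsf{perm}(H))\to\rC^b(k\mathsf{perm}(\cent HQ))$, commuting with taking homology of complexes in the sense that $\rH^i(\Br_Q(E))=\Br_Q(\rH^i(E))$ whenever the terms are $\ell$-permutation modules and the homology modules are again $\ell$-permutation — here one uses that a complex of $\ell$-permutation modules with $\ell$-permutation homology is, up to homotopy, the direct sum of its homology with a contractible complex. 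I would isolate this last reduction as the technical core: the claim is that a bounded complex $E$ of $\ell$-permutation modules is homotopy equivalent to $\rH^0(E)[0]$ if and only if $E$ is, up to adding $\rH^0(E)[0]$, contractible, i.e. homotopy equivalent to $0$.

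Second, I would invoke the contractibility detection statement: a bounded complex $E$ of $\ell$-permutation $kH$-modules is contractible (null-homotopic) if and only if $\Br_Q(E)$ is contractible as a complex of $k\cent HQ$-modules for every $\ell$-subgroup $Q\leq H$; in fact it suffices to test $Q$ ranging over representatives of $H$-conjugacy classes of $\ell$-subgroups. This is the part of [Bouc98] (6.4) that does the real work, using that a bounded complex of projective modules is contractible iff it has no homology, combined with an induction on $|H|_\ell$ via the Brauer construction to peel off the non-projective part. Applying it to $E':=\mathrm{cone}$-style replacement, or more directly: set $F$ to be a complex homotopy equivalent to $E$ that splits off $\rH^0(E)[0]$ and a complex $F'$ with $\rH^*(F')=0$ in all degrees (possible since all homology of $E$ is $\ell$-permutation — indeed $\rH^i(E)=\Br_1(\rH^i(E))=\rH^i(\Br_1(E))=0$ for $i\neq0$ by hypothesis applied with $Q=1$, so actually only $\rH^0$ survives and it is $\ell$-permutation). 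Then for each $\ell$-subgroup $Q$, $\Br_Q(F')$ is a direct summand of $\Br_Q(E)$ with zero homology in all degrees by hypothesis, and being a complex of $\ell$-permutation modules with no homology whose terms are, after a further Brauer construction, projective, it is contractible. Running the detection criterion gives $F'\cong 0$ in $\HO(kH\text{-}\mo)$, hence $E\cong \rH^0(E)[0]$.

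The main obstacle, and the step I would be most careful about, is the bookkeeping in the detection-of-contractibility argument: one must genuinely use the $\ell$-permutation hypothesis (not just boundedness) to pass from ``$\Br_Q(E)$ acyclic for all $Q$'' to ``$E$ null-homotopic'', because for arbitrary complexes this is false. The cleanest route is Bouc's: induct on $|H|_\ell$; if $H$ has trivial Sylow $\ell$-subgroup then $\ell$-permutation modules are projective, acyclic bounded complexes of projectives are contractible, done; otherwise pick a central order-$\ell$ subgroup $Z$ of a Sylow $\ell$-subgroup lying in a suitable normal position, split $E$ along the $Z$-fixed points versus a complex of modules whose vertices avoid $Z$ (which are relatively $Z$-projective hence handled by a Green-correspondence/transfer argument), apply $\Br_Z$ to reduce the fixed-point part to $\cent HZ$ with smaller $\ell$-order, and invoke the inductive hypothesis together with the given vanishing of $\Br_Q(E)$ for $Q\supseteq Z$. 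I would cite [Bouc98, 6.4, 6.9] for the precise form of this dévissage rather than reproduce it, since the statement of the lemma is exactly what is extracted there, and only flag that the hypotheses of the lemma (bounded, $\ell$-permutation terms, $\Br_Q$ acyclic for all $\ell$-subgroups $Q$) are precisely the input that makes the induction go through.
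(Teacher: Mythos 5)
The paper gives no proof of this lemma — it simply cites [Bouc98, 6.4, 6.9] and moves on — so there is no internal argument to compare against. Your outline does identify the right underlying criterion (detection of contractibility through $\Br_Q$) and the inductive strategy, but there is a genuine gap in the reduction you run on top of it. You propose to first write $E\simeq \rH^0(E)[0]\oplus F'$ in $\HO(kH\text{-}\mo)$ with $F'$ acyclic, and only then apply the detection criterion to $F'$. The existence of that splitting is, however, precisely what the lemma asserts, and the claim you offer in support — that a bounded complex of $\ell$-permutation modules whose homology modules are again $\ell$-permutation is homotopy equivalent to the sum of its homology and a contractible complex — is false. Take $H=\bbZ/2$, $k=\bbF_2$, $g$ the generator: the complex $kH\xrightarrow{\,g-1\,}kH$ in degrees $0,1$ has $\ell$-permutation terms and $\ell$-permutation homology ($k$ in both degrees), yet its differential lies in $J(kH)$, so it has no contractible summand and is not homotopy equivalent to $k[0]\oplus k[1]$. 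The same example refutes the commuting-with-homology claim you use earlier: $\Br_H$ annihilates both terms but not the homology. And the one-sided variant $kH\twoheadrightarrow k$ (the augmentation) has homology only in degree $0$, both terms $\ell$-permutation, yet is not homotopy equivalent to $k[0]$ — it fails the hypothesis of the lemma because $\Br_H$ turns it into $0\to k$, which is exactly the kind of obstruction the full $\Br_Q$-hypothesis is there to exclude and cannot be given away at the start.

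The correct order of operations is the reverse: derive the splitting from the hypothesis degree by degree. For the top non-zero degree $n>0$, the hypothesis gives $\rH^n(\Br_Q(E))=0$ while $\Br_Q(E^{n+1})=0$, so $\Br_Q(d^{n-1})$ is surjective for every $\ell$-subgroup $Q$; for a map of $\ell$-permutation modules this forces $d^{n-1}$ to be split surjective (this split-epi criterion is the substantive input from Bouc's 6.4), so one may split off the contractible summand $E^n\xrightarrow{\id}E^n$ and replace $E^{n-1}$ by $\ker d^{n-1}$, which as a direct summand of $E^{n-1}$ is again $\ell$-permutation. Iterating from the top, and dually from the bottom for negative degrees via split injectivity, reduces $E$ in $\HO(kH\text{-}\mo)$ to a complex concentrated in degree $0$, which is then forced to be $\rH^0(E)$. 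Both the splitting you wanted to posit and the fact that $\rH^0(E)$ is itself $\ell$-permutation fall out of this peeling rather than being available as inputs; replacing your first splitting step by this degree-by-degree argument closes the gap.
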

	
	This will be applied to $H=\LL_s^F\times \LL_s^F{}^\op$ and $E\deq \End^\bullet_{k\GF}(C)$. 

Lemma~\ref{YQ} somehow shows that the relevant $\ell$-subgroups to check are of the form $\Delta Q$ for $Q$ an $\ell$-subgroup of $\LL_s^F$. By a theorem of Rickard (see [Du17, 2.11]) $\Br_{\Delta Q}(\R\Gamma_c(\YY_\PP,k))$ identifies with $\R\Gamma_c((\YY_\PP)^{\Delta Q},k)$. In [BoDaRo17, \S 3.A] it is shown that $(\YY_\PP)^{\Delta Q}$ is to be considered as a variety $\YY_{\cent\PP Q}$ in the (possibly non-connected) reductive group $\cent{\GG}{Q}$, which in turn gives sense to and establishes
\begin{equation}\label{BrC}
\text{  $  \Br_{\Delta Q}(C)=\mathrm{G}\Gamma_c(\YY_{\cent\PP Q}^{(\cent\GG Q)},k) \Br_Q(\ov e_\ell (\LL_s^F,s)). $ }
\end{equation}

Then the authors show for $\Br_Q(\ov e_\ell (\LL_s^F,s))$ a formula [BoDaRo17, 4.14] generalizing the one of Brou\'e-Michel seen before (\Th{Brell}) for cyclic subgroups $Q$. This allows to identify the right hand side of (\ref{BrC}) with a sum of complexes of the same type as $C$ itself in the local subgroup $\cent{\GG}{Q}^F$. One applies to them Bonnaf\'e-Rouquier's theorem (\Th{BRMorita}) thus getting that their homology is in one single degree. This essentially gives Proposition~\ref{Endgr} thanks to Lemma~\ref{BrE}. Let us comment that the above adaptations to the case of non-connected reductive groups needs indeed a lot of work [BoDaRo17, Sect. 3-4].

The next steps go through the following propositions and are less difficult. Remember that one is looking for a complex acted on by $\NN_s^F$ on the right.

\begin{prop}[]\label{Step2} One has\begin{eqnarray*}
		\End_{\HO(k(\GF\times\NN_s^F{}^\op))}(\Ind^{\GF\times\NN_s^F{}^\op}_{\GF\times\LL_s^F{}^\op}(C))&\cong\End_{\tD^b(k(\GF\times\NN_s^F{}^\op))}(\Ind^{\GF\times\NN_s^F{}^\op}_{\GF\times\LL_s^F{}^\op}(C)) \\
		\cong&\End_{k(\GF\times\NN_s^F{}^\op)}(\Ind^{\GF\times\NN_s^F{}^\op}_{\GF\times\LL_s^F{}^\op}(\rH^d_c(\YY_\PP,k))  \end{eqnarray*} 
\end{prop}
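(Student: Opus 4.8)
The plan is to deduce Proposition~\ref{Step2} from Proposition~\ref{Endgr} together with the extendibility results established in part~B of the sketch. The three terms to be compared are built by applying $\Ind^{\GF\times\NN_s^F{}^\op}_{\GF\times\LL_s^F{}^\op}(-)$ to, respectively, the complex $C$, the complex $C$ viewed in the derived category, and the single module $\rH^d_c(\YY_\PP,k)\ov e_\ell(\LL_s^F,s)$ in degree $d$. First I would record that $C$ is, by definition and by Theorem~\ref{Rick95}, a bounded complex of $\ell$-permutation $k(\GF\times\LL_s^F{}^\op)$-modules whose terms are summands of modules induced from $\ell$-subgroups $Q$ with $(\YY_\PP)^Q\ne\emptyset$; by Lemma~\ref{YQ} such $Q$ are subconjugate to $\Delta(\LL_s^F)$, so the terms of $C$ are in fact relatively $\Delta(\LL_s^F)$-projective. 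Since $\NN_s^F/\LL_s^F$ is an $\ell'$-group (Lemma~\ref{NonC}), induction from $\GF\times\LL_s^F{}^\op$ to $\GF\times\NN_s^F{}^\op$ is an exact functor that sends $\ell$-permutation modules to $\ell$-permutation modules, so $\Ind(C)$ is again a bounded complex of $\ell$-permutation modules.

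For the first isomorphism I would use Lemma~\ref{BrE} applied with $H=\GF\times\NN_s^F{}^\op$ and $E=\End^\bullet_{k\GF}(\Ind(C))$: it suffices to check that $\Br_R(E)$ has homology concentrated in degree $0$ for every $\ell$-subgroup $R\leq \GF\times\NN_s^F{}^\op$. Because endomorphism complexes commute with the relevant inductions (one has $\End^\bullet_{k\GF}(\Ind^{\GF\times\NN_s^F{}^\op}_{\GF\times\LL_s^F{}^\op}C)\cong \End^\bullet_{k\GF}(C)\otimes_{k\LL_s^F\times\LL_s^F{}^\op}k(\NN_s^F\times\NN_s^F{}^\op)$ since the relevant index is prime to $\ell$), the Brauer construction on $E$ is governed by the Brauer construction on $\End^\bullet_{k\GF}(C)$, which by Proposition~\ref{Endgr} is homotopy equivalent to a module in degree $0$; the $\ell'$-group $\NN_s^F/\LL_s^F$ acting does not spread the homology into other degrees. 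Hence $E\cong \rH^0(E)[0]$ in $\HO(k(\GF\times\NN_s^F{}^\op))$, which is the first claimed isomorphism together with the identification of the $\HO$-endomorphism ring with the $\tD^b$-endomorphism ring.

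For the second isomorphism I would pass from $C$ to its cohomology. By part~A of the sketch (Theorem~\ref{BDRinv}) and the fact, used throughout \S9, that after projecting on $\ov e_\ell(\LL_s^F,s)$ the complex $\R\Gamma_c(\YY_\PP,k)\ov e_\ell(\LL_s^F,s)$ has cohomology concentrated in the single degree $d=\dim\YY_\PP$ (this is exactly the input that makes $\rH^d_c(\YY_\PP,k)\ov e_\ell(\LL_s^F,s)$ a well-defined bimodule inducing the Morita equivalence of Theorem~\ref{BDR1}), the complex $C=\mathrm{G}\Gamma_c(\YY_\PP,\OO)e_\ell(\LL_s^F,s)\otimes k$ is quasi-isomorphic to $\rH^d_c(\YY_\PP,k)\ov e_\ell(\LL_s^F,s)[d]$. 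Applying the exact functor $\Ind^{\GF\times\NN_s^F{}^\op}_{\GF\times\LL_s^F{}^\op}$ preserves this quasi-isomorphism, so $\Ind(C)$ and $\Ind(\rH^d_c(\YY_\PP,k))[d]$ agree in $\tD^b(k(\GF\times\NN_s^F{}^\op))$, and a shift does not change endomorphism rings. Combining the two displays gives the chain of isomorphisms asserted in Proposition~\ref{Step2}.

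The main obstacle I expect is the bookkeeping in the first step: one must verify carefully that the Brauer quotients $\Br_R(\Ind(C))$ for $R\le\GF\times\NN_s^F{}^\op$ really are controlled by the $\Br_{\Delta Q}(C)$ computed in Proposition~\ref{Endgr}, i.e.\ that passing from $\LL_s^F$ to $\NN_s^F$ introduces no new local obstruction. This rests on the $\ell'$-index of $\NN_s^F/\LL_s^F$ and on Lemma~\ref{YQ} (which restricts the relevant $R$ up to conjugacy to subgroups meeting $\GF\times\LL_s^F{}^\op$ in a $\Delta$-type subgroup), but making the Mackey-type decomposition of $\Br_R(\Ind(C))$ precise — and checking it is compatible with the endomorphism-complex formation — is the delicate point; the rest is formal manipulation with derived categories and homotopy categories of $\ell$-permutation complexes.
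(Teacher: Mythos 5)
Your overall strategy — reduce to Proposition~\ref{Endgr} for the first isomorphism and use the concentration of cohomology in degree $d$ for the second — is the right one, and the second half of your argument is essentially correct (up to a sign: $C$ is quasi-isomorphic to $\rH^d_c(\YY_\PP,k)\ov e_\ell(\LL_s^F,s)[-d]$, not $[+d]$; and the cohomology concentration comes from Theorems~\ref{BR03} and~\ref{Br90}, not from Theorem~\ref{BDRinv}, which is about independence of the parabolic). But the first half contains both a type error and an unnecessary detour.

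The type error: $\End^\bullet_{k\GF}(\Ind^{\GF\times\NN_s^F{}^\op}_{\GF\times\LL_s^F{}^\op}C)$ carries only the residual action of $\NN_s^F\times\NN_s^F{}^\op$ (one copy of $\NN_s^F$ acting on the source, the other on the target, $\GF$ having been consumed), so it is a complex of $k(\NN_s^F\times\NN_s^F{}^\op)$-modules, not of $k(\GF\times\NN_s^F{}^\op)$-modules. Your application of Lemma~\ref{BrE} with $H=\GF\times\NN_s^F{}^\op$ therefore does not typecheck; one must take $H=\NN_s^F\times\NN_s^F{}^\op$. In your last paragraph the confusion compounds: you write $\Br_R(\Ind(C))$ where you mean $\Br_R(\End^\bullet_{k\GF}(\Ind(C)))$ — these are complexes over different groups and have entirely different Brauer quotients.

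More importantly, the detour through Bouc's lemma is circular. You observe the Nakayama-type isomorphism $\End^\bullet_{k\GF}(\Ind C)\cong \Ind^{\NN_s^F\times\NN_s^F{}^\op}_{\LL_s^F\times\LL_s^F{}^\op}\bigl(\End^\bullet_{k\GF}(C)\bigr)$ and invoke Proposition~\ref{Endgr}, which already says that $\End^\bullet_{k\GF}(C)\simeq \End_{\tD^b(k\GF)}(C)[0]$ in $\HO(k(\LL_s^F\times\LL_s^F{}^\op))$. Induction is an exact additive functor and therefore preserves homotopy equivalences, so $\End^\bullet_{k\GF}(\Ind C)\simeq \Ind^{\NN_s^F\times\NN_s^F{}^\op}_{\LL_s^F\times\LL_s^F{}^\op}\bigl(\End_{\tD^b(k\GF)}(C)\bigr)[0]$ in $\HO(k(\NN_s^F\times\NN_s^F{}^\op))$ — there is nothing left to verify about Brauer quotients. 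Bouc's Lemma~\ref{BrE} is the right tool for the \emph{hard} step, namely Proposition~\ref{Endgr}, where one genuinely does not yet know the homotopy equivalence and must establish it locally; here you already have the homotopy equivalence at the level of $\LL_s^F$ and are merely transporting it up.

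Finally, to finish one still needs to pass from the homotopy equivalence of $\End^\bullet_{k\GF}(\Ind C)$ (a complex of $k(\NN_s^F\times\NN_s^F{}^\op)$-modules) to the three claimed endomorphism rings. Taking $\Delta\NN_s^F$-fixed points, an additive functor preserving homotopy equivalences, gives $\End_{\HO(k(\GF\times\NN_s^F{}^\op))}(\Ind C)=\rH^0\bigl((\End^\bullet_{k\GF}(\Ind C))^{\Delta\NN_s^F}\bigr)=\bigl(\Ind^{\NN_s^F\times\NN_s^F{}^\op}_{\LL_s^F\times\LL_s^F{}^\op}\End_{k\GF}(\rH^d_c)\bigr)^{\Delta\NN_s^F}$, and one checks directly that this equals $\End_{k(\GF\times\NN_s^F{}^\op)}(\Ind\rH^d_c)$, which is the $\tD^b$-endomorphism ring by the cohomology concentration argument. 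Your proposal is silent on this last identification, which is routine but needs saying.
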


The proof of the following uses \Th{BDR1}.(i).

\begin{prop}[]\label{Step3} There is a direct summand $\w C$ of $\Ind^{\GF\times\NN_s^F{}^\op}_{\GF\times\LL_s^F{}^\op}(C)$ satisfying\begin{enumerate}[\rm(i)]
		\item $\Res^{\GF\times\NN_s^F{}^\op}_{\GF\times\LL_s^F{}^\op}(\w C)\cong C$ and
		\item $\End^\bullet_{k\GF}(\w C)\cong \End_{\tD^b(k\GF)}(\w C)[0]\cong k\NN_s^F\ov e_\ell(\LL_s^F,s)[0]$ in $\HO(k(\NN_s^F\times\NN_s^F{}^\op))$.
	\end{enumerate}
\end{prop}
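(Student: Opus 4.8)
\textbf{Plan for the proof of Proposition~\ref{Step3}.}
The goal is to produce, from the complex $C$ over $\GF\times\LL_s^F{}^\op$, a complex $\w C$ over $\GF\times\NN_s^F{}^\op$ extending $C$ and having the expected endomorphism ring. The natural candidate is a direct summand of $\Ind^{\GF\times\NN_s^F{}^\op}_{\GF\times\LL_s^F{}^\op}(C)$: by Proposition~\ref{Step2} the endomorphism algebra of this induced complex in $\HO$ already coincides with the honest endomorphism algebra of $\Ind^{\GF\times\NN_s^F{}^\op}_{\GF\times\LL_s^F{}^\op}(\rH^d_c(\YY_\PP,k))$ as a module, so detecting summands of the complex is the same as detecting summands of a single module in degree $d$. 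Here is where \Th{BDR1}.(i) enters: it tells us that the $k\GF\otimes k\LL_s^F{}^\op$-module $M=\rH^d_c(\YY_\PP,k)\ov e_\ell(\LL_s^F,s)$ carries an action of $\GF\times\NN_s^F{}^\op$, i.e.\ it is of the form $\Res^{\GF\times\NN_s^F{}^\op}_{\GF\times\LL_s^F{}^\op}M'$ for a $k\NN_s^F$-twisted module $M'$. By the usual Clifford-theoretic bookkeeping ($\NN_s^F/\LL_s^F$ an $\lp$-group, Mackey for $\Res\Ind$, and the fact that $\w Me$ is a sum of pairwise non-isomorphic indecomposables as recalled in step~B of Section~9.D), $M'$ appears as a direct summand of $\Ind^{\GF\times\NN_s^F{}^\op}_{\GF\times\LL_s^F{}^\op}(M)$, and one lifts this summand decomposition from the module in degree $d$ to the complex itself, producing $\w C$ with $\Res^{\GF\times\NN_s^F{}^\op}_{\GF\times\LL_s^F{}^\op}(\w C)\cong C$. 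This gives part~(i).

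For part~(ii), one first records that $\End^\bullet_{k\GF}(\w C)$ is a direct summand (as a complex of $k(\NN_s^F\times\NN_s^F{}^\op)$-modules) of $\End^\bullet_{k\GF}(\Ind^{\GF\times\NN_s^F{}^\op}_{\GF\times\LL_s^F{}^\op}(C))$, whose cohomology is concentrated in degree~$0$ by Proposition~\ref{Step2} combined with Proposition~\ref{Endgr}. Hence $\End^\bullet_{k\GF}(\w C)$ also has cohomology only in degree~$0$, so it is represented by $\End_{\tD^b(k\GF)}(\w C)[0]$ in $\HO(k(\NN_s^F\times\NN_s^F{}^\op))$. It remains to identify this degree-zero algebra with $k\NN_s^F\ov e_\ell(\LL_s^F,s)$. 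Using (i) one has $\End_{\tD^b(k\GF)}(\w C)\cong\End_{\tD^b(k\GF)}(C)\otimes_{k\LL_s^F}k\NN_s^F$ up to the twist, and by Proposition~\ref{Endgr} (or directly by Bonnaf\'e--Rouquier, \Th{BRMorita}, which gives $\End_{\tD^b(k\GF)}(C)\cong k\LL_s^F\ov e_\ell(\LL_s^F,s)$ since $C$ induces that Morita equivalence) this yields $k\NN_s^F\ov e_\ell(\LL_s^F,s)$. The canonical map $k\NN_s^F\ov e_\ell(\LL_s^F,s)\to\End^\bullet_{k\GF}(\w C)$ is then an isomorphism in $\HO$ because it is one after $\otimes_{k\NN_s^F}k\w\NN_s^F$ (where \Th{BRMorita} applied in $\w\GG$ to the module $\Ind^{\w\cN}_\cN M'$ of step~B gives the corresponding statement with connected center), and the index $[\w\NN_s^F:\NN_s^F]$ is prime to $\ell$, so an $\HO$-isomorphism can be detected after this faithfully flat-type base change.

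\textbf{Main obstacle.} The delicate point is \emph{descending} the extension from the induced complex to a genuine summand $\w C$ while keeping control of the endomorphism complex in the homotopy category rather than merely the derived category — i.e.\ turning the module-level extension of \Th{BDR1}.(i) into a statement about $\ell$-permutation complexes on the nose. This is exactly why Proposition~\ref{Step2} is needed as a separate input: it is what allows one to read off idempotents in $\End_{\HO}$ of the induced complex from idempotents of an ordinary module endomorphism ring, so that the summand $\w C$ can be split off functorially. One must also be careful that the twist by $\NN_s^F$ is compatible with the $\ell$-permutation structure coming from \Th{Rick95}, so that all the Brauer-morphism machinery used earlier (and needed for the eventual local statement \Th{BDR2}) still applies to $\w C$; checking this compatibility, rather than any new geometric input, is the part that requires real care here.
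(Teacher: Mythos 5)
Your proposal matches the route the paper indicates (it gives essentially no detail beyond the remark that the proof ``uses \Th{BDR1}.(i)''): you use \Th{BDR1}.(i) for the module-level extension $M'$, Proposition~\ref{Step2} to identify idempotents of $\End_{\HO}$ of the induced complex with idempotents of an ordinary endomorphism ring so that $\w C$ can be split off, and the module-level computation from Step~B (in particular $\End_{k\w\GG^F}(\Ind^{\w\cN}_\cN M')\cong\End_{k\GF}(M)\otimes_{\NN_s^F}\w\NN_s^F$ and (\ref{wMo2})) to identify the degree-zero algebra with $k\NN_s^F\ov e_\ell(\LL_s^F,s)$. Two small points: the reduction to $\w\NN_s^F$ detects isomorphisms simply because $\Res^{\w\NN_s^F}_{\NN_s^F}\Ind^{\w\NN_s^F}_{\NN_s^F}$ contains the identity functor as a direct summand (Mackey), not because $[\w\NN_s^F:\NN_s^F]$ is prime to $\ell$ --- that index involves $\zent{\w\GG}^F$ and can very well be divisible by $\ell$, so ``faithfully flat-type'' should be justified via Mackey rather than coprimality; and the displayed isomorphism $\End_{\tD^b(k\GF)}(\w C)\cong\End_{\tD^b(k\GF)}(C)\otimes_{k\LL_s^F}k\NN_s^F$ should really be the crossed-product/skew-group-algebra identification from Step~B rather than a genuine tensor product --- your ``up to the twist'' acknowledges this, but it is precisely where the $\lp$-index of $\NN_s^F/\LL_s^F$ is used and it deserves to be spelled out, since it is the actual Clifford-theoretic content of the step.
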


Using relatively standard techniques allowing with extra information to check only one of the two isomorphisms of Definition~\ref{RickEqu}, one now gets a Rickard equivalence over $k$. Lifting all that to $\OO$ as claimed in \Th{BDR2} follows classical procedures (see [Rick96, 5.2]).

\begin{rem}\label{BDRatum} Recalling that we have assumed for (\ref{M=Res}) above that $\NN_s/\LL_s$ is cyclic, one gets at this point \Th{BDR2} in that case. We even get a similar statement for any $F$-stable Levi subgroup $\LL^*$ (replacing $\LL^*_s$) such that $\LL^*$ contains $\ccent{\GD}{s}$, is normalized by $\cent{\GD}{s}$ and the factor group $(\cent{\GD}{s}\LD)^{F^*}/\LD{}^{F^*}$ is cyclic. Then the two algebras shown to be Rickard equivalent are $\OO\GF e_\ell(\GF ,s)$ and $\OO N e_\ell(\LF ,s)$ where $N\leq \norm{\GG}{\LL}^F$ is such that $N/\LF\cong (\cent{\GD}{s}\LD)^{F^*}/\LD{}^{F^*}$. 
	
	When $\NN_s/\LL_s$ is not cyclic, the proof proposed by Bonnaf\'e-Dat-Rouquier in {[BoDaRo17b]} consists in several steps described to the present author as follows (October 2017). First, one reduces the problem to groups $\GG$ that are simple as algebraic group (finite center and irrreducible root system) through direct products and central extension. Once this is done, the cases to care about are when  $\NN_s/\LL_s$ or equivalently $\cent{\GD}{s}/\ccent{\GD}{s}$ is not cyclic, which  
by Lemma~\ref{NonC}	can occur only in type $\tD_{2n}$ ($n\geq 2$). There are three possibilities for $s$ up to conjugacy, but only one such that $(\cent{\GD}{s}\LL_s^*)^{F^*}/\LL_s^*{}^{F^*}$ is not cyclic. Then $\ccent{\GD}{s}$ has type $\tA_{2n-3}$ and one can choose an $F$-stable Levi subgroup $\LL^*$ of type $\tA_{2n-3}\times \tA_{1}\times \tA_{1}$ satisfying the above. One then gets the equivalence sought between $\OO\GF e_\ell(\GF ,s)$ and $\OO N e_\ell(\LF ,s)$ where $N/\LF$ corresponds to a subgroup of order 2 of $\cent{\GD}{s}/\ccent{\GD}{s}$. Going from $\OO N e_\ell(\LF ,s)$ to our goal $\OO \NN^F_s e_\ell(\LF ,s)$ can then be done by proving versions of \Th{BDRinv} and (\ref{wMo2}) in a non-connected group $\HH$ such that $\HH^\circ =\LL$ and $\HH/\LL$ covers the missing part of $\cent{\GD}{s}/\ccent{\GD}{s}$. We refer to {[BoDaRo17b]} for more details.
\end{rem}

%%%%%%%%%%%%%%%%%%%%%%%%%%%%%%%%%%%%%%%%%%%%%%%%%%%%%%%%%%%%%%%%%%%%%%%%%%%%%

%%%%%%%%%%%%%%%%%%%%%%%%%%%%%%%%%%%%%%%%%%%%%%%%%%%%%%%%%%%%%%%%

{}
\bigskip

{}

\section{Recreation: Blocks of defect zero}

 Modular group algebras $kH$ where the characteristic of $k$ divides the order of the finite group $H$ are the typical examples of non semi-simple algebras but they of course may have blocks that are indeed simple. This may be seen as rather exceptional and the local structure or representation theory of such blocks is quite trivial. But on the other hand a statement like Alperin's weight conjecture (see 3.C above) crucially needs that enough of those situations exist. There are very few general theorems ensuring that a finite group algebra has such blocks and this is probably related with how difficult it is to say anything general about Alperin's conjecture. Using CFSG, one can see that non abelian simple groups have a lot of blocks of defect zero.

\begin{thm}\label{10.1}
	Let $\ell$ be a prime and $S$ a finite non-abelian simple group. Then it has an $\ell$-block of defect zero (see 1.D) except in the following cases \begin{enumerate}[(a)]
		\item $\ell=2$ and $S$ is an alternating group ${\Alt}_n$ for $n\geq7$ such that neither $n$ nor $n-2$ is a triangular number, or one of the sporadic groups $M_{12}$, $M_{22}$, $M_{24}$, $J_2$, $HS$, $Suz$, $Co_1$, $Co_3$, $BM$. 
		\item $\ell=3$ and $S$ is an alternating group ${ \Alt}_n$ for $n\geq 7$ such that $(3n+1)_p$ is non-square for at least one prime $p\equiv -1 (3)$, or $S$ is one of the two sporadic groups $Suz$ and $Co_3$.
	\end{enumerate}
\end{thm}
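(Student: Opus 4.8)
The statement is a CFSG-type result, so the plan is to treat each family in the classification of finite simple groups separately, using the fact that a finite group $S$ has an $\ell$-block of defect zero if and only if there is an irreducible character $\chi\in\Irr(S)$ with $\chi(1)_\ell=|S|_\ell$ (equivalently, $\ell\nmid |S|/\chi(1)$). The bulk of the work reduces to finding, in each case, such a character or proving it cannot exist. For the groups of Lie type $\GF$ (with $\GF/\zent\GF$ simple), the Steinberg character has degree $|\GF|_p=q^{|\Phi^+|}$, so for $\ell=p$ this is immediately a block of defect zero (this is \Th{StM} above, realised inside $\GF$); for $\ell\ne p$ one uses the known generic degrees of unipotent characters and the fact that for each $d$ there are unipotent characters whose degree contains the full $\ell$-part of $|\GF|$ once one accounts for the $\phi_{d\ell^a}$-factors, which one reads off from the explicit degree polynomials (Lusztig's classification of unipotent characters). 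This settles all groups of Lie type without exception, and is essentially a finite check once the generic degree tables are invoked.

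For the 26 sporadic groups one consults the character tables (in the ATLAS): for each sporadic $S$ and each prime $\ell\mid|S|$ one scans $\Irr(S)$ for a character of maximal $\ell$-defect, and records the finitely many pairs $(S,\ell)$ for which none exists. This produces exactly the list in (a) and (b) ($M_{12},M_{22},M_{24},J_2,HS,Suz,Co_1,Co_3,BM$ at $\ell=2$, and $Suz,Co_3$ at $\ell=3$); the verification is purely computational and uses no structural input beyond the tables. The genuinely delicate family is the alternating groups. Here I would pass to the symmetric group via \Th{BrRo}: $\Sym n$ has an $\ell$-block of defect zero precisely when there is an $\ell$-core partition $\kappa$ with $|\kappa|=n$ (equivalently $\zeta_\kappa$ vanishes off $\ell'$-elements, by Lemma~\ref{zetaka}, and then $\zeta_\kappa$ spans a block of defect zero), and correspondingly $\Alt_n$ has one when such a self-conjugate-or-paired $\ell$-core situation occurs; the combinatorial heart is then the theory of $\ell$-cores. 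For $\ell\geq 5$ a generating-function argument of Granville--Ono shows every $n$ admits an $\ell$-core partition, so $\Sym n$ (and $\Alt_n$, after a short argument on restriction of $\zeta_\kappa$ using that an $\ell$-core of odd length has trivial $\Alt_n$-stabiliser behaviour) always has a defect-zero block — no exceptions for $\ell\geq 5$.

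The remaining and hardest cases are $\ell=2$ and $\ell=3$ for alternating groups. For $\ell=2$, $2$-cores are exactly the staircase partitions $(k,k-1,\dots,1)$, so $\Sym n$ has a $2$-block of defect zero iff $n$ is a triangular number $\binom{k+1}{2}$; transferring to $\Alt_n$ one finds a defect-zero block iff $n$ or $n-2$ is triangular (the $n-2$ case comes from the character $\zeta_\kappa$ with $\kappa$ a staircase of size $n-2$ together with a careful analysis of the split/non-split behaviour of characters of $\Sym{n-2}\times\Sym 2$-induced constituents upon restriction to $\Alt_n$), which is exactly the exceptional condition stated. For $\ell=3$ one needs the arithmetic of $3$-cores: the number of $3$-core partitions of $n$ is essentially the number of representations of $3n+1$ by the quadratic form $x^2+xy+y^2$ (equivalently counts divisors of $3n+1$ in residue classes mod $3$), and this is positive unless $3n+1$ has a prime factor $p\equiv -1\pmod 3$ to an odd power — this is the genus-theory criterion quoted in (b); the $\Alt_n$ version again requires tracking the action of the outer sign on these core characters. \emph{The main obstacle} is precisely this last point: assembling the $3$-core counting formula from the theory of binary quadratic forms and then correctly descending from $\Sym n$ to $\Alt_n$ (deciding when a defect-zero $2$- or $3$-block of $\Sym n$ survives, splits, or fuses on restriction, and conversely when a genuinely new defect-zero block of $\Alt_n$ appears, e.g. from the $n-2$ phenomenon) without error; everything else is either the Steinberg observation, a generic-degree bookkeeping exercise, or a table lookup.
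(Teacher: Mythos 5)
Your proposal diverges from the paper---and is actually incorrect---in the Lie-type case at non-defining primes. You claim one can always find a \emph{unipotent} character whose degree carries the full $\ell$-part of $|\GF|$, and that this ``settles all groups of Lie type without exception.'' This fails already for $\PSL_2(q)$ with $\ell$ an odd prime dividing $q-1$: the only unipotent characters are the trivial and Steinberg characters, of degrees $1$ and $q$, both prime to $\ell$, while $|\PSL_2(q)|_\ell=(q-1)_\ell>1$. More generally, for many $(\GG,\ell)$ all unipotent characters lie in the principal block (for instance when the relevant $d$-cuspidal pair $(\LL,\zeta)$ of \Th{CuspBl} is $(\TT_0,1)$), so there is no unipotent block of defect zero at all; generic-degree bookkeeping cannot save the argument.

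The paper, following Michler and Willems, instead finds a regular semisimple $s\in[\GG^*,\GG^*]^{F^*}$ whose centraliser is a maximal torus $\TT^*$ with $|\TT^F/\Z(\GF)|$ prime to $\ell$; then $\pm\Lu{\TT}{\GG}\th$ (with $\th$ in general position corresponding to $s$) is irreducible of degree $|\GF/\TT^F|_{r'}$, has $\Z(\GF)$ in its kernel, and spans a defect-zero $\ell$-block of $\GF/\Z(\GF)$. These characters lie in $\ser{\GF}{s}$ with $s\ne1$, not in $\ser{\GF}{1}$. The rest of your outline---Steinberg in defining characteristic, ATLAS lookup for sporadics, $\ell$-cores via \Th{BrRo} and Granville--Ono for alternating groups, and the ``$n$ or $n-2$ triangular'' criterion at $\ell=2$---does match the paper, though the $n-2$ case is cleanest phrased directly in terms of the defect group of $B_\kappa$ being $\{1\}$ or $\Sym2$ and the fact that $\zeta_\lambda$ restricts irreducibly to $\Alt_n$ whenever $\lambda\ne\lambda^t$, rather than via $\Sym{n-2}\times\Sym2$-induction.
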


The checking of this theorem on the character table of a given simple group is easy since an $\ell$-block of defect zero is signaled by an ordinary character of degree divisible by the highest power of $\ell$ dividing the order of the group (see [NagaoTsu, 3.6.29]). This applies to the 26 sporadic groups and the 18 primes $\{2, 3, \dots , 43, 47, 59, 67, 71 \}$
that divide the order of one of them.

For groups of Lie type, note that the theorem asserts that all have blocks of defect zero for all primes. This was checked by Michler [Mi86, 5.1] for odd $\ell$ and Willems  [Wi88] for $\ell=2$. When $\ell$ is the defining prime, the Steinberg module gives such a block (see \Th{StM} above). Assume now that the defining prime is some $r\not= \ell$. Then the checking for a group $S=\GF/\Z(\GF)$ basically consists in finding regular semi-simple elements $s\in [\GG^*,\GG^*]^{F^*}$ whose centraliser is a maximal torus $\TT^*$ such that $\TT^F/\Z(\GF)$ is of order prime to $\ell$. Then the corresponding Deligne-Lusztig character $\pm\Lu{\TT}{\GG}\th$ is irreducible with degree $|\GF/\TT^F|_{r'}$ (see [DigneMic, 12.9]), has $\Z(\GF)$ in its kernel and therefore is in an $\ell$-block of defect zero of $\GF/\Z(\GF)$.

Strangely enough the answer for alternating groups was known after the case of simple groups of Lie type. The problem reduces to the case of the symmetric group $\Sym{n}$ except for the prime $2$. There a $2$-block of $\Sym{n}$ can restrict to a block of defect zero of $\Alt_n$ if it has defect group $1$ or $\Sym 2$. 
 Theorem~\ref{BrRo} gives the defect group of a 2-block in terms of $2$-cores. It
  is easy to see that a Young diagram has no $2$-hook if and only if its rim has the shape of a regular stair. This means that this is the partition $m,m-1,\dots ,1$ of the triangular number $m(m+1)/2$. Similarly the Young diagram can have only one $2$-hook if it is as above plus two more boxes at the first row, or two more boxes at the first column. Hence $n-2$ being a triangular number. 
  
 For primes $\ell\geq 3$, one gets from Theorem~\ref{BrRo} that $\Alt_n$ has an $\ell$-block with defect zero if and only if there exists an $\ell$-core $\kappa\vdash n$.

 Many things have been known for a long time about cores since their introduction by Nakayama [Naka41a]. If \sing{$c_d(n)$} denotes the number of $d$-cores $\kappa\vdash n$, one has 
 \begin{equation}\label{genf}
 \text{  $\sum_{n\geq 0}^{} c_d(n) t^n=\prod_{n\geq 1}{(1-t^{dn})^d\over (1-t^n)}  $ }
 \end{equation}
  as seen from the theory of $d$-quotients [JamesKer, \S~2.7.30]. The numbers are documented in https://oeis.org/A175595.
  
The general theorem about existence of $d$-cores was finally reached by Granville-Ono in 1996.
 
 \begin{thm}[{[GrOn96]}]
 Let $n\geq 1$, $d\geq 3$. Then $c_d(n)=0$ if and only if $d=3$ and $(3n+1)_\ell$ is a non-square for some prime $\ell\equiv -1(3)$.
 \end{thm}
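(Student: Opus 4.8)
The plan is to translate the question ``does there exist a $d$-core partition of $n$?'' into arithmetic via the generating function (\ref{genf}) and then invoke the theory of modular forms. First I would record the reformulation: by (\ref{genf}), $c_d(n)$ is the coefficient of $t^n$ in $\prod_{n\geq1}(1-t^{dn})^d/(1-t^n)$. Writing $\eta(z)=q^{1/24}\prod_{n\geq1}(1-q^n)$ with $q=e^{2\pi i z}$, this product is $q^{-(d^2-1)/24}\,\eta(dz)^d/\eta(z)$, which is (up to the rational power of $q$) a modular form; for $d\geq4$ one checks it is a holomorphic modular form of weight $(d-1)/2$ on a suitable congruence subgroup with a character, while the case $d=3$ gives weight $1$, which is the genuinely special case. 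So the statement $c_d(n)>0$ for all $n$ is equivalent to the non-vanishing of all Fourier coefficients of an explicit eta-quotient.

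The next step, for $d\geq 5$ and $d=4$, is to show the relevant modular form has \emph{every} coefficient positive. The cleanest route is: decompose the space of the given weight and level into the Eisenstein part and the cuspidal part; bound the cuspidal contribution using the Deligne bound (Ramanujan--Petersson, a theorem I may quote) which gives $O(n^{(\text{weight}-1)/2+\varepsilon})$; and show the Eisenstein coefficient grows strictly faster, like a positive constant times $n^{\text{weight}-1}$ on the relevant arithmetic progressions, with no cancellation from local factors at the bad primes. For $d=4$ one works instead with the weight $3/2$ form and uses Shimura's correspondence / the theory of ternary quadratic forms, or a direct argument that $\eta(4z)^4/\eta(z)$ represents every $n$ with positive multiplicity. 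For small $d$ ($d=4$) a finite amount of explicit checking plus the asymptotic suffices. I expect these cases to go through with no genuine obstacle beyond carefully pinning down the level, character, and the precise Eisenstein main term.

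The heart of the matter, and the main obstacle, is $d=3$. Here $\eta(3z)^3/\eta(z)$ is a weight $1$ form, so Deligne-type bounds are not strong enough to beat an Eisenstein main term by pure size, and in fact $c_3(n)$ \emph{does} vanish for infinitely many $n$ (e.g. $n=31$). The key classical fact I would use is the exact formula: $c_3(n)$ equals the number of representations controlled by the form $x^2+xy+y^2$, equivalently $c_3(n)=\tfrac13\bigl(\text{number of divisors of }3n+1\text{ congruent to }1\pmod3\bigr)-\tfrac13\bigl(\text{those congruent to }2\bigr)$; more usefully, $c_3(n)$ can be written as a divisor-type sum attached to the quadratic character $\chi_{-3}$ evaluated at $3n+1$. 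Concretely one shows $c_3(n)=\sum_{d\mid 3n+1}\chi_{-3}(d)$, a multiplicative function of $m=3n+1$. Then $c_3(n)=0$ iff this multiplicative sum vanishes, and a multiplicative function that is a sum of $\chi_{-3}$ over divisors vanishes at $m$ precisely when some prime $\ell\equiv-1\pmod3$ divides $m$ to an \emph{odd} power --- i.e. when $m=3n+1$ is not a square times a product of primes $\equiv1\pmod3$, equivalently when $(3n+1)_\ell$ is a non-square for some prime $\ell\equiv-1\pmod3$. Assembling: for $d\geq4$ the modular-forms asymptotic gives $c_d(n)>0$ for all $n$ (after checking the finitely many small $n$ by hand), and for $d=3$ the divisor-sum identity gives the stated exact criterion. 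I would expect the write-up effort to concentrate entirely on (i) correctly identifying the eta-quotient as a modular form with the right parameters, and (ii) the elementary but slightly fiddly manipulation turning $\eta(3z)^3/\eta(z)$ into the divisor sum $\sum_{d\mid 3n+1}\chi_{-3}(d)$ and reading off its vanishing locus.
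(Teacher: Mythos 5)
Your $d=3$ analysis is exactly the paper's: the divisor-sum identity $c_3(n)=\sum_{m\mid 3n+1}\big(\tfrac{m}{3}\big)$ (your $\chi_{-3}$ is the same character as the paper's Legendre symbol $(\tfrac{\cdot}{3})$ on integers coprime to $3$), its multiplicativity in $3n+1$, and the prime-power local factors that vanish precisely when some $\ell\equiv -1\ (3)$ divides $3n+1$ to an odd power. No disagreement there.

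For $d\geq 4$ your route and the paper's diverge, and you should be aware the paper deliberately sidesteps the one you chose. You propose to run the full modular-forms machine for every $d\geq 4$: identify the eta-quotient, split off the Eisenstein part, beat the cusp part by Deligne/Iwaniec bounds, and handle the half-integral weight case $d=4$ by Shimura/ternary forms. That is the strategy Granville--Ono use for \emph{small} $d$, but it does not painlessly cover all $d\geq 4$: the implied constants in the cusp-form bounds and the "finitely many small $n$" to check by hand both blow up with the level, which is why Granville--Ono switch to elementary arguments already for primes $d\geq 17$. The paper instead (i) uses the hook fact (\ref{d-dd'}) — a $d$-core is automatically a $dd'$-core — to reduce to $d=4,6$ and odd $d\geq 5$, (ii) translates $c_d(n)\neq 0$ into solvability of the explicit diophantine system (\ref{EqCo}), $n=\sum_i\big(\tfrac{d}{2}x_i^2+(i-1)x_i\big)$ with $\sum_i x_i=0$, and (iii) settles odd $d\geq 9$ for all large $n$ by Kiming's elementary argument via the three-squares theorem, citing Granville--Ono for the residual small cases. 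What your approach buys is uniformity and ties to a beautiful piece of analytic theory; what the paper's buys is elementarity and an effective, short proof for most $d$. If you keep your route, you should at least add the (\ref{d-dd'}) reduction (it costs nothing and shrinks the case list), and you should not expect the asymptotic-plus-finite-check step to "go through with no genuine obstacle" uniformly in $d$ — that is exactly the obstacle Kiming's argument was designed to remove.
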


For $d=3$, Granville-Ono show that \begin{equation}\label{genf}
\text{  $ c_3(n) =\sum_{m{\mid}3n+ 1,\ m\ge1}{\Big({m\over 3}\Big)}  $ }
\end{equation}
 where $({m\over 3})$ is the Legendre symbol. Using Gauss' quadratic reciprocity law [Serre, p.7], this gives the statement about $c_3(n)$. An elementary proof of (\ref{genf}) can be found in
[HiSe09].
	
	 For $d\geq 4$, one claims that there are $d$-core partitions of $n$ for any $n$. Note that (\ref{d-dd'}) implies that one can restrict the study to $d=4$, 6 and odd $d\geq 5$. 
	 
	Using a variant of $\beta$-numbers one also shows elementarily that $c_d(n)\not=0$  if and only if there is a $d $-tuple of integers $(x_1,\dots ,x_d )$ such that (see [GaKiSt90, Bij. 2]) \begin{equation}\label{EqCo}
	n=\sum_{i=1}^d  \Big({d \over 2}\cdot x_i^2+(i-1)\cdot x_i\Big)  \text{ \   and   \ }  \sum_{i=1}^d  x_i=0.
	\end{equation}
	
	  Granville-Ono [GrOn96] solve the above using modular forms but mainly elementary arguments for primes $d\geq 17$. We conclude by giving below an elementary argument taken from [Ki96]. 
The number theoretic flavor is quite apparent.

\begin{pro}
	Assume $d\geq 9$ is an odd integer and $n\geq {1\over 4}d^3+{3\over 4}d-1$. Then $c_d(n)\not=0$.
\end{pro}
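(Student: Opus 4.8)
The plan is to reduce the existence of a $d$-core partition of $n$ to a solvability statement for the Diophantine system~(\ref{EqCo}), and then to solve that system directly by an elementary counting argument. The equivalence~(\ref{EqCo}) is already available to us: $c_d(n)\ne 0$ if and only if there is a $d$-tuple of integers $(x_1,\dots,x_d)$ with $n=\sum_{i=1}^d\bigl(\tfrac d2 x_i^2+(i-1)x_i\bigr)$ and $\sum_{i=1}^d x_i=0$. So it suffices to produce such a tuple under the hypotheses $d\geq 9$ odd and $n\geq\tfrac14 d^3+\tfrac34 d-1$.

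First I would normalize the quadratic form. Completing the square, $\tfrac d2 x_i^2+(i-1)x_i=\tfrac1{2d}\bigl((dx_i+i-1)^2-(i-1)^2\bigr)$, so writing $y_i=dx_i+i-1$ the target becomes $\sum_{i=1}^d y_i^2=2dn+\sum_{i=1}^d(i-1)^2=2dn+\tfrac{(d-1)d(2d-1)}{6}$, subject to the congruence conditions $y_i\equiv i-1\pmod d$ and the linear condition $\sum_i y_i=\tfrac{d(d-1)}2$ (which is $\equiv 0\pmod d$ since $d$ is odd, so this is consistent). Thus the problem is: represent a prescribed integer $N:=2dn+\tfrac{(d-1)d(2d-1)}{6}$ as a sum of $d$ squares $y_1^2+\dots+y_d^2$ with $y_i$ lying in a prescribed residue class mod $d$ and with $\sum y_i$ prescribed. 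The lower bound on $n$ is exactly what is needed to guarantee $N$ is large enough for the elementary construction to go through; I would keep track of it as the running constraint.

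The main step, and the one I expect to be the real obstacle, is the actual construction of the $y_i$. The natural approach (following [Ki96]) is to take all but a few of the $x_i$ equal to $0$ (so $y_i=i-1$ for most $i$), and then use the remaining handful of free variables — say three or four of them — to simultaneously hit the correct value of $\sum y_i^2$ and maintain $\sum x_i=0$. Concretely one would pick two indices $i,j$ and set $x_i=t$, $x_j=-t$ for a parameter $t$, contributing $dt^2\cdot(\text{something linear in }t)$ to the sum; adjusting $t$ and possibly one more pair of indices $k,\ell$ with $x_k=u$, $x_\ell=-u$ gives two parameters $t,u$ to solve one equation, with enough slack (because $d\geq 9$ gives at least the needed number of indices, and the odd parity of $d$ ensures the relevant residues behave) that every sufficiently large $n$ is covered. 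The inequality $n\geq\tfrac14 d^3+\tfrac34 d-1$ should emerge as precisely the threshold beyond which the discriminant of the quadratic in $t$ one must solve is nonnegative and the resulting root can be rounded to an integer within the available range of the auxiliary parameter $u$. The bookkeeping of these cases — checking that the rounding error can always be absorbed, and handling the residual small cases $t\in\{0,\pm1\}$ separately — is where the genuine work lies; everything else is formal.

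I would therefore organize the write-up as: (1) invoke~(\ref{EqCo}); (2) substitute and reduce to the $N$-as-sum-of-$d$-squares form with side conditions; (3) make the two-parameter ansatz $x_i=t,\ x_j=-t,\ x_k=u,\ x_\ell=-u$ (all other $x_m=0$), compute the resulting value of $n$ as an explicit quadratic polynomial in $t,u$; (4) show that as $t,u$ range over $\bbZ$ this polynomial takes every integer value $\geq\tfrac14 d^3+\tfrac34 d-1$, by fixing $u$ appropriately and solving for $t$, using that the coefficient of $t^2$ is $\tfrac d2$ so consecutive values of the quadratic differ by $O(d\cdot t)$ and hence the ``gaps'' are bridged once $t$ is comparable to $d$; and (5) conclude $c_d(n)\ne 0$. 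The delicate point throughout is to choose the indices $i,j,k,\ell$ (and the sign pattern) so that the linear terms $(i-1)x_i+\cdots$ cooperate rather than obstruct; with $d\geq 9$ there is ample room, but one must exhibit an explicit choice and verify the arithmetic.
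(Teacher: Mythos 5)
Your reduction to the system~(\ref{EqCo}), the observation that only a handful of the $x_i$ need be nonzero, and the idea of pairing them so that $\sum_i x_i=0$ is automatic are all the right opening moves --- this is also how Kiming's proof (the one reproduced in the paper) begins. The gap is in the ansatz. Setting $x_i=t$, $x_j=-t$, $x_k=u$, $x_\ell=-u$ and all other entries to $0$, equation~(\ref{EqCo}) becomes $n=d(t^2+u^2)+(i-j)t+(k-\ell)u$, and completing the square turns it into
$$ \bigl(2dt+(i-j)\bigr)^2+\bigl(2du+(k-\ell)\bigr)^2 \;=\; 4dn+(i-j)^2+(k-\ell)^2. $$
So your construction requires $4dn+c$ to be a \emph{sum of two squares} for some $c$ in the finite set $\{a^2+b^2:\ 1\leq|a|,|b|\leq d-1\}$. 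By Landau's theorem the integers representable as a sum of two squares have density zero (their counting function is $\sim KN/\sqrt{\log N}$), and each set $\{n:\ 4dn+c\ \text{is a sum of two squares}\}$ is a preimage of a density-zero set under an affine map, hence density zero; a union of the $O(d^2)$ such sets ($d$ fixed) is still density zero. Thus your ansatz misses almost every sufficiently large $n$, and step~(4) of your plan --- bridging the ``gaps'' by tuning $t$ and $u$ --- is exactly where the argument breaks: a positive definite binary quadratic form never represents a positive proportion of the integers, no matter how the linear coefficients (i.e.\ the indices $i,j,k,\ell$) are chosen, so the elementary counting argument you anticipate cannot exist.

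The paper's proof repairs this by using \emph{four} paired parameters, $(x_1,\dots,x_8)=(-\alpha,\alpha,-\beta,\beta,-\gamma,\gamma,-\delta,\delta)$ (eight nonzero entries, which is what the hypothesis $d\geq9$ provides). Then (\ref{EqCo}) reads $n=d(\alpha^2+\beta^2+\gamma^2+\delta^2)+(\alpha+\beta+\gamma+\delta)$, so after writing $n=dq'+r'$ one must realize $\alpha^2+\beta^2+\gamma^2+\delta^2=q'$ and $\alpha+\beta+\gamma+\delta=r'$ simultaneously. The orthogonal (Hadamard) change of variables $\alpha=\tfrac14(r'+a+b+c)$, $\beta=\tfrac14(r'-a-b+c)$, $\gamma=\tfrac14(r'-a+b-c)$, $\delta=\tfrac14(r'+a-b-c)$ packages both conditions into the single equation $4q'=r'{}^2+a^2+b^2+c^2$, i.e.\ a representation of $4q'-r'{}^2$ as a sum of three squares. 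Your hypothesis on $n$ is what makes $4q'-r'{}^2$ positive (your discriminant intuition is attached to the wrong construction but is pointing at this), a parity adjustment of $(q,r)$ to $(q',r')$ forces $4q'-r'{}^2\equiv 3\pmod 8$, and the decisive input is Gauss's three-squares theorem: every positive integer $\equiv 3\pmod 8$ is a sum of three odd squares. That classical theorem --- not a density estimate --- is the engine of the proof, and it is precisely the ingredient your two-pair ansatz has no analogue of.
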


Note that $n\leq{1\over 4}(d-1)^2$ leads to trivial solutions (use Young diagrams included in a square to get $\la\vdash n$ such that $\hoo_c(\la)=\emptyset$ for any $c\geq d$). 

\begin{proof}
	(Kiming) One solves the problem in the form of (\ref{EqCo}). One will need 8 integers $x_1,\dots ,x_8$ with sum 0 to represent $n$, whence the condition $d\geq 9$.
	
	The condition $n\geq {1\over 4}d^3+{3\over 4}d-1$ implies that the euclidean division of $n$ by $d$ gives $n=dq+r$ with $4q\geq {d^2-1}$ and $d-1\geq r\geq 0$. Let's change slightly the parity of these integers while keeping $r^2$ small. Let 
	$$ (q',r')\deq\begin{cases} (q,r)  &\text{ if }  q\equiv 1\  (2) \text{ and } r\not\equiv 0\ (4) \\ (q+1,r-d)  &\text{ if }  q\equiv r\equiv 0\ (2) \\  
	(q+2,r-2d)  &\text{ if }  q\equiv 1\ (2) \text{ and } r \equiv 0\ (4) \\
	(q-\eps,r+\eps d)  &\text{ if }  q\equiv 0\ (2) \text{ and } r \equiv \eps d\ (4) \text{ for } \eps=\pm 1 .
	\end{cases} $$
	
	We still have $n=dq'+r'$ but now $q'$ is odd and one of the following occurs \begin{enumerate}[(a)]
		\item $r'$ is odd and $4q'\geq r'{}^2$ (two first cases above).
		\item $r'\equiv 2\ (4)$ and $16q'\geq r'{}^2$.
	\end{enumerate}  
	
	Let's look at case (a). Then $0<4q'-r'{}^2\equiv 3\ (8)$ so $4q'-r'{}^2$ can be represented by a sum of three odd squares (see [Serre, p. 45]). Therefore \begin{equation}\label{rabc}
	4q'=r'{}^2+a^2+b^2+c^2
	\end{equation} with $r',a,b,c$ odd. If necessary, we may change $a$ into $-a$  to ensure that $r'+a+b+c$ is a multiple of 4. One then defines \begin{eqnarray*}
		\al =(r'+a+b+c)/4& &\beta =(r'-a-b+c)/4 \\ \gamma =(r'-a+b-c)/4& &\dd =(r'+a-b-c)/4
	\end{eqnarray*}
	and $(x_1,\dots ,x_8)=(-\al,\al,-\beta,\beta,-\gamma,\gamma,-\dd,\dd)$.
	
	One has $x_1+\cdots +x_8=0$ and \begin{eqnarray*}
		{d\over 2}  x_1^2+{d\over 2}  x_2^2+x_2+{d\over 2}  x_3^2+2x_3+\cdots +{d\over 2}  x_8^2+7x_8 &=& {d\over 4}  (r'{}^2+a^2+b^2+c^2)+r'\cr
		&=&dq'+r'=n \text{ by (\ref{rabc})}.
	\end{eqnarray*} This solves the equation (\ref{EqCo}).
	
	In the case (b), one replaces $r'$ by $r'/2$ to define $a,b,c,\al,\beta,\gamma,\dd$ similarly. Then one takes $(x_1,\dots ,x_8)=(-\al,-\beta,\al,\beta,-\gamma,-\dd,\gamma,\dd)$.
\end{proof}

\bigskip

{}

\bigskip

{}

\bigskip

%%%%%%%%%%%%%%%%%%%%%%%%%%%%%%%%%%%%%%%%%%%%%%%%%%%%%%%%%%%%%%%%

%%%%%%%%%%%%%%%%%%%%%%%%%%%%%%%%%%%%%%%%%%%%%%%%%%%%%%%%%%%%%%%%

{}
\bigskip

{}

\def\ch#1{{ (#1)}}

{%
\small\footnotesize

\def\refit#1{\bibitem#1{#1}}

}

\printindex

\end{document}